\documentclass[11pt,reqno]{amsart}

\usepackage[letterpaper,margin=0.90in]{geometry}
\usepackage{amsmath,amssymb,mathtools,mathrsfs,esint}
\usepackage{enumitem}
\usepackage{microtype}
\usepackage[colorlinks=true,linkcolor=blue,citecolor=blue,urlcolor=blue]{hyperref}
\usepackage[nameinlink,capitalise,noabbrev]{cleveref}

\allowdisplaybreaks
\numberwithin{equation}{section}

\newtheorem{theorem}{Theorem}[section]
\newtheorem{proposition}[theorem]{Proposition}
\newtheorem{lemma}[theorem]{Lemma}
\newtheorem{corollary}[theorem]{Corollary}

\newtheorem{example}[theorem]{Example}
\theoremstyle{definition}
\newtheorem{definition}[theorem]{Definition}

\newcommand{\C}{\mathbb C}
\newcommand{\N}{\mathbb N}
\newcommand{\Nzero}{\mathbb N_0}
\newcommand{\dd}{\,d}
\newcommand{\Hol}{\operatorname{Hol}}
\newcommand{\loc}{\mathrm{loc}}
\newcommand{\dist}{\operatorname{dist}}
\newcommand{\supp}{\operatorname{supp}}
\newcommand{\avg}{\mathop{\fint}}
\newcommand{\Piabs}{\boldsymbol\Pi}
\newcommand{\dideal}{\mathfrak d_{p,q}^{\,r}}
\newcommand{\V}{\mathcal V}
\newcommand{\Q}{\mathcal Q}
\newcommand{\M}{\mathcal M}
\newcommand{\G}{\mathcal G}
\newcommand{\R}{\mathcal R}

\title[Absolutely summing operators on finite-type domains]
{Absolutely summing operators on Bergman spaces over convex domains of finite type}

\author[Jianxiang Dong]{Jianxiang Dong$^{1}$}
\address{School of Mathematics and Statistics, Tianshui Normal University, Tianshui 741000, P.R. China}
\email{jianxd@tsnu.edu.cn}

\author[Chunxu Xu]{Chunxu Xu$^{2,*}$}
\address{School of Science, Nanjing Forestry University,
Nanjing 210037, P.R. China}
\email{1968385450@qq.com}
\thanks{$^{*}$Corresponding author.}
\subjclass[2020]{Primary 47B10; Secondary 32A36, 32T25, 47B33, 47B35, 47G10}

\keywords{Absolutely summing operator, Bergman space, Hankel operator, composition operator, Volterra operator}

\hypersetup{
  pdftitle={Absolutely summing operators on Bergman spaces over convex domains of finite type},
  pdfauthor={Jianxiang Dong and Chunxu Xu},
  pdfsubject={Operator ideals on Bergman spaces},
  pdfkeywords={absolutely summing operator, Bergman space, Hankel operator, composition operator, Volterra operator, finite type}
}

\begin{document}

\begin{abstract}
Let $\Omega\subset\mathbb C^n$ be a smoothly bounded convex domain
of finite type. We characterize absolute summability on its Bergman
spaces and estimate approximation errors in the summing norm.
For $1<p<\infty$ and
$1\le q,r<\infty$, we characterize absolutely $r$-summing Carleson
embeddings $A^p(\Omega)\to L^q(\mu)$ by diagonal operators formed
from normalized masses on McNeal balls. For $1<p,q<\infty$ and
$1\le r<\infty$, we obtain a corresponding big Hankel criterion
using local $L^q$ distances to holomorphic functions, with the
appropriate volume weights. Both criteria give upper and lower norm
estimates. The proofs combine uniform local nuclear estimates,
kernel synthesis, and an analytic decomposition at two fixed scales.
Local Taylor polynomials give explicit rank and error bounds for
the embeddings. These bounds transfer to general Hankel operators
when $p=2$ or $q=2$. Suitable atomic embeddings have the same best
approximation errors as their diagonal models, up to constants.
A family of Hankel operators contains a complemented copy of the
diagonal ideal and has matching approximation rates for power
sequences in the Hilbert case.
For positive Toeplitz operators on finite-type ellipsoids, we compute
exact spectral asymptotics in three regimes, including the critical
logarithmic factor and the leading constants for geometric symbols.
These asymptotics give optimal approximation rates in the summing norm.
Applications to composition and Volterra operators include the
target endpoint $q=1$.
\end{abstract}

\maketitle

\setcounter{tocdepth}{1}
\tableofcontents

\section{Introduction}

\subsection{Background and the problem}

Let $X,Y$ be Banach spaces and $1\le r<\infty$.
For a finite family $(x_k)_{k=1}^N\subset X$, define its weak
$r$-norm by
\begin{equation}\label{eq:weak-r-norm}
 w_r(x_1,\ldots,x_N)
 =\sup_{x^*\in B_{X^*}}
 \left(\sum_{k=1}^N|x^*(x_k)|^r\right)^{1/r},
\end{equation}
where $X^*$ is the continuous dual of $X$ and $B_{X^*}$ is its
closed unit ball. A bounded linear operator $T:X\to Y$ is
absolutely $r$-summing if
\begin{equation}\label{eq:intro-summing}
 \left(\sum_{k=1}^N\|Tx_k\|_Y^r\right)^{1/r}
 \le Cw_r(x_1,\ldots,x_N)
\end{equation}
for every finite family in $X$. The least such $C$ is the summing
norm $\pi_r(T)$, and $\Piabs_r(X,Y)$ denotes the class of these
operators \cite{Pietsch1967,Pietsch1980,DiestelJarchowTonge1995}.
Thus absolute summability compares the norms of the images $Tx_k$
with the weak $r$-norm of the original family. For Bergman spaces,
we seek criteria in terms of local data of a measure or a symbol.
We also study approximation in $\pi_r$ by operators of finite rank,
where $\operatorname{rank}T$ is the dimension of the range of $T$.
The error estimate must hold uniformly over all finite families.

Throughout, let $\Omega\subset\mathbb C^n$ be a bounded convex domain
with $C^\infty$ boundary and finite D'Angelo type. This means that
\[
 \Delta_1(\partial\Omega)
 =\sup_{\zeta\in\partial\Omega}\sup_{\varphi}
 \frac{\nu_0(\rho_\Omega\circ\varphi)}
 {\nu_0(\varphi-\zeta)}<\infty.
\]
Here $\rho_\Omega$ is a smooth defining function, so that
$\Omega=\{\rho_\Omega<0\}$ and $\nabla\rho_\Omega\ne0$ on
$\partial\Omega$; $\nu_0$ is the vanishing order at $0$, and
$\varphi:(\C,0)\to(\C^n,\zeta)$ ranges over nonconstant
holomorphic germs. For a vector-valued germ, its vanishing order
is the minimum of the orders of its components.
Write $v$ for Euclidean volume, $\dist$ for Euclidean distance, and
$\Hol(U)$ for the holomorphic functions on an open set $U$.
The Bergman space and its norm are
\[
 A^s(\Omega)=L^s(\Omega,v)\cap\Hol(\Omega),\qquad
 \|F\|_{A^s}=\left(\int_\Omega|F|^s\dd v\right)^{1/s},
 \qquad 1\le s<\infty.
\]
Put $\delta(z)=\dist(z,\partial\Omega)$.
Let $K$ be the Bergman kernel, characterized by
$F(z)=\int_\Omega K(z,w)F(w)\dd v(w)$ for $F\in A^2(\Omega)$,
with $K(\cdot,w)\in A^2(\Omega)$ and
$K(z,w)=\overline{K(w,z)}$.
The boundary scales on
these domains depend on both the point and the direction. Local
restriction estimates and analytic approximation must therefore be
uniform under nonisotropic changes of scale.

Fan, He, Wang, and Zeng completed the classification of absolutely
summing diagonal operators. They also proved two general transference
principles and Carleson and Hankel criteria on weighted Bergman spaces
of the ball \cite{FanHeWangZeng2026}. Their ball results include the
source endpoint $p=1$. We use their classification and transference
principles, together with Garling's diagonal theory
\cite{Garling1974}, on unweighted finite-type spaces with $1<p<\infty$.
Xiao, Yang, and Yuan studied Carleson embeddings and positive Toeplitz
operators on the same class of finite-type domains
\cite{XiaoYangYuan2026}. Their criteria concern boundedness,
compactness, and Schatten membership. For a compact operator $T$
between Hilbert spaces, its singular values $s_N(T)$ are the
eigenvalues of $|T|=(T^*T)^{1/2}$, listed in nonincreasing order
with multiplicity and completed by zeros when necessary.
For $0<s<\infty$, the Schatten class $\mathcal S_s$ consists of
such operators with $\sum_Ns_N(T)^s<\infty$; write
$\|T\|_{\mathcal S_s}=(\sum_Ns_N(T)^s)^{1/s}$.
The case $s=2$ is the Hilbert--Schmidt class.
We pass from these properties
to absolutely summing ideals between Banach spaces. The quantitative
problem is to control approximation in the summing norm, with an
explicit bound on the rank.

For the embedding criterion, we prove uniform nuclear estimates for
local restriction maps after nonisotropic rescaling. A map $T:X\to Y$
is nuclear if it admits a representation
$Tx=\sum_\ell x_\ell^*(x)y_\ell$ with $x_\ell^*\in X^*$,
$y_\ell\in Y$, and $\sum_\ell\|x_\ell^*\|\,\|y_\ell\|<\infty$.
Its nuclear norm $\nu_1(T)$ is the infimum of these sums over all
such representations. We write
$(x^*\otimes y)x=x^*(x)y$ for a rank-one term.
Together with
kernel synthesis, these estimates verify the hypotheses of the known
transference principles on $\Omega$. The local Taylor expansions also
give explicit rank and error bounds; see \cref{thm:intro-approximation}.

For the Hankel criterion, local holomorphic approximants must agree
up to controlled errors on overlapping cells. We obtain this control
by using two fixed scales and a partition of unity. We then compare
Ahn's form norm with the dual McNeal norm. This comparison allows us to
apply Ahn's weighted $\bar\partial$ estimate \cite{Ahn2004} for every
$1<q<\infty$; see \cref{prop:FTI-all,lem:ida-decomposition}.

The approximation problem requires both an error estimate and a
rank bound. We obtain them from the cells retained and the local
Taylor degrees. Pietsch domination gives convergence of the
diagonal tails directly, without using their explicit classification.
For general Hankel operators, integral distance to analytic functions
(IDA), the local $L^q$ distance to holomorphic functions defined in
\eqref{eq:intro-ida}, gives a corresponding sequence and an
upper bound when $p=2$ or $q=2$. An orthogonal projection transfers
the embedding approximation without increasing its rank; see
\cref{cor:Hankel-summing-approximation}.

For suitable atomic measures, we obtain factorizations in both
directions between the embedding and a diagonal operator. These
factorizations preserve every rank bound. Hence the best errors
agree up to constants. For a family of Hankel operators, fixed
bounded maps recover the whole diagonal operator. This gives a
complemented copy of the diagonal ideal, meaning an isomorphic image
that is the range of a bounded projection. It also gives lower bounds against
arbitrary approximants of finite rank; see
\cref{thm:intro-diagonal-models}.
For this family, we also choose a separate Taylor degree on each
symbol cell. This gives matching upper and lower approximation rates
for power sequences in the Hilbert case. The rank remains proportional
to the number of cells; see
\cref{prop:Hankel-adaptive-approximation,cor:Hankel-Hilbert-approximation-lower}.
An explicit Toeplitz family on ellipsoids allows us to compute the
leading spectral constants and a critical logarithmic factor.
A further calculation gives exact constants for symbols defined by
the finite-type boundary scales; see
\cref{thm:intro-spectral-model,prop:ellipsoid-profile}.

The geometric background includes the Carleson criteria in
\cite{Hastings1975,CimaWogen1982,Luecking1983} and their finite-type
forms in \cite{Jasiczak2010,KrantzLi1995,LiLiuWang2024}.
More general weakly pseudoconvex settings are studied in
\cite{Zhang2025,Zhang2026}. Criteria for Toeplitz operators in trace ideals on classical
and strongly pseudoconvex Bergman spaces appear in
\cite{Luecking1987,PauZhao2015,AbateRaissySaracco2012,
CuckovicMcNeal2006,HuLvZhu2016}.
For Hankel operators, local analytic approximation and oscillation estimates
were developed in
\cite{Zhu1991,Li1992,Li1993,LiLuecking1995,FangXia2018,
Isralowitz2013,Pau2016,PelaezPeralaRattya2020}.
The IDA method on Fock spaces is treated in \cite{HuVirtanen2023};
its relation to singular value asymptotics on weighted Bergman spaces
is studied in \cite{FanWangZeng2026}.

Absolute summability of Carleson embeddings on Hardy and Bergman
spaces is studied in
\cite{LefevreRodriguezPiazza2018,HeJreisLefevreLou2024}.
See \cite{FaresLefevre2022} for Bloch spaces and
\cite{ChenDongWang2025,Lefevre2026} for results at the source endpoint.
Those results at $p=1$ differ from the target endpoint $q=1$
considered here. The operator applications below use the pullback
method for composition operators
\cite{CowenMacCluer1995,CuckovicZhao2004}, the radial identity for
Volterra operators \cite{AlemanSiskakis1997,JreisLefevre2024}, and
projection decompositions for Toeplitz and little Hankel operators.
Related summability results on the ball and on Fock spaces appear in
\cite{HuWang2025,ChenHeWang2026,HuLv2026,XuDong2026}.
The section introductions give the comparisons needed for each
application. General results on trace ideals are given in
\cite{Simon2005,Zhu2007}.

\subsection{Main results}

The Bergman projection is
\[
 Pg(z)=\int_\Omega K(z,w)g(w)\dd v(w)
\]
initially on $L^2(\Omega)$. It satisfies
\begin{equation}\label{eq:intro-Pq}
 P:L^s(\Omega)\longrightarrow A^s(\Omega)
 \quad\text{is bounded},\qquad 1<s<\infty.
\end{equation}
See \cite{McNealStein1994,McNealSIO1994}.

For $1\le q<\infty$, define the symbol class
\begin{equation}\label{eq:symbol-class}
 \mathcal D_{\Omega,q}
 =\{f\in L^q_{\loc}(\Omega):
 fK(\cdot,z)\in L^q(\Omega)\text{ for every }z\in\Omega\}.
\end{equation}
The notation $L^q_{\loc}$ means $L^q$ on every compact subset.
Let $\Gamma$ be the algebraic span of the kernel sections
$K(\cdot,z)$. For $f\in\mathcal D_{\Omega,q}$ and $1<q<\infty$, put
\begin{equation}\label{eq:intro-operators}
 M_fg=fg,\qquad T_fg=P(fg),\qquad H_fg=(I-P)(fg),
 \qquad g\in\Gamma.
\end{equation}
These are the multiplication operator $M_f$, the Toeplitz operator
$T_f$, and the big Hankel operator $H_f$. Here $I$ is the identity on $L^q$.
At $q=1$, define only $M_fg=fg$ on $\Gamma$.
The space $\Gamma$ is dense in $A^p$ for
$1<p<\infty$ by \cref{lem:kernel-span-density}.
Membership of an initial operator in $\Piabs_r$ always means
that it has an absolutely $r$-summing extension. Such an extension
is unique. We put $\pi_r(T)=+\infty$ if no such extension exists.
For a positive locally finite Borel measure $\mu$, let $J_\mu$
denote the inclusion $A^p(\Omega)\to L^q(\Omega,\mu)$ whenever
this inclusion is bounded. Such an inclusion is called a Carleson
embedding, and $\mu$ is then a Carleson measure for these exponents.

We write $A\lesssim B$ for $A\le CB$ with a constant $C>0$, and
$A\asymp B$ when both $A\lesssim B$ and $B\lesssim A$ hold.
For positive quantities, $A\sim B$ means that $A/B$ tends to $1$
in the indicated limit. We write $A=O(B)$ when $|A|\le C|B|$
and $A=o(B)$ when $A/B\to0$, in the stated limit.
Unless stated otherwise, the constants may
depend on $\Omega$, the displayed exponents, and the fixed
geometric radii. They do not depend on lattice indices, symbols,
functions, or measures. Put $\N=\{1,2,\ldots\}$ and
$\Nzero=\{0,1,2,\ldots\}$. We write $\mathbf1_E$ for the
characteristic function of $E$ and $\#F$ for the size of a finite set $F$.

For every sufficiently small $s>0$, let $\Q_s(z)$ be the McNeal ball,
a nonisotropic polydisc centered at $z$ whose radii follow the
extremal complex directions at scale $s\delta(z)$.
Its construction is given in \cref{sec:geometry}.
Let $\V_s(z)=v(\Q_s(z))$.
For $f\in L^q_{\loc}(\Omega)$, set
\begin{align}
 \M_{q,s}f(z)
 &=\left(\avg_{\Q_s(z)}|f(w)|^q\dd v(w)\right)^{1/q},
 \notag\\
 \G_{q,s}f(z)
 &=\inf_{h\in\Hol(\Q_s^*(z))}
 \left(\avg_{\Q_s(z)}|f(w)-h(w)|^q\dd v(w)\right)^{1/q}.
 \label{eq:intro-ida}
\end{align}
Here $\Q_s^*(z)$ is one fixed enlargement of $\Q_s(z)$, and
$\avg_Eu\dd v=v(E)^{-1}\int_Eu\dd v$ is the normalized integral.

Fix an admissible pair of radii
\begin{equation}\label{eq:admissible-radius-pair}
 0<\rho<\rho_+<s_0,
 \qquad C_*\rho<\rho_+.
\end{equation}
The constants $s_0$ and $C_*>1$ depend only on the finite-type
geometry.  They are chosen in \cref{lem:ida-decomposition}.  The small
radius $\rho$ is used for the lattice.  The larger radius $\rho_+$ is
used for analytic approximation.  This convention with two fixed scales is
used throughout.

All sequence spaces are complex and indexed by $\N$.
For $1\le s<\infty$, $\ell^s$ consists of sequences $c=(c_j)$
with $\|c\|_{\ell^s}=(\sum_j|c_j|^s)^{1/s}<\infty$;
$\ell^\infty$ has norm $\|c\|_{\ell^\infty}=\sup_j|c_j|$.
For a scalar sequence $b=(b_j)$, define
\begin{equation*}
 D_b:\ell^p\to\ell^q,
 \qquad D_b(c_j)=(b_jc_j),
 \qquad
 \dideal=\{b:D_b\in\Piabs_r(\ell^p,\ell^q)\},
 \qquad
 \|b\|_{\dideal}=\pi_r(D_b).
\end{equation*}
Let $\Lambda=\{a_j\}$ be an admissible finite-type $\rho$-lattice:
the balls $\Q_\rho(a_j)$ cover $\Omega$, fixed smaller balls are
pairwise disjoint, and each fixed admissible enlargement has bounded
overlap, as specified in \eqref{eq:admissible-lattice}. Put
$V_j=\V_\rho(a_j)$.
Define the coefficient sequences
\begin{align}
 \mathbf M_{p,q,\rho}f
 &=\{V_j^{1/q-1/p}\M_{q,\rho}f(a_j)\}_j,
 \label{eq:lattice-M}\\
 \mathbf G_{p,q,\rho}f
 &=\{V_j^{1/q-1/p}\G_{q,\rho_+}f(a_j)\}_j.
 \label{eq:lattice-G}
\end{align}
Then
\begin{align}
 \|f\|_{\mathcal L_{p,q}^{r}(\Omega)}
 &=\|\mathbf M_{p,q,\rho}f\|_{\dideal},
 \notag\\
 \|f\|_{\mathcal I_{p,q}^{r}(\Omega)}
 &=\|\mathbf G_{p,q,\rho}f\|_{\dideal}.
 \label{eq:intro-I-space}
\end{align}
Set
\begin{equation*}
 \mathcal L_{p,q}^{r}(\Omega)
 =\{f\in L^q_{\loc}(\Omega):
 \|f\|_{\mathcal L_{p,q}^{r}(\Omega)}<\infty\},
 \qquad
 \mathcal I_{p,q}^{r}(\Omega)
 =\{f\in L^q_{\loc}(\Omega):
 \|f\|_{\mathcal I_{p,q}^{r}(\Omega)}<\infty\}.
\end{equation*}
The $\mathcal I$-quantity is a seminorm. Its notation omits
the fixed pair $(\rho,\rho_+)$ and the $\rho$-lattice.
Its null space is $\Hol(\Omega)$, by \cref{prop:ida-calculus}.
It therefore defines a norm on
$\mathcal I_{p,q}^r(\Omega)/\Hol(\Omega)$.
The $\mathcal L$-quantity is independent, up to equivalent norms, of the
admissible lattice and radius. For $1<q<\infty$, the same is true
of the $\mathcal I$-seminorm on $\mathcal D_{\Omega,q}$, by
\cref{thm:intro-hankel}. At $q=1$, the notation in
\eqref{eq:intro-I-space} refers to the fixed pair of scales.

The Carleson criterion is used throughout the paper.

\begin{theorem}\label{thm:intro-carleson}
Let $1<p<\infty$, $1\le q,r<\infty$, and let $\mu$ be a positive
locally finite Borel measure on $\Omega$. Put
\[
 \beta_j(\mu)
 =\frac{\mu(\Q_\rho(a_j))^{1/q}}{V_j^{1/p}}.
\]
Then
\[
 J_\mu\in\Piabs_r(A^p(\Omega),L^q(\Omega,\mu))
 \quad\Longleftrightarrow\quad
 \{\beta_j(\mu)\}_j\in\mathfrak d_{p,q}^{\,r}.
\]
Moreover,
\[
 \pi_r(J_\mu)\asymp
 \|\{\beta_j(\mu)\}_j\|_{\mathfrak d_{p,q}^{\,r}}.
\]
The constants are independent of $\mu$.
\end{theorem}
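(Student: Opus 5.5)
We prove the two estimates $\pi_r(J_\mu)\lesssim\|\{\beta_j(\mu)\}\|_{\dideal}$ and $\|\{\beta_j(\mu)\}\|_{\dideal}\lesssim\pi_r(J_\mu)$ separately. Each comes from a factorization of one operator through the other, with fixed bounded maps. The factorizations are supplied by the two transference principles of Fan, He, Wang, and Zeng \cite{FanHeWangZeng2026}. We must verify their hypotheses on $\Omega$: a uniform local restriction estimate and a kernel synthesis (atomic decomposition) on the lattice $\Lambda$.

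\emph{Upper bound.} Let $E_j=\Q_\rho(a_j)$, and let $\widetilde E_j$ be a fixed admissible enlargement of $E_j$ with bounded overlap. Write
\[
J_\mu=\sum_j \mathbf 1_{E_j} J_\mu .
\]
We aim to factor $J_\mu=B\circ D_\beta\circ A$. Here $A:A^p(\Omega)\to\ell^p(X_j)$ sends $F$ to the sequence of suitably normalized local pieces $F|_{\widetilde E_j}$, and $\|A\|\lesssim1$ by bounded overlap. The local maps $R_j:F|_{\widetilde E_j}\mapsto V_j^{1/p}\mu(E_j)^{-1/q}F|_{E_j}\in L^q(\mu|_{E_j})$ should have uniformly bounded nuclear norm.

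To get that bound, rescale $\widetilde E_j$ by the nonisotropic McNeal dilation to a fixed polydisc. Expand $F$ in its Taylor series at $a_j$, using Cauchy estimates on the enlarged polydisc. This writes $R_j=\sum_\alpha x_\alpha^*\otimes y_\alpha$ with
\[
\|x_\alpha^*\|\lesssim \theta^{|\alpha|}V_j^{-1/p}\cdot V_j^{1/p},\qquad \|y_\alpha\|_{L^q(\mu|_{E_j})}\lesssim \mu(E_j)^{1/q}\cdot\mu(E_j)^{-1/q},
\]
for some $\theta<1$. Hence $\nu_1(R_j)\lesssim1$ uniformly in $j$. The transference principle then shows that uniformly nuclear local blocks combine with the diagonal weights $\beta_j$. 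It turns $\pi_r(D_\beta)$ into $\pi_r(J_\mu)\lesssim\|\beta\|_{\dideal}$, and the sum over $j$ is recombined in $L^q(\mu)$ with bounded overlap.

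\emph{Lower bound.} Use kernel synthesis. Let $k_j$ be the normalized kernel $K(\cdot,a_j)/\|K(\cdot,a_j)\|_{A^p}$. The synthesis map
\[
S:\ell^p\to A^p,\qquad Sc=\sum_j c_jk_j,
\]
is bounded; this is the atomic decomposition, which uses \eqref{eq:intro-Pq} and the McNeal kernel estimates $|K(z,a_j)|\asymp V_j^{-1}$ on $\Q_\rho(a_j)$.

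Next define $Q:L^q(\mu)\to\ell^q$ by
\[
Qg=\Bigl(\mu(E_j)^{-1/q}\cdot\text{(normalized average of $g$ against a test function on $E_j$)}\Bigr)_j .
\]
It is bounded by bounded overlap. By itself $QJ_\mu S$ is not exactly diagonal, because of off-diagonal kernel interactions. Here we use the second transference principle. Pass to sparse sublattices: finitely many subfamilies, each with separation large enough that the off-diagonal part is a small perturbation. On each subfamily $QJ_\mu S$ is then a bounded perturbation of $D_\beta$ by an operator with norm $\le\frac12$. Since $\dideal$ is a solid space, $\|\beta|_{\text{sub}}\|_{\dideal}\lesssim\pi_r(J_\mu)$. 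Finally sum over the finitely many subfamilies.

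We expect this lower bound to be the main obstacle, because off-diagonal decay is nonisotropic. First we would prove a Schur-type decay $|K(z,a_k)|\,V_k^{1/p'}\dots$ in the McNeal pseudodistance. Then we would invoke the transference principle's hypothesis, which is stated in terms of an almost-diagonal condition, rather than inverting perturbations by hand. A lower bound for $|k_j|$ on $E_j$ suffices there because $\mu$ is positive. At $q=1$ no projection is needed in either direction, so the argument covers that endpoint.
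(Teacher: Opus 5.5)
Your upper bound follows the paper's proof. After nonisotropic rescaling, Taylor expansion gives local restriction blocks of uniformly bounded nuclear norm, and the block principle (\cref{lem:block-principle}) combines them with the weights $\beta_j(\mu)$. One step needs repair: the balls $\Q_\rho(a_j)$ overlap, so $\sum_j\mathbf 1_{E_j}J_\mu\ne J_\mu$. There are two ways to fix it.
\begin{itemize}
\item Use disjoint cells $\Q_{c\rho}(a_j)\subset E_j\subset\Q_{C\rho}(a_j)$, as the paper does. Then $L^q(\mu)=(\bigoplus_jL^q(E_j,\mu))_{\ell^q}$ isometrically, and \cref{lem:finite-neighbor} returns you from $\mu(\Q_{C\rho}(a_j))$ to $\mu(\Q_\rho(a_j))$.
\item Keep $E_j=\Q_\rho(a_j)$ and recombine by $(g_j)\mapsto\sum_j\chi_jg_j$ with $\chi_j=\mathbf 1_{E_j}/\sum_k\mathbf 1_{E_k}$. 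This map is contractive by convexity of $|\cdot|^q$.
\end{itemize}

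The lower bound has a genuine gap in how you handle the off-diagonal terms. Suppose $QJ_\mu S=D_\beta+E$ with $\|E\|\le\frac12$ in operator norm. Nothing follows about $\pi_r(D_\beta)$. You only get $\pi_r(D_\beta)\le\pi_r(QJ_\mu S)+\pi_r(E)$, and $\pi_r(E)$ is not controlled by $\|E\|$; $E$ need not be absolutely summing at all.

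A multiplicative form $QJ_\mu S=D_{\widetilde\beta}(I+E)$ with $\|E\|_{\ell^p\to\ell^p}\le\frac12$ would pass through the ideal property. However, it requires a uniform Schur estimate. On each of finitely many sublattices covering $\Lambda$, the matrix with off-diagonal entries $V_i^{1/p}V_j^{1/p'}\sup_{z\in\Q_\rho(a_i)}|K(z,a_j)|$ must have norm at most $\frac12$ on $\ell^p$. That is a quantitative off-diagonal decay bound for the Bergman kernel in the McNeal pseudodistance. You defer it, and the near-diagonal estimates in \cref{sec:geometry} do not supply it. Qualitative decay, such as the limits used in \cref{thm:complemented-Hankel-model}, only lets you thin out one sparse subsequence. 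It cannot split the whole lattice into finitely many almost-diagonal families.

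The missing idea is that Rademacher averaging removes the off-diagonal terms exactly, with no decay estimate.
\begin{itemize}
\item Work on a sublattice where the $Q_j=\Q_\rho(a_j)$ are disjoint. Set $B=R_\mu J_\mu S_p:\ell^p\to(\bigoplus_jL^q(Q_j,\mu))_{\ell^q}$, and let $U_t$ and $V_t$ multiply the $j$th coordinate by $r_j(t)$.
\item On finite coordinate compressions, $\int_0^1V_tBU_t\dd t$ is the block diagonal $\mathbb Dc=\{c_je_{j,p}|_{Q_j}\}_j$. Convexity of $\pi_r$ and the ideal property give $\pi_r(\mathbb D)\le\|S_p\|\pi_r(J_\mu)$.
\item Norming functionals for $e_{j,p}$ in $L^q(Q_j,\mu)$ turn $\mathbb D$ into $D_\gamma$. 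For $q=1$, use integration against $\overline{e_{j,p}}/|e_{j,p}|$. Here $\gamma_j=\|e_{j,p}\|_{L^q(Q_j,\mu)}\gtrsim\beta_j(\mu)$ by the near-diagonal bound $|e_{j,p}|\asymp V_j^{-1/p}$ on $Q_j$.
\item Solidity, followed by summing over the finitely many sublattices, finishes the proof.
\end{itemize}

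The same averaging applied directly to your scalar matrix $QJ_\mu S$ would also work, but $Q$ must be normalized correctly. Use $g\mapsto\mu(E_j)^{-1/q'}\int_{E_j}g\,\overline{k_j}/|k_j|\dd\mu$. This functional has norm at most one on $L^q(E_j,\mu)$, and its diagonal entries are $\gtrsim\beta_j(\mu)$. Your prefactor $\mu(E_j)^{-1/q}$ in front of a normalized average is not bounded uniformly in $\mu$.
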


The proof is given in \cref{thm:carleson}. The corresponding
Hankel criterion uses the local distance to holomorphic functions.

\begin{theorem}\label{thm:intro-hankel}
Let $1<p,q<\infty$, $1\le r<\infty$, and
$f\in\mathcal D_{\Omega,q}$. Then
\begin{equation}\label{eq:intro-hankel-theorem}
 H_f\in\Piabs_r(A^p(\Omega),L^q(\Omega))
 \quad\Longleftrightarrow\quad
 f\in\mathcal I_{p,q}^{r}(\Omega),
\end{equation}
and
\begin{equation}\label{eq:intro-hankel-norm}
 \pi_r(H_f)\asymp\|f\|_{\mathcal I_{p,q}^{r}(\Omega)}.
\end{equation}
The constants depend only on $p,q,r$, $\Omega$, and the fixed
admissible geometric parameters. No additional solution hypothesis
is needed on $\Omega$.
\end{theorem}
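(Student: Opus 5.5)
We prove the two inequalities $\pi_r(H_f)\lesssim\|f\|_{\mathcal I_{p,q}^r(\Omega)}$ and $\|f\|_{\mathcal I_{p,q}^r(\Omega)}\lesssim\pi_r(H_f)$ separately; each uses one of the two transference principles of \cite{FanHeWangZeng2026}, which the Carleson criterion (\cref{thm:intro-carleson}) already verifies on $\Omega$.

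\emph{Upper bound.} We start from the decomposition of \cref{lem:ida-decomposition} at the scales $(\rho,\rho_+)$. Given $f$ with $\|f\|_{\mathcal I_{p,q}^r}<\infty$, we choose near-optimal holomorphic approximants $h_j\in\Hol(\Q_{\rho_+}^*(a_j))$ and glue them with a partition of unity $\{\psi_j\}$ subordinate to $\{\Q_\rho(a_j)\}$. This writes $f=f_1+f_2$, where $f_1=\sum_j\psi_j(f-h_j)$ and $f_2=\sum_j\psi_jh_j$. For $f_1$ we have the pointwise bound $|H_{f_1}g|\le|M_{f_1}g|+|P(f_1g)|$. Since $P$ is bounded on $L^q$ by \eqref{eq:intro-Pq}, it suffices to control $M_{f_1}:A^p\to L^q$. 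This operator is dominated by a Carleson-type embedding with density $|f_1|^q\,dv$. Its local masses satisfy
\[
 \Bigl(\int_{\Q_\rho(a_j)}|f_1|^q\,dv\Bigr)^{1/q}V_j^{-1/p}\lesssim V_j^{1/q-1/p}\G_{q,\rho_+}f(a_j),
\]
up to bounded overlap. Applying \cref{thm:intro-carleson} (or its lattice-diagonal transference directly) then gives $\pi_r(M_{f_1})\lesssim\|\mathbf G_{p,q,\rho}f\|_{\dideal}$.

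For $f_2$, the function $\bar\partial f_2=\sum_j(\bar\partial\psi_j)(h_j-h_k)$ on overlaps is controlled by the local IDA quantities. Cauchy estimates on the larger scale $\rho_+$ bound the differences $h_j-h_k$ by $\G_{q,\rho_+}f$ at neighbouring lattice points, measured in the McNeal metric, since $C_*\rho<\rho_+$. We then use $H_{f_2}g=u_g-P(u_g)$, where $u_g$ is a solution of $\bar\partial u_g=g\,\bar\partial f_2$. The solution is taken from Ahn's weighted $\bar\partial$ estimate as extended to all $1<q<\infty$ in \cref{prop:FTI-all}, where Ahn's form norm is compared with the dual McNeal norm. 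This realises $H_{f_2}$ as a bounded solution operator composed with a multiplication by a form whose local size is again given by $\mathbf G_{p,q,\rho}f$. The resulting $\bar\partial$-data map $g\mapsto g\,\bar\partial f_2$ is once more a local Carleson-type operator, so its summing norm is bounded by the diagonal norm of $\mathbf G_{p,q,\rho}f$.

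\emph{Lower bound.} Here we use the synthesis transference. For each $j$ we test $H_f$ on normalized kernels $k_j=K(\cdot,a_j)/\|K(\cdot,a_j)\|_{A^p}$. On $\Q_\rho(a_j)$ the kernel is comparable to $K(a_j,a_j)\asymp V_j^{-1}$ and has no zeros. Hence
\[
 \|\mathbf 1_{\Q_{\rho_+}(a_j)}H_fk_j\|_{L^q}\gtrsim V_j^{1/q-1/p}\G_{q,\rho_+}f(a_j).
\]
To see this, take $h=P(fk_j)/k_j$, which is holomorphic on $\Q_{\rho_+}^*(a_j)$; then $(f-h)k_j=H_fk_j$ there. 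Kernel synthesis, via the uniform nuclear estimates of the local restriction maps, gives a bounded map $\ell^p\to A^p$ with $e_j\mapsto k_j$. Bounded overlap of the enlarged cells gives a bounded map from $L^q$ to $\ell^q$ of localized pieces. Composing, $D_{\mathbf G}$ factors through $H_f$ with bounded outer maps, and the ideal property gives $\|\mathbf G_{p,q,\rho}f\|_{\dideal}\lesssim\pi_r(H_f)$. The hypothesis $f\in\mathcal D_{\Omega,q}$ ensures that $H_f k_j$ is defined. Independence of the lattice and radii follows because both sides are comparable to $\pi_r(H_f)$.

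\emph{Main obstacle.} I expect the hardest step to be the $f_2$ part of the upper bound. It requires an $L^q$ $\bar\partial$ solution operator for all $1<q<\infty$ with McNeal-adapted weights, uniformly over nonisotropic scales, and the overlap bounds on $h_j-h_k$ must be compatible with the diagonal ideal rather than with a mere $\ell^\infty$ or $\ell^s$ norm. I would first bound $\pi_r$ of the $\bar\partial$-data map through its diagonal model. After that I would invoke boundedness of the solution operator, which preserves membership in $\Piabs_r$ by the ideal property.
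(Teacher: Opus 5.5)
Your upper bound is the paper's argument, with the roles of $f_1$ and $f_2$ interchanged. You use the decomposition of \cref{lem:ida-decomposition}, the rough bound $H_{f_1}=(I-P)M_{f_1}$ for the remainder, and \cref{prop:FTI-all} for the smooth part. The latter gives $\|H_{f_2}g\|_q\lesssim\|g\,|\bar\partial f_2|_\Omega\|_q$, which is norm domination by a Carleson embedding, so the summing inequality transfers.

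The gap is in the lower bound, at the sentence ``Composing, $D_{\mathbf G}$ factors through $H_f$ with bounded outer maps.'' As stated this is false. Take $Se_j=e_{j,p}$ and $Qu=\bigl(\varphi_i(u|_{\Q_{\rho_+}(a_i)})\bigr)_i$. Then $QH_fS$ is a full matrix with entries $\varphi_i\bigl((H_fe_{j,p})|_{\Q_{\rho_+}(a_i)}\bigr)$. The function $H_fe_{j,p}=fe_{j,p}-P(fe_{j,p})$ is not localized near $a_j$, because the projection spreads it over $\Omega$. So the off-diagonal entries do not vanish. The ideal property then only bounds $\pi_r$ of this nondiagonal operator. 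Nothing in your sketch passes from it to the diagonal sequence $d_i=\|H_fe_{i,p}\|_{L^q(\Q_{\rho_+}(a_i))}$.

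The missing step is diagonal extraction.
\begin{itemize}
\item Let $B=RH_fS_p$ take values in $(\bigoplus_jL^q(\Q_{\rho_+}(a_j)))_{\ell^q}$. Let $U_t,V_t$ multiply the $j$th coordinate by the Rademacher sign $r_j(t)$.
\item Compress to the first $m$ coordinates. Then $\int_0^1Q_mV_tBU_tP_m\,dt$ is an average of $2^m$ isometric conjugates $Q_mV_\varepsilon BU_\varepsilon P_m$.
\item This average equals the compression of the block-diagonal map $c\mapsto\{c_j(H_fe_{j,p})|_{\Q_{\rho_+}(a_j)}\}_j$. By convexity of $\pi_r$ (\cref{lem:summing-calculus}(iii)), its summing norm is at most $\pi_r(B)$.
\item Letting $m\to\infty$ removes the truncation.
\end{itemize}
Only after this do the norming functionals $\varphi_j$ produce $D_d$. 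Then \eqref{eq:H-local-lower} and solidity give $\|\mathbf G_{p,q,\rho}f\|_{\dideal}\lesssim\pi_r(H_f)$. This averaging is what the diagonal transference of \cite{FanHeWangZeng2026} supplies, so invoking it would close the gap. The explicit factorization you wrote does not.

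Two smaller corrections:
\begin{itemize}
\item The synthesis bound in \cref{lem:geometry-package}(v) comes from duality and the sampling inequality $\sum_jV_j|h(a_j)|^{p'}\lesssim\|h\|_{A^{p'}}^{p'}$. It does not come from the nuclear restriction estimates, which give the upper Carleson bound.
\item You need the nonvanishing of $e_{j,p}$ and the bound $|e_{j,p}|\asymp V_j^{-1/p}$ on $\Q_{\rho_+}^*(a_j)$, not only on $\Q_\rho(a_j)$. This is what makes $P(fe_{j,p})/e_{j,p}$ an admissible competitor. \Cref{lem:geometry-package}(vi) provides it because $A_*\rho_+<\rho_0$.
\end{itemize}
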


The local restriction proof also controls approximation by operators of finite rank.
The next statement is \cref{thm:summing-approximation}.

\begin{theorem}\label{thm:intro-approximation}
Let $1<p<\infty$, $1\le q,r<\infty$, and let $\mu$ be a positive
locally finite Borel measure on $\Omega$. Assume
$J_\mu\in\Piabs_r(A^p(\Omega),L^q(\Omega,\mu))$, and put
\[
 b_j=\mu(\Q_{C\rho}(a_j))^{1/q}V_j^{-1/p},
\]
where $C$ is the fixed enlargement constant in
\cref{thm:summing-approximation}.
There are constants $A>0$ and $0<\vartheta<1$, independent of $\mu$,
such that for every finite $F\subset\N$ and every $L\in\Nzero$
there is an operator of finite rank $R_{F,L}:A^p(\Omega)\to L^q(\Omega,\mu)$
with
\begin{align*}
 \operatorname{rank}R_{F,L}&\le\#F\binom{L+n}{n},\\
 \pi_r(J_\mu-R_{F,L})
 &\le A\left(\|b\mathbf1_{F^c}\|_{\dideal}
 +(L+1)^{n-1}\vartheta^{L+1}\|b\mathbf1_F\|_{\dideal}\right).
\end{align*}
In particular, $J_\mu$ is a limit of operators of finite rank in $\pi_r$.
\end{theorem}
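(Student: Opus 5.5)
The plan is to decompose $J_\mu$ along the lattice and truncate each local piece by a Taylor polynomial. Since the balls $\Q_\rho(a_j)$ cover $\Omega$, choose a disjoint Borel partition $\{E_j\}$ with $E_j\subset\Q_\rho(a_j)$, and write
\[
 J_\mu g=\sum_j \mathbf1_{E_j}g .
\]
For each $j$, let $\Phi_j$ be the nonisotropic affine rescaling that maps the unit polydisc onto $\Q_{C\rho}(a_j)$. Put $\Pi_L^j g=(T_L(g\circ\Phi_j))\circ\Phi_j^{-1}$, where $T_L$ is the Taylor polynomial of degree $L$ at $0$; it lies in a space of dimension $\binom{L+n}{n}$. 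Define
\[
 R_{F,L}g=\sum_{j\in F}\mathbf1_{E_j}\Pi_L^j g .
\]
The rank bound is then immediate, and
\[
 J_\mu-R_{F,L}=\sum_{j\notin F}\mathbf1_{E_j}J_\mu+\sum_{j\in F}\mathbf1_{E_j}(I-\Pi_L^j).
\]

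The first term is $J_{\mu_F}$ with $\mu_F=\mu|_{\bigcup_{j\notin F}E_j}$. By \cref{thm:intro-carleson}, its summing norm is bounded by the $\dideal$-norm of $\beta_j(\mu_F)$. The issue is that $\mu_F(\Q_\rho(a_k))$ can be nonzero for $k\in F$ through neighboring cells. I would avoid this by not applying the theorem as a black box. Instead, I would rerun its upper-bound half, which factors $J_\mu$ as a local restriction $A^p\to\bigoplus_j A^p(\Q_{C\rho}(a_j))$ followed by a diagonal-type operator with entries $\mu(E_j)^{1/q}$-weighted local maps. Only indices $j\notin F$ then appear, and $\mu(E_j)\le\mu(\Q_{C\rho}(a_j))$ gives the bound $\|b\mathbf1_{F^c}\|_{\dideal}$. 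This is also where the enlarged radius $C\rho$ in $b_j$ comes from.

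For the second term, I would use the same factorization, with the local piece at index $j$ now being $g\mapsto\mathbf1_{E_j}(I-\Pi_L^j)g$. After rescaling by $\Phi_j$, this is the fixed model map
\[
 S_L:A^p(\mathbb D^n)\to L^\infty(\Phi_j^{-1}(\Q_\rho(a_j))),\qquad h\mapsto h-T_Lh,
\]
restricted to a polydisc of radius $r_0<1$ with $r_0$ independent of $j$. By uniformity of McNeal geometry, $C$ is chosen so that the ratio of radii is a fixed $\vartheta_0<1$. Expanding $h=\sum_\alpha c_\alpha z^\alpha$, with $|c_\alpha|\lesssim\|h\|_{A^p}$ by Cauchy estimates on a slightly smaller polydisc, gives a nuclear representation of $S_L$:
\[
 \nu_1(S_L)\lesssim\sum_{|\alpha|>L}\vartheta^{|\alpha|}\lesssim (L+1)^{n-1}\vartheta^{L+1},
\]
where $\vartheta\in(\vartheta_0,1)$. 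The count of multi-indices of degree $m$ is $\binom{m+n-1}{n-1}\lesssim m^{n-1}$, and summing the geometric tail yields the factor $(L+1)^{n-1}$. Undoing the rescaling multiplies the local $L^\infty\to L^q(\mu)$ norm by exactly $\mu(E_j)^{1/q}$ and the $A^p$ normalization by $V_j^{-1/p}$, up to uniform constants. This is the uniform local nuclear estimate. The transference principle of Fan--He--Wang--Zeng then converts a uniformly nuclear family of local maps weighted by $b_j\mathbf1_F$ into a $\pi_r$ bound by $\|b\mathbf1_F\|_{\dideal}$.

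The main obstacle is justifying that transference step. It requires the local maps to be uniformly nuclear into $L^q(\mu|_{E_j})$ and the restriction map $g\mapsto(g\circ\Phi_j)_j$ to be bounded $A^p\to\ell^p(A^p(\mathbb D^n))$. The latter follows from bounded overlap of the enlarged balls and the Jacobian $V_j$. I would first try to verify these hypotheses exactly as in the proof of \cref{thm:carleson}, with $S_L$ replacing the identity model.

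For the final claim, take $F=\{1,\dots,N\}$ and $L\to\infty$. Since $b\in\dideal$, which follows from $J_\mu\in\Piabs_r$ by the lower half of \cref{thm:intro-carleson} applied at radius $C\rho$, and since $\|b\mathbf1_{F^c}\|_{\dideal}\to0$ by Pietsch domination for diagonal tails, both error terms tend to $0$.
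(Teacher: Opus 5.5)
Your proposal is correct and is essentially the paper's proof. The shared steps are: cells $E_j$ inside the lattice polydiscs; an affine rescaling of an enlarged McNeal polydisc onto $\mathbb D^n$; and Taylor truncation whose tail has nuclear norm $\lesssim(L+1)^{n-1}\vartheta^{L+1}\mu(E_j)^{1/q}V_j^{-1/p}$. Then come the Fan--He--Wang--Zeng block principle and Pietsch domination for the diagonal tails. The paper merges your two error terms into one block operator with weights $d_j=b_j\bigl(\mathbf 1_{F^c}(j)+\eta_L\mathbf 1_F(j)\bigr)$ and then uses the triangle inequality in $\dideal$.

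One small repair: $\mu$ may be singular, so composition from Lebesgue-$L^\infty$ into $L^q(\mu)$ is not well defined. Let your model map take values in the sup-norm space $C(\overline{r_0\mathbb D^n})$ instead. Alternatively, estimate each term $z\mapsto\mathbf 1_{E_j}(z)(\Phi_j^{-1}z)^\alpha$ directly in $L^q(\mu)$, with norm at most $r_0^{|\alpha|}\mu(E_j)^{1/q}$, as the paper does.
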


The first term measures the omitted cells. The second term measures
the local polynomial error. The estimate gives a rank bound for each
choice of cells and polynomial degree, including when $q=1$.

We next compare best errors at a prescribed rank. For
$T\in\Piabs_r(X,Y)$, put
\[
 a_N^{(r)}(T)=\inf\{\pi_r(T-A):A\in\mathcal L(X,Y),\
                                  \operatorname{rank}A<N\},
 \qquad N\ge1,
\]
where $\mathcal L(X,Y)$ is the space of bounded linear maps.

There is also an upper bound for general Hankel operators when
$p=2$ or $q=2$. Let $1<p,q<\infty$, $1\le r<\infty$, and
$f\in\mathcal D_{\Omega,q}\cap\mathcal I_{p,q}^r(\Omega)$.
Put $\gamma_j=V_j^{1/q-1/p}\G_{q,\rho_+}f(a_j)$.
For fixed constants $C_0\in\N$ and $0<\vartheta<1$, set
$\eta_L=(L+1)^{n-1}\vartheta^{L+1}$. Then
\[
 a_N^{(r)}(H_f)
 \lesssim\inf_{\substack{F\subset\N\text{ finite},\ L\in\Nzero\\
                    C_0\#F\binom{L+n}{n}<N}}
 \left(\|\gamma\mathbf1_{F^c}\|_{\dideal}
       +\eta_L\|\gamma\|_{\dideal}\right).
\]
The constants do not depend on $f$ or $N$.
\Cref{cor:Hankel-summing-approximation} proves this estimate.
The Hilbert assumption permits the use of an orthogonal projection
while preserving the rank bound.

\begin{theorem}\label{thm:intro-diagonal-models}
Let $1<p<\infty$ and $1\le q<\infty$.
There is an infinite sublattice, relabeled as $\{a_j\}_{j\ge1}$,
with the following properties for every $1\le r<\infty$.
\begin{enumerate}[label=\textup{(\roman*)}]
\item For $b\in\mathfrak d_{p,q}^{\,r}$, define
\[
 \mu_b=\sum_{j\ge1}|b_j|^qK(a_j,a_j)^{-q/p}\delta_{a_j},
\]
where $\delta_{a_j}$ is the unit point mass at $a_j$.
Then $\mu_b$ is finite, $J_{\mu_b}\in\Piabs_r(A^p,L^q(\mu_b))$, and
\[
 a_N^{(r)}(J_{\mu_b})\asymp a_N^{(r)}(D_b),\qquad N\ge1.
\]
\item If $q>1$, there are bounded maps
$T_p:\ell^p\to A^p$ and $Q:L^q\to\ell^q$, independent of $b$ and $r$,
and a bounded complex-linear map
$\mathcal E:\mathfrak d_{p,q}^{\,r}\to\Piabs_r(A^p,L^q)$.
For each $b\in\mathfrak d_{p,q}^{\,r}$, write $\mathcal E b=H_{f_b}$,
where $f_b\in\mathcal D_{\Omega,q}$. Then
\[
 QH_{f_b}T_p=D_b,\qquad
 a_N^{(r)}(H_{f_b})\ge
 \frac{a_N^{(r)}(D_b)}{\|Q\|\,\|T_p\|}.
\]
The map $\mathcal E$ is an isomorphism onto a complemented subspace
of $\Piabs_r(A^p,L^q)$ consisting of big Hankel operators.
\end{enumerate}
The comparison constants are independent of $b$ and $N$.
\end{theorem}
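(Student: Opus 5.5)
Both parts will follow one scheme: factor through the diagonal model in both directions with fixed bounded maps, then use the ideal property of $\pi_r$ together with rank monotonicity. If $T=AUB$ with bounded $A,B$, then $\pi_r(AUB-ARB)\le\|A\|\,\pi_r(U-R)\|B\|$ and $\operatorname{rank}(ARB)\le\operatorname{rank}R$, so $a_N^{(r)}(AUB)\le\|A\|\,\|B\|\,a_N^{(r)}(U)$.

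\emph{Choice of sublattice.} I first pass to a sparse sublattice: infinitely many points $a_j$ of the $\rho$-lattice, chosen so that the enlarged McNeal balls $\Q_{C\rho}(a_j)$ are pairwise disjoint. Moreover, the normalized kernels $k_j=K(\cdot,a_j)K(a_j,a_j)^{-1/p'}$ should form an almost-orthogonal family in $A^p$. Concretely, $\sum_j c_jk_j$ should be bounded $\ell^p\to A^p$ and equivalent to $\|c\|_{\ell^p}$, with a bounded coefficient functional $A^p\to\ell^p$, namely $F\mapsto (F(a_j)K(a_j,a_j)^{-1/p})_j$, acting as a left inverse.

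The sublattice comes from the off-diagonal decay of the Bergman kernel on convex finite-type domains (McNeal's estimates). Taking the points in the pseudometric so sparse that $\sum_{k\ne j}|\langle k_j, \cdot\rangle|$-type Schur sums are below $1/2$ gives a perturbation-of-identity argument. This uses the kernel synthesis estimates established earlier in the paper (the Carleson/atomic decomposition machinery behind \cref{thm:intro-carleson}).

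\emph{Part (i).} Finiteness of $\mu_b$ follows from $b\in\ell^\infty$, which holds because $D_b$ is bounded, together with $\sum_j K(a_j,a_j)^{-q/p}<\infty$ along a sufficiently sparse sequence; the latter can be arranged by choice. The two factorizations are as follows.

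\begin{enumerate}[label=\textup{(\alph*)}]
\item Upper bound. Since $\mu_b$ is atomic, $L^q(\mu_b)\cong\ell^q$ isometrically via $g\mapsto (g(a_j)|b_j|K(a_j,a_j)^{-1/p})_j$. Under this identification $J_{\mu_b}$ becomes $F\mapsto (|b_j|\,F(a_j)K(a_j,a_j)^{-1/p})_j$, that is, $D_{|b|}\circ S$, where $S$ is the bounded sampling map $A^p\to\ell^p$. Since $D_{|b|}$ and $D_b$ differ by a unimodular diagonal isometry, $a_N^{(r)}(J_{\mu_b})\le\|S\|\,a_N^{(r)}(D_b)$. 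Membership in $\Piabs_r$ follows in the same way.
\item Lower bound. We have $D_b=U\circ J_{\mu_b}\circ T_p$ with $T_p c=\sum c_jk_j$ and $U$ the isometry above composed with a unimodular diagonal. For this to hold exactly I need $Sk_j=e_j$. This is achieved by replacing $k_j$ with a biorthogonal system $\tilde k_j=\sum_i G^{-1}_{ji}k_i$, where the Gram-type matrix $G=(k_i(a_j)K(a_j,a_j)^{-1/p})$ is invertible on $\ell^p$ by the sparseness (diagonal $1$, small off-diagonal).
\end{enumerate}

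\emph{Part (ii).} Take $f_b=\sum_j \bar\epsilon_j\, b_j V_j^{1/p-1/q}\,\overline{h_j}\,\mathbf 1_{\Q_{\rho}(a_j)}$. Here $h_j$ is a suitable normalized local function, for instance $\overline{(w-a_j)}$ in the extremal normal direction, rescaled, so that $f_b$ is far from holomorphic on each cell. Linearity of $b\mapsto f_b$ gives that $\mathcal E$ is complex-linear. The upper bound $\pi_r(H_{f_b})\lesssim\|b\|_{\dideal}$ follows from \cref{thm:intro-hankel}, since $\G_{q,\rho_+}f_b(a_j)\lesssim |b_j|V_j^{1/p-1/q}$ by disjointness.

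For the exact identity $QH_{f_b}T_p=D_b$, I define $Q g=(\langle g,\psi_j\rangle)_j$ with $\psi_j\in L^{q'}$ supported in $\Q_\rho(a_j)$ and orthogonal to $A^{q}$-functions restricted there. A natural candidate is $\psi_j=\bar\partial^*$-type test functions, e.g. $\psi_j=\overline{\partial_{\bar w}\phi_j}$ against compactly supported bumps. Since $\langle Pu,\psi_j\rangle$ then vanishes, $\langle H_{f_b}g,\psi_j\rangle=\langle f_bg,\psi_j\rangle$ depends only on the $j$-th piece. With the biorthogonal $\tilde k_i$ I then arrange $\langle f_b\tilde k_i,\psi_j\rangle=b_j\delta_{ij}$ by normalizing, using disjoint supports. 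Boundedness of $Q:L^q\to\ell^q$ follows from disjointness of supports and $\|\psi_j\|_{L^{q'}}$ being uniformly normalized.

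The lower bound on $a_N^{(r)}$ is then the factorization inequality. Complementation comes from $\mathcal P(T)=\mathcal E(b(T))$ with $b(T)$ the diagonal of $QTT_p$ in the unit vector basis. This is a bounded projection on $\Piabs_r$ because extracting the diagonal of an operator is contractive on $\pi_r(\ell^p,\ell^q)$, by averaging over unimodular diagonal conjugations, and $\mathcal E$ is bounded below by the factorization.

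\emph{Main obstacle.} I expect the hardest step to be making $QH_{f_b}T_p=D_b$ hold exactly rather than up to a perturbation. This requires the biorthogonal correction and the annihilation of holomorphic parts to be compatible. If exactness fails, I would first settle for $QH_{f_b}T_p=D_b+E_b$ with $E_b$ of small $\pi_r$-norm relative to $b$, and then invert $I+E\circ D^{-1}$ on the diagonal ideal via a Neumann series.
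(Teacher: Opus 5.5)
Your outline is the paper's proof. It has the same pieces: a thinned sublattice on which $M=R_pS_p$ is a small perturbation of $I$, so that $T_p=S_pM^{-1}$ satisfies $R_pT_p=I$; the factorizations $J_{\mu_b}=VD_{|b|}R_p$ and $D_{|b|}=UJ_{\mu_b}T_p$ in part (i), which are fine as you wrote them; and, in part (ii), an antiholomorphic chart coordinate on disjoint cells, functionals that kill holomorphic functions, and diagonal extraction for $\mathcal C\mathcal E=I$.

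The genuine gap is the step you flag yourself. The identity $\langle f_b\tilde k_i,\psi_j\rangle=b_j\delta_{ij}$ does not follow ``by normalizing, using disjoint supports.'' Disjointness only says that the pairing sees the $j$th piece of $f_b$. But $\tilde k_i$ is a global function that does not vanish on the $j$th cell, and rescaling $\psi_j$ fixes only the diagonal entry.

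The missing observation is that the pairing equals $b_j$ times a \emph{fixed} functional. Write $f_b=b_jV_j^{1/p-1/q}\overline{\zeta_{j,1}}$ on the cell and $\psi_j=\kappa_j\overline{\partial_{\bar\zeta_1}\phi_j}$, with $\phi_j$ supported inside the cell. One integration by parts gives $\langle f_bg,\psi_j\rangle=-\bar\kappa_jb_jV_j^{1/p-1/q}\int g\,\phi_j\,dv$ for holomorphic $g$. Hence $QH_{f_b}=D_b\widetilde R$ with $\widetilde R$ independent of $b$.

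Now take $\phi_j$ to be the pullback of one fixed polyradial bump under the affine chart centered at $a_j$. The mean-value property then gives $\int g\phi_j\,dv=g(a_j)\int\phi_j\,dv$. Choosing $|\kappa_j|\asymp V_j^{-1/q'}$, which also keeps $\|\psi_j\|_{L^{q'}}\asymp1$, yields exactly $QH_{f_b}=D_bR_p$. Therefore $QH_{f_b}T_p=D_b$ for all $b$ at once. Alternatively, you can biorthogonalize against $\widetilde R$ instead of $R_p$.

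This is precisely the paper's $\lambda_j$. In chart coordinates its weight $\zeta_1\mathbf 1_{\theta_1\mathbb D^n}$ is $\partial_{\bar\zeta_1}$ of the polyradial Lipschitz function $(|\zeta_1|^2-\theta_1^2)\mathbf 1_{\theta_1\mathbb D^n}$. The extra factor $1/e_{j,p}$ is harmless, and $\int_{U_j}h|\zeta_{j,1}|^2\,dv=m_jh(a_j)$ is the mean-value step. Your fallback cannot replace this. $D_b$ is compact, so $D^{-1}$ does not exist, and a $b$-dependent correction would destroy the $b$-independence of $Q,T_p$ on which both the approximation bound and $\mathcal C\mathcal E=I$ rest.

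Three smaller points.

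\emph{The symbol.} With your literal choice $\overline{h_j}=w-a_j$, the symbol is holomorphic on the cell, and your functional annihilates $f_bg$ there. The symbol must carry $\overline{\zeta_{j,1}}$, since $\partial_{\bar\zeta_1}\overline{\zeta_1}=1$ is what produces $\int g\phi_j$.

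\emph{The thinning.} The sublattice cannot come from a larger uniform separation in the McNeal pseudometric. On $\mathbb B_n$, take a separated sequence that is still dense at some fixed Bergman scale. Then the row sums $\sum_j|M_{ij}|$ diverge when $p\le n+1$, and the column sums diverge when $p'\le n+1$; one of these always occurs. You need a recursive choice toward the boundary. The paper uses only the qualitative limits $M_{ij},M_{ji}\to0$ for fixed $i$ as $a_j\to\partial\Omega$. These follow from $K(\cdot,a_i)\in A^p\cap A^{p'}$ and $V_j\to0$, and the paper arranges $|M_{ij}|,|M_{ji}|\le2^{-i-j-2}$. No off-diagonal kernel decay is needed.

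\emph{The upper bound on $\mathcal E$.} Going through \cref{thm:intro-hankel} requires controlling $\G_{q,\rho_+}f_b$ at every lattice point, not only at the $a_j$, and verifying $f_b\in\mathcal D_{\Omega,q}$ for infinite $b$. It is simpler to write $H_{f_b}=(I-P)M_{f_b}$ and apply \cref{thm:carleson} to $|f_b|^q\,dv$. This gives $f_b\in\mathcal D_{\Omega,q}$ and $\pi_r(M_{f_b})\lesssim\|b\|_{\dideal}$ at once.
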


This combines \cref{thm:complemented-Hankel-model,cor:atomic-sharp-approximation}.
The estimates for the atomic embeddings hold at each rank.
The recovery maps for the Hankel model are fixed throughout the
symbol family and for every summing exponent $r$.
The lower bound therefore applies to every approximant of finite rank,
regardless of its operator form.
For $p=q=2$ and $b_j=j^{-\alpha}$, $\alpha>1/2$, the same bounded
symbol satisfies the upper and lower bounds
\[
 a_N^{(r)}(H_{f_b})\asymp_{r,\alpha}N^{1/2-\alpha},
 \qquad N\ge1,\quad 1\le r<\infty.
\]
The upper bound uses different Taylor degrees on different cells.
For arbitrary $b$, this lower bound need not have a matching upper
bound in terms of the diagonal tail.

The distinction between strong and weak boundary regions is explicit
on $\Omega_m=\{(z,w)\in\C^2:|z|^{2m}+|w|^2<1\}$, where
$m\ge2$ is an integer. Put
$t=1-|z|^{2m}-|w|^2$, $R=|z|^2+t^{1/m}$, and
$f=t^\gamma R^{-\eta/2}$, where $\gamma\ge\eta/(2m)\ge0$.
In a power range $\mathfrak d_{p,q}^{\,r}=\ell^\kappa$, where
$\kappa=\kappa(p,q,r)$ is the exponent in \cref{prop:garling}, write
$\Delta=1/q-1/p$. Then
\[
 M_f\in\Piabs_r
 \quad\Longleftrightarrow\quad
 \kappa(\gamma+3\Delta)>2
 \quad\hbox{and}\quad
 \kappa\left(\gamma+\left(2+\frac1m\right)\Delta
                    -\frac\eta{2m}\right)>1.
\]
For $q>1$, the same condition characterizes $T_f$. The proof integrates
over the whole boundary layer. It also treats logarithmic modifications
of these symbols, including the equality cases; see
\cref{thm:ellipsoid-two-thresholds,cor:ellipsoid-Orlicz-border}.

The same ellipsoids give a direct spectral check of the symbol
criteria. The next statement is part of \cref{thm:ellipsoid-counting}.

\begin{theorem}\label{thm:intro-spectral-model}
Let $m\ge2$ be an integer, let $\gamma>\beta\ge0$, and set
\[
 g_{\gamma,\beta}=t^\gamma(1-|w|^2)^{-\beta},\qquad
 \alpha=\max\left\{\frac2\gamma,\frac1{\gamma-\beta}\right\},
 \qquad \varepsilon=\mathbf1_{\{\gamma=2\beta\}}.
\]
The operator $T_{g_{\gamma,\beta}}$ on $A^2(\Omega_m)$ is positive
and compact. Its singular values satisfy
\[
 s_N(T_{g_{\gamma,\beta}})
 \sim A_*N^{-1/\alpha}(\log N)^{\varepsilon/\alpha},
 \qquad N\to\infty,
\]
with an explicit $A_*>0$.
If $\alpha<2$, then for every $1\le r<\infty$,
\[
 a_N^{(r)}(T_{g_{\gamma,\beta}})
 \asymp_{m,\gamma,\beta,r}
 N^{1/2-1/\alpha}(\log N)^{\varepsilon/\alpha},
 \qquad N\to\infty.
\]
\end{theorem}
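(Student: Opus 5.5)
The plan is to diagonalize $T_{g_{\gamma,\beta}}$ in the monomial basis of $A^2(\Omega_m)$, turn the singular value count into a lattice-point count, and then convert the spectral asymptotics into summing-norm rates.

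\emph{Diagonalization.} The symbol $g=t^\gamma(1-|w|^2)^{-\beta}$ depends only on $(|z|,|w|)$. Hence $T_g$ commutes with the torus action $(z,w)\mapsto(e^{i\theta_1}z,e^{i\theta_2}w)$. It is therefore diagonal in the orthogonal basis $z^aw^b$ of $A^2(\Omega_m)$, with eigenvalues
\[
\lambda_{a,b}=\frac{\int_{\Omega_m}g\,|z|^{2a}|w|^{2b}\,dv}{\int_{\Omega_m}|z|^{2a}|w|^{2b}\,dv}.
\]
Positivity is immediate since $g\ge0$. Compactness follows once $\lambda_{a,b}\to0$.

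\emph{Computing the eigenvalues.} Use polar coordinates with $x=|z|^{2m}$ and $y=|w|^2$, so that $t=1-x-y$. Both integrals become Dirichlet-type integrals over the simplex. The denominator is a Beta product, $\asymp\Gamma((a+1)/m)\Gamma(b+1)/\Gamma((a+1)/m+b+2)$. In the numerator, write $(1-y)^{-\beta}$ and substitute $x=(1-y)u$. This gives
\[
\lambda_{a,b}=\frac{B\!\left(\tfrac{a+1}{m},\gamma+1\right)\int_0^1 y^{b}(1-y)^{(a+1)/m+\gamma-\beta}\,dy}{B\!\left(\tfrac{a+1}{m},1\right)B\!\left(b+1,\tfrac{a+1}{m}+1\right)}
\]
up to exact Gamma constants. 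Applying Stirling, for $A=(a+1)/m$ large,
\[
\lambda_{a,b}\sim\Gamma(\gamma+1)\,\frac{A^{-\gamma}}{\dots}\cdot\frac{\Gamma(A+\gamma-\beta+1)\Gamma(A+b+2)}{\Gamma(A+1)\Gamma(A+b+\gamma-\beta+2)}
\;\approx\;\Gamma(\gamma+1)\,A^{-\beta}(A+b)^{-(\gamma-\beta)}.
\]
The same formula holds, with explicit Gamma factors replacing the asymptotics, when $A$ is bounded.

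\emph{Lattice-point count.} Next count $\mathcal N(\lambda)=\#\{(a,b):\lambda_{a,b}>\lambda\}$. This is essentially the area of the region $\{(A,b):A^{\beta}(A+b)^{\gamma-\beta}<c/\lambda\}$ scaled by $m$, plus the contribution from bounded $a$. Change variables to $s=A+b$. For fixed $s$, the range of $A$ is $A<(c/\lambda)^{1/\beta}s^{-(\gamma-\beta)/\beta}$, capped by $A\le s$. The area is then
\[
\int_0^{S}\min\!\left(s,\ \lambda^{-1/\beta}s^{-(\gamma-\beta)/\beta}\right)ds ,
\]
and there are three regimes:
\begin{enumerate}[label=\textup{(\alpha*)}]
\item If $\gamma>2\beta$, the diagonal piece dominates, giving $\mathcal N\asymp\lambda^{-2/\gamma}$.
\item If $\gamma<2\beta$, the tail along $A$ bounded dominates (the strip $a$ fixed, $b\le\lambda^{-1/(\gamma-\beta)}$), giving $\lambda^{-1/(\gamma-\beta)}$.
\item If $\gamma=2\beta$, the integral $\int s^{-1}\,ds$ produces $\lambda^{-2/\gamma}\log(1/\lambda)$.
\end{enumerate}
To get exact constants, compute the limits of $\lambda^{\alpha}\mathcal N(\lambda)$, or $\lambda^{\alpha}\mathcal N(\lambda)/\log(1/\lambda)$ in the critical case, via Riemann sums. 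In regime (ii) this is an exact sum over $a$ of strip lengths, using the exact Gamma values in $a$. That sum converges precisely because $\gamma<2\beta$. Inverting $\mathcal N$ gives $s_N\sim A_*N^{-1/\alpha}(\log N)^{\varepsilon/\alpha}$.

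\emph{Main obstacle.} The hardest step is the uniformity of Stirling's formula across the regimes, especially the critical logarithm. Error terms must be $o(1)$ relative to the leading count uniformly in $(a,b)$. I would first bound $\lambda_{a,b}$ above and below by $(1\pm\epsilon)$ times the model $\Gamma(\gamma+1)A^{-\beta}(A+b+1)^{-(\gamma-\beta)}$ outside a finite set. Then I would sandwich the counts; in regime (ii) the treatment is exact in $a$.

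\emph{Summing-norm rates.} For the second claim, on Hilbert space $\Piabs_r=\mathcal S_2$ for all $r$, with $\pi_r\asymp\|\cdot\|_{\mathcal S_2}$. Hence
\[
a_N^{(r)}(T)\asymp\Big(\sum_{k\ge N}s_k^2\Big)^{1/2}.
\]
This follows by truncating the spectral decomposition for the upper bound and by the Hilbert–Schmidt Eckart–Young inequality for the lower bound. When $\alpha<2$ the tail sum of $k^{-2/\alpha}(\log k)^{2\varepsilon/\alpha}$ is $\asymp N^{1-2/\alpha}(\log N)^{2\varepsilon/\alpha}$, which yields the stated rate.
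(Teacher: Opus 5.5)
This is essentially the paper's proof: the same Gamma-ratio eigenvalues in the monomial basis, comparison with the model $\Gamma(\gamma+1)A^{-\beta}(A+b)^{-(\gamma-\beta)}$ (the paper counts the model exactly row by row, which is equivalent to your $s$-integral), dominated convergence over rows when $\gamma<2\beta$, inversion of the counting function, and the Hilbert--Schmidt tail combined with $\pi_r\asymp\|\cdot\|_{\mathcal S_2}$.

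One step needs correcting. The $(1\pm\epsilon)$ sandwich cannot hold outside a finite set. For fixed $a$, the ratio of $\lambda_{a,b}$ to the model tends, as $b\to\infty$, to $A^{\beta}\Gamma(A+\gamma-\beta+1)/\Gamma(A+\gamma+1)$, which is not $1$ in general. The sandwich does hold for $a\ge a_\epsilon$, uniformly in $b$. Each of the finitely many exceptional rows then contains $O(\lambda^{-1/(\gamma-\beta)})$ eigenvalues above $\lambda$. This is of lower order exactly when $\gamma\ge2\beta$, by a power if $\gamma>2\beta$ and by the logarithm if $\gamma=2\beta$. That is also why the case $\gamma<2\beta$ must be treated exactly in $a$, as you propose.
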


The two exponents describe different parts of the monomial count.
When $\gamma>2\beta$, the two-dimensional count gives the leading
term. When $\gamma<2\beta$, each fixed degree in the first variable
contributes to the leading constant. At $\gamma=2\beta$, the sum
over these degrees produces a logarithmic factor.
\Cref{thm:ellipsoid-counting} gives the constants in all three
cases and the exact approximation constant for $r=2$.
The passage to the other summing norms uses the classical
comparison on Hilbert spaces in \cref{prop:Hilbert-calibration}.
\Cref{prop:ellipsoid-profile} also treats symbols
$t^\gamma(1-|w|^2)^{-\beta}\psi(|z|^{2m}/(1-|w|^2))$ with a bounded
measurable function $\psi:[0,1]\to[0,\infty)$ having a positive
limit at $1$.
When $\gamma\ge2\beta$, the leading constant depends only on that
limit. When $\gamma<2\beta$, it depends on the full function $\psi$
through a convergent series of beta integrals. In particular, this
gives exact constants for the geometric symbol
$t^\gamma(|z|^2+t^{1/m})^{-m\beta}$.

\subsection{Applications and method}

The remaining operator results are applications of the two criteria.
For Toeplitz operators, diagonal extraction gives a necessary condition
in terms of the Berezin transform
$\widetilde f(z)=K(z,z)^{-1}\int_\Omega f(w)|K(w,z)|^2\dd v(w)$.
A converse holds for positive symbols satisfying the local
reverse-H\"older estimate $\M_{q,\rho}f\le C\M_{1,\rho}f$.
For complex
symbols satisfying the matching IDA condition, Toeplitz summability
is equivalent to multiplication summability and to the local mean
condition; see
\cref{thm:positive-Toeplitz,cor:matching-ida}.
For little Hankel operators, we prove a corresponding scalar
necessary condition in \cref{thm:little-necessity}.
The joint graph criteria follow from bounded projection decompositions
of multiplication. They do not give converses for the individual
components without the stated symbol hypotheses.

Weighted composition and Volterra composition operators are treated
in \cref{sec:composition,sec:volterra}. Their norms on finite families
are comparable to those of explicit pullback Carleson embeddings.
The Volterra argument uses the radial Littlewood--Paley formula in
\cref{lem:radial-LP}. These applications include $q=1$.
At this endpoint, the multiplication criterion also remains valid;
see \eqref{eq:intro-q1-multiplication}. The projected graph arguments
require $q>1$. Analytic Toeplitz symbols need no projection and are
treated at $q=1$ in \cref{cor:analytic-symbols}.

The classification in \cref{prop:garling} identifies the diagonal
ideal with a power sequence space, except for one logarithmic Orlicz
case. Section~\ref{sec:common} writes the resulting parameter conditions
explicitly. Separated symbols show that the sequence conditions are
sharp; see \cref{thm:sharp-Hankel-models,thm:infinite-separated-realization,thm:sharp-models}.

The proofs use three local estimates. Nonisotropic Taylor expansions
give uniformly nuclear restriction maps. Kernel synthesis and
Rademacher averages give the converse diagonal bounds. Finally,
analytic approximation at two fixed scales and Ahn's weighted $\bar\partial$
estimate give the Hankel upper bound. The comparison of form norms
is proved in \cref{prop:FTI-all}. The Hilbert case has a second proof
in \cref{prop:FTI-two}, based on \cite{Zimmer2021,Zimmer2023}.
Other weighted and nonisotropic solution estimates appear in
\cite{Cumenge2001,AhnCho2003,Alexandre2006,
CharpentierDupain2018,CharpentierDupain2024,Yao2024}.

Section~\ref{sec:geometry} gives the geometric tools and facts about operator ideals.
Sections~\ref{sec:carleson} and~\ref{sec:ida} prove the main criteria,
the quantitative approximation estimates, and the diagonal model
results. Sections~\ref{sec:toeplitz} to~\ref{sec:volterra} give the
projection and pullback applications. Section~\ref{sec:common}
contains the explicit parameter conditions, finite-type examples,
exact spectral asymptotics, and endpoint statements.

\section{Geometry and operator ideals}\label{sec:geometry}

Write $c_{00}$ for finitely supported scalar sequences, $e_j$ for
the $j$th coordinate vector, and $e_j^*(c)=c_j$.
For $1<s<\infty$, write $s'=s/(s-1)$ for the conjugate exponent.
For vectors in $\C^n$, use the Hermitian inner product
$\langle z,w\rangle=\sum_{k=1}^nz_k\overline{w_k}$.
The symbol $\supp$ denotes support. We use standard terminology
from several complex variables as in \cite{FollandKohn1972,Krantz2001}.
For later integral criteria, write
\[
 \dd\lambda_\Omega(z)=\frac{\dd v(z)}{\V_\rho(z)}.
\]

\subsection{McNeal polydiscs}

McNeal's nonisotropic polydiscs describe the Bergman kernel and
the boundary geometry \cite{McNeal1992,McNeal1994}; see also
\cite{Catlin1987,NagelSteinWainger1985,NagelRosaySteinWainger1989,
NikolovPflug2003}. The dyadic approach in \cite{GanHuKhan2022}
gives another proof of \eqref{eq:intro-Pq}; see Corollary~4.5 there.
It also describes the structure as a space of homogeneous type.

For $\zeta\in\partial\Omega$, let $\nu_\Omega(\zeta)$ be the outer
unit normal to $\partial\Omega$ at $\zeta$.

Choose a $C^\infty$ defining function $\rho_\Omega$, convex near
$\partial\Omega$, such that
\[
 \Omega=\{z:\rho_\Omega(z)<0\},
 \qquad |\nabla\rho_\Omega|>0\quad\text{on }\partial\Omega.
\]
Recall that $\delta(z)=\dist(z,\partial\Omega)$.  For $z$ near the boundary, a
unit vector $u\in\C^n$, and $\varepsilon>0$, define
\begin{equation*}
 \tau(z,u,\varepsilon)
 =\sup\{t>0:
 |\rho_\Omega(z+\lambda u)-\rho_\Omega(z)|<\varepsilon
 \text{ whenever }|\lambda|<t\}.
\end{equation*}
Choose a minimal McNeal extremal basis
$u_1(z,\varepsilon),\ldots,u_n(z,\varepsilon)$ as follows.
The first vector is the complex normal to the level set of
$\rho_\Omega$ through $z$.  After $u_1,\ldots,u_{k-1}$ have been
chosen, choose a unit vector $u_k$ that minimizes
$\tau(z,u,\varepsilon)$ in their complex orthogonal complement.
Choose the minimizing vectors Borel measurably in $(z,\varepsilon)$.
Such choices exist: the directional radius is continuous, each
minimization is over a compact sphere, and measurable selection
applies successively in the orthogonal complements. Put
\[
 \tau_k(z,\varepsilon)
 =\tau(z,u_k(z,\varepsilon),\varepsilon).
\]
We use the comparisons between minimal and maximal extremal bases in
\cite[Propositions~3.1 and~3.2]{NikolovPflugThomas2013}; see also
\cite[Section~2, Remark~2.3]{GanHuKhan2022}.
These comparisons give equivalent directional norms and polydisc scales.
We do not use tangency at the successive boundary contact points of a
maximal basis.  Define
\begin{equation*}
 \Q(z,\varepsilon)
 =\left\{z+\sum_{k=1}^n\zeta_ku_k(z,\varepsilon):
 |\zeta_k|<c_0\tau_k(z,\varepsilon)\right\},
\end{equation*}
where $c_0>0$ is fixed and small. First fix a geometric cutoff
$\rho_{\rm geom}>0$ for the boundary polydiscs, scale comparisons,
and engulfing estimates. A smaller working cutoff $\rho_0$ will be
chosen in \cref{lem:geometry-package}. Auxiliary polydiscs remain
defined below $\rho_{\rm geom}$ when $\rho_0$ is reduced.
For $0<\rho<\rho_{\rm geom}$, set
\begin{equation*}
 \Q_\rho(z)=\Q(z,\rho\delta(z)),
 \qquad
 \V_\rho(z)=v(\Q_\rho(z)).
\end{equation*}
Changing the scale may change the extremal basis.  We therefore
treat a change of scale and an affine dilation separately. For $L>0$, put
\begin{equation}\label{eq:frozen-polydisc-dilation}
 \Q_s^{[L]}(z)=z+L\bigl(\Q_s(z)-z\bigr).
\end{equation}
This dilation keeps the basis and the ratios of side lengths fixed.
Let $m_\Omega$ be an integer not smaller than the finite type, and put
\[
 N_{z,s}(X)=\max_{1\le k\le n}
 \frac{|\langle X,u_k(z,s\delta(z))\rangle|}
 {c_0\tau_k(z,s\delta(z))}.
\]
For centers in the boundary collar, directional norm comparison and
the finite-type scale estimates give
a constant $C_{\rm sc}\ge1$ such that
\begin{equation}\label{eq:polydisc-scale-gauge}
 N_{z,t}(X)\le C_{\rm sc}(s/t)^{1/m_\Omega}N_{z,s}(X),
 \qquad 0<s\le t<\rho_{\rm geom}.
\end{equation}
Indeed, the gauge in extremal coordinates is uniformly equivalent to
$|X|/\tau(z,X/|X|,s\delta(z))$ for $X\ne0$.  The directional radius
at scale $t\delta(z)$ is at least a constant times
$(t/s)^{1/m_\Omega}$ times its value at scale $s\delta(z)$.
These two estimates imply \eqref{eq:polydisc-scale-gauge}; see
\cite[Section~2]{Jasiczak2010} and
\cite[Propositions~3.1 and~3.2]{NikolovPflugThomas2013}.

Choose $A_*>1$ sufficiently large that
$4C_{\rm sc}A_*^{-1/m_\Omega}<1/2$.  For $A_*s<\rho_0$, define
\begin{equation*}
 \Q_s^*(z)=\Q_{A_*s}(z).
\end{equation*}
Then \eqref{eq:polydisc-scale-gauge} gives the uniform inclusion
\begin{equation}\label{eq:polydisc-uniform-enlargement}
 \overline{\Q_s^{[4]}(z)}
 \subset \Q_{A_*s}^{[1/2]}(z)
 \subset \Q_s^*(z)\Subset\Omega.
\end{equation}
In particular, every function used in the infimum defining $\G_{q,s}$ is
defined on a neighborhood of $\overline{\Q_s(z)}$.
No inclusion between $\Q_s$ and $\Q_{2s}$ is assumed.
An admissible $\rho$-lattice is a sequence $\Lambda=\{a_j\}$ for which
some fixed $0<c<1$ with $C_{\rm sc}c^{1/m_\Omega}<1/2$ satisfies
\begin{equation}\label{eq:admissible-lattice}
 \Omega=\bigcup_j\Q_\rho(a_j),\qquad
 \Q_{c\rho}(a_j)\cap\Q_{c\rho}(a_k)=\varnothing\quad(j\ne k),
 \qquad
 \sum_j\mathbf1_{\Q_{C\rho}(a_j)}\le N_C
\end{equation}
for every fixed $C\ge1$ satisfying $C\rho<\rho_0$.  The overlap bound
$N_C<\infty$ is independent of the lattice indices.
The condition on $c$ and \eqref{eq:polydisc-scale-gauge} ensure
$\overline{\Q_{c\rho}(z)}\subset\Q_\rho^{[1/2]}(z)$ in the boundary
collar.  We impose the same inclusion on the core charts below.
All radii below are chosen so small that every fixed enlargement used in
the argument is still below $\rho_0$.
Complete the boundary covering on a compact core by finitely many
Euclidean coordinate polydiscs.  Choose each inner polydisc together
with concentric affine dilates of factors $2,3,4$, all compactly
contained in $\Omega$, and a larger coordinate polydisc containing
their closures.  Use these nested polydiscs for the core lattice
cells and their enlargements.  The prescribed scales are fixed before
this finite completion.  To define $\Q_s(z)$ also at nonlattice core
points, take a finite Borel partition subordinate to the inner charts.
For a center $z$ in one part, use the Euclidean polydisc centered at
$z$ with the prescribed nested radii of that chart.  The outer chart
contains all these polydiscs.  Thus $\Q_s(z)$ and $\V_s(z)$ are defined
at every core point, and the lattice construction selects finitely
many of these cells.  The geometric enlargement constants are
first fixed in the boundary collar.  Shrink the inner core cells if necessary so
that the inclusions in \eqref{eq:polydisc-uniform-enlargement} and the
engulfing inclusions used below also hold on the overlap with the boundary
covering.  This overlap lies in a compact subset.  Thus only finitely
many cells and constants are added.  All local affine maps, submean
estimates, and restriction estimates below use polydiscs also on the
core.  When a formula uses $u_k$ and $c_0\tau_k$ at a core cell, these
mean its fixed coordinate directions and polydisc side lengths.

For a fixed $C\ge1$, a sublattice
$\Lambda_0=\{a_j:j\in J_0\}$ is called $C$-separated if the sets
$\Q_{C\rho}(a_j)$, $j\in J_0$, are pairwise disjoint.  The phrase
\emph{sufficiently separated} means $C$-separated for one fixed $C$
large enough to contain every enlargement used in the stated result.
The value of $C$ is independent of the lattice indices.

The diagonal kernel estimates
\cite{McNeal1994,NikolovPflug2003} and the basis comparisons above give
\begin{equation}\label{eq:volume-product}
 \V_\rho(z)\asymp
 \prod_{k=1}^n\tau_k(z,\rho\delta(z))^2,
 \qquad
 K(z,z)\asymp\V_\rho(z)^{-1}.
\end{equation}
For the finitely many core cells, the side lengths in the volume
product are those of their Euclidean coordinate polydiscs.  The kernel
comparison there follows by compactness, with the fixed radii included
in the constants.
The engulfing and scale comparisons for these polydiscs are given in
\cite[Section~2]{Jasiczak2010} and
\cite[Section~2]{GanHuKhan2022}.  If $w\in\Q_\rho(z)$, then
\begin{equation*}
 \Q_\rho(w)\subset\Q_{C\rho}(z),
 \qquad
 \V_\rho(w)\asymp\V_\rho(z).
\end{equation*}
The constants are uniform in $z,w$.

For a $(0,1)$-form $\omega$, put
\[
 \omega_k(z)=
 \omega_z\bigl(\overline{u_k(z,\delta(z))}\bigr).
\]
Define its dual McNeal norm by
\begin{equation}\label{eq:dual-McNeal}
 |\omega(z)|_{\Omega}
 =\left(\sum_{k=1}^n
 |\omega_k(z)|^2\tau_k(z,\delta(z))^2\right)^{1/2}.
\end{equation}
The primal directional norms for the admissible extremal bases are
uniformly equivalent by
\cite[Proposition~3.2]{NikolovPflugThomas2013}.  Duality and the
equivalence of the finite-dimensional $\ell^1$ and $\ell^2$ norms give
the same conclusion for \eqref{eq:dual-McNeal}.
On a fixed compact core of $\Omega$, we use the Euclidean form norm.
This gives a norm on all of $\Omega$, equivalent under any two such
choices.

The measurable basis choice also makes the local quantities used
below measurable. Indeed, the set of pairs $(z,w)$ with
$w\in\Q_s(z)$ is Borel. Thus $\V_s$ and $\M_{q,s}f$ are measurable
by integration. For $\G_{q,s}f$, the infimum in
\eqref{eq:intro-ida} may be taken over polynomials whose coefficients
have rational real and imaginary parts. To see this, expand an
admissible holomorphic function in the affine coordinates of
$\Q_s^*(z)$. Its Taylor polynomials converge uniformly on
$\overline{\Q_s(z)}$. Rewrite these polynomials in the original
coordinates on $\C^n$. Then approximate their coefficients by
numbers in $\mathbb Q+i\mathbb Q$. Conversely, every polynomial is admissible.
Hence the same infimum is countable and is measurable.
The form norm in \eqref{eq:dual-McNeal} is measurable as well.

For $u\in L^p(\Omega)$ and $v\in L^{p'}(\Omega)$, use the pairing
$\langle u,v\rangle=\int_\Omega u\overline v\dd v$.
Let $\mathcal P(\Omega)$ denote the restrictions to
$\Omega$ of holomorphic polynomials on $\mathbb C^n$.  Its density in
$A^p(\Omega)$ is proved in \cref{prop:domain-ledger}.

\begin{lemma}\label{lem:kernel-span-density}
Let $1<p<\infty$. Every kernel section $K(\cdot,z)$ belongs to
$A^p(\Omega)$, and their algebraic span $\Gamma$ is dense in $A^p(\Omega)$.
The Bergman projection satisfies the pairing identity
\eqref{eq:projection-pairing} on $L^p\times L^{p'}$.
\end{lemma}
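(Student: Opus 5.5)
We prove the three assertions in the order (a) $K(\cdot,z)\in A^p$, (b) the pairing identity, (c) density of $\Gamma$; each step feeds the next.

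\emph{Step (a).} Fix $z\in\Omega$. The Bergman kernel of a smoothly bounded convex domain of finite type extends smoothly to $\overline\Omega\times\overline\Omega$ minus the boundary diagonal. This is McNeal's off-diagonal estimate \cite{McNeal1994}, which gives $|K(w,z)|\lesssim\prod_k\tau_k(z,\cdot)^{-2}$, and hence boundedness in $w$ for fixed interior $z$. Therefore $K(\cdot,z)$ is bounded on $\Omega$. Since $\Omega$ is bounded, $K(\cdot,z)\in L^p$ for every $p$, so $K(\cdot,z)\in A^p(\Omega)$.

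\emph{Step (b).} The pairing identity $\langle Pu,v\rangle=\langle u,Pv\rangle$ holds for $u,v\in L^2$ because $P$ is the orthogonal projection. For $u\in L^p$ and $v\in L^{p'}$, approximate $u$ and $v$ by bounded compactly supported functions, which lie in $L^2\cap L^p$ and $L^2\cap L^{p'}$. By \eqref{eq:intro-Pq}, $P$ is bounded on $L^p$ and on $L^{p'}$, and the pairing is continuous on $L^p\times L^{p'}$. Passing to the limit on both sides gives the identity. Step (a) also shows that the integral formula for $Pu(z)$ converges absolutely for $u\in L^p$ and agrees with the extension. Finally, $P$ reproduces $A^p$: it does so on $A^p\cap A^2$, and this set is dense in $A^p$ because it contains the polynomials, which are dense by \cref{prop:domain-ledger}.

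\emph{Step (c).} We use Hahn--Banach. Since $(A^p)^*$ is represented by pairing against functions in $L^{p'}$, it suffices to take $g\in L^{p'}$ whose functional vanishes on $\Gamma$ and show it vanishes on $A^p$. Replace $g$ by $h=Pg\in A^{p'}$. By the pairing identity, $\langle F,g\rangle=\langle PF,g\rangle=\langle F,h\rangle$ for $F\in A^p$. Testing against $F=K(\cdot,z)$ gives
\[
0=\langle K(\cdot,z),h\rangle=\overline{\int_\Omega K(z,w)h(w)\,dv(w)}.
\]
The integral on the right is $\overline{Ph(z)}=\overline{h(z)}$, since $P$ reproduces $A^{p'}$ by step (b) applied with $p'$. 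Hence $h\equiv0$, so $\langle F,g\rangle=0$ for all $F\in A^p$, and $\Gamma$ is dense.

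The main obstacle is step (a)'s boundary regularity of $K(\cdot,z)$ together with the reproducing property of $P$ on all of $A^{p'}$. The latter needs density of $A^2\cap A^{p'}$ in $A^{p'}$, and we route it through the polynomial density of \cref{prop:domain-ledger}. If that result is not available, we would instead use a dilation argument. Since $\Omega$ is convex, it is star-shaped about some interior point $z_0$. The functions $F_t(w)=F(z_0+t(w-z_0))$, $t<1$, are holomorphic on a neighborhood of $\overline\Omega$, hence lie in $A^2\cap A^{p'}$, and $F_t\to F$ in $A^{p'}$ as $t\to1$.
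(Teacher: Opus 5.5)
Your argument is correct. Steps (b) and (c) follow the same outline as the paper's proof: approximation gives the pairing identity, then Hahn--Banach, replacement of the representing function by its Bergman projection, and testing against kernel sections.

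The real difference is how you handle the kernel sections. You import boundary regularity of $K$ to get $K(\cdot,z)\in L^\infty(\Omega)$. You then evaluate $\langle K(\cdot,z),h\rangle$ through the absolutely convergent integral formula for $P$ on $L^{p'}$, together with the reproducing property on $A^{p'}$. The paper instead takes a smooth radial bump $\chi_a$ with $\int\chi_a\,dv=1$. The antiholomorphic mean-value property of $w\mapsto K(z,w)$ gives $P\chi_a=K(\cdot,a)$, so membership in $A^p$ follows from \eqref{eq:intro-Pq} alone. In the density step the paper writes $\langle K(\cdot,z),h\rangle=\langle P\chi_z,h\rangle=\langle\chi_z,Ph\rangle=\overline{h(z)}$ by the ordinary mean-value property, so it needs no pointwise formula for $P$ on $L^{p'}$.

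The paper's route therefore uses nothing beyond the $L^s$ boundedness of $P$, which you already invoke in (b). Your route gives the stronger fact $K(\cdot,z)\in L^\infty$, which the paper uses later anyway, but it costs a deeper external theorem. What you actually need there is a Kerzman-type off-diagonal regularity result or an $L^\infty$ bound for $P\chi_z$. The size estimate as you wrote it, with the scale in $\tau_k(z,\cdot)$ left unspecified, does not by itself pin this down.

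In the other direction, your proof checks explicitly that $PF=F$ for $F\in A^p$ when $p<2$. You do this via density of $\mathcal P(\Omega)\subset A^2\cap A^p$, whose proof in \cref{prop:domain-ledger} does not depend on this lemma, or via dilations. The paper's proof uses that step without comment.
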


\begin{proof}
Fix $a\in\Omega$. Choose a smooth nonnegative radial function $\chi_a$
supported in a Euclidean ball centered at $a$, with
$\int_\Omega\chi_a\dd v=1$. The antiholomorphic mean-value identity gives
\[
 P\chi_a(z)=\int_\Omega K(z,w)\chi_a(w)\dd v(w)=K(z,a).
\]
Since $P$ is bounded on $L^p$, this proves $K(\cdot,a)\in A^p$.
The projection is also bounded on $L^{p'}$. Let $L\in(A^p)^*$.  Hahn--Banach and
$L^p$ duality give $h\in L^{p'}$ such that
\[
 L(g)=\int_\Omega g\overline h\dd v.
\]
To justify the projection pairing at these exponents, choose
$u_m\in L^2\cap L^p$ and $v_m\in L^2\cap L^{p'}$ such that
$u_m\to u$ in $L^p$ and $v_m\to v$ in $L^{p'}$.
Self-adjointness on $L^2$ gives
\[
 \int_\Omega Pu_m\,\overline{v_m}\dd v
 =\int_\Omega u_m\,\overline{Pv_m}\dd v.
\]
For the left side, H\"older's inequality gives
\[
 \left|\int_\Omega Pu_m\,\overline{v_m}\dd v
       -\int_\Omega Pu\,\overline v\dd v\right|
 \le\|P(u_m-u)\|_p\|v_m\|_{p'}
    +\|Pu\|_p\|v_m-v\|_{p'}\longrightarrow0.
\]
The right side is handled in the same way. Thus
\begin{equation}\label{eq:projection-pairing}
 \langle Pu,v\rangle=\langle u,Pv\rangle,
 \qquad u\in L^p,\quad v\in L^{p'}.
\end{equation}
Since $Pg=g$ for $g\in A^p$, this replaces $h$ by $Ph\in A^{p'}$
in the representation of $L$. Write $h$ for this holomorphic representative.
This pairing identifies $(A^p)^*$ with $A^{p'}$, with equivalent norms.
For every $z\in\Omega$, the function $\chi_z$ chosen above gives
\[
 \int_\Omega K(w,z)\overline{h(w)}\dd v(w)
 =\langle P\chi_z,h\rangle
 =\langle\chi_z,Ph\rangle=\overline{h(z)}.
\]
If $L$ vanishes on every $K(\cdot,z)$, then
\[
 0=L(K(\cdot,z))=\overline{h(z)}
 \qquad(z\in\Omega).
\]
Thus $L=0$.  The annihilator of $\Gamma$ is trivial, so $\Gamma$ is
dense.

\end{proof}

\begin{lemma}\label{lem:geometry-package}
Let $1<p<\infty$. Fix an admissible radius $\rho>0$ such that
$A_*\rho<\rho_0$. There is a sequence
$\Lambda=\{a_j\}_{j\ge1}\subset\Omega$ such that:
\begin{enumerate}[label=\textup{(\roman*)}]
\item $\Omega=\bigcup_j\Q_\rho(a_j)$;
\item the balls $\Q_{c\rho}(a_j)$ are pairwise disjoint;
\item every fixed enlargement $\{\Q_{C\rho}(a_j)\}$ with
$C\rho<\rho_0$ has finite overlap;
\item $V_j:=\V_\rho(a_j)\asymp K(a_j,a_j)^{-1}$;
\item for every sufficiently separated sublattice,
\begin{equation}\label{eq:kernel-synthesis}
 S_p(c)=\sum_jc_j
 \frac{K(\cdot,a_j)}{K(a_j,a_j)^{1-1/p}}
 \quad\text{satisfies}\quad
 \|S_p(c)\|_{A^p}\lesssim\|c\|_{\ell^p};
\end{equation}
\item for every fixed $C\ge1$ with $C\rho<\rho_0$ and every
$z\in\Q_{C\rho}(a_j)$,
\begin{equation}\label{eq:kernel-lower-finite}
 |K(z,a_j)|\asymp K(a_j,a_j)\asymp V_j^{-1}.
\end{equation}
\end{enumerate}
Every lattice is a finite union of sublattices for which the balls
$\Q_\rho^*(a_j)$ are pairwise disjoint and
\eqref{eq:kernel-synthesis} holds.
\end{lemma}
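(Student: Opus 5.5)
The construction runs in four steps: build the lattice, record the kernel comparisons, split into separated sublattices, and prove the synthesis bound.

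\emph{Lattice.} We build $\Lambda$ by a maximal-separation (Vitali-type) argument in the space of homogeneous type furnished by the McNeal polydiscs. Choose a maximal family $\{a_j\}$ such that the polydiscs $\Q_{c\rho}(a_j)$ are pairwise disjoint. Such a family is countable, because disjoint open sets in $\Omega$ are countable. By maximality, every $z\in\Omega$ has some $j$ with $\Q_{c\rho}(z)\cap\Q_{c\rho}(a_j)\ne\varnothing$. The engulfing property then gives $z\in\Q_{C'\rho}(a_j)$ for a geometric constant $C'$, and similarly at the nonlattice core points, which were handled by the finite chart construction.

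To obtain (i) with the exact radius $\rho$, we run the selection at a smaller working radius $\rho/C'$ and rename constants, so that $c$ is small enough to satisfy $C_{\rm sc}c^{1/m_\Omega}<1/2$. Finite overlap (iii) is the standard volume count. If $z\in\Q_{C\rho}(a_j)$, then $\V_\rho(a_j)\asymp\V_\rho(z)$, and $\Q_{c\rho}(a_j)\subset\Q_{C''\rho}(z)$ by engulfing. The disjoint sets $\Q_{c\rho}(a_j)$ all lie in $\Q_{C''\rho}(z)$, whose volume is $\lesssim\V_\rho(z)$ by the scale estimate \eqref{eq:polydisc-scale-gauge} and \eqref{eq:volume-product}. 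Each of these sets has volume $\gtrsim\V_\rho(z)$, so their number is bounded.

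\emph{Kernel comparisons.} Item (iv) is \eqref{eq:volume-product}. For (vi), the upper bound $|K(z,a)|\lesssim K(a,a)$ on $\Q_{C\rho}(a)$ comes from McNeal's off-diagonal estimate $|K(z,w)|\lesssim\V(z,w)^{-1}$. The lower bound comes from a Cauchy estimate in the rescaled coordinates of $\Q^*$: the derivatives of $K(\cdot,a)$ are controlled at the nonisotropic scale, so $|K(z,a)-K(a,a)|\le\tfrac12K(a,a)$ on $\Q_{\varepsilon\rho}(a)$. This extends to $\Q_{C\rho}(a)$ only after shrinking radii, and I expect this step to be delicate. The fallback is to state (vi) on enlargements small relative to the fixed neighbourhood where the Cauchy estimate applies, adjusting $\rho_0$ accordingly.

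\emph{Splitting into separated sublattices.} Form a graph on the indices, joining $j$ and $k$ when $\Q_{C\rho}(a_j)\cap\Q_{C\rho}(a_k)\ne\varnothing$. By finite overlap together with engulfing, this graph has bounded degree, so greedy coloring splits $\Lambda$ into finitely many $C$-separated sublattices.

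\emph{Synthesis bound (v).} We argue by duality and this is the main obstacle. For $h\in A^{p'}$, the reproducing identity gives
\[
 \langle S_p(c),h\rangle=\sum_jc_jK(a_j,a_j)^{1/p-1}\overline{h(a_j)},
\]
so by H\"older it suffices to prove
\[
 \sum_j K(a_j,a_j)^{(1/p-1)p'}|h(a_j)|^{p'}
 =\sum_jV_j|h(a_j)|^{p'}\cdot O(1)\lesssim\|h\|_{A^{p'}}^{p'}.
\]
This follows from the subharmonic mean value inequality on the polydiscs $\Q_{c\rho}(a_j)$, namely $|h(a_j)|^{p'}\le\avg_{\Q_{c\rho}(a_j)}|h|^{p'}$, using that $\V_{c\rho}\asymp V_j$ and that these polydiscs are disjoint. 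Strictly, the inequality is for polydiscs centered at $a_j$, which holds after an affine change of variables. This argument requires no separation at all; the extra separation is only needed to ensure that the synthesis series converges for the later uses.

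Density of $\Gamma$ and the pairing identity from \cref{lem:kernel-span-density} justify the duality and the convergence for finitely supported $c$, and the general case follows by density.
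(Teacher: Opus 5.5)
Your lattice construction (a maximal disjoint family of $\Q_{c\rho}(a_j)$, engulfing, then the scale gauge to return to radius $\rho$) matches the paper. So do the volume count for (iii), item (iv), the bounded-degree greedy coloring, and the duality proof of (v) via the pairing identity and the submean inequality on the disjoint $\Q_{c\rho}(a_j)$. One correction: separation is not needed for convergence of $S_p(c)$ either. The bound on finitely supported $c$ makes the partial sums Cauchy in $A^p$, so $S_p$ is bounded on all of $\ell^p$ for the full lattice. Separation is used only later, when disjoint target cells are needed for block extraction.

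The genuine gap is (vi), which you flagged but did not close. Suppose you take the Cauchy estimate in the coordinates of $\Q_\rho^*(a)=\Q_{A_*\rho}(a)$. By \eqref{eq:polydisc-scale-gauge}, a point $z\in\Q_{C\rho}(a)$ then has rescaled coordinates of size up to $C_{\rm sc}(C/A_*)^{1/m_\Omega}$. So you only get $|K(z,a)-K(a,a)|\le C'(C/A_*)^{1/m_\Omega}K(a,a)$, with an uncontrolled constant $C'$. That ratio is scale invariant: it is not small in general for $C\ge1$, and shrinking $\rho$ or $\rho_0$ does not change it.

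The missing idea is to anchor at a fixed absolute reference scale $s_*<\rho_{\rm geom}$, chosen before $\rho_0$ and independent of $\rho$ and $C$.
\begin{itemize}
\item Let $D_a$ map $\mathbb D^n$ affinely onto $\Q_{s_*}(a)$, and set $F_a(\zeta)=K(a+D_a\zeta,a)$.
\item The inequality $|K(w,a)|\le K(w,w)^{1/2}K(a,a)^{1/2}$ and volume comparison at the fixed scale $s_*$ give $\sup_{\mathbb D^n}|F_a|\lesssim K(a,a)$. This also supplies your upper bound without McNeal's off-diagonal estimate.
\item Cauchy's estimate bounds each $\partial_{\zeta_k}F_a$ on $\tfrac12\mathbb D^n$ by a constant times $K(a,a)$.
\item By \eqref{eq:polydisc-scale-gauge}, $\|D_a^{-1}(z-a)\|_\infty\le C_{\rm sc}(\theta/s_*)^{1/m_\Omega}$ for $z\in\Q_\theta(a)$.
\end{itemize}
Integrating along the segment yields $|K(z,a)-K(a,a)|\le C_0\theta^{1/m_\Omega}K(a,a)$.

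Now choose the working cutoff $\rho_0<s_*$ so that $C_{\rm sc}(\rho_0/s_*)^{1/m_\Omega}<1/2$ and $C_0\rho_0^{1/m_\Omega}<1/2$. This must be compatible with the other constraints on $\rho_0$, for instance $L\rho_0<\rho_{\rm geom}$ for the engulfing in (iii). Taking $\theta=C\rho<\rho_0$ gives $\tfrac12K(a,a)\le|K(z,a)|\le\tfrac32K(a,a)$ on $\Q_{C\rho}(a)$ for every admissible $C$ at once. So (vi) holds exactly as stated, and no weakening is needed. On the finitely many core charts, the same bound follows from continuity of $K$ near the diagonal and $K(a,a)>0$, after shrinking the chart polydiscs.
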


\begin{proof}
Take a maximal family of pairwise disjoint balls
$\Q_{c\rho}(a_j)$. It is countable. Each ball contains a point with
rational real and imaginary coordinates, and the balls are disjoint.
For every
$x\in\Omega$, maximality gives an index $j$ such that
\[
 \Q_{c\rho}(x)\cap\Q_{c\rho}(a_j)\ne\varnothing.
\]
Engulfing then gives $x\in\Q_{C_0c\rho}(a_j)$, where $C_0$ is fixed.
Decrease $c$ at the construction stage so that
$C_{\rm sc}(C_0c)^{1/m_\Omega}<1$.  The scale-gauge estimate gives
\[
 x\in\Q_{C_0c\rho}(a_j)\subset\Q_\rho(a_j),
 \qquad \Omega=\bigcup_j\Q_\rho(a_j).
\]
Thus (i) and (ii) hold.

Fix $C\ge1$ with $C\rho<\rho_0$. In the boundary collar, let
$L\ge1$ be a uniform engulfing constant. Require
$L\rho_0<\rho_{\rm geom}$. If $z\in\Q_{C\rho}(a_j)$, then
\[
 \Q_{c\rho}(a_j)\subset\Q_{LC\rho}(z),
 \qquad
 \V_\rho(a_j)\asymp\V_\rho(z),
 \qquad LC\rho<\rho_{\rm geom}.
\]
The small balls on the left are disjoint. If $I$ is any finite set of
such indices, scale comparison within the geometric cutoff
gives
\begin{align*}
 \#I\,\V_\rho(z)
 &\lesssim\sum_{j\in I}v(\Q_{c\rho}(a_j))
 =v\left(\bigsqcup_{j\in I}\Q_{c\rho}(a_j)\right)\\
 &\le v(\Q_{LC\rho}(z))\lesssim\V_\rho(z).
\end{align*}
The constants are independent of $I$ and $z$.  Taking the supremum
over all finite $I$ proves (iii) in the collar. The finitely many
core charts satisfy the same assertion by their prescribed nested
polydiscs, with constants for the fixed scales. Formula
\eqref{eq:volume-product} gives (iv).

For (vi), fix a sufficiently small reference scale
$0<s_*<\rho_{\rm geom}$ before
the final choice of $\rho_0$.  Write $\mathbb D$ for the unit disc and
put
\[
 D_a\zeta=c_0\sum_{k=1}^n
 \tau_k(a,s_*\delta(a))\zeta_ku_k(a,s_*\delta(a)),
 \qquad F_a(\zeta)=K(a+D_a\zeta,a).
\]
The kernel Cauchy--Schwarz inequality and volume comparison give
\[
 \sup_{\zeta\in\mathbb D^n}|F_a(\zeta)|
 \le\sup_{w\in\Q_{s_*}(a)}K(w,w)^{1/2}K(a,a)^{1/2}
 \lesssim K(a,a).
\]
Cauchy's estimate in each coordinate gives
\[
 \sup_{\zeta\in\frac12\mathbb D^n}
 |\partial_{\zeta_k}F_a(\zeta)|\lesssim K(a,a),
 \qquad 1\le k\le n.
\]
By \eqref{eq:polydisc-scale-gauge}, if $z\in\Q_\theta(a)$, then
\[
 \|D_a^{-1}(z-a)\|_\infty
 \le C_{\rm sc}(\theta/s_*)^{1/m_\Omega}.
\]
Take $\theta$ so small that the right side is at most $1/2$.
Integrate the derivatives of $F_a$ along the segment from $0$ to
$D_a^{-1}(z-a)$.  This yields
\[
 |K(z,a)-K(a,a)|
 \le\sum_{k=1}^n |(D_a^{-1}(z-a))_k|
 \sup_{\frac12\mathbb D^n}|\partial_{\zeta_k}F_a|
 \le C_0\theta^{1/m_\Omega}K(a,a).
\]
Choose $\rho_0$ so that
\[
 0<\rho_0<s_*,\qquad L\rho_0<\rho_{\rm geom},\qquad
 C_{\rm sc}(\rho_0/s_*)^{1/m_\Omega}<\tfrac12,\qquad
 C_0\rho_0^{1/m_\Omega}<\tfrac12.
\]
Thus the scale comparison with the fixed reference $s_*$ remains
valid after the working cutoff has been reduced. All preceding
conditions involving $\rho_0$ use this final choice.
Taking $\theta=C\rho<\rho_0$ proves (vi) in the boundary collar.
On the compact core, continuity of $K$ near the diagonal and
$K(a,a)>0$ give the same inequality after the outer coordinate
polydiscs are chosen sufficiently small.  Compactness makes this
choice uniform for the finite core family.  Thus (vi) holds everywhere.

We prove (v).  Put
\[
 e_{j,p}(z)=\frac{K(z,a_j)}{K(a_j,a_j)^{1-1/p}}.
\]
By (iv) and (vi), for every fixed admissible $C$,
\begin{equation}\label{eq:atom-normalization}
 |e_{j,p}(z)|\asymp V_j^{-1/p}
 \quad(z\in\Q_{C\rho}(a_j)).
\end{equation}
The kernels in this formula belong to $A^p$.  Indeed, for each fixed
$a\in\Omega$, choose a smooth nonnegative radial function $\chi_a$
supported in a Euclidean ball centered at $a$, with
$\int\chi_a\dd v=1$.  The antiholomorphic mean-value identity gives
\[
 P\chi_a(z)=\int K(z,w)\chi_a(w)\dd v(w)=K(z,a).
\]
Boundedness of $P$ on $L^p$ proves the assertion.  No uniform bound on
$\chi_a$ is used below.

Let $c=(c_j)$ have finite support.  For $\phi\in L^{p'}(\Omega)$,
$\|\phi\|_{p'}\le1$, set $h=P\phi$.  Since $P$ is bounded on $L^{p'}$,
\[
 \|h\|_{A^{p'}}\lesssim1.
\]
The pairing identity \eqref{eq:projection-pairing} and the radial
mean-value identity give
\[
 \int K(z,a_j)\overline{\phi(z)}\dd v(z)
 =\int P\chi_{a_j}\,\overline\phi\dd v
 =\int\chi_{a_j}\,\overline h\dd v
 =\overline{h(a_j)}.
\]
Consequently,
\begin{align}
 \left|\int_\Omega S_p(c)\overline\phi\dd v\right|
 &=\left|\sum_j
 \frac{c_j\overline{h(a_j)}}{K(a_j,a_j)^{1-1/p}}\right|\sum_j|c_j|V_j^{1/p'}|h(a_j)|.                 \label{eq:synthesis-dual}
\end{align}
The balls $\Q_{c\rho}(a_j)$ are disjoint.  The holomorphic submean
inequality implies
\begin{equation*}
 \sum_jV_j|h(a_j)|^{p'}
 \lesssim\sum_j\int_{\Q_{c\rho}(a_j)}|h|^{p'}\dd v
 \le \|h\|_{A^{p'}}^{p'}.
\end{equation*}
Apply H\"older's inequality to \eqref{eq:synthesis-dual}.  Take the
supremum over $\phi$.  Then
\[
 \|S_p(c)\|_{A^p}\lesssim\|c\|_{\ell^p}.
\]
Let $P_m$ be the projection onto the first $m$ coordinates of $\ell^p$.
For $c\in\ell^p$,
\[
 \|S_p(P_m c)-S_p(P_kc)\|_{A^p}
 \lesssim\|P_mc-P_kc\|_{\ell^p}\longrightarrow0.
\]
Hence the finite sums converge in $A^p$.  Their limit defines the
bounded map in \eqref{eq:kernel-synthesis}.  Take $c$ to be the $j$th
unit coordinate vector.  Equation \eqref{eq:atom-normalization} gives
\[
 \|e_{j,p}\|_{A^p}\asymp1.
\]
Finally, let two indices be adjacent when
$\Q_\rho^*(a_j)\cap\Q_\rho^*(a_k)\ne\varnothing$.  Engulfing places all
neighbors of $a_j$ in one fixed enlargement of $\Q_\rho(a_j)$.
Packing and (iii) bound the vertex degree by a constant $N_*$.  A greedy
coloring uses at most $N_*+1$ colors.  Balls of one color are disjoint.
If a larger separation is required, replace $\Q_\rho^*$ by that fixed
enlargement before coloring.  The number of colors remains finite.  See
\cite{McNeal1994,McNealStein1994} for the kernel and projection estimates
used above.
\end{proof}

For later use, define the normalized kernel atom
\[
 e_{j,p}=\frac{K(\cdot,a_j)}{K(a_j,a_j)^{1-1/p}}.
\]
Then $\|e_{j,p}\|_{A^p}\asymp1$ and
\begin{align*}
 \|fe_{j,p}\|_{L^q(\Q_\rho(a_j))}
 &\asymp
 V_j^{1/q-1/p}\M_{q,\rho}f(a_j),
 \\
 \inf_{h\in\Hol(\Q_{\rho_+}^*(a_j))}
 \|(f-h)e_{j,p}\|_{L^q(\Q_{\rho_+}(a_j))}
 &\asymp
 V_j^{1/q-1/p}\G_{q,\rho_+}f(a_j).
\end{align*}
Thus multiplication and analytic distance have the same weight.
The formula applies to all the stated values of $p,q$, and $r$.
\subsection{Absolutely summing operators}

Recall the weak $r$-norm from \eqref{eq:weak-r-norm}.
The definition of absolute summability gives
\[
 \left(\sum_{k=1}^N\|Tx_k\|^r\right)^{1/r}
 \le\pi_r(T)w_r(x_1,\ldots,x_N).
\]

For Banach spaces $X_j$ and $1\le s<\infty$, define
\[
 \left(\bigoplus_jX_j\right)_{\ell^s}
 =\left\{(x_j):\sum_j\|x_j\|_{X_j}^s<\infty\right\},
 \qquad
 \|(x_j)\|=\left(\sum_j\|x_j\|_{X_j}^s\right)^{1/s}.
\]
The same notation applies to a finite family of spaces.
The following facts will be used throughout the paper.

\begin{lemma}\label{lem:summing-calculus}
Let $X,X_0,Y,Y_0$ be Banach spaces and $1\le r<\infty$.
\begin{enumerate}[label=\textup{(\roman*)}]
\item If $T\in\Piabs_r(X,Y)$, $A\in\mathcal L(X_0,X)$, and
$B\in\mathcal L(Y,Y_0)$, then
\begin{equation}\label{eq:summing-ideal-property}
 BTA\in\Piabs_r(X_0,Y_0),
 \qquad
 \pi_r(BTA)\le\|B\|\pi_r(T)\|A\|.
\end{equation}
\item Let $1\le s<\infty$ and
$T_\ell\in\mathcal L(X,Y_\ell)$ for $1\le\ell\le m$.  Define
\[
 \mathscr T x=(T_1x,\ldots,T_mx)
 \in\left(\bigoplus_{\ell=1}^mY_\ell\right)_{\ell^s}.
\]
Then $\mathscr T$ is absolutely $r$-summing if and only if every
$T_\ell$ is absolutely $r$-summing.  In this case,
\begin{equation}\label{eq:finite-graph-summing}
 \max_{1\le\ell\le m}\pi_r(T_\ell)
 \le\pi_r(\mathscr T)
 \le\sum_{\ell=1}^m\pi_r(T_\ell).
\end{equation}
\item Let $T\in\Piabs_r(X,Y)$.  Let
$(\mathcal U_t)_{t\in[0,1]}$ and
$(\mathcal V_t)_{t\in[0,1]}$ be strongly measurable families of
contractions on $Y$ and $X$, respectively.  If
\[
 Sx=\int_0^1\mathcal U_tT\mathcal V_tx\dd t,
\]
where the Bochner integral exists for every $x\in X$, then
$S\in\Piabs_r(X,Y)$ and
\begin{equation*}
 \pi_r(S)\le\pi_r(T).
\end{equation*}
\item Let $T\in\Piabs_r(X,Y)$.  If $(x_j)$ converges weakly to $x$ in $X$,
then
\begin{equation}\label{eq:summing-complete-continuity}
 \|Tx_j-Tx\|_Y\longrightarrow0.
\end{equation}
\end{enumerate}
Here $\mathcal L(X,Y)$ denotes the bounded linear operators from $X$
to $Y$.
\end{lemma}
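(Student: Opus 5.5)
We prove the four items of \cref{lem:summing-calculus} in order, working directly from the definition \eqref{eq:intro-summing} together with the weak $r$-norm \eqref{eq:weak-r-norm}.

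\emph{Item (i).} Take a finite family $(x_k)\subset X_0$. For $x^*\in B_{X^*}$, the functional $A^*x^*/\|A\|$ lies in $B_{X_0^*}$. Hence
\[
 w_r(Ax_1,\ldots,Ax_N)\le\|A\|\,w_r(x_1,\ldots,x_N).
\]
Combining this with $\|BTAx_k\|\le\|B\|\,\|TAx_k\|$ and the summing inequality for $T$ gives \eqref{eq:summing-ideal-property}.

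\emph{Item (ii).} Each $T_\ell$ is the composition of $\mathscr T$ with the coordinate projection onto $Y_\ell$, which is a contraction. Item (i) then yields the lower bound in \eqref{eq:finite-graph-summing}. For the upper bound, write $\mathscr T=\sum_\ell\iota_\ell T_\ell$, where $\iota_\ell$ is the isometric inclusion of $Y_\ell$ into the direct sum. Two facts finish the argument. First, $\pi_r$ satisfies the triangle inequality, by Minkowski's inequality in $\ell^r$ applied to $\bigl(\|(S+S')x_k\|\bigr)_k$. Second, item (i) gives $\pi_r(\iota_\ell T_\ell)\le\pi_r(T_\ell)$.

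\emph{Item (iii).} This is the only step needing care, because of the Bochner integral. Fix $x_1,\ldots,x_N$. Minkowski's integral inequality, applied in the space $\ell^r_N(Y)$, gives
\[
 \Bigl(\sum_k\|Sx_k\|^r\Bigr)^{1/r}
 \le\int_0^1\Bigl(\sum_k\|\mathcal U_tT\mathcal V_tx_k\|^r\Bigr)^{1/r}\dd t.
\]
This uses two facts: the map $t\mapsto(\mathcal U_tT\mathcal V_tx_k)_k$ is Bochner integrable in $\ell^r_N(Y)$, and the norm of a Bochner integral is at most the integral of the norm. For each fixed $t$, the integrand is at most
\[
 \pi_r(T)\,w_r(\mathcal V_tx_1,\ldots,\mathcal V_tx_N)\le\pi_r(T)\,w_r(x_1,\ldots,x_N),
\]
by item (i) with contractions. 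Measurability of the integrand follows from strong measurability of the two families, since the integrand is a norm of a strongly measurable function.

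\emph{Item (iv).} We use Pietsch domination: there is a regular probability measure $\nu$ on $(B_{X^*},w^*)$ such that
\[
 \|Tx\|\le\pi_r(T)\Bigl(\int|x^*(x)|^r\dd\nu(x^*)\Bigr)^{1/r}.
\]
Put $y_j=x_j-x$. Weak convergence, together with the uniform boundedness principle, gives $\sup_j\|y_j\|<\infty$ and $x^*(y_j)\to0$ pointwise on $B_{X^*}$. Dominated convergence then sends the right-hand side, evaluated at $y_j$, to $0$. This proves \eqref{eq:summing-complete-continuity}.

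The only nonroutine points are the measurability bookkeeping in (iii) and the invocation of the Pietsch domination theorem in (iv). Both are standard; see \cite{DiestelJarchowTonge1995}.
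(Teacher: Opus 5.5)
Your proof is correct and follows the paper's argument step by step. Item (i) uses the adjoint bound on weak $r$-norms, (ii) uses coordinate projections plus Minkowski (your triangle inequality for $\pi_r$ applied to $\sum_\ell\iota_\ell T_\ell$ only repackages the paper's direct Minkowski estimate), (iii) uses Minkowski's integral inequality, and (iv) uses Pietsch domination with dominated convergence; your extra remarks on Bochner integrability in $\ell^r_N(Y)$ are sound and are in any case covered by the hypothesis that the integrals exist.
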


\begin{proof}
For a finite family $(x_k)_{k=1}^N\subset X_0$,
\begin{align*}
 \left(\sum_k\|BTAx_k\|^r\right)^{1/r}
 &\le\|B\|\pi_r(T)w_r(Ax_1,\ldots,Ax_N).
\end{align*}
For $\phi\in B_{X^*}$, the functional
$\phi\circ A/\|A\|$ belongs to $B_{X_0^*}$ when $A\ne0$.  Hence
\[
 w_r(Ax_1,\ldots,Ax_N)
 \le\|A\|w_r(x_1,\ldots,x_N).
\]
This proves \eqref{eq:summing-ideal-property}.

Assume first that $\mathscr T$ is absolutely $r$-summing.  Let
$Q_\ell$ be the coordinate projection from the finite
$\ell^s$-sum onto $Y_\ell$.  Since $\|Q_\ell\|=1$,
\[
 \pi_r(T_\ell)=\pi_r(Q_\ell\mathscr T)\le\pi_r(\mathscr T).
\]
Thus every $T_\ell$ is absolutely $r$-summing.  Conversely, assume
that every $T_\ell$ is absolutely $r$-summing.  Minkowski's inequality
gives
\begin{align*}
 \left(\sum_k\|\mathscr Tx_k\|_{\ell^s(Y_\ell)}^r\right)^{1/r}
 &\le
 \sum_{\ell=1}^m
 \left(\sum_k\|T_\ell x_k\|_{Y_\ell}^r\right)^{1/r}\leq\left(\sum_{\ell=1}^m\pi_r(T_\ell)\right)
 w_r(x_1,\ldots,x_N).
\end{align*}
This proves \eqref{eq:finite-graph-summing}.

For part (iii), apply Minkowski's integral inequality:
\begin{align*}
 \left(\sum_k\|Sx_k\|^r\right)^{1/r}
 &\le
 \int_0^1
 \left(\sum_k
 \|\mathcal U_tT\mathcal V_tx_k\|^r\right)^{1/r}\dd t\\
 &\le
 \pi_r(T)\int_0^1
 w_r(\mathcal V_tx_1,\ldots,\mathcal V_tx_N)\dd t\\
 &\le\pi_r(T)w_r(x_1,\ldots,x_N).
\end{align*}
This proves part (iii).

For part (iv), Pietsch domination gives a probability measure $\nu$ on
$B_{X^*}$ such that
\begin{equation}\label{eq:intro-Pietsch-domination}
 \|Tx\|_Y^r
 \le\pi_r(T)^r\int_{B_{X^*}}|\phi(x)|^r\dd\nu(\phi).
\end{equation}
If $(x_j)$ converges weakly to zero, then it is bounded and
\[
 \phi(x_j)\longrightarrow0,
 \qquad
 |\phi(x_j)|^r\le\sup_j\|x_j\|^r.
\]
Dominated convergence in \eqref{eq:intro-Pietsch-domination} gives
$\|Tx_j\|\to0$.  Translation by $x$ proves
\eqref{eq:summing-complete-continuity}.
\end{proof}
\section{Diagonal ideals and absolutely summing Carleson embeddings}\label{sec:carleson}

\subsection{Block reduction and Carleson embeddings}

Let $\mu$ be a positive locally finite Borel measure on $\Omega$.  Define
\begin{equation*}
 \mathcal D(J_\mu)=A^p(\Omega)\cap L^q(\Omega,\mu),
 \qquad
 J_\mu g=g\quad(g\in\mathcal D(J_\mu)).
\end{equation*}
We write $J_\mu:A^p\to L^q(\mu)$ only when
$\mathcal D(J_\mu)=A^p$ and the inclusion is bounded.

For an admissible lattice $\Lambda=\{a_j\}$, put
\begin{equation*}
 \beta_j(\mu)
 =\frac{\mu(\Q_\rho(a_j))^{1/q}}{V_j^{1/p}}.
\end{equation*}

We use the block principle of Fan, He, Wang, and Zeng
\cite[Proposition~4.2]{FanHeWangZeng2026}.  We state it with normalized
block norms.  We include the proof by Pietsch domination to specify the
constants in the Carleson and Hankel arguments.
In the block argument, $p'=\infty$ when $p=1$.

\begin{lemma}\label{lem:block-principle}
Let $1\le p,q,r<\infty$.  Let $X_j,Y_j$ be Banach spaces.  Suppose
$B_j:X_j\to Y_j$ are absolutely $1$-summing and
$\sup_j\pi_1(B_j)\le1$.  For $b=(b_j)$ define
\[
 \mathbb B_b:(\oplus_jX_j)_{\ell^p}
 \longrightarrow(\oplus_jY_j)_{\ell^q},
 \qquad
 \mathbb B_b(x_j)=(b_jB_jx_j).
\]
If $D_b\in\Piabs_r(\ell^p,\ell^q)$, then
\begin{equation}\label{eq:block-principle}
 \pi_r(\mathbb B_b)
 \le \pi_r(D_b).
\end{equation}
If $\pi_1(B_j)\le C_0$, multiply the right side by $C_0$.
\end{lemma}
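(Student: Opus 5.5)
Each block $B_j$ is $1$-summing with $\pi_1(B_j)\le1$, so Pietsch domination applies to it. We use this to dominate $\mathbb B_b$ pointwise by the diagonal operator $D_b$ acting on a suitable $\ell^p$-sequence. The transfer then goes through a single vector-valued inequality.

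\textbf{Step 1: realize each block as a pointwise-dominated map.} Pietsch's theorem for $B_j$ gives a probability measure $\nu_j$ on $B_{X_j^*}$ with
\[
 \|B_jx\|\le\int_{B_{X_j^*}}|\phi(x)|\dd\nu_j(\phi),\qquad x\in X_j.
\]

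\textbf{Step 2: reduce to the case $x_j^{(k)}$ nonzero in finitely many coordinates.} Fix a finite family $x^{(1)},\dots,x^{(N)}$ in $(\oplus_jX_j)_{\ell^p}$. By truncation we may assume only finitely many coordinates $j$ are nonzero.

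\textbf{Step 3: reduce to a diagonal estimate.} For each fixed choice of functionals $\phi=(\phi_j)\in\prod_jB_{X_j^*}$, define the bounded map
\[
 \Phi_\phi:(\oplus X_j)_{\ell^p}\to\ell^p,\qquad \Phi_\phi(x)=(\phi_j(x_j))_j .
\]
It satisfies $\|\Phi_\phi\|\le1$. By Step 1 and Minkowski's integral inequality, with respect to the product probability $\nu=\bigotimes_j\nu_j$ on the finitely many active coordinates, we get
\[
 \|\mathbb B_bx\|_{\ell^q(Y_j)}
 =\Bigl(\sum_j|b_j|^q\|B_jx_j\|^q\Bigr)^{1/q}
 \le\int\Bigl(\sum_j|b_j|^q|\phi_j(x_j)|^q\Bigr)^{1/q}\dd\nu(\phi)
 =\int\|D_b\Phi_\phi x\|_{\ell^q}\dd\nu(\phi).
\]
This is the inequality $\|\int f\|\le\int\|f\|$ applied coordinatewise in $\ell^q$, since $q\ge1$.

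\textbf{Step 4: sum over the family.} Apply Minkowski's integral inequality once more, now in $\ell^r$ over $k$:
\[
 \Bigl(\sum_k\|\mathbb B_bx^{(k)}\|^r\Bigr)^{1/r}
 \le\int\Bigl(\sum_k\|D_b\Phi_\phi x^{(k)}\|^r\Bigr)^{1/r}\dd\nu(\phi).
\]
The integrand is at most $\pi_r(D_b)\,w_r(\Phi_\phi x^{(1)},\dots,\Phi_\phi x^{(N)})$. By \cref{lem:summing-calculus}(i) this is in turn at most $\pi_r(D_b)\,w_r(x^{(1)},\dots,x^{(N)})$, because $\|\Phi_\phi\|\le1$. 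Integrating against the probability measure $\nu$ gives \eqref{eq:block-principle}.

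\textbf{Rescaling and measurability.} If instead $\pi_1(B_j)\le C_0$, apply the result to $B_j/C_0$.

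The only delicate point is the measurability of $\phi\mapsto\Phi_\phi x$ needed to integrate. This causes no difficulty, because only finitely many coordinates are active: each $\nu_j$ is a Radon measure on the weak$^*$ compact ball, and $\phi\mapsto\phi_j(x_j)$ is weak$^*$ continuous, so all integrands are continuous on a compact product. The main conceptual step is Step 3. Its key observation is that a single random choice of functionals turns every block into a scalar coordinate simultaneously, which reduces the block operator to $D_b$ without any loss in the constant.
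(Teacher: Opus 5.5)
Your proof is correct and follows the paper's argument essentially step for step: Pietsch measures $\nu_j$ for the blocks, Minkowski in $\ell^q$ and then in $\ell^r$ against the product measure, the summing inequality for $D_b$, and contractivity of $x\mapsto(\phi_j(x_j))_j$ into $\ell^p$. The paper phrases this last step through the functionals $F_{\alpha,x^*}$, which are exactly $\alpha\circ\Phi_\phi$.

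The only place where you are terser than the paper is the passage from finitely many coordinates to the full direct sum, which deserves one line. By monotone convergence $\|\mathbb B_bP_mx\|$ increases to $\|\mathbb B_bx\|$, while $w_r(P_mx^{(1)},\ldots,P_mx^{(N)})\le w_r(x^{(1)},\ldots,x^{(N)})$. This also shows that $\mathbb B_b$ is well defined; the paper gets that instead from $\|B_j\|\le1$ and the boundedness of $D_b$.
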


\begin{proof}
First suppose that there are only $m$ coordinates.  Pietsch domination
gives probability measures $\nu_j$ on $B_{X_j^*}$ such that
\begin{equation*}
 \|B_jx\|_{Y_j}
 \le\int_{B_{X_j^*}}|x^*(x)|\dd\nu_j(x^*).
\end{equation*}
Let $x^{(1)},\ldots,x^{(N)}\in(\oplus X_j)_{\ell^p}$ and write
$x^{(k)}=(x_j^{(k)})_j$.  By Minkowski's inequality,
\begin{align*}
 \|\mathbb B_bx^{(k)}\|_{\ell^q(Y_j)}
 &\le
 \left\|\left\{
 |b_j|\int|x^*(x_j^{(k)})|\dd\nu_j(x^*)
 \right\}_j\right\|_{\ell^q}\\
 &\le
 \int
 \|D_b( x_j^*(x_j^{(k)}) )_j\|_{\ell^q}
 \dd\nu((x_j^*)_j),
\end{align*}
where $\nu=\prod_{j=1}^m\nu_j$.  Apply Minkowski once more in $\ell^r_N$
and then the $r$-summing inequality for $D_b$:
\begin{align*}
 \left(\sum_{k=1}^N
 \|\mathbb B_bx^{(k)}\|^r\right)^{1/r}
 &\le\pi_r(D_b)
 \int
 \sup_{\|\alpha\|_{\ell^{p'}}\le1}
 \left(\sum_{k=1}^N
 \left|\sum_j\alpha_jx_j^*(x_j^{(k)})\right|^r
 \right)^{1/r}\dd\nu\\
 &\le\pi_r(D_b)
 \sup_{x^*\in B_{\left((\oplus_jX_j)_{\ell^p}\right)^*}}
 \left(\sum_{k=1}^N|x^*(x^{(k)})|^r\right)^{1/r}.
\end{align*}
For completeness, fix $(x_j^*)$ and
$\alpha\in B_{\ell^{p'}}$.  The functional
\[
 F_{\alpha,x^*}(x)
 =\sum_{j=1}^m\alpha_jx_j^*(x_j)
\]
satisfies
\[
 |F_{\alpha,x^*}(x)|
 \le\sum_{j=1}^m|\alpha_j|\|x_j\|_{X_j}
 \le\|\alpha\|_{p'}
 \bigl\|(\|x_j\|_{X_j})\bigr\|_p
 \le\|x\|_{(\oplus X_j)_{\ell^p}}.
\]
Thus $F_{\alpha,x^*}$ belongs to the dual unit ball.  This proves the
estimate on finitely many coordinates with constant one.

Now consider infinitely many coordinates.  Since
$\|B_j\|\le\pi_1(B_j)\le1$,
\begin{align*}
 \|\mathbb B_bx\|_{\ell^q(Y_j)}
 &\le\|(|b_j|\|x_j\|_{X_j})_j\|_{\ell^q}\\
 &\le\|D_b\|\|x\|_{\ell^p(X_j)}.
\end{align*}
Hence $\mathbb B_b$ is well defined.  Let $P_m$ and $Q_m$ denote the
first-$m$ coordinate projections.  The finite estimate gives
\[
 \pi_r(Q_m\mathbb B_bP_m)\le\pi_r(D_b)
 \qquad(m\in\N).
\]
For a fixed finite family $(x^{(k)})_{k=1}^N$, first replace every
$x^{(k)}$ by $P_mx^{(k)}$.  Since $\|P_m\|\le1$,
\[
 w_r(P_mx^{(1)},\ldots,P_mx^{(N)})
 \le w_r(x^{(1)},\ldots,x^{(N)}).
\]
Moreover,
\[
 Q_m\mathbb B_bP_mx^{(k)}
 =\mathbb B_bP_mx^{(k)}\longrightarrow\mathbb B_bx^{(k)}
\]
in the target norm. Let $m\to\infty$ in the inequality on finitely
many coordinates. This gives the $r$-summing inequality for $\mathbb B_b$.
This proves \eqref{eq:block-principle}.  Scaling gives the bound
with $C_0$.
\end{proof}

We specify the measurable cells used in the factorization.  With $c$
fixed as in \eqref{eq:admissible-lattice}, put
\begin{align*}
 U&=\bigcup_j\Q_{c\rho}(a_j),\qquad
 P_j=\Q_\rho(a_j)\setminus\bigcup_{k<j}\Q_\rho(a_k),\\
 E_j&=\Q_{c\rho}(a_j)\cup(P_j\setminus U).
\end{align*}
The small polydiscs are pairwise disjoint and are contained in their
corresponding $\rho$-polydiscs.  The $P_j$ form a partition of $\Omega$.
Hence the $E_j$ are measurable and satisfy
\begin{equation}\label{eq:partition-cells}
 \Omega=\bigsqcup_jE_j,
 \qquad
 \Q_{c\rho}(a_j)\subset E_j\subset\Q_{C\rho}(a_j),
 \qquad v(E_j)\asymp V_j.
\end{equation}
Each cell is relatively compact in $\Omega$.  Since $\mu$ is locally
finite, $\mu(E_j)<\infty$.
Choose $C$ so large that the cell inclusion in
\eqref{eq:partition-cells} holds and
$C_{\rm sc}C^{-1/m_\Omega}<1/2$. Then
\eqref{eq:polydisc-scale-gauge} gives
$\Q_\rho(a_j)\subset\Q_{C\rho}(a_j)$.
Choose $C_1>C$ large enough for the scaled inclusion in
\eqref{eq:local-scaled-inclusion} below.  As required by the convention
in \eqref{eq:admissible-lattice}, take the global lattice radius so that
$C_1\rho<\rho_0$.  Let
\[
 X_j=A^p(\Q_{C_1\rho}(a_j)),
 \qquad
 \|u\|_{X_j}=\|u\|_{L^p(\Q_{C_1\rho}(a_j))}.
\]
Here $A^p(U)=L^p(U,v)\cap\Hol(U)$ for an open set $U$.  Define
\[
 Rg=(g|_{\Q_{C_1\rho}(a_j)})_j.
\]
Finite overlap gives
\begin{equation*}
 \|Rg\|_{\ell^p(X_j)}^p
 =\sum_j
 \int_{\Q_{C_1\rho}(a_j)}|g|^p\dd v
 \lesssim\|g\|_{A^p}^p.
\end{equation*}

\begin{lemma}\label{lem:local-restriction}
Let $1\le p,q<\infty$.  Let
\[
 B_j:X_j\longrightarrow L^q(E_j,\mu),
 \qquad B_ju=u|_{E_j}.
\]
Then
\begin{equation}\label{eq:local-one-summing}
 \pi_1(B_j)
 \lesssim\frac{\mu(E_j)^{1/q}}{V_j^{1/p}}.
\end{equation}
The constant is independent of $j$ and $\mu$.
\end{lemma}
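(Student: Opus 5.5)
The plan is to bound the nuclear norm $\nu_1(B_j)$, which dominates $\pi_1(B_j)$. We write $B_j$ as an absolutely convergent series of rank-one maps built from the nonisotropic Taylor expansion at $a_j$.

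First we normalize the geometry with the affine map $D_j\zeta=a_j+c_0\sum_k\tau_k\zeta_ku_k$, using the basis and radii at scale $C_1\rho\delta(a_j)$. Under this map $\Q_{C_1\rho}(a_j)$ becomes the unit polydisc $\mathbb D^n$, and $v$ becomes $V_{j,C_1}\,dv$ up to the Jacobian, with $V_{j,C_1}\asymp V_j$ by scale comparison. By the choice of $C_1$ and the gauge estimate \eqref{eq:polydisc-scale-gauge}, we get
\begin{equation}\label{eq:local-scaled-inclusion}
 D_j^{-1}(E_j)\subset D_j^{-1}(\Q_{C\rho}(a_j))\subset\theta\,\mathbb D^n
\end{equation}
for some fixed $\theta<1$ independent of $j$. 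This is the inclusion referenced before the lemma.

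For $u\in X_j$, put $U=u\circ D_j\in A^p(\mathbb D^n)$. Then
\[
 \|U\|_{A^p(\mathbb D^n)}\asymp V_j^{-1/p}\|u\|_{X_j}.
\]
Expand $U(\zeta)=\sum_\alpha\widehat U(\alpha)\zeta^\alpha$. The coefficient functional $U\mapsto\widehat U(\alpha)$ is bounded on $A^p(\mathbb D^n)$ with norm at most $C_\theta'\theta_1^{-|\alpha|}$ for any fixed $\theta<\theta_1<1$. This follows from Cauchy's formula on the torus of radius $\theta_1$ combined with the submean value property on slightly larger polydiscs. Pulled back, the functional $x_\alpha^*(u)=\widehat{u\circ D_j}(\alpha)$ therefore satisfies
\[
 \|x_\alpha^*\|_{X_j^*}\lesssim V_j^{-1/p}\theta_1^{-|\alpha|}.
\]

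Next set $y_\alpha=(\zeta^\alpha\circ D_j^{-1})|_{E_j}\in L^q(E_j,\mu)$. By \eqref{eq:local-scaled-inclusion}, $|y_\alpha|\le\theta^{|\alpha|}$ on $E_j$, so
\[
 \|y_\alpha\|_{L^q(\mu)}\le\theta^{|\alpha|}\mu(E_j)^{1/q}.
\]
The series $B_j=\sum_\alpha x_\alpha^*\otimes y_\alpha$ converges pointwise on $E_j$, since the Taylor series converges uniformly on $\theta\mathbb D^n$. It also converges in norm, because
\[
 \sum_\alpha\|x_\alpha^*\|\,\|y_\alpha\|
 \lesssim\frac{\mu(E_j)^{1/q}}{V_j^{1/p}}\sum_{L\ge0}\binom{L+n-1}{n-1}(\theta/\theta_1)^L<\infty.
\]
Hence $\nu_1(B_j)$ is bounded by the right side of \eqref{eq:local-one-summing}, and $\pi_1\le\nu_1$ concludes the proof. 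Truncating the series at degree $L$ gives the tail $(L+1)^{n-1}\vartheta^{L+1}$ with rank $\binom{L+n}{n}$; this is the estimate later needed for \cref{thm:intro-approximation}.

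The main obstacle is uniformity in $j$. Two inputs must hold with constants independent of the cell: the scaled inclusion with a uniform $\theta<1$, which comes from \eqref{eq:polydisc-scale-gauge} together with the fact that the basis at scale $C\rho$ may differ from that at scale $C_1\rho$, and the volume comparison $V_{j,C_1}\asymp V_j$. On the boundary collar, both follow from the gauge estimate and finite-type scale comparison, so I would first verify the inclusion there. The core cells are finitely many fixed Euclidean polydiscs and are handled directly by compactness.
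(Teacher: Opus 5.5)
Your proposal is correct and follows the paper's proof essentially step for step. Both arguments use the same affine normalization at scale $C_1\rho\delta(a_j)$, with the gauge estimate placing $E_j$ inside $\theta\mathbb D^n$, and the same bound $C V_j^{-1/p}\theta_1^{-|\alpha|}$ for the Taylor coefficient functionals (Cauchy's formula on the torus plus the submean inequality). Both then bound the monomials on $E_j$ by $\theta^{|\alpha|}\mu(E_j)^{1/q}$ and sum the resulting nuclear series against $\binom{m+n-1}{n-1}(\theta/\theta_1)^m$, treating the core cells via the fixed nested coordinate polydiscs as the paper does.
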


\begin{proof}
If $\mu(E_j)=0$, then $B_j=0$.  Assume $\mu(E_j)>0$.  Put
$\delta_j=\delta(a_j)$.  Use the extremal basis at scale
$C_1\rho\delta_j$.  Let $\mathbb D=\{\zeta\in\C:|\zeta|<1\}$.
The affine map
\[
 \Phi_j(\zeta)
 =a_j+c_0\sum_{k=1}^n
 \tau_k(a_j,C_1\rho\delta_j)\zeta_ku_k(a_j,C_1\rho\delta_j)
\]
maps $\mathbb D^n$ onto $\Q_{C_1\rho}(a_j)$.  For
$z\in\Q_{C\rho}(a_j)$, \eqref{eq:polydisc-scale-gauge} gives
\[
 \|\Phi_j^{-1}(z)\|_\infty
 =N_{a_j,C_1\rho}(z-a_j)
 \le C_{\rm sc}(C/C_1)^{1/m_\Omega}.
\]
Fix $0<\theta<\theta_1<1$ and choose $C_1/C$ so large that the last
bound is less than $\theta$.  On the core use the prescribed nested
coordinate polydiscs.  Thus
\begin{equation}\label{eq:local-scaled-inclusion}
 \Phi_j^{-1}(E_j)
 \subset \Phi_j^{-1}(\Q_{C\rho}(a_j))
 \subset\theta\mathbb D^n
 \Subset\theta_1\mathbb D^n\Subset\mathbb D^n
\end{equation}
for every $j$ \cite{McNeal1994,McNealStein1994}. The absolute value
of the real Jacobian determinant of $\Phi_j$, denoted by
$J_{\mathbb R}\Phi_j$, satisfies
\begin{equation*}
 J_{\mathbb R}\Phi_j
 =c_0^{2n}\prod_{k=1}^n
 \tau_k(a_j,C_1\rho\delta_j)^2
 \asymp V_j.
\end{equation*}
Write
\[
 u(\Phi_j(\zeta))
 =\sum_{\alpha\in\Nzero^n}c_\alpha\zeta^\alpha.
\]
For a multiindex $\alpha=(\alpha_1,\ldots,\alpha_n)$, write
$|\alpha|=\sum_k\alpha_k$ and
$\zeta^\alpha=\prod_k\zeta_k^{\alpha_k}$.  Let
$\ell_{j,\alpha}(u)=c_\alpha$ and
$m_{j,\alpha}(z)=(\Phi_j^{-1}z)^\alpha$ for
$z\in\Q_{C_1\rho}(a_j)$.
Cauchy's formula and the local $L^p$ submean estimate give
\[
 |c_\alpha|\le
 C V_j^{-1/p}\theta_1^{-|\alpha|}\|u\|_{X_j},
 \qquad \theta<\theta_1<1.
\]
More explicitly, Cauchy's formula on the torus
$|\zeta_k|=\theta_1$ gives
\[
 |c_\alpha|\theta_1^{|\alpha|}
 \le\sup_{\overline{\theta_1\mathbb D^n}}|u\circ\Phi_j|.
\]
The scaled submean inequality on finitely many polydiscs covering
$\overline{\theta_1\mathbb D^n}$ gives
\[
 \sup_{\overline{\theta_1\mathbb D^n}}|u\circ\Phi_j|
 \le C V_j^{-1/p}\|u\|_{X_j}.
\]
Thus
\begin{align*}
 \|\ell_{j,\alpha}\|_{X_j^*}
 &\le C V_j^{-1/p}\theta_1^{-|\alpha|},
 \\
 \|m_{j,\alpha}\|_{L^q(E_j,\mu)}
 &\le\theta^{|\alpha|}\mu(E_j)^{1/q}.
\end{align*}
On $E_j$,
\[
 B_ju=\sum_\alpha
 \ell_{j,\alpha}(u)\,m_{j,\alpha}|_{E_j},
\]
and
\[
 \|\ell_{j,\alpha}\|\,
 \|m_{j,\alpha}\|_{L^q(E_j,\mu)}
 \le C(\theta/\theta_1)^{|\alpha|}
 \frac{\mu(E_j)^{1/q}}{V_j^{1/p}}.
\]
Since
\[
 \#\{\alpha\in\Nzero^n:|\alpha|=m\}
 =\binom{m+n-1}{n-1},
\]
we obtain
\begin{align*}
 \sum_\alpha\|\ell_{j,\alpha}\|
 \|m_{j,\alpha}\|_{L^q(E_j,\mu)}
 &\le C\frac{\mu(E_j)^{1/q}}{V_j^{1/p}}
 \sum_{m=0}^{\infty}\binom{m+n-1}{n-1}
 \left(\frac{\theta}{\theta_1}\right)^m\\
 &=C\left(1-\frac{\theta}{\theta_1}\right)^{-n}
 \frac{\mu(E_j)^{1/q}}{V_j^{1/p}}.
\end{align*}
The Taylor series converges uniformly on $\theta\mathbb D^n$.  The
convergent sum of norms above also gives convergence in operator norm
from $X_j$ to $L^q(E_j,\mu)$.  Thus the identity
\[
 B_j=\sum_{\alpha\in\Nzero^n}
 \ell_{j,\alpha}\otimes(m_{j,\alpha}|_{E_j})
\]
holds in operator norm.  This is a nuclear decomposition.  The nuclear
norm of $B_j$ is bounded by the right side of
\eqref{eq:local-one-summing}.  To see the $1$-summing bound directly,
let $u_1,\ldots,u_N\in X_j$.  Then
\begin{align*}
 \sum_{k=1}^N\|B_ju_k\|_{L^q(E_j,\mu)}
 &\le\sum_\alpha\|m_{j,\alpha}\|_{L^q(E_j,\mu)}
       \sum_{k=1}^N|\ell_{j,\alpha}(u_k)|\\
 &\le\left(\sum_\alpha\|\ell_{j,\alpha}\|
                \|m_{j,\alpha}\|_{L^q(E_j,\mu)}\right)
       w_1(u_1,\ldots,u_N).
\end{align*}
This proves \eqref{eq:local-one-summing}, including $q=1$.
\end{proof}

\begin{theorem}\label{thm:carleson}
Let $1<p<\infty$ and $1\le q,r<\infty$.  Let $\mu$ be a positive locally
finite Borel measure on $\Omega$.  Then
\begin{equation*}
 J_\mu\in\Piabs_r(A^p(\Omega),L^q(\Omega,\mu))
 \quad\Longleftrightarrow\quad
 \{\beta_j(\mu)\}_j\in\mathfrak d_{p,q}^{\,r}.
\end{equation*}
Moreover,
\begin{equation*}
 \pi_r(J_\mu)
 \asymp
 \|\{\beta_j(\mu)\}\|_{\mathfrak d_{p,q}^{\,r}}.
\end{equation*}
The constants depend only on $p,q,r$, the finite-type data, and the fixed
geometric parameters.  They are independent of $\mu$.
\end{theorem}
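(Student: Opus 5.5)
We prove the two inequalities separately. The upper bound uses the block principle; the lower bound uses kernel synthesis and Rademacher averages.

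\emph{Upper bound.} Factor $J_\mu$ as
\[
 A^p(\Omega)\xrightarrow{R}(\oplus_jX_j)_{\ell^p}
 \xrightarrow{\mathbb B}(\oplus_jL^q(E_j,\mu))_{\ell^q}
 \xrightarrow{\Sigma}L^q(\Omega,\mu).
\]
Here $R$ is the bounded restriction map, and $\Sigma(u_j)=\sum_j u_j\mathbf 1_{E_j}$. Since the $E_j$ partition $\Omega$ by \eqref{eq:partition-cells}, $\Sigma$ is an isometry, and $\Sigma\mathbb BR g=g$.

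Write $b_j=\mu(E_j)^{1/q}V_j^{-1/p}$. For $\mu(E_j)>0$, normalize $B_j=b_j\widetilde B_j$. By \cref{lem:local-restriction}, $\pi_1(\widetilde B_j)\le C_0$ uniformly in $j$. Then \cref{lem:block-principle} and the ideal property \eqref{eq:summing-ideal-property} give
\[
 \pi_r(J_\mu)\lesssim\pi_r(D_b)=\|b\|_{\dideal}.
\]
It remains to compare $b$ with $\beta(\mu)$. We have $E_j\subset\Q_{C\rho}(a_j)$, and each such enlargement is covered by boundedly many $\Q_\rho(a_k)$ with $V_k\asymp V_j$. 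These $a_k$ lie in a fixed enlargement, so $b_j\lesssim\sum_{k\in N(j)}\beta_k$ with $\#N(j)$ uniformly bounded.

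The map $(x_k)\mapsto(\sum_{k\in N(j)}x_k)_j$ splits into finitely many pieces. Each piece is an injection-type relabelling $S$, bounded on both $\ell^p$ and $\ell^q$. So $D_b$ is dominated by a finite sum of operators of the form $S D_\beta S'$. The ideal $\dideal$ is solid, since multiplication by a contraction $\ell^\infty$ multiplier is an ideal operation. This gives $\|b\|_{\dideal}\lesssim\|\beta\|_{\dideal}$.

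A cleaner alternative is to run the whole argument directly with cells $\Q_\rho(a_j)$. One maps into $(\oplus_jL^q(\Q_\rho(a_j),\mu))_{\ell^q}$ and then uses finite overlap to bound $\|g\|_{L^q(\mu)}\le\|(g|_{\Q_\rho(a_j)})\|_{\ell^q}$. The sum map is still bounded there.

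\emph{Lower bound.} Use \cref{lem:geometry-package}(v). Split the lattice into finitely many sufficiently separated sublattices $\Lambda_i$. It suffices to show $\|\beta\mathbf 1_{\Lambda_i}\|_{\dideal}\lesssim\pi_r(J_\mu)$ for each $i$, and then sum.

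On $\Lambda_i$, compose $J_\mu\circ S_p:\ell^p\to L^q(\mu)$ with the map $Q:L^q(\mu)\to\ell^q$ given by
\[
 Qg=\bigl(\mu(\Q_\rho(a_j))^{-1/q}\,\|g\|_{L^q(\Q_\rho(a_j),\mu)}\bigr)_j.
\]
This $Q$ is nonlinear, so linearity must be restored. The standard trick is Rademacher averaging. For signs $\varepsilon$, set
\[
 S_p^\varepsilon c=S_p(\varepsilon c),\qquad \|S_p^\varepsilon\|\lesssim1.
\]
By \eqref{eq:kernel-lower-finite}, on $\Q_\rho(a_j)$ we have $|e_{j,p}|\asymp V_j^{-1/p}$. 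Khintchine's inequality (in $L^q(\mu)$, averaging over $\varepsilon$) gives
\[
 \mathbb E_\varepsilon\int|S_p^\varepsilon c|^q\,d\mu
 \asymp\int\Bigl(\sum_j|c_j|^2|e_{j,p}|^2\Bigr)^{q/2}d\mu
 \ge\sum_j|c_j|^q\beta_j^q\,c.
\]
The last step keeps only the $j$-th term on $\Q_\rho(a_j)$; separation ensures the disjointness we need.

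To obtain an honest operator inequality for $\pi_r(D_\beta)$, apply this to finite families $c^{(1)},\dots,c^{(N)}$ with the same random signs. Then average the $r$-summing inequality for $J_\mu S_p^\varepsilon$ (using the bound $\pi_r\le\pi_r(J_\mu)\|S_p\|$) over $\varepsilon$. Here one interchanges $\mathbb E_\varepsilon$ with the $\ell^r$ sum via Kahane's inequality.

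This is the main obstacle: turning the Khintchine lower bound, which holds pointwise for each vector, into a lower bound on $\bigl(\sum_k\|D_\beta c^{(k)}\|_{\ell^q}^r\bigr)^{1/r}$ with comparable constants. It is essentially the second transference principle of Fan--He--Wang--Zeng. I would invoke their abstract statement, verifying its hypotheses: uniformly bounded randomized synthesis, and the lower localized kernel bound \eqref{eq:kernel-lower-finite}. I would fall back on the Kahane-type averaging sketched above only if those hypotheses do not match exactly.

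Combining both directions gives the equivalence and the norm comparability. The constants depend only on the geometry.
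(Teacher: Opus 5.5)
Your upper bound follows the paper's argument: factorization through the disjoint cells $E_j$, \cref{lem:local-restriction}, the block principle, and the neighbor comparison of \cref{lem:finite-neighbor}. Splitting into partial injections also needs the column bound $\sup_k\#\{j:k\in N(j)\}<\infty$, which you did not state. Your ``cleaner alternative'' with the cells $\Q_\rho(a_j)$ does work and skips the neighbor comparison, but not through a sum map, which would overcount by the overlap number. Instead, covering gives $\|J_\mu g\|_{L^q(\mu)}\le\|(g|_{\Q_\rho(a_j)})_j\|_{\ell^q}$ for every $g$, and such pointwise norm domination passes directly to $\pi_r$. The paper keeps the partition because it truncates that exact linear factorization to build the finite-rank approximants in \cref{thm:summing-approximation}.

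For the lower bound you take a genuinely different but valid route, and the step you call the main obstacle is already closed by your own sketch. For each member of the family, pointwise Khintchine and disjointness of the $\Q_\rho(a_j)$ give $\|D_\beta c^{(k)}\|_q\lesssim(\mathbb E_\varepsilon\|J_\mu S_p(\varepsilon c^{(k)})\|_{L^q(\mu)}^q)^{1/q}$. Kahane's inequality (or just H\"older when $r\ge q$) replaces the $q$th moment by the $r$th. Now sum over $k$, exchange $\mathbb E_\varepsilon$ with the finite sum, and use $w_r((\varepsilon c^{(k)})_k)=w_r((c^{(k)})_k)$ for each fixed sign vector. This yields $\|\beta\mathbf 1_{\Lambda_i}\|_{\dideal}\lesssim\|S_p\|\pi_r(J_\mu)$, so no appeal to the Fan--He--Wang--Zeng transference principle is needed.

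The paper instead linearizes at the operator level. It restricts to the disjoint $Q_j$ and averages $V_tBU_t$ with Rademacher signs on both the source and the target blocks. This extracts the block diagonal $\mathbb Dc=(c_je_{j,p}|_{Q_j})_j$ with $\pi_r(\mathbb D)\le\pi_r(B)$ by convexity. It then applies linear norming functionals of $e_{j,p}|_{Q_j}$ to get $D_\gamma=\Phi\mathbb D$ with $\gamma_j\gtrsim\beta_j(\mu)$.

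Your square-function version needs no target sign maps or norming functionals, but it carries Khintchine--Kahane constants depending on $q,r$ and gives only a norm inequality. The paper's version is linear throughout with averaging constant one. That is why the same extraction is reused almost verbatim for the big Hankel lower bound and the diagonal models.
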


\begin{proof}
We first prove sufficiency.  By \eqref{eq:partition-cells},
\[
 L^q(\Omega,\mu)
 =\left(\bigoplus_jL^q(E_j,\mu)\right)_{\ell^q}.
\]
Since $E_j\subset\Q_{C\rho}(a_j)$, first compare $\beta_j(\mu)$ with
\[
 \beta_j^C(\mu)=\frac{\mu(\Q_{C\rho}(a_j))^{1/q}}{V_j^{1/p}}.
\]
Set
\[
 \mathcal N(j)=
 \{k:\Q_\rho(a_k)\cap\Q_{C\rho}(a_j)\ne\varnothing\}.
\]
The lattice covering gives
\[
 \Q_{C\rho}(a_j)
 \subset\bigcup_{k\in\mathcal N(j)}\Q_\rho(a_k).
\]
Engulfing and packing by the disjoint polydiscs
$\Q_{c\rho}(a_k)$ give
\[
 \sup_j\#\mathcal N(j)
 +\sup_k\#\{j:k\in\mathcal N(j)\}\le N_C,
 \qquad V_k\asymp V_j\quad(k\in\mathcal N(j)).
\]
Therefore
\begin{equation}\label{eq:beta-radius}
\begin{aligned}
\beta_j^C(\mu)
&\le
C\left(
  \sum_{k\in\mathcal N(j)}
  \frac{\mu(\Q_\rho(a_k))}{V_k^{q/p}}
 \right)^{1/q}
\le
C\sum_{k\in\mathcal N(j)}\beta_k(\mu).
\end{aligned}
\end{equation}
\Cref{lem:finite-neighbor} gives
\[
 \|\beta^C(\mu)\|_{\mathfrak d_{p,q}^{\,r}}
 \lesssim\|\beta(\mu)\|_{\mathfrak d_{p,q}^{\,r}}.
\]
This upper bound is enough for the factorization.  Write
\[
 B_j=\beta_j^C(\mu)\widehat B_j,
 \qquad \pi_1(\widehat B_j)\lesssim1,
\]
using \cref{lem:local-restriction}.  Set $\widehat B_j=0$ when
$\beta_j^C(\mu)=0$.  Let
\[
 \mathcal B(u_j)=(B_ju_j)_j.
\]
With the isometric identification above, the embedding factors as
\begin{equation}\label{eq:carleson-factor}
 A^p(\Omega)
 \xrightarrow{R}
 (\oplus_jX_j)_{\ell^p}
 \xrightarrow{\mathcal B}
 (\oplus_jL^q(E_j,\mu))_{\ell^q}.
\end{equation}
The first map is bounded by finite overlap.  Since
$B_j=\beta_j^C(\mu)\widehat B_j$, \cref{lem:block-principle} gives
\[
 \pi_r(\mathcal B)
 \lesssim\|\beta^C(\mu)\|_{\mathfrak d_{p,q}^{\,r}}.
\]
Hence
\begin{equation}\label{eq:carleson-upper}
 \pi_r(J_\mu)
 \lesssim\|\{\beta_j(\mu)\}\|_{\mathfrak d_{p,q}^{\,r}}.
\end{equation}
The map $\mathcal B$ in \eqref{eq:carleson-factor} is defined on the whole
direct sum.  Hence, for every $g\in A^p$,
\[
 \sum_j\int_{E_j}|g|^q\dd\mu<\infty.
\]
Thus $g\in L^q(\mu)$ and the extended map is the inclusion $J_\mu$.

For necessity, divide $\Lambda$ into finitely many sublattices such that
the polydiscs $Q_j:=\Q_\rho(a_j)$ are pairwise disjoint within each
sublattice and \eqref{eq:kernel-synthesis} holds.  Fix one sublattice.
Let $S_p:\ell^p\to A^p(\Omega)$ be the synthesis map in
\eqref{eq:kernel-synthesis}.  Put
\[
 Y=\left(\bigoplus_jL^q(Q_j,\mu)\right)_{\ell^q}.
\]
Restriction to the disjoint union of the $Q_j$ defines a contraction
$R_\mu:L^q(\mu)\to Y$. Set $B=R_\mu J_\mu S_p$.
Use the Rademacher functions
$r_j(t)=(-1)^{\lfloor2^jt\rfloor}$ for $0\le t<1$ and $j\ge1$;
their values at $t=1$ may be chosen arbitrarily. Define
\[
 U_t(c_j)=(r_j(t)c_j),
 \qquad
 V_t(u_j)=(r_j(t)u_j).
\]
Define on finitely supported sequences
\[
 \mathbb Dc=\{c_je_{j,p}|_{Q_j}\}_j.
\]
Enumerate the fixed sublattice.  Let $P_m$ and $Q_m$ be the first-$m$
coordinate projections.  For $c\in c_{00}$ and $j\le m$,
\[
 \bigl(Q_mV_tBU_tP_mc\bigr)_j
 =\sum_{k=1}^m r_j(t)r_k(t)c_k e_{k,p}|_{Q_j}.
\]
Let $\delta_{jk}$ be the Kronecker delta, equal to $1$ when $j=k$
and to $0$ otherwise. Since
$\int_0^1r_j(t)r_k(t)\dd t=\delta_{jk}$,
\begin{equation*}
 Q_m\mathbb DP_m
 =\int_0^1Q_mV_tBU_tP_m\dd t.
\end{equation*}
The maps $U_t$ and $V_t$ are isometries.  The ideal property and
\cref{lem:summing-calculus}(iii) give
\[
 \pi_r(Q_m\mathbb DP_m)\le\pi_r(B).
\]
The same estimate holds with $\{1,\ldots,m\}$ replaced by any finite set
$F$.  Write $P_F$ and $Q_F$ for the corresponding coordinate
projections.  Hence, for $k<m$ and $c\in\ell^p$,
\[
 \|Q_{\{k+1,\ldots,m\}}\mathbb D
   P_{\{k+1,\ldots,m\}}c\|_Y
 \le \pi_r(B)\|P_{\{k+1,\ldots,m\}}c\|_{\ell^p}.
\]
Thus $\mathbb D$ extends from $c_{00}$ to a bounded map
$\ell^p\to Y$.  Apply the estimate on finitely many coordinates to a finite family
and let $m\to\infty$.  Then
\[
 \pi_r(\mathbb D)\le\pi_r(B)
 \le\|S_p\|\pi_r(J_\mu).
\]
Put $\gamma_j=\|e_{j,p}\|_{L^q(Q_j,\mu)}$, which is finite because
$J_\mu$ is bounded.  If $\gamma_j>0$, define
\[
 \varphi_j(u)=
 \begin{cases}
 \displaystyle\gamma_j^{1-q}\int_{Q_j}
 u\,\overline{e_{j,p}}\,|e_{j,p}|^{q-2}\dd\mu,&q>1,\\[6pt]
 \displaystyle\int_{Q_j}u\,
 \frac{\overline{e_{j,p}}}{|e_{j,p}|}\dd\mu,&q=1.
 \end{cases}
\]
The integrands are set to zero where $e_{j,p}=0$.
H\"older's inequality gives $\|\varphi_j\|\le1$, and
$\varphi_j(e_{j,p})=\gamma_j$ gives equality.  Put $\varphi_j=0$ if
$\gamma_j=0$.  This also covers $q=1$.  The target need not be reflexive.
The block map
\[
 \Phi:Y\longrightarrow\ell^q,
 \qquad \Phi(u_j)=(\varphi_j(u_j))_j,
\]
has norm at most one, since
\[
 \|\Phi(u_j)\|_q^q
 =\sum_j|\varphi_j(u_j)|^q
 \le\sum_j\|u_j\|_{L^q(Q_j,\mu)}^q.
\]
Then $D_\gamma=\Phi\mathbb D$.  By \eqref{eq:atom-normalization},
\[
 \gamma_j
 \gtrsim\frac{\mu(Q_j)^{1/q}}{V_j^{1/p}}=\beta_j(\mu).
\]
The ideal property gives
\[
 \|\gamma\|_{\mathfrak d_{p,q}^{\,r}}
 \le\pi_r(\mathbb D).
\]
Since $0\le\beta_j(\mu)\lesssim\gamma_j$, solidity yields
\[
 \pi_r(D_{\beta})\lesssim\pi_r(J_\mu)\|S_p\|.
\]
For each color, extend its coefficient sequence by zero.  The sum of
these sequences is $\beta$.  Solidity and the triangle inequality give
\begin{equation}\label{eq:carleson-lower}
 \|\{\beta_j(\mu)\}\|_{\mathfrak d_{p,q}^{\,r}}
 \lesssim\pi_r(J_\mu).
\end{equation}
Combining \eqref{eq:carleson-upper} and
\eqref{eq:carleson-lower} proves the theorem.
\end{proof}

For a measurable symbol $f$, put $\dd\mu_f=|f|^q\dd v$.  Then
\begin{equation}\label{eq:mu-f-coefficient}
 \beta_j(\mu_f)
 =V_j^{1/q-1/p}\M_{q,\rho}f(a_j).
\end{equation}

\begin{corollary}\label{cor:multiplication}
Let $1<p<\infty$, $1\le q<\infty$, $1\le r<\infty$, and
$f\in\mathcal D_{\Omega,q}$.  Then
\begin{equation*}
 M_f\in\Piabs_r(A^p(\Omega),L^q(\Omega))
 \quad\Longleftrightarrow\quad
 f\in\mathcal L_{p,q}^{r}(\Omega),
\end{equation*}
and
\begin{equation}\label{eq:multiplication-norm}
 \pi_r(M_f)
 \asymp\|f\|_{\mathcal L_{p,q}^{r}(\Omega)}.
\end{equation}
The constants are independent of $f$.
\end{corollary}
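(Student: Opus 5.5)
The plan is to reduce \cref{cor:multiplication} to \cref{thm:carleson} applied to the measure $\dd\mu_f=|f|^q\dd v$. This measure is positive, Borel and locally finite because $f\in L^q_{\loc}(\Omega)$. For $g\in\Gamma$ we have
\[
 \|M_fg\|_{L^q(\Omega)}=\|g\|_{L^q(\mu_f)} .
\]
Hence the map $W:L^q(\mu_f)\to L^q(\Omega,v)$, $Wu=fu$, is an isometry, where we set $Wu=0$ off $\{f\neq0\}$ so that it is well defined on $\mu_f$-classes. On $\Gamma$ we have $M_f=WJ_\mu$, whenever the embedding $J_\mu$ makes sense. Formula \eqref{eq:mu-f-coefficient} identifies $\beta_j(\mu_f)$ with the $j$th entry of $\mathbf M_{p,q,\rho}f$. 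So $\|\beta(\mu_f)\|_{\dideal}=\|f\|_{\mathcal L^r_{p,q}(\Omega)}$.

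For sufficiency, assume $f\in\mathcal L^r_{p,q}$. By \cref{thm:carleson}, $J_{\mu_f}\in\Piabs_r(A^p,L^q(\mu_f))$, with $\pi_r(J_{\mu_f})\lesssim\|f\|_{\mathcal L^r_{p,q}}$. The proof of that theorem also shows that the inclusion is everywhere defined and bounded. Then $WJ_{\mu_f}$ is an absolutely $r$-summing extension of $M_f|_\Gamma$, and \cref{lem:summing-calculus}(i) gives $\pi_r(M_f)\le\pi_r(J_{\mu_f})$.

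For necessity, let $\widetilde M$ be the $r$-summing extension of $M_f$ to $A^p$. We need to produce $J_{\mu_f}$ as a bounded map on all of $A^p$. On $\Gamma$ we have $\|g\|_{L^q(\mu_f)}=\|M_fg\|_q$, so $g\mapsto g$ extends from the dense set $\Gamma$ (\cref{lem:kernel-span-density}) to a bounded map $A^p\to L^q(\mu_f)$. We must check that this extension is the genuine inclusion. Take $g_k\in\Gamma$ with $g_k\to g$ in $A^p$. Then $g_k\to g$ locally uniformly, by the submean estimates. Also $(g_k)$ is Cauchy in $L^q(\mu_f)$, so a subsequence converges $\mu_f$-a.e. to its limit, and that limit must be $g$. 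Hence the extension is $J_{\mu_f}$, and $\widetilde M=WJ_{\mu_f}$.

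Since $W$ is an isometry onto its range, $\pi_r(J_{\mu_f})=\pi_r(WJ_{\mu_f})$ follows directly from the definition of $\pi_r$, because norms of images are preserved. The lower bound in \cref{thm:carleson} then yields $\|f\|_{\mathcal L^r_{p,q}}\lesssim\pi_r(M_f)$.

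The only delicate point is this identification of the extension with the genuine inclusion, together with the domain issues. That includes the case $q=1$, where $M_f$ is defined only on $\Gamma$. The Fatou/a.e.-subsequence argument above handles it. Everything else is the ideal property and the stated Carleson criterion, so the constants are independent of $f$.
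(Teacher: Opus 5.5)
Your proposal is correct and follows essentially the same route as the paper: you transfer the norm identity $\|M_fg\|_{L^q(v)}=\|g\|_{L^q(\mu_f)}$ from $\Gamma$ to all of $A^p$, identify the extension with the genuine inclusion, conclude $\pi_r(M_f)=\pi_r(J_{\mu_f})$, and then apply \cref{thm:carleson} together with \eqref{eq:mu-f-coefficient}. The only difference is cosmetic: you identify the extension through an a.e.-convergent subsequence and the isometry $u\mapsto fu$, whereas the paper uses local $L^q$ convergence of $fg_m\to fg$ and compares the two summing inequalities directly.
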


\begin{proof}
The measure $\mu_f$ is locally finite because $f\in L^q_{\loc}$.  For
$g\in\Gamma$,
\[
 \|M_fg\|_{L^q(v)}=\|J_{\mu_f}g\|_{L^q(\mu_f)}.
\]
The same identity holds for every difference $g-h$ with $g,h\in\Gamma$.
Hence $M_f:\Gamma\to L^q(v)$ has a bounded extension if and only if
$J_{\mu_f}|_\Gamma:\Gamma\to L^q(\mu_f)$ has one.  Both extensions are
unique because $\Gamma$ is dense in $A^p$.

Let $g_m\in\Gamma$ and $g_m\to g$ in $A^p$.  Point evaluation estimates
give $g_m\to g$ uniformly on compact subsets of $\Omega$.  Since
$f\in L^q_{\loc}$,
\[
 f g_m\longrightarrow f g\quad\text{locally in }L^q(v).
\]
Thus the two extensions are the pointwise multiplication map and the
inclusion map, respectively.  Passing to the limit in the norm identity
gives, for every $g\in A^p$,
\[
 \|M_fg\|_{L^q(v)}=\|J_{\mu_f}g\|_{L^q(\mu_f)}.
\]
Apply this identity to every member of a finite family in $A^p$.  Their
weak $r$-norm is the same.  Therefore
\[
 \pi_r(M_f)=\pi_r(J_{\mu_f}).
\]
Now apply \cref{thm:carleson} and \eqref{eq:mu-f-coefficient}.
\end{proof}

\subsection{The diagonal ideal}

The classification in \cref{prop:garling} combines
\cite[Theorem~9]{Garling1974} with
\cite[Theorem~1.4]{FanHeWangZeng2026}.  We give the change of parameters
and the endpoint checks needed for our notation.

For $1\le p,q\le\infty$ and $1\le r<\infty$, recall
\begin{equation*}
 \mathfrak d_{p,q}^{\,r}
 =\{b:D_b\in\Piabs_r(\ell^p,\ell^q)\},
 \qquad
 \|b\|_{\mathfrak d_{p,q}^{\,r}}=\pi_r(D_b).
\end{equation*}

\begin{proposition}\label{prop:diagonal-structure}
Let $1\le p,q\le\infty$ and $1\le r<\infty$.
The space $\mathfrak d_{p,q}^{\,r}$ is a symmetric Banach sequence
lattice.  If $|c_j|\le|b_j|$, then
\begin{equation}\label{eq:solid}
 \|c\|_{\mathfrak d_{p,q}^{\,r}}
 \le\|b\|_{\mathfrak d_{p,q}^{\,r}}.
\end{equation}
If $\sigma:\N\to\N$ is a permutation, then
\[
 \|(b_{\sigma(j)})_j\|_{\mathfrak d_{p,q}^{\,r}}
 =\|b\|_{\mathfrak d_{p,q}^{\,r}}.
\]
If $\sigma:\N\to\N$ is only injective, then
\begin{equation}\label{eq:diagonal-subsequence}
 \|(b_{\sigma(j)})_j\|_{\mathfrak d_{p,q}^{\,r}}
 \le\|b\|_{\mathfrak d_{p,q}^{\,r}}.
\end{equation}
\end{proposition}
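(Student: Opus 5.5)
The plan is to realise every claimed inequality as an instance of the ideal property \cref{lem:summing-calculus}(i), by writing the relevant diagonal operators as compositions of $D_b$ with bounded maps of norm at most one. Completeness and the Banach lattice structure then come from standard facts about $\Piabs_r(\ell^p,\ell^q)$.

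\emph{Solidity.} If $|c_j|\le|b_j|$, set $m_j=c_j/b_j$ when $b_j\ne0$ and $m_j=0$ otherwise, so $|m_j|\le1$. The multiplier $D_m$ has norm at most one on $\ell^q$ (and on $c_0$-type endpoints, since $q\le\infty$ is allowed). Then $D_c=D_mD_b$, and \cref{lem:summing-calculus}(i) gives \eqref{eq:solid}. With $|m_j|=1$ this shows that $\|b\|$ depends only on $(|b_j|)$.

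\emph{Permutations and injections.} For an injective $\sigma$, define
\[
 A:\ell^p\to\ell^p,\quad (Ac)_{\sigma(j)}=c_j,\ \text{zero off }\sigma(\N),
 \qquad B:\ell^q\to\ell^q,\quad (Bd)_j=d_{\sigma(j)}.
\]
Both are contractions; $A$ is an isometric embedding and $B$ a coordinate restriction followed by relabeling. A direct check gives $D_{(b_{\sigma(j)})}=BD_bA$, so \eqref{eq:diagonal-subsequence} follows from the ideal property. If $\sigma$ is a permutation, apply the same bound to $\sigma^{-1}$ and the sequence $(b_{\sigma(j)})$ to get equality.

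\emph{Banach space structure.} The map $b\mapsto D_b$ is linear and injective. Since $\pi_r$ is a norm on $\Piabs_r(\ell^p,\ell^q)$, $\|\cdot\|_{\dideal}$ is a norm on $\dideal$. For completeness, take a Cauchy sequence $b^{(k)}$ in $\dideal$. Because $|b_j|=\|D_be_j\|\le\|D_b\|\le\pi_r(D_b)$, it converges coordinatewise to some $b$. The operators $D_{b^{(k)}}$ converge in $\pi_r$ to some $T$, since $\Piabs_r$ is complete. Operator-norm convergence forces $Te_j=b_je_j$, and boundedness together with density of $c_{00}$ (or weak$^*$ arguments when $p=\infty$, testing on coordinates) gives $T=D_b$.

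Hence $\dideal$ is a Banach space, and solidity makes it a lattice. Symmetry holds because membership is preserved under permutations, with equal norms.

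The only mildly delicate points are the endpoint cases $p=\infty$ or $q=\infty$, where $c_{00}$ is not dense. I would handle these by noting that $D_b$ is determined by its coordinates via the coordinate functionals $e_j^*$: for $D_b$ with $b\in\ell^\infty$ acting on $\ell^\infty$ this is immediate coordinatewise. The identity $BD_bA=D_{b\circ\sigma}$ is likewise a coordinatewise identity valid for all $c\in\ell^p$, so no density is needed there.
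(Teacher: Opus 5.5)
Your proposal is correct and follows the paper's proof essentially step by step: solidity via $D_c=D_mD_b$, injections via $D_{(b_{\sigma(j)})}=BD_bA$ with $\sigma^{-1}$ handling permutations, and completeness via coordinatewise convergence together with completeness of $\Piabs_r(\ell^p,\ell^q)$. For the identification $T=D_b$, the paper uses only the coordinatewise argument you mention as a fallback, namely $(Tx)_j=\lim_m b_j^{(m)}x_j=b_jx_j$ for every $x\in\ell^p$; this works for all $p$, so no density of $c_{00}$ or weak$^*$ argument is needed.
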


\begin{proof}
Put
\[
 \alpha_j=\begin{cases}c_j/b_j,&b_j\ne0,\\0,&b_j=0.\end{cases}
\]
Then $D_c=D_\alpha D_b$ and $\|D_\alpha:\ell^q\to\ell^q\|\le1$.
The ideal property proves \eqref{eq:solid}.  Let
$\sigma:\N\to\N$ be injective.  Define
\[
 J_\sigma^{(p)}:\ell^p\to\ell^p,
 \qquad (J_\sigma^{(p)}x)_{\sigma(j)}=x_j,
\]
with all other coordinates equal to zero.  Define
\[
 Q_\sigma^{(q)}:\ell^q\to\ell^q,
 \qquad (Q_\sigma^{(q)}y)_j=y_{\sigma(j)}.
\]
Both maps have norm one, and
\[
 D_{(b_{\sigma(j)})_j}
 =Q_\sigma^{(q)}D_bJ_\sigma^{(p)}.
\]
The ideal property proves \eqref{eq:diagonal-subsequence}.  If $\sigma$
is a permutation, apply the same argument to $\sigma^{-1}$.  The two
opposite inequalities give equality of norms.

Let $(b^{(m)})$ be Cauchy in $\mathfrak d_{p,q}^{\,r}$.  The absolutely
$r$-summing operators form a Banach operator ideal under $\pi_r$.  Hence
$(D_{b^{(m)}})$ is Cauchy in $\Piabs_r(\ell^p,\ell^q)$.  It converges
to some $T\in\Piabs_r(\ell^p,\ell^q)$.  Moreover,
\[
 |b_j^{(m)}-b_j^{(k)}|
 =\|(D_{b^{(m)}}-D_{b^{(k)}})e_j\|_q
 \le\pi_r(D_{b^{(m)}}-D_{b^{(k)}}).
\]
Thus $b_j^{(m)}\to b_j$ for every $j$.  For every $x\in\ell^p$ and
every coordinate $j$, continuity of coordinate evaluation gives
\[
 (Tx)_j=\lim_m(D_{b^{(m)}}x)_j=b_jx_j.
\]
Hence $T=D_b$ on $\ell^p$.  Finally,
\[
 \|b^{(m)}-b\|_{\mathfrak d_{p,q}^{\,r}}
 =\pi_r(D_{b^{(m)}}-T)\longrightarrow0.
\]
This proves completeness.
\end{proof}

We next prove that the coefficients vanish at the boundary and compare
different summing exponents.

\begin{proposition}
\label{prop:diagonal-compact-monotone}
Let $1<p<\infty$, $1\le q<\infty$, and
$1\le r\le s<\infty$.
Then
\begin{equation}\label{eq:diagonal-monotone}
 \mathfrak d_{p,q}^{\,r}\hookrightarrow
 \mathfrak d_{p,q}^{\,s},
 \qquad
 \|b\|_{\mathfrak d_{p,q}^{\,s}}
 \le \|b\|_{\mathfrak d_{p,q}^{\,r}}.
\end{equation}
Moreover,
\begin{equation}\label{eq:diagonal-c0}
 \mathfrak d_{p,q}^{\,r}\subset c_0.
\end{equation}
Here $c_0$ is the Banach space of scalar sequences converging to zero.
Every $D_b$ with $b\in\mathfrak d_{p,q}^{\,r}$ is compact.
\end{proposition}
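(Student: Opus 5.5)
The statement has three parts: monotonicity in $r$, the inclusion in $c_0$, and compactness of $D_b$. I will treat them in that order; each follows from standard facts about summing operators combined with the diagonal structure in \cref{prop:diagonal-structure}.

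\emph{Monotonicity.} This is the classical inclusion $\Piabs_r\subset\Piabs_s$ for $r\le s$, with $\pi_s\le\pi_r$. I would prove it through Pietsch domination \eqref{eq:intro-Pietsch-domination}. If $T\in\Piabs_r(X,Y)$, there is a probability measure $\nu$ on $B_{X^*}$ such that
\[
 \|Tx\|\le\pi_r(T)\left(\int|\phi(x)|^r\dd\nu\right)^{1/r}.
\]
Since $\nu$ is a probability measure, H\"older's inequality bounds the $L^r(\nu)$ norm by the $L^s(\nu)$ norm. Summing the $s$-th powers over a finite family and bounding $\int\sum_k|\phi(x_k)|^s\dd\nu\le w_s(x_1,\ldots,x_N)^s$ gives $\pi_s(T)\le\pi_r(T)$. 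Applying this to $T=D_b$ proves \eqref{eq:diagonal-monotone}.

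\emph{Inclusion in $c_0$.} I would use the complete continuity from \cref{lem:summing-calculus}(iv). Because $1<p<\infty$, the unit vectors $e_j$ converge weakly to $0$ in $\ell^p$; this is exactly where $p>1$ enters, since the statement fails for $p=1$. Then
\[
 |b_j|=\|D_be_j\|_{\ell^q}\to0,
\]
which proves \eqref{eq:diagonal-c0}.

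\emph{Compactness.} Since $b\in c_0$, write $b^{(m)}=b\mathbf1_{\{1,\ldots,m\}}$. Each $D_{b^{(m)}}$ has finite rank, and
\[
 \|D_b-D_{b^{(m)}}\|_{\ell^p\to\ell^q}\le\sup_{j>m}|b_j|
\]
when $p\le q$. The case $p>q$ needs more care, because then $D_b$ is bounded $\ell^p\to\ell^q$ only when $b$ lies in a smaller sequence space, and a tail in $c_0$ alone does not control the operator norm. To avoid splitting into cases, I would instead use a general argument. On the reflexive space $\ell^p$, a completely continuous operator maps bounded sets into relatively compact sets. Indeed, every bounded sequence has a weakly convergent subsequence, and complete continuity turns this into norm convergence of the images. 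Hence $D_b$ is compact.

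The only genuine obstacle is the compactness claim for $p>q$, and the reflexivity argument handles it uniformly in all cases.
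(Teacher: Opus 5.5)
Your proposal is correct and follows the paper's proof essentially step for step. The paper also uses Pietsch domination with H\"older on the probability measure to get $\pi_s\le\pi_r$, complete continuity from \cref{lem:summing-calculus}(iv) applied to the weakly null unit vectors to get $b\in c_0$, and complete continuity plus reflexivity of $\ell^p$ for compactness; as you noticed, the truncation route is unnecessary.
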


\begin{proof}
Let $D_b\in\Piabs_r(\ell^p,\ell^q)$.  Pietsch domination gives a
probability measure $\nu$ on $B_{\ell^{p'}}$ such that
\begin{equation*}
 \|D_bx\|_q
 \le \pi_r(D_b)
 \left(\int_{B_{\ell^{p'}}}|\phi(x)|^r\dd\nu(\phi)\right)^{1/r}.
\end{equation*}
Since $\nu$ is a probability measure and $r\le s$,
\[
 \left(\int|\phi(x)|^r\dd\nu\right)^{1/r}
 \le
 \left(\int|\phi(x)|^s\dd\nu\right)^{1/s}.
\]
Thus the same measure is a Pietsch measure of order $s$.  Hence
\[
 D_b\in\Piabs_s(\ell^p,\ell^q),
 \qquad
 \pi_s(D_b)\le\pi_r(D_b).
\]
This proves \eqref{eq:diagonal-monotone}.

Every absolutely summing operator is completely continuous by
\cref{lem:summing-calculus}(iv).  The space $\ell^p$ is reflexive.
Thus every bounded sequence in $\ell^p$ has a weakly convergent
subsequence.  The image of this subsequence under $D_b$ converges in norm.
Thus $D_b$ is compact.

For \eqref{eq:diagonal-c0}, let $(e_j)$ be the canonical basis of
$\ell^p$.  Since $1<p<\infty$, the sequence $(e_j)$ converges weakly
to zero in $\ell^p$.
Complete continuity gives
\[
 |b_j|=\|D_be_j\|_{\ell^q}\longrightarrow0.
\]
Therefore $b\in c_0$.
\end{proof}

For later reference, let $j\to\partial\Omega$ mean
\begin{equation*}
 \delta(a_j)\longrightarrow0.
\end{equation*}
Only finitely many lattice points lie in a fixed compact subset of
$\Omega$.  Thus $j\to\partial\Omega$ means that the lattice points leave every
finite subset.

We use the exact diagonal ideal in all proofs.  Garling's
diagonal theorem gives the classification outside
\begin{equation*}
 \mathcal E
 =\{(p,q,r):1<p<2<q<\infty,\ r>\max\{p',q\}\}.
\end{equation*}
Fan, He, Wang, and Zeng completed the classification on
$\mathcal E$ \cite{FanHeWangZeng2026}.
We write $\log^+t=\max\{\log t,0\}$.
Recall that a Young function $\Psi:[0,\infty)\to[0,\infty)$ is
continuous, increasing, and convex.  It satisfies $\Psi(0)=0$ and
$\Psi(t)\to\infty$ as $t\to\infty$.
For $1<q<2$, put
\[
 M_q(0)=0,\qquad
 M_q(t)=t^q\bigl(1+\log^+(t^{-1})\bigr)\quad(t>0).
\]
For $0<t<1$,
\begin{align*}
 M_q'(t)
 &=t^{q-1}\left[q\left(1+\log\frac1t\right)-1\right],\\
 M_q''(t)
 &=t^{q-2}\left[q(q-1)\left(1+\log\frac1t\right)-(2q-1)\right].
\end{align*}
Choose $\varepsilon_q\in(0,1)$ so that both expressions are positive on
$(0,\varepsilon_q]$.  Fix the Young function
\begin{equation}\label{eq:qminus}
 \Psi_q(t)=
 \begin{cases}
  M_q(t),&0\le t\le\varepsilon_q,\\
  M_q(\varepsilon_q)+M_q'(\varepsilon_q)(t-\varepsilon_q)
  +(t-\varepsilon_q)^2,&t>\varepsilon_q.
 \end{cases}
\end{equation}
The two pieces have the same value and derivative at $\varepsilon_q$.
The derivative of the second piece is
$M_q'(\varepsilon_q)+2(t-\varepsilon_q)$.  Hence $\Psi_q$ is increasing
and convex.
We use $q^-$ as a label for this Orlicz scale.  It is not a numerical
exponent.  For each $A\ge1$, this function satisfies
\[
 \Psi_q(At)\le C_{A,q}\Psi_q(t),\qquad t\ge0.
\]
Indeed,
\[
 \lim_{t\downarrow0}\frac{\Psi_q(At)}{\Psi_q(t)}=A^q,
 \qquad
 \lim_{t\to\infty}\frac{\Psi_q(At)}{\Psi_q(t)}=A^2.
\]
The quotient is continuous on $(0,\infty)$.  It is therefore bounded.
The notation $\ell^{q^-}$ means the Orlicz sequence space equipped with
the Luxemburg norm
\begin{equation}\label{eq:luxemburg}
 \|b\|_{\ell^{q^-}}
 =\inf\left\{t>0:\sum_j\Psi_q\left(\frac{|b_j|}{t}\right)\le1\right\}.
\end{equation}

\begin{proposition}\label{prop:garling}
Let $1<p<\infty$, $1\le q<\infty$, and $1\le r<\infty$.  For
$(p,q,r)\in\mathcal E$, put
\begin{equation}\label{eq:exceptional-index}
 \theta=\frac{1/q-1/r}{1/2-1/r},
 \qquad
 \frac1{\kappa_{\mathcal E}}
 =\frac{1-\theta}{r}+\frac{\theta}{p'}.
\end{equation}
The exceptional inequalities imply $0<\theta<1$.
Then the following sequence spaces equal $\mathfrak d_{p,q}^{\,r}$,
with equivalent norms:
\begin{equation}\label{eq:kappa-table}
\mathfrak d_{p,q}^{\,r}=
\begin{cases}
 \ell^{(1/p'+1/q-1/2)^{-1}},
 &1<p\le2,\ 1\le q\le2,\\
 \ell^q,&2<p<\infty,\ 1\le q<p',\\
 \ell^{q^-},&2<p<\infty,\ q=p'<2,\ 1\le r<q,\\
 \ell^q,&2<p<\infty,\ q=p'<2,\ q\le r,\\
 \ell^{\max\{p',\min\{r,q\}\}},
 &2\le p<\infty,\ p'<q<\infty,\\
 \ell^{p'},&1<p<2<q<\infty,\ 1\le r\le p',\\
 \ell^r,&1<p<2,\ p'<q<\infty,\ p'<r\le q,\\
 \ell^{\kappa_{\mathcal E}},
 &1<p<2<q<\infty,\ r>\max\{p',q\}.
\end{cases}
\end{equation}
In the last line,
\begin{equation}\label{eq:exceptional-index-order}
 \max\{p',q\}<\kappa_{\mathcal E}<r.
\end{equation}
\end{proposition}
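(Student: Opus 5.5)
The plan is to deduce \cref{prop:garling} from the two external classifications: Garling's theorem \cite[Theorem~9]{Garling1974} off $\mathcal E$, and \cite[Theorem~1.4]{FanHeWangZeng2026} on $\mathcal E$. The only work is translating their parameters into ours and checking the endpoint cases. I would proceed in three steps.

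\emph{First step: restate the two sources in our notation.} Garling describes diagonal $r$-summing maps $\ell^p\to\ell^q$ through the indices $1/p'$, $1/q$, $1/r$, and the answer depends on the position of $q$ relative to $2$ and $p'$. I would rewrite each case with our convention $D_b(c_j)=(b_jc_j)$ and $\pi_r(D_b)$ as norm, and read off the exponents. For $1<p\le2$, $q\le2$, the Hilbertian (cotype-2) regime gives $\ell^s$ with $1/s=1/p'+1/q-1/2$. For $p>2$, $q<p'$ it gives $\ell^q$. For $q>p'$ it gives the $\max/\min$ expressions. At the point $q=p'<2$ with $r<q$, the logarithmic Orlicz space appears. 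Since \cref{prop:diagonal-structure} makes $\dideal$ a symmetric solid Banach lattice, equality as sets yields equivalence of norms by the closed graph theorem. So I would only need to match the sets, plus the norm comparisons quoted from the sources.

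\emph{Second step: the endpoints.} I would check $q=1$, $r=1$, the boundary values $p=2$ and $q=2$, and $r=p'$, $r=q$. The goal is that adjacent lines of \eqref{eq:kappa-table} agree where they overlap. For instance, at $p=2$ and $q\le2$ the first line gives $\ell^{(1/q)^{-1}}=\ell^q$, which matches the second line as $q<p'=2$ degenerates. I also need to confirm that the Orlicz case is stated with our Young function $\Psi_q$. Any two Young functions equivalent near $0$ give the same sequence space with equivalent Luxemburg norms, since only small arguments matter in $\ell^\Psi$ and the $\Delta_2$ bound $\Psi_q(At)\le C_{A,q}\Psi_q(t)$ holds. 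So $\Psi_q$ may replace $t^q\log(1/t)$ or any variant used in the literature.

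\emph{Third step: the exceptional region.} Here I would verify $0<\theta<1$ and \eqref{eq:exceptional-index-order}. Since $2<q<r$, we have $1/r<1/q<1/2$, so the numerator and denominator of $\theta$ are positive and the numerator is smaller, giving $0<\theta<1$. Then $1/\kappa_{\mathcal E}$ is a strict convex combination of $1/r$ and $1/p'$. As $r>p'$, this gives $p'<\kappa_{\mathcal E}<r$. For $\kappa_{\mathcal E}>q$, I would check $(1-\theta)/r+\theta/p'<1/q$. Writing $1/q=(1-\theta)/r+\theta/2$ (this is the definition of $\theta$), the inequality becomes $\theta/p'<\theta/2$, i.e.\ $p'>2$, which holds because $p<2$. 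This yields $\max\{p',q\}<\kappa_{\mathcal E}<r$.

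I expect the main obstacle to be the bookkeeping in the first step. Garling's and Fan--He--Wang--Zeng's parametrizations must be matched line by line, including their conventions for $1/p^*$ and which region each result covers. The table's case split is not obviously a partition, so I would explicitly verify that every triple $(p,q,r)$ with $1<p<\infty$, $1\le q,r<\infty$ falls in exactly one line, or in overlapping lines that give the same space.
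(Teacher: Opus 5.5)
Your plan matches the paper's proof. The paper substitutes $u=p'$ into Garling's Theorem~9 case by case off $\mathcal E$, invokes Fan--He--Wang--Zeng on $\mathcal E$, changes the Orlicz modular away from zero, and concludes with the closed graph theorem. Your convex-combination check of $\max\{p',q\}<\kappa_{\mathcal E}<r$, via $1/q=(1-\theta)/r+\theta/2$, is cleaner than the paper's explicit algebra with the closed-form index $\kappa_{\mathcal E}=p'q(r-2)/\bigl((p'-2)(q-2)+2(r-2)\bigr)$. You still need the one-line substitution showing that this closed form equals your $\theta$-definition.
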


\begin{proof}
Garling writes the source space as $\ell^{\mathsf P'}$.  Our source is
$\ell^p$.  Therefore
\begin{equation*}
 \mathsf P'=p,
 \qquad
 \mathsf P=p'.
\end{equation*}
Put $u=p'$.  Thus Garling's diagonal maps from $\ell^{u'}=\ell^p$ into
$\ell^q$.  We apply \cite[Theorem~9]{Garling1974} with the pair $(u,q)$.

First suppose $1<p\le2$.  Then $2\le u<\infty$.  If $1\le q\le2$,
Garling's cases (iii) to (v), including their common boundary lines, give
\begin{equation*}
 b\in\ell^{\phi(u,q)},
 \qquad
 \frac1{\phi(u,q)}=\frac1u+\frac1q-\frac12.
\end{equation*}
Since $u=p'$, this is the first line of
\eqref{eq:kappa-table}.  At the two endpoints,
\[
 p=2:\quad \phi(2,q)=q,
 \qquad
 q=2:\quad \phi(p',2)=p'.
\]
In particular, the corner $p=q=2$ gives $\ell^2$ for every $r$.

Continue with $1<p<2$ and $2<q<\infty$.  Now $u=p'>2$.  Garling's
cases (vi) and (vii) give
\begin{align}
 1\le r\le u
 &\quad\Longrightarrow\quad b\in\ell^u,
 \label{eq:Garling-high-high-one}\\
 u<r\le q,\quad u<q
 &\quad\Longrightarrow\quad b\in\ell^r.
 \notag
\end{align}
Substituting $u=p'$ gives the sixth and seventh lines of
\eqref{eq:kappa-table}.  If $q\le p'$, only
\eqref{eq:Garling-high-high-one} applies before the remaining range.  If
$p'<q$, both lines apply.  The remaining range is exactly
\[
 r>\max\{p',q\}.
\]
Thus the remaining parameter range with $1<p<2$ and $q>2$ is exactly
$\mathcal E$.

Next suppose $2<p<\infty$.  Then $1<u=p'<2$.  For $1\le q<u$, Garling's
case (iv) gives
\begin{equation*}
 b\in\ell^q
 \qquad(1\le r<\infty).
\end{equation*}
This is the second line of \eqref{eq:kappa-table}.  On the diagonal
$q=u=p'<2$, case (ii) gives
\begin{equation*}
 b\in
 \begin{cases}
  \ell^{q^-},&1\le r<q,\\
  \ell^q,&q\le r<\infty.
 \end{cases}
\end{equation*}
Garling defines $\ell^{q^-}$ by a modular equivalent near zero to
\[
 \sum_j|b_j|^q\bigl(1+|\log|b_j||\bigr).
\]
Changing the defining function away from zero does not change the
sequence space or its topology.  Hence the Young function in
\eqref{eq:qminus} gives the same Orlicz space.  This proves the third
and fourth lines of \eqref{eq:kappa-table}.

For $1<u\le2$ and $u<q$, Garling's case (i) gives
\begin{equation}\label{eq:Garling-above-diagonal}
 b\in
 \begin{cases}
  \ell^u,&1\le r\le u,\\
  \ell^r,&u\le r\le q,\\
  \ell^q,&q\le r<\infty.
 \end{cases}
\end{equation}
At the common endpoints the two adjacent conditions agree.  Formula
\eqref{eq:Garling-above-diagonal} is equivalent to
\[
 b\in\ell^{\max\{u,\min\{r,q\}\}}.
\]
Substituting $u=p'$ gives the fifth line of
\eqref{eq:kappa-table}.  This completes the power and Orlicz regions.

In the remaining region $\mathcal E$, the theorem of Fan, He, Wang,
and Zeng gives
\[
 D_b\in\Piabs_r(\ell^p,\ell^q)
 \quad\Longleftrightarrow\quad
 b\in\ell^{\kappa_{\mathcal E}},
\]
with equivalence of norms
\cite[Theorem~1.4]{FanHeWangZeng2026}.  Their formula for the index is
\[
 \kappa_{\mathcal E}
 =\frac{p'q(r-2)}
 {(p'-2)(q-2)+2(r-2)}.
\]
To compare the exponents, put
\[
 D=(p'-2)(q-2)+2(r-2).
\]
Since $r>\max\{p',q\}$ and $p',q>2$,
\begin{align*}
 q(r-2)-D&=(r-p')(q-2)>0,\\
 p'(r-2)-D&=(r-q)(p'-2)>0,\\
 rD-p'q(r-2)&=2(r-p')(r-q)>0.
\end{align*}
Thus $\kappa_{\mathcal E}>p'$, $\kappa_{\mathcal E}>q$, and
$\kappa_{\mathcal E}<r$.  This proves
\eqref{eq:exceptional-index-order}.  Direct substitution of
\[
 \theta=\frac{2(r-q)}{q(r-2)}
\]
into \eqref{eq:exceptional-index} gives the same formula for
$\kappa_{\mathcal E}$.

Finally, coordinate evaluations are continuous for
$\mathfrak d_{p,q}^{\,r}$ by
\cref{prop:diagonal-structure}.  They are continuous for each listed
power or Orlicz space.  The identity map between two equal sequence
sets has a closed graph in both directions.  The closed graph theorem
therefore gives equivalence of norms in each case.
\end{proof}

\subsection{Lattice stability}

Recall the coefficient sequences in
\eqref{eq:lattice-M} and \eqref{eq:lattice-G}.

\begin{lemma}\label{lem:finite-neighbor}
Let $1\le p,q\le\infty$ and $1\le r<\infty$.
Suppose that $N_0\in\N$, that $\mathcal N(j)$ are finite sets, and
\[
 \sup_j\#\mathcal N(j)+\sup_k\#\{j:k\in\mathcal N(j)\}\le N_0.
\]
If $e_j=\sum_{k\in\mathcal N(j)}|b_k|$, then
\begin{equation*}
 \|e\|_{\mathfrak d_{p,q}^{\,r}}
 \le N_0\|b\|_{\mathfrak d_{p,q}^{\,r}}.
\end{equation*}
\end{lemma}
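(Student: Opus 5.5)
The plan is to split the neighbor relation into at most $N_0$ injective partial maps and then use the triangle inequality together with \cref{prop:diagonal-structure}. Consider the bipartite graph on pairs $(j,k)$ with $k\in\mathcal N(j)$. Put $d_1=\sup_j\#\mathcal N(j)$ and $d_2=\sup_k\#\{j:k\in\mathcal N(j)\}$. Every vertex on either side then has degree at most $\max\{d_1,d_2\}\le N_0$. By K\H{o}nig's edge-colouring theorem (for infinite bipartite graphs of bounded degree one can colour finite subgraphs and pass to a limit by compactness), the edge set splits into at most $N_0$ matchings $M_1,\dots,M_{N_0}$. A cheaper route is a greedy colouring with $d_1+d_2-1\le N_0$ colours, which already suffices because the hypothesis bounds the \emph{sum} $d_1+d_2$. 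I would use the greedy version: order the edges by enumerating $j$ and then $k$, and give each edge the least colour not used by the at most $(d_1-1)+(d_2-1)$ edges that share an endpoint with it. This needs no compactness.

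Each colour class $M_\ell$ is a partial injection: on the set $S_\ell$ of indices $j$ that carry an edge of colour $\ell$, define $\sigma_\ell(j)$ to be the unique $k$ with $(j,k)\in M_\ell$. Then $\sigma_\ell$ is injective on $S_\ell$. Set $e^{(\ell)}_j=|b_{\sigma_\ell(j)}|$ for $j\in S_\ell$ and $e^{(\ell)}_j=0$ otherwise. By construction, $e_j=\sum_\ell e^{(\ell)}_j$ coordinatewise.

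To bound each $e^{(\ell)}$, I first extend $\sigma_\ell$ to an injection of $\N$ in order to apply \eqref{eq:diagonal-subsequence}. Directly, $D_{e^{(\ell)}}=U\,D_b\,J$ up to the unimodular phases $|b_k|/b_k$, which are handled by solidity \eqref{eq:solid}. Here $J:\ell^p\to\ell^p$ sends $e_j\mapsto e_{\sigma_\ell(j)}$ for $j\in S_\ell$ and sends the other basis vectors to zero, and $Q:\ell^q\to\ell^q$ takes $y$ to the sequence with $y_{\sigma_\ell(j)}$ in slot $j\in S_\ell$ and zero elsewhere. Both maps have norm at most one because $\sigma_\ell$ is injective; this holds for every $1\le p,q\le\infty$. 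The ideal property \eqref{eq:summing-ideal-property} then gives $\|e^{(\ell)}\|_{\dideal}\le\|b\|_{\dideal}$. Finally, by the triangle inequality in the Banach lattice $\dideal$ (\cref{prop:diagonal-structure}), $\|e\|_{\dideal}\le\sum_\ell\|e^{(\ell)}\|_{\dideal}\le N_0\|b\|_{\dideal}$.

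The main obstacle is getting the constant exactly $N_0$, i.e.\ showing that the number of colours needed does not exceed $N_0$. Greedy colouring gives $d_1+d_2-1\le N_0-1$ when both $d_i\ge1$, and the trivial case $\mathcal N(j)=\varnothing$ for all $j$ is immediate. So the count is comfortable, and the only care needed is checking that the greedy procedure is well defined on a countable edge set. It is, since each edge has only finitely many predecessors that share an endpoint with it.
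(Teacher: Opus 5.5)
Your proposal is correct and is essentially the paper's proof: a greedy edge-colouring of the bipartite incidence graph with at most $N_0-1$ colours, each colour class a partial injection $\sigma$ giving $D_{b^\sigma}=Q_\sigma D_bJ_\sigma$ with contractive $Q_\sigma,J_\sigma$, and then the triangle inequality in $\mathfrak d_{p,q}^{\,r}$. The one blemish is your aside about first extending $\sigma_\ell$ to an injection of $\N$, which is not always possible (e.g.\ $\sigma(j)=j-1$ on $\{2,3,\ldots\}$). It is also unnecessary, because your direct factorization through $J$ and $Q$ is exactly what the argument uses.
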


\begin{proof}
View the incidence relation
$\{(j,k):k\in\mathcal N(j)\}$ as a bipartite graph.  Every vertex has
finite degree.  Enumerate its edges.  When an edge $(j,k)$ is colored, at
most
\[
 \bigl(\#\mathcal N(j)-1\bigr)
 +\bigl(\#\{i:k\in\mathcal N(i)\}-1\bigr)
 \le N_0-2
\]
colors are forbidden at its endpoints.  Thus $N_0-1$ colors suffice if
the graph is nonempty.  In that case $N_0\ge2$.  Edges of one color
define a partial injection $\sigma$.  This is a one-to-one map defined
on a subset of $\N$.  If $b_j^\sigma=b_{\sigma(j)}$ on the domain of
$\sigma$ and is zero otherwise, then
\[
 (J_\sigma x)_{\sigma(j)}=x_j,
 \qquad
 (Q_\sigma y)_j=y_{\sigma(j)}
\]
on that domain; all other coordinates are zero.  Thus
\[
 D_{b^\sigma}=Q_\sigma D_bJ_\sigma,
 \qquad \|Q_\sigma\|,\|J_\sigma\|\le1.
\]
The ideal property gives
$\|b^\sigma\|_{\mathfrak d_{p,q}^{\,r}}\le
\|b\|_{\mathfrak d_{p,q}^{\,r}}$.
If the color classes are indexed by $1\le\nu\le m$, then $m<N_0$ and
\[
 e=\sum_{\nu=1}^m|b^{\sigma_\nu}|.
\]
The triangle inequality gives
\[
 \|e\|_{\mathfrak d_{p,q}^{\,r}}
 \le m\|b\|_{\mathfrak d_{p,q}^{\,r}}
 \le N_0\|b\|_{\mathfrak d_{p,q}^{\,r}}.
\]
\end{proof}

\begin{lemma}\label{lem:discretization}
Let $1\le p,q<\infty$, $1\le r<\infty$, and
$f\in L^q_{\loc}(\Omega)$.  The norm of
$\mathbf M_{p,q,\rho}f$ is unchanged, up to fixed constants,
when the admissible radius or lattice is changed.  Hence
$\mathcal L_{p,q}^{r}(\Omega)$ is well defined.

\end{lemma}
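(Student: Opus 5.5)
The cleanest route is to read the invariance off the Carleson criterion, since $\mathbf M_{p,q,\rho}f$ is exactly the sequence $\beta(\mu_f)$ for $\dd\mu_f=|f|^q\dd v$, by \eqref{eq:mu-f-coefficient}. Fix two admissible data $(\rho,\Lambda)$ and $(\rho',\Lambda')$. If $\mu_f$ is a Carleson measure and $J_{\mu_f}\in\Piabs_r$, then \cref{thm:carleson} applied with each choice gives
\[
 \|\mathbf M_{p,q,\rho}f\|_{\dideal}\asymp\pi_r(J_{\mu_f})\asymp
 \|\mathbf M'_{p,q,\rho'}f\|_{\dideal}.
\]
This argument only works when $1<p$ and when $J_{\mu_f}$ is defined. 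The lemma, however, is stated for $1\le p$ and for arbitrary $f\in L^q_{\loc}$, so that both sides may be infinite. For that reason I would give a direct comparison, which avoids this restriction.

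\emph{Direct comparison.} For each $j\in\Lambda'$ put
\[
 \mathcal N(j)=\{k\in\Lambda:\Q_\rho(a_k)\cap\Q_{\rho'}(a'_j)\ne\varnothing\}.
\]
The steps are as follows.
\begin{enumerate}[label=\textup{(\arabic*)}]
\item \emph{Covering.} Since the $\rho$-balls cover $\Omega$, we have $\Q_{\rho'}(a'_j)\subset\bigcup_{k\in\mathcal N(j)}\Q_\rho(a_k)$.
\item \emph{Comparable volumes.} Engulfing and the scale comparison \eqref{eq:polydisc-scale-gauge} give $\V_{\rho'}(a'_j)\asymp V_k$ for $k\in\mathcal N(j)$, with constants depending only on the two fixed radii.
\item \emph{Bounded incidence.} Packing by the disjoint small balls $\Q_{c\rho}(a_k)$ inside a fixed enlargement of $\Q_{\rho'}(a'_j)$ bounds $\#\mathcal N(j)$. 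The symmetric packing argument, using the disjoint balls $\Q_{c\rho'}(a'_j)$, bounds $\#\{j:k\in\mathcal N(j)\}$. This is the same argument used for (iii) in \cref{lem:geometry-package}.
\end{enumerate}
Now write the coefficient as
\[
 \V_{\rho'}(a'_j)^{1/q-1/p}\M_{q,\rho'}f(a'_j)=\frac{\mu_f(\Q_{\rho'}(a'_j))^{1/q}}{\V_{\rho'}(a'_j)^{1/p}}.
\]
Bound $\mu_f(\Q_{\rho'}(a'_j))$ by $\sum_{k\in\mathcal N(j)}\mu_f(\Q_\rho(a_k))$, replace the volumes using step (2), and use $(\sum x_k)^{1/q}\le\sum x_k^{1/q}$. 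This gives
\[
 \mathbf M'_j\lesssim\sum_{k\in\mathcal N(j)}\mathbf M_k,
\]
exactly as in \eqref{eq:beta-radius}. \Cref{lem:finite-neighbor} together with solidity \eqref{eq:solid} then yields $\|\mathbf M'\|_{\dideal}\lesssim\|\mathbf M\|_{\dideal}$. Exchanging the roles of the two data gives the reverse inequality. Both bounds hold in $[0,\infty]$, so finiteness is preserved as well.

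The main obstacle is making steps (2) and (3) uniform when $\rho\ne\rho'$. No inclusion between $\Q_\rho$ and $\Q_{\rho'}$ is assumed, and the extremal bases may differ between the two scales. I would handle this with the gauge estimate \eqref{eq:polydisc-scale-gauge}. If $\rho\le\rho'$, then $\Q_\rho(z)\subset\Q_{\rho'}^{[C]}(z)$ with $C=C_{\rm sc}(\rho'/\rho)^{1/m_\Omega}$. If $\rho'<\rho$, the dilation $\Q_{\rho'}(z)\subset\Q_\rho^{[C']}(z)$ instead. Combined with engulfing, this keeps all the relevant balls inside a fixed enlargement of the form $\Q_{C''\max(\rho,\rho')}$, which still lies below $\rho_0$ by the standing smallness convention. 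The volume comparison then follows from \eqref{eq:volume-product}, since the side lengths at two fixed scales are comparable up to powers of $\rho'/\rho$. The finitely many core cells are handled by compactness.
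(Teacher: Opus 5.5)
Your direct comparison is correct and is essentially the paper's argument: neighbor sets from the covering, two-sided bounded incidence and comparable volumes from engulfing and packing, the bound $(\sum x_k)^{1/q}\le\sum x_k^{1/q}$, and then \cref{lem:finite-neighbor}, solidity, and exchanging the two choices of radius and lattice. Two remarks only: the paper writes the estimate with the averages $\M_{q,\rho}$ rather than the masses of $\mu_f$, which is purely notational; and in your gauge step, \eqref{eq:polydisc-scale-gauge} gives the dilation factor $C_{\rm sc}(\rho/\rho')^{1/m_\Omega}$ rather than its reciprocal, although your larger factor still yields a valid inclusion.
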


\begin{proof}
Let $(\rho,\Lambda)$ and $(\sigma,\Lambda')$ be two admissible choices,
where
\[
 \Lambda=\{a_j\},\qquad \Lambda'=\{a_k'\},\qquad
 V_j=v(\Q_\rho(a_j)),\qquad W_k=v(\Q_\sigma(a_k')).
\]
The superscript on $\mathbf M^{\Lambda}_{p,q,\rho}$ denotes the
sampling lattice.  The norm is the same as before.
For each $j$, set
\[
 \mathcal N(j)
 =\{k:\Q_\sigma(a_k')\cap\Q_\rho(a_j)\ne\varnothing\}.
\]
Since the $\sigma$-polydiscs cover $\Omega$,
\begin{equation}\label{eq:two-lattice-cover}
 \Q_\rho(a_j)
 \subset\bigcup_{k\in\mathcal N(j)}\Q_\sigma(a_k').
\end{equation}
Engulfing, volume doubling, and the disjoint small polydiscs in the two
lattices give
\begin{equation}\label{eq:two-lattice-incidence}
 \sup_j\#\mathcal N(j)
 +\sup_k\#\{j:k\in\mathcal N(j)\}\le N_{\rho,\sigma}\in\N,
 \qquad
 V_j\asymp W_k\quad(k\in\mathcal N(j)).
\end{equation}
The constants depend on the two fixed radii.  They do not depend on the
indices.

Put $\alpha=1/q-1/p$.  From
\eqref{eq:two-lattice-cover} and \eqref{eq:two-lattice-incidence},
\begin{align*}
 \M_{q,\rho}f(a_j)^q
 &=\frac1{V_j}\int_{\Q_\rho(a_j)}|f|^q\dd v\notag\le\sum_{k\in\mathcal N(j)}
 \frac{W_k}{V_j}\M_{q,\sigma}f(a_k')^q\notag\lesssim\sum_{k\in\mathcal N(j)}
 \M_{q,\sigma}f(a_k')^q.
\end{align*}
Thus
\begin{align*}
 V_j^\alpha\M_{q,\rho}f(a_j)
 &\lesssim
 \left(\sum_{k\in\mathcal N(j)}
 \bigl[W_k^\alpha\M_{q,\sigma}f(a_k')\bigr]^q\right)^{1/q}\notag\le\sum_{k\in\mathcal N(j)}
 W_k^\alpha\M_{q,\sigma}f(a_k').
\end{align*}
Solidity and \cref{lem:finite-neighbor} give
\begin{equation*}
 \|\mathbf M_{p,q,\rho}^{\Lambda}f\|_{\mathfrak d_{p,q}^{\,r}}
 \lesssim
 \|\mathbf M_{p,q,\sigma}^{\Lambda'}f\|_{\mathfrak d_{p,q}^{\,r}}.
\end{equation*}
Reverse the two admissible choices.  This proves the norm comparison
for $\mathbf M$.

\end{proof}

\subsection{Approximation in the summing norm}

The local Taylor decomposition also gives approximation by operators of finite rank in
the summing norm.  The estimate depends on the number of chosen lattice
cells and the Taylor degree on each cell.  It applies to
singular measures and includes $q=1$.

\begin{theorem}\label{thm:summing-approximation}
Let $1<p<\infty$, $1\le q,r<\infty$, and let $\mu$ be a positive
locally finite Borel measure on $\Omega$.  Assume
$J_\mu\in\Piabs_r(A^p(\Omega),L^q(\Omega,\mu))$.
Use the cells $E_j$ and the fixed enlargements from
\eqref{eq:partition-cells} through \eqref{eq:local-scaled-inclusion}, and put
\[
 b_j=\frac{\mu(\Q_{C\rho}(a_j))^{1/q}}{V_j^{1/p}}.
\]
There are constants $A>0$ and $0<\vartheta<1$, independent of $\mu$,
with the following property.  For every finite $F\subset\N$ and every
integer $L\ge0$, there is an operator
$R_{F,L}:A^p(\Omega)\to L^q(\Omega,\mu)$ with
\begin{equation}\label{eq:local-Taylor-rank}
 \operatorname{rank}R_{F,L}
 \le \#F\binom{L+n}{n}
\end{equation}
and
\begin{equation}\label{eq:summing-approximation-bound}
 \pi_r(J_\mu-R_{F,L})
 \le A\left(
 \|b\mathbf1_{F^c}\|_{\mathfrak d_{p,q}^{\,r}}
 +(L+1)^{n-1}\vartheta^{L+1}
 \|b\mathbf1_F\|_{\mathfrak d_{p,q}^{\,r}}
 \right).
\end{equation}
The constants depend only on the parameters and the fixed geometric
data.  In particular, $J_\mu$ is a limit of operators of finite rank in
the $r$-summing norm.
\end{theorem}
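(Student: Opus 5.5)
We reuse the factorization \eqref{eq:carleson-factor} from the proof of \cref{thm:carleson}, $J_\mu=\mathcal B R$, with $R:A^p\to(\oplus_jX_j)_{\ell^p}$ bounded by finite overlap and $\mathcal B(u_j)=(B_ju_j)_j$. We split each local restriction along the nuclear Taylor decomposition of \cref{lem:local-restriction}:
\[
 B_j=B_j^{\le L}+B_j^{>L},\qquad
 B_j^{\le L}=\sum_{|\alpha|\le L}\ell_{j,\alpha}\otimes(m_{j,\alpha}|_{E_j}),
\]
with the remainder $B_j^{>L}$ given by the tail $|\alpha|>L$. Then we define
\[
 R_{F,L}g=\sum_{j\in F}B_j^{\le L}(g|_{\Q_{C_1\rho}(a_j)}),
\]
placed in the $E_j$-coordinate of $L^q(\mu)=(\oplus_jL^q(E_j,\mu))_{\ell^q}$. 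Each $B_j^{\le L}$ has range in the span of the monomials $m_{j,\alpha}$ with $|\alpha|\le L$. There are $\binom{L+n}{n}$ such monomials, which gives \eqref{eq:local-Taylor-rank}.

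The error is
\[
 J_\mu-R_{F,L}=\mathcal B_{F^c}R+\mathcal B^{>L}_FR,
\]
where $\mathcal B_{F^c}$ keeps only the blocks $B_j$ with $j\notin F$, and $\mathcal B^{>L}_F$ keeps only the tail blocks $B_j^{>L}$ with $j\in F$. For the first term, \cref{lem:local-restriction} gives $\pi_1(B_j)\lesssim\mu(E_j)^{1/q}V_j^{-1/p}\le b_j$, because $E_j\subset\Q_{C\rho}(a_j)$. Write $B_j=b_j\widehat B_j$ with $\pi_1(\widehat B_j)\lesssim1$, as in the sufficiency proof. Then \cref{lem:block-principle} applied to the sequence $b\mathbf 1_{F^c}$ gives
\[
 \pi_r(\mathcal B_{F^c})\lesssim\|b\mathbf1_{F^c}\|_{\dideal}.
\]
For the second term, the nuclear estimate in the proof of \cref{lem:local-restriction}, summed only over $|\alpha|\ge L+1$, gives
\[
 \pi_1(B_j^{>L})\le\nu_1(B_j^{>L})\lesssim\frac{\mu(E_j)^{1/q}}{V_j^{1/p}}\sum_{m>L}\binom{m+n-1}{n-1}\Big(\frac{\theta}{\theta_1}\Big)^m.
\]
With $\vartheta=\theta/\theta_1$, we have $\binom{m+n-1}{n-1}\lesssim(m+1)^{n-1}$. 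The tail sum is then bounded by $C(L+1)^{n-1}\vartheta^{L+1}$, for instance by comparing with the ratio of consecutive terms, or by slightly increasing $\vartheta$ and absorbing the polynomial factor. Applying \cref{lem:block-principle} to $b\mathbf 1_F$ with $C_0\asymp(L+1)^{n-1}\vartheta^{L+1}$ gives the second term of \eqref{eq:summing-approximation-bound}. Combining the two bounds by the triangle inequality for $\pi_r$ and the ideal property with $\|R\|$ finishes the estimate.

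For the final assertion, note that the $b$ defined here with the enlarged balls satisfies $\|b\|_{\dideal}\lesssim\pi_r(J_\mu)<\infty$, by \eqref{eq:beta-radius}, \cref{lem:finite-neighbor} and \cref{thm:carleson}. We take $F=\{1,\dots,M\}$ and let $L\to\infty$, so the second term vanishes. It remains to show that $\|b\mathbf 1_{F^c}\|_{\dideal}\to0$ as $M\to\infty$, and this is the main obstacle: in the Orlicz or exceptional cases the ideal might fail order continuity. The plan is to avoid the classification and use Pietsch domination, as the introduction indicates. $D_b$ is $r$-summing, so it has a Pietsch measure $\nu$ on $B_{\ell^{p'}}$. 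Since $D_{b\mathbf1_{F^c}}=D_bP_{F^c}$, its summing norm is controlled by
\[
 \pi_r(D_b)\sup_{\|x\|\le1}\Big(\int|\phi(P_{F^c}x)|^r\,d\nu\Big)^{1/r}
 \le\pi_r(D_b)\Big(\int\|P_{F^c}\phi\|_{\ell^{p'}}^r\,d\nu\Big)^{1/r}.
\]
Here $P_{F^c}\phi$ is the restriction of $\phi$ to the coordinates outside $F$. Since $1<p'<\infty$, we have $\|P_{F^c}\phi\|_{p'}\to0$ for each $\phi$, and these quantities are bounded by $1$. Dominated convergence therefore gives the tail convergence, more precisely via the domination bound $\|D_bP_{F^c}x\|\le\pi_r(D_b)(\int|\phi(P_{F^c}x)|^r\,d\nu)^{1/r}$ and the same estimate applied to finite families, using $\pi_r(D_bP_{F^c})\le\pi_r(D_b\,\cdot)$ restricted to the Pietsch integral.
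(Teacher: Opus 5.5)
Your proposal is correct and follows the paper's proof essentially step for step. It uses the same truncated Taylor blocks, the same nuclear tail estimate with $\vartheta=\theta/\theta_1$, and the block principle for the residual blocks: you apply it separately to $b\mathbf 1_{F^c}$ and to $b\mathbf 1_F$ and add, while the paper applies it once to $d_j=b_j(\mathbf 1_{F^c}(j)+\eta_L\mathbf 1_F(j))$ and then uses solidity and the triangle inequality. It also uses the same Pietsch-domination argument for $\|b\mathbf 1_{\{j>M\}}\|_{\dideal}\to0$. One small correction in that last step: the intermediate quantity with $\sup_{\|x\|\le1}$ outside the integral does not by itself bound $\pi_r(D_bP_{F^c})$. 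Instead, insert the finite-family estimate $\sum_k|\phi(P_{F^c}x_k)|^r\le\|P_{F^c}^*\phi\|_{p'}^r\,w_r(x_1,\ldots,x_N)^r$ inside the Pietsch integral, which gives your right-hand bound directly, as in the paper.
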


\begin{proof}
Let $X_j$, $\Phi_j$, $\ell_{j,\alpha}$, and $m_{j,\alpha}$ be as in
\cref{lem:local-restriction}.  For $u\in X_j$, define
\[
 T_{j,L}u
 =\sum_{|\alpha|\le L}
 \ell_{j,\alpha}(u)\,m_{j,\alpha}|_{E_j}.
\]
The rank of $T_{j,L}$ is at most
$\#\{\alpha\in\Nzero^n:|\alpha|\le L\}=\binom{L+n}{n}$.
Write $\vartheta=\theta/\theta_1\in(0,1)$, with $\theta,\theta_1$
from \eqref{eq:local-scaled-inclusion}. The cells, affine maps, and
the numbers $\theta,\theta_1$ are fixed before $\mu$, $F$, and $L$
are chosen. The nuclear decomposition in the proof of
\cref{lem:local-restriction} gives
\begin{align}
 \pi_1(B_j-T_{j,L})
 &\le C\frac{\mu(E_j)^{1/q}}{V_j^{1/p}}
 \sum_{m=L+1}^{\infty}
 \binom{m+n-1}{n-1}\vartheta^m\notag\\
 &\le C_1(L+1)^{n-1}\vartheta^{L+1}b_j.
 \label{eq:local-Taylor-tail}
\end{align}
For the last inequality, write $m=L+1+k$ and use
\[
 \binom{L+k+n}{n-1}
 \le C_n(L+1)^{n-1}(k+1)^{n-1},
 \qquad
 \sum_{k=0}^{\infty}(k+1)^{n-1}\vartheta^k<\infty.
\]
The same restriction lemma gives $\pi_1(B_j)\le C_2b_j$.
These estimates also hold if $b_j=0$, since all the corresponding
operators into $L^q(E_j,\mu)$ then vanish.

Under the direct-sum identification of $L^q(\Omega,\mu)$, set
\[
 (R_{F,L}h)|_{E_j}
 =
 \begin{cases}
 T_{j,L}(h|_{\Q_{C_1\rho}(a_j)}),&j\in F,\\
 0,&j\notin F.
 \end{cases}
\]
Its range is contained in the span of
\[
 \bigl\{\mathbf1_{E_j}m_{j,\alpha}:j\in F,\ |\alpha|\le L\bigr\}.
\]
Every function in this finite set belongs to $L^q(\mu)$ because
$\mu(E_j)<\infty$.  Each coefficient is a bounded functional on
$A^p$, by \cref{lem:local-restriction} and the restriction estimate.
This proves boundedness and \eqref{eq:local-Taylor-rank}.  Put
\[
 \eta_L=(L+1)^{n-1}\vartheta^{L+1},\qquad
 d_j=b_j\bigl(\mathbf1_{F^c}(j)+\eta_L\mathbf1_F(j)\bigr).
\]
Let $C_j=B_j-T_{j,L}$ for $j\in F$ and $C_j=B_j$ otherwise.
By \eqref{eq:local-Taylor-tail},
\[
 \pi_1(C_j)\le C_3d_j.
\]
The restriction map
$R:A^p\to(\oplus_jX_j)_{\ell^p}$ is bounded, and
\[
 J_\mu-R_{F,L}=\mathcal C R,
 \qquad \mathcal C(u_j)=(C_ju_j)_j.
\]
Apply \cref{lem:block-principle} to $C_j/d_j$, taking the normalized
block to be zero when $d_j=0$.  Solidity and the triangle inequality
give
\begin{align*}
 \pi_r(J_\mu-R_{F,L})
 &\le C_3\|R\|\|d\|_{\mathfrak d_{p,q}^{\,r}}\le C_3\|R\|\left(
 \|b\mathbf1_{F^c}\|_{\mathfrak d_{p,q}^{\,r}}
 +\eta_L\|b\mathbf1_F\|_{\mathfrak d_{p,q}^{\,r}}\right).
\end{align*}
This is \eqref{eq:summing-approximation-bound}.

By \cref{thm:carleson} and the radius comparison in its proof,
$b\in\mathfrak d_{p,q}^{\,r}$.  We verify the required tail convergence.
Let $P_m$ be the first-$m$ coordinate projection on $\ell^p$.
Pietsch domination gives a probability measure $\nu$ on
$B_{\ell^{p'}}$ such that, for every finite family
$x_1,\ldots,x_N\in\ell^p$,
\begin{align*}
 \sum_{k=1}^N\|D_b(I-P_m)x_k\|_q^r
 &\le\pi_r(D_b)^r\int_{B_{\ell^{p'}}}
       \sum_{k=1}^N|((I-P_m)^*\phi)(x_k)|^r\dd\nu(\phi)\\
 &\le\pi_r(D_b)^r w_r(x_1,\ldots,x_N)^r
       \int_{B_{\ell^{p'}}}
       \|(I-P_m)^*\phi\|_{p'}^r\dd\nu(\phi).
\end{align*}
Here $p'=p/(p-1)<\infty$. For each $\phi\in B_{\ell^{p'}}$,
\[
 \|(I-P_m)^*\phi\|_{p'}^{p'}
 =\sum_{j>m}|\phi_j|^{p'}\longrightarrow0,
 \qquad \|(I-P_m)^*\phi\|_{p'}\le1.
\]
Dominated convergence therefore gives
\begin{align*}
 \|b\mathbf1_{\{j>m\}}\|_{\mathfrak d_{p,q}^{\,r}}
 &=\pi_r(D_b(I-P_m))\\
 &\le\pi_r(D_b)
 \left(\int_{B_{\ell^{p'}}}
       \|(I-P_m)^*\phi\|_{p'}^r\dd\nu(\phi)\right)^{1/r}
 \longrightarrow0.
\end{align*}
This argument covers every parameter range without using
\cref{prop:garling}.
Given $\varepsilon>0$, first take $F=\{1,\ldots,m\}$ so that
$A\|b\mathbf1_{F^c}\|_{\mathfrak d_{p,q}^{\,r}}<\varepsilon/2$.
Then choose $L$ so large that
$A\eta_L\|b\|_{\mathfrak d_{p,q}^{\,r}}<\varepsilon/2$.
Solidity and \eqref{eq:summing-approximation-bound} give
\[
 \pi_r(J_\mu-R_{F,L})<\varepsilon.
\]
Thus the operators of finite rank converge to $J_\mu$ in the
$r$-summing norm.
\end{proof}

Recall that, for $T\in\Piabs_r(X,Y)$,
\[
 a_N^{(r)}(T)
 =\inf\{\pi_r(T-A):A\in\mathcal L(X,Y),\ \operatorname{rank}A<N\},
 \qquad N\ge1.
\]
Thus \cref{thm:summing-approximation} gives the rank bound
\begin{equation}\label{eq:summing-rank-budget}
 a_N^{(r)}(J_\mu)
 \le A\!\inf_{\substack{F\subset\N\text{ finite},\ L\ge0\\
                 \#F\binom{L+n}{n}<N}}
 \left(
 \|b\mathbf1_{F^c}\|_{\mathfrak d_{p,q}^{\,r}}
 +\eta_L\|b\mathbf1_F\|_{\mathfrak d_{p,q}^{\,r}}
 \right).
\end{equation}
Here $L$ is an integer.  The estimate is uniform in the measure.

\begin{corollary}\label{cor:multiplication-approximation}
Let $1<p<\infty$, $1\le q,r<\infty$, and
$f\in\mathcal L_{p,q}^{r}(\Omega)$.  Then
$M_f:A^p(\Omega)\to L^q(\Omega,v)$ is a limit of operators of finite rank in the $r$-summing norm.  The rank and error estimates
\eqref{eq:local-Taylor-rank} and \eqref{eq:summing-approximation-bound}
hold with
\[
 b_j=V_j^{-1/p}
 \left(\int_{\Q_{C\rho}(a_j)}|f|^q\dd v\right)^{1/q}.
\]
\end{corollary}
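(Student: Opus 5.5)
The plan is to reduce everything to \cref{thm:summing-approximation} applied to the measure $\dd\mu_f=|f|^q\dd v$, exactly as in the proof of \cref{cor:multiplication}. Since $f\in L^q_{\loc}(\Omega)$, the measure $\mu_f$ is positive and locally finite. Because $f\in\mathcal L_{p,q}^r(\Omega)$, \cref{thm:carleson} and \eqref{eq:mu-f-coefficient} show that $J_{\mu_f}\in\Piabs_r(A^p,L^q(\mu_f))$. Moreover, $\mu_f(\Q_{C\rho}(a_j))^{1/q}V_j^{-1/p}$ is precisely the sequence $b_j$ displayed in the corollary.

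Next, I transfer operators between the two targets through the map
\[
W:L^q(\mu_f)\to L^q(v),\qquad Wh=fh .
\]
This is a linear isometry, since $\int|fh|^q\dd v=\int|h|^q\dd\mu_f$. The proof of \cref{cor:multiplication} shows that the extension of $M_f$ to $A^p$ is pointwise multiplication. Hence $M_f=WJ_{\mu_f}$ on all of $A^p$.

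For each finite $F$ and each $L\ge0$, take $R_{F,L}$ from \cref{thm:summing-approximation} and set $\widetilde R_{F,L}=WR_{F,L}$. Its rank does not exceed that of $R_{F,L}$, so \eqref{eq:local-Taylor-rank} holds. The ideal property \eqref{eq:summing-ideal-property}, applied with $\|W\|=1$, gives
\[
\pi_r(M_f-\widetilde R_{F,L})=\pi_r\bigl(W(J_{\mu_f}-R_{F,L})\bigr)\le\pi_r(J_{\mu_f}-R_{F,L}).
\]
This yields \eqref{eq:summing-approximation-bound} with the same $A$ and $\vartheta$.

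Finally, the convergence statement follows from the last part of \cref{thm:summing-approximation}. There, the tails $\|b\mathbf1_{\{j>m\}}\|_{\dideal}\to0$ by Pietsch domination, and $\eta_L\to0$. The only point needing care is the identification $M_f=WJ_{\mu_f}$ on the whole of $A^p$, not merely on $\Gamma$. This is already settled in \cref{cor:multiplication} via local uniform convergence. No other obstacle arises.
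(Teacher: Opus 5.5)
Your proposal is correct and follows the paper's proof essentially verbatim. You apply \cref{thm:summing-approximation} to $\dd\mu_f=|f|^q\dd v$, factor $M_f$ through the isometry $h\mapsto fh$ from $L^q(\mu_f)$ to $L^q(v)$, and compose the approximants $R_{F,L}$ with that isometry via the ideal property.

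The one step the paper makes explicit and you skip is checking $f\in\mathcal D_{\Omega,q}$, which \cref{cor:multiplication} assumes while the hypothesis here is only $f\in\mathcal L_{p,q}^r$. It follows at once from $\|fK(\cdot,z)\|_{L^q(v)}=\|J_{\mu_f}K(\cdot,z)\|_{L^q(\mu_f)}<\infty$ together with $K(\cdot,z)\in A^p$, by \cref{lem:kernel-span-density}.
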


\begin{proof}
Set $\dd\mu_f=|f|^q\dd v$. Since
$f\in\mathcal L_{p,q}^{r}(\Omega)$, \cref{thm:carleson} gives
$J_{\mu_f}\in\Piabs_r(A^p,L^q(\mu_f))$. Hence
\[
 \|fg\|_{L^q(v)}
 =\|J_{\mu_f}g\|_{L^q(\mu_f)}<\infty
 \qquad(g\in A^p).
\]
Every kernel section belongs to $A^p$ by
\cref{lem:kernel-span-density}. Thus $f\in\mathcal D_{\Omega,q}$.
The map
\[
 U_f:L^q(\mu_f)\to L^q(v),\qquad U_fh=fh,
\]
is a well-defined linear isometry, and $M_f=U_fJ_{\mu_f}$.
Use the operators of finite rank $U_fR_{F,L}$ and the ideal property.
Their ranks do not exceed those of $R_{F,L}$.
\end{proof}
At $q=1$, \cref{cor:multiplication,prop:garling} give the following
explicit form. Let $1<p<\infty$, $1\le r<\infty$, and
$f\in\mathcal D_{\Omega,1}$. Then
\begin{equation}\label{eq:intro-q1-multiplication}
 M_f\in\Piabs_r(A^p,L^1)
 \Longleftrightarrow
 \{V_j^{1-1/p}\M_{1,\rho}f(a_j)\}_j\in\ell^{s_p},
 \qquad
 s_p=
 \begin{cases}
  \dfrac{2p}{3p-2},&1<p\le2,\\[3pt]
  1,&p>2.
 \end{cases}
\end{equation}
The exponent $s_p$ does not depend on $r$.

\section{Big Hankel operators}\label{sec:ida}

Earlier results for Bergman spaces and finite-type domains appear in
\cite{Li1992,Li1993,LiLuecking1995,Pau2016}.  We use a nonisotropic
IDA quantity in place of a Schatten symbol norm.

For $1<q<\infty$ and $f\in\mathcal D_{\Omega,q}$, recall that
\[
 H_fg=(I-P)(fg),\qquad g\in\Gamma.
\]
The local distance in \eqref{eq:intro-ida} is adapted to McNeal balls.
The enlargement in its definition is needed.  It leaves room for uniform
Cauchy estimates and local analytic continuation.  For every admissible
scale $s$,
\begin{equation}\label{eq:G-less-M-finite}
 \G_{q,s}f(z)\le\M_{q,s}f(z).
\end{equation}

\begin{proposition}
\label{prop:ida-calculus}
Let $1<p<\infty$, $1\le q,r<\infty$, and let
$f,g\in L^q_{\loc}(\Omega)$.  Let $s>0$ be an admissible scale.
For $h\in\Hol(\Omega)$,
\begin{align}
 \G_{q,s}(f+h)(z)&=\G_{q,s}f(z),
 \notag\\
 \G_{q,s}(f+g)(z)&\le
 \G_{q,s}f(z)+\G_{q,s}g(z),
 \label{eq:G-subadditive}\\
 |\G_{q,s}f(z)-\G_{q,s}g(z)|
 &\le\M_{q,s}(f-g)(z).
 \label{eq:G-Lipschitz}
\end{align}
Consequently,
\begin{equation}\label{eq:I-seminorm-properties}
 \|f+h\|_{\mathcal I_{p,q}^{r}}
 =\|f\|_{\mathcal I_{p,q}^{r}},
 \qquad
 \|f+g\|_{\mathcal I_{p,q}^{r}}
 \le\|f\|_{\mathcal I_{p,q}^{r}}
 +\|g\|_{\mathcal I_{p,q}^{r}}.
\end{equation}
If $\|f\|_{\mathcal I_{p,q}^{r}}=0$, then $f$ agrees almost
everywhere with a holomorphic function on $\Omega$.
\end{proposition}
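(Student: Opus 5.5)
The three pointwise identities follow from the definition of $\G_{q,s}$ as an infimum over $\Hol(\Q_s^*(z))$, and I would prove them in order.

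\emph{Invariance.} If $h\in\Hol(\Omega)$, then $h|_{\Q_s^*(z)}\in\Hol(\Q_s^*(z))$, because $\Q_s^*(z)\Subset\Omega$ by \eqref{eq:polydisc-uniform-enlargement}. The map $k\mapsto k-h$ is therefore a bijection of $\Hol(\Q_s^*(z))$. It carries the family of errors $f+h-k$ onto the family $f-k'$, so the two infima coincide.

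\emph{Subadditivity.} Given near-optimal $h_1,h_2$ for $f$ and $g$, the sum $h_1+h_2$ is admissible for $f+g$. Minkowski's inequality in $L^q$ of the normalized measure on $\Q_s(z)$ then gives \eqref{eq:G-subadditive}.

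\emph{Lipschitz bound.} For any admissible $h$, Minkowski gives
\[
\Bigl(\avg|f-h|^q\Bigr)^{1/q}\le\Bigl(\avg|g-h|^q\Bigr)^{1/q}+\M_{q,s}(f-g)(z).
\]
Taking the infimum over $h$ and then exchanging $f$ and $g$ yields \eqref{eq:G-Lipschitz}. Finiteness of all quantities comes from $f,g\in L^q_{\loc}$ and $h=0$ being admissible, which is \eqref{eq:G-less-M-finite}.

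\emph{Seminorm properties.} The lattice sequence $\mathbf G_{p,q,\rho}$ inherits these relations coordinatewise. Invariance gives the equality in \eqref{eq:I-seminorm-properties} directly. Subadditivity gives $0\le\mathbf G(f+g)_j\le\mathbf G f_j+\mathbf G g_j$. Solidity \eqref{eq:solid} together with the triangle inequality in the Banach lattice $\dideal$ (\cref{prop:diagonal-structure}) then yields the second relation. The case where one of the norms is infinite is trivial.

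\emph{Null space.} Suppose $\|f\|_{\mathcal I}=0$. Coordinate evaluation is dominated by $\pi_r(D_b)$, via $|b_j|=\|D_be_j\|\le\pi_r(D_b)$, so every coordinate $\G_{q,\rho_+}f(a_j)$ vanishes. For each $j$, take $h_{j,m}\in\Hol(\Q_{\rho_+}^*(a_j))$ with $\|f-h_{j,m}\|_{L^q(\Q_{\rho_+}(a_j))}\to0$. The $h_{j,m}$ are then Cauchy in $L^q(\Q_{\rho_+}(a_j))$.

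The local submean inequality on the open set $\Q_{\rho_+}(a_j)$, applied on small Euclidean balls, gives uniform convergence on compact subsets. Hence the limit $h_j$ is holomorphic on $\Q_{\rho_+}(a_j)$ and $f=h_j$ a.e. there. Since $\rho<\rho_+$, I would use the scale comparison \eqref{eq:polydisc-scale-gauge}, or, if no inclusion $\Q_\rho\subset\Q_{\rho_+}$ is available, simply work with the open sets $\Q_{\rho_+}(a_j)$ themselves. Either way these sets cover $\Omega$, since they contain $a_j$ and engulf the $\rho$-cells after the fixed admissible choice $C_*\rho<\rho_+$ in \eqref{eq:admissible-radius-pair}.

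On an overlap, $h_j=f=h_k$ a.e., so $h_j=h_k$ by continuity. The $h_j$ therefore glue to a single $h\in\Hol(\Omega)$ with $f=h$ a.e.

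The only delicate point is this covering and inclusion step. If $\Q_\rho(a_j)\subset\Q_{\rho_+}(a_j)$ is not literally available, I would instead use that the sets $\Q_{\rho_+}(a_j)$ are open and cover $\Omega$, since each contains the smaller cell. I expect this to follow from the two-scale choice in \cref{lem:ida-decomposition}. Failing that, I would fall back on the enlargement-and-gauge argument used for the cells $E_j$ in \eqref{eq:partition-cells}.
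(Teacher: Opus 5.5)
Your proof is correct and follows the paper's argument step for step, with one variation in the null-space step: the paper shows $\bar\partial f=0$ in $\mathcal D'(\Q_{\rho_+}(a_j))$ and applies Weyl's lemma once on $\Omega$, while you build local holomorphic limits through the submean inequality and glue them on overlaps, which works equally well. The covering you hedge on does hold: \eqref{eq:ida-common-outer-polydisc} with $k=j$ (equivalently, \eqref{eq:polydisc-scale-gauge} together with $C_{\rm sc}C_*^{-1/m_\Omega}<1$) gives $\Q_\rho(a_j)\subset\Q_{\rho_+}(a_j)$, so the sets $\Q_{\rho_+}(a_j)$ cover $\Omega$.
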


\begin{proof}
Let $U=\Q_s(z)$ and $U^*=\Q_s^*(z)$.  Replacing each
competitor $a$ by $b=a-h$ on $U^*$ gives
\begin{align*}
 \G_{q,s}(f+h)(z)
 &=\inf_{a\in\Hol(U^*)}
 \left(\avg_U|f+h-a|^q\dd v\right)^{1/q}\\
 &=\inf_{b\in\Hol(U^*)}
 \left(\avg_U|f-b|^q\dd v\right)^{1/q}
 =\G_{q,s}f(z).
\end{align*}
Fix $\varepsilon>0$.  Choose $a,b\in\Hol(U^*)$ whose errors are within
$\varepsilon$ of the two infima.  Minkowski's inequality gives
\[
 \left(\avg_U|f+g-a-b|^q\dd v\right)^{1/q}
 \le
 \left(\avg_U|f-a|^q\dd v\right)^{1/q}
 +
 \left(\avg_U|g-b|^q\dd v\right)^{1/q}.
\]
Let $\varepsilon\downarrow0$.  This proves
\eqref{eq:G-subadditive}.  Apply it
to
\[
 f=g+(f-g),\qquad g=f+(g-f),
\]
and use \eqref{eq:G-less-M-finite}.  We obtain
\[
 \G_{q,s}f-\G_{q,s}g
 \le\M_{q,s}(f-g),
 \qquad
 \G_{q,s}g-\G_{q,s}f
 \le\M_{q,s}(f-g).
\]
This is \eqref{eq:G-Lipschitz}.

Take $s=\rho_+$ and $z=a_j$.  Multiply the pointwise inequalities by
$V_j^{1/q-1/p}$.  Use
solidity and the triangle inequality in
$\mathfrak d_{p,q}^{\,r}$.  This proves
\eqref{eq:I-seminorm-properties}.

Assume that the seminorm is zero.  Then
$\G_{q,\rho_+}f(a_j)=0$ for every $j$.  For each $j$, choose
$h_{j,m}\in\Hol(\Q_{\rho_+}^*(a_j))$ with
\[
 \|f-h_{j,m}\|_{L^q(\Q_{\rho_+}(a_j))}\longrightarrow0.
\]
Write $C_c^\infty(U)$ for the smooth compactly supported functions
on $U$, and $\mathcal D'(U)$ for their continuous linear dual,
the space of distributions on $U$. Restricting to
$\Q_{\rho_+}(a_j)$ and passing to distributions gives
\[
 \bar\partial f=0
 \quad\text{in }\mathcal D'(\Q_{\rho_+}(a_j)).
\]
Indeed, if $\varphi\in C_c^\infty(\Q_{\rho_+}(a_j))$, then for
$1\le k\le n$,
\[
 \left|\int (f-h_{j,m})
 \frac{\partial\varphi}{\partial\bar z_k}\dd v\right|
 \le \|f-h_{j,m}\|_{L^q(\Q_{\rho_+}(a_j))}
 \left\|\frac{\partial\varphi}{\partial\bar z_k}\right\|_{L^{q'}}
 \longrightarrow0.
\]
Here $q'=\infty$ when $q=1$.  The integral with $h_{j,m}$ is zero.
The sets $\Q_{\rho_+}(a_j)$ cover $\Omega$.  Thus
$\bar\partial f=0$ in $\mathcal D'(\Omega)$.  Weyl's lemma gives one
function $h\in\Hol(\Omega)$ with $f=h$ almost everywhere.
\end{proof}

The following property describes the solution estimate used in the
Hankel argument.  We verify it for every $1<q<\infty$ by applying
Ahn's weighted estimate.

\begin{definition}\label{def:FTI}
Let $1<q<\infty$.  We say that $\Omega$ has property
$\mathrm{FTI}_q$ if there is a linear operator
$\mathcal S_{\Omega,q}$ from the vector space of smooth
$\bar\partial$-closed $(0,1)$-forms with
$|\omega|_\Omega\in L^q(\Omega)$ into $L^q(\Omega)$ such that
\begin{equation}\label{eq:FTI-solution}
 \bar\partial\mathcal S_{\Omega,q}\omega=\omega
\end{equation}
in distributions, and
\begin{equation}\label{eq:FTI-basic-estimate}
 \|\mathcal S_{\Omega,q}\omega\|_{L^q(\Omega)}
 \le C_q\||\omega|_\Omega\|_{L^q(\Omega)}.
\end{equation}
\end{definition}

\begin{proposition}\label{prop:FTI-all}
Every bounded smooth convex domain of finite type has property
$\mathrm{FTI}_q$ for each $1<q<\infty$.  More precisely, for every
smooth $\bar\partial$-closed $(0,1)$-form $\omega$ with
$|\omega|_\Omega\in L^q(\Omega)$, there is a unique solution
$u\in L^q(\Omega)$ such that
\begin{equation}\label{eq:FTI-all-normalization}
 \bar\partial u=\omega,\qquad Pu=0.
\end{equation}
The map $\omega\mapsto u$ is linear, and
\begin{equation}\label{eq:FTI-all-estimate}
 \|u\|_q\le C_{\Omega,q}\||\omega|_\Omega\|_q.
\end{equation}
\end{proposition}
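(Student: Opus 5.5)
The plan is to reduce the statement to Ahn's weighted $L^q$ estimate for $\bar\partial$ on convex domains of finite type \cite{Ahn2004}, and then to normalize the solution by the Bergman projection. Ahn's theorem provides a solution operator, built from a Henkin-type integral kernel adapted to the McNeal geometry. It satisfies $\|\mathcal K\omega\|_{L^q}\le C\|\,|\omega|_{A}\|_{L^q}$ for every $1<q<\infty$, where $|\cdot|_A$ is a nonisotropic form norm. Ahn defines that norm through the extremal directions at scale $\delta(z)$, weighting the components by the corresponding $\tau_k(z,\delta(z))$. The first step is therefore a pointwise comparison $|\omega(z)|_A\lesssim|\omega(z)|_\Omega$, uniform in $z$, with $|\cdot|_\Omega$ the dual McNeal norm \eqref{eq:dual-McNeal}.

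To prove the comparison, I would write both norms as dual norms of primal directional gauges. Ahn's quantity is, up to constants, $\sup_{X\ne0}|\omega_z(\bar X)|\,\tau(z,X/|X|,\delta(z))/|X|$, or its minimal-basis version. The primal gauges attached to different admissible extremal bases at the same scale are uniformly equivalent by \cite[Proposition~3.2]{NikolovPflugThomas2013}. The passage between the $\ell^2$-type form in \eqref{eq:dual-McNeal} and a sup over directions costs only a dimensional constant, since finite-dimensional $\ell^1$, $\ell^2$, and $\ell^\infty$ norms are equivalent. On the compact core both norms are comparable to the Euclidean norm by compactness. The main obstacle is checking that Ahn's weights are the ones at scale $\delta(z)$ and not at a different scale. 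If they differ by a fixed factor, the scale gauge \eqref{eq:polydisc-scale-gauge} absorbs this into constants. If Ahn's weight uses a power of $\delta$ different from $\tau_k$, I would instead use the inequality in the favorable direction only, since domination by $|\omega|_\Omega$ is all that is needed. I also need Ahn's operator to be linear and defined on smooth closed forms with this finite weighted norm. If the estimate is stated only for forms smooth up to the boundary, I would approximate on the exhaustion $\Omega_\epsilon=\{\rho_\Omega<-\epsilon\}$, which are again convex of finite type with uniform constants. I would take weak-$L^q$ limits, using reflexivity for $1<q<\infty$, and choose the solutions by the linear integral formula so that linearity survives the limit.

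Given a solution $v=\mathcal K\omega\in L^q$ with $\bar\partial v=\omega$, set $u=v-Pv$. By \eqref{eq:intro-Pq}, $Pv\in A^q$ is holomorphic, so $\bar\partial u=\omega$. Also $Pu=Pv-P^2v=0$, because $P$ reproduces $A^q$; this follows from the density of $L^2\cap L^q$ in $L^q$ and the $L^2$ identity $P^2=P$. The bound is $\|u\|_q\le(1+\|P\|_{q\to q})\|v\|_q$, which gives \eqref{eq:FTI-all-estimate}, and the map $\omega\mapsto u$ is linear.

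For uniqueness, suppose $u_1,u_2$ both satisfy \eqref{eq:FTI-all-normalization}. Then $w=u_1-u_2\in L^q$ satisfies $\bar\partial w=0$ in distributions, so Weyl's lemma makes $w$ holomorphic and $w\in A^q$. Hence $w=Pw=0$. Property $\mathrm{FTI}_q$ follows with $\mathcal S_{\Omega,q}\omega=u$.
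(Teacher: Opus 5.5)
Your proposal is correct and follows the paper's route: cite Ahn's estimate, compare his form norm with the dual McNeal norm through the Nikolov--Pflug--Thomas basis comparison, set $u=(I-P)v$, and get uniqueness (and hence linearity) from Weyl's lemma. One point should be made precise. Ahn's result is a weighted family $\int_\Omega|v|^q\delta^{\alpha-1}\dd v\lesssim\int_\Omega\|\omega\|_{\mathrm A}^q\delta^{\alpha-1+q}\dd v$, where $\|\omega\|_{\mathrm A}$ is the dual norm of the gauge $\kappa_z(X)=\delta(z)|X|/\tau(z,X/|X|,\delta(z)/2)$, and taking $\alpha=1$ gives exactly $\delta\|\omega\|_{\mathrm A}\asymp|\omega|_\Omega$; this settles your hedge about powers of $\delta$, and since his formula applies to forms that are merely $C^1$ in $\Omega$, your exhaustion step is unnecessary.
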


\begin{proof}
We apply Ahn's weighted solution estimate
\cite[Theorem~1.1(1) and Remark~\textup{(ii)}]{Ahn2004}.
We first compare its form norm with \eqref{eq:dual-McNeal}.
For a point $z$ in the boundary collar and $X\ne0$, put
\[
 \kappa_z(X)
 =\frac{\delta(z)|X|}
 {\tau(z,X/|X|,\delta(z)/2)},
 \qquad
 \|\omega(z)\|_{\mathrm A}
 =\sup_{X\ne0}
 \frac{|\omega_z(\overline X)|}{\kappa_z(X)}.
\]
These are the directional quantity and form norm in
\cite[Section~2.2.1]{Ahn2004}.  The distance $\delta$ is the same
Euclidean boundary distance used in this paper.
Write $X=\sum_kX_ku_k(z,\delta(z))$ and set
\[
 \mathcal N_z(X)
 =\left(\sum_{k=1}^n
 \frac{|X_k|^2}{\tau_k(z,\delta(z))^2}\right)^{1/2}.
\]
The comparison at fixed scales for $\tau$ and the directional norm
comparison in \cref{sec:geometry} give
\begin{equation}\label{eq:Ahn-directional-comparison}
 \frac{\kappa_z(X)}{\delta(z)}
 =\frac{|X|}{\tau(z,X/|X|,\delta(z)/2)}
 \asymp
 \frac{|X|}{\tau(z,X/|X|,\delta(z))}
 \asymp\mathcal N_z(X).
\end{equation}
The constants are independent of $z$ and $X$.  In the last comparison,
changing between extremal bases is justified by
\cite[Propositions~3.1 and~3.2]{NikolovPflugThomas2013}.
Taking dual norms in \eqref{eq:Ahn-directional-comparison} gives
\begin{align}
 \delta(z)\|\omega(z)\|_{\mathrm A}
 &\asymp\sup_{X\ne0}
 \frac{\left|\sum_{k=1}^n\omega_k(z)\overline{X_k}\right|}
 {\mathcal N_z(X)}\notag\\
 &=\left(\sum_{k=1}^n
 |\omega_k(z)|^2\tau_k(z,\delta(z))^2\right)^{1/2}
 =|\omega(z)|_\Omega.
 \label{eq:Ahn-McNeal-dual}
\end{align}
The equality follows from Cauchy--Schwarz.  If $\omega(z)\ne0$,
equality is attained by choosing
$X_k=\omega_k(z)\tau_k(z,\delta(z))^2$.
On the fixed compact core, both form norms in
\eqref{eq:Ahn-McNeal-dual} are uniformly equivalent to the Euclidean
form norm.  Thus this comparison holds throughout $\Omega$.

For each $\alpha>0$ for which the right-hand side below is finite,
Ahn's estimate gives a distributional solution $v$ of
$\bar\partial v=\omega$ satisfying
\begin{equation}\label{eq:Ahn-weighted-estimate}
 \int_\Omega |v(z)|^q\delta(z)^{\alpha-1}\dd v(z)
 \le C_{\alpha,q}
 \int_\Omega
 \|\omega(z)\|_{\mathrm A}^q
 \delta(z)^{\alpha-1+q}\dd v(z),
 \qquad \alpha>0.
\end{equation}
Both integrals use Euclidean volume. Ahn's solution formula is
stated for $C^1$ forms on $\Omega$; see
\cite[Section~2.1, formula~(3)]{Ahn2004}.
Thus it applies to the forms used below, which are smooth in
$\Omega$ and have finite weighted norm even when they grow at
the boundary. The cited theorem is stated for
$(n,1)$-forms.  Its Remark~\textup{(ii)} gives the same estimate for
$(0,1)$-forms.  Equivalently, one can multiply $\omega$ by the constant
holomorphic volume form and then identify the coefficient of the
resulting $(n,0)$-form.
Take $\alpha=1$ in \eqref{eq:Ahn-weighted-estimate}.  Equation
\eqref{eq:Ahn-McNeal-dual} gives
\begin{equation}\label{eq:Ahn-unweighted-estimate}
 \|v\|_q^q
 \le C_q\int_\Omega
 \bigl(\delta(z)\|\omega(z)\|_{\mathrm A}\bigr)^q\dd v(z)
 \lesssim\int_\Omega|\omega(z)|_\Omega^q\dd v(z).
\end{equation}

The Bergman projection is bounded on $L^q(\Omega)$.  Set
$u=(I-P)v$.  Then
\[
 \bar\partial u=\omega,\qquad Pu=0,
 \qquad
 \|u\|_q\le(1+\|P\|_{L^q\to L^q})\|v\|_q.
\]
This proves \eqref{eq:FTI-all-estimate}.
If $u_1,u_2\in L^q$ both satisfy
\eqref{eq:FTI-all-normalization}, then Weyl's lemma gives
$u_1-u_2\in A^q$.  Hence
\[
 u_1-u_2=P(u_1-u_2)=0.
\]
The normalized solution is therefore independent of the solution $v$
chosen in \eqref{eq:Ahn-weighted-estimate}.
Define $\mathcal S_{\Omega,q}\omega=u$.
For forms $\omega_1,\omega_2$ and scalars $a,b\in\C$, the function
$a\mathcal S_{\Omega,q}\omega_1+b\mathcal S_{\Omega,q}\omega_2$
has zero Bergman projection and solves the equation with right-hand
side $a\omega_1+b\omega_2$.  Uniqueness proves linearity.
\end{proof}

For $q=2$, the normalized solution above is the solution of smallest
$L^2$ norm.  The following proof gives an independent verification
using the Bergman metric and Zimmer's estimate.

\begin{proposition}\label{prop:FTI-two}
Every bounded smooth convex domain of finite type has property
$\mathrm{FTI}_2$. More precisely, if $\omega$ is a smooth
$\bar\partial$-closed $(0,1)$-form and
$|\omega|_\Omega\in L^2(\Omega)$, then there is a unique solution
$u\in L^2(\Omega)\ominus A^2(\Omega)$ of
$\bar\partial u=\omega$, and
\begin{equation}\label{eq:FTI-two-estimate}
 \|u\|_2^2\le C_\Omega
 \int_\Omega |\omega(z)|_\Omega^2\dd v(z).
\end{equation}
Here $L^2(\Omega)\ominus A^2(\Omega)$ is the orthogonal complement
of $A^2(\Omega)$ in $L^2(\Omega)$. The map $\omega\mapsto u$ is linear.
\end{proposition}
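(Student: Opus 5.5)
The $L^2$-minimal solution will come from Hörmander's weighted $L^2$ method, taking the Bergman metric as Kähler metric. Zimmer's estimate then compares that metric with the McNeal scales. On a bounded pseudoconvex domain the Bergman metric $g_B=\partial\bar\partial\log K(z,z)$ is Kähler and complete, and $\psi=\log K(z,z)$ is plurisubharmonic with $i\partial\bar\partial\psi=g_B$. The Donnelly--Fefferman / Berndtsson form of Hörmander's estimate therefore gives, for a $\bar\partial$-closed $(0,1)$-form $\omega$, a solution $u$ of $\bar\partial u=\omega$ with
\[
 \int_\Omega|u|^2\dd v\le C\int_\Omega|\omega|_{g_B}^2\dd v .
\]
The constant is absolute; this uses the fact that $|\partial\psi|_{g_B}$ is bounded on these domains. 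Convex domains of finite type satisfy this bound, so the Donnelly--Fefferman twisting applies. Projecting onto $A^2(\Omega)^\perp$ does not increase the $L^2$ norm and preserves the equation, since $P u$ is holomorphic. This yields the minimal solution $u=(I-P)u$. Uniqueness and linearity then follow exactly as in the proof of \cref{prop:FTI-all}: two solutions differ by an element of $A^2$ that is orthogonal to $A^2$.

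The remaining step, which I expect to be the main obstacle, is the pointwise comparison
\[
 |\omega(z)|_{g_B}\lesssim|\omega(z)|_\Omega .
\]
Here $|\cdot|_{g_B}$ denotes the dual norm on $(0,1)$-covectors. For this I would invoke Zimmer's estimates \cite{Zimmer2021,Zimmer2023}, which state that on convex finite-type domains the Bergman metric is uniformly comparable to the metric
\[
 \sum_k|X_k|^2/\tau_k(z,\delta(z))^2 ,
\]
written in the extremal coordinates. This is the Catlin--McNeal comparison $g_B(z;X)\asymp\mathcal N_z(X)^2$, with $\mathcal N_z$ as in the proof of \cref{prop:FTI-all}. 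The basis changes are handled by \cite[Propositions~3.1 and~3.2]{NikolovPflugThomas2013}, as before. Dualizing as in \eqref{eq:Ahn-McNeal-dual} converts the primal comparison into $|\omega(z)|_{g_B}\asymp|\omega(z)|_\Omega$. On the compact core both norms are equivalent to the Euclidean one, by continuity and the positivity of $g_B$.

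Combining the two steps gives \eqref{eq:FTI-two-estimate}. One point needs care, because $\omega$ is only smooth in the interior and may not lie in $L^2$ without weights. Hörmander's theorem on complete Kähler manifolds requires only that the right-hand side be finite, and completeness of the Bergman metric on bounded convex domains is classical. No exhaustion argument is therefore needed, but I would check that the a priori finiteness of the integral suffices for the version of the theorem that is cited.
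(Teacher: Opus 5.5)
Your proof is correct in outline, but it gets the Bergman-metric $L^2$ estimate by a different route than the paper, and it swaps the roles of the references.

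\textbf{What the paper does.} It takes the comparison $\langle B_\Omega(z)X,X\rangle\asymp\mathcal N_z(X)^2$ from McNeal's estimates and the basis comparisons \cite{McNeal1994,NikolovPflug2003,NikolovPflugThomas2013}, and dualizes it as you do. It then imports $\|v\|_2^2\lesssim\int_\Omega|\omega|^2_{B_\Omega^{-1}}\dd v$ as the unweighted case of Zimmer's Corollary~3.4, using his geometric estimates in \cite{Zimmer2021} for convex domains. So Zimmer supplies the $\bar\partial$ estimate, not the comparison with the radii $\tau_k$; that comparison is McNeal's.

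\textbf{What you do.} You prove the $\bar\partial$ estimate directly by the Donnelly--Fefferman--Berndtsson twist with the Bergman potential. This is essentially the mechanism inside Zimmer's corollary. Your route is more classical and self-contained. The paper's is shorter and gives the Bergman-metric estimate on every bounded convex domain, with finite type entering only through the comparison.

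\textbf{What your version still needs.} First, justify $|\partial\log K(z,z)|_{g_B}\lesssim1$. This follows from McNeal's derivative bounds $|\partial_{u_k}K(z,z)|\lesssim K(z,z)/\tau_k(z,\delta(z))$ together with $g_B(z;X)\asymp\mathcal N_z(X)^2$. Second, twist with $\psi=c\log K(z,z)$ for a small $c>0$, so that the gradient condition in Berndtsson's theorem holds. This costs a factor $c^{-1}$, so the constant depends on the gradient bound and is not absolute.

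\textbf{On completeness.} It plays no role. Berndtsson's estimate holds on any pseudoconvex domain with plurisubharmonic weights. The minimal solution in $L^2(\Omega,e^{-s\psi})$, $0<s<1$, exists by H\"ormander's theorem, because $K(z,z)\ge v(\Omega)^{-1}$ makes $e^{-s\psi}$ bounded. Hence finiteness of $\int_\Omega|\omega|^2_{g_B}\dd v$ suffices. Your claim that the Bergman metric is complete on every bounded pseudoconvex domain is false in general (e.g.\ on the punctured disc), though it is true on bounded convex domains.

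The projection, uniqueness, and linearity steps match the paper's.
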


\begin{proof}
Define the positive Hermitian operator $B_\Omega(z)$ representing
the Bergman metric by its quadratic form in standard coordinates:
\[
 \langle B_\Omega(z)X,X\rangle
 =\sum_{j,k=1}^n
 \frac{\partial^2\log K(z,z)}{\partial z_j\partial\overline z_k}
 X_j\overline{X_k},\qquad X\in\C^n.
\]
For $X\in\C^n$, write $X=\sum_kX_ku_k(z,\delta(z))$.
The estimates for invariant metrics and the comparison of extremal bases
\cite{McNeal1994,NikolovPflug2003,NikolovPflugThomas2013} give
\begin{equation}\label{eq:bergman-mcneal-primal}
 \langle B_\Omega(z)X,X\rangle
 \asymp\sum_{k=1}^n
 \frac{|X_k|^2}{\tau_k(z,\delta(z))^2}.
\end{equation}
On the compact core this follows from positive definiteness and
continuity. Thus the comparison is uniform on $\Omega$.
Taking dual norms yields
\begin{equation}\label{eq:bergman-mcneal-dual}
 |\omega(z)|_{B_\Omega^{-1}}^2
 :=\sup_{X\ne0}
 \frac{|\omega_z(\overline X)|^2}
 {\langle B_\Omega(z)X,X\rangle}
 \asymp\sum_{k=1}^n
 |\omega_k(z)|^2\tau_k(z,\delta(z))^2
 =|\omega(z)|_\Omega^2.
\end{equation}
To see the direction of this comparison, let
$D(z)=\operatorname{diag}(\tau_1^{-2},\ldots,\tau_n^{-2})$
in the extremal coordinates.  Equation
\eqref{eq:bergman-mcneal-primal} says that
\[
 cD(z)\le B_\Omega(z)\le CD(z),
 \qquad
 C^{-1}D(z)^{-1}\le B_\Omega(z)^{-1}
 \le c^{-1}D(z)^{-1}.
\]
The constants $c,C>0$ are independent of $z$.

For bounded convex domains, the geometric estimates in
\cite[Section~4]{Zimmer2021} allow us to apply the unweighted case of
\cite[Corollary~3.4]{Zimmer2023}. It gives a distributional
solution $v\in L^2(\Omega)$ with
\begin{equation}\label{eq:zimmer-unweighted}
 \bar\partial v=\omega,
 \qquad
 \|v\|_2^2\le C
 \int_\Omega|\omega(z)|_{B_\Omega^{-1}}^2\dd v(z).
\end{equation}
Both integrals use Euclidean volume. In the notation of that
corollary, the plurisubharmonic weight is $\lambda_2=0$.
Put $u=(I-P)v$. Then
\[
 \bar\partial u=\omega,
 \qquad u\perp A^2(\Omega),
 \qquad \|u\|_2\le\|v\|_2.
\]
Every other $L^2$ solution has the form $u+h$, with $h\in A^2$.
Consequently,
\[
 \|u+h\|_2^2=\|u\|_2^2+\|h\|_2^2,
\]
so this choice is also the solution of smallest $L^2$ norm.
Equations \eqref{eq:bergman-mcneal-dual} and
\eqref{eq:zimmer-unweighted} prove
\eqref{eq:FTI-two-estimate}.

If $u_1$ and $u_2$ are two such solutions, then
$\bar\partial(u_1-u_2)=0$ in distributions. Weyl's lemma gives
$u_1-u_2\in A^2(\Omega)$. Orthogonality gives $u_1=u_2$.
For closed forms $\omega_1,\omega_2$, let $u_1,u_2$ be their normalized
solutions. For scalars $a,b\in\C$, $a u_1+b u_2$ is orthogonal to $A^2$
and solves the equation with right-hand side $a\omega_1+b\omega_2$.
Uniqueness proves linearity.
\end{proof}

Let $\omega$ be a smooth $\bar\partial$-closed $(0,1)$-form.
Its norm is defined in \eqref{eq:dual-McNeal}.  If
$g\in\Hol(\Omega)$ and $g|\omega|_\Omega\in L^q(\Omega)$, then
\[
 \bar\partial(g\omega)=g\bar\partial\omega=0,
 \qquad |g\omega|_\Omega=|g|\,|\omega|_\Omega.
\]
Applying \eqref{eq:FTI-basic-estimate} to $g\omega$ gives
\begin{equation}\label{eq:dbar-solution}
 \|\mathcal S_{\Omega,q}(g\omega)\|_{L^q(\Omega)}
 \le C_q\|g|\omega|_\Omega\|_{L^q(\Omega)}.
\end{equation}
Conversely, $g=1$ in \eqref{eq:dbar-solution} gives
\eqref{eq:FTI-basic-estimate}.  Thus the holomorphic factor introduces
no additional assumption.  No identity between
$\mathcal S(g\omega)$ and $g\mathcal S\omega$ is required.

The choice $\alpha=1$ in Ahn's estimate is essential here.  Its
right-hand side contains
\[
 \delta^q\|\omega\|_{\mathrm A}^q\asymp|\omega|_\Omega^q.
\]
Thus the powers of the directional radii match the norm required by
the Hankel argument.  This is the estimate from \cite{Ahn2004}; the
weighted estimates in \cite{CharpentierDupain2018} use a different
anisotropic quantity.

The convention with two fixed scales in \eqref{eq:admissible-radius-pair} gives
a fixed margin for the local holomorphic approximants.  The solution estimate in
\cref{prop:FTI-all} gives the Hankel upper estimate
\[
 \|H_ug\|_q\lesssim\|g|\bar\partial u|_\Omega\|_q
\]
for every $1<q<\infty$.  The IDA decomposition and the lower estimate
below do not use the solution operator.

The following decomposition is a main step in the proof.

\begin{lemma}\label{lem:ida-decomposition}
Let $1<p,q<\infty$, $1\le r<\infty$, and
$f\in L^q_{\loc}(\Omega)$.  For every $\varepsilon>0$, there are a
sequence $\eta=(\eta_j)\ge0$,
$f_1\in C^\infty(\Omega)$, and $f_2\in L^q_{\loc}(\Omega)$ such that
$f=f_1+f_2$,
\begin{equation}\label{eq:eta-small}
 \left\|\{V_j^{1/q-1/p}\eta_j\}_j\right\|_{\dideal}
 \le\varepsilon,
\end{equation}
and
\begin{equation}\label{eq:ida-local-decomposition}
 \M_{q,\rho}(|\bar\partial f_1|_\Omega)(a_j)
 +\M_{q,\rho}f_2(a_j)
 \le C\sum_{k\in\mathcal N(j)}
 \bigl(\G_{q,\rho_+}f(a_k)+\eta_k\bigr).
\end{equation}
The neighbor sets have uniformly bounded incidence multiplicities.
Moreover,
\begin{equation}\label{eq:ida-sequence-decomposition}
\begin{aligned}
 &\left\|\{V_j^{1/q-1/p}
 \M_{q,\rho}(|\bar\partial f_1|_\Omega)(a_j)\}\right\|_{\dideal}+\left\|\{V_j^{1/q-1/p}\M_{q,\rho}f_2(a_j)\}\right\|_{\dideal}
 \lesssim\|f\|_{\mathcal I_{p,q}^{r}(\Omega)}+\varepsilon.
\end{aligned}
\end{equation}
\end{lemma}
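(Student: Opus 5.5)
We will build the decomposition from near-optimal local holomorphic approximants and a partition of unity subordinate to the lattice. For each $j$ choose $h_j\in\Hol(\Q_{\rho_+}^*(a_j))$ with
\[
 \Bigl(\avg_{\Q_{\rho_+}(a_j)}|f-h_j|^q\dd v\Bigr)^{1/q}
 \le\G_{q,\rho_+}f(a_j)+\eta_j ,
\]
where $\eta_j>0$ is chosen so small that $\eta_jV_j^{1/q-1/p}\le\varepsilon 2^{-j}/\|e_j\|_{\dideal}$. Then $\|\{V_j^{1/q-1/p}\eta_j\}\|_{\dideal}\le\sum_j\varepsilon2^{-j}\le\varepsilon$ by the triangle inequality, which gives \eqref{eq:eta-small}. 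Next take a smooth partition of unity $\{\psi_j\}$ with $\supp\psi_j\subset\Q_\rho^{[2]}(a_j)$, or an equivalent fixed dilate inside $\Q_{\rho_+}(a_j)$; this is where $C_*\rho<\rho_+$ is used. Its derivatives should satisfy the nonisotropic bounds $|\bar\partial\psi_j|_\Omega\lesssim1$. We build it from bump functions $\chi\circ\Phi_j^{-1}$ in the affine coordinates of the lattice polydiscs, normalized by their sum, which is $\ge1$ and has bounded overlap. Under $\Phi_j$ the derivative in the direction $u_k$ is of size $\tau_k^{-1}$, so the dual McNeal norm \eqref{eq:dual-McNeal} of $\bar\partial\psi_j$ is $O(1)$. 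Comparing $\tau_k$ at scales $\rho\delta(a_j)$ and $\delta(z)$ for $z$ in the cell uses \eqref{eq:polydisc-scale-gauge} and the fixed-scale comparisons, so the constant depends on $\rho$ but not on $j$. Set $f_1=\sum_j\psi_jh_j$, which is smooth, and $f_2=f-f_1=\sum_j\psi_j(f-h_j)$.

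The local estimate for $f_2$ is immediate. On $\Q_\rho(a_j)$ only indices $k\in\mathcal N(j)$ (those whose supports meet it) contribute, $\Q_\rho(a_j)\subset\Q_{\rho_+}(a_k)$ after engulfing (with the constants $s_0,C_*$ chosen for this), and $V_j\asymp V_k$. Hence
\[
 \M_{q,\rho}f_2(a_j)\lesssim\sum_{k\in\mathcal N(j)}\bigl(\G_{q,\rho_+}f(a_k)+\eta_k\bigr).
\]

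For $f_1$, since $\sum_k\bar\partial\psi_k=0$ we have on $\Q_\rho(a_j)$
\[
 \bar\partial f_1=\sum_{k\in\mathcal N(j)}(h_k-h_j)\,\bar\partial\psi_k .
\]
So $\M_{q,\rho}(|\bar\partial f_1|_\Omega)(a_j)\lesssim\sum_{k\in\mathcal N(j)}(\avg_{\supp\psi_k\cap\Q_\rho(a_j)}|h_k-h_j|^q)^{1/q}$. The triangle inequality through $f$ bounds $|h_k-h_j|$ in $L^q$ average on the overlap $\Q_{\rho_+}(a_k)\cap\Q_{\rho_+}(a_j)$, which contains $\supp\psi_k\cap\Q_\rho(a_j)$, by $\G_{q,\rho_+}f(a_k)+\eta_k+\G_{q,\rho_+}f(a_j)+\eta_j$. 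For this we need the overlap to have volume $\asymp V_j$, which follows because it contains $\supp\psi_k$'s core $\Q_{c\rho}$-type cell.

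Enlarging $\mathcal N(j)$ to include $j$ keeps bounded incidence, and this proves \eqref{eq:ida-local-decomposition}. Then \eqref{eq:ida-sequence-decomposition} follows by multiplying by $V_j^{1/q-1/p}\asymp V_k^{1/q-1/p}$ and applying solidity, \cref{lem:finite-neighbor}, and \eqref{eq:eta-small}.

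The main obstacle is the uniform partition-of-unity gradient bound in the dual McNeal norm. It requires controlling $\tau_k(z,\delta(z))$ against $\tau_k(a_j,\rho\delta(a_j))$ and the change of extremal basis via \cite{NikolovPflugThomas2013}; I would try first to reduce it to the gauge comparison \eqref{eq:polydisc-scale-gauge} together with \eqref{eq:Ahn-directional-comparison}.
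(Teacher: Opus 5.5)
Your proposal is correct and shares the paper's overall structure, but it handles the $\bar\partial f_1$ estimate by a different, slightly simpler route. The shared parts are:
\begin{itemize}
\item the choice of $\eta$ (you use $\pi_r(e_j^*\otimes e_j)=1$ and completeness of $\dideal$, where the paper normalizes $(2^{-j})$);
\item the normalized bumps pulled back by the frozen affine maps;
\item $f_1=\sum_j\psi_jh_j$ and $f_2=\sum_j\psi_j(f-h_j)$;
\item the identity $\sum_k\bar\partial\psi_k=0$.
\end{itemize}

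For $\bar\partial f_1$, the paper first proves the pointwise bound \eqref{eq:h-difference} on $P_j(2)$. It averages over $P_j(3)$ and applies the holomorphic submean inequality, and this is why it needs the common outer inclusion \eqref{eq:ida-common-outer-polydisc} for every neighbor. You stay in $L^q$, which is all that $\M_{q,\rho}$ requires. You then need only $\overline{\supp\psi_k}\subset\Q_{\rho_+}(a_k)$, $\Q_\rho(a_j)\subset\Q_{\rho_+}(a_j)$, and $\V_{\rho_+}(a_k)\asymp V_k\asymp V_j$ for $k\in\mathcal N(j)$. The paper's route gives a pointwise bound on $|\bar\partial f_1|_\Omega$, which is stronger than what the lemma needs.

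Two repairs are needed.
\begin{itemize}
\item As written, your chain passes through the average over $\supp\psi_k\cap\Q_\rho(a_j)$. That set may be a thin sliver, so its average cannot be bounded by the average over the overlap. Keep the factor $V_j^{-1}$ and use $\|h_k-h_j\|_{L^q(\supp\psi_k\cap\Q_\rho(a_j))}\le\|f-h_k\|_{L^q(\Q_{\rho_+}(a_k))}+\|f-h_j\|_{L^q(\Q_{\rho_+}(a_j))}$. With this, no lower volume bound on the overlap is needed.
\item For $|\bar\partial\psi_j|_\Omega\lesssim_\rho1$ you must bound the gauge at the small scale $\rho\delta(a_j)$ by the gauge at scale $\delta(z)$. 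That is, you need $\tau(\cdot,u,t\varepsilon)\gtrsim_t\tau(\cdot,u,\varepsilon)$ for fixed $t<1$. The inequality \eqref{eq:polydisc-scale-gauge} goes the opposite way, so it cannot be the tool. The needed bound follows from convexity of $\rho_\Omega$ along complex lines, which gives $\tau(z,u,t\varepsilon)\ge t\,\tau(z,u,\varepsilon)$, combined with stability of the radii when the center moves inside a McNeal polydisc.
\end{itemize}
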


\begin{proof}
Put $\alpha=1/q-1/p$.  Normalize $(2^{-j})$ in
$\mathfrak d_{p,q}^{\,r}$ by setting
\[
 d_j=\frac{2^{-j}}
 {\|\{2^{-k}\}_{k\ge1}\|_{\mathfrak d_{p,q}^{\,r}}}.
\]
This is possible because $(2^{-j})\in\ell^1$ and every
$\ell^1$ diagonal is nuclear.  More precisely,
\[
 D_d=\sum_{j=1}^\infty d_j e_j^*\otimes e_j,
 \qquad
 \sum_j\|d_je_j^*\|_{(\ell^p)^*}\|e_j\|_{\ell^q}
 =\sum_j|d_j|.
\]
Here $e_j$ is the $j$th unit vector and $e_j^*(c)=c_j$.
Thus $D_d$ is $1$-summing and hence $r$-summing.  Define
\begin{equation*}
 \eta_j=\varepsilon d_jV_j^{-\alpha}.
\end{equation*}
Then \eqref{eq:eta-small} holds.

For each lattice point, freeze the extremal coordinates at scale
$\rho\delta(a_j)$ and put
\[
 P_j(L)=\Q_\rho^{[L]}(a_j),\qquad L=1,2,3,4.
\]
Thus $P_j(1)=\Q_\rho(a_j)$ and $v(P_j(L))=L^{2n}V_j$.
These are affine dilates in one coordinate system.  By
\eqref{eq:polydisc-uniform-enlargement}, their closures are contained
in a fixed scale enlargement.  Use the nested coordinate polydiscs
chosen in \cref{sec:geometry} for the finitely many core cells.

Pull back one fixed cutoff on $2\mathbb D^n$ that equals one on
$\mathbb D^n$.  This gives $\psi_j\in C_c^\infty(P_j(2))$ such that
\[
 0\le\psi_j\le1,\qquad
 \psi_j=1\quad\hbox{on }P_j(1).
\]
In the frozen coordinates its derivatives satisfy
\[
 |(\bar\partial\psi_j)_z
   (\overline{u_k(a_j,\rho\delta(a_j))})|
 \lesssim\tau_k(a_j,\rho\delta(a_j))^{-1}.
\]
On $P_j(2)$, stability of the directional norms at nearby centers,
followed by comparison of the fixed scales $\rho\delta(a_j)$ and
$\delta(z)$, gives
\[
 |\bar\partial\psi_j(z)|_\Omega
 \le C_\rho
 \left(\sum_{k=1}^n
 \tau_k(a_j,\rho\delta(a_j))^2
 |(\bar\partial\psi_j)_z
   (\overline{u_k(a_j,\rho\delta(a_j))})|^2\right)^{1/2}
 \le C_\rho.
\]
Here $C_\rho$ is independent of $j$.  On the core the same bound follows
from the finitely many affine maps.  Put
\[
 \Psi=\sum_j\psi_j,\qquad \chi_j=\psi_j/\Psi.
\]
The covering, engulfing, and finite overlap give
$1\le\Psi\le N$ and $|\bar\partial\Psi|_\Omega\le C_\rho N$.
The quotient rule gives a smooth partition of unity with
\begin{equation}\label{eq:partition-unity}
 \supp\chi_j\Subset P_j(2),\qquad
 \sum_j\chi_j=1,\qquad
 |\bar\partial\chi_j|_\Omega\le C_\rho.
\end{equation}
In fact,
\[
 \bar\partial\chi_j
 =\frac{\bar\partial\psi_j}{\Psi}
 -\frac{\psi_j\bar\partial\Psi}{\Psi^2},
 \qquad
 |\bar\partial\chi_j|_\Omega
 \le |\bar\partial\psi_j|_\Omega
 +\psi_j|\bar\partial\Psi|_\Omega.
\]
The sums are locally finite.  Indeed, a fixed compact set meets only
finitely many lattice cells, by packing and the positive lower bound
for their volumes on that set.

For fixed $j$, define
\[
 \mathcal N(j)=\{k:\supp\chi_k\cap P_j(1)\ne\varnothing\}.
\]
If $k\in\mathcal N(j)$, the two centers lie in a common fixed scale
enlargement.  Engulfing and packing of the disjoint small lattice balls
give
\begin{equation}\label{eq:neighbor-incidence}
 \sup_j\#\mathcal N(j)
 +\sup_k\#\{j:k\in\mathcal N(j)\}<\infty,
 \qquad V_k\asymp V_j\quad(k\in\mathcal N(j)).
\end{equation}
The column bound follows from the same argument with the centers
interchanged.  More precisely, there is a fixed $C_0\ge1$ such that
\[
 \overline{P_j(4)}\subset\Q_{C_0\rho}(a_k)
 \qquad(k\in\mathcal N(j)).
\]
By \eqref{eq:polydisc-uniform-enlargement}, $P_j(4)$ and $P_k(2)$
lie in fixed scale enlargements.  Apply engulfing at an intersection
point to obtain $C_0$.  Increase the constant once to
include the closures.  The finite core charts were chosen with the
same containment property.

Fix $C_*\ge A_*C_0$ in \eqref{eq:admissible-radius-pair}, and take
$s_0<\rho_0/(2A_*)$.  Choose the pair so that
$C_*\rho<\rho_+<s_0$.  Formula
\eqref{eq:polydisc-scale-gauge} and the core nesting convention give
\begin{equation}\label{eq:ida-common-outer-polydisc}
 \overline{P_j(4)}\subset\Q_{\rho_+}(a_k)
 \Subset\Q_{\rho_+}^*(a_k)
 \qquad(k\in\mathcal N(j)).
\end{equation}
In particular, the averaging region below lies in
$\Q_{\rho_+}(a_k)$ for every $k\in\mathcal N(j)$.
All these constants are fixed before choosing the local approximants.

For each $j$, choose
$h_j\in\Hol(\Q_{\rho_+}^*(a_j))$ such that
\[
 \left(\avg_{\Q_{\rho_+}(a_j)}|f-h_j|^q\dd v\right)^{1/q}
 \le\G_{q,\rho_+}f(a_j)+\eta_j.
\]
Set
\begin{equation}\label{eq:ida-f1-f2}
 f_1=\sum_j\chi_jh_j,\qquad
 f_2=f-f_1=\sum_j\chi_j(f-h_j).
\end{equation}
Each product is extended by zero outside $P_j(2)$.  Its support is
compactly contained in the domain of $h_j$, by
\eqref{eq:ida-common-outer-polydisc} with $k=j$.
Thus $f_1\in C^\infty(\Omega)$ and $f_2\in L^q_{\loc}(\Omega)$.

Let $k,\ell\in\mathcal N(j)$. The inclusion
\eqref{eq:ida-common-outer-polydisc} gives
\[
 \left(\avg_{P_j(3)}|f-h_k|^q\dd v\right)^{1/q}
 \le
 \left(\frac{\V_{\rho_+}(a_k)}{v(P_j(3))}\right)^{1/q}
 \bigl(\G_{q,\rho_+}f(a_k)+\eta_k\bigr).
\]
The volume ratio is bounded uniformly in $j$ and $k$.
The same estimate holds with $k$ replaced by $\ell$.
The function $h_k-h_\ell$ is holomorphic on a neighborhood of
$\overline{P_j(4)}$. By the triangle inequality,
\begin{align*}
 \left(\avg_{P_j(3)}|h_k-h_\ell|^q\dd v\right)^{1/q}
 &\le
 \left(\avg_{P_j(3)}|f-h_k|^q\dd v\right)^{1/q}
 +\left(\avg_{P_j(3)}|f-h_\ell|^q\dd v\right)^{1/q}\\
 &\lesssim\G_{q,\rho_+}f(a_k)+\eta_k
 +\G_{q,\rho_+}f(a_\ell)+\eta_\ell.
\end{align*}
The affine map from $P_j(3)$ onto $\mathbb D^n$ sends $P_j(2)$
onto $(2/3)\mathbb D^n$.  The ordinary submean estimate therefore has
a constant independent of $j$.  Scaling back gives
\begin{equation}\label{eq:h-difference}
 \sup_{P_j(2)}|h_k-h_\ell|
 \lesssim\G_{q,\rho_+}f(a_k)+\eta_k
 +\G_{q,\rho_+}f(a_\ell)+\eta_\ell.
\end{equation}

Choose $j_0\in\mathcal N(j)$.  Since
$\sum_k\bar\partial\chi_k=0$, on $\Q_\rho(a_j)$ we have
\[
 \bar\partial f_1
 =\sum_{k\in\mathcal N(j)}
 \bar\partial\chi_k(h_k-h_{j_0})
 \quad\text{on }\Q_\rho(a_j).
\]
Equations \eqref{eq:partition-unity} and \eqref{eq:h-difference} give
\[
 |\bar\partial f_1|_\Omega
 \lesssim\sum_{k\in\mathcal N(j)}
 \bigl(\G_{q,\rho_+}f(a_k)+\eta_k\bigr).
\]
Minkowski's inequality and the second identity in
\eqref{eq:ida-f1-f2} give
\begin{align*}
 \M_{q,\rho}f_2(a_j)
 &\le\sum_{k\in\mathcal N(j)}
 \left(\avg_{\Q_\rho(a_j)}
 |\chi_k(f-h_k)|^q\dd v\right)^{1/q}\\
 &\lesssim\sum_{k\in\mathcal N(j)}
 \bigl(\G_{q,\rho_+}f(a_k)+\eta_k\bigr).
\end{align*}
This proves \eqref{eq:ida-local-decomposition}.

Multiply \eqref{eq:ida-local-decomposition} by $V_j^\alpha$.  Neighboring
volumes are comparable.  Put
\[
 c_k=V_k^\alpha\G_{q,\rho_+}f(a_k).
\]
Each sequence on the left of \eqref{eq:ida-sequence-decomposition} is
pointwise bounded by
\[
 C\left\{\sum_{k\in\mathcal N(j)}
 (c_k+V_k^\alpha\eta_k)\right\}_j.
\]
By \cref{lem:finite-neighbor}, its $\mathfrak d_{p,q}^{\,r}$ norm is at
most $C(\|c\|_{\mathfrak d_{p,q}^{\,r}}+\varepsilon)$.  By
\eqref{eq:lattice-G},
\[
 \|c\|_{\mathfrak d_{p,q}^{\,r}}
 =\|\mathbf G_{p,q,\rho}f\|_{\mathfrak d_{p,q}^{\,r}}
 =\|f\|_{\mathcal I_{p,q}^{r}(\Omega)}.
\]
The constants do not depend on the errors $\eta_j$.  Since
\eqref{eq:eta-small} is exact, no limiting choice of the functions
$h_j$ is needed.
This is \eqref{eq:ida-sequence-decomposition}.
\end{proof}

\begin{lemma}\label{lem:H-two-estimates}
Let $1<q<\infty$.
Let $u\in C^\infty(\Omega)$ and
$g\in\Hol(\Omega)$.  If
\[
 ug\in L^q(\Omega),
 \qquad
 g|\bar\partial u|_\Omega\in L^q(\Omega),
\]
then
\begin{equation}\label{eq:H-smooth}
 \|H_ug\|_{L^q}
 \lesssim\|g|\bar\partial u|_\Omega\|_{L^q}.
\end{equation}
Here $H_ug=(I-P)(ug)$.
For any measurable $u$ with $ug\in L^q(\Omega)$, we have
\begin{equation}\label{eq:H-rough}
 \|H_ug\|_{L^q}
 \le(1+\|P\|_{L^q\to L^q})\|ug\|_{L^q}.
\end{equation}
\end{lemma}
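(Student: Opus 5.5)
The rough bound \eqref{eq:H-rough} needs no geometry. Since $ug\in L^q(\Omega)$ and $P$ is bounded on $L^q$ by \eqref{eq:intro-Pq}, the function $H_ug=(I-P)(ug)$ is well defined in $L^q$, and the triangle inequality gives $\|(I-P)(ug)\|_q\le\|ug\|_q+\|P\|_{L^q\to L^q}\|ug\|_q$. So the only real content is the smooth estimate \eqref{eq:H-smooth}. For it I will identify $H_ug$ with the normalized $\bar\partial$-solution produced in \cref{prop:FTI-all}.

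Put $\omega=\bar\partial(ug)$. Since $g$ is holomorphic and $u$ is smooth, $\omega=g\,\bar\partial u$. This is a smooth $(0,1)$-form on $\Omega$, and it is $\bar\partial$-closed because $\bar\partial\omega=\bar\partial\bar\partial(ug)=0$. Its dual McNeal norm \eqref{eq:dual-McNeal} is linear in the form pointwise, so $|\omega|_\Omega=|g|\,|\bar\partial u|_\Omega$, which lies in $L^q(\Omega)$ by hypothesis. \Cref{prop:FTI-all} then gives a unique $w\in L^q(\Omega)$ with $\bar\partial w=\omega$ and $Pw=0$. It also gives $\|w\|_q\le C_{\Omega,q}\|g|\bar\partial u|_\Omega\|_q$; this is \eqref{eq:dbar-solution}.

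It remains to show $H_ug=w$. The difference $ug-w$ lies in $L^q(\Omega)$ and satisfies $\bar\partial(ug-w)=0$ in distributions. Weyl's lemma, in the form already used in the proof of \cref{prop:FTI-all}, makes it agree a.e.\ with a holomorphic function, so $ug-w\in A^q(\Omega)$. The projection $P$ reproduces $A^q$; this holds by density of $A^2\cap A^q$ and the pairing identity \eqref{eq:projection-pairing}, as in \cref{lem:kernel-span-density}. Hence $(I-P)(ug-w)=0$. Therefore $H_ug=(I-P)(ug)=(I-P)w=w-Pw=w$, and the bound on $\|w\|_q$ is exactly \eqref{eq:H-smooth}.

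The only delicate point is the reproducing property $Pf=f$ for $f\in A^q$ with $q\ne2$. I would take it from the duality argument of \cref{lem:kernel-span-density}: for $f\in A^q$ and any $\chi_z$, \eqref{eq:projection-pairing} gives $\overline{Pf(z)}=\langle\chi_z,Pf\rangle=\langle P\chi_z,f\rangle=\overline{f(z)}$, using the antiholomorphic mean-value identity $P\chi_z=K(\cdot,z)$ and the reproducing formula against $K(\cdot,z)\in A^{q'}$. Apart from that, every step rests on \cref{prop:FTI-all}, and no commutation between $g$ and the solution operator is needed.
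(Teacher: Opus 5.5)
Your proposal is correct and follows the paper's proof essentially step for step. In both, the rough bound is the triangle inequality, and for the smooth bound $H_ug$ is identified with the normalized solution $\mathcal S_{\Omega,q}(g\,\bar\partial u)$ from \cref{prop:FTI-all}, using Weyl's lemma and the fact that $P$ fixes $A^q$.

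One small caution concerns your closing duality argument for $Pf=f$ on $A^q$: it is circular. The ``reproducing formula against $K(\cdot,z)$'' for $f\in A^q$ is exactly the statement $Pf=f$. Use instead $A^q\subset A^2$ when $q\ge2$. When $q<2$, use the density of $A^2\cap A^q$ in $A^q$ (for example, of polynomials, \cref{prop:domain-ledger}) together with the $L^q$-boundedness of $P$.
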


\begin{proof}
Put $\vartheta=\bar\partial u$.  It is a smooth
$\bar\partial$-closed $(0,1)$-form.  Since $g$ is holomorphic,
$g\vartheta$ is also $\bar\partial$-closed.  Set
\[
 v=\mathcal S_{\Omega,q}(g\vartheta).
\]
Equation \eqref{eq:dbar-solution}, applied to $\vartheta$ with
multiplier $g$, gives
\[
 \|v\|_q\le C_q\|g|\bar\partial u|_\Omega\|_q<\infty.
\]
By \eqref{eq:FTI-solution},
\[
 \bar\partial v=g\bar\partial u,
 \qquad
 \bar\partial(ug-v)=0.
\]
Both $ug$ and $v$ belong to $L^q(\Omega)$.  Hence $ug-v$ belongs to
$L^q(\Omega)$ and is distributionally holomorphic.  The distributional
Weyl lemma gives $ug-v\in A^q(\Omega)$.  Since
$P(ug-v)=ug-v$,
\begin{equation*}
 H_ug=(I-P)(ug)=(I-P)v=v,
\end{equation*}
where $Pv=0$ by \eqref{eq:FTI-all-normalization}.  Therefore
\[
 \|H_ug\|_q=\|v\|_q
 \lesssim\|g|\bar\partial u|_\Omega\|_q.
\]
This proves \eqref{eq:H-smooth}.  The second estimate follows directly
from $H_u=(I-P)M_u$ on its natural domain.
\end{proof}

\begin{proof}[Proof of \cref{thm:intro-hankel}]
Assume first that $f\in\mathcal I_{p,q}^{r}(\Omega)$.  Fix
$\varepsilon>0$ and apply \cref{lem:ida-decomposition}.  Put
\[
 \dd\mu_1=|\bar\partial f_1|_\Omega^q\dd v,
 \qquad
 \dd\mu_2=|f_2|^q\dd v.
\]
By \eqref{eq:ida-sequence-decomposition} and
\cref{thm:carleson},
\begin{equation}\label{eq:Jmu12}
 \pi_r(J_{\mu_1})+\pi_r(J_{\mu_2})
 \lesssim\|f\|_{\mathcal I_{p,q}^{r}(\Omega)}+\varepsilon.
\end{equation}
For $g\in A^p$, the two embedding norms are
\begin{equation}\label{eq:H-embedding-identities}
 \|J_{\mu_1}g\|_{L^q(\mu_1)}
 =\|g|\bar\partial f_1|_\Omega\|_{L^q(v)},
 \qquad
 \|J_{\mu_2}g\|_{L^q(\mu_2)}
 =\|gf_2\|_{L^q(v)}.
\end{equation}
For $g\in\Gamma$, the definition of $\mathcal D_{\Omega,q}$ gives
$fg\in L^q$.  The boundedness of $J_{\mu_2}$ gives $f_2g\in L^q$.
Hence
\begin{equation*}
 f_1g=fg-f_2g\in L^q,
 \qquad
 g|\bar\partial f_1|_\Omega\in L^q.
\end{equation*}
Thus both operators below are defined on $\Gamma$.  Recall the weak
$r$-norm $w_r$ from \eqref{eq:weak-r-norm}.
For $g_1,\ldots,g_N\in\Gamma$, equations
\eqref{eq:H-smooth}, \eqref{eq:H-rough}, and
\eqref{eq:H-embedding-identities} imply
\begin{align*}
 \left(\sum_{k=1}^N\|H_{f_1}g_k\|_q^r\right)^{1/r}
 &\lesssim\pi_r(J_{\mu_1})w_r(g_1,\ldots,g_N),\\
 \left(\sum_{k=1}^N\|H_{f_2}g_k\|_q^r\right)^{1/r}
 &\lesssim\pi_r(J_{\mu_2})w_r(g_1,\ldots,g_N).
\end{align*}
On $\Gamma$, $H_f=H_{f_1}+H_{f_2}$.  Hence Minkowski's inequality and
\eqref{eq:Jmu12} give
\begin{align}
 \left(\sum_{k=1}^N\|H_fg_k\|_q^r\right)^{1/r}
 &\le
 \left(\sum_{k=1}^N\|H_{f_1}g_k\|_q^r\right)^{1/r}
 +\left(\sum_{k=1}^N\|H_{f_2}g_k\|_q^r\right)^{1/r}\notag\\
 &\lesssim
 \bigl(\|f\|_{\mathcal I_{p,q}^{r}(\Omega)}+\varepsilon\bigr)
 w_r(g_1,\ldots,g_N).                         \label{eq:H-upper-family}
\end{align}
The case $N=1$ gives a bounded extension from the dense space
$\Gamma$ to $A^p$.  The extension is unique.  Approximate each member
of a finite family in $A^p$ by elements of $\Gamma$.  Passing to the
limit in \eqref{eq:H-upper-family} gives
\begin{equation}\label{eq:H-upper-finite}
 \pi_r(H_f)
 \lesssim\|f\|_{\mathcal I_{p,q}^{r}(\Omega)}+\varepsilon.
\end{equation}
If $g_{k,m}\to g_k$ in $A^p$ for
$1\le k\le N$, then
\[
 |w_r(g_{1,m},\ldots,g_{N,m})-w_r(g_1,\ldots,g_N)|
 \le\left(\sum_{k=1}^N\|g_{k,m}-g_k\|_{A^p}^r\right)^{1/r}
 \longrightarrow0.
\]
The bounded extension gives convergence of each $H_fg_{k,m}$ in $L^q$.
Thus both sides of the inequality for finite families pass to the limit.
Let $\varepsilon\downarrow0$.  This proves the upper estimate.

Conversely, assume $H_f\in\Piabs_r(A^p,L^q)$.  Color the lattice so that,
within each color, the balls $\Q_{\rho_+}^*(a_j)$ are pairwise disjoint and
\eqref{eq:kernel-synthesis} holds.  Fix one color and denote its index
set by $J_\ell$.  All indices in the following construction belong to
$J_\ell$.  On
$\Q_{\rho_+}^*(a_j)$ the kernel $K(\cdot,a_j)$ has no zeros by
\eqref{eq:kernel-lower-finite}. Since $e_{j,p}$ is a scalar
multiple of $K(\cdot,a_j)$, the symbol condition gives
$fe_{j,p}\in L^q(\Omega)$. Hence $P(fe_{j,p})\in A^q(\Omega)$, and
\[
 h_j(z)=\frac{P(f e_{j,p})(z)}{e_{j,p}(z)}
\]
is holomorphic there.  It is admissible in the infimum defining
$\G_{q,\rho_+}f(a_j)$.  The ratio $\rho_+/\rho$ is fixed, so
$\V_{\rho_+}(a_j)\asymp V_j$.  By \eqref{eq:atom-normalization},
\begin{align}
 \|H_fe_{j,p}\|_{L^q(\Q_{\rho_+}(a_j))}
 &=\left(\int_{\Q_{\rho_+}(a_j)}
 |f-h_j|^q|e_{j,p}|^q\dd v\right)^{1/q}\notag\\
 &\gtrsim
 V_j^{1/q-1/p}\G_{q,\rho_+}f(a_j).
 \label{eq:H-local-lower}
\end{align}
Let
\[
 Y=\left(\bigoplus_jL^q(\Q_{\rho_+}(a_j))\right)_{\ell^q},
 \qquad
 Bc=\{(H_fS_pc)|_{\Q_{\rho_+}(a_j)}\}_j.
\]
The target balls are disjoint.  Hence the restriction map
\[
 R:L^q(\Omega)\longrightarrow Y,
 \qquad Ru=\{u|_{\Q_{\rho_+}(a_j)}\}_j,
\]
is contractive.  Since $B=RH_fS_p$, the ideal property gives
\begin{equation*}
 B\in\Piabs_r(\ell^p,Y),
 \qquad
 \pi_r(B)\le\|S_p\|\pi_r(H_f).
\end{equation*}
Let $r_j:[0,1]\to\{-1,1\}$ be the Rademacher functions.  Define
\[
 U_t(c_j)=(r_j(t)c_j),
 \qquad
 V_t(u_j)=(r_j(t)u_j).
\]
Both maps are isometries.  Rademacher averaging gives, first on finite
coordinate compressions, the diagonal blocks of $B$.  Define on
finitely supported sequences
\[
 \mathbb Hc=\{c_j(H_fe_{j,p})|_{\Q_{\rho_+}(a_j)}\}_j.
\]
Enumerate the chosen color.  Let $P_m$ and $Q_m$ be the first-$m$
coordinate projections in the source and target.  Then
\begin{equation}\label{eq:H-block-extraction}
 Q_m\mathbb HP_m
 =\int_0^1Q_mV_tBU_tP_m\dd t.
\end{equation}
The compressed integrand has finitely many blocks.
It is a simple operator-valued function.  Thus the integral is a Bochner integral in
operator norm.
Orthogonality,
\[
 \int_0^1r_j(t)r_k(t)\dd t=\delta_{jk},
\]
proves \eqref{eq:H-block-extraction}.  Convexity and the ideal property
give
\[
 \pi_r(Q_m\mathbb HP_m)\le\pi_r(B).
\]
Here $\delta_{jk}$ is the Kronecker delta.  If $c$ is finitely supported,
then $(Q_m\mathbb HP_m)c=\mathbb Hc$ for all sufficiently large $m$.
Apply this observation simultaneously to each member of a finite family.
The definition of $\pi_r$ gives
\[
 \pi_r(\mathbb H)\le\pi_r(B).
\]
More explicitly, the case of a single finitely supported vector gives
$\|\mathbb Hc\|_Y\le\pi_r(B)\|c\|_{\ell^p}$.
Since $c_{00}$ is dense in $\ell^p$, this defines a unique bounded
extension of $\mathbb H$.  Approximation of finite families gives the
displayed summing estimate for that extension.

Put $v_j=(H_fe_{j,p})|_{\Q_{\rho_+}(a_j)}$ and
$d_j=\|v_j\|_q$.  For $d_j>0$, define
\[
 \varphi_j(u)=d_j^{1-q}
 \int_{\Q_{\rho_+}(a_j)}u\,\overline{v_j}|v_j|^{q-2}\dd v,
\]
where the integrand is zero wherever $v_j=0$.
If $d_j=0$, put $\varphi_j=0$.  H\"older's inequality gives
$\|\varphi_j\|\le1$ and
\[
 \varphi_j(v_j)=d_j.
\]
The block map $\Phi:(u_j)\mapsto(\varphi_j(u_j))$ is contractive since
\[
 \|\Phi(u_j)\|_{\ell^q}^q
 =\sum_j|\varphi_j(u_j)|^q\le\sum_j\|u_j\|_q^q.
\]
Moreover, $\Phi\mathbb H=D_d$.  Therefore
\eqref{eq:H-local-lower}, solidity, and the ideal property give
\begin{equation*}
 \|(\mathbf G_{p,q,\rho}f)|_{J_\ell}\|_{\mathfrak d_{p,q}^{\,r}}
 \lesssim\pi_r(H_f).
\end{equation*}
Let $N_{\mathrm{col}}$ be the number of colors.  For the $\ell$th
color, let $b^{(\ell)}$ equal the corresponding
restriction of $\mathbf G_{p,q,\rho}f$ and vanish elsewhere.  Then
\[
 \mathbf G_{p,q,\rho}f=\sum_{\ell=1}^{N_{\mathrm{col}}}b^{(\ell)},
 \qquad
 \|b^{(\ell)}\|_{\mathfrak d_{p,q}^{\,r}}
 \lesssim\pi_r(H_f).
\]
The triangle inequality gives the estimate on the full lattice
\begin{equation}\label{eq:H-lower-finite}
 \|\mathbf G_{p,q,\rho}f\|_{\mathfrak d_{p,q}^{\,r}}
 \le\sum_{\ell=1}^{N_{\mathrm{col}}}
 \|b^{(\ell)}\|_{\mathfrak d_{p,q}^{\,r}}
 \lesssim\pi_r(H_f).
\end{equation}
This proves the necessity part of \cref{thm:intro-hankel}.
Combine \eqref{eq:H-upper-finite} with the lower estimate.  This proves
\eqref{eq:intro-hankel-theorem} and \eqref{eq:intro-hankel-norm}.
\end{proof}

For the target $L^2$, the criterion has an explicit form. Put
\[
 \kappa_2(p,r)=
 \begin{cases}
 p',&1<p\le2,\\
 \max\{p',\min\{r,2\}\},&2<p<\infty.
 \end{cases}
\]
The known diagonal classification gives
\begin{equation}\label{eq:intro-Hankel-two-explicit}
 \pi_r(H_f:A^p\to L^2)
 \asymp
 \left(\sum_j
   [V_j^{1/2-1/p}\G_{2,\rho_+}f(a_j)]^{\kappa_2(p,r)}
 \right)^{1/\kappa_2(p,r)},
 \qquad f\in\mathcal D_{\Omega,2},
\end{equation}
with both sides allowed to be infinite. At $p=2$ this is the
Hilbert--Schmidt condition, independently of $r$; see
\cref{prop:Hilbert-calibration}.

\subsection{Quantitative approximation}

The next bound uses the local distance sequence directly. The proof
first dominates the Hankel operator by a Carleson embedding. It then
transfers each approximant without increasing its rank.

\begin{corollary}\label{cor:Hankel-summing-approximation}
Let $1<p,q<\infty$ and $1\le r<\infty$. Assume that $p=2$ or $q=2$.
Let $f\in\mathcal D_{\Omega,q}\cap\mathcal I_{p,q}^{r}(\Omega)$, and put
\[
 \gamma_j=V_j^{1/q-1/p}\G_{q,\rho_+}f(a_j),\qquad
 \eta_L=(L+1)^{n-1}\vartheta^{L+1},\quad L\ge0.
\]
Here $0<\vartheta<1$ is a fixed geometric constant. There are constants
$A>0$ and $C_0\in\mathbb N$, independent of $f$, such that
\begin{equation}\label{eq:Hankel-summing-rank-budget}
 a_N^{(r)}(H_f)
 \le A\!\inf_{\substack{F\subset\mathbb N\text{ finite},\ L\ge0\\
              C_0\#F\binom{L+n}{n}<N}}
 \left(
 \|\gamma\mathbf1_{F^c}\|_{\mathfrak d_{p,q}^{\,r}}
 +\eta_L\|\gamma\|_{\mathfrak d_{p,q}^{\,r}}
 \right),\qquad N\ge1.
\end{equation}
The degree $L$ is an integer.
\end{corollary}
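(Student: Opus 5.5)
Fix $\varepsilon>0$ and apply \cref{lem:ida-decomposition} to write $f=f_1+f_2$. The plan is to dominate $H_f$ by Carleson embeddings $J_{\mu_1},J_{\mu_2}$ with $\dd\mu_1=|\bar\partial f_1|_\Omega^q\dd v$ and $\dd\mu_2=|f_2|^q\dd v$, as in the proof of \cref{thm:intro-hankel}. By \eqref{eq:ida-local-decomposition}, the enlarged coefficients $b^{(i)}_j=\mu_i(\Q_{C\rho}(a_j))^{1/q}V_j^{-1/p}$ are pointwise bounded by $C\sum_{k\in\mathcal N'(j)}(\gamma_k+V_k^{1/q-1/p}\eta_k)$ for neighbor sets of bounded incidence; the covering argument of \eqref{eq:beta-radius} absorbs the radius change. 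I would then invoke \cref{thm:summing-approximation} for each $\mu_i$ with a cell set $F'$ and degree $L$. The tail $\|b^{(i)}\mathbf1_{F'^c}\|$ has to be controlled by $\|\gamma\mathbf1_{F^c}\|$ plus an $\varepsilon$-term, so I choose $F'=\bigcup_{k\in F}\{j:k\in\mathcal N'(j)\}$. Then $\#F'\le N_0\#F$, and $j\notin F'$ forces every neighbor $k\in\mathcal N'(j)$ to lie outside $F$. \cref{lem:finite-neighbor} applied to $\gamma\mathbf1_{F^c}$ therefore bounds the tail, and the $F'$ part is bounded by $\|\gamma\|+\varepsilon$. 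This gives the constant $C_0=2N_0$ in the rank budget, since there are two embeddings.

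The key step is transferring an approximant $R$ of $J_{\mu_i}$ to one for the Hankel pieces without raising the rank. In the Hilbert cases I use a factorization through $L^q(\mu_i)$. For $i=2$ it is $H_{f_2}=(I-P)U_{f_2}J_{\mu_2}$, where $U_{f_2}h=f_2h$ is an isometry $L^q(\mu_2)\to L^q(v)$; hence $(I-P)U_{f_2}R$ is an approximant of rank at most $\operatorname{rank}R$. This step needs no Hilbert hypothesis. For $i=1$, \eqref{eq:H-smooth} only gives a norm domination $\|H_{f_1}g\|_q\lesssim\|J_{\mu_1}g\|$, not a factorization. When $q=2$, domination $\|Tg\|\le C\|Sg\|$ into a Hilbert space yields $T=WS$ with $W$ bounded on $\overline{\operatorname{ran}S}\subset L^2(\mu_1)$: define $W(Sg)=Tg$ and extend by zero on the orthogonal complement using the orthogonal projection onto $\overline{\operatorname{ran}S}$. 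Then $W\circ\Pi R$ approximates $H_{f_1}$ with $\pi_r(H_{f_1}-W\Pi R)\le\|W\|\pi_r(J_{\mu_1}-R)$, where $\Pi$ is that orthogonal projection. This requires the target $L^q(\mu_1)$ to be a Hilbert space, which holds when $q=2$.

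When $p=2$ and $q\ne2$, I would instead factor on the source side. Since $A^2$ is Hilbert, the domination produces the same structure after replacing $\mu_1$ by the measure whose embedding norm is $\|g|\bar\partial f_1|_\Omega\|_q$. I expect this case to be the main obstacle, because $L^q(\mu_1)$ is not Hilbert. My first attempt would be a Pietsch/Hilbert reformulation: in the Hilbert-source case $a_N^{(r)}$ is comparable to Schatten-type quantities via \cref{prop:Hilbert-calibration}, so I would aim to compare $H_{f_1}$ and $J_{\mu_1}$ through $|T|$ and $|S|$ on $A^2$ and use the orthogonal projection onto spectral subspaces of the Hilbert source. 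Finally, sum the two approximants, whose ranks add to at most $C_0\#F\binom{L+n}{n}<N$. This gives $a_N^{(r)}(H_f)\lesssim\|\gamma\mathbf1_{F^c}\|+\eta_L(\|\gamma\|+\varepsilon)+\varepsilon$, and letting $\varepsilon\downarrow0$ and taking the infimum yields \eqref{eq:Hankel-summing-rank-budget}.
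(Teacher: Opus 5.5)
Your setup is sound: your set $F'$ is the paper's $F^+$, and the tail estimate through \cref{lem:finite-neighbor} is the same. The exact factorization $H_{f_2}=(I-P)U_{f_2}J_{\mu_2}$ holds for every $1<q<\infty$. Your $q=2$ step for $H_{f_1}$ (domination, then a bounded $W$ on $\overline{J_{\mu_1}(A^p)}$ composed with the orthogonal projection of $L^2(\mu_1)$) is exactly the paper's $q=2$ argument.

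The gap is the case $p=2$, $q\ne2$. You leave it open, and the route you sketch does not work there. \cref{prop:Hilbert-calibration} requires \emph{both} spaces to be Hilbert spaces. So for $H_{f_1}:A^2\to L^q$ with $q\ne2$ it gives no comparison of $\pi_r$ or $a_N^{(r)}$ with Schatten quantities. Also, $|H_{f_1}|=(H_{f_1}^*H_{f_1})^{1/2}$ and its spectral subspaces are not available without a Hilbert target. Your remark about ``replacing $\mu_1$'' by the measure whose embedding norm is $\|g|\bar\partial f_1|_\Omega\|_q$ just returns $\mu_1$ itself.

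The missing idea is a direct use of the Hilbert \emph{source}. Take the approximant $R$ of $J_{\mu_1}$ from \cref{thm:summing-approximation}. Let $E$ be the orthogonal projection of $A^2$ onto $(\ker R)^\perp$, and set $B=H_{f_1}E$. Then $\operatorname{rank}B\le\operatorname{rank}E=\operatorname{rank}R$, $\|I-E\|\le1$, and $R(I-E)=0$. Hence $H_{f_1}-B=H_{f_1}(I-E)$ and $J_{\mu_1}(I-E)=(J_{\mu_1}-R)(I-E)$. Applying the domination to $(I-E)g_k$ gives, for every finite family $(g_k)_{k=1}^N\subset A^2$,
\[
 \begin{aligned}
 \Bigl(\sum_k\|(H_{f_1}-B)g_k\|_q^r\Bigr)^{1/r}
 &\le C\Bigl(\sum_k\|(J_{\mu_1}-R)(I-E)g_k\|_{L^q(\mu_1)}^r\Bigr)^{1/r}\\
 &\le C\,\pi_r(J_{\mu_1}-R)\,w_r(g_1,\ldots,g_N).
 \end{aligned}
\]

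This is exactly where $p=2$ enters: $\ker R$ is the range of a contractive projection. In $A^p$ with $p\ne2$, the available projections onto $\ker R$ may have norms that grow with $\operatorname{rank}R$, which would destroy the uniformity in $N$.

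The paper applies this step, and the $q=2$ factorization, to the single combined measure $(|\bar\partial f_1|_\Omega^q+|f_2|^q)\dd v$, which dominates all of $H_f$, so only one approximant is needed. Your version with two embeddings also closes once this step is supplied, at the cost of the factor $2$ in $C_0$.
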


\begin{proof}
Fix $\varepsilon>0$. Apply \cref{lem:ida-decomposition}, and write
$f=f_1+f_2$. Put
\[
 w_1=|\bar\partial f_1|_\Omega,\qquad w_2=|f_2|,
 \qquad \mathrm d\mu=(w_1^q+w_2^q)\,\mathrm dv,
 \qquad e_j=V_j^{1/q-1/p}\eta_j.
\]
The measure $\mu$ is locally finite and $\|e\|_{\mathfrak d_{p,q}^{\,r}}
\le\varepsilon$. Let $C$ be the fixed enlargement constant in
\cref{thm:summing-approximation}, and set
\[
 b_j=V_j^{-1/p}\mu(\Q_{C\rho}(a_j))^{1/q},\qquad
 \mathcal M(j)=\{k:\Q_\rho(a_k)\cap\Q_{C\rho}(a_j)\ne\varnothing\}.
\]
The covering and packing estimates at fixed scales give
\[
 \Q_{C\rho}(a_j)\subset\bigcup_{k\in\mathcal M(j)}\Q_\rho(a_k),
 \qquad V_k\asymp V_j\quad(k\in\mathcal M(j)).
\]
Both the row and column incidence bounds for $\mathcal M$ are uniform.
By the triangle inequality and \eqref{eq:ida-local-decomposition},
\begin{align*}
 b_j
 &\le V_j^{-1/p}\sum_{k\in\mathcal M(j)}
       \mu(\Q_\rho(a_k))^{1/q}\\
 &\lesssim\sum_{k\in\mathcal M(j)}V_k^{1/q-1/p}
       \bigl(\M_{q,\rho}w_1(a_k)+\M_{q,\rho}w_2(a_k)\bigr)\\
 &\lesssim\sum_{k\in\mathcal M(j)}\sum_{\ell\in\mathcal N(k)}
       (\gamma_\ell+e_\ell).
\end{align*}
Set $\mathcal N_*(j)=\bigcup_{k\in\mathcal M(j)}\mathcal N(k)$.
The multiplicities in the last double sum are uniformly bounded.
Thus
\[
 b_j\lesssim\sum_{\ell\in\mathcal N_*(j)}(\gamma_\ell+e_\ell).
\]
The relation $\mathcal N_*$ has uniform row and column incidence bounds.
All these relations are fixed independently of $f$ and $\varepsilon$.
For a finite set $F$, define
\[
 F^+=\{j:\mathcal N_*(j)\cap F\ne\varnothing\}.
\]
There is a fixed integer $C_0$ such that $\#F^+\le C_0\#F$.
If $j\notin F^+$, every index in $\mathcal N_*(j)$ lies outside $F$.
Consequently, solidity and \cref{lem:finite-neighbor} give
\begin{align}
 \|b\mathbf1_{(F^+)^c}\|_{\mathfrak d_{p,q}^{\,r}}
 &\lesssim\|\gamma\mathbf1_{F^c}\|_{\mathfrak d_{p,q}^{\,r}}
            +\varepsilon,\label{eq:Hankel-approx-tail}\\
 \|b\mathbf1_{F^+}\|_{\mathfrak d_{p,q}^{\,r}}
 &\le\|b\|_{\mathfrak d_{p,q}^{\,r}}
 \lesssim\|\gamma\|_{\mathfrak d_{p,q}^{\,r}}+\varepsilon.
 \label{eq:Hankel-approx-full}
\end{align}
In particular, $J_\mu\in\Piabs_r(A^p,L^q(\mu))$ by
\cref{thm:carleson}.

For $g\in\Gamma$, the assumption $f\in\mathcal D_{\Omega,q}$ gives
$fg\in L^q$. Boundedness of $J_\mu$ gives $f_2g\in L^q$ and
$gw_1\in L^q$, so $f_1g\in L^q$ as well.
Apply \cref{lem:H-two-estimates} to the two terms. This gives
\[
 \|H_fg\|_q
 \lesssim\|gw_1\|_q+\|gw_2\|_q
 \lesssim\left(\int_\Omega|g|^q\,\mathrm d\mu\right)^{1/q}.
\]
The bounded extensions and the density of $\Gamma$ give
\begin{equation}\label{eq:Hankel-approx-domination}
 \|H_fg\|_q\le C_1\|J_\mu g\|_{L^q(\mu)}
 \qquad(g\in A^p).
\end{equation}
The constant $C_1$ does not depend on $f$ or $\varepsilon$.

Apply \cref{thm:summing-approximation} with the finite set $F^+$.
It gives $R:A^p\to L^q(\mu)$ with
\[
 \operatorname{rank}R\le C_0\#F\binom{L+n}{n}
\]
and, by \eqref{eq:Hankel-approx-tail} and \eqref{eq:Hankel-approx-full},
\begin{equation}\label{eq:Hankel-approx-embedding-error}
 \pi_r(J_\mu-R)
 \lesssim\|\gamma\mathbf1_{F^c}\|_{\mathfrak d_{p,q}^{\,r}}
       +\eta_L\|\gamma\|_{\mathfrak d_{p,q}^{\,r}}
       +\varepsilon.
\end{equation}
Here $\sup_{L\ge0}\eta_L<\infty$, so the last constant is uniform in $L$.

Suppose first that $p=2$. Let $E$ be the orthogonal projection of
$A^2$ onto $(\ker R)^\perp$, and put $B=H_fE$. Then
\[
 \operatorname{rank}B\le\operatorname{rank}E
 =\operatorname{rank}R,
 \qquad R(I-E)=0,\qquad \|I-E\|\le1.
\]
For every finite family $(g_k)\subset A^2$, domination gives
\begin{align*}
 \left(\sum_k\|(H_f-B)g_k\|_q^r\right)^{1/r}
 &\le C_1\left(\sum_k\|J_\mu(I-E)g_k\|_{L^q(\mu)}^r\right)^{1/r}\\
 &=C_1\left(\sum_k\|(J_\mu-R)(I-E)g_k\|_{L^q(\mu)}^r\right)^{1/r}\\
 &\le C_1\pi_r(J_\mu-R)w_r(g_1,\ldots,g_m).
\end{align*}
Hence $\pi_r(H_f-B)\le C_1\pi_r(J_\mu-R)$.

Now suppose that $q=2$. Define $U_0(J_\mu g)=H_fg$.
Equation \eqref{eq:Hankel-approx-domination} makes this a well-defined
bounded map on $J_\mu(A^p)$ with norm at most $C_1$.
Extend it continuously to
$Z=\overline{J_\mu(A^p)}\subset L^2(\mu)$.
Let $Q_Z$ be the orthogonal projection onto $Z$, and put
$U=U_0Q_Z:L^2(\mu)\to L^2(v)$.
Then $H_f=UJ_\mu$ and $\|U\|\le C_1$. With $B=UR$,
\[
 \operatorname{rank}B\le\operatorname{rank}R,
 \qquad
 \pi_r(H_f-B)\le C_1\pi_r(J_\mu-R).
\]
Thus the same rank and error bounds hold in both cases.
For every pair $F,L$ allowed in \eqref{eq:Hankel-summing-rank-budget},
the rank is less than $N$. Take the infimum over operators of finite rank
and then let $\varepsilon$ tend to zero in
\eqref{eq:Hankel-approx-embedding-error}.
Finally take the infimum over $F,L$.
This proves \eqref{eq:Hankel-summing-rank-budget}.
\end{proof}

\subsection{Separated symbols and diagonal models}

The lower estimate above is sharp already on finite separated families.
The next construction also explains why analytic distance, rather than
the local mean of the symbol, is the natural quantity for Hankel operators.
For a sublattice indexed by $J_0$, every sequence on $J_0$ is identified
with its zero extension to the ambient lattice.

\begin{theorem}
\label{thm:sharp-Hankel-models}
Let $1<p,q<\infty$ and $1\le r<\infty$.  Let
$\Lambda_0=\{a_j:j\in J_0\}$ be a sufficiently separated sublattice.
For every finitely supported sequence $b=(b_j)_{j\in J_0}\ge0$, there
is a bounded, compactly supported symbol
$f_b\in\mathcal D_{\Omega,q}$ such that
\begin{align}
 \|\mathbf G_{p,q,\rho}f_b\|_{\mathfrak d_{p,q}^{\,r}}
 &\asymp\|b\|_{\mathfrak d_{p,q}^{\,r}},
 \label{eq:sharp-Hankel-G}\\
 \|\mathbf M_{p,q,\rho}f_b\|_{\mathfrak d_{p,q}^{\,r}}
 &\asymp\|b\|_{\mathfrak d_{p,q}^{\,r}},
 \label{eq:sharp-Hankel-M}\\
 \pi_r(H_{f_b}:A^p\to L^q)
 &\asymp\|b\|_{\mathfrak d_{p,q}^{\,r}}.
 \label{eq:sharp-Hankel-operator}
\end{align}
The constants are independent of $b$ and of its finite support.
The construction uses the rough estimate \eqref{eq:H-rough} and
the lower estimate \eqref{eq:H-lower-finite}.
\end{theorem}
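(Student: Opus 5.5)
Choose a fixed unimodular non-holomorphic profile, say $\phi(\zeta)=\overline{\zeta_1}/|\zeta_1|$ on $\mathbb D^n$ (or any bounded $\phi$ with $\operatorname{dist}_{L^q(\mathbb D^n)}(\phi,\Hol(\mathbb D^n))>0$ and $\int_{\mathbb D^n}|\phi|^q>0$). Transplant it to each separated cell by the affine maps $\Phi_j$ of the frozen extremal coordinates at scale $\rho\delta(a_j)$. Put
\[
 f_b=\sum_{j\in J_0}b_jV_j^{1/p-1/q}\,(\phi\circ\Phi_j^{-1})\mathbf 1_{\Q_\rho(a_j)}.
\]
This is a finite sum, so $f_b$ is bounded and compactly supported. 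Hence $f_bK(\cdot,z)\in L^q$ and $f_b\in\mathcal D_{\Omega,q}$.

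For \eqref{eq:sharp-Hankel-M}, compute the local means directly. On a cell $a_k$ of the full lattice, sufficient separation means $\Q_\rho(a_k)$ meets only boundedly many supports $\Q_\rho(a_j)$, all with $V_j\asymp V_k$. This gives $V_k^{1/q-1/p}\M_{q,\rho}f_b(a_k)\lesssim\sum_{j\in\mathcal N(k)}b_j$. \Cref{lem:finite-neighbor} then yields the upper bound. At $k=j\in J_0$ we get $V_j^{1/q-1/p}\M_{q,\rho}f_b(a_j)\asymp b_j$, and solidity together with \eqref{eq:diagonal-subsequence} yields the lower bound.

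For \eqref{eq:sharp-Hankel-G}, the upper bound is immediate from \eqref{eq:G-less-M-finite}, applied after the radius comparison of \cref{lem:discretization} at scale $\rho_+$, and \eqref{eq:sharp-Hankel-M}. The lower bound is the main obstacle. I need $\G_{q,\rho_+}f_b(a_j)\gtrsim b_jV_j^{1/p-1/q}$ for $j\in J_0$, uniformly in $j$. Separation ensures that on $\Q_{\rho_+}(a_j)$ the symbol equals the single bump $b_jV_j^{1/p-1/q}\phi\circ\Phi_j^{-1}\mathbf1_{\Q_\rho(a_j)}$. For any $h\in\Hol(\Q_{\rho_+}^*(a_j))$, restrict to $\Q_\rho(a_j)$ and pull back by $\Phi_j$. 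The average of $|f_b-h|^q$ over $\Q_{\rho_+}(a_j)$ dominates $(V_j/\V_{\rho_+}(a_j))$ times the average over $\Q_\rho(a_j)$. The latter is at least $(b_jV_j^{1/p-1/q})^q$ times $\delta_0^q$, where $\delta_0$ is the fixed $L^q(\mathbb D^n)$-distance from $\phi$ to holomorphic functions on a neighborhood of $\overline{\mathbb D^n}$. This distance is a positive constant independent of $j$. For $\phi=\overline{\zeta_1}/|\zeta_1|$, positivity follows by integrating against $\zeta_1$-dependent test functions on circles, where holomorphic $h$ has no $\bar\zeta_1$ Fourier mode. Since $\V_{\rho_+}\asymp V_j$, the bound follows. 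On the core cells the same argument uses the Euclidean polydiscs. This is precisely where the non-holomorphy of the profile enters.

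Finally, \eqref{eq:sharp-Hankel-operator}. The lower bound is \eqref{eq:H-lower-finite} combined with the lower half of \eqref{eq:sharp-Hankel-G}. For the upper bound, use the rough estimate \eqref{eq:H-rough}: $\|H_{f_b}g\|_q\le(1+\|P\|)\|f_bg\|_q=(1+\|P\|)\|M_{f_b}g\|_q$. Applied to finite families, this gives $\pi_r(H_{f_b})\le(1+\|P\|)\pi_r(M_{f_b})$. \Cref{cor:multiplication} and \eqref{eq:sharp-Hankel-M} then give $\lesssim\|b\|_{\dideal}$. All constants come from fixed geometry and the fixed profile $\phi$, so they are independent of $b$ and of its support.
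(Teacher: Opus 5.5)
Your proposal is correct and is essentially the paper's proof. It uses the same ingredients: a transplanted non-holomorphic profile with weights $b_jV_j^{1/p-1/q}$ on separated cells; a duality lower bound for $\G_{q,\rho_+}f_b(a_j)$ via a test function annihilating holomorphic functions; the upper bounds from $\G\le\M$ and \cref{lem:finite-neighbor}; and the operator bounds from \eqref{eq:H-rough}, \cref{cor:multiplication}, and \eqref{eq:H-lower-finite}. The only differences are cosmetic: the paper puts the profile $\overline{\zeta_1}$ on an inner polydisc $\Phi_j(\theta_1\mathbb D^n)\Subset\Q_\rho(a_j)$ and tests against $u\mapsto\int_{\theta_0\mathbb D^n}u\,\zeta_1\,dv$, whereas your unimodular profile $\overline{\zeta_1}/|\zeta_1|$ on the whole cell makes $V_j^{1/q-1/p}\M_{q,\rho}f_b(a_j)=b_j$ exact.
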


\begin{proof}
For $j\in J_0$, let
\[
 \Phi_j(\zeta)
 =a_j+c_0\sum_{k=1}^n
 \tau_k(a_j,A_*\rho\delta(a_j))
 \zeta_k u_k(a_j,A_*\rho\delta(a_j)).
\]
The comparison of the two fixed scales gives a constant $C_\Phi$,
independent of $j$, such that
\[
 N_{a_j,\rho}(\Phi_j(\zeta)-a_j)
 \le C_\Phi\|\zeta\|_\infty.
\]
Choose $0<\theta_1<\min\{1,(2C_\Phi)^{-1}\}$ and put
$\theta_0=\theta_1/2$.  Then the sets
\[
 U_j=\Phi_j(\theta_1\mathbb D^n),
 \qquad
 U_j^0=\Phi_j(\theta_0\mathbb D^n)
\]
satisfy
\begin{equation}\label{eq:sharp-Hankel-cells}
 U_j^0\Subset U_j\Subset\Q_\rho(a_j),
 \qquad
 v(U_j^0)\asymp v(U_j)\asymp V_j.
\end{equation}
The separation makes the sets $U_j$ pairwise disjoint.  Put
\[
 \zeta_j=\Phi_j^{-1},
 \qquad
 A_j=b_jV_j^{1/p-1/q},
\]
and define
\begin{equation*}
 f_b(z)=
 \sum_{j\in J_0}A_j
 \overline{(\zeta_j(z))_1}\,\mathbf1_{U_j}(z).
\end{equation*}
Only finitely many summands are nonzero.  Their supports are relatively
compact in $\Omega$.  Hence $f_b$ is bounded and compactly supported.
For each fixed $z\in\Omega$, the kernel section $K(\cdot,z)$ is bounded
on that support.  Therefore
\[
 f_bK(\cdot,z)\in L^q(\Omega),
 \qquad
 f_b\in\mathcal D_{\Omega,q}.
\]
We first prove the local lower estimate.  Let
$h\in\Hol(\Q_{\rho_+}^*(a_j))$.  On $U_j^0$, change variables by $\Phi_j$.
Its real Jacobian is comparable to $V_j$.  Thus
\begin{equation}\label{eq:sharp-Hankel-change-variable}
\begin{aligned}
&\left(
  \avg_{\Q_{\rho_+}(a_j)}
  |f_b-h|^q\dd v
 \right)^{1/q}
\gtrsim
 \left(
  \int_{\theta_0\mathbb D^n}
  |A_j\overline{\zeta_1}-h(\Phi_j(\zeta))|^q
  \dd v(\zeta)
 \right)^{1/q}.
\end{aligned}
\end{equation}
The distance from $\overline{\zeta_1}$ to holomorphic functions is
positive on the fixed polydisc.  Define
\[
 L(u)=\int_{\theta_0\mathbb D^n}u(\zeta)\zeta_1\dd v(\zeta).
\]
If $a$ is holomorphic on a neighborhood of the closed polydisc, angular
integration gives
\begin{equation*}
 L(a)=0.
\end{equation*}
On the other hand,
\[
 L(\overline{\zeta_1})
 =\int_{\theta_0\mathbb D^n}|\zeta_1|^2\dd v(\zeta)>0.
\]
H\"older's inequality gives
\begin{align*}
 A_jL(\overline{\zeta_1})
 &=|L(A_j\overline{\zeta_1}-a)|\le
 \|A_j\overline{\zeta_1}-a\|_{L^q(\theta_0\mathbb D^n)}
 \|\zeta_1\|_{L^{q'}(\theta_0\mathbb D^n)}.
\end{align*}
Hence
\begin{equation}\label{eq:sharp-Hankel-fixed-distance}
 \inf_{a\in\Hol(\theta_1\mathbb D^n)}
 \|A_j\overline{\zeta_1}-a\|_{L^q(\theta_0\mathbb D^n)}
 \ge c_qA_j.
\end{equation}
Apply this to $a=h\circ\Phi_j$ in
\eqref{eq:sharp-Hankel-change-variable}.  This function belongs to
$\Hol(\theta_1\mathbb D^n)$ because
$U_j\Subset\Q_\rho(a_j)\Subset\Q_{\rho_+}^*(a_j)$.  We obtain
\begin{equation}\label{eq:sharp-Hankel-local-lower}
 V_j^{1/q-1/p}\G_{q,\rho_+}f_b(a_j)\gtrsim b_j.
\end{equation}
The same change of variables on $U_j$ gives
\begin{align*}
 \M_{q,\rho}f_b(a_j)^q
 &\ge \frac{|A_j|^q}{V_j}
 \int_{U_j}|(\zeta_j(z))_1|^q\dd v(z)\notag\\
 &\asymp |A_j|^q
 \int_{\theta_1\mathbb D^n}|\zeta_1|^q\dd v(\zeta)
 \asymp |A_j|^q.
\end{align*}
Therefore
\begin{equation}\label{eq:sharp-Hankel-M-local-lower}
 V_j^{1/q-1/p}\M_{q,\rho}f_b(a_j)\gtrsim b_j.
\end{equation}

The reverse estimate follows from
$\G_{q,\rho_+}f_b\le\M_{q,\rho_+}f_b$.  More precisely, let
$s\in\{\rho,\rho_+\}$.  If $\Q_s(a_k)$ meets $U_j$, engulfing gives
$V_k\asymp V_j$.
Each ball meets at most a fixed number of the sets $U_j$.  Each $U_j$
meets at most a fixed number of lattice balls.  Consequently,
\begin{equation}\label{eq:sharp-Hankel-upper-neighbor}
 V_k^{1/q-1/p}\M_{q,s}f_b(a_k)
 \lesssim
 \sum_{j\in\mathcal N(k)}b_j,
 \qquad s\in\{\rho,\rho_+\},
\end{equation}
where the relation $\mathcal N$ has bounded row and column incidence.
Indeed, the supports $U_j$ are disjoint and
$|\zeta_{j,1}|\le\theta_1$ there.  Hence, for $s=\rho$ or $\rho_+$,
\begin{align*}
 \M_{q,s}f_b(a_k)^q
 &\le \frac{C}{V_k}
 \sum_{j\in\mathcal N(k)}|A_j|^qv(U_j)
 \le \frac{C}{V_k}
 \sum_{j\in\mathcal N(k)}b_j^qV_j^{q/p},\\
 V_k^{1/q-1/p}\M_{q,s}f_b(a_k)
 &\le C\left(\sum_{j\in\mathcal N(k)}b_j^q\right)^{1/q}
 \le C\sum_{j\in\mathcal N(k)}b_j.
\end{align*}
Extend $b$ by zero outside $J_0$.  Equations
\eqref{eq:sharp-Hankel-local-lower} and
\eqref{eq:sharp-Hankel-M-local-lower}, together with solidity, give the
two lower norm bounds.  Equation
\eqref{eq:sharp-Hankel-upper-neighbor} and
\cref{lem:finite-neighbor} give the two upper norm bounds.  This proves
\eqref{eq:sharp-Hankel-G} and \eqref{eq:sharp-Hankel-M}.

By \cref{cor:multiplication},
\[
 \pi_r(M_{f_b})
 \asymp\|\mathbf M_{p,q,\rho}f_b\|_{\mathfrak d_{p,q}^{\,r}}
 \asymp\|b\|_{\mathfrak d_{p,q}^{\,r}}.
\]
Since $H_{f_b}=(I-P)M_{f_b}$,
\begin{equation*}
 \pi_r(H_{f_b})
 \le(1+\|P\|_{L^q\to L^q})\pi_r(M_{f_b})
 \lesssim\|b\|_{\mathfrak d_{p,q}^{\,r}}.
\end{equation*}
The unconditional lower estimate on the full lattice
\eqref{eq:H-lower-finite} and \eqref{eq:sharp-Hankel-G} give
\[
 \|b\|_{\mathfrak d_{p,q}^{\,r}}
 \lesssim\pi_r(H_{f_b}).
\]
This proves \eqref{eq:sharp-Hankel-operator}.
\end{proof}

The restriction to finite support can be removed on the summing ideal.
Thus every nonnegative sequence in $\mathfrak d_{p,q}^{\,r}$
can be represented by a symbol on $\Omega$.

\begin{theorem}
\label{thm:infinite-separated-realization}
Let $1<p,q<\infty$ and $1\le r<\infty$.  Let
$\Lambda_0=\{a_j:j\in J_0\}$ be a sufficiently separated infinite
sublattice.  For every nonnegative
$b=(b_j)_{j\in J_0}\in\mathfrak d_{p,q}^{\,r}$, there are symbols
\[
 f_b^{\mathrm M},f_b^{\mathrm H}\in
 L^q(\Omega)\cap\mathcal D_{\Omega,q}
\]
such that
\begin{align}
 \|\mathbf M_{p,q,\rho}f_b^{\mathrm M}\|_{\mathfrak d_{p,q}^{\,r}}
 &\asymp\|b\|_{\mathfrak d_{p,q}^{\,r}},
 \label{eq:infinite-model-M-data}\\
 \pi_r(M_{f_b^{\mathrm M}})
 &\asymp\|b\|_{\mathfrak d_{p,q}^{\,r}},
 \label{eq:infinite-model-M-operator}\\
 \|\mathbf G_{p,q,\rho}f_b^{\mathrm H}\|_{\mathfrak d_{p,q}^{\,r}}
 &\asymp\|b\|_{\mathfrak d_{p,q}^{\,r}},
 \label{eq:infinite-model-H-data}\\
 \pi_r(H_{f_b^{\mathrm H}})
 &\asymp\|b\|_{\mathfrak d_{p,q}^{\,r}}.
 \label{eq:infinite-model-H-operator}
\end{align}
All constants are independent of $b$.
\end{theorem}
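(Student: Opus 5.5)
The plan is to reuse the finite construction of \cref{thm:sharp-Hankel-models} verbatim on the whole infinite sublattice and define
\[
 f_b=\sum_{j\in J_0}A_j\overline{(\zeta_j(z))_1}\,\mathbf1_{U_j}(z),
 \qquad A_j=b_jV_j^{1/p-1/q}.
\]
We take $f_b^{\mathrm H}=f_b$, and for the multiplication model either the same symbol or the simpler $f_b^{\mathrm M}=\sum_jA_j\mathbf1_{U_j}$. The sets $U_j$ are pairwise disjoint by separation. The local lower bounds \eqref{eq:sharp-Hankel-local-lower} and \eqref{eq:sharp-Hankel-M-local-lower}, and the neighbor upper bound \eqref{eq:sharp-Hankel-upper-neighbor}, are all pointwise in $j$ or $k$. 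Each such estimate involves only finitely many summands, because each ball meets boundedly many $U_j$. Hence they hold without change for infinite support. Solidity and \cref{lem:finite-neighbor} then give \eqref{eq:infinite-model-M-data} and \eqref{eq:infinite-model-H-data} exactly as before.

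The new work is membership in $L^q(\Omega)\cap\mathcal D_{\Omega,q}$. For $L^q$, we have $\|f_b\|_q^q\lesssim\sum_j|A_j|^qV_j=\sum_jb_j^qV_j^{q/p}$. By \cref{prop:diagonal-compact-monotone}, $b\in c_0$, so $b$ is bounded. Since $\Omega$ is bounded we also have $\sup_jV_j<\infty$, but that alone does not give summability. Instead I would use the embedding $\mathfrak d_{p,q}^{\,r}\subset\ell^q$ only in the ranges where it holds, and this is not available in general. A cleaner route is to obtain $\mathcal D_{\Omega,q}$ membership directly from Carleson boundedness. By \cref{thm:carleson} with $d\mu=|f_b|^q\dd v$, the data sequence lies in $\mathfrak d_{p,q}^{\,r}$. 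Hence $J_\mu:A^p\to L^q(\mu)$ is bounded, so $f_bg\in L^q$ for every $g\in A^p$. Taking $g=K(\cdot,z)\in A^p$ (\cref{lem:kernel-span-density}) gives $f_b\in\mathcal D_{\Omega,q}$. Taking $g=1$ gives $f_b\in L^q(\Omega)$, since constants lie in $A^p$ on a bounded domain. This removes any need for an explicit summability argument.

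Once $f_b\in\mathcal D_{\Omega,q}$, \cref{cor:multiplication} gives \eqref{eq:infinite-model-M-operator}, and \cref{thm:intro-hankel} applied to the data estimate \eqref{eq:infinite-model-H-data} gives \eqref{eq:infinite-model-H-operator} in both directions. Alternatively, for the upper bound we can use $H_{f_b}=(I-P)M_{f_b}$ together with \eqref{eq:H-rough}, and for the lower bound \eqref{eq:H-lower-finite}. All constants come from the finite-type geometry, so they are independent of $b$.

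The main obstacle is making sure the upper neighbor estimate \eqref{eq:sharp-Hankel-upper-neighbor} is genuinely uniform when infinitely many cells are present near the boundary. This requires that each $\Q_s(a_k)$, $s\in\{\rho,\rho_+\}$, meets only boundedly many $U_j$, with comparable volumes. I would verify it through engulfing and the packing argument of \cref{lem:geometry-package}(iii), which are index-independent. A secondary point is that the extension from finite to infinite support needs no limiting argument, because every quantity involved is local.
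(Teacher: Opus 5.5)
Your proposal is correct and takes essentially the same route as the paper: the same cell-supported symbols, the same local lower bounds and neighbor upper bounds carried over verbatim to infinite support, and membership in $\mathcal D_{\Omega,q}$ obtained from \cref{thm:carleson} by testing the bounded embedding on kernel sections. The only differences are cosmetic. You get $f_b\in L^q(\Omega)$ by applying the embedding to $\mathbf 1\in A^p$, whereas the paper checks $\sum_j b_j^qV_j^{q/p}=\|D_b(V_j^{1/p})_j\|_{\ell^q}^q<\infty$ directly, which is the same test-vector idea. The paper also uses $\mathbf 1_{E_j}$ rather than $\mathbf 1_{U_j}$ for the multiplication symbol, and it uses only the rough bound $(I-P)M_f$ for the Hankel upper estimate.
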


\begin{proof}
Enumerate the ambient lattice.  Start with the pairwise disjoint balls
$\Q_{c\rho}(a_j)$.  Assign every remaining point of $\Omega$ to the
first ball $\Q_\rho(a_j)$ that contains it.  This gives measurable,
pairwise disjoint cells with
\[
 \Q_{c\rho}(a_j)\subset E_j\subset\Q_\rho(a_j).
\]
Retain the cells with $j\in J_0$.  Choose
$0<\theta_0<\theta_1<1$ as in \eqref{eq:sharp-Hankel-cells}.  For these
indices, put
\[
 \Phi_j(\zeta)
 =a_j+c_0\sum_{k=1}^n
 \tau_k(a_j,A_*\rho\delta(a_j))
 \zeta_ku_k(a_j,A_*\rho\delta(a_j)),
\]
and set
\[
 U_j=\Phi_j(\theta_1\mathbb D^n),
 \qquad U_j^0=\Phi_j(\theta_0\mathbb D^n),
 \qquad \zeta_j=\Phi_j^{-1}.
\]
The separation makes the sets $U_j$
pairwise disjoint.  Moreover,
\begin{equation}\label{eq:infinite-model-volume}
 v(E_j)\asymp v(U_j)\asymp V_j.
\end{equation}
Put
\begin{align*}
 f_b^{\mathrm M}(z)
 &=\sum_{j\in J_0}
 b_jV_j^{1/p-1/q}\mathbf1_{E_j}(z),
 \\
 f_b^{\mathrm H}(z)
 &=\sum_{j\in J_0}
 b_jV_j^{1/p-1/q}
 \overline{(\zeta_j(z))_1}\mathbf1_{U_j}(z).
\end{align*}
The supports are disjoint, so both sums are well defined pointwise.

We first check global $L^q$ integrability.  Let
\[
 c_j=V_j^{1/p}.
\]
The balls $\Q_{c\rho}(a_j)$ are pairwise disjoint and have volume
comparable to $V_j$.  Hence
\begin{equation*}
 \|c\|_{\ell^p}^p
 =\sum_{j\in J_0}V_j
 \lesssim v(\Omega)<\infty.
\end{equation*}
Since $D_b\in\Piabs_r(\ell^p,\ell^q)$,
$\|D_b\|\le\pi_r(D_b)=\|b\|_{\mathfrak d_{p,q}^{\,r}}$.  Therefore
\begin{equation}\label{eq:weighted-bq-summable}
 \sum_{j\in J_0}b_j^qV_j^{q/p}
 =\|D_bc\|_{\ell^q}^q
 \le\|D_b\|^q\|c\|_{\ell^p}^q<\infty.
\end{equation}
Equations \eqref{eq:infinite-model-volume} through \eqref{eq:weighted-bq-summable} imply
\begin{align*}
 \|f_b^{\mathrm M}\|_{L^q}^q
 &\asymp\sum_jb_j^qV_j^{q/p},
 \\
 \|f_b^{\mathrm H}\|_{L^q}^q
 &\lesssim\sum_jb_j^qV_j^{q/p}.
\end{align*}
Their membership in $\mathcal D_{\Omega,q}$ will follow from the local
estimates below and the Carleson theorem.

We next prove the local lower estimates.  On
$\Q_{c\rho}(a_j)\subset E_j$, the first symbol has the constant value
$b_jV_j^{1/p-1/q}$.  Thus
\[
 \M_{q,\rho}f_b^{\mathrm M}(a_j)
 \gtrsim b_jV_j^{1/p-1/q}.
\]
For the second symbol, repeat
\eqref{eq:sharp-Hankel-change-variable} through \eqref{eq:sharp-Hankel-fixed-distance} on $U_j^0$.  Since the supports
are disjoint, the other summands vanish there.  Consequently, for
$j\in J_0$,
\begin{align*}
 V_j^{1/q-1/p}\M_{q,\rho}f_b^{\mathrm M}(a_j)
 &\gtrsim b_j,
 \\
 V_j^{1/q-1/p}\G_{q,\rho_+}f_b^{\mathrm H}(a_j)
 &\gtrsim b_j.
\end{align*}
For every lattice point $a_k$, finite overlap gives
\begin{align*}
 V_k^{1/q-1/p}\M_{q,\rho}f_b^{\mathrm M}(a_k)
 &\lesssim\sum_{j\in\mathcal N(k)}b_j,
 \\
 V_k^{1/q-1/p}\M_{q,\rho}f_b^{\mathrm H}(a_k)
 &\lesssim\sum_{j\in\mathcal N(k)}b_j.
\end{align*}
The last estimate also holds with $\M_{q,\rho_+}$ in place of
$\M_{q,\rho}$.  Indeed, $\rho_+/\rho$ is fixed and the larger balls
have uniformly bounded overlap and incidence.  Hence
\begin{equation*}
 V_k^{1/q-1/p}\M_{q,\rho_+}f_b^{\mathrm H}(a_k)
 \lesssim\sum_{j\in\mathcal N_+(k)}b_j,
\end{equation*}
where $\mathcal N_+$ has bounded row and column incidence.
The relation $\mathcal N$ has bounded row and column incidence.  The
proof uses only one ball at a time.  It does not require finite support.
Solidity and \cref{lem:finite-neighbor} therefore give
\begin{align}
 \|\mathbf M_{p,q,\rho}f_b^{\mathrm M}\|_{\mathfrak d_{p,q}^{\,r}}
 &\asymp\|b\|_{\mathfrak d_{p,q}^{\,r}},
 \label{eq:infinite-M-sequence-equivalence}\\
 \|\mathbf G_{p,q,\rho}f_b^{\mathrm H}\|_{\mathfrak d_{p,q}^{\,r}}
 &\asymp\|b\|_{\mathfrak d_{p,q}^{\,r}},
 \label{eq:infinite-H-sequence-equivalence}\\
 \|\mathbf M_{p,q,\rho}f_b^{\mathrm H}\|_{\mathfrak d_{p,q}^{\,r}}
 &\lesssim\|b\|_{\mathfrak d_{p,q}^{\,r}}.
 \label{eq:infinite-H-mean-upper}
\end{align}

Apply \cref{thm:carleson} to the locally finite measures
\[
 \dd\mu_{\mathrm M}=|f_b^{\mathrm M}|^q\dd v,
 \qquad
 \dd\mu_{\mathrm H}=|f_b^{\mathrm H}|^q\dd v.
\]
Equations \eqref{eq:infinite-M-sequence-equivalence} and
\eqref{eq:infinite-H-mean-upper} give bounded embeddings
$J_{\mu_{\mathrm X}}:A^p\to L^q(\mu_{\mathrm X})$ for
$\mathrm X\in\{\mathrm M,\mathrm H\}$.
Since $K(\cdot,z)\in A^p$ for each fixed $z\in\Omega$,
\[
 \|f_b^{\mathrm X}K(\cdot,z)\|_{L^q(v)}
 =\|J_{\mu_{\mathrm X}}K(\cdot,z)\|_{L^q(\mu_{\mathrm X})}
 \le\|J_{\mu_{\mathrm X}}\|\|K(\cdot,z)\|_{A^p}<\infty,
 \qquad \mathrm X\in\{\mathrm M,\mathrm H\}.
\]
Thus both symbols belong to $\mathcal D_{\Omega,q}$ before we apply
the multiplication and Hankel criteria.

The multiplication theorem and
\eqref{eq:infinite-M-sequence-equivalence} prove
\eqref{eq:infinite-model-M-data} and \eqref{eq:infinite-model-M-operator}.  For the Hankel symbol,
\[
 \pi_r(H_{f_b^{\mathrm H}})
 \le(1+\|P\|)\pi_r(M_{f_b^{\mathrm H}})
 \lesssim\|b\|_{\mathfrak d_{p,q}^{\,r}}
\]
by \eqref{eq:infinite-H-mean-upper}.  The unconditional lower Hankel
estimate and \eqref{eq:infinite-H-sequence-equivalence} give
\[
 \|b\|_{\mathfrak d_{p,q}^{\,r}}
 \lesssim\pi_r(H_{f_b^{\mathrm H}}).
\]
This proves \eqref{eq:infinite-model-H-data} and
\eqref{eq:infinite-model-H-operator}.
\end{proof}

Thus the sequence ideal also describes infinite sequences.
Each nonnegative sequence in the ideal gives local data with an equivalent
norm.  This holds for both multiplication and big Hankel operators.

The construction above can also preserve linear structure.
On a suitable infinite sublattice, a fixed local moment recovers the
whole diagonal operator. This gives lower bounds for approximation
by operators of arbitrary finite rank.

\begin{theorem}\label{thm:complemented-Hankel-model}
Let $1<p,q<\infty$. There is a sufficiently separated infinite
sublattice, relabeled as $\{a_j\}_{j\ge1}$, with the following
properties. For every $1\le r<\infty$, the formula
\begin{equation}\label{eq:linear-Hankel-model}
 f_b(z)=\sum_{j\ge1}b_jV_j^{1/p-1/q}
 \overline{(\zeta_j(z))_1}\mathbf1_{U_j}(z),
 \qquad
 \mathcal E b=H_{f_b},
\end{equation}
defines a bounded complex-linear map
\[
 \mathcal E:\mathfrak d_{p,q}^{\,r}\longrightarrow
 \Piabs_r(A^p(\Omega),L^q(\Omega)).
\]
Here $U_j$ and $\zeta_j$ are the cells and coordinates in
\cref{thm:infinite-separated-realization}.
For each $b\in\mathfrak d_{p,q}^{\,r}$, the symbol in
\eqref{eq:linear-Hankel-model} belongs to
$L^q(\Omega)\cap\mathcal D_{\Omega,q}$. There are bounded maps
\[
 T_p:\ell^p\longrightarrow A^p(\Omega),
 \qquad Q:L^q(\Omega)\longrightarrow\ell^q,
\]
independent of $b$ and $r$, such that
\begin{equation}\label{eq:Hankel-diagonal-factorization}
 QH_{f_b}T_p=D_b.
\end{equation}
There is also a bounded linear map
\[
 \mathcal C:\Piabs_r(A^p(\Omega),L^q(\Omega))
 \longrightarrow\mathfrak d_{p,q}^{\,r},
 \qquad \mathcal C\mathcal E=I.
\]
Thus $\mathcal E\mathfrak d_{p,q}^{\,r}$ is a complemented subspace of
$\Piabs_r(A^p(\Omega),L^q(\Omega))$, and its elements are big Hankel
operators. Moreover, for every $N\ge1$,
\begin{equation}\label{eq:Hankel-approximation-lower}
 a_N^{(r)}(H_{f_b})
 \ge \frac{a_N^{(r)}(D_b)}{\|Q\|\,\|T_p\|}.
\end{equation}
The construction uses only the Carleson theorem and boundedness of $P$.
\end{theorem}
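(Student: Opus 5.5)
The plan is to build $T_p$ from the kernel atoms and $Q$ from a fixed local moment that annihilates holomorphic functions. The moment kills the projection term $P(f_bg)$ exactly. The only real work is to remove the interaction between different atoms, which we do by passing to a sparse sublattice and inverting a fixed matrix that does not depend on $b$.

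\emph{Choice of sublattice and the recovery map $Q$.} Work on a sufficiently separated infinite sublattice, relabeled $\{a_j\}_{j\ge1}$. Take the cells $U_j^0\Subset U_j$ and coordinates $\zeta_j$ of \cref{thm:infinite-separated-realization}. Define
\[
 (Qu)_j=\kappa_j\int_{U_j^0}u(z)\,(\zeta_j(z))_1\dd v(z),
 \qquad
 \kappa_j=\frac{c_*^{-1}}{V_j\,K(a_j,a_j)^{1/p}\,V_j^{1/p-1/q}},
\]
where $c_*=\int_{\theta_0\mathbb D^n}|\zeta_1|^2\dd v(\zeta)\cdot J$ absorbs the Jacobian normalization. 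The cells $U_j^0$ are disjoint and $\|\zeta_{j,1}\|_{L^{q'}(U_j^0)}\lesssim V_j^{1/q'}$. Since $\kappa_j\asymp V_j^{-1/q'}$ by \eqref{eq:volume-product}, H\"older's inequality gives $\|Q:L^q\to\ell^q\|\lesssim1$.

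\emph{The two key identities.} Let $g$ be holomorphic near $\overline{U_j}$. The angular-integration argument already used for \eqref{eq:sharp-Hankel-fixed-distance} gives
\[
 \int_{U_j^0}g\,\zeta_{j,1}\dd v=0.
\]
Since the weight $|\zeta_1|^2$ is radial in each variable, the mean-value property gives
\[
 \int_{U_j^0}\overline{\zeta_{j,1}}\,g\,\zeta_{j,1}\dd v=c_*V_j\,g(a_j).
\]
Now $P(f_bg)$ is holomorphic on $\Omega$, and $f_b=A_j\overline{\zeta_{j,1}}$ on $U_j^0$ with $A_j=b_jV_j^{1/p-1/q}$. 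Hence
\[
 (QH_{f_b}g)_j=b_j\,\frac{g(a_j)}{K(a_j,a_j)^{1/p}}.
\]
Boundedness and linearity of $\mathcal E$, and membership $f_b\in L^q\cap\mathcal D_{\Omega,q}$, follow from \cref{thm:infinite-separated-realization}. Indeed, the estimate $\pi_r(H_{f_b})\le(1+\|P\|)\pi_r(M_{f_b})\lesssim\|b\|$ used there only needs $|b|$, and $f_b$ is linear in $b$.

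\emph{Choice of $T_p$; main obstacle.} Let $S_p$ be the synthesis map of \eqref{eq:kernel-synthesis}. Then
\[
 \frac{S_pc(a_j)}{K(a_j,a_j)^{1/p}}=\sum_kM_{jk}c_k,
 \qquad
 M_{jk}=\frac{K(a_j,a_k)}{K(a_j,a_j)^{1/p}K(a_k,a_k)^{1-1/p}},
 \qquad M_{jj}=1.
\]
The main obstacle is to show that, after passing to a sparse enough sublattice, $M=I+E$ with $\|E\|_{\ell^p\to\ell^p}\le1/2$. I would try first the off-diagonal decay of McNeal's kernel estimates, $|K(a_j,a_k)|\lesssim K(a_j,a_j)^{1/2}K(a_k,a_k)^{1/2}\,\epsilon(d(a_j,a_k))$ with $\epsilon\to0$ as the nonisotropic distance grows. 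Combine this with a Schur test with weights $V_j^{\pm}$ powers, using summability of $\sum_k V_k$ along shells, and extract the sublattice inductively so that each new point is far from all previous ones.

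If this pointwise decay is not available, there is a fallback. Write $E=\sum_k$ of rows and use the boundedness of $S_p$ together with the sampling bound $\sum_jV_j|h(a_j)|^{p}\lesssim\|h\|^p$ on the tail of the sublattice. A diagonal selection then makes the off-diagonal part small in norm. Once $M$ is invertible, set $T_p=S_pM^{-1}$, which is independent of $b$ and $r$. Then
\[
 QH_{f_b}T_pc=D_bMM^{-1}c=D_bc,
\]
which is \eqref{eq:Hankel-diagonal-factorization}.

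\emph{Complementation and approximation.} For $T\in\Piabs_r(A^p,L^q)$, define
\[
 \mathcal C(T)_j=(QTT_pe_j)_j,
\]
the diagonal of $QTT_p\in\Piabs_r(\ell^p,\ell^q)$. Rademacher averaging, exactly as in \eqref{eq:H-block-extraction} together with \cref{lem:summing-calculus}(iii), shows that
\[
 \pi_r\bigl(D_{\mathcal C(T)}\bigr)\le\pi_r(QTT_p)\le\|Q\|\,\|T_p\|\,\pi_r(T).
\]
So $\mathcal C$ is bounded and linear, and $\mathcal C\mathcal E=I$ by the factorization. Hence $\mathcal E\mathcal C$ is a bounded projection onto $\mathcal E\mathfrak d_{p,q}^{\,r}$, and the lower bound from \cref{thm:carleson}-type estimates (or directly from $\mathcal C\mathcal E=I$) shows that $\mathcal E$ is an isomorphism onto its range. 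Finally, if $\operatorname{rank}A<N$, then $\operatorname{rank}(QAT_p)<N$, and
\[
 a_N^{(r)}(D_b)\le\pi_r\bigl(Q(H_{f_b}-A)T_p\bigr)\le\|Q\|\,\|T_p\|\,\pi_r(H_{f_b}-A).
\]
Taking the infimum over $A$ gives \eqref{eq:Hankel-approximation-lower}.
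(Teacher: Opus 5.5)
Your proposal is correct and follows essentially the same route as the paper: a sparse sublattice on which $M=R_pS_p$ is a small perturbation of $I$, $T_p=S_pM^{-1}$, a local moment against $(\zeta_j)_1$ that kills $P(f_bg)$ combined with the weighted mean-value identity, Rademacher diagonal extraction for $\mathcal C$, and the factorization for the approximation lower bound. To invert $M$, use your fallback, which is exactly the paper's argument: for fixed $i$, $M_{ij}\to0$ and $M_{ji}\to0$ as $a_j\to\partial\Omega$, you choose the points recursively with $|M_{ij}|,|M_{ji}|\le 2^{-i-j-2}$, and Schur's test finishes; the McNeal off-diagonal decay route is unnecessary and, with only the symmetric bound and no rate, would not control $M$ when $p\ne2$. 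One small fix: since $J_{\mathbb R}\Phi_j/V_j$ is only comparable to $1$, your constant $c_*$ must be allowed to depend on $j$, which is what the paper's normalization $m_j=\int_{U_j}|(\zeta_j)_1|^2\dd v$ does.
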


\begin{proof}
We first select the sublattice. Write $K_j=K(a_j,a_j)$ and
$p'=p/(p-1)$. The sampling map
\[
 R_ph=\{K_j^{-1/p}h(a_j)\}_j
\]
is bounded from $A^p$ to $\ell^p$. Indeed, submean and the disjoint
small balls give
\[
 \sum_jK_j^{-1}|h(a_j)|^p
 \lesssim\sum_j\int_{\Q_{c\rho}(a_j)}|h|^p\dd v
 \le\|h\|_{A^p}^p.
\]
Every infinite separated lattice has only finitely many points in
each compact subset of $\Omega$. As its points approach the boundary,
$V_j\to0$. For $1<s<\infty$ and fixed $h\in A^s$, the same submean
estimate gives
\[
 K_j^{-1}|h(a_j)|^s
 \lesssim\int_{\Q_{c\rho}(a_j)}|h|^s\dd v\longrightarrow0.
\]
The last limit follows from absolute continuity of the integral,
since $v(\Q_{c\rho}(a_j))\asymp V_j\to0$.

Recall $e_{j,p}=K(\cdot,a_j)/K_j^{1/p'}$. The matrix of $R_pS_p$ is
\[
 M_{ij}=\frac{K(a_i,a_j)}{K_i^{1/p}K_j^{1/p'}},
 \qquad M_{jj}=1.
\]
For fixed $i$, both $M_{ij}$ and $M_{ji}$ tend to zero as $a_j$
approaches the boundary. For $M_{ji}$, apply the limit above
with $s=p$ to $K(\cdot,a_i)\in A^p$. For $M_{ij}$, use kernel
symmetry and apply it with $s=p'$ to
$K(\cdot,a_i)\in A^{p'}$. These memberships were proved in
\cref{lem:geometry-package}.
Start with one infinite sufficiently separated sublattice.
Choose its points recursively. After $a_1,\ldots,a_{j-1}$ have
been fixed, choose $a_j$ farther towards the boundary so that
\[
 |M_{ij}|\le2^{-i-j-2},\qquad
 |M_{ji}|\le2^{-i-j-2},\qquad 1\le i<j.
\]
The two limits above allow this choice. Each stage imposes only
finitely many conditions. Taking a subsequence preserves separation.
Put
$E=M-I$. Its row and column sums are at most $1/4$. For a finitely
supported $x$, H\"older's inequality gives
\begin{align*}
 \|Ex\|_p^p
 &\le\sum_i\left(\sum_j|E_{ij}|\right)^{p-1}
                  \sum_j|E_{ij}|\,|x_j|^p\le4^{-p}\sum_j|x_j|^p.
\end{align*}
Thus $\|E\|_{\ell^p\to\ell^p}\le1/4$. The Neumann series defines
$M^{-1}$ on $\ell^p$, with $\|M^{-1}\|\le4/3$. Set
\begin{equation}\label{eq:thin-lattice-interpolation}
 T_p=S_pM^{-1},
 \qquad R_pT_p=I.
\end{equation}
The identity $R_pS_p=M$ holds first on finitely supported sequences
and then on $\ell^p$ by boundedness.

The proof of \cref{thm:infinite-separated-realization} also applies
to complex sequences $b$. Each upper estimate there uses only
$|b_j|$. The selected points still belong to the original lattice.
Their small polydiscs remain pairwise disjoint, so
\[
 \sum_jV_j\lesssim v(\Omega)<\infty,
 \qquad c=(V_j^{1/p})_j\in\ell^p.
\]
Since $D_b:\ell^p\to\ell^q$ is bounded,
\[
 \sum_j|b_j|^qV_j^{q/p}
 =\|D_bc\|_q^q
 \le\|D_b\|^q\left(\sum_jV_j\right)^{q/p}<\infty.
\]
Disjointness of the symbol cells and the multiplication estimate give
\[
 \|f_b\|_q^q\lesssim\sum_j|b_j|^qV_j^{q/p}<\infty,
 \qquad
 \pi_r(M_{f_b})\lesssim\|b\|_{\mathfrak d_{p,q}^{\,r}}.
\]
The same argument verifies $f_b\in\mathcal D_{\Omega,q}$ before
forming $H_{f_b}$. Since $P$ is bounded on $L^q$,
\[
 \pi_r(H_{f_b})\le(1+\|P\|)\pi_r(M_{f_b})
 \lesssim\|b\|_{\mathfrak d_{p,q}^{\,r}}.
\]
The symbol formula and the identity $H_{f_b}=(I-P)M_{f_b}$ show that
$\mathcal E$ is complex-linear.

We now construct a fixed coefficient map. Put
\[
 m_j=\int_{U_j}|(\zeta_j(z))_1|^2\dd v(z)\asymp V_j,
 \qquad
 \lambda_j(u)=\frac{V_j^{1/q-1/p}}{m_j}
 \int_{U_j}\frac{u(z)}{e_{j,p}(z)}
                  (\zeta_j(z))_1\dd v(z).
\]
The denominator has no zeros on a neighborhood of
$\overline{U_j}$, by \eqref{eq:kernel-lower-finite}. Since
$|e_{j,p}|\asymp V_j^{-1/p}$ there, H\"older's inequality gives
\begin{align*}
 |\lambda_j(u)|
 &\lesssim V_j^{1/q-1/p-1}V_j^{1/p}V_j^{1/q'}
             \|u\|_{L^q(U_j)}\\
 &\lesssim\|u\|_{L^q(U_j)}.
\end{align*}
Define $Qu=\{\lambda_j(u)\}_j$. The sets $U_j$ are disjoint, so
\[
 \|Qu\|_{\ell^q}^q
 \lesssim\sum_j\|u\|_{L^q(U_j)}^q\le\|u\|_q^q.
\]

Fix $g\in A^p$. Multiplication by $f_b$ maps $A^p$ into $L^q$,
so $P(f_bg)$ is holomorphic. On $U_j$, the quotient
$P(f_bg)/e_{j,p}$ is holomorphic on a neighborhood of the closed
cell. Angular integration in the affine coordinates gives
\[
 \int_{U_j}\frac{P(f_bg)(z)}{e_{j,p}(z)}
                   (\zeta_j(z))_1\dd v(z)=0.
\]
For every holomorphic function $h$ on that neighborhood, the same
angular integration gives the weighted mean-value formula
\[
 \int_{U_j}h(z)|(\zeta_j(z))_1|^2\dd v(z)=m_jh(a_j).
\]
This follows by expanding $h\circ\Phi_j$ in its uniformly convergent
power series on the closed polydisc. Every nonconstant monomial has
zero angular mean, also after multiplication by $|\zeta_1|^2$.
Since the other symbol cells do not meet $U_j$, we obtain
\begin{align*}
 \lambda_j(H_{f_b}g)
 &=\frac{b_j}{m_j}\int_{U_j}
       \frac{g(z)}{e_{j,p}(z)}|(\zeta_j(z))_1|^2\dd v(z)\\
 &=b_j\frac{g(a_j)}{e_{j,p}(a_j)}
 =b_jK_j^{-1/p}g(a_j).
\end{align*}
Consequently,
\[
 QH_{f_b}=D_bR_p.
\]
Compose with \eqref{eq:thin-lattice-interpolation}. This proves
\eqref{eq:Hankel-diagonal-factorization}.

For $A\in\Piabs_r(A^p,L^q)$, let
\[
 \mathcal C(A)_j=(QAT_pe_j)_j,
\]
where $e_j$ is the $j$th coordinate vector of $\ell^p$.
Diagonal extraction by finite sign averages gives
\[
 \|\mathcal C(A)\|_{\mathfrak d_{p,q}^{\,r}}
 \le\pi_r(QAT_p)
 \le\|Q\|\,\|T_p\|\,\pi_r(A).
\]
For $B=QAT_p$, let $P_m$ and $Q_m$ be the first-$m$ coordinate
projections on $\ell^p$ and $\ell^q$.  Let $U_\varepsilon^{(s)}$
multiply the first $m$ coordinates of $\ell^s$ by $\varepsilon_1,\ldots,\varepsilon_m$
and leave the others fixed.  The first $m$ diagonal entries of $B$
are obtained from
\[
 2^{-m}\sum_{\varepsilon\in\{-1,1\}^m}
 Q_mU_\varepsilon^{(q)}BU_\varepsilon^{(p)}P_m.
\]
Each term has $r$-summing norm at most $\pi_r(B)$.
Applying the bound to finite families and letting $m\to\infty$
gives the displayed extraction estimate, exactly as in
\eqref{eq:H-block-extraction}.
Equation \eqref{eq:Hankel-diagonal-factorization} gives
$\mathcal C\mathcal E=I$. In particular,
\[
 \|b\|_{\mathfrak d_{p,q}^{\,r}}
 \le\|\mathcal C\|\,\pi_r(\mathcal E b),
\]
so the range of $\mathcal E$ is closed. Therefore
$\mathcal P=\mathcal E\mathcal C$ is bounded and satisfies
\[
 \mathcal P^2=\mathcal E(\mathcal C\mathcal E)\mathcal C
 =\mathcal P,
 \qquad
 \operatorname{ran}\mathcal P=\mathcal E\mathfrak d_{p,q}^{\,r}.
\]
This proves the assertion about the complemented subspace.

Finally, let $A\in\mathcal L(A^p,L^q)$ have rank less than $N$. Then
$QAT_p$ also has rank less than $N$, and
\[
 \pi_r(D_b-QAT_p)
 =\pi_r\bigl(Q(H_{f_b}-A)T_p\bigr)
 \le\|Q\|\,\|T_p\|\,\pi_r(H_{f_b}-A).
\]
Take the infimum over $A$. This proves
\eqref{eq:Hankel-approximation-lower}.
\end{proof}

The local Taylor degree need not be the same on every symbol cell.
This gives an upper estimate that can be combined with
\eqref{eq:Hankel-approximation-lower}.

\begin{proposition}\label{prop:Hankel-adaptive-approximation}
Let $1<p,q<\infty$, $1\le r<\infty$, and let
$b\in\mathfrak d_{p,q}^{\,r}$. Let $f_b$ be the symbol in
\cref{thm:complemented-Hankel-model}. There are constants $C>0$ and
$0<\vartheta<1$, independent of $b$, such that the following holds.
For every finite set $F\subset\N$ and every choice of integers
$L_j\ge0$, $j\in F$, there is a bounded operator of finite rank
$A_{F,\mathbf L}:A^p\to L^q$ for which
\begin{align}
 \operatorname{rank}A_{F,\mathbf L}
 &\le\sum_{j\in F}\binom{L_j+n}{n},
 \label{eq:Hankel-adaptive-rank}\\
 \pi_r(H_{f_b}-A_{F,\mathbf L})
 &\le C\|d\|_{\mathfrak d_{p,q}^{\,r}},
 \label{eq:Hankel-adaptive-error}
\end{align}
where
\[
 d_j=
 \begin{cases}
 |b_j|(L_j+1)^{n-1}\vartheta^{L_j+1},&j\in F,\\
 |b_j|,&j\notin F.
 \end{cases}
\]
The sequence $d$ belongs to $\mathfrak d_{p,q}^{\,r}$, since the
Taylor factor is uniformly bounded and the ideal is solid.
The constants may depend on $p,q,r$ and the fixed geometric data.
They do not depend on $F$ or the degrees $L_j$.
\end{proposition}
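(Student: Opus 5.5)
The plan is to approximate the multiplication operator $M_{f_b}$ and then compose with the bounded map $I-P$. Since $H_{f_b}=(I-P)M_{f_b}$, this costs only a factor $1+\|P\|_{L^q\to L^q}$ and does not increase the rank. Put $\dd\mu=|f_b|^q\dd v$. By the proof of \cref{thm:complemented-Hankel-model}, $J_\mu$ is bounded and $M_{f_b}=U_{f_b}J_\mu$, where $U_{f_b}h=f_bh$ is the isometry from $L^q(\mu)$ to $L^q(v)$ used in \cref{cor:multiplication-approximation}. It therefore suffices to build finite-rank operators $R:A^p\to L^q(\mu)$ with the stated rank and with $\pi_r(J_\mu-R)\lesssim\|d\|_{\dideal}$. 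We then set $A_{F,\mathbf L}=(I-P)U_{f_b}R$.

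\emph{Cell decomposition.} I will not use the ambient cells $E_j$ from \eqref{eq:partition-cells}. The measure $\mu$ is carried by the pairwise disjoint symbol cells $U_j\subset\Q_\rho(a_j)$ of the chosen sublattice, so
\[
 L^q(\mu)=\Bigl(\bigoplus_j L^q(U_j,\mu)\Bigr)_{\ell^q}.
\]
Let $X_j=A^p(\Q_{C_1\rho}(a_j))$ and let $R$ be the restriction map. Finite overlap of the enlargements makes $R:A^p\to(\oplus_jX_j)_{\ell^p}$ bounded. Then $J_\mu$ factors through the blocks $B_ju=u|_{U_j}$. Since $U_j\subset\Q_\rho(a_j)\subset\Q_{C\rho}(a_j)$, the scaled inclusion \eqref{eq:local-scaled-inclusion} holds with $U_j$ in place of $E_j$. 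The nuclear Taylor decomposition of \cref{lem:local-restriction} then applies verbatim and gives
\[
 \pi_1(B_j)\lesssim \mu(U_j)^{1/q}V_j^{-1/p}.
\]
On $U_j$ we have $|f_b|=|b_j|V_j^{1/p-1/q}|(\zeta_j)_1|$, with $|(\zeta_j)_1|\le\theta_1$ and $v(U_j)\asymp V_j$. Hence $\mu(U_j)^{1/q}\lesssim|b_j|V_j^{1/p}$, and so $\pi_1(B_j)\lesssim|b_j|$ with no neighbor sums. This is the point where the disjoint, separated cells pay off.

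\emph{Adaptive truncation.} For $j\in F$, let $T_{j,L_j}$ be the Taylor truncation of degree $L_j$, as in the proof of \cref{thm:summing-approximation}. Its rank is at most $\binom{L_j+n}{n}$, and the tail estimate \eqref{eq:local-Taylor-tail} gives
\[
 \pi_1(B_j-T_{j,L_j})\lesssim |b_j|(L_j+1)^{n-1}\vartheta^{L_j+1}
\]
with the same $\vartheta=\theta/\theta_1$. Set $C_j=B_j-T_{j,L_j}$ for $j\in F$ and $C_j=B_j$ otherwise, so that $\pi_1(C_j)\lesssim d_j$. Define $R_{F,\mathbf L}$ blockwise by $T_{j,L_j}$ on $U_j$ for $j\in F$ and zero elsewhere. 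Its rank is at most $\sum_{j\in F}\binom{L_j+n}{n}$, which gives \eqref{eq:Hankel-adaptive-rank}. Moreover $J_\mu-R_{F,\mathbf L}=\mathcal C R$, where $\mathcal C(u_j)=(C_ju_j)_j$. Applying \cref{lem:block-principle} to the normalized blocks $C_j/d_j$ (set to zero when $d_j=0$) gives
\[
 \pi_r(J_\mu-R_{F,\mathbf L})\lesssim\|R\|\,\|d\|_{\dideal}.
\]
Composing with $(I-P)U_{f_b}$ and using \eqref{eq:summing-ideal-property} then yields \eqref{eq:Hankel-adaptive-error}.

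\emph{Main obstacle.} The delicate step is the uniform local estimate $\pi_1(B_j)\lesssim|b_j|$ with the degree-dependent tail. This requires the geometric inclusions of \cref{lem:local-restriction} to hold for the cells $U_j$ defined at scale $A_*\rho$, uniformly in $j$, including on the core. If the inclusion $U_j\subset\Q_{C\rho}(a_j)$ is not directly available, my first attempt would be to enlarge $C_1$ once more, using \eqref{eq:polydisc-scale-gauge}, so that $U_j$ is covered. All remaining steps are bookkeeping with solidity of $\dideal$.
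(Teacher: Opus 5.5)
Your proposal is correct and is essentially the paper's argument: a cellwise nuclear Taylor expansion on the disjoint symbol cells $U_j$, truncation at degree $L_j$ for $j\in F$, the block principle with weights $d_j$, and then composition with the bounded map $I-P$. The only difference is one of routing. You pass through $M_{f_b}=U_{f_b}J_\mu$ and reuse \cref{lem:local-restriction} on $\Q_{C_1\rho}(a_j)$; the inclusion you worried about does hold, since $U_j\Subset\Q_\rho(a_j)$ by \eqref{eq:sharp-Hankel-cells} and $\Q_\rho(a_j)\subset\Q_{C\rho}(a_j)$ by the choice of $C$. The paper instead expands the multiplication blocks $u\mapsto b_jV_j^{1/p-1/q}\overline{\zeta_{j,1}}\,u|_{U_j}$ directly in the chart $\Phi_j$ on $\Q_{A_*\rho}(a_j)$, which gives $\vartheta=\theta_1/\sigma$.
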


\begin{proof}
Use the maps $\Phi_j$, the sets $U_j$, and the coordinates $\zeta_j$
from \eqref{eq:linear-Hankel-model}. Thus
$U_j=\Phi_j(\theta_1\mathbb D^n)$ with a fixed $\theta_1<1$.
Put
\[
 W_j=\Phi_j(\mathbb D^n)=\Q_{A_*\rho}(a_j),\qquad
 X_j=A^p(W_j),\qquad Y_j=L^q(U_j).
\]
The sets $W_j$ have bounded overlap. Hence the restriction map
\[
 \mathcal R:A^p\longrightarrow(\oplus_jX_j)_{\ell^p},
 \qquad \mathcal Rg=(g|_{W_j})_j,
\]
is bounded. Extension by zero gives an isometry
\[
 \mathcal J:(\oplus_jY_j)_{\ell^q}\longrightarrow L^q(\Omega),
 \qquad\mathcal J(u_j)=\sum_j\mathbf1_{U_j}u_j,
\]
because the sets $U_j$ are disjoint.

Fix $\theta_1<\sigma<1$. If $u\in X_j$, write
\[
 u(\Phi_j(\zeta))=\sum_{\nu\in\Nzero^n}
       \ell_{j,\nu}(u)\zeta^\nu.
\]
The Cauchy and submean estimates used in
\cref{lem:local-restriction} give
\[
 \|\ell_{j,\nu}\|_{X_j^*}
 \le C V_j^{-1/p}\sigma^{-|\nu|}.
\]
Define $B_j:X_j\to Y_j$ by
\[
 B_ju=b_jV_j^{1/p-1/q}\overline{\zeta_{j,1}}u|_{U_j}.
\]
Its Taylor vectors are
\[
 v_{j,\nu}=b_jV_j^{1/p-1/q}
       \overline{\zeta_{j,1}}\zeta_j^\nu|_{U_j}.
\]
Since $v(U_j)\asymp V_j$ and $|\zeta_j|_\infty\le\theta_1$ there,
\[
 \|v_{j,\nu}\|_{Y_j}
 \le C|b_j|V_j^{1/p}\theta_1^{|\nu|}.
\]
Set $\vartheta=\theta_1/\sigma$. The series
$B_j=\sum_\nu\ell_{j,\nu}\otimes v_{j,\nu}$ therefore converges in
nuclear norm, and
\[
 \|\ell_{j,\nu}\|\|v_{j,\nu}\|
 \le C|b_j|\vartheta^{|\nu|}.
\]
Let $B_{j,L}$ be its sum over $|\nu|\le L$. Then
\begin{align*}
 \operatorname{rank}B_{j,L}&\le\binom{L+n}{n},\\
 \pi_1(B_j-B_{j,L})
 &\le C|b_j|\sum_{m>L}\binom{m+n-1}{n-1}\vartheta^m\le C_1|b_j|(L+1)^{n-1}\vartheta^{L+1},\\
 \pi_1(B_j)&\le C_2|b_j|.
\end{align*}
The last tail estimate is the same elementary series bound as in
\eqref{eq:local-Taylor-tail}.

Let $\mathcal B_{F,\mathbf L}$ have block $B_{j,L_j}$ on $F$ and
zero block outside $F$. Set
\[
 A_{F,\mathbf L}=(I-P)\mathcal J
       \mathcal B_{F,\mathbf L}\mathcal R.
\]
This operator is bounded and has the rank bound in
\eqref{eq:Hankel-adaptive-rank}. Indeed, it is a finite sum of
bounded coefficient functionals times the vectors
$(I-P)\mathcal Jv_{j,\nu}$. The estimate $\pi_1(B_j)\le C|b_j|$ and
\cref{lem:block-principle} show that the full block map $\mathcal B$
with blocks $B_j$ is bounded. Hence the following are identities of
bounded operators on $A^p$:
\[
 M_{f_b}=\mathcal J\mathcal B\mathcal R,
 \qquad
 H_{f_b}-A_{F,\mathbf L}
 =(I-P)\mathcal J(\mathcal B-\mathcal B_{F,\mathbf L})\mathcal R.
\]
The residual blocks have $1$-summing norms at most $C_3d_j$.
Normalize each nonzero block by $C_3d_j$, and take the zero block
when $d_j=0$. The block principle and the ideal property give
\[
 \pi_r(H_{f_b}-A_{F,\mathbf L})
 \le C_3\|I-P\|\|\mathcal R\|
       \|d\|_{\mathfrak d_{p,q}^{\,r}}.
\]
This proves \eqref{eq:Hankel-adaptive-error}.
\end{proof}

\begin{corollary}\label{cor:Hankel-Hilbert-approximation-lower}
Let $p=q=2$, $1\le r<\infty$, and let $b\in\ell^2$. For the symbols
in \cref{thm:complemented-Hankel-model},
\begin{equation}\label{eq:Hankel-Hilbert-tail-lower}
 a_N^{(r)}(H_{f_b})
 \gtrsim_r\left(\sum_{j\ge N}(b_j^*)^2\right)^{1/2},
 \qquad N\ge1,
\end{equation}
Here $(b_j^*)$ is the decreasing rearrangement of $(|b_j|)$:
it lists the nonzero values in nonincreasing order, with multiplicity,
and is completed by zeros when necessary.
In particular, if $b_j=j^{-\alpha}$ with $\alpha>1/2$, then
\[
 a_N^{(r)}(H_{f_b})\asymp_{r,\alpha}N^{1/2-\alpha},
 \qquad N\ge1.
\]
The same bounded symbol has this property for every $1\le r<\infty$.
\end{corollary}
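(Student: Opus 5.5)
The lower bound \eqref{eq:Hankel-Hilbert-tail-lower} follows from \eqref{eq:Hankel-approximation-lower} in \cref{thm:complemented-Hankel-model}, once $a_N^{(r)}(D_b)$ is identified on $\ell^2$. For $p=q=2$, \cref{prop:garling} gives $\mathfrak d_{2,2}^{\,r}=\ell^2$ for every $r$. On Hilbert spaces every absolutely $r$-summing operator is Hilbert--Schmidt, and $\pi_r(T)\asymp_r\|T\|_{\mathcal S_2}$ for all $1\le r<\infty$ (Pietsch, Grothendieck; this is the classical comparison in \cref{prop:Hilbert-calibration}). Hence
\[
 a_N^{(r)}(D_b)\gtrsim_r\inf_{\operatorname{rank}A<N}\|D_b-A\|_{\mathcal S_2}.
\]
For Hilbert--Schmidt operators the right side equals $\bigl(\sum_{k\ge N}s_k(D_b)^2\bigr)^{1/2}$ by the Eckart--Young--Schmidt theorem. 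The singular values of the diagonal $D_b$ on $\ell^2$ are exactly the decreasing rearrangement $(b_k^*)$. Dividing by $\|Q\|\,\|T_p\|$, which is independent of $b$ and $r$, gives the tail lower bound for every $r$ with one fixed symbol $f_b$.

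For the power sequence $b_j=j^{-\alpha}$ with $\alpha>1/2$ we have $b\in\ell^2$, $b^*=b$, and $\sum_{j\ge N}j^{-2\alpha}\asymp N^{1-2\alpha}$. This gives the lower bound $N^{1/2-\alpha}$.

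For the upper bound, apply \cref{prop:Hankel-adaptive-approximation} with $p=q=2$. Then $\|d\|_{\mathfrak d_{2,2}^{\,r}}\asymp_r\|d\|_{\ell^2}$. Given $N$, take $F=\{1,\ldots,M\}$ with $M\asymp N$, and choose degrees so that the total rank $\sum_{j\le M}\binom{L_j+n}{n}$ stays below $N$. The simplest choice is $L_j=0$ for all $j\le M$ with $M=N-1$. Then the rank is at most $N-1$, and
\[
 d_j=|b_j|\vartheta\quad(j\le M),\qquad d_j=|b_j|\quad(j>M).
\]
This gives $\|d\|_2\le \vartheta\|b\|_2+(\sum_{j\ge N}j^{-2\alpha})^{1/2}$, which does \emph{not} decay. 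So the degrees must grow, and this is the main obstacle.

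The remedy is to use a dyadic budget. On the block $2^{k}\le j<2^{k+1}$, $k\le K$, take degree $L_j=\lambda(K-k)$ for a fixed integer $\lambda$. The rank is then
\[
 \sum_{k\le K}2^k\binom{\lambda(K-k)+n}{n}\lesssim 2^K\sum_{i\ge0}2^{-i}(\lambda i+1)^n\lesssim_\lambda 2^K .
\]
So with $2^K\asymp N/C_\lambda$ the rank is less than $N$.

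The error contribution from block $k$ is
\[
 \Bigl(\sum_{j\sim2^k}j^{-2\alpha}\Bigr)^{1/2}\eta_{\lambda(K-k)}\asymp 2^{k(1/2-\alpha)}(\lambda(K-k)+1)^{n-1}\vartheta^{\lambda(K-k)+1}.
\]
Choose $\lambda$ so large that $\vartheta^{\lambda}<2^{-(\alpha-1/2)}/2$. With $i=K-k$, the block errors are then dominated by
\[
 2^{K(1/2-\alpha)}\sum_{i\ge0}\bigl(2^{\alpha-1/2}\vartheta^\lambda\bigr)^i(\lambda i+1)^{n-1},
\]
a convergent geometric series. The tail $j\ge2^{K+1}$ contributes $\asymp2^{K(1/2-\alpha)}$. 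Summing in $\ell^2$ (or crudely by the triangle inequality) gives $\|d\|_2\lesssim N^{1/2-\alpha}$. Hence $a_N^{(r)}(H_{f_b})\lesssim_{r,\alpha}N^{1/2-\alpha}$; small $N$ are absorbed into constants since $a_N^{(r)}\le\pi_r(H_{f_b})<\infty$. Combined with the lower bound, this proves the two-sided estimate for the same bounded symbol and every $r$.
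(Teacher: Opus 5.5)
Your proposal is correct and follows essentially the paper's proof. The lower bound comes from the factorization \eqref{eq:Hankel-approximation-lower} together with the Hilbert--Schmidt tail of $D_b$. The upper bound comes from \cref{prop:Hankel-adaptive-approximation} with Taylor degrees growing like $\log(M/j)$; the paper takes $L_j=\lceil c\log(M/j)\rceil$, and your dyadic choice $L_j=\lambda(K-k)$ is a discretization of that.

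The only point you assert without checking is that the symbol is bounded. This is immediate: when $p=q=2$ the weight $V_j^{1/p-1/q}=1$, so $|f_b|\le\theta_1\sup_j|b_j|$.
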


\begin{proof}
By \cref{prop:Hilbert-calibration}, the $r$-summing and
Hilbert--Schmidt norms are equivalent on Hilbert spaces. For a
diagonal operator on $\ell^2$, its best Hilbert--Schmidt approximation
error among operators of rank less than $N$ is
\[
 \inf_{\operatorname{rank}A<N}\|D_b-A\|_{\mathcal S_2}
 =\left(\sum_{j\ge N}(b_j^*)^2\right)^{1/2}.
\]
To see the lower bound directly, let $P_A$ be the orthogonal
projection onto the range of $A$, whose dimension is $k<N$.
After rearranging $|b_j|$ in decreasing order,
\begin{align*}
 \|D_b-A\|_{\mathcal S_2}^2
 &\ge\|(I-P_A)D_b\|_{\mathcal S_2}^2\\
 &=\sum_j(b_j^*)^2\bigl(1-\|P_Ae_j\|^2\bigr)
 \ge\sum_{j>k}(b_j^*)^2
 \ge\sum_{j\ge N}(b_j^*)^2.
\end{align*}
Here $0\le\|P_Ae_j\|^2\le1$ and
$\sum_j\|P_Ae_j\|^2=k$. The weighted sum is therefore largest
when the first $k$ weights equal one and all other weights equal zero.
Truncation to the first $N-1$ coordinates gives equality.
Apply \eqref{eq:Hankel-approximation-lower}. For the final assertion,
integral comparison gives
$\sum_{j\ge N}j^{-2\alpha}\asymp_\alpha N^{1-2\alpha}$.

For the reverse power estimate, choose $\tau\in(\vartheta,1)$.
There is $C_0$ such that
\[
 (L+1)^{n-1}\vartheta^{L+1}\le C_0\tau^L
 \qquad(L\ge0).
\]
Choose $c>0$ so that $\kappa=c\log(1/\tau)>\alpha$.
For an integer $M\ge1$, take $F=\{1,\ldots,M\}$ and
\[
 L_j=\left\lceil c\log(M/j)\right\rceil\qquad(1\le j\le M).
\]
Then $\tau^{L_j}\le(j/M)^\kappa$. Since
$\mathfrak d_{2,2}^{\,r}=\ell^2$ with equivalent norms,
\cref{prop:Hankel-adaptive-approximation} gives
\begin{align*}
 \pi_r(H_{f_b}-A_{F,\mathbf L})^2
 &\le C_r\left(
 M^{-2\kappa}\sum_{j=1}^M j^{2\kappa-2\alpha}
 +\sum_{j>M}j^{-2\alpha}\right)\\
 &\le C_{r,\alpha}M^{1-2\alpha}.
\end{align*}
The first sum is at most $C M^{2\kappa-2\alpha+1}$, because
$2\kappa-2\alpha>0$. Integral comparison bounds the second sum.
Moreover,
\begin{align*}
 \operatorname{rank}A_{F,\mathbf L}
 &\le C_{n,c}\sum_{j=1}^M(1+\log(M/j))^n\\
 &\le C_{n,c}\int_0^M(1+\log(M/x))^n\,\mathrm dx\\
 &=C_{n,c}M\int_0^\infty(1+t)^ne^{-t}\,\mathrm dt
 \le C_4M.
\end{align*}
The summand is decreasing as a function of $j$, which proves the
integral bound. Take $M=\lfloor(N-1)/C_4\rfloor$ for all sufficiently
large $N$. Then the rank is less than $N$ and $M\asymp N$. Hence
\[
 a_N^{(r)}(H_{f_b})\lesssim_{r,\alpha}N^{1/2-\alpha}.
\]
Increasing the constant covers the remaining finite set of values
of $N$. The lower estimate already proved gives the reverse bound.
Finally, $|f_b|\le\theta_1\sup_j|b_j|$ when $p=q=2$.
Thus this is one bounded symbol, independent of $r$.
\end{proof}

\begin{corollary}\label{cor:atomic-sharp-approximation}
Let $1<p<\infty$, $1\le q,r<\infty$, and
$b\in\mathfrak d_{p,q}^{\,r}$. On the sublattice chosen in
\cref{thm:complemented-Hankel-model}, put
\[
 \mu_b=\sum_{j\ge1}|b_j|^qK(a_j,a_j)^{-q/p}\delta_{a_j}.
\]
Then $\mu_b$ is finite, $J_{\mu_b}\in\Piabs_r(A^p,L^q(\mu_b))$,
and, uniformly for $N\ge1$,
\begin{equation}\label{eq:atomic-sharp-approximation}
 a_N^{(r)}(J_{\mu_b})\asymp a_N^{(r)}(D_b).
\end{equation}
For $p=q=2$, this gives
\[
 a_N^{(r)}(J_{\mu_b})
 \asymp_r\left(\sum_{j\ge N}(b_j^*)^2\right)^{1/2}.
\]
\end{corollary}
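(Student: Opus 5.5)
We prove \eqref{eq:atomic-sharp-approximation} by factoring $J_{\mu_b}$ through $D_b$ and $D_b$ through $J_{\mu_b}$, each time by fixed bounded maps that do not raise rank. Then $a_N^{(r)}$ is preserved up to the product of the norms, by the ideal property \eqref{eq:summing-ideal-property}.

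\emph{Finiteness and the upper comparison.} Since $\mu_b$ is purely atomic, $L^q(\mu_b)$ is isometric to $\ell^q$ via the map
\[
 W:\ell^q\to L^q(\mu_b),\qquad (Wc)(a_j)=\frac{c_j}{|b_j|K_j^{-1/p}}
\]
on atoms of positive mass; atoms of zero mass are irrelevant. Under this identification,
\[
 \|J_{\mu_b}g\|^q=\sum_j|b_j|^q\,|K_j^{-1/p}g(a_j)|^q ,
\]
so $J_{\mu_b}=W\,D_{|b|}R_p$, where $R_p$ is the bounded sampling map from the proof of \cref{thm:complemented-Hankel-model}. Writing $|b|=\overline{\sigma}b$ with unimodular $\sigma$, and absorbing $D_{\overline{\sigma}}$ into $W$, we get $J_{\mu_b}=W'D_bR_p$ with $W'$ an isometry. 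In particular $J_{\mu_b}$ is bounded and $\pi_r(J_{\mu_b})\le\|R_p\|\pi_r(D_b)$. Finiteness of $\mu_b$ follows from
\[
 \mu_b(\Omega)=\sum_j|b_j|^qV_j^{q/p}\lesssim\|D_b\|^q\Bigl(\sum_jV_j\Bigr)^{q/p}<\infty,
\]
exactly as in \eqref{eq:weighted-bq-summable}. If $A:\ell^p\to\ell^q$ has rank less than $N$, then $W'AR_p$ has rank less than $N$ and $\pi_r(J_{\mu_b}-W'AR_p)\le\|R_p\|\pi_r(D_b-A)$. Hence $a_N^{(r)}(J_{\mu_b})\le\|R_p\|\,a_N^{(r)}(D_b)$.

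\emph{Lower comparison.} Use the interpolation map $T_p=S_pM^{-1}$ from \eqref{eq:thin-lattice-interpolation}, which satisfies $R_pT_p=I$. Then
\[
 W'^{-1}J_{\mu_b}T_p=D_bR_pT_p=D_b
\]
on the span of the coordinates where $b_j\ne0$. Here $W'^{-1}$ denotes the isometric inverse on $L^q(\mu_b)$, extended by zero coordinates where $b_j=0$; the identity holds on all of $\ell^p$ because $D_b$ vanishes on the remaining coordinates anyway. For $A$ of rank less than $N$ into $L^q(\mu_b)$,
\[
 \pi_r\bigl(D_b-W'^{-1}AT_p\bigr)\le\|T_p\|\,\pi_r(J_{\mu_b}-A),
\]
which gives $a_N^{(r)}(D_b)\le\|T_p\|\,a_N^{(r)}(J_{\mu_b})$. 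Since $R_p$ and $T_p$ are fixed, the constants are independent of $b$ and $N$.

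\emph{The case $p=q=2$.} By \cref{prop:Hilbert-calibration}, $\pi_r$ is equivalent to the Hilbert--Schmidt norm on Hilbert spaces. For the diagonal operator, the best Hilbert--Schmidt error at rank less than $N$ is $(\sum_{j\ge N}(b_j^*)^2)^{1/2}$, as computed in \cref{cor:Hankel-Hilbert-approximation-lower}.

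The only delicate point is bookkeeping: atoms with $b_j=0$, phases of complex $b_j$, and checking that the identification $W$ is isometric onto $L^q(\mu_b)$. The essential input, the two-sided factorization through the bounded maps $R_p$ and $T_p$ on the chosen sublattice, is already available from \cref{thm:complemented-Hankel-model}.
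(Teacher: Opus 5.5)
Your proof is correct and follows the paper's argument. You use the two factorizations $J_{\mu_b}=(\text{contraction})\,D_b\,R_p$ and $D_b=(\text{isometry})\,J_{\mu_b}\,T_p$ through the fixed maps $R_p,T_p$ of \cref{thm:complemented-Hankel-model}; these depend only on $p$, so they remain available at $q=1$. The Hilbert case is then read off from the diagonal tail formula. The paper works with $D_{|b|}$ and removes the phases at the end, while you absorb them into the coordinate maps; the two are equivalent.

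Two small slips do not affect the argument. First, $\mu_b(\Omega)=\sum_j|b_j|^qK(a_j,a_j)^{-q/p}$ is only comparable to $\sum_j|b_j|^qV_j^{q/p}$, not equal to it. Second, your $W$ is a contraction that is isometric only on the coordinates with $b_j\ne0$, which is all you actually use.
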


\begin{proof}
The choice of the sublattice and the maps $R_p,T_p$ in
\eqref{eq:thin-lattice-interpolation} depend only on $p$.
They are therefore available also when $q=1$.
Write $w_j=|b_j|K(a_j,a_j)^{-1/p}$. Since $1\in A^p$,
\[
 \mu_b(\Omega)=\|D_{|b|}R_p1\|_{\ell^q}^q<\infty.
\]
Define $U:L^q(\mu_b)\to\ell^q$ by
$(Uu)_j=w_ju(a_j)$, taking the coordinate to be zero when $w_j=0$.
It is an isometry. Define $V:\ell^q\to L^q(\mu_b)$ by assigning
$Vx(a_j)=x_j/w_j$ when $w_j>0$. Values elsewhere have no effect
on its $L^q(\mu_b)$ class. Then $\|V\|\le1$ and $VU=I$.
The definitions give the two exact factorizations
\[
 J_{\mu_b}=VD_{|b|}R_p,
 \qquad D_{|b|}=UJ_{\mu_b}T_p.
\]
The ideal property proves summability. Composition with each bounded
factor preserves the rank bound. Hence
\[
 \|T_p\|^{-1}a_N^{(r)}(D_{|b|})
 \le a_N^{(r)}(J_{\mu_b})
 \le\|R_p\|a_N^{(r)}(D_{|b|}).
\]
Multiplication by the coordinate phases is an isometry, so
$a_N^{(r)}(D_{|b|})=a_N^{(r)}(D_b)$.
The formula on Hilbert spaces follows from the diagonal tail calculation
in \cref{cor:Hankel-Hilbert-approximation-lower}.
\end{proof}

The next corollary follows from Pietsch domination and the reflexivity
of $A^p$. We give the proof for a general target Banach space.

\begin{corollary}\label{cor:H-compact}
Let $1<p,q<\infty$, $1\le r<\infty$, and
$f\in\mathcal D_{\Omega,q}$.  Every absolutely $r$-summing Hankel operator
$H_f:A^p(\Omega)\to L^q(\Omega)$ is compact.  In particular, if
$f\in\mathcal I_{p,q}^{r}(\Omega)$, then $H_f$ is compact.
\end{corollary}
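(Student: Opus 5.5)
The argument is the same as the compactness part of \cref{prop:diagonal-compact-monotone}, with $\ell^p$ replaced by $A^p(\Omega)$ and $D_b$ by $H_f$. Let $H_f$ denote the unique absolutely $r$-summing extension from $\Gamma$ to $A^p(\Omega)$; its existence is the hypothesis. By \cref{lem:summing-calculus}(iv), which follows from Pietsch domination, $H_f$ is completely continuous: it sends weakly convergent sequences in $A^p$ to norm convergent sequences in $L^q$. This step holds for any Banach target, so we use nothing about $L^q$ beyond its being a Banach space.

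Next we check that $A^p(\Omega)$ is reflexive for $1<p<\infty$. Point evaluations are locally uniformly bounded by the submean estimate. Hence a sequence in $A^p$ converging in $L^p$ converges locally uniformly, and its limit is holomorphic. Thus $A^p$ is a closed subspace of the reflexive space $L^p(\Omega)$, and so it is reflexive. Alternatively, the duality $(A^p)^*\cong A^{p'}$ from \cref{lem:kernel-span-density} gives the same conclusion.

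Now take a bounded sequence $(g_k)$ in $A^p$. By Eberlein--\v{S}mulian, or directly because $A^p$ is reflexive and the closed span of the $g_k$ is separable, there is a weakly convergent subsequence $g_{k_i}\rightharpoonup g$. Complete continuity gives $H_fg_{k_i}\to H_fg$ in $L^q$. Therefore $H_f$ maps bounded sets to relatively compact sets, that is, $H_f$ is compact.

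For the last assertion, let $f\in\mathcal I_{p,q}^r(\Omega)$. Then \cref{thm:intro-hankel} gives $H_f\in\Piabs_r(A^p,L^q)$, and the first part applies. No step is genuinely hard. The only point needing care is that we work with the extension on all of $A^p$ rather than with the initial operator on $\Gamma$, and this is ensured by the membership convention.
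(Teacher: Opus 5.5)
Your proposal is correct and follows the paper's proof essentially verbatim. Pietsch domination gives complete continuity, as in \cref{lem:summing-calculus}(iv); reflexivity of $A^p(\Omega)$ gives weakly convergent subsequences of bounded sequences, so $H_f$ is compact; and the last assertion follows from \cref{thm:intro-hankel}.
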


\begin{proof}
Let $Y$ be a Banach space and let $T:A^p(\Omega)\to Y$ be absolutely
$r$-summing.  Pietsch domination
gives a probability measure $\nu$ on $B_{(A^p)^*}$ such that
\begin{equation}\label{eq:pietsch-compactness}
 \|Tg\|_Y^r
 \le \pi_r(T)^r
 \int_{B_{(A^p)^*}}|\phi(g)|^r\dd\nu(\phi).
\end{equation}
Let $(g_m)$ converge weakly to zero in $A^p$.  The sequence is norm
bounded.  Hence
\[
 |\phi(g_m)|^r\le\sup_m\|g_m\|_{A^p}^r,
 \qquad
 \phi(g_m)\longrightarrow0
\]
for every $\phi\in B_{(A^p)^*}$.  Dominated convergence in
\eqref{eq:pietsch-compactness} gives $\|Tg_m\|_Y\to0$.  Thus $T$ is
completely continuous.

Since $1<p<\infty$, $A^p(\Omega)$ is reflexive.  Every bounded sequence
has a weakly convergent subsequence.  Its image under $T$ converges in
norm.  Thus $T$ is compact.  Apply this to $T=H_f$.  The last assertion
follows from \cref{thm:intro-hankel}.
\end{proof}

\begin{corollary}\label{cor:ida-independence}
Let $1<p,q<\infty$ and $1\le r<\infty$.
If $f\in\mathcal D_{\Omega,q}$, any two admissible radius pairs
and lattices give equivalent $\mathbf G$-seminorms.  Hence
$\mathcal I_{p,q}^{r}(\Omega)\cap\mathcal D_{\Omega,q}$ is
independent of the fixed pair of scales.
\end{corollary}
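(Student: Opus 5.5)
The plan is to let the operator $H_f$, which does not depend on any lattice or radius pair, mediate between the two $\mathbf G$-seminorms. The proof of \cref{thm:intro-hankel} gives two inequalities that are each valid for any admissible choice $(\rho,\rho_+,\Lambda)$. Fix two admissible choices, $(\rho,\rho_+,\Lambda)$ and $(\sigma,\sigma_+,\Lambda')$, and write $\mathbf G$ and $\mathbf G'$ for the corresponding coefficient sequences. For $f\in\mathcal D_{\Omega,q}$, the upper estimate \eqref{eq:H-upper-finite} applied with the first choice gives
\[
 \pi_r(H_f)\lesssim\|\mathbf G_{p,q,\rho}f\|_{\dideal}.
\]
The lower estimate \eqref{eq:H-lower-finite} applied with the second choice gives
\[
 \|\mathbf G'f\|_{\dideal}\lesssim\pi_r(H_f).
\]
Chaining the two and then exchanging the roles of the choices yields the equivalence of seminorms. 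Both sides may be infinite, and the argument is symmetric in that case. If one seminorm is finite, \cref{thm:intro-hankel} makes $H_f$ absolutely $r$-summing, and the lower bound for the other choice makes the other seminorm finite. The set $\mathcal I_{p,q}^r(\Omega)\cap\mathcal D_{\Omega,q}$ is therefore the same for both choices.

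The step that needs checking is that the implicit constants in both directions depend only on the fixed radii and the lattice constants in \eqref{eq:admissible-lattice}, and not on $f$. I would verify this for each ingredient in turn.

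For the upper bound, the ingredients are \cref{lem:ida-decomposition}, \cref{thm:carleson}, and \cref{lem:H-two-estimates}. The lemma is stated for an arbitrary admissible pair and lattice. \cref{thm:carleson} is stated for any admissible lattice. The solution operator from \cref{prop:FTI-all} involves no lattice data at all.

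For the lower bound, the ingredients are the coloring, the synthesis operator $S_p$ from \cref{lem:geometry-package}, and \eqref{eq:atom-normalization}. All of these are available for every admissible lattice, with constants that are uniform in the same sense.

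One further point is that $\mathcal D_{\Omega,q}$ itself does not depend on any choice. This is what allows \cref{thm:intro-hankel} to be applied with either choice to the same symbol.

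An alternative route would compare $\G_{q,\rho_+}f(a_j)$ with $\G_{q,\sigma_+}f(a'_k)$ directly, in the manner of \cref{lem:discretization}. That comparison fails in general, because a holomorphic approximant on a small enlarged ball does not extend to a larger ball. The operator route avoids this difficulty, and it is why the hypothesis $f\in\mathcal D_{\Omega,q}$ is needed.
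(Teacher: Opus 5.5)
Your proposal is correct and matches the paper's proof. The sufficiency part of \cref{thm:intro-hankel} for the first choice and its necessity part for the second bound one $\mathbf G$-seminorm by the other through $\pi_r(H_f)$ with constants independent of $f$, and interchanging the two choices gives the equivalence.
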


\begin{proof}
Let $\mathbf G^{(1)}f$ and
$\mathbf G^{(2)}f$ be formed from two admissible radius pairs and
lattices.  Suppose first that
\[
 \|\mathbf G^{(1)}f\|_{\mathfrak d_{p,q}^{\,r}}<\infty.
\]
The sufficiency part of \cref{thm:intro-hankel}, applied to the first
data, and its necessity part, applied to the second data, give
\begin{align*}
 \pi_r(H_f)
 &\lesssim
 \|\mathbf G^{(1)}f\|_{\mathfrak d_{p,q}^{\,r}},\\
 \|\mathbf G^{(2)}f\|_{\mathfrak d_{p,q}^{\,r}}
 &\lesssim\pi_r(H_f).
\end{align*}
Hence
\[
 \|\mathbf G^{(2)}f\|_{\mathfrak d_{p,q}^{\,r}}
 \lesssim
 \|\mathbf G^{(1)}f\|_{\mathfrak d_{p,q}^{\,r}}.
\]
Interchange the two choices.  The two seminorms are equivalent.  The constants may
depend on the two fixed geometric choices.  They do not depend on $f$.
\end{proof}
\section{Toeplitz operators}\label{sec:toeplitz}

This section applies Carleson localization to projected
multiplication. Diagonal extraction gives a necessary condition for a
complex Toeplitz symbol. For a positive symbol that satisfies a local
reverse-H\"older estimate, the converse also holds. The joint
$(T_f,H_f)$ criterion follows from the projection decomposition.

Positive symbols and Schatten classes are studied in
\cite{Luecking1987,LiLuecking1995,PauZhao2015}.
For positive measures on convex finite-type domains, geometric
boundedness, compactness, and Schatten criteria are in
\cite{XiaoYangYuan2026}. The absolutely summing case on the unit ball is studied in
\cite{HuWang2025}. Results for complex symbols on weighted Fock spaces
are in \cite{XuDong2026}. The exponents $p,q,r$ below are independent.
Each converse for a single operator requires its stated symbol assumptions.

For $1<q<\infty$ and $f\in\mathcal D_{\Omega,q}$, recall that
\[
 T_fg=P(fg),\qquad g\in\Gamma.
\]
We extract a scalar sequence by the diagonal transference method of
\cite[Proposition~4.4]{FanHeWangZeng2026}.  Here the atoms and local
estimates come from the finite-type geometry.  For the fixed lattice,
put
\begin{equation*}
 \Delta_j(A)=V_j^{1/q}(Ae_{j,p})(a_j),
 \qquad
 e_{j,p}(z)=\frac{K(z,a_j)}{K(a_j,a_j)^{1-1/p}}.
\end{equation*}

For $f\in\mathcal D_{\Omega,q}$, define the Berezin transform by
\[
 \widetilde f(z)=K(z,z)^{-1}
     \int_\Omega f(w)|K(w,z)|^2\dd v(w).
\]
The integral converges by H\"older's inequality and
\cref{lem:kernel-span-density}.

\begin{proposition}\label{prop:operator-diagonal}
Let $1<p,q<\infty$ and $1\le r<\infty$.
Let $A\in\Piabs_r(A^p(\Omega),A^q(\Omega))$.  Then
\begin{equation}\label{eq:operator-diagonal-bound}
 \{\Delta_j(A)\}_j\in\mathfrak d_{p,q}^{\,r},
 \qquad
 \|\{\Delta_j(A)\}\|_{\mathfrak d_{p,q}^{\,r}}
 \lesssim\pi_r(A).
\end{equation}
\end{proposition}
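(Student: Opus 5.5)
The plan is to run the same diagonal extraction used in the necessity parts of \cref{thm:carleson} and \cref{thm:intro-hankel}, with point evaluations at the lattice points playing the role of the coordinate functionals. First, split the lattice into finitely many colors so that within each color \eqref{eq:kernel-synthesis} holds. Since $\mathfrak d_{p,q}^{\,r}$ is solid and satisfies the triangle inequality (\cref{prop:diagonal-structure}), it suffices to prove the bound on one color $J_\ell$. On that color, let $S_p:\ell^p\to A^p$ be the synthesis map. We need a sampling map on the target: $R_q:A^q\to\ell^q$, $R_qh=\{V_j^{1/q}h(a_j)\}_j$. It is bounded by the submean inequality on the disjoint balls $\Q_{c\rho}(a_j)$, which gives $\sum_jV_j|h(a_j)|^q\lesssim\|h\|_{A^q}^q$; this is the same computation used for $R_p$ in \cref{thm:complemented-Hankel-model}. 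Then $B=R_qAS_p\in\Piabs_r(\ell^p,\ell^q)$ with $\pi_r(B)\le\|R_q\|\,\|S_p\|\,\pi_r(A)$, and its diagonal entries are $B_{jj}=V_j^{1/q}(Ae_{j,p})(a_j)=\Delta_j(A)$.

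The second step extracts the diagonal. For $m\in\N$, average over signs $\varepsilon\in\{-1,1\}^m$, exactly as in the proof of \cref{thm:complemented-Hankel-model}:
\[
 Q_mD_{\Delta}P_m=2^{-m}\sum_{\varepsilon}Q_mU^{(q)}_\varepsilon BU^{(p)}_\varepsilon P_m .
\]
Each term has $r$-summing norm at most $\pi_r(B)$, so $\pi_r(Q_mD_\Delta P_m)\le\pi_r(B)$. Letting $m\to\infty$ on finite families, as in the extraction argument after \eqref{eq:H-block-extraction}, gives $\|\Delta|_{J_\ell}\|_{\mathfrak d_{p,q}^{\,r}}\le\pi_r(B)\lesssim\pi_r(A)$. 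Summing over the finitely many colors yields \eqref{eq:operator-diagonal-bound}.

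The only step needing care is the boundedness of $R_q$, which is the standard submean estimate on the disjoint small balls with $V_j\asymp v(\Q_{c\rho}(a_j))$. Two technical points also need checking. First, $Ae_{j,p}\in A^q$, so point evaluation at $a_j$ makes sense. Second, $A$ is given as a bounded operator on all of $A^p$, so no density argument over $\Gamma$ is required. I expect no genuine obstacle; the main thing to verify is that the sign averaging handles the passage from finite compressions to the full diagonal operator, and this is identical to the earlier Hankel extraction.
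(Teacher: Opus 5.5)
Your proposal is correct and follows the paper's proof essentially step for step: restrict to a separated sublattice, form $B=R_qAS_p$ with the sampling map $R_q h=\{V_j^{1/q}h(a_j)\}_j$, identify $B_{jj}=\Delta_j(A)$, extract the diagonal by finite sign averages with a limit over finite families, and sum over the finitely many sublattices. One small correction: passing from the relabeled sublattice back to the full index set uses the norm-one coordinate injections and projections, which show that zero extension preserves the $\mathfrak d_{p,q}^{\,r}$ norm; solidity alone does not give this.
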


\begin{proof}
First restrict the indices to one sufficiently separated sublattice and
relabel it as $\mathbb N$.  On this sublattice the synthesis estimate
\eqref{eq:kernel-synthesis} holds.
Let $S_p:\ell^p\to A^p$ be the synthesis map in
\eqref{eq:kernel-synthesis}.  The sampling map
\begin{equation}\label{eq:sampling-map}
 R_q:A^q\to\ell^q,
 \qquad R_qh=\{V_j^{1/q}h(a_j)\}_j,
\end{equation}
is bounded.  Indeed, the holomorphic submean inequality and disjointness
give
\[
 \sum_jV_j|h(a_j)|^q
 \lesssim\sum_j\int_{\Q_{c\rho}(a_j)}|h|^q\dd v
 \le\|h\|_{A^q}^q.
\]
Put $B=R_qAS_p:\ell^p\to\ell^q$.  Then
\[
 B\in\Piabs_r(\ell^p,\ell^q),
 \qquad
 \pi_r(B)\lesssim\pi_r(A).
\]
Let $e_j$ be the $j$th unit vector of $\ell^p$, and put
$B_{ij}=(Be_j)_i$.
Let $(r_j(t))$ be the Rademacher functions and define
\[
 U_t^{(s)}(x_j)=(r_j(t)x_j),
 \qquad s=p,q.
\]
Both maps $U_t^{(s)}$ are isometries.  For finitely supported $x$,
\begin{equation}\label{eq:rademacher-diagonal}
 \int_0^1U_t^{(q)}BU_t^{(p)}x\dd t
 =D_{(B_{jj})}x.
\end{equation}
Indeed, the $i$th coordinate of the left side is
\[
 \sum_jB_{ij}x_j\int_0^1r_i(t)r_j(t)\dd t=B_{ii}x_i.
\]
Let $P_m:\ell^p\to\ell^p$ and $Q_m:\ell^q\to\ell^q$ be the
first-$m$ coordinate projections.  Put
\[
 D_m=Q_mD_{(B_{jj})}P_m.
\]
Only the first $m$ Rademacher functions occur after both projections.
For $\varepsilon\in\{-1,1\}^m$, let $U_\varepsilon^{(s)}$ multiply
the first $m$ coordinates by $\varepsilon_j$ and leave all other
coordinates unchanged.  The first $m$ Rademacher signs take every
value $\varepsilon$ on a set of measure $2^{-m}$.  Thus
\eqref{eq:rademacher-diagonal} becomes the finite sum
\[
 D_m
 =\int_0^1Q_mU_t^{(q)}BU_t^{(p)}P_m\dd t
 =2^{-m}\sum_{\varepsilon\in\{-1,1\}^m}
 Q_mU_\varepsilon^{(q)}BU_\varepsilon^{(p)}P_m.
\]
Every coordinate projection is a contraction and every sign map is
an isometry.  The triangle inequality and the ideal property give
\begin{align}
 \pi_r(D_m)
 &\le2^{-m}\sum_{\varepsilon\in\{-1,1\}^m}
 \pi_r(Q_mU_\varepsilon^{(q)}BU_\varepsilon^{(p)}P_m)
 \le\pi_r(B).                                      \label{eq:finite-diagonal-pi}
\end{align}
If $x\in c_{00}$, choose $m$ so that $P_mx=x$.  Then
\[
 \|D_{(B_{jj})}x\|_{\ell^q}
 =\|D_mx\|_{\ell^q}
 \le\pi_r(B)\|x\|_{\ell^p}.
\]
Hence the diagonal map on $c_{00}$ has a unique bounded extension
$D_{(B_{jj})}:\ell^p\to\ell^q$.  Let $x_1,\ldots,x_N\in\ell^p$.
Then
\[
 D_mx_k=D_{(B_{jj})}P_mx_k\longrightarrow D_{(B_{jj})}x_k
 \quad(1\le k\le N).
\]
Moreover,
\begin{align*}
 \sup_{\phi\in B_{(\ell^p)^*}}
 \left(\sum_{k=1}^N|\phi(P_mx_k)|^r\right)^{1/r}
 &\le
 \sup_{\phi\in B_{(\ell^p)^*}}
 \left(\sum_{k=1}^N|\phi(x_k)|^r\right)^{1/r},
\end{align*}
because $\|P_m\|=1$.  In particular,
\[
 \left(\sum_{k=1}^N\|D_mx_k\|_{\ell^q}^r\right)^{1/r}
 \le\pi_r(B)w_r(x_1,\ldots,x_N).
\]
The sum on the left has only $N$ terms. Each term converges to
$\|D_{(B_{jj})}x_k\|_{\ell^q}^r$.  Letting $m\to\infty$ gives
\[
 \left(\sum_{k=1}^N
 \|D_{(B_{jj})}x_k\|_{\ell^q}^r\right)^{1/r}
 \le\pi_r(B)w_r(x_1,\ldots,x_N).
\]
Taking the supremum over finite families proves
$\pi_r(D_{(B_{jj})})\le\pi_r(B)$.
By definition, $B_{jj}=\Delta_j(A)$.  This proves the estimate on one
sufficiently separated sublattice.  Decompose the fixed lattice into
$N_*$ such sublattices by \cref{lem:geometry-package}.  Let
$d^{(\ell)}$ equal $\Delta_j(A)$ on the $\ell$th sublattice and zero
elsewhere.  Coordinate injections and projections have norm one.
Thus zero extension preserves the norm of the diagonal ideal.  The preceding
argument gives
\[
 \|d^{(\ell)}\|_{\mathfrak d_{p,q}^{\,r}}
 \lesssim\pi_r(A).
\]
Since
\[
 (\Delta_j(A))_j=\sum_{\ell=1}^{N_*}d^{(\ell)},
\]
the triangle inequality proves \eqref{eq:operator-diagonal-bound}.
\end{proof}

For $A=T_f$, the diagonal is the weighted Berezin transform.  Indeed,
\begin{align}
 (T_fe_{j,p})(a_j)
 &=\frac1{K(a_j,a_j)^{1-1/p}}
 \int_\Omega f(w)|K(w,a_j)|^2\dd v(w)\notag\\
 &=K(a_j,a_j)^{1/p}\widetilde f(a_j).
 \label{eq:Toeplitz-diagonal-computation}
\end{align}
Together with $K(a_j,a_j)\asymp V_j^{-1}$, this gives
\begin{equation}\label{eq:Toeplitz-diagonal-weight}
 |\Delta_j(T_f)|
 \asymp V_j^{1/q-1/p}|\widetilde f(a_j)|.
\end{equation}

\begin{corollary}
\label{cor:scalar-necessity}
Let $1<p,q<\infty$, $1\le r<\infty$, and
$f\in\mathcal D_{\Omega,q}$.  If
$T_f\in\Piabs_r(A^p(\Omega),A^q(\Omega))$, then
\begin{equation}\label{eq:scalar-necessity}
 \left\|\{V_j^{1/q-1/p}|\widetilde f(a_j)|\}\right\|_{\dideal}
 \lesssim\pi_r(T_f).
\end{equation}
No IDA assumption is used.
\end{corollary}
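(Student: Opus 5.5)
The plan is to apply \cref{prop:operator-diagonal} with $A=T_f$ and then identify the diagonal entries through \eqref{eq:Toeplitz-diagonal-weight}. The hypothesis says that $T_f$, initially defined on $\Gamma$, has an absolutely $r$-summing extension $A^p\to L^q$. Since $T_f g=P(fg)$ is holomorphic and $P$ is bounded on $L^q$, the range lies in $A^q$. So I regard the extension as an element of $\Piabs_r(A^p(\Omega),A^q(\Omega))$. Its summing norm is the same for either target, because $A^q$ is a closed subspace of $L^q$ and the target norm is unchanged.

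\cref{prop:operator-diagonal} then gives
\[
 \|\{\Delta_j(T_f)\}_j\|_{\dideal}\lesssim\pi_r(T_f).
\]
To use this, I have to check that $\Delta_j(T_f)$ is really given by the formula \eqref{eq:Toeplitz-diagonal-computation}. The atom $e_{j,p}$ is a scalar multiple of a kernel section, so it lies in $\Gamma$. Hence the extension agrees with $P(fe_{j,p})$ there. The definition of $\mathcal D_{\Omega,q}$ gives $fe_{j,p}\in L^q$.

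Evaluation at $a_j$ uses the reproducing pairing. Write $P(fe_{j,p})(a_j)=\langle fe_{j,p},K(\cdot,a_j)\rangle$, which is justified by \eqref{eq:projection-pairing} together with $K(\cdot,a_j)\in A^{q'}$ from \cref{lem:kernel-span-density}. This pairing is exactly the integral in \eqref{eq:Toeplitz-diagonal-computation}, and it converges absolutely by H\"older's inequality.

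Next I multiply by $V_j^{1/q}$ and use $K(a_j,a_j)\asymp V_j^{-1}$ from \cref{lem:geometry-package}(iv). This yields $|\Delta_j(T_f)|\asymp V_j^{1/q-1/p}|\widetilde f(a_j)|$ with constants independent of $j$ and $f$. Solidity \eqref{eq:solid}, applied after absorbing the comparison constant through the scaling $D_c=D_\alpha D_b$ with $\|\alpha\|_\infty\lesssim1$, transfers the bound to the sequence in \eqref{eq:scalar-necessity}.

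The only delicate step is the identification of the extension on $e_{j,p}$ with the integral formula. The pairing identity requires $u\in L^q$ and $v\in L^{q'}$, and both memberships hold here. I expect no further obstacle. No IDA information enters, since the argument uses only the diagonal of $T_f$.
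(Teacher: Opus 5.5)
Your proposal is correct and follows the paper's proof essentially step for step. You apply \cref{prop:operator-diagonal} to the $r$-summing extension of $T_f$, identify $(T_fe_{j,p})(a_j)=K(a_j,a_j)^{1/p}\widetilde f(a_j)$ via the absolutely convergent reproducing formula (using $e_{j,p}\in\Gamma$ and $fe_{j,p}\in L^q$), use $K(a_j,a_j)\asymp V_j^{-1}$, and finish with solidity, just as the paper does. Two small points: evaluating $P(fe_{j,p})$ at $a_j$ through \eqref{eq:projection-pairing} really goes through the radial bump $\chi_{a_j}$ with $P\chi_{a_j}=K(\cdot,a_j)$, as in \cref{lem:kernel-span-density}; and your detour through an extension into $L^q$ is unnecessary, since the hypothesis already places the extension in $A^q$.
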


\begin{proof}
For every $j$, the atom $e_{j,p}$ belongs to $\Gamma$.  Since
$fe_{j,p}\in L^q$ and $K(\cdot,a_j)\in L^{q'}$, the reproducing
formula for $P(fe_{j,p})$ is absolutely convergent.  It gives
\eqref{eq:Toeplitz-diagonal-computation}.  Apply
\cref{prop:operator-diagonal} to the unique $r$-summing extension of
$T_f$.  Equations \eqref{eq:Toeplitz-diagonal-computation} and
\eqref{eq:Toeplitz-diagonal-weight} give
\[
 V_j^{1/q-1/p}|\widetilde f(a_j)|
 \lesssim |\Delta_j(T_f)|.
\]
Solidity of $\mathfrak d_{p,q}^{\,r}$ proves
\eqref{eq:scalar-necessity}.
\end{proof}

For a general complex symbol, the diagonal computation gives a
necessary condition. For nonnegative symbols satisfying the local
reverse-H\"older condition below, the Berezin diagonal also controls
the local $L^q$ mean. This gives the converse under that condition.

For $f\ge0$, write $f\in\mathrm{RH}_{q,\rho}(\Omega)$ when
\begin{equation}\label{eq:intro-local-RH}
 \left(\avg_{\Q_\rho(z)}f(w)^q\dd v(w)\right)^{1/q}
 \le C_{\mathrm{RH}}\avg_{\Q_\rho(z)}f(w)\dd v(w),
 \qquad z\in\Omega.
\end{equation}

\begin{theorem}
\label{thm:positive-Toeplitz}
Let $1<p,q<\infty$, $1\le r<\infty$, and let
$f\in\mathcal D_{\Omega,q}$.  Assume that
$f\in\mathrm{RH}_{q,\rho}(\Omega)$ in the sense of
\eqref{eq:intro-local-RH}.  Then the following conditions are
equivalent:
\begin{enumerate}[label=\textup{(\roman*)}]
\item $T_f\in\Piabs_r(A^p(\Omega),A^q(\Omega))$;
\item $M_f\in\Piabs_r(A^p(\Omega),L^q(\Omega))$;
\item $f\in\mathcal L_{p,q}^{r}(\Omega)$;
\item
\begin{equation}\label{eq:positive-Toeplitz-Berezin}
 \{V_j^{1/q-1/p}\widetilde f(a_j)\}_j
 \in\mathfrak d_{p,q}^{\,r}.
\end{equation}
\end{enumerate}
Moreover,
\begin{equation}\label{eq:positive-Toeplitz-norms}
 \pi_r(T_f)
 \asymp\pi_r(M_f)
 \asymp\|f\|_{\mathcal L_{p,q}^{r}}
 \asymp
 \|\{V_j^{1/q-1/p}\widetilde f(a_j)\}\|_{\mathfrak d_{p,q}^{\,r}}.
\end{equation}
The comparison constants may depend on $C_{\mathrm{RH}}$.  They do not
depend on $f$.
\end{theorem}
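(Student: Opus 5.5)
The plan is to close the cycle (ii)$\Leftrightarrow$(iii), (i)$\Rightarrow$(iv), (iv)$\Rightarrow$(iii), and (ii)$\Rightarrow$(i), tracking the norm comparisons at each step.

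\textbf{Equivalence of (ii) and (iii), and (ii)$\Rightarrow$(i).} The equivalence (ii)$\Leftrightarrow$(iii) with $\pi_r(M_f)\asymp\|f\|_{\mathcal L_{p,q}^r}$ is exactly \cref{cor:multiplication}. For (ii)$\Rightarrow$(i) we use $T_f=PM_f$ on $\Gamma$. Boundedness of $P$ on $L^q$ and the ideal property \eqref{eq:summing-ideal-property} then give $\pi_r(T_f)\le\|P\|\pi_r(M_f)$. The $r$-summing extensions agree, since both are the unique extensions from the dense subspace $\Gamma$.

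\textbf{(i)$\Rightarrow$(iv).} This is \cref{cor:scalar-necessity}, which gives $\|\{V_j^{1/q-1/p}\widetilde f(a_j)\}\|_{\dideal}\lesssim\pi_r(T_f)$. Here $f\ge0$, so $|\widetilde f|=\widetilde f$.

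\textbf{(iv)$\Rightarrow$(iii).} This is the main step, and it is where positivity and the reverse-H\"older hypothesis enter. First we bound the local mean by the Berezin transform. For $w\in\Q_\rho(a_j)$, \eqref{eq:kernel-lower-finite} gives $|K(w,a_j)|^2\gtrsim K(a_j,a_j)^2$. Since $f\ge0$, we can drop the integral outside $\Q_\rho(a_j)$ and obtain
\[
 \widetilde f(a_j)\ge K(a_j,a_j)^{-1}\int_{\Q_\rho(a_j)}f|K(\cdot,a_j)|^2\dd v
 \gtrsim K(a_j,a_j)\int_{\Q_\rho(a_j)}f\dd v\asymp\M_{1,\rho}f(a_j),
\]
using $K(a_j,a_j)\asymp V_j^{-1}$. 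Next, \eqref{eq:intro-local-RH} gives $\M_{q,\rho}f(a_j)\le C_{\mathrm{RH}}\M_{1,\rho}f(a_j)\lesssim\widetilde f(a_j)$. We multiply by $V_j^{1/q-1/p}$ and apply solidity \eqref{eq:solid}. This yields $\|f\|_{\mathcal L_{p,q}^r}\lesssim C_{\mathrm{RH}}\|\{V_j^{1/q-1/p}\widetilde f(a_j)\}\|_{\dideal}$.

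The points to verify are that \eqref{eq:kernel-lower-finite} applies with $C=1$, and that the Berezin integral is well defined and nonnegative. Positivity handles the latter, and $f\in\mathcal D_{\Omega,q}$ together with \cref{lem:kernel-span-density} gives finiteness. Only this step uses $C_{\mathrm{RH}}$.

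\textbf{Conclusion.} Chaining the four bounds gives
\[
 \|\{\cdot\}\|_{\dideal}\lesssim\pi_r(T_f)\lesssim\pi_r(M_f)\asymp\|f\|_{\mathcal L}\lesssim\|\{\cdot\}\|_{\dideal},
\]
which proves \eqref{eq:positive-Toeplitz-norms} and all four equivalences. No upper bound of the Berezin diagonal by the local means is needed; that direction would require off-diagonal kernel decay, and the cycle avoids it.
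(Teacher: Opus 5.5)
Your proposal is correct and follows the paper's proof essentially step for step. The paper likewise bounds $\M_{q,\rho}f(a_j)\lesssim C_{\mathrm{RH}}\widetilde f(a_j)$ by restricting the Berezin integral to $\Q_\rho(a_j)$ (using positivity, the near-diagonal kernel estimate, and the reverse-H\"older condition), then closes the cycle with solidity, \cref{cor:scalar-necessity}, \cref{cor:multiplication}, and the factorization $T_f=PM_f$.
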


\begin{proof}
For $w\in\Q_\rho(a_j)$, the near-diagonal kernel estimate gives
\[
 |K(w,a_j)|\asymp K(a_j,a_j)\asymp V_j^{-1}.
\]
By the definition of $\mathrm{RH}_{q,\rho}(\Omega)$, $f\ge0$.  Hence
\begin{align*}
 \widetilde f(a_j)
 &=\frac1{K(a_j,a_j)}
 \int_\Omega f(w)|K(w,a_j)|^2\dd v(w)\notag\\
 &\ge
 \frac1{K(a_j,a_j)}
 \int_{\Q_\rho(a_j)}f(w)|K(w,a_j)|^2\dd v(w)\notag\\
 &\gtrsim
 \frac1{V_j}\int_{\Q_\rho(a_j)}f(w)\dd v(w).
\end{align*}
The reverse-H\"older condition now gives
\begin{equation*}
 \M_{q,\rho}f(a_j)
 \le C_{\mathrm{RH}}
 \avg_{\Q_\rho(a_j)}f\dd v
 \lesssim C_{\mathrm{RH}}\widetilde f(a_j).
\end{equation*}
Multiply by $V_j^{1/q-1/p}$.  Solidity gives
\begin{equation}\label{eq:positive-sequence-recovery}
 \|f\|_{\mathcal L_{p,q}^{r}}
 \lesssim C_{\mathrm{RH}}
 \|\{V_j^{1/q-1/p}\widetilde f(a_j)\}\|_{\mathfrak d_{p,q}^{\,r}}.
\end{equation}

Assume (i).  By \cref{cor:scalar-necessity}, the sequence in
\eqref{eq:positive-Toeplitz-Berezin} belongs to the diagonal ideal and
\[
 \|\{V_j^{1/q-1/p}\widetilde f(a_j)\}\|_{\mathfrak d_{p,q}^{\,r}}
 \lesssim\pi_r(T_f).
\]
Equations \eqref{eq:positive-sequence-recovery} and
\eqref{eq:multiplication-norm} give
\[
 \pi_r(M_f)\lesssim C_{\mathrm{RH}}\pi_r(T_f).
\]
Thus (i) implies (ii).  Conversely,
\[
 T_f=PM_f,
 \qquad
 \pi_r(T_f)\le\|P\|_{L^q\to L^q}\pi_r(M_f).
\]
Hence (i) and (ii) are equivalent, with comparable norms.
Conditions (ii) and (iii) are equivalent by
\cref{cor:multiplication}.  The preceding estimates show that
\begin{align*}
 \|f\|_{\mathcal L_{p,q}^{r}}
 &\lesssim C_{\mathrm{RH}}\pi_r(T_f),\qquad &&\text{under (i)},\\
 \pi_r(T_f)&\lesssim\pi_r(M_f)
 \asymp\|f\|_{\mathcal L_{p,q}^{r}},\qquad &&\text{under (iii)}.
\end{align*}
They also show that (i) implies (iv), and
\eqref{eq:positive-sequence-recovery} shows that (iv) implies (iii).
Finally, (iii) implies (i), and then \cref{cor:scalar-necessity} gives
\[
 \|\{V_j^{1/q-1/p}\widetilde f(a_j)\}\|_{\mathfrak d_{p,q}^{\,r}}
 \lesssim \pi_r(T_f)
 \lesssim \|f\|_{\mathcal L_{p,q}^{r}}.
\]
Together with \eqref{eq:positive-sequence-recovery}, these inequalities
give all the comparisons in \eqref{eq:positive-Toeplitz-norms}.
\end{proof}

Apply the Carleson theorem to $|f|^qv$ and use the boundedness of $P$.
This gives the following graph criterion.

\begin{corollary}\label{thm:intro-main}
Let $1<p,q<\infty$, $1\le r<\infty$, and
$f\in\mathcal D_{\Omega,q}$.  For Banach spaces $X,Y$, let
$X\oplus_qY$ denote the product with norm
$(\|x\|_X^q+\|y\|_Y^q)^{1/q}$.  Define initially on $\Gamma$
\[
 \mathscr T_f:\Gamma\longrightarrow A^q(\Omega)\oplus_qL^q(\Omega),
 \qquad
 \mathscr T_fg=(T_fg,H_fg).
\]
The following conditions are equivalent:
\begin{enumerate}[label=\textup{(\roman*)}]
\item $M_f\in\Piabs_r(A^p(\Omega),L^q(\Omega))$;
\item $\mathscr T_f\in\Piabs_r(A^p(\Omega),A^q(\Omega)\oplus_qL^q(\Omega))$;
\item $T_f\in\Piabs_r(A^p(\Omega),A^q(\Omega))$ and
$H_f\in\Piabs_r(A^p(\Omega),L^q(\Omega))$;
\item $f\in\mathcal L_{p,q}^{r}(\Omega)$.
\end{enumerate}
Moreover,
\begin{equation}\label{eq:intro-main-norm}
\begin{aligned}
 \pi_r(M_f)
 &\asymp\pi_r(\mathscr T_f)
 \asymp\pi_r(T_f)+\pi_r(H_f)\\
 &\asymp\|f\|_{\mathcal L_{p,q}^{r}(\Omega)}.
\end{aligned}
\end{equation}
Whenever these conditions hold, the four initial operators
$M_f,\mathscr T_f,T_f$, and $H_f$ have unique bounded extensions.  The
constants depend only on $p,q,r$, the finite-type data of $\Omega$, and
the fixed admissible geometric parameters.
\end{corollary}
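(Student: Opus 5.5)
The plan is to prove the cycle (iv)$\Leftrightarrow$(i)$\Rightarrow$(ii)$\Rightarrow$(iii)$\Rightarrow$(i), tracking the constants at each step. The equivalence (i)$\Leftrightarrow$(iv), together with $\pi_r(M_f)\asymp\|f\|_{\mathcal L_{p,q}^r}$, is exactly \cref{cor:multiplication}. That corollary also shows that the bounded extension of $M_f$ is pointwise multiplication on all of $A^p$.

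For (i)$\Rightarrow$(ii), we factor $\mathscr T_f=\Theta M_f$ on $\Gamma$, where $\Theta u=(Pu,(I-P)u)$ maps $L^q$ into $A^q\oplus_qL^q$. By \eqref{eq:intro-Pq}, $\Theta$ is bounded with $\|\Theta\|\le 1+2\|P\|_{L^q\to L^q}$. The ideal property \eqref{eq:summing-ideal-property} then gives an $r$-summing extension with $\pi_r(\mathscr T_f)\lesssim\pi_r(M_f)$. Uniqueness of the extension follows from the density of $\Gamma$ in $A^p$ (\cref{lem:kernel-span-density}). The step (ii)$\Rightarrow$(iii) is \cref{lem:summing-calculus}(ii): composing with the two coordinate projections, each of norm one, gives $\max\{\pi_r(T_f),\pi_r(H_f)\}\le\pi_r(\mathscr T_f)$.

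For (iii)$\Rightarrow$(i), we use the identity $M_fg=T_fg+H_fg$ on $\Gamma$. This holds because $fg\in L^q$ for $g\in\Gamma$, since $f\in\mathcal D_{\Omega,q}$. Let $\iota:A^q\to L^q$ be the inclusion. Minkowski's inequality on finite families $g_1,\ldots,g_N\subset\Gamma$ gives
\[
\Bigl(\sum_k\|M_fg_k\|_q^r\Bigr)^{1/r}\le(\pi_r(T_f)+\pi_r(H_f))\,w_r(g_1,\ldots,g_N).
\]
The case $N=1$ gives a bounded extension of $M_f$. Approximating each member of an arbitrary finite family in $A^p$ by elements of $\Gamma$ then passes the inequality to the limit, exactly as in the proof of \cref{thm:intro-hankel}, using the continuity of $w_r$ under norm perturbations. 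This yields $\pi_r(M_f)\le\pi_r(T_f)+\pi_r(H_f)$. The identity $\mathscr T_f\leftrightarrow(T_f,H_f)$ and \eqref{eq:finite-graph-summing} show that $\pi_r(T_f)+\pi_r(H_f)\asymp\pi_r(\mathscr T_f)$ with constant $2$. Chaining these comparisons gives \eqref{eq:intro-main-norm}.

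The only delicate point, which is mild, is bookkeeping with the initial operators. Membership in $\Piabs_r$ is defined through extensions from $\Gamma$, so every inequality must first be established on $\Gamma$ and then extended by density. I also need to check that the extension of $\mathscr T_f$ is the pair of the extensions of $T_f$ and $H_f$. This follows from uniqueness, since both are continuous and agree on the dense set $\Gamma$. The constants depend only on $\|P\|_{L^q\to L^q}$ and the constants in \cref{cor:multiplication}, hence only on $p,q,r$ and the geometric data.
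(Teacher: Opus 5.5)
Your proposal is correct and follows essentially the same route as the paper. You factor through the bounded splitting $u\mapsto(Pu,(I-P)u)$, use the coordinate projections and Minkowski's inequality to compare $\pi_r(\mathscr T_f)$ with $\pi_r(T_f)+\pi_r(H_f)$, invoke \cref{cor:multiplication} for (i)$\Leftrightarrow$(iv), and identify the extensions through density of $\Gamma$. Your direct Minkowski bound for (iii)$\Rightarrow$(i) is just the paper's ideal-property argument with the summation map $\Sigma(u,v)=u+v$ written out explicitly.
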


\begin{proof}[Proof of \cref{thm:intro-main}]
Define
\[
 W:L^q(\Omega)\to A^q(\Omega)\oplus_qL^q(\Omega),
 \qquad Wu=(Pu,(I-P)u),
\]
and
\[
 \Sigma:A^q(\Omega)\oplus_qL^q(\Omega)\to L^q(\Omega),
 \qquad\Sigma(u,v)=u+v.
\]
By \eqref{eq:intro-Pq},
\begin{align*}
 \|Wu\|_{A^q\oplus_qL^q}
 &\le
 \bigl(\|P\|_{L^q\to L^q}^q
       +\|I-P\|_{L^q\to L^q}^q\bigr)^{1/q}\|u\|_{L^q},
 \\
 \|\Sigma(u,v)\|_{L^q}
 &\le \|u\|_{L^q}+\|v\|_{L^q}
 \le 2^{1-1/q}\|(u,v)\|_{A^q\oplus_qL^q}.
\end{align*}
Thus $W$ and $\Sigma$ are bounded.  On $\Gamma$,
\begin{equation*}
 \mathscr T_f=WM_f,
 \qquad
 M_f=\Sigma\mathscr T_f.
\end{equation*}
If $M_f$ extends, the first identity defines an extension of
$\mathscr T_f$.  If $\mathscr T_f$ extends, the second defines an
extension of $M_f$.  The coordinate projections have norm one.  Also,
for finite families $(g_k)$,
\[
 \left(\sum_k\|\mathscr T_fg_k\|^r\right)^{1/r}
 \le
 \left(\sum_k\|T_fg_k\|^r\right)^{1/r}
 +
 \left(\sum_k\|H_fg_k\|^r\right)^{1/r}.
\]
Consequently,
\begin{align*}
 \pi_r(\mathscr T_f)&\le\|W\|\pi_r(M_f),&
 \pi_r(M_f)&\le\|\Sigma\|\pi_r(\mathscr T_f),\\
 \pi_r(T_f)&\le\pi_r(\mathscr T_f),&
 \pi_r(H_f)&\le\pi_r(\mathscr T_f),\\
 \pi_r(\mathscr T_f)&\le\pi_r(T_f)+\pi_r(H_f).
\end{align*}
The ideal property now gives
\begin{equation*}
 \pi_r(M_f)\asymp\pi_r(\mathscr T_f)
 \asymp\pi_r(T_f)+\pi_r(H_f).
\end{equation*}
This proves that (i), (ii), and (iii) are equivalent. It also proves the displayed
norm comparisons.

By \cref{cor:multiplication},
\[
 \pi_r(M_f)\asymp
 \|\mathbf M_{p,q,\rho}f\|_{\dideal}.
\]
Thus (i) and (iv) are equivalent.  This also proves
\eqref{eq:intro-main-norm}.

It remains to identify the extensions.  Suppose $T_f$ and $H_f$ have
bounded extensions. Let $g_m\in\Gamma$ tend to $g$ in $A^p$.  Then
$T_fg_m+H_fg_m$ tends in $L^q$ to some $u$.  On every compact
$K\Subset\Omega$, the submean inequality gives a constant $C_K$ such
that
\[
 \sup_{z\in K}|g_m(z)-g(z)|
 \le C_K\|g_m-g\|_{A^p}\longrightarrow0.
\]
Since $f\in L^q_{\loc}(\Omega)$,
\[
 \|fg_m-fg\|_{L^q(K)}
 \le \sup_{z\in K}|g_m(z)-g(z)|\,\|f\|_{L^q(K)}
 \longrightarrow0.
\]
But $fg_m=T_fg_m+H_fg_m$ on $\Gamma$.  Uniqueness of local $L^q$ limits
gives $u=fg$ almost everywhere on $K$.  Let $K$ increase to $\Omega$
through a sequence of compact sets. Then $u=fg$ almost everywhere on
$\Omega$. Hence the extension
$M_f=T_f+H_f$ is the pointwise multiplication operator.

For $g\in A^p$, use the same sequence $(g_m)\subset\Gamma$.  Since
$M_fg_m\to M_fg$ in $L^q$ and $P$ is bounded on $L^q$,
\begin{align*}
 T_fg
 &=\lim_{m\to\infty}T_fg_m
 =\lim_{m\to\infty}P(M_fg_m)
 =P(M_fg),\\
 H_fg
 &=\lim_{m\to\infty}H_fg_m
 =\lim_{m\to\infty}(I-P)(M_fg_m)
 =(I-P)(M_fg).
\end{align*}
Therefore
\[
 T_f=PM_f,
 \qquad
 H_f=(I-P)M_f
\]
on all of $A^p$.  Conversely, suppose that $M_f$ has a bounded extension.
The bounded operators
\[
 PM_f:A^p\to A^q,
 \qquad
 (I-P)M_f:A^p\to L^q
\]
agree on $\Gamma$ with $T_f$ and $H_f$, respectively.  Hence they are
their extensions, and $WM_f$ is the extension of $\mathscr T_f$.
Density of $\Gamma$ gives uniqueness in every case.
\end{proof}

The matching IDA criterion now follows from the preceding results.

\begin{corollary}
\label{cor:matching-ida}
Let $1<p,q<\infty$, $1\le r<\infty$, and
$f\in\mathcal D_{\Omega,q}$.  Then
\begin{equation}\label{eq:matching-ida-equivalence}
\begin{aligned}
 \bigl[T_f\in\Piabs_r(A^p,A^q)
 \text{ and }f\in\mathcal I_{p,q}^{r}(\Omega)\bigr]
 &\Longleftrightarrow M_f\in\Piabs_r(A^p,L^q)\\
 &\Longleftrightarrow f\in\mathcal L_{p,q}^{r}(\Omega).
\end{aligned}
\end{equation}
Moreover,
\begin{equation}\label{eq:matching-ida-norm}
 \pi_r(T_f)+\|f\|_{\mathcal I_{p,q}^{r}(\Omega)}
 \asymp\pi_r(M_f)
 \asymp\|f\|_{\mathcal L_{p,q}^{r}(\Omega)}.
\end{equation}
The constants depend only on $p,q,r$, the finite-type data, and the
fixed geometry.  They do not depend on $f$.
\end{corollary}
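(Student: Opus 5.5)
The plan is to derive the corollary from \cref{thm:intro-main}, \cref{thm:intro-hankel} and \cref{cor:multiplication}, with no new analysis. By \cref{thm:intro-main}, the statements $M_f\in\Piabs_r(A^p,L^q)$ and $f\in\mathcal L_{p,q}^r(\Omega)$ are equivalent, with $\pi_r(M_f)\asymp\|f\|_{\mathcal L_{p,q}^r}$, and each is also equivalent to $T_f\in\Piabs_r(A^p,A^q)$ together with $H_f\in\Piabs_r(A^p,L^q)$, with $\pi_r(M_f)\asymp\pi_r(T_f)+\pi_r(H_f)$. So it suffices to replace the Hankel condition by the IDA condition. Since $1<q<\infty$ and $f\in\mathcal D_{\Omega,q}$, \cref{thm:intro-hankel} gives $H_f\in\Piabs_r$ if and only if $f\in\mathcal I_{p,q}^r(\Omega)$, with $\pi_r(H_f)\asymp\|f\|_{\mathcal I_{p,q}^r}$.

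The argument then proceeds in order.
\begin{enumerate}[label=\textup{(\alph*)}]
\item Assume $T_f\in\Piabs_r$ and $f\in\mathcal I_{p,q}^r$. The sufficiency half of \cref{thm:intro-hankel} puts $H_f$ in $\Piabs_r$. Then (iii) of \cref{thm:intro-main} holds, so $M_f\in\Piabs_r$ and $\pi_r(M_f)\lesssim\pi_r(T_f)+\pi_r(H_f)\lesssim\pi_r(T_f)+\|f\|_{\mathcal I_{p,q}^r}$.
\item Conversely, assume $M_f\in\Piabs_r$. Then \cref{thm:intro-main} gives both $T_f$ and $H_f$ in $\Piabs_r$, with $\pi_r(T_f)+\pi_r(H_f)\lesssim\pi_r(M_f)$. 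The necessity half of \cref{thm:intro-hankel}, namely \eqref{eq:H-lower-finite}, yields $\|f\|_{\mathcal I_{p,q}^r}\lesssim\pi_r(H_f)\lesssim\pi_r(M_f)$.
\item Combining (a) and (b) with \cref{cor:multiplication} gives \eqref{eq:matching-ida-norm}.
\end{enumerate}

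The IDA bound in (b) has an alternative route that avoids the Hankel lower estimate. Use \eqref{eq:G-less-M-finite} with $s=\rho_+$, which gives $\mathbf G_{p,q,\rho}f\le\mathbf M_{p,q,\rho_+}f$ pointwise. Then compare the $\rho_+$-averages with $\rho$-lattice averages through a bounded-incidence neighbor relation, as in \cref{lem:discretization}, and apply \cref{lem:finite-neighbor}. This gives $\|f\|_{\mathcal I}\lesssim\|f\|_{\mathcal L}$ directly.

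The only point needing care is the meaning of membership for the initial operators on $\Gamma$. Each membership statement refers to an $r$-summing extension, which is unique by density of $\Gamma$. Before the Hankel criterion is invoked, one must check that the extension supplied by \cref{thm:intro-main} is the operator appearing in \cref{thm:intro-hankel}. Both are the unique extension of $H_f|_\Gamma$, so they coincide. Infinite values are handled by the convention $\pi_r=+\infty$, so each inequality remains meaningful in both directions. I do not expect a genuine obstacle beyond this bookkeeping.
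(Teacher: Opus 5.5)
Your proof is correct and follows the paper's argument. The implication from $T_f\in\Piabs_r$ and $f\in\mathcal I_{p,q}^r(\Omega)$ to $M_f\in\Piabs_r$ uses the sufficiency half of \cref{thm:intro-hankel} together with $M_f=T_f+H_f$ on $\Gamma$, exactly as you do; for the converse, the paper takes your ``alternative route'' ($T_f=PM_f$, the bound $\G_{q,\rho_+}f\le\M_{q,\rho_+}f$, and the radius comparison of \cref{lem:discretization}) rather than the Hankel lower estimate \eqref{eq:H-lower-finite}, though your primary route through \cref{thm:intro-main} and that lower estimate is also valid.
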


\begin{proof}
By \cref{cor:multiplication}, the second and third conditions are
equivalent and
\begin{equation}\label{eq:matching-M-L}
 \pi_r(M_f)\asymp\|f\|_{\mathcal L_{p,q}^{r}}.
\end{equation}
Assume that $M_f$ is $r$-summing.  On $\Gamma$,
$T_f=PM_f$.  Hence $PM_f$ is the unique extension of $T_f$, and
\[
 \pi_r(T_f)\le\|P\|_{L^q\to L^q}\pi_r(M_f).
\]
Also,
\[
 \G_{q,\rho_+}f(a_j)\le\M_{q,\rho_+}f(a_j).
\]
Solidity and the estimate comparing radii in
\cref{lem:discretization} give
\[
 \|f\|_{\mathcal I_{p,q}^{r}}
 \lesssim\|f\|_{\mathcal L_{p,q}^{r}}
 \lesssim\pi_r(M_f).
\]
Therefore
\begin{equation}\label{eq:matching-upper}
 \pi_r(T_f)+\|f\|_{\mathcal I_{p,q}^{r}}
 \lesssim\pi_r(M_f).
\end{equation}
Thus the second condition implies the first.

Conversely, assume the first condition.  By
\cref{thm:intro-hankel},
\[
 H_f\in\Piabs_r(A^p,L^q),
 \qquad
 \pi_r(H_f)\lesssim\|f\|_{\mathcal I_{p,q}^{r}}.
\]
On $\Gamma$,
\[
 M_f=T_f+H_f,
\]
so the sum of the two extensions is an $r$-summing extension of $M_f$.
Thus
\begin{equation}\label{eq:matching-lower}
 \pi_r(M_f)
 \le\pi_r(T_f)+C\|f\|_{\mathcal I_{p,q}^{r}}.
\end{equation}
This proves that the first condition implies the second.  The second and
third conditions are equivalent by \eqref{eq:matching-M-L}.  Finally,
\eqref{eq:matching-upper}, \eqref{eq:matching-lower}, and
\eqref{eq:matching-M-L} give \eqref{eq:matching-ida-norm}.
\end{proof}

\section{Little Hankel operators}\label{sec:little}

We study the little Hankel operator and its multiplication graph
separately. We give a scalar necessary condition for the little Hankel
operator. The joint criterion follows from
\cref{cor:multiplication} and the boundedness of $\overline P$.
The examples of operators with finite rank below show why these two
statements have different conclusions.

Janson studied Hankel operators between weighted Bergman spaces
\cite{Janson1988}. Generalized Toeplitz and little Hankel operators
on $A^p$ were studied in \cite{TaskinenVirtanen2018}.
The case of finite-type domains is treated in
\cite{BonamiPelosoSymesak2001}. Recent results on bounded
symmetric domains include \cite{YangYuan2025,Yuan2026}.
The absolutely summing ball case is treated in
\cite{FanHeWangZeng2026}.

Put
\[
 \overline{A^q(\Omega)}
 =\{u\in L^q(\Omega):\overline u\in A^q(\Omega)\}.
\]
It is a complex Banach space with the $L^q$ norm.  For
$1<q<\infty$, define
\begin{equation*}
 \overline P:L^q(\Omega)\longrightarrow\overline{A^q(\Omega)},
 \qquad
 \overline Pu=\overline{P(\overline u)}.
\end{equation*}
This map is complex linear.  It is bounded and
$\|\overline P\|_{L^q\to L^q}=\|P\|_{L^q\to L^q}$.
Moreover, $\overline P^2=\overline P$.  Put
\[
 N_q(\Omega)=\ker\overline P,
 \qquad
 L^q(\Omega)=\overline{A^q(\Omega)}\oplus N_q(\Omega).
\]
The space $N_q(\Omega)$ is closed and has the $L^q$ norm.
For $q=2$, it is the orthogonal complement of
$\overline{A^2(\Omega)}$.  For general $q$, the displayed sum is the
Banach space decomposition given by the bounded projection
$\overline P$.
For $f\in\mathcal D_{\Omega,q}$, define on $\Gamma$
\begin{equation*}
 h_fg=\overline P(fg),
 \qquad
 k_fg=(I-\overline P)(fg).
\end{equation*}
The operator $h_f:\Gamma\to\overline{A^q}$ is the little Hankel
operator.  The auxiliary operator $k_f:\Gamma\to N_q$ gives the
component removed by $\overline P$.  Thus
\[
 M_f=h_f+k_f,\qquad \overline Pk_f=0
 \quad\text{on }\Gamma.
\]
All statements about operator ideals below concern the unique bounded
extensions of these initial operators, when they exist.  Summability
of $h_f$ alone does not imply that $fg\in L^q$ for every $g\in A^p$.
Set
\begin{equation*}
 \mathscr H_fg=(h_fg,k_fg)
 \in\overline{A^q(\Omega)}\oplus_qL^q(\Omega).
\end{equation*}
The second coordinate lies in $N_q$, which we regard as a subspace of
$L^q$.  The equivalence between summability of this pair and of $M_f$
uses only the bounded projection decomposition.  Its symbol condition
comes from \cref{cor:multiplication}.

\begin{theorem}\label{thm:little-graph}
Let $1<p,q<\infty$, $1\le r<\infty$, and
$f\in\mathcal D_{\Omega,q}$.  The following conditions are equivalent:
\begin{enumerate}[label=\textup{(\roman*)}]
\item $M_f\in\Piabs_r(A^p,L^q)$;
\item $\mathscr H_f\in
\Piabs_r(A^p,\overline{A^q}\oplus_qL^q)$;
\item $h_f\in\Piabs_r(A^p,\overline{A^q})$ and
$k_f\in\Piabs_r(A^p,L^q)$;
\item $f\in\mathcal L_{p,q}^{r}(\Omega)$.
\end{enumerate}
Moreover,
\begin{equation}\label{eq:little-graph-norm}
 \pi_r(M_f)
 \asymp\pi_r(\mathscr H_f)
 \asymp\pi_r(h_f)+\pi_r(k_f)
 \asymp\|f\|_{\mathcal L_{p,q}^{r}(\Omega)}.
\end{equation}
Whenever these conditions hold, the four initial operators
$M_f,\mathscr H_f,h_f$, and $k_f$ have unique bounded extensions.  The
comparison constants depend only on $p,q,r$, the finite-type data, and
the fixed admissible geometric parameters.
\end{theorem}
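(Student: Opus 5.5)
The plan is to copy the proof of \cref{thm:intro-main}, with the pair $(P,I-P)$ replaced by $(\overline P,I-\overline P)$. First we set up two bounded maps
\[
 W:L^q(\Omega)\to\overline{A^q(\Omega)}\oplus_qL^q(\Omega),\qquad Wu=(\overline Pu,(I-\overline P)u),
\]
and
\[
 \Sigma:\overline{A^q(\Omega)}\oplus_qL^q(\Omega)\to L^q(\Omega),\qquad \Sigma(u,v)=u+v.
\]
Boundedness of $W$ follows from $\|\overline P\|_{L^q\to L^q}=\|P\|_{L^q\to L^q}<\infty$, which comes from \eqref{eq:intro-Pq}. For $\Sigma$ we have $\|\Sigma\|\le2^{1-1/q}$. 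On $\Gamma$ these maps give the identities $\mathscr H_f=WM_f$ and $M_f=\Sigma\mathscr H_f$.

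The ideal property \eqref{eq:summing-ideal-property} and \cref{lem:summing-calculus}(ii) then give
\begin{align*}
 \pi_r(\mathscr H_f)&\le\|W\|\pi_r(M_f), &
 \pi_r(M_f)&\le\|\Sigma\|\pi_r(\mathscr H_f),\\
 \max\{\pi_r(h_f),\pi_r(k_f)\}&\le\pi_r(\mathscr H_f), &
 \pi_r(\mathscr H_f)&\le\pi_r(h_f)+\pi_r(k_f).
\end{align*}
In the third inequality we use that the coordinate projections have norm one. Here the second coordinate of $\mathscr H_f$ is regarded as taking values in $L^q$, and $N_q$ carries the $L^q$ norm, so nothing is lost in that coordinate. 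These inequalities prove that (i), (ii) and (iii) are equivalent, together with the first two comparisons in \eqref{eq:little-graph-norm}. Finally, \cref{cor:multiplication} gives $\pi_r(M_f)\asymp\|f\|_{\mathcal L_{p,q}^r}$, which yields (i)$\Leftrightarrow$(iv) and the last comparison.

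It remains to identify the extensions, and this is the only real care point; we argue as in the proof of \cref{thm:intro-main}. Suppose first that $h_f$ and $k_f$ extend boundedly. Take $g_m\in\Gamma$ with $g_m\to g$ in $A^p$. Then $h_fg_m+k_fg_m=fg_m$ converges in $L^q$. Local uniform convergence of $g_m$ together with $f\in L^q_{\loc}$ shows that the limit equals $fg$ almost everywhere. Hence the extension of $M_f$ is pointwise multiplication, and $h_f=\overline PM_f$, $k_f=(I-\overline P)M_f$ on all of $A^p$.

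Conversely, suppose $M_f$ extends. Then $\overline PM_f$ and $(I-\overline P)M_f$ are bounded and agree with $h_f$ and $k_f$ on $\Gamma$, so they are the extensions. In the same way $WM_f$ is the extension of $\mathscr H_f$. Uniqueness in every case follows from the density of $\Gamma$ (\cref{lem:kernel-span-density}).

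I expect no serious obstacle. The only subtle point is that the linear structure on $\overline{A^q}$ is the ordinary complex one, so $\overline P$ is complex linear and all the ideal-property arguments apply without change.
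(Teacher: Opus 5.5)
Your proposal is correct and follows the paper's proof essentially step for step. You use the same bounded maps $W=(\overline P,I-\overline P)$ and $\Sigma(u,v)=u+v$, the ideal property together with the coordinate and Minkowski bounds, \cref{cor:multiplication} for (iv), and the argument by local $L^q$ limits to identify the extensions. The paper adds one check you omit: by continuity of $\overline P$, the extended $k_f$ still takes values in $N_q=\ker\overline P$. The theorem as stated does not need this check.
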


\begin{proof}
Define
\[
 W_\ell:L^q\longrightarrow\overline{A^q}\oplus_qL^q,
 \qquad
 W_\ell u=(\overline Pu,(I-\overline P)u),
\]
and
\[
 \Sigma_\ell:\overline{A^q}\oplus_qL^q\longrightarrow L^q,
 \qquad
 \Sigma_\ell(v,w)=v+w.
\]
Since $\overline P$ is bounded on $L^q$,
\begin{align*}
 \|W_\ell u\|_{\overline{A^q}\oplus_qL^q}
 &\le
 \bigl(\|\overline P\|^q+\|I-\overline P\|^q\bigr)^{1/q}
 \|u\|_{L^q},                                      \\
 \|\Sigma_\ell(v,w)\|_{L^q}
 &\le \|v\|_{L^q}+\|w\|_{L^q}
 \le2^{1-1/q}\|(v,w)\|_{\overline{A^q}\oplus_qL^q}.
\end{align*}
Thus both maps are bounded.  On $\Gamma$,
\[
 \mathscr H_f=W_\ell M_f,
 \qquad
 M_f=\Sigma_\ell\mathscr H_f.
\]
If $M_f$ extends, then $W_\ell M_f$ extends $\mathscr H_f$.  If
$\mathscr H_f$ extends, then $\Sigma_\ell\mathscr H_f$ extends $M_f$.
The two coordinate projections have norm one.  Hence
\begin{align*}
 \pi_r(\mathscr H_f)&\le\|W_\ell\|\pi_r(M_f),&
 \pi_r(M_f)&\le\|\Sigma_\ell\|\pi_r(\mathscr H_f),\\
 \pi_r(h_f)&\le\pi_r(\mathscr H_f),&
 \pi_r(k_f)&\le\pi_r(\mathscr H_f).
\end{align*}
Conversely, assume that $h_f$ and $k_f$ have absolutely $r$-summing
extensions.  For
$g_1,\ldots,g_N\in A^p$, Minkowski's inequality gives
\begin{align*}
 \left(\sum_{\nu=1}^N
 \|\mathscr H_fg_\nu\|_{\overline{A^q}\oplus_qL^q}^{\,r}\right)^{1/r}
 &\le
 \left(\sum_{\nu=1}^N\|h_fg_\nu\|_{L^q}^{\,r}\right)^{1/r}+
 \left(\sum_{\nu=1}^N\|k_fg_\nu\|_{L^q}^{\,r}\right)^{1/r}.
\end{align*}
The last expression is at most
\[
 \bigl(\pi_r(h_f)+\pi_r(k_f)\bigr)
 w_r(g_1,\ldots,g_N).
\]
Therefore
\[
 \frac12\bigl(\pi_r(h_f)+\pi_r(k_f)\bigr)
 \le\pi_r(\mathscr H_f)
 \le\pi_r(h_f)+\pi_r(k_f).
\]
Thus (i), (ii), and (iii) are equivalent.  By \cref{cor:multiplication},
\[
 \pi_r(M_f)
 \asymp\|\mathbf M_{p,q,\rho}f\|_{\dideal}.
\]
This proves the equivalence with (iv) and
\eqref{eq:little-graph-norm}.

It remains to identify the extensions.  Suppose first that $h_f$ and
$k_f$ have bounded extensions.  Let $g_m\in\Gamma$ and $g_m\to g$ in $A^p$.  Put
\[
 u=\lim_{m\to\infty}(h_fg_m+k_fg_m)\quad\text{in }L^q.
\]
Fix $K\Subset\Omega$.  The point evaluation estimate gives
\[
 \sup_{z\in K}|g_m(z)-g(z)|
 \le C_K\|g_m-g\|_{A^p}\longrightarrow0.
\]
Since $f\in L^q_{\loc}$,
\[
 \|fg_m-fg\|_{L^q(K)}
 \le
 \sup_{z\in K}|g_m(z)-g(z)|\,\|f\|_{L^q(K)}
 \longrightarrow0.
\]
On $\Gamma$,
\[
 fg_m=h_fg_m+k_fg_m.
\]
Thus $u=fg$ almost everywhere on $K$.  Let $K$ increase to $\Omega$
through a sequence of compact sets. Then $u=fg$ almost everywhere on
$\Omega$. Hence the extension given by $\Sigma_\ell\mathscr H_f$ is the
pointwise multiplication operator.

Conversely, suppose that $M_f$ extends.  The bounded operators
\[
 \overline PM_f:A^p\to\overline{A^q},
 \qquad
 (I-\overline P)M_f:A^p\to L^q
\]
agree on $\Gamma$ with $h_f$ and $k_f$.  They are their extensions.
Moreover, $W_\ell M_f$ extends $\mathscr H_f$.  To identify the
range of the second component, let $g_m\in\Gamma$ tend to $g$ in
$A^p$.  Then
\[
 \overline Pk_fg
 =\lim_{m\to\infty}\overline Pk_fg_m=0.
\]
Thus the extended $k_f$ takes its values in the closed space $N_q$.
Density of $\Gamma$ proves uniqueness of all four extensions.
\end{proof}

The little Hankel operator itself has a different scalar diagonal.
For $f\in\mathcal D_{\Omega,q}$, define
\begin{equation}\label{eq:little-Berezin-transform}
 \mathfrak b f(z)
 =\frac{1}{K(z,z)}
 \int_\Omega f(w)K(w,z)^2\dd v(w).
\end{equation}
The integral is absolutely convergent.  Indeed,
$fK(\cdot,z)\in L^q$ and $K(\cdot,z)\in L^{q'}$.

\begin{theorem}\label{thm:little-necessity}
Let $1<p,q<\infty$, $1\le r<\infty$, and
$f\in\mathcal D_{\Omega,q}$.  If
$h_f\in\Piabs_r(A^p,\overline{A^q})$, then
\begin{equation}\label{eq:little-necessity}
 \{V_j^{1/q-1/p}|\mathfrak b f(a_j)|\}_j\in\dideal,
\end{equation}
and
\begin{equation}\label{eq:little-necessity-norm}
 \|\{V_j^{1/q-1/p}|\mathfrak b f(a_j)|\}\|_{\dideal}
 \lesssim\pi_r(h_f).
\end{equation}
\end{theorem}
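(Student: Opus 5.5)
The plan is to imitate the diagonal extraction of \cref{prop:operator-diagonal}, but with a sampling map adapted to antiholomorphic targets. First I would restrict to one sufficiently separated sublattice, on which the synthesis map $S_p:\ell^p\to A^p$ of \eqref{eq:kernel-synthesis} is bounded, and relabel it as $\N$. On $\overline{A^q}$ I would use the conjugated sampling map
\[
 \overline R_q:\overline{A^q}\to\ell^q,\qquad \overline R_qu=\{V_j^{1/q}u(a_j)\}_j .
\]
Elements of $\overline{A^q}$ are antiholomorphic, so $|u|^q$ is still subharmonic. The submean inequality on the disjoint balls $\Q_{c\rho}(a_j)$ therefore gives $\|\overline R_qu\|_{\ell^q}\lesssim\|u\|_q$, exactly as in \eqref{eq:sampling-map}. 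The map is complex linear, since sampling is linear and no conjugation is applied to the values. Then $B=\overline R_qh_fS_p$ lies in $\Piabs_r(\ell^p,\ell^q)$ with $\pi_r(B)\lesssim\pi_r(h_f)$.

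Next I would extract the diagonal of $B$ by the Rademacher and sign averaging of \eqref{eq:rademacher-diagonal} and \eqref{eq:finite-diagonal-pi}. This gives $\pi_r(D_{(B_{jj})})\le\pi_r(B)$, with the same limiting argument over the first $m$ coordinates. The remaining step is to identify the diagonal entry
\[
 B_{jj}=V_j^{1/q}\,\overline{P(\overline{fe_{j,p}})}(a_j).
\]
Here $e_{j,p}\in\Gamma$ and $fe_{j,p}\in L^q$ because $f\in\mathcal D_{\Omega,q}$. I would apply the reproducing formula to $\overline{fe_{j,p}}$, which is absolutely convergent since $K(\cdot,a_j)\in L^{q'}$, and use $\overline{K(a_j,w)}=K(w,a_j)$. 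This yields
\[
 \overline{P(\overline{fe_{j,p}})}(a_j)
 =\int_\Omega K(w,a_j)f(w)e_{j,p}(w)\dd v(w)
 =K(a_j,a_j)^{1/p}\,\mathfrak bf(a_j).
\]
Combined with $K(a_j,a_j)\asymp V_j^{-1}$, this gives $|B_{jj}|\asymp V_j^{1/q-1/p}|\mathfrak bf(a_j)|$, and solidity yields the bound on one sublattice. Finally I would sum over the finitely many colours from \cref{lem:geometry-package} using the triangle inequality, as at the end of \cref{prop:operator-diagonal}.

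The main obstacle is the reproducing identity for $\overline P$ applied to an $L^q$ function that is not in $L^2$. I would justify it through the pairing identity \eqref{eq:projection-pairing}, together with the radial mean-value trick $P\chi_{a_j}=K(\cdot,a_j)$ from \cref{lem:kernel-span-density}. Concretely, $P(\overline{fe_{j,p}})(a_j)=\langle P(\overline{fe_{j,p}}),\chi_{a_j}\rangle$, because $P(\overline{fe_{j,p}})$ is holomorphic and $\chi_{a_j}$ is radial about $a_j$. Moving $P$ across the pairing turns this into $\langle\overline{fe_{j,p}},K(\cdot,a_j)\rangle$, and conjugating gives the displayed formula. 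A second point to check is that the $r$-summing extension of $h_f$ agrees with $\overline P(f\,\cdot)$ on the atoms; this holds because $e_{j,p}\in\Gamma$, where $h_f$ is defined by that formula.
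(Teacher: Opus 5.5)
Your proposal is correct and is essentially the paper's proof. The paper also restricts to a separated sublattice, forms $B=R_q^\ell h_fS_p$, extracts the diagonal by Rademacher averaging, and sums over the colours. Its sampling map $u\mapsto\{\int_\Omega u\,e_{j,q'}\dd v\}_j$ is, by reproduction, exactly $u\mapsto\{K(a_j,a_j)^{-1/q}u(a_j)\}_j$, which is your $\overline R_q$ up to the comparable factors $V_j^{1/q}\asymp K(a_j,a_j)^{-1/q}$. The paper evaluates the diagonal through the identity $\int_\Omega\overline Pu\,\psi\dd v=\int_\Omega u\psi\dd v$ for $\psi\in A^{q'}$, which is equivalent to your $\chi_{a_j}$-pairing justification of the reproducing formula.
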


\begin{proof}
First restrict the indices to one sufficiently separated sublattice
$J_\ell$.  Relabel $J_\ell$ as $\mathbb N$ when defining the sequence
maps below.  Let
$S_p:\ell^p\to A^p$ be the bounded kernel synthesis map.  For
$u\in\overline{A^q}$, define
\begin{equation*}
 R_q^\ell u
 =\left\{\int_\Omega u(w)e_{j,q'}(w)\dd v(w)\right\}_j.
\end{equation*}
Write $u=\overline h$, where $h\in A^q$.  Reproduction gives
\[
 \int_\Omega u e_{j,q'}\dd v
 =\frac{\overline{h(a_j)}}{K(a_j,a_j)^{1/q}}.
\]
Since $K(a_j,a_j)^{-1/q}\asymp V_j^{1/q}$, the sampling estimate
\eqref{eq:sampling-map} gives
\[
 \|R_q^\ell u\|_{\ell^q}\lesssim\|u\|_{\overline{A^q}}.
\]
Thus $R_q^\ell:\overline{A^q}\to\ell^q$ is bounded.  Consequently,
\[
 B=R_q^\ell h_fS_p:\ell^p\longrightarrow\ell^q
\]
is absolutely $r$-summing and
\[
 \pi_r(B)\lesssim\pi_r(h_f).
\]
Its diagonal is
\begin{align*}
 (Be_j)_j
 &=\int_\Omega h_f(e_{j,p})(w)e_{j,q'}(w)\dd v(w)\notag\\
 &=\int_\Omega f(w)e_{j,p}(w)e_{j,q'}(w)\dd v(w)\notag\\
 &=K(a_j,a_j)^{1/p-1/q}\mathfrak b f(a_j).
\end{align*}
For the second equality, use the projection pairing
\eqref{eq:projection-pairing} with the exponents $q,q'$.
For $u\in L^q$ and $\psi\in A^{q'}$, apply this identity to
$\overline u$ and $\psi$, and use $P\psi=\psi$ to obtain
\begin{align*}
 \int_\Omega\overline Pu\,\psi\dd v
 &=\overline{\int_\Omega P(\overline u)\,\overline\psi\dd v}\\
 &=\overline{\int_\Omega\overline u\,\overline\psi\dd v}
 =\int_\Omega u\psi\dd v.
\end{align*}
Apply this identity with
\[
 u=fe_{j,p},
 \qquad
 \psi=e_{j,q'}.
\]
Both functions lie in the required spaces.  Indeed,
$fe_{j,p}\in L^q$ by the definition of
$\mathcal D_{\Omega,q}$, and $e_{j,q'}\in A^{q'}$.
For the third equality, use
\[
 e_{j,p}(w)e_{j,q'}(w)
 =
 \frac{K(w,a_j)^2}
 {K(a_j,a_j)^{\,1-1/p+1/q}}
\]
and \eqref{eq:little-Berezin-transform}.  Finally,
$K(a_j,a_j)\asymp V_j^{-1}$ gives
\begin{equation}\label{eq:little-diagonal-modulus}
 |(Be_j)_j|
 \asymp V_j^{1/q-1/p}|\mathfrak b f(a_j)|.
\end{equation}

Return now to the original lattice indices.  Define
\[
 d_j^{(\ell)}
 =
 \begin{cases}
 V_j^{1/q-1/p}|\mathfrak b f(a_j)|,&j\in J_\ell,\\
 0,&j\notin J_\ell.
\end{cases}
\]
Coordinate injections and projections have norm one.  Hence zero
extension preserves the norm of the diagonal ideal.  The Rademacher diagonal
extraction in the proof of \cref{prop:operator-diagonal}, followed by
\eqref{eq:little-diagonal-modulus} and solidity, gives
\[
 \|d^{(\ell)}\|_{\mathfrak d_{p,q}^{\,r}}
 \lesssim\pi_r(B)
 \lesssim\pi_r(h_f).
\]
Decompose the full lattice into $N_*$ such sublattices.  Since
\[
 \{V_j^{1/q-1/p}|\mathfrak b f(a_j)|\}_j
 =\sum_{\ell=1}^{N_*}d^{(\ell)},
\]
the triangle inequality proves
\eqref{eq:little-necessity} and
\eqref{eq:little-necessity-norm}.
\end{proof}

\begin{corollary}
\label{cor:little-two-sided}
Let $1<p,q<\infty$, $1\le r<\infty$, and
$f\in\mathcal D_{\Omega,q}\cap\mathcal L_{p,q}^{r}(\Omega)$.  Then
$h_f\in\Piabs_r(A^p,\overline{A^q})$ and
\begin{equation*}
 \left\|\{V_j^{1/q-1/p}|\mathfrak b f(a_j)|\}\right\|_{\dideal}
 \lesssim\pi_r(h_f)
 \lesssim\|f\|_{\mathcal L_{p,q}^{r}(\Omega)}.
\end{equation*}
\end{corollary}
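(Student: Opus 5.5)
The statement splits into a lower and an upper bound, and both are immediate from results already proved. For the lower bound I will invoke \cref{thm:little-necessity}, which needs only that $h_f$ is absolutely $r$-summing and gives
\[
 \|\{V_j^{1/q-1/p}|\mathfrak b f(a_j)|\}\|_{\dideal}\lesssim\pi_r(h_f).
\]
So the substantive step is the upper bound $\pi_r(h_f)\lesssim\|f\|_{\mathcal L_{p,q}^{r}(\Omega)}$, which will also establish membership $h_f\in\Piabs_r(A^p,\overline{A^q})$.

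For the upper bound I will use the multiplication criterion. Since $f\in\mathcal D_{\Omega,q}\cap\mathcal L_{p,q}^{r}(\Omega)$, \cref{cor:multiplication} shows that $M_f$ extends to an absolutely $r$-summing operator $A^p\to L^q$ with $\pi_r(M_f)\asymp\|f\|_{\mathcal L_{p,q}^{r}}$. On $\Gamma$ we have $h_f=\overline P M_f$, and $\overline P:L^q\to\overline{A^q}$ is a bounded complex-linear map with norm $\|P\|_{L^q\to L^q}$. The operator $\overline P M_f$ is therefore a bounded extension of $h_f$. By density of $\Gamma$ (\cref{lem:kernel-span-density}) it is the unique one. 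The ideal property \eqref{eq:summing-ideal-property} then gives
\[
 \pi_r(h_f)\le\|P\|_{L^q\to L^q}\,\pi_r(M_f)\lesssim\|f\|_{\mathcal L_{p,q}^{r}}.
\]
Equivalently, this is the implication (iv)$\Rightarrow$(iii) of \cref{thm:little-graph}, together with its norm comparison \eqref{eq:little-graph-norm}.

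Combining the two displays gives the chain of inequalities. I see no real obstacle. The only point to check is that the operator to which \cref{thm:little-necessity} is applied is the same extension $\overline PM_f$. This holds because membership in $\Piabs_r$ always refers to the unique extension from $\Gamma$. All constants come from \cref{cor:multiplication}, \cref{thm:little-necessity}, and $\|P\|_{L^q\to L^q}$, so they are independent of $f$.
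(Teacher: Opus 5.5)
Your proposal is correct and matches the paper's proof: the upper bound comes from \cref{cor:multiplication}, the identity $h_f=\overline PM_f$ on $\Gamma$ (which makes $\overline PM_f$ the unique extension), and the ideal property with $\|\overline P\|=\|P\|_{L^q\to L^q}$. The lower bound is \eqref{eq:little-necessity-norm} from \cref{thm:little-necessity}, applied to that same extension.
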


\begin{proof}
By \cref{cor:multiplication},
\[
 M_f\in\Piabs_r(A^p,L^q),
 \qquad
 \pi_r(M_f)\asymp\|f\|_{\mathcal L_{p,q}^{r}}.
\]
On $\Gamma$, $h_f=\overline PM_f$.  Thus $\overline PM_f$ is the
unique extension of $h_f$, and
\[
 \pi_r(h_f)
 \le\|\overline P\|\,\pi_r(M_f)
 \lesssim\|f\|_{\mathcal L_{p,q}^{r}}.
\]
The left inequality follows from
\eqref{eq:little-necessity-norm}.
\end{proof}

The next result assumes that the complementary
operator is summing.  It gives a criterion for $h_f$ under this
assumption. It does not characterize $h_f$ by its symbol alone.

\begin{corollary}
\label{cor:little-matching}
Let $1<p,q<\infty$, $1\le r<\infty$, and
$f\in\mathcal D_{\Omega,q}$.  Assume
$k_f\in\Piabs_r(A^p,L^q)$.  Then
\[
 h_f\in\Piabs_r(A^p,\overline{A^q})
 \quad\Longleftrightarrow\quad
 f\in\mathcal L_{p,q}^{r}(\Omega).
\]
Moreover,
\[
 \pi_r(h_f)+\pi_r(k_f)
 \asymp\|f\|_{\mathcal L_{p,q}^{r}(\Omega)}.
\]
\end{corollary}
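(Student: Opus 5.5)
The plan is to reduce both directions to \cref{thm:little-graph}, using the summability of $k_f$ to pass between $h_f$ and $M_f$. On $\Gamma$ we have $M_f=h_f+k_f$, and $h_f=\overline PM_f$, $k_f=(I-\overline P)M_f$ once $M_f$ extends. The hypothesis that $k_f$ is summing removes the only obstruction to recovering $M_f$ from $h_f$.

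For the forward direction, assume $h_f\in\Piabs_r(A^p,\overline{A^q})$. Together with the standing assumption $k_f\in\Piabs_r(A^p,L^q)$, condition (iii) of \cref{thm:little-graph} holds. That theorem then gives $f\in\mathcal L_{p,q}^{r}(\Omega)$ and
\[
 \|f\|_{\mathcal L_{p,q}^{r}}\lesssim\pi_r(h_f)+\pi_r(k_f).
\]
If one prefers to avoid quoting the graph theorem, the same bound follows from Minkowski's inequality on finite families in $\Gamma$ applied to $M_f=h_f+k_f$. This gives $\pi_r(M_f)\le\pi_r(h_f)+\pi_r(k_f)$. Density of $\Gamma$ then extends the estimate to $A^p$, and \cref{cor:multiplication} converts it into the $\mathcal L$-bound.

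For the converse, assume $f\in\mathcal L_{p,q}^{r}(\Omega)$. By \cref{cor:little-two-sided}, or directly by $h_f=\overline PM_f$ and \cref{cor:multiplication}, $h_f$ is $r$-summing with $\pi_r(h_f)\lesssim\|f\|_{\mathcal L_{p,q}^{r}}$. Likewise $k_f=(I-\overline P)M_f$ gives $\pi_r(k_f)\le\|I-\overline P\|\,\pi_r(M_f)\lesssim\|f\|_{\mathcal L_{p,q}^{r}}$. Adding the two bounds yields the upper half of the norm equivalence, and the forward direction supplies the lower half.

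The only delicate point is identifying the extensions: the $r$-summing extension of $h_f$ assumed at the start must coincide with $\overline PM_f$ once $M_f$ is known to extend. This is handled exactly as in the last part of the proof of \cref{thm:little-graph}. Both operators agree on the dense set $\Gamma$ and are bounded, so they coincide. I expect no real obstacle, because the corollary is essentially a restatement of the equivalence of (iii) and (iv) in \cref{thm:little-graph} under the hypothesis on $k_f$.
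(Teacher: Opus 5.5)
Your proposal is correct and is essentially the paper's proof. For the forward direction, the paper adds the two summing extensions to obtain a summing extension of $M_f=h_f+k_f$ with $\pi_r(M_f)\le\pi_r(h_f)+\pi_r(k_f)$ and then applies \cref{cor:multiplication}, which is your direct route; your alternative citation of (iii)$\Rightarrow$(iv) in \cref{thm:little-graph} amounts to the same argument. The converse uses the factorizations $h_f=\overline PM_f$ and $k_f=(I-\overline P)M_f$ exactly as you describe.
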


\begin{proof}
Assume first that $h_f$ is $r$-summing.  On $\Gamma$,
\[
 M_f=h_f+k_f.
\]
The sum of the two extensions is an $r$-summing extension of $M_f$.
Hence \cref{cor:multiplication} gives
\[
 f\in\mathcal L_{p,q}^{r},
 \qquad
 \|f\|_{\mathcal L_{p,q}^{r}}
 \lesssim\pi_r(h_f)+\pi_r(k_f).
\]
Conversely, assume that $f\in\mathcal L_{p,q}^{r}$.  Then
$M_f$ is $r$-summing and
\[
 h_f=\overline PM_f,
 \qquad
 \pi_r(h_f)\le\|\overline P\|\,\pi_r(M_f)
 \lesssim\|f\|_{\mathcal L_{p,q}^{r}}.
\]
The same factorization gives
\[
 \pi_r(k_f)
 \le\|I-\overline P\|\,\pi_r(M_f)
 \lesssim\|f\|_{\mathcal L_{p,q}^{r}}.
\]
The last three estimates prove the equivalence and
the upper and lower norm estimates.
\end{proof}

\begin{example}\label{ex:little-rank-one}
Let $\Omega=\mathbb B_n=\{z\in\C^n:|z|<1\}$ be the unit ball.
Let $1<p<\infty$, set $q=p$, and let
$f=1$.  For a holomorphic polynomial $G$ and a multiindex $\beta$,
rotational orthogonality gives
\[
 \int_{\mathbb B_n}G(w)w^\beta\dd v(w)
 =
 \begin{cases}
 v(\mathbb B_n)G(0),&\beta=0,\\
 0,&\beta\ne0.
 \end{cases}
\]
The expansion in the anti-holomorphic orthonormal basis gives
$\overline PG=G(0)$.  Holomorphic polynomials are dense in $A^p$.
Both $\overline P:L^p\to L^p$ and evaluation at zero are bounded.
Passing to the $A^p$ limit gives
\[
 h_1g=\overline Pg=g(0),
 \qquad g\in A^p(\mathbb B_n).
\]
Consequently,
\[
 k_1g=g-g(0),\qquad H_1g=(I-P)g=0.
\]
Thus the auxiliary operator $k_f$ differs from the big Hankel operator
$H_f$.
The operator $h_1$ has rank one.  It belongs to $\Piabs_r$ for every
$1\le r<\infty$.  In contrast, $M_1:A^p\to L^p$ is neither compact nor
absolutely $r$-summing.
Indeed, choose $a_j\to\partial\mathbb B_n$.  The normalized kernels
$e_{j,p}$ tend weakly to zero.  To verify this, let
$G\in A^{p'}(\mathbb B_n)$ and choose polynomials $G_m\to G$ in
$A^{p'}$.  Then
\[
 \left|\int_{\mathbb B_n}e_{j,p}\overline G\dd v\right|
 =\frac{|G(a_j)|}{K(a_j,a_j)^{1/p'}}
 \le
 \frac{|G_m(a_j)|}{K(a_j,a_j)^{1/p'}}
 +C\|G-G_m\|_{A^{p'}}.
\]
For fixed $m$, the polynomial $G_m$ is bounded on
$\overline{\mathbb B_n}$.  Also,
\[
 K(a_j,a_j)
 =\frac{n!}{\pi^n}(1-|a_j|^2)^{-n-1}\longrightarrow\infty.
\]
Hence
\[
 \limsup_{j\to\infty}
 \left|\int_{\mathbb B_n}e_{j,p}\overline G\dd v\right|
 \le C\|G-G_m\|_{A^{p'}}.
\]
Let $m\to\infty$.  This proves that $(e_{j,p})$ converges weakly to
zero in $A^p$.
On the other hand,
\[
 \|M_1e_{j,p}\|_{L^p}=\|e_{j,p}\|_{A^p}\asymp1.
\]
Thus $M_1$ is not compact.  It is not absolutely $r$-summing by
\cref{lem:summing-calculus}\textup{(iv)}.  Since $h_1$ has rank one,
the identity $M_1=h_1+k_1$ also shows that $k_1$ is not absolutely
$r$-summing.  By
\cref{cor:multiplication}, $1\notin\mathcal L_{p,p}^r$.  Therefore
neither membership in $\mathcal L_{p,p}^r$ nor summability of $M_f$ is
necessary for summability of $h_f$ alone.
\end{example}

The preceding rank-one example belongs to an explicit
family of operators of finite rank.  It also gives a direct check of the
diagonal condition for the little Hankel operator.

\begin{theorem}
\label{thm:little-monomial}
Let $\alpha=(\alpha_1,\ldots,\alpha_n)\in\Nzero^n$ and
$f_\alpha(z)=\overline{z^\alpha}$ on $\mathbb B_n$.  Let
$1<p,q<\infty$ and $1\le r<\infty$.  Then
\[
 h_{f_\alpha}:A^p(\mathbb B_n)\longrightarrow
 \overline{A^q(\mathbb B_n)}
\]
has finite rank.  More precisely,
\begin{equation}\label{eq:little-monomial-rank}
 \operatorname{rank}h_{f_\alpha}
 =\prod_{k=1}^n(\alpha_k+1).
\end{equation}
For every multiindex $\gamma\in\Nzero^n$,
\begin{equation}\label{eq:little-monomial-action}
 h_{\overline{z^\alpha}}(z^\gamma)
 =
 \begin{cases}
 \displaystyle
 \frac{\|z^\alpha\|_{A^2}^2}
 {\|z^{\alpha-\gamma}\|_{A^2}^2}
 \overline{z^{\alpha-\gamma}},
 &0\le\gamma_k\le\alpha_k\quad(1\le k\le n),\\[9pt]
 0,&\text{otherwise}.
 \end{cases}
\end{equation}
Consequently,
\begin{equation}\label{eq:little-monomial-summing}
 h_{f_\alpha}\in\Piabs_1(A^p,\overline{A^q})
 \subset\Piabs_r(A^p,\overline{A^q}).
\end{equation}
More precisely,
\begin{equation}\label{eq:little-monomial-Berezin}
 \mathfrak b f_\alpha(z)
 =c_\alpha(1-|z|^2)^{n+1}\overline{z^\alpha}.
\end{equation}
Here
\begin{equation}\label{eq:little-monomial-constant}
 c_\alpha
 =
 \frac{n!\,(2n+1+|\alpha|)!}
 {(2n+1)!\,(n+|\alpha|)!}>0.
\end{equation}
In particular, $c_0=1$.
\end{theorem}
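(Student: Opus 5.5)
The plan is to compute everything on monomials, using the rotational orthogonality of monomials in $A^2(\mathbb B_n)$. Then I extend from polynomials to $\Gamma$ and to $A^p$ by a finite-rank formula built from bounded Taylor-coefficient functionals.

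\emph{Step 1: action on monomials.} Fix $\gamma$. Since $\overline{z^\alpha}z^\gamma$ is bounded, the pairing identity \eqref{eq:projection-pairing} identifies $P(z^\alpha\overline{z^\gamma})$ through its inner products with monomials $z^\beta$:
\[
 \langle z^\alpha\overline{z^\gamma},z^\beta\rangle=\langle z^\alpha,z^{\beta+\gamma}\rangle .
\]
This vanishes unless $\beta+\gamma=\alpha$. Hence $P(z^\alpha\overline{z^\gamma})=0$ unless $\gamma\le\alpha$ componentwise, and otherwise
\[
 P(z^\alpha\overline{z^\gamma})=\frac{\|z^\alpha\|_{A^2}^2}{\|z^{\alpha-\gamma}\|_{A^2}^2}\,z^{\alpha-\gamma}.
\]
Taking complex conjugates gives \eqref{eq:little-monomial-action}, since $\overline P(\overline{z^\alpha}z^\gamma)=\overline{P(z^\alpha\overline{z^\gamma})}$.

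\emph{Step 2: extension.} Write $c_\gamma(g)=\partial^\gamma g(0)/\gamma!$. By Cauchy's estimate on a small ball and the submean inequality, each $c_\gamma$ is a bounded functional on $A^p$. Define
\[
 Tg=\sum_{\gamma\le\alpha}c_\gamma(g)\,\frac{\|z^\alpha\|_{A^2}^2}{\|z^{\alpha-\gamma}\|_{A^2}^2}\,\overline{z^{\alpha-\gamma}}.
\]
This is a bounded operator $A^p\to\overline{A^q}$ of finite rank. It agrees with $h_{f_\alpha}$ on polynomials. To pass to $\Gamma$, I expand
\[
 K(\cdot,z)=\frac{n!}{\pi^n}\bigl(1-\langle\cdot,z\rangle\bigr)^{-n-1},
\]
whose power series converges uniformly on $\overline{\mathbb B_n}$ for fixed $|z|<1$. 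Then $\overline{z^\alpha}K(\cdot,z)$ is an $L^q$ limit of $\overline{z^\alpha}$ times polynomials. Continuity of $\overline P$ on $L^q$ gives $h_{f_\alpha}=T$ on $\Gamma$, and density of $\Gamma$ (\cref{lem:kernel-span-density}) makes $T$ the unique extension. The vectors $\overline{z^{\alpha-\gamma}}$ for $\gamma\le\alpha$ are distinct antiholomorphic monomials, hence linearly independent. The functionals $c_\gamma$ are independent as well, since $c_\gamma(z^{\gamma'})=\delta_{\gamma\gamma'}$. So the rank equals $\#\{\gamma:\gamma\le\alpha\}=\prod_k(\alpha_k+1)$, which is \eqref{eq:little-monomial-rank}. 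A bounded finite-rank map is nuclear, hence $1$-summing, hence $r$-summing by Pietsch domination as in \cref{prop:diagonal-compact-monotone}. This gives \eqref{eq:little-monomial-summing}.

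\emph{Step 3: Berezin-type transform.} This is the computational core. Write
\[
 K(w,z)^2=\Bigl(\tfrac{n!}{\pi^n}\Bigr)^2\sum_{k\ge0}\binom{k+2n+1}{2n+1}\langle w,z\rangle^k .
\]
Integrate against $\overline{w^\alpha}$ term by term; this is justified by uniform convergence for fixed $z$. By orthogonality only the $w^\alpha$ component of $\langle w,z\rangle^{|\alpha|}$ survives, namely $\frac{|\alpha|!}{\alpha!}w^\alpha\overline{z^\alpha}$. Using $\int_{\mathbb B_n}|w^\alpha|^2\,dv=\pi^n\alpha!/(n+|\alpha|)!$ and $K(z,z)^{-1}=\frac{\pi^n}{n!}(1-|z|^2)^{n+1}$, the factors collapse to
\[
 \mathfrak b f_\alpha(z)=\frac{n!\,(2n+1+|\alpha|)!}{(2n+1)!\,(n+|\alpha|)!}\,(1-|z|^2)^{n+1}\,\overline{z^\alpha}.
\]
This is \eqref{eq:little-monomial-Berezin} with \eqref{eq:little-monomial-constant}, and $c_0=1$ is immediate.

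The main obstacle is bookkeeping rather than ideas. First, the constants in Step 3 must come out exactly. Second, in Step 2 the extension must avoid assuming $M_{f_\alpha}:A^p\to L^q$ is bounded, which can fail when $q>p$. That is why I route the extension through the coefficient functionals $c_\gamma$ rather than through $\overline P M_{f_\alpha}$.
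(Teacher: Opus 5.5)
Your proposal is correct and follows essentially the same route as the paper: the monomial action via rotational orthogonality, a finite-rank operator built from bounded Taylor-coefficient functionals and identified with $h_{f_\alpha}$ on $\Gamma$ by uniform polynomial approximation, nuclearity giving the summing inclusion, and term-by-term integration of the expanded $K(w,z)^2$ to obtain $c_\alpha$. The differences are cosmetic: you conjugate $P(z^\alpha\overline{z^\gamma})$ instead of projecting directly onto the antiholomorphic orthonormal basis, and you group the kernel expansion by total degree with binomial coefficients instead of by multi-indices.
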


\begin{proof}
With Lebesgue volume on $\mathbb B_n$,
\begin{equation}\label{eq:ball-monomial-norm}
 \|z^\beta\|_{A^2(\mathbb B_n)}^2
 =\frac{\pi^n\beta!}{(n+|\beta|)!},
 \qquad
 \beta!=\prod_{k=1}^n\beta_k!.
\end{equation}
The functions
\[
 e_\beta(z)=\frac{z^\beta}{\|z^\beta\|_{A^2}},
 \qquad \beta\in\Nzero^n,
\]
form an orthonormal basis of $A^2(\mathbb B_n)$.  Hence
$(\overline{e_\beta})_\beta$ is an orthonormal basis of
$\overline{A^2(\mathbb B_n)}$.  For two multiindices
$\beta,\gamma$,
\begin{align*}
 \left\langle
 \overline{z^\alpha}z^\gamma,\overline{e_\beta}
 \right\rangle_{L^2}
 &=
 \frac1{\|z^\beta\|_{A^2}}
 \int_{\mathbb B_n}
 \overline{z^\alpha}z^{\gamma+\beta}\dd v(z)\notag\\
 &=
 \begin{cases}
 \displaystyle
 \frac{\|z^\alpha\|_{A^2}^2}{\|z^\beta\|_{A^2}},
 &\alpha=\gamma+\beta,\\[7pt]
 0,&\alpha\ne\gamma+\beta.
 \end{cases}
\end{align*}
The function $\overline{z^\alpha}z^\gamma$ belongs to
$L^2(\mathbb B_n)\cap L^q(\mathbb B_n)$.  On this intersection, the
bounded $L^q$ projection $\overline P$ agrees with the orthogonal
$L^2$ projection.  Hence projection onto the anti-holomorphic basis gives
\eqref{eq:little-monomial-action}.

Let $F(z)=\sum_\gamma\widehat F(\gamma)z^\gamma$ be a polynomial.
Equation \eqref{eq:little-monomial-action} gives
\begin{equation}\label{eq:little-monomial-finite-formula}
 h_{\overline{z^\alpha}}F
 =
 \sum_{0\le\gamma\le\alpha}
 \widehat F(\gamma)
 \frac{\|z^\alpha\|_{A^2}^2}
 {\|z^{\alpha-\gamma}\|_{A^2}^2}
 \overline{z^{\alpha-\gamma}}.
\end{equation}
Denote the right side by $\mathcal R_\alpha F$.
The inequalities between multiindices are coordinatewise.  Each
coefficient functional
\[
 F\longmapsto\widehat F(\gamma)
 =\frac1{\gamma!}\partial^\gamma F(0)
\]
is bounded on $A^p(\mathbb B_n)$.  Indeed, set
$t_0=(4\sqrt n)^{-1}$.  The closed polydisc
$\{z:|z_k|\le t_0\}$ lies in $\frac12\mathbb B_n$.  Cauchy's
formula and the submean estimate give
\begin{align*}
 |\widehat F(\gamma)|
 &\le t_0^{-|\gamma|}
 \sup_{|z_k|\le t_0}|F(z)|\\
 &\le t_0^{-|\gamma|}C_{n,p}\|F\|_{A^p}
 \le C_{\alpha,n,p}\|F\|_{A^p},
 \qquad 0\le\gamma\le\alpha.
\end{align*}
Thus the right side of
\eqref{eq:little-monomial-finite-formula} defines a bounded operator
from $A^p$ into the finite-dimensional space
\[
 \operatorname{span}\{
 \overline{z^\beta}:0\le\beta\le\alpha\}
 \subset\overline{A^q}.
\]
For every polynomial $F$,
\[
 \mathcal R_\alpha F=\overline P(f_\alpha F).
\]
If $G\in\Gamma$, then $G$ extends holomorphically to a neighborhood of
$\overline{\mathbb B_n}$.  Its Taylor polynomials converge uniformly
on $\overline{\mathbb B_n}$.  Let $(G_N)$ be these polynomials.  Since
$f_\alpha$ is bounded,
\[
 \|f_\alpha(G_N-G)\|_{L^q}
 \le\|f_\alpha\|_\infty
 v(\mathbb B_n)^{1/q}\|G_N-G\|_\infty
 \longrightarrow0.
\]
Uniform convergence also gives $G_N\to G$ in $A^p$.  Hence
\begin{align*}
 \mathcal R_\alpha G_N&\longrightarrow\mathcal R_\alpha G
 &&\text{in }\overline{A^q},\\
 \overline P(f_\alpha G_N)&\longrightarrow\overline P(f_\alpha G)
 &&\text{in }\overline{A^q}.
\end{align*}
Since $\mathcal R_\alpha G_N=\overline P(f_\alpha G_N)$,
\[
 \mathcal R_\alpha G=\overline P(f_\alpha G)=h_{f_\alpha}G.
\]
Thus $\mathcal R_\alpha$ agrees with the initial operator on $\Gamma$.
It is its unique bounded extension.

For every $\beta$ with $0\le\beta\le\alpha$, take
$\gamma=\alpha-\beta$ in
\eqref{eq:little-monomial-action}.  The image is a nonzero multiple of
$\overline{z^\beta}$.  These monomials are linearly independent.
Therefore the range has dimension
\[
 \#\{\beta:0\le\beta\le\alpha\}
 =\prod_{k=1}^n(\alpha_k+1).
\]
This proves \eqref{eq:little-monomial-rank}.

The finite formula is a finite sum of rank-one operators:
\[
 h_{\overline{z^\alpha}}
 =\sum_{0\le\gamma\le\alpha}
 \left(
 \widehat{\,\cdot\,}(\gamma)
 \right)
 \otimes
 \left(
 \frac{\|z^\alpha\|_{A^2}^2}
 {\|z^{\alpha-\gamma}\|_{A^2}^2}
 \overline{z^{\alpha-\gamma}}
 \right).
\]
Hence it is nuclear.  Nuclear operators are absolutely $1$-summing.
More explicitly,
\begin{align*}
 \pi_1(h_{\overline{z^\alpha}})
 &\le
 \sum_{0\le\gamma\le\alpha}
 \|\widehat{\,\cdot\,}(\gamma)\|_{(A^p)^*}
 \frac{\|z^\alpha\|_{A^2}^2}
 {\|z^{\alpha-\gamma}\|_{A^2}^2}
 \|z^{\alpha-\gamma}\|_{A^q}\\
 &<\infty.
\end{align*}
The inclusion theorem for summing ideals gives
$\pi_r(h_{\overline{z^\alpha}})
\le\pi_1(h_{\overline{z^\alpha}})$ for $r\ge1$.  This proves
\eqref{eq:little-monomial-summing}.

It remains to calculate the scalar diagonal.  Write
\[
 K(w,z)=\frac{n!}{\pi^n}
 (1-\langle w,z\rangle)^{-n-1}.
\]
The square has the locally uniformly convergent expansion
\begin{equation*}
 K(w,z)^2
 =\sum_{\beta\in\Nzero^n}
 d_\beta w^\beta\overline{z^\beta},
 \qquad
 d_\beta
 =
 \left(\frac{n!}{\pi^n}\right)^2
 \frac{(2n+1+|\beta|)!}{(2n+1)!\,\beta!}>0.
\end{equation*}
For fixed $z\in\mathbb B_n$, group the terms by $|\beta|=k$.
The multinomial formula and Cauchy--Schwarz give
\begin{align*}
 \sum_{|\beta|=k}d_\beta|w^\beta z^\beta|
 &=\left(\frac{n!}{\pi^n}\right)^2
 \frac{(2n+1+k)!}{(2n+1)!\,k!}
 \left(\sum_{j=1}^n|w_jz_j|\right)^k\\
 &\le\left(\frac{n!}{\pi^n}\right)^2
 \frac{(2n+1+k)!}{(2n+1)!\,k!}|z|^k,
 \qquad |w|\le1.
\end{align*}
The sum of the last expression over $k\ge0$ is
$(n!/\pi^n)^2(1-|z|)^{-2n-2}<\infty$.  Hence the series for $K(w,z)^2$
converges absolutely and uniformly for
$w\in\overline{\mathbb B_n}$.  The same bound holds after multiplication by
$\overline{w^\alpha}$.  We may therefore
integrate term by term.  Rotational orthogonality and
\eqref{eq:ball-monomial-norm} give
\begin{align*}
 \int_{\mathbb B_n}\overline{w^\alpha}K(w,z)^2\dd v(w)
 &=d_\alpha\|z^\alpha\|_{A^2}^2\overline{z^\alpha},\\
 K(z,z)&=\frac{n!}{\pi^n}(1-|z|^2)^{-n-1}.
\end{align*}
Put $m=|\alpha|$.  Then
\begin{align*}
 \frac{d_\alpha\|z^\alpha\|_{A^2}^2}
 {n!/\pi^n}
 &=
 \frac{\pi^n}{n!}
 \left(\frac{n!}{\pi^n}\right)^2
 \frac{(2n+1+m)!}{(2n+1)!\,\alpha!}
 \frac{\pi^n\alpha!}{(n+m)!}\\
 &=\frac{n!\,(2n+1+m)!}{(2n+1)!\,(n+m)!}
 =c_\alpha.
\end{align*}
Division by $K(z,z)$ proves
\eqref{eq:little-monomial-Berezin} and
\eqref{eq:little-monomial-constant}.
\end{proof}

\begin{corollary}
\label{cor:little-polynomial}
Let $1<p,q<\infty$ and $1\le r<\infty$.  Let
\[
 f(z)=\sum_{\alpha\in F}\lambda_\alpha\overline{z^\alpha},
\]
where $F\subset\Nzero^n$ is finite.  Then
$h_f:A^p(\mathbb B_n)\to\overline{A^q(\mathbb B_n)}$ is finite rank and
belongs to $\Piabs_r$.  In particular, even when $q=p$, summability of
$h_f$ does not imply $f\in\mathcal L_{p,p}^{r}(\mathbb B_n)$.
\end{corollary}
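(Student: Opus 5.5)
The plan is to reduce to \cref{thm:little-monomial} using linearity of the symbol-to-operator map, and then to use \cref{ex:little-rank-one} as the counterexample.

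First, $f$ is a bounded polynomial in $\overline z$. Hence $f\in L^\infty(\mathbb B_n)$, and so $fK(\cdot,z)\in L^q$ for every $z$. This gives $f\in\mathcal D_{\mathbb B_n,q}$.

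On $\Gamma$ we have $h_fg=\overline P(fg)=\sum_{\alpha\in F}\lambda_\alpha\overline P(\overline{z^\alpha}g)=\sum_{\alpha\in F}\lambda_\alpha h_{f_\alpha}g$, by complex linearity of $\overline P$. By \cref{thm:little-monomial}, each $h_{f_\alpha}$ has a bounded extension of rank $\prod_k(\alpha_k+1)$, and it lies in $\Piabs_1\subset\Piabs_r$. The finite linear combination $\sum_\alpha\lambda_\alpha h_{f_\alpha}$ is therefore a bounded extension of $h_f$. Its range lies in $\operatorname{span}\{\overline{z^\beta}:\beta\le\alpha\text{ for some }\alpha\in F\}$, so its rank is at most $\sum_{\alpha\in F}\prod_k(\alpha_k+1)$. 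Since $\Piabs_r$ is a linear space, this extension is absolutely $r$-summing, with $\pi_r(h_f)\le\sum_\alpha|\lambda_\alpha|\pi_1(h_{f_\alpha})$. It is the unique extension because $\Gamma$ is dense.

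For the final assertion, take $F=\{0\}$ and $\lambda_0=1$, so $f=1$ and $q=p$. The preceding paragraph, or directly \cref{ex:little-rank-one}, shows that $h_1$ has rank one and lies in $\Piabs_r$. \Cref{ex:little-rank-one} also shows that $M_1:A^p\to L^p$ is not compact. By \cref{lem:summing-calculus}(iv) and reflexivity of $A^p$, $M_1$ is therefore not absolutely $r$-summing. \Cref{cor:multiplication} then gives $1\notin\mathcal L^r_{p,p}(\mathbb B_n)$. Note that \cref{cor:multiplication} is stated for the finite-type domain $\Omega$, and the ball is a strongly convex, hence finite-type, smooth domain, so it applies.

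No step is a real obstacle. The only care needed is the bookkeeping about extensions: the finite combination of the extended monomial operators agrees with $h_f$ on $\Gamma$, and so it is the unique bounded extension.
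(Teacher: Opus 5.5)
Your proposal is correct and matches the paper's proof: you use linearity to write $h_f$ as a combination of the monomial operators of \cref{thm:little-monomial}, bound the range by a finite-dimensional span of anti-holomorphic monomials, use subadditivity of $\pi_1$ to get $h_f\in\Piabs_1\subset\Piabs_r$, and obtain the final assertion from \cref{ex:little-rank-one} with $f=1$. The extra bookkeeping is accurate and only makes explicit what the paper leaves implicit: membership $f\in\mathcal D_{\mathbb B_n,q}$, uniqueness of the extension by density of $\Gamma$, and the applicability of \cref{cor:multiplication} on the ball.
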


\begin{proof}
Linearity and \cref{thm:little-monomial} give
\[
 h_f=\sum_{\alpha\in F}\lambda_\alpha
 h_{\overline{z^\alpha}}
\]
and
\[
 \operatorname{ran}h_f
 \subset
 \sum_{\alpha\in F}
 \operatorname{span}\{
 \overline{z^\beta}:0\le\beta\le\alpha\}.
\]
The space on the right is finite dimensional.  Moreover,
\[
 \pi_1(h_f)
 \le\sum_{\alpha\in F}|\lambda_\alpha|
 \pi_1(h_{\overline{z^\alpha}})<\infty.
\]
Thus $h_f\in\Piabs_1\subset\Piabs_r$.  The final assertion follows by
taking $f=1$ and using \cref{ex:little-rank-one}.
\end{proof}

\subsection{Differences of projected operators}

\begin{corollary}\label{thm:projected-perturbation}
\label{cor:projected-closedness}
Let $1<p,q<\infty$, $1\le r<\infty$, and
$f_0,f_1\in\mathcal D_{\Omega,q}$.  Put $d=f_1-f_0$.  Then
\begin{align}
 \pi_r(\mathscr T_{f_1}-\mathscr T_{f_0})
 &\asymp\|d\|_{\mathcal L_{p,q}^{r}},
 \label{eq:Toeplitz-graph-perturbation}\\
 \pi_r(\mathscr H_{f_1}-\mathscr H_{f_0})
 &\asymp\|d\|_{\mathcal L_{p,q}^{r}}.
 \notag
\end{align}
For the big Hankel difference,
\begin{equation}\label{eq:Hankel-perturbation}
 \pi_r(H_{f_1}-H_{f_0})
 \asymp\|d\|_{\mathcal I_{p,q}^{r}}.
\end{equation}
Here the norms are allowed to be infinite.  A finite norm means that the
initial difference on $\Gamma$ has a unique absolutely $r$-summing
extension.  If $d\in\mathcal L_{p,q}^{r}$, each of the four component
differences $T_d,H_d,h_d,k_d$ has summing norm at most
$C\|d\|_{\mathcal L_{p,q}^{r}}$.
Thus convergence of symbols in the displayed seminorms gives
convergence of the corresponding differences in $\pi_r$.
The Hankel statement is unchanged by adding a symbol in
$\Hol(\Omega)\cap\mathcal D_{\Omega,q}$ to $d$.
\end{corollary}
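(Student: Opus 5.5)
The plan is to reduce everything to linearity in the symbol and then apply the earlier criteria to the single symbol $d=f_1-f_0$. First note that $\mathcal D_{\Omega,q}$ is a vector space, so $d\in\mathcal D_{\Omega,q}$. On $\Gamma$ all the initial operators are linear in the symbol:
\[
 M_{f_1}-M_{f_0}=M_d,\qquad \mathscr T_{f_1}-\mathscr T_{f_0}=\mathscr T_d,\qquad
 \mathscr H_{f_1}-\mathscr H_{f_0}=\mathscr H_d,\qquad H_{f_1}-H_{f_0}=H_d .
\]
This holds because $P$ and $\overline P$ are linear on $L^q$ and $fg\in L^q$ for $g\in\Gamma$. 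By the convention that membership in $\Piabs_r$ means existence of a (unique) $r$-summing extension of the initial operator, with $\pi_r=+\infty$ otherwise, the $\pi_r$ of each difference equals $\pi_r$ of the corresponding operator with symbol $d$. Both sides may then be infinite simultaneously.

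Next, \eqref{eq:Toeplitz-graph-perturbation} follows from \cref{thm:intro-main} applied to $d$, which gives $\pi_r(\mathscr T_d)\asymp\|d\|_{\mathcal L^r_{p,q}}$. Note that this theorem is an equivalence of conditions with norm comparison, so if either side is infinite so is the other. The $\mathscr H$ statement follows in the same way from \cref{thm:little-graph}. Finally, \eqref{eq:Hankel-perturbation} is \cref{thm:intro-hankel} applied to $d\in\mathcal D_{\Omega,q}$.

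For the component bounds, assume $d\in\mathcal L^r_{p,q}$. \Cref{cor:multiplication} gives $\pi_r(M_d)\lesssim\|d\|_{\mathcal L^r_{p,q}}$. The factorizations
\[
 T_d=PM_d,\qquad H_d=(I-P)M_d,\qquad h_d=\overline PM_d,\qquad k_d=(I-\overline P)M_d,
\]
identified as the unique extensions in the proofs of \cref{thm:intro-main,thm:little-graph}, together with the ideal property \eqref{eq:summing-ideal-property}, give each bound with $C=\max\{\|P\|,\|I-P\|\}$ times the multiplication constant. Convergence of differences in $\pi_r$ is then immediate: apply the estimates to $d_m=f_m-f$.

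For the last assertion, let $h\in\Hol(\Omega)\cap\mathcal D_{\Omega,q}$. Then $\|d+h\|_{\mathcal I^r_{p,q}}=\|d\|_{\mathcal I^r_{p,q}}$ by \cref{prop:ida-calculus}. On the operator side, $hg\in L^q\cap\Hol(\Omega)=A^q$ for $g\in\Gamma$, so $H_hg=0$ and $H_{d+h}=H_d$ on $\Gamma$.

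The only point requiring care, which is the mild obstacle, is the bookkeeping of infinite values and domains. One must check that the difference of two possibly non-extendable initial operators is still the initial operator of $d$ on $\Gamma$. This holds since all are defined on $\Gamma$ without assuming extension. No new analysis is needed.
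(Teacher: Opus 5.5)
Your proposal is correct and follows the paper's proof essentially step for step. It uses linearity in the symbol on $\Gamma$, applies \cref{thm:intro-main,thm:little-graph,thm:intro-hankel} to $d$, factors the components as $T_d=PM_d$, $H_d=(I-P)M_d$, $h_d=\overline PM_d$, $k_d=(I-\overline P)M_d$ with the ideal property, and uses $H_hg=0$ for holomorphic $h$. The only cosmetic differences are two. You cite \cref{prop:ida-calculus} for $\G_{q,\rho_+}(d+h)=\G_{q,\rho_+}d$, where the paper repeats the bijection-of-approximants argument inline. Your constant for $h_d,k_d$ tacitly uses $\|\overline P\|=\|P\|$ and $\|I-\overline P\|=\|I-P\|$, which holds because complex conjugation is an isometry of $L^q$.
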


\begin{proof}
For every $z\in\Omega$,
\[
 \|dK(\cdot,z)\|_{L^q}
 \le\|f_1K(\cdot,z)\|_{L^q}
   +\|f_0K(\cdot,z)\|_{L^q}<\infty.
\]
Also $d\in L^q_{\loc}$, so $d\in\mathcal D_{\Omega,q}$.  On the
common domain $\Gamma$, linearity gives
\[
 \mathscr T_{f_1}-\mathscr T_{f_0}=\mathscr T_d,
 \qquad
 \mathscr H_{f_1}-\mathscr H_{f_0}=\mathscr H_d,
 \qquad
 H_{f_1}-H_{f_0}=H_d.
\]
Apply \cref{thm:intro-main,thm:little-graph,thm:intro-hankel} to $d$.
These theorems characterize the summing extensions of the operators
on the right. They also give the stated norm comparisons.  This argument
only requires an extension of the difference.  If both original
operators already have bounded extensions, their difference agrees with this
extension on $\Gamma$. Density gives agreement on all of $A^p$.

If $d\in\mathcal L_{p,q}^{r}$, then
$\pi_r(M_d)\lesssim\|d\|_{\mathcal L_{p,q}^{r}}$.  The identities
\[
 T_d=PM_d,\quad H_d=(I-P)M_d,\quad
 h_d=\overline PM_d,\quad k_d=(I-\overline P)M_d
\]
hold first on $\Gamma$ and then on $A^p$ by continuity.  The ideal
property gives, for example,
\[
 \pi_r(T_d)+\pi_r(H_d)
 \le(\|P\|+\|I-P\|)\pi_r(M_d)
 \lesssim\|d\|_{\mathcal L_{p,q}^{r}}.
\]
The same calculation with $\overline P$ proves the other two component
bounds.  All constants are independent of $f_0$ and $f_1$.
Applying these inequalities to $d_\nu=f_\nu-f$ gives
\begin{align*}
 \|d_\nu\|_{\mathcal L_{p,q}^{r}}\longrightarrow0
 &\quad\Longrightarrow\quad
 \pi_r(\mathscr T_{f_\nu}-\mathscr T_f)
 +\pi_r(\mathscr H_{f_\nu}-\mathscr H_f)\longrightarrow0,\\
 \|d_\nu\|_{\mathcal I_{p,q}^{r}}\longrightarrow0
 &\quad\Longrightarrow\quad
 \pi_r(H_{f_\nu}-H_f)\longrightarrow0.
\end{align*}

Finally, let $h\in\Hol(\Omega)\cap\mathcal D_{\Omega,q}$.
For $g\in\Gamma$, $hg\in A^q$, and therefore
\[
 H_hg=(I-P)(hg)=0,
 \qquad H_{d+h}g=H_dg.
\]
In the definition of $\G_{q,\rho_+}$, the map
$a\mapsto a+h$ is a bijection of the holomorphic approximants on
each enlarged polydisc.  Since
$|(d+h)-(a+h)|=|d-a|$, taking the infimum gives
\[
 \G_{q,\rho_+}(d+h)(z)=\G_{q,\rho_+}d(z),
 \qquad z\in\Omega.
\]
Thus the operators and the IDA seminorms are both unchanged.
\end{proof}

\section{Weighted composition operators}\label{sec:composition}

The results of this section follow from \cref{thm:carleson} through
an exact pullback identity. That identity preserves the norm of every
finite family, so it applies directly to the $r$-summing norm.
The geometry enters through the nonisotropic local masses of the
pullback measure. The argument also covers $q=1$.

Pullback methods for boundedness and compactness are standard
\cite{CowenMacCluer1995,CuckovicZhao2004}.
For weakly pseudoconvex domains, see \cite{Zhang2025,Zhang2026}.
Absolutely summing criteria on the disc and half-plane appear in
\cite{HeJreisLefevreLou2024,ChenDongWang2025}.

Let $\varphi:\Omega\to\Omega$ be holomorphic.  Let
$u\in\Hol(\Omega)$.  The weighted composition operator is
\begin{equation*}
 W_{u,\varphi}F=u(F\circ\varphi).
\end{equation*}
Its natural domain as an operator from $A^p$ to $A^q$ is
\begin{equation}\label{eq:weighted-composition-natural-domain}
 \mathcal D(W_{u,\varphi})
 =\{F\in A^p(\Omega):u(F\circ\varphi)\in A^q(\Omega)\}.
\end{equation}
The pointwise formula is well defined for every $F\in\Hol(\Omega)$.
The notation $W_{u,\varphi}:A^p\to A^q$ means that
\eqref{eq:weighted-composition-natural-domain} equals $A^p$ and that
the resulting operator is bounded.
The composition operator $C_\varphi$ is the case $u=1$.
For $u\in A^q(\Omega)$, define the finite positive Borel measure
\begin{equation}\label{eq:weighted-pullback}
 \mu_{u,\varphi}
 =\varphi_*(|u|^qv),
 \qquad
 \mu_{u,\varphi}(E)
 =\int_{\varphi^{-1}(E)}|u(z)|^q\dd v(z).
\end{equation}
Here $\varphi_*$ denotes pushforward of measures.  Thus
\begin{equation}\label{eq:weighted-composition-isometry}
 \|W_{u,\varphi}F\|_{A^q}^q
 =\int_\Omega|F(w)|^q\dd\mu_{u,\varphi}(w).
\end{equation}
The assumption $u\in A^q$ is necessary when
$W_{u,\varphi}:A^p\to A^q$ is bounded.  Indeed,
$W_{u,\varphi}\mathbf1=u$.

\begin{theorem}
\label{thm:weighted-composition}
Let $1<p<\infty$, $1\le q,r<\infty$,
$u\in A^q(\Omega)$, and
$\varphi:\Omega\to\Omega$ be holomorphic.  The following conditions are
equivalent:
\begin{enumerate}[label=\textup{(\roman*)}]
\item $W_{u,\varphi}\in\Piabs_r(A^p(\Omega),A^q(\Omega))$;
\item $J_{\mu_{u,\varphi}}\in
\Piabs_r(A^p(\Omega),L^q(\mu_{u,\varphi}))$;
\item
\begin{equation}\label{eq:weighted-composition-sequence}
 \left\{
 \frac{\mu_{u,\varphi}(\Q_\rho(a_j))^{1/q}}
 {V_j^{1/p}}
 \right\}_j
 \in\mathfrak d_{p,q}^{\,r}.
\end{equation}
\end{enumerate}
Moreover,
\begin{equation}\label{eq:weighted-composition-norm}
 \pi_r(W_{u,\varphi})
 \asymp
 \left\|
 \left\{\frac{\mu_{u,\varphi}(\Q_\rho(a_j))^{1/q}}
 {V_j^{1/p}}\right\}_j
 \right\|_{\mathfrak d_{p,q}^{\,r}}.
\end{equation}
The constants depend only on $p,q,r$, the finite-type data, and the fixed
geometric parameters.  They do not depend on $u$ or $\varphi$.
\end{theorem}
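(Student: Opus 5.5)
The plan is to reduce everything to \cref{thm:carleson} through the pullback identity \eqref{eq:weighted-composition-isometry}. The equivalence of (ii) and (iii), together with the comparison of $\pi_r(J_{\mu_{u,\varphi}})$ with the sequence norm, is exactly \cref{thm:carleson} applied to $\mu=\mu_{u,\varphi}$. This measure is finite, since $u\in A^q$, and hence locally finite; so \cref{thm:carleson} applies with constants independent of $\mu$, and therefore independent of $u$ and $\varphi$. It then remains to prove (i)$\Leftrightarrow$(ii) with the exact identity $\pi_r(W_{u,\varphi})=\pi_r(J_{\mu_{u,\varphi}})$.

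For this, I first compare the domains. A change of variables under $\varphi$ gives, for every $F\in\Hol(\Omega)$,
\[
 \int_\Omega|u|^q|F\circ\varphi|^q\dd v=\int_\Omega|F|^q\dd\mu_{u,\varphi},
\]
with both sides allowed to be infinite. Here $F$ is Borel and nonnegative integrands are allowed. Since $u(F\circ\varphi)$ is holomorphic, this shows $\mathcal D(W_{u,\varphi})=\mathcal D(J_{\mu_{u,\varphi}})$, and the identity $\|W_{u,\varphi}F\|_{A^q}=\|J_{\mu_{u,\varphi}}F\|_{L^q(\mu_{u,\varphi})}$ holds on this common domain. Hence one operator is everywhere defined and bounded exactly when the other is. Next, I define $U:J_\mu(A^p)\to A^q$ by $U(F|_\mu)=u(F\circ\varphi)$. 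This map is well defined and isometric, since two functions that agree $\mu$-a.e. have equal images, by the same identity applied to their difference. It satisfies $W_{u,\varphi}=UJ_\mu$ and $J_\mu=U^{-1}W_{u,\varphi}$ on the range. Alternatively, I apply the norm identity to each member of a finite family $F_1,\ldots,F_N\in A^p$. The two left sides of the summing inequality coincide and the weak $r$-norms are the same source quantity, so $\pi_r(W_{u,\varphi})=\pi_r(J_{\mu_{u,\varphi}})$ directly, with either side infinite simultaneously. This is the argument of \cref{cor:multiplication}, and it avoids any extension issue because here both operators are defined pointwise on their natural domains.

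Combining the two steps gives \eqref{eq:weighted-composition-norm}. The only point needing care is measurability and the change of variables formula for the pushforward. The measure $\mu_{u,\varphi}$ is defined as the pushforward of $|u|^q v$, so $\int g\,\dd\mu_{u,\varphi}=\int g\circ\varphi\,|u|^q\dd v$ for nonnegative Borel $g$ by the definition of pushforward; no Jacobian is needed. I also note that the case $q=1$ requires nothing extra, since \cref{thm:carleson} covers $q=1$. I expect no real obstacle. The mildest subtlety is confirming that boundedness in (i) is the natural-domain notion, which the domain equality above handles.
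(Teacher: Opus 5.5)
Your proposal is correct and follows the paper's proof. The paper uses the same pullback identity \eqref{eq:weighted-composition-isometry} to show that the natural domains and norms agree, then applies it to every finite family to get $\pi_r(W_{u,\varphi})=\pi_r(J_{\mu_{u,\varphi}})$, and finally invokes \cref{thm:carleson} for the finite measure $\mu_{u,\varphi}$, with constants independent of $\mu$.
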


\begin{proof}
First suppose that one of the operators in (i) and (ii) is bounded.
For $F\in A^p$, \eqref{eq:weighted-composition-isometry} gives
\[
 \|W_{u,\varphi}F\|_{A^q}
 =\|J_{\mu_{u,\varphi}}F\|_{L^q(\mu_{u,\varphi})}.
\]
If the right side is finite, then $u(F\circ\varphi)\in A^q$.
If the left side is finite, then $F\in L^q(\mu_{u,\varphi})$.
Hence one operator is bounded on all of $A^p$ if and only if the other is.
Their operator norms are equal.

For every finite family $F_1,\ldots,F_N\in A^p$, the same identity gives
\begin{equation*}
 \left(\sum_{\nu=1}^N
 \|W_{u,\varphi}F_\nu\|_{A^q}^r\right)^{1/r}
 =
 \left(\sum_{\nu=1}^N
 \|J_{\mu_{u,\varphi}}F_\nu\|_{L^q(\mu_{u,\varphi})}^r
 \right)^{1/r}.
\end{equation*}
The weak $r$-norm on the right side of the summing inequality is the
same for both operators.  Hence
\[
 \pi_r(W_{u,\varphi})
 =\pi_r(J_{\mu_{u,\varphi}}).
\]
Apply \cref{thm:carleson}.  This proves the equivalences and
\eqref{eq:weighted-composition-norm}.  The pointwise formula determines
the extension uniquely.
\end{proof}

\begin{corollary}\label{cor:composition}
Let $1<p<\infty$, $1\le q,r<\infty$, and
$\varphi:\Omega\to\Omega$ be holomorphic.  Put
\[
 \mu_\varphi(E)=v(\varphi^{-1}(E)).
\]
Then
\begin{equation}\label{eq:composition-criterion}
 C_\varphi\in\Piabs_r(A^p,A^q)
 \quad\Longleftrightarrow\quad
 \left\{\frac{v(\varphi^{-1}(\Q_\rho(a_j)))^{1/q}}
 {V_j^{1/p}}\right\}_j
 \in\mathfrak d_{p,q}^{\,r}.
\end{equation}
Moreover,
\begin{equation*}
 \pi_r(C_\varphi)
 \asymp
 \left\|
 \left\{\frac{v(\varphi^{-1}(\Q_\rho(a_j)))^{1/q}}
 {V_j^{1/p}}\right\}_j
 \right\|_{\mathfrak d_{p,q}^{\,r}}.
\end{equation*}
The constants have the same dependence as in
\cref{thm:weighted-composition}.
\end{corollary}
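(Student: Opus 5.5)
The statement is the special case $u\equiv1$ of \cref{thm:weighted-composition}, so the plan is to reduce directly to that theorem. The only preliminary point is the hypothesis $u\in A^q(\Omega)$. Since $\Omega$ is bounded, the constant function $\mathbf1$ belongs to $A^q(\Omega)$ for every $1\le q<\infty$. For $u=\mathbf1$ we also have $W_{\mathbf1,\varphi}=C_\varphi$, with the same natural domain \eqref{eq:weighted-composition-natural-domain}.

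Next I identify the pullback measure. With $u=\mathbf1$, formula \eqref{eq:weighted-pullback} becomes
\[
 \mu_{\mathbf1,\varphi}(E)=\int_{\varphi^{-1}(E)}\dd v=v(\varphi^{-1}(E))=\mu_\varphi(E)
\]
for every Borel set $E\subset\Omega$. This measure is finite, with $\mu_\varphi(\Omega)\le v(\Omega)$, and it is locally finite. The sequence in \eqref{eq:weighted-composition-sequence} is then exactly
\[
 \bigl\{v(\varphi^{-1}(\Q_\rho(a_j)))^{1/q}V_j^{-1/p}\bigr\}_j,
\]
which is the sequence appearing in \eqref{eq:composition-criterion}.

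The equivalence (i)$\Leftrightarrow$(iii) of \cref{thm:weighted-composition} now gives \eqref{eq:composition-criterion}. The norm comparison \eqref{eq:weighted-composition-norm} gives the stated two-sided estimate for $\pi_r(C_\varphi)$. Its constants do not depend on $u$ or $\varphi$, so here they depend only on $p,q,r$ and the fixed geometry.

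There is no real obstacle. The only point to check is that boundedness of $C_\varphi$ is not assumed in advance. The proof of \cref{thm:weighted-composition} already handles this: through the identity \eqref{eq:weighted-composition-isometry}, $C_\varphi$ is bounded on all of $A^p$ exactly when $J_{\mu_\varphi}$ is. The range $q=1$ is likewise covered by that theorem.
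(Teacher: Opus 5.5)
Your proposal is correct and matches the paper's proof: both note that $\mathbf1\in A^q(\Omega)$ because $\Omega$ is bounded, identify $\mu_{\mathbf1,\varphi}=\mu_\varphi$, and apply \cref{thm:weighted-composition} with $u=\mathbf1$. Your remarks on the domain of $C_\varphi$ and on the case $q=1$ are consistent with how that theorem is proved.
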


\begin{proof}
Since $\Omega$ is bounded, $1\in A^q(\Omega)$.  For $u=1$,
$\mu_{u,\varphi}=\mu_\varphi$.  Apply
\cref{thm:weighted-composition}.
\end{proof}

The pullback formula is essential.  No regularity of
$\varphi^{-1}(\Q_\rho(a_j))$ is required.  For example, if
$\varphi\equiv b\in\Omega$, then
\[
 C_\varphi F=F(b)\mathbf1,
 \qquad
 \mu_\varphi=v(\Omega)\delta_b.
\]
Here $\delta_b$ is the unit point mass at $b$.
Only finitely many coefficients in \eqref{eq:composition-criterion} are
nonzero.  Thus $C_\varphi$ is absolutely $r$-summing for every
$r\ge1$, as also follows from its rank-one form.

Weighted composition operators are closed under operator composition.
The corresponding measure depends on both weights and both self-maps.

\begin{theorem}
\label{thm:weighted-composition-products}
Let $1<p<\infty$, $1\le q,r<\infty$.  Let
$u,w\in\Hol(\Omega)$ and let
$\varphi,\psi:\Omega\to\Omega$ be holomorphic.  Put
\begin{equation*}
 U=w(u\circ\psi),
 \qquad
 \Phi=\varphi\circ\psi.
\end{equation*}
Assume $U\in A^q(\Omega)$.  Then
\begin{equation}\label{eq:weighted-composition-product-identity}
 W_{w,\psi}W_{u,\varphi}=W_{U,\Phi}
\end{equation}
on $\Hol(\Omega)$, and
\begin{equation}\label{eq:weighted-composition-product-criterion}
 W_{w,\psi}W_{u,\varphi}\in\Piabs_r(A^p,A^q)
 \quad\Longleftrightarrow\quad
 \left\{
 \frac{\mu_{U,\Phi}(\Q_\rho(a_j))^{1/q}}
 {V_j^{1/p}}
 \right\}_j\in\mathfrak d_{p,q}^{\,r}.
\end{equation}
The measure is
\begin{equation}\label{eq:weighted-composition-product-measure}
 \mu_{U,\Phi}(E)
 =
 \int_{(\varphi\circ\psi)^{-1}(E)}
 |w(z)|^q|u(\psi(z))|^q\dd v(z).
\end{equation}
Here the left side of
\eqref{eq:weighted-composition-product-identity} is first defined
algebraically on $\Hol(\Omega)$ and then restricted to
$\mathcal P(\Omega)$.  Since $U\in A^q$, this restriction takes values
in $A^q$.  Membership in $\Piabs_r(A^p,A^q)$ means that the restriction
has an absolutely $r$-summing extension to $A^p$.  Under
\eqref{eq:weighted-composition-product-criterion}, the extension is
unique and is given by $F\mapsto U(F\circ\Phi)$ for every $F\in A^p$.
Moreover,
\begin{equation}\label{eq:weighted-composition-product-norm}
 \pi_r(W_{w,\psi}W_{u,\varphi})
 \asymp
 \left\|
 \left\{
 \frac{\mu_{U,\Phi}(\Q_\rho(a_j))^{1/q}}
 {V_j^{1/p}}
 \right\}_j
 \right\|_{\mathfrak d_{p,q}^{\,r}}.
\end{equation}

For $m\in\N$, define
\begin{equation*}
 \varphi^{\circ0}=\mathrm{id}_\Omega,
 \qquad
 u_{[m]}=\prod_{k=0}^{m-1}u\circ\varphi^{\circ k}.
\end{equation*}
If $u_{[m]}\in A^q(\Omega)$, then
\begin{equation}\label{eq:weighted-composition-iterate-identity}
 W_{u,\varphi}^{\,m}
 =W_{u_{[m]},\varphi^{\circ m}},
\end{equation}
and
\begin{equation}\label{eq:weighted-composition-iterate-criterion}
 W_{u,\varphi}^{\,m}\in\Piabs_r(A^p,A^q)
 \quad\Longleftrightarrow\quad
 \left\{
 \frac{\mu_{u_{[m]},\varphi^{\circ m}}
 (\Q_\rho(a_j))^{1/q}}
 {V_j^{1/p}}
 \right\}_j\in\mathfrak d_{p,q}^{\,r}.
\end{equation}
The same convention applies to the power: its algebraic formula is
restricted to $\mathcal P(\Omega)$.  Under
\eqref{eq:weighted-composition-iterate-criterion}, this restriction has
a unique absolutely $r$-summing extension from $A^p$ to $A^q$, and
\begin{equation}\label{eq:weighted-composition-iterate-norm}
 \pi_r(W_{u,\varphi}^{\,m})
 \asymp
 \left\|
 \left\{
 \frac{\mu_{u_{[m]},\varphi^{\circ m}}(\Q_\rho(a_j))^{1/q}}
 {V_j^{1/p}}
 \right\}_j
 \right\|_{\mathfrak d_{p,q}^{\,r}}.
\end{equation}
All implicit constants depend only on $p,q,r$, the finite-type data,
and the fixed geometric parameters.
\end{theorem}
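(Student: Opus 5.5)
The plan is to reduce everything to \cref{thm:weighted-composition} applied to the single weighted composition operator $W_{U,\Phi}$. First I verify the algebraic identity \eqref{eq:weighted-composition-product-identity} pointwise on $\Hol(\Omega)$:
\[
 W_{w,\psi}W_{u,\varphi}F
 =w\cdot\bigl(u(F\circ\varphi)\bigr)\circ\psi
 =w\,(u\circ\psi)\,(F\circ\varphi\circ\psi)
 =U(F\circ\Phi).
\]
Here $\Phi=\varphi\circ\psi$ maps $\Omega$ holomorphically into $\Omega$, and $U\in\Hol(\Omega)$. Since $U\in A^q$ and $\Omega$ is bounded, every polynomial $F\in\mathcal P(\Omega)$ is bounded on $\Omega$. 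Hence $U(F\circ\Phi)\in A^q$, so the restriction to $\mathcal P(\Omega)$ takes values in $A^q$.

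Next I identify the extension. The pullback identity \eqref{eq:weighted-composition-isometry} with $(U,\Phi)$ gives $\|U(F\circ\Phi)\|_{A^q}=\|F\|_{L^q(\mu_{U,\Phi})}$ for every holomorphic $F$. The measure formula \eqref{eq:weighted-composition-product-measure} is just the pushforward definition \eqref{eq:weighted-pullback} unwound, using $|U|^q=|w|^q|u\circ\psi|^q$. So the restriction to $\mathcal P(\Omega)$ coincides, up to this isometric identification, with $J_{\mu_{U,\Phi}}$ restricted to polynomials.

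If the restriction has an absolutely $r$-summing extension, then this extension is in particular bounded. Since $\mathcal P(\Omega)$ is dense in $A^p$ (\cref{prop:domain-ledger}), we get $\|F\|_{L^q(\mu_{U,\Phi})}\lesssim\|F\|_{A^p}$ on polynomials. I then need to pass to all of $A^p$. Take polynomials $F_m\to F$ in $A^p$; they converge locally uniformly, and they are Cauchy in $L^q(\mu_{U,\Phi})$. Fatou's lemma identifies the limit with $F$ and gives $F\in L^q(\mu_{U,\Phi})$. Therefore $J_{\mu_{U,\Phi}}$ is bounded, and the extension is $F\mapsto U(F\circ\Phi)$.

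The summing norms agree because the norm identity holds member-by-member on finite families. Density plus continuity of $w_r$, as in the proof of \cref{thm:intro-hankel}, shows that it suffices to test polynomial families. Conversely, if the sequence condition holds, \cref{thm:carleson} gives a summing $J_{\mu_{U,\Phi}}$, and \cref{thm:weighted-composition} applies directly to $W_{U,\Phi}$. Uniqueness of the extension follows from density. The norm comparison \eqref{eq:weighted-composition-product-norm} is then \eqref{eq:weighted-composition-norm} for $(U,\Phi)$. The main point needing care is this domain issue: the composition factors need not individually be bounded on $A^p$, which is why everything is routed through the single operator $W_{U,\Phi}$ and the Fatou argument rather than through the two factors.

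For iterates, I prove \eqref{eq:weighted-composition-iterate-identity} by induction on $m$ using the product identity. Take $w=u$, $\psi=\varphi$ applied to $W_{u_{[m-1]},\varphi^{\circ(m-1)}}$ and compose on the appropriate side. The cocycle relation
\[
 u_{[m]}=u\cdot\bigl(u_{[m-1]}\circ\varphi\bigr)
\]
holds because $u_{[m-1]}\circ\varphi=\prod_{k=1}^{m-1}u\circ\varphi^{\circ k}$. This is purely algebraic on $\Hol(\Omega)$, so no integrability of intermediate weights is needed. The criterion \eqref{eq:weighted-composition-iterate-criterion} and the norm \eqref{eq:weighted-composition-iterate-norm} then follow from the first part applied with $U=u_{[m]}\in A^q$ and $\Phi=\varphi^{\circ m}$.
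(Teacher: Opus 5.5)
Your proposal is correct and follows essentially the paper's proof. Both verify the algebraic identity, then use the pullback isometry on $\mathcal P(\Omega)$ to identify the restricted product with $J_{\mu_{U,\Phi}}$. Both extend by density and continuity of $w_r$; you identify the $L^q(\mu_{U,\Phi})$-limit by Fatou's lemma, where the paper uses an a.e.-convergent subsequence. Both then invoke \cref{thm:carleson} and \cref{thm:weighted-composition}, and treat the iterates by induction through $u_{[m]}=u\,(u_{[m-1]}\circ\varphi)$.
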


\begin{proof}
For $F\in\Hol(\Omega)$,
\begin{align*}
 W_{w,\psi}W_{u,\varphi}F(z)
 &=w(z)\bigl(W_{u,\varphi}F\bigr)(\psi(z))\\
 &=w(z)u(\psi(z))F(\varphi(\psi(z)))\\
 &=U(z)F(\Phi(z)).
\end{align*}
This proves \eqref{eq:weighted-composition-product-identity}.  For
$F\in\mathcal P(\Omega)$, the function $F$ is bounded on $\Omega$.  Since
$U\in A^q$, we have $U(F\circ\Phi)\in A^q$.  Taking the $A^q$ norm gives
\begin{align*}
 \|W_{w,\psi}W_{u,\varphi}F\|_{A^q}^q
 &=
 \int_\Omega
 |F(\varphi(\psi(z)))|^q
 |w(z)|^q|u(\psi(z))|^q\dd v(z)\\
 &=\int_\Omega|F(\xi)|^q\dd\mu_{U,\Phi}(\xi).
\end{align*}
This proves \eqref{eq:weighted-composition-product-measure}.  Apply the
same identity to $F_1,\ldots,F_N\in\mathcal P(\Omega)$.  Then
\begin{align*}
 &\left(\sum_{\nu=1}^N
 \|W_{w,\psi}W_{u,\varphi}F_\nu\|_{A^q}^r\right)^{1/r}\\
 &\hspace{25mm}=
 \left(\sum_{\nu=1}^N
 \|J_{\mu_{U,\Phi}}F_\nu\|_{L^q(\mu_{U,\Phi})}^r\right)^{1/r}.
\end{align*}
The weak $r$-norm of $(F_\nu)$ is the same on both sides.  Thus the
two initial maps satisfy the same summing inequalities.  The case of
a family with one element gives boundedness.  Density of
$\mathcal P(\Omega)$ in $A^p$ then gives unique bounded extensions.
For a finite family $(F_\nu)\subset A^p$, choose polynomials
$P_{\nu,k}\to F_\nu$ in $A^p$.  The estimate
\[
 w_r(P_{1,k}-F_1,\ldots,P_{N,k}-F_N)
 \le\left(\sum_{\nu=1}^N
 \|P_{\nu,k}-F_\nu\|_{A^p}^r\right)^{1/r}
 \longrightarrow0
\]
passes the summing inequality to the extensions with the same constant.

We also identify the extension into $L^q(\mu_{U,\Phi})$.
If $P_k\to F$ in $A^p$ and the extended inclusion sends $F$ to $h$,
then $P_k\to h$ in $L^q(\mu_{U,\Phi})$.  A subsequence converges to
$h$ almost everywhere for this measure.  On the other hand,
$P_k\to F$ locally uniformly on $\Omega$, hence at every point.
Therefore $h=F$ almost everywhere.  This argument also applies when
the pullback measure is singular or has atoms.  Hence
\[
 \pi_r(W_{w,\psi}W_{u,\varphi})
 =\pi_r(J_{\mu_{U,\Phi}}).
\]
Now use \cref{thm:carleson}.  We obtain
\eqref{eq:weighted-composition-product-criterion} and
\eqref{eq:weighted-composition-product-norm}.  To identify the
extension, choose $F_k\in\mathcal P(\Omega)$ with $F_k\to F$ in $A^p$.
Then $F_k\to F$ locally uniformly, and
\[
 U(F_k\circ\Phi)\longrightarrow U(F\circ\Phi)
\]
locally uniformly.  The bounded extension sends $F_k$ to a sequence
converging in $A^q$, hence locally uniformly, to its value at $F$.
These two limits agree.  Thus the extension has the stated
pointwise formula.

Formula \eqref{eq:weighted-composition-iterate-identity} follows by
induction.  The induction step is
\begin{align*}
 W_{u,\varphi}^{\,m+1}F(z)
 &=u(z)u_{[m]}(\varphi(z))
 F(\varphi^{\circ(m+1)}(z))\\
 &=\left(\prod_{k=0}^{m}
 u(\varphi^{\circ k}(z))\right)
 F(\varphi^{\circ(m+1)}(z)).
\end{align*}
Apply \cref{thm:weighted-composition} with the weight $u_{[m]}$ and the
self-map $\varphi^{\circ m}$.  This gives
\eqref{eq:weighted-composition-iterate-criterion} and
\eqref{eq:weighted-composition-iterate-norm}.
\end{proof}

\begin{corollary}
\label{cor:weighted-product-ideal}
Let $1<p,s,q<\infty$ and $1\le r<\infty$.
If
\[
 W_{u,\varphi}:A^p\to A^s
 \quad\text{is absolutely }r\text{-summing}
\]
and
\[
 W_{w,\psi}:A^s\to A^q
 \quad\text{is bounded},
\]
then
\begin{equation}\label{eq:weighted-product-ideal-bound}
 W_{w,\psi}W_{u,\varphi}\in\Piabs_r(A^p,A^q),
 \qquad
 \pi_r(W_{w,\psi}W_{u,\varphi})
 \le
 \|W_{w,\psi}\|\pi_r(W_{u,\varphi}).
\end{equation}
If $W_{u,\varphi}:A^p\to A^s$ is bounded and
$W_{w,\psi}:A^s\to A^q$ is absolutely $r$-summing, then
\begin{equation}\label{eq:weighted-product-ideal-bound-reverse}
 \pi_r(W_{w,\psi}W_{u,\varphi})
 \le
 \pi_r(W_{w,\psi})\|W_{u,\varphi}\|.
\end{equation}
\end{corollary}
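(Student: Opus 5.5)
The statement is an instance of the ideal property in \cref{lem:summing-calculus}(i). The only point to check is that the product $W_{w,\psi}W_{u,\varphi}$ is the bounded operator obtained by composing the two given operators. It should not be some other extension of an algebraic formula. I therefore take the two operators as given bounded maps and identify their composition.

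First I fix the operators. In the first case, $W_{u,\varphi}:A^p\to A^s$ is an absolutely $r$-summing operator defined on all of $A^p$. By the convention after \eqref{eq:weighted-composition-natural-domain}, it acts by the pointwise formula $F\mapsto u(F\circ\varphi)$. Likewise $W_{w,\psi}:A^s\to A^q$ is bounded on all of $A^s$ and acts by the pointwise formula.

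For $F\in A^p$, the function $G=u(F\circ\varphi)$ lies in $A^s$. Applying $W_{w,\psi}$ to it gives $w(G\circ\psi)=U(F\circ\Phi)$, with $U=w(u\circ\psi)$ and $\Phi=\varphi\circ\psi$, exactly as in \eqref{eq:weighted-composition-product-identity}. So the composition of the two bounded maps is defined on all of $A^p$, takes values in $A^q$, and agrees pointwise with $W_{U,\Phi}$. On $\mathcal P(\Omega)$ it coincides with the restriction used in \cref{thm:weighted-composition-products}. By density of $\mathcal P(\Omega)$, it is the unique bounded extension considered there.

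Next I apply \eqref{eq:summing-ideal-property} with $T=W_{u,\varphi}$, $A=I_{A^p}$ and $B=W_{w,\psi}$. This gives $W_{w,\psi}W_{u,\varphi}\in\Piabs_r(A^p,A^q)$ together with $\pi_r\le\|W_{w,\psi}\|\pi_r(W_{u,\varphi})$, which is \eqref{eq:weighted-product-ideal-bound}.

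For the reverse case I take $T=W_{w,\psi}\in\Piabs_r(A^s,A^q)$, $A=W_{u,\varphi}$ and $B=I_{A^q}$. This yields \eqref{eq:weighted-product-ideal-bound-reverse}. The same identification of the composition applies, because $W_{u,\varphi}$ maps $A^p$ into $A^s$, which is the domain of $W_{w,\psi}$.

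The only real obstacle is the bookkeeping between two descriptions of the product: the composition of two everywhere defined bounded maps, and the extension from $\mathcal P(\Omega)$ used in \cref{thm:weighted-composition-products}. Once the pointwise identity and density are in place, the two coincide. No geometry or Carleson theory is needed, and the constants are exactly one.
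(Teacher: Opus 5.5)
Your proposal is correct and matches the paper's proof: both bounds come directly from the ideal property \eqref{eq:summing-ideal-property} applied to the composition of the two given everywhere-defined operators. For your identification with the extension in \cref{thm:weighted-composition-products}, also record that $U=w(u\circ\psi)=W_{w,\psi}W_{u,\varphi}\mathbf 1\in A^q$, since this is that theorem's hypothesis; the paper notes this as well, and it is what makes the theorem's scalar criterion available for the product.
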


\begin{proof}
Put $A=W_{u,\varphi}$ and $B=W_{w,\psi}$.  If $A\in\Piabs_r$ and $B$ is
bounded, the ideal property gives
\[
 \pi_r(BA)\le\|B\|\pi_r(A).
\]
If $A$ is bounded and $B\in\Piabs_r$, it gives
\[
 \pi_r(BA)\le\pi_r(B)\|A\|.
\]
These are \eqref{eq:weighted-product-ideal-bound} and
\eqref{eq:weighted-product-ideal-bound-reverse}.  In either case,
$U=w(u\circ\psi)=(BA)\mathbf1$ belongs to $A^q$.  Thus
\cref{thm:weighted-composition-products} also gives the scalar
criterion for $BA$.
\end{proof}

\subsection{Continuous pullback criteria}

For a positive locally finite Borel measure $\mu$, define
\begin{equation}\label{eq:continuous-Carleson-density}
 \mathcal C_{p,q,\rho}\mu(z)
 =
 \frac{\mu(\Q_\rho(z))^{1/q}}{\V_\rho(z)^{1/p}},
 \qquad z\in\Omega.
\end{equation}
This is a nonisotropic averaging density.  It is not the
Radon--Nikodym derivative of $\mu$.
At the logarithmic borderline, $L^{q^-}(d\lambda_\Omega)$ denotes the
Orlicz space defined by $\Psi_q$ in \eqref{eq:qminus}.  Its norm is
\begin{equation*}
 \|F\|_{L^{q^-}(d\lambda_\Omega)}
 =\inf\left\{t>0:
 \int_\Omega\Psi_q\left(\frac{|F(z)|}{t}\right)
 \dd\lambda_\Omega(z)\le1\right\}.
\end{equation*}

\begin{lemma}
\label{lem:measure-discrete-continuous}
Let $1<p<\infty$ and $1\le q,r<\infty$.  Let $\mu$ be a positive
locally finite Borel measure on $\Omega$.  If
$\mathfrak d_{p,q}^{\,r}=\ell^\kappa$, then
\begin{equation}\label{eq:measure-discrete-continuous}
 \|\mathcal C_{p,q,\rho}\mu\|_{L^\kappa(d\lambda_\Omega)}
 \asymp
 \left\|
 \left\{\frac{\mu(\Q_\rho(a_j))^{1/q}}{V_j^{1/p}}\right\}_j
 \right\|_{\ell^\kappa}.
\end{equation}
At the logarithmic borderline
$2<p<\infty$, $q=p'<2$, and $1\le r<q$,
\begin{equation}\label{eq:measure-discrete-continuous-Orlicz}
 \|\mathcal C_{p,q,\rho}\mu\|_{L^{q^-}(d\lambda_\Omega)}
 \asymp
 \left\|
 \left\{\frac{\mu(\Q_\rho(a_j))^{1/q}}{V_j^{1/p}}\right\}_j
 \right\|_{\ell^{q^-}}.
\end{equation}
The constants are independent of $\mu$.
\end{lemma}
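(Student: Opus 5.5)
The plan is to compare the continuous quantity $\mathcal C_{p,q,\rho}\mu(z)$ pointwise with the lattice coefficients at nearby lattice points. Then integrate against $d\lambda_\Omega$ cell by cell, using the partition $\Omega=\bigsqcup_jE_j$ from \eqref{eq:partition-cells} with $v(E_j)\asymp V_j$. Write $\beta_j=\mu(\Q_\rho(a_j))^{1/q}V_j^{-1/p}$ and $\beta^C_j=\mu(\Q_{C\rho}(a_j))^{1/q}V_j^{-1/p}$. The proof has an upper bound and a lower bound, each using a different radius change, with \cref{lem:finite-neighbor} and solidity absorbing the difference.

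For the upper bound, take $z\in E_j\subset\Q_{C\rho}(a_j)$. Engulfing gives $\Q_\rho(z)\subset\Q_{C'\rho}(a_j)$ and $\V_\rho(z)\asymp V_j$, so $\mathcal C_{p,q,\rho}\mu(z)\lesssim\beta^{C'}_j$. Since $\lambda_\Omega(E_j)=\int_{E_j}\V_\rho^{-1}\dd v\asymp1$, we get
\[
\int_\Omega\Phi(\mathcal C_{p,q,\rho}\mu/t)\dd\lambda_\Omega\lesssim\sum_j\Phi(C\beta^{C'}_j/t)
\]
for $\Phi(s)=s^\kappa$ or $\Phi=\Psi_q$. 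The constant inside $\Phi$ is removed by the $\Delta_2$-type bound $\Psi_q(At)\le C_{A,q}\Psi_q(t)$ established after \eqref{eq:qminus}. The constant outside is removed by convexity and $\Psi_q(0)=0$, since $\Phi(s/A)\le\Phi(s)/A$ for $A\ge1$. The enlarged radius is then handled as in \eqref{eq:beta-radius}: $\beta^{C'}_j\lesssim\sum_{k\in\mathcal N(j)}\beta_k$ with bounded incidence, and \cref{lem:finite-neighbor} together with the identification $\mathfrak d_{p,q}^{\,r}=\ell^\kappa$ or $\ell^{q^-}$ from \cref{prop:garling} gives the discrete side. In the Orlicz case one can also argue directly, since the Luxemburg norm is solid and the triangle inequality applies to finitely many shifted sequences.

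For the lower bound, use the small disjoint balls $\Q_{c'\rho}(a_j)$ for a suitably small $c'$. If $z\in\Q_{c'\rho}(a_j)$ with $c'$ small, the scale gauge \eqref{eq:polydisc-scale-gauge} and engulfing should give $\Q_\rho(a_j)\subset\Q_{C''\rho}(z)$, so $\beta_j\lesssim\mathcal C_{p,q,C''\rho}\mu(z)$, which is the density at a larger radius. Integrating over the disjoint sets $\Q_{c'\rho}(a_j)$, each of $\lambda_\Omega$-mass $\asymp1$, gives
\[
\sum_j\Phi(\beta_j/t)\lesssim\int\Phi(C\mathcal C_{p,q,C''\rho}\mu/t)\dd\lambda_\Omega.
\]

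The main obstacle is returning from radius $C''\rho$ to $\rho$ in the continuous norm. I would do this discretely. Bound $\mathcal C_{p,q,C''\rho}\mu(z)\lesssim\sum_{k\in\mathcal N(j)}\beta_k$ on $E_j$, exactly as in the upper bound. This yields $\|\mathcal C_{C''\rho}\mu\|\asymp\|\beta\|$ as well, but it alone does not yet compare $\beta$ with the radius-$\rho$ density. To close the loop, apply the discrete equivalence for two different admissible lattices and radii, as in \cref{lem:discretization} adapted to measures via the same covering argument \eqref{eq:two-lattice-cover}. This shows $\|\beta^{(\rho)}\|\asymp\|\beta^{(\rho/C'')}\|$ for lattices at radii $\rho$ and $\rho/C''$. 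Then the lower bound at radius $\rho/C''$ gives $\|\beta^{(\rho/C'')}\|\lesssim\|\mathcal C_{p,q,\rho}\mu\|$. Combining the two directions yields \eqref{eq:measure-discrete-continuous} and \eqref{eq:measure-discrete-continuous-Orlicz}, with constants depending only on the fixed radii, doubling, and the $\Delta_2$ constant of $\Psi_q$, and not on $\mu$.
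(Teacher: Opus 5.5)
Your proof is correct. The upper bound is the same as the paper's: pointwise domination on the cells $E_j$ by $\beta^{C_1}_j$, $\lambda_\Omega(E_j)\asymp1$, then \eqref{eq:beta-radius} and \cref{lem:finite-neighbor}. Your lower bound also passes to a finer admissible lattice whose small balls sit inside $\Q_\rho(z)$, just as the paper does with its radii $\rho_1<\rho_2<\rho$.

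The genuine difference is how you compare the coarse sequence $\beta^{(\rho)}$ with the fine one.
\begin{itemize}
\item The paper applies \cref{thm:carleson} to both lattices and uses $\mathfrak d_{p,q}^{\,r}=\ell^\kappa$ to obtain $\|\beta^{(\rho)}\|_{\ell^\kappa}\asymp\pi_r(J_\mu)\asymp\|\beta^{(\rho_1)}\|_{\ell^\kappa}$.
\item You compare them geometrically, as in the proof of \cref{lem:discretization}. You cover $\Q_\rho(a_j)$ by the boundedly many fine balls meeting it, then use subadditivity of $\mu$, volume comparability, and \cref{lem:finite-neighbor}. Only the direction $\beta^{(\rho)}_j\lesssim\sum_{k\in\mathcal N(j)}\beta^{(\sigma)}_k$ is actually needed.
\end{itemize}

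The paper's route is shorter once the Carleson theorem is available. However, it makes this elementary lemma depend on the nuclear-restriction and kernel-synthesis machinery. It is also the one step where the hypothesis $\mathfrak d_{p,q}^{\,r}=\ell^\kappa$ is genuinely used. Your route is self-contained. It shows the discrete--continuous equivalence holds for every $1\le\kappa<\infty$ and for the Luxemburg norm of $\Psi_q$, independently of Garling's classification. The hypothesis is then needed only to interpret the result.

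Three small remarks.
\begin{itemize}
\item The detour through $\mathcal C_{p,q,C''\rho}\mu$ on the original lattice is unnecessary. The finer-lattice argument gives the lower bound directly.
\item Setting $\sigma=\rho/C''$ tacitly assumes the engulfing constant is uniform in the radius. It is cleaner to fix $\sigma$ small enough that $z\in\Q_{c\sigma}(b)$ forces $\Q_\sigma(b)\subset\Q_\rho(z)$, as the paper does.
\item For Luxemburg norms the $\Delta_2$ bound on $\Psi_q$ is not needed. A constant inside $\Psi_q$ is absorbed by rescaling $t$.
\end{itemize}
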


\begin{proof}
Let $(E_j)$ be the cells in \eqref{eq:partition-cells}.  If $z\in E_j$,
engulfing and volume comparison give
\begin{equation*}
 \mathcal C_{p,q,\rho}\mu(z)
 \lesssim
 \frac{\mu(\Q_{C_1\rho}(a_j))^{1/q}}{V_j^{1/p}}
 =:\beta_j^{C_1}(\mu).
\end{equation*}
On each cell, $\V_\rho(z)\asymp V_j$ and $v(E_j)\asymp V_j$.
Thus,
\[
 \lambda_\Omega(E_j)
 =\int_{E_j}\frac{\dd v(z)}{\V_\rho(z)}\asymp1,
 \qquad
 \int_{E_j}|\mathcal C_{p,q,\rho}\mu(z)|^\kappa
 \dd\lambda_\Omega(z)
 \lesssim|\beta_j^{C_1}(\mu)|^\kappa.
\]
Summing over the disjoint cells gives
\begin{equation}\label{eq:measure-continuous-upper-norm}
 \|\mathcal C_{p,q,\rho}\mu\|_{L^\kappa(d\lambda_\Omega)}
 \lesssim
 \|\{\beta_j^{C_1}(\mu)\}\|_{\ell^\kappa}.
\end{equation}
The finite-neighbor estimate \eqref{eq:beta-radius} gives
\begin{equation*}
 \|\{\beta_j^{C_1}(\mu)\}\|_{\ell^\kappa}
 \lesssim
 \left\|\left\{
 \frac{\mu(\Q_\rho(a_j))^{1/q}}{V_j^{1/p}}
 \right\}_j\right\|_{\ell^\kappa}.
\end{equation*}

We prove the reverse estimate with a finer lattice.  Choose fixed
$0<\rho_1<\rho_2<\rho$ such that
\begin{equation*}
 z\in\Q_{\rho_2}(b)
 \quad\Longrightarrow\quad
 \Q_{\rho_1}(b)\subset\Q_\rho(z).
\end{equation*}
Let $\Lambda'=\{b_k\}$ be an admissible $\rho_1$-lattice.  Put
\[
 V_k'=v(\Q_{\rho_1}(b_k)),
 \qquad
 \beta_k'(\mu)=
 \frac{\mu(\Q_{\rho_1}(b_k))^{1/q}}{(V_k')^{1/p}}.
\]
If $z\in\Q_{\rho_2}(b_k)$, engulfing and volume comparison give
\begin{equation*}
 \beta_k'(\mu)
 \lesssim\mathcal C_{p,q,\rho}\mu(z).
\end{equation*}
The balls $\Q_{\rho_2}(b_k)$ have bounded overlap and
\[
 \lambda_\Omega(\Q_{\rho_2}(b_k))\asymp1.
\]
Thus, with an overlap bound $N_0$,
\begin{align*}
 \sum_k|\beta_k'(\mu)|^\kappa
 &\lesssim\sum_k\int_{\Q_{\rho_2}(b_k)}
 |\mathcal C_{p,q,\rho}\mu(z)|^\kappa\dd\lambda_\Omega(z)\\
 &\le N_0\int_\Omega
 |\mathcal C_{p,q,\rho}\mu(z)|^\kappa\dd\lambda_\Omega(z).
\end{align*}
Taking the $\kappa$th root gives
\begin{equation*}
 \|\{\beta_k'(\mu)\}\|_{\ell^\kappa}
 \lesssim
 \|\mathcal C_{p,q,\rho}\mu\|_{L^\kappa(d\lambda_\Omega)}.
\end{equation*}
Apply \cref{thm:carleson} first with $(\rho,\Lambda)$ and then with
$(\rho_1,\Lambda')$.  In this proof, set $\pi_r(J_\mu)=+\infty$ if the
inclusion has no absolutely $r$-summing extension.  Since
$\mathfrak d_{p,q}^{\,r}=\ell^\kappa$ with equivalent norms,
\begin{equation}\label{eq:measure-two-lattice-comparison}
 \left\|\left\{
 \frac{\mu(\Q_\rho(a_j))^{1/q}}{V_j^{1/p}}
 \right\}_j\right\|_{\ell^\kappa}
 \asymp \pi_r(J_\mu)
 \asymp \|\{\beta_k'(\mu)\}\|_{\ell^\kappa}.
\end{equation}
Equations \eqref{eq:measure-continuous-upper-norm} through \eqref{eq:measure-two-lattice-comparison} prove
\eqref{eq:measure-discrete-continuous}.

At the logarithmic borderline, apply the increasing function
$\Psi_q$ to the same pointwise estimates.  There are fixed constants
$A,B\ge1$ such that, for every $t>0$,
\begin{align*}
 \int_\Omega\Psi_q\left(
 \frac{\mathcal C_{p,q,\rho}\mu(z)}{At}\right)
 \dd\lambda_\Omega(z)
 &\le B\sum_j\Psi_q\left(\frac{\beta_j^{C_1}(\mu)}t\right),\\
 \sum_k\Psi_q\left(\frac{\beta_k'(\mu)}{At}\right)
 &\le B\int_\Omega\Psi_q\left(
 \frac{\mathcal C_{p,q,\rho}\mu(z)}t\right)
 \dd\lambda_\Omega(z).
\end{align*}
Here the first estimate uses the cell measures.  The second uses the
lower measure bound and finite overlap of the balls in the finer lattice.
Convexity and $\Psi_q(0)=0$ give
\[
 \Psi_q(s/B)\le B^{-1}\Psi_q(s)\qquad(s\ge0).
\]
Thus a modular at most $1$ on either right side gives a modular at most
$1$ on the corresponding left side after replacing $At$ by $ABt$.
Taking infima over $t$ proves
\begin{align*}
 \|\mathcal C_{p,q,\rho}\mu\|_{L^{q^-}(d\lambda_\Omega)}
 &\lesssim
 \|\{\beta_j^{C_1}(\mu)\}\|_{\ell^{q^-}},\\
 \|\{\beta_k'(\mu)\}\|_{\ell^{q^-}}
 &\lesssim
 \|\mathcal C_{p,q,\rho}\mu\|_{L^{q^-}(d\lambda_\Omega)}.
\end{align*}
Now use \eqref{eq:beta-radius}, \cref{lem:finite-neighbor}, and
\cref{thm:carleson} for the two lattices.  Since
$\mathfrak d_{p,q}^{\,r}=\ell^{q^-}$ with equivalent norms, we obtain
\eqref{eq:measure-discrete-continuous-Orlicz}.
\end{proof}

\begin{corollary}
\label{cor:weighted-composition-continuous}
Let the hypotheses of \cref{thm:weighted-composition} hold.
If $\mathfrak d_{p,q}^{\,r}=\ell^\kappa$ in
\eqref{eq:kappa-table}, then
\begin{equation*}
 W_{u,\varphi}\in\Piabs_r(A^p,A^q)
 \quad\Longleftrightarrow\quad
 \mathcal C_{p,q,\rho}\mu_{u,\varphi}
 \in L^\kappa(\Omega,d\lambda_\Omega),
\end{equation*}
and
\[
 \pi_r(W_{u,\varphi})
 \asymp
 \|\mathcal C_{p,q,\rho}\mu_{u,\varphi}
 \|_{L^\kappa(d\lambda_\Omega)}.
\]
At the logarithmic borderline,
\begin{align*}
 W_{u,\varphi}\in\Piabs_r(A^p,A^q)
 &\quad\Longleftrightarrow\quad
 \mathcal C_{p,q,\rho}\mu_{u,\varphi}
 \in L^{q^-}(d\lambda_\Omega),\\
 \pi_r(W_{u,\varphi})
 &\asymp
 \|\mathcal C_{p,q,\rho}\mu_{u,\varphi}
 \|_{L^{q^-}(d\lambda_\Omega)}.
\end{align*}
For $q=1$, let $s_p$ be defined by
\eqref{eq:intro-q1-multiplication}.  Then
\begin{align*}
 W_{u,\varphi}\in\Piabs_r(A^p,A^1)
 &\quad\Longleftrightarrow\quad
 \mathcal C_{p,1,\rho}\mu_{u,\varphi}
 \in L^{s_p}(d\lambda_\Omega),\\
 \pi_r(W_{u,\varphi})
 &\asymp
 \|\mathcal C_{p,1,\rho}\mu_{u,\varphi}
 \|_{L^{s_p}(d\lambda_\Omega)}.
\end{align*}
All constants are independent of $u$ and $\varphi$.
\end{corollary}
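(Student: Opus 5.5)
The plan is to chain three earlier results: \cref{thm:weighted-composition}, \cref{lem:measure-discrete-continuous}, and, for $q=1$, the explicit form \eqref{eq:intro-q1-multiplication} of the diagonal ideal. By \cref{thm:weighted-composition}, $W_{u,\varphi}$ is absolutely $r$-summing from $A^p$ to $A^q$ exactly when $\{\beta_j(\mu_{u,\varphi})\}_j\in\mathfrak d_{p,q}^{\,r}$. It also gives $\pi_r(W_{u,\varphi})\asymp\|\{\beta_j(\mu_{u,\varphi})\}\|_{\mathfrak d_{p,q}^{\,r}}$, with constants independent of $u$ and $\varphi$. Here $\mu_{u,\varphi}=\varphi_*(|u|^qv)$ is a finite, hence locally finite, positive Borel measure, because $u\in A^q$. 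So it is an admissible input for \cref{lem:measure-discrete-continuous}.

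In the power case, \cref{prop:garling} identifies $\mathfrak d_{p,q}^{\,r}$ with $\ell^\kappa$ up to equivalent norms, so the discrete condition is finiteness of the $\ell^\kappa$ norm of the lattice sequence. \eqref{eq:measure-discrete-continuous} turns that norm into $\|\mathcal C_{p,q,\rho}\mu_{u,\varphi}\|_{L^\kappa(d\lambda_\Omega)}$, with constants independent of $\mu$. Chaining the two-sided comparisons gives the norm equivalence. The membership equivalence follows because both sides are finite or infinite together; here I adopt the convention $\pi_r=+\infty$ when no summing extension exists, as the lemma's proof does.

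The logarithmic borderline $2<p<\infty$, $q=p'<2$, $1\le r<q$ is handled the same way. \cref{prop:garling} gives $\mathfrak d_{p,q}^{\,r}=\ell^{q^-}$ with equivalent Luxemburg norms, and I would use \eqref{eq:measure-discrete-continuous-Orlicz} in place of \eqref{eq:measure-discrete-continuous}.

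For $q=1$, \cref{prop:garling} gives $\mathfrak d_{p,1}^{\,r}=\ell^{s_p}$ for every $r$. For $1<p\le2$ and $q=1$ the first line yields the exponent $(1/p'+1/2)^{-1}=2p/(3p-2)$. For $p>2$ and $q=1<p'$ the second line yields $\ell^1$. This matches \eqref{eq:intro-q1-multiplication}, so the $q=1$ statement is the power case with $\kappa=s_p$.

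The only point needing care is that \cref{lem:measure-discrete-continuous} is stated for $1\le q<\infty$ and internally invokes \cref{thm:carleson} on two lattices. Both results include $q=1$, so no reflexivity of the target is used. I expect no real obstacle; the main bookkeeping is confirming that every constant in the chain depends only on $p,q,r$ and the geometry, never on $u$ or $\varphi$.
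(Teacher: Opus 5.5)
Your proposal is correct and follows the paper's own argument. The paper combines \cref{thm:weighted-composition} with \cref{lem:measure-discrete-continuous}, and for $q=1$ it uses the identification $\mathfrak d_{p,1}^{\,r}=\ell^{s_p}$ from \eqref{eq:q1-diagonal}, which you rederived correctly from the first two lines of \eqref{eq:kappa-table}.
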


\begin{proof}
Combine \cref{thm:weighted-composition} with
\cref{lem:measure-discrete-continuous}.  For $q=1$, use
\eqref{eq:q1-diagonal}.
\end{proof}

We next give a class of weighted composition operators that belong to
every summing ideal.
Recall the nuclear norm $\nu_1$ defined in the introduction. Every nuclear
operator is absolutely $1$-summing and
\begin{equation}\label{eq:nuclear-to-summing}
 \pi_1(T)\le\nu_1(T).
\end{equation}

\begin{theorem}
\label{thm:composition-dilation-nuclear}
Let $\Omega=\mathbb B_n$, $0\le s<1$, and
\[
 \varphi_s(z)=sz.
\]
If $1<p<\infty$, $1\le q<\infty$, and $u\in A^q(\mathbb B_n)$, then
\[
 W_{u,\varphi_s}:A^p(\mathbb B_n)\longrightarrow A^q(\mathbb B_n)
\]
is nuclear.  In particular,
\begin{equation}\label{eq:composition-dilation-all-r}
 W_{u,\varphi_s}\in\Piabs_r(A^p,A^q)
 \qquad(1\le r<\infty).
\end{equation}
More precisely,
\begin{equation}\label{eq:composition-dilation-nuclear-bound}
 \nu_1(W_{u,\varphi_s})
 \le C_{n,p,q}\|u\|_{A^q}
 \sum_{m=0}^\infty
 \binom{n+m-1}{m}(m+1)^{(n+1)/p}s^m.
\end{equation}
\end{theorem}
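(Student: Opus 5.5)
We expand $F$ in homogeneous Taylor polynomials and write $W_{u,\varphi_s}$ as an explicit nuclear series. For $F\in A^p(\mathbb B_n)$ write $F=\sum_{\alpha\in\Nzero^n}\widehat F(\alpha)z^\alpha$. Then
\[
 W_{u,\varphi_s}F(z)=u(z)F(sz)=\sum_{\alpha}s^{|\alpha|}\widehat F(\alpha)\,u(z)z^\alpha .
\]
This suggests the representation $W_{u,\varphi_s}=\sum_\alpha s^{|\alpha|}\,\widehat{\,\cdot\,}(\alpha)\otimes(uz^\alpha)$. We then estimate $\sum_\alpha s^{|\alpha|}\|\widehat{\,\cdot\,}(\alpha)\|_{(A^p)^*}\|uz^\alpha\|_{A^q}$ and invoke \eqref{eq:nuclear-to-summing}, together with the inclusion $\Piabs_1\subset\Piabs_r$ from the Pietsch argument of \cref{prop:diagonal-compact-monotone}.

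The target factors are easy: $|z^\alpha|\le1$ on the ball, so $\|uz^\alpha\|_{A^q}\le\|u\|_{A^q}$. The key step is a coefficient bound of the form
\[
 |\widehat F(\alpha)|\le C_{n,p}\,(|\alpha|+1)^{(n+1)/p}\,\frac{\|F\|_{A^p}}{\|z^\alpha\|_{\infty}}\quad\text{or similar},
\]
chosen so that summing over $|\alpha|=m$ produces the factor $\binom{n+m-1}{m}(m+1)^{(n+1)/p}$ in \eqref{eq:composition-dilation-nuclear-bound}. Because $\sup_{\mathbb B_n}|z^\alpha|=\bigl(\alpha^\alpha/|\alpha|^{|\alpha|}\bigr)^{1/2}$ is not bounded below uniformly, I will not use the monomial sup-norm. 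Instead I will go through the homogeneous components. Let $F_m$ be the $m$-homogeneous part of $F$. Integrating over rotations $\zeta\mapsto e^{i\theta}\zeta$ gives $F_m(z)=\frac1{2\pi}\int_0^{2\pi}F(e^{i\theta}z)e^{-im\theta}\dd\theta$, so $F\mapsto F_m$ is contractive on $A^p$. Then $F_m(rz)=r^mF_m(z)$ together with the point evaluation estimate $|G(z)|\lesssim(1-|z|)^{-(n+1)/p}\|G\|_{A^p}$ at radius $r=1-1/(m+1)$ yields
\[
 \sup_{\mathbb B_n}|F_m|\lesssim(m+1)^{(n+1)/p}\|F\|_{A^p}.
\]

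This suggests a cleaner nuclear decomposition that avoids individual monomials. I first write $W_{u,\varphi_s}=\sum_m s^m M_u\Pi_m$, where $\Pi_m F=F_m$. Each $\Pi_m$ maps $A^p$ into the finite-dimensional space $\mathcal H_m$ of $m$-homogeneous polynomials, whose dimension is $\binom{n+m-1}{m}$. It is bounded into $(\mathcal H_m,\|\cdot\|_\infty)$ with norm $\lesssim(m+1)^{(n+1)/p}$, and $M_u:(\mathcal H_m,\|\cdot\|_\infty)\to A^q$ has norm $\le\|u\|_{A^q}$.

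To get a nuclear bound linear in the dimension, I use an Auerbach basis of $(\mathcal H_m,\|\cdot\|_\infty)$. This gives $\nu_1(\mathrm{id}_{\mathcal H_m})\le\dim\mathcal H_m$, hence
\[
 \nu_1(M_u\Pi_m)\le\|u\|_{A^q}\binom{n+m-1}{m}C(m+1)^{(n+1)/p}.
\]
Summing with the weights $s^m$ gives \eqref{eq:composition-dilation-nuclear-bound}, and the series converges since $s<1$.

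It remains to check that the series converges to $W_{u,\varphi_s}$. For $s=0$ the operator is $F\mapsto F(0)u$, which is rank one, so assume $s>0$. The partial sums converge in nuclear norm, hence in operator norm. On polynomials the series agrees with $W_{u,\varphi_s}$ pointwise, and polynomials are dense in $A^p$ by \cref{prop:domain-ledger}. The limit is bounded and agrees pointwise with $u\,(F\circ\varphi_s)$, since both sides converge locally uniformly, so it is the operator. I expect the main obstacle to be the uniform homogeneous-part estimate with the exact exponent $(n+1)/p$, which relies on the optimal choice of radius and on the ball's point evaluation bound $K(z,z)^{1/p}$.
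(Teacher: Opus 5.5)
Your proposal is correct and follows essentially the paper's proof. The shared steps are: rotation-averaged homogeneous projections, contractive on $A^p$; the bound $\|F_m\|_\infty\lesssim(m+1)^{(n+1)/p}\|F\|_{A^p}$ from homogeneity and the point-evaluation estimate near radius $1-1/m$ (the paper takes $1-1/(m+2)$, which also avoids the degenerate radius $0$ at $m=0$); an Auerbach basis bounding the nuclear norm by $\binom{n+m-1}{m}$ times the operator norm; and summation against $s^m$ followed by identification of the limit. The only cosmetic difference is where the Auerbach basis sits: you factor $M_u\circ\mathrm{id}\circ\Pi_m$ through $(\mathcal H_m,\|\cdot\|_\infty)$, while the paper applies it directly to the range of $T_m=u\,Q_m$, and both give the same bound.
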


\begin{proof}
Write the homogeneous expansion
\[
 F=\sum_{m=0}^\infty F_m,
 \qquad
 F_m(e^{it}z)=e^{imt}F_m(z).
\]
The $m$th homogeneous projection is
\begin{equation}\label{eq:homogeneous-projection}
 Q_mF(z)
 =F_m(z)
 =\frac1{2\pi}\int_0^{2\pi}
 F(e^{it}z)e^{-imt}\dd t.
\end{equation}
Rotations preserve Lebesgue volume.  Minkowski's inequality gives
\begin{equation}\label{eq:homogeneous-projection-bound}
 \|Q_mF\|_{A^p}\le\|F\|_{A^p}.
\end{equation}

We need a polynomial bound for the supremum norm of $F_m$.  Put
\[
 r_m=1-\frac1{m+2}.
\]
Since
\[
 r_m^{-m}
 =\left(1+\frac1{m+1}\right)^m\le e,
\]
homogeneity gives
\begin{equation}\label{eq:homogeneous-boundary-reduction}
 \|F_m\|_{L^\infty(\mathbb B_n)}
 \le e\sup_{|z|=r_m}|F_m(z)|.
\end{equation}
The estimate for point evaluations on $A^p(\mathbb B_n)$ is
\[
 |G(z)|
 \le C_{n,p}(1-|z|^2)^{-(n+1)/p}\|G\|_{A^p}.
\]
Apply it to $G=F_m$.  Since
$1-r_m^2\asymp(m+1)^{-1}$, equations
\eqref{eq:homogeneous-projection-bound} and
\eqref{eq:homogeneous-boundary-reduction} yield
\begin{equation}\label{eq:homogeneous-supremum}
 \|F_m\|_\infty
 \le C_{n,p}(m+1)^{(n+1)/p}\|F\|_{A^p}.
\end{equation}

Define
\[
 T_mF=uF_m.
\]
Then
\begin{equation}\label{eq:composition-homogeneous-operator}
 \|T_mF\|_{A^q}
 \le\|u\|_{A^q}\|F_m\|_\infty
 \le C_{n,p}(m+1)^{(n+1)/p}
 \|u\|_{A^q}\|F\|_{A^p}.
\end{equation}
The range of $T_m$ is contained in
\[
 u\mathcal P_m,
\]
where $\mathcal P_m$ is the space of homogeneous polynomials of degree
$m$.  Hence
\begin{equation*}
 \operatorname{rank}T_m
 \le\dim\mathcal P_m
 =\binom{n+m-1}{m}.
\end{equation*}
If a bounded operator $T$ has rank at most $N$, then
\begin{equation}\label{eq:finite-rank-nuclear-bound}
 \nu_1(T)\le N\|T\|.
\end{equation}
Indeed, let $d=\operatorname{rank}T\le N$. Choose an Auerbach basis
$(y_k)_{k=1}^d$ of the range, with coordinate functionals
$(y_k^*)_{k=1}^d$ satisfying $y_k^*(y_\ell)=\delta_{k\ell}$ and
$\|y_k\|=\|y_k^*\|=1$. This gives
\[
 T=\sum_{k=1}^d(y_k^*\circ T)\otimes y_k,
 \qquad
 \|y_k^*\circ T\|\le\|T\|,
 \qquad
 \|y_k\|=1.
\]
Combining \eqref{eq:composition-homogeneous-operator} through \eqref{eq:finite-rank-nuclear-bound} gives
\begin{equation}\label{eq:homogeneous-nuclear-bound}
 \nu_1(T_m)
 \le C_{n,p}
 \binom{n+m-1}{m}(m+1)^{(n+1)/p}\|u\|_{A^q}.
\end{equation}

Since $F_m(sz)=s^mF_m(z)$,
\begin{equation}\label{eq:composition-nuclear-series}
 W_{u,\varphi_s}F
 =uF(s\,\cdot)
 =\sum_{m=0}^\infty s^mT_mF.
\end{equation}
The series of the nuclear norms converges because $s<1$ and the
remaining factors grow polynomially.  To see that it defines a nuclear
operator, choose an Auerbach representation for each $T_m$ as above:
\[
 T_m=\sum_{k=1}^{N_m}x_{m,k}^*\otimes y_{m,k},
 \qquad
 \sum_{k=1}^{N_m}\|x_{m,k}^*\|\,\|y_{m,k}\|
 \le N_m\|T_m\|,
 \quad N_m=\operatorname{rank}T_m.
\]
The double series
$\sum_{m,k}s^m x_{m,k}^*\otimes y_{m,k}$ has a finite sum of the
products of the two norms.  It therefore converges in operator norm
and is a nuclear representation of its limit.  The pointwise
limit below shows that this operator is $W_{u,\varphi_s}$.
Summing
\eqref{eq:homogeneous-nuclear-bound} proves
\eqref{eq:composition-dilation-nuclear-bound}.  More explicitly,
\[
 \sum_{m=0}^{\infty}s^m\nu_1(T_m)
 \le C_{n,p,q}\|u\|_{A^q}
 \sum_{m=0}^{\infty}
 \binom{n+m-1}{m}(m+1)^{(n+1)/p}s^m<\infty.
\]
The partial sums in \eqref{eq:composition-nuclear-series} converge in
$A^q$.  Also,
\[
 \sum_{m=0}^Ns^mF_m(z)\longrightarrow F(sz)
\]
locally uniformly on $\mathbb B_n$.  Since $u$ is locally bounded, the
products converge locally uniformly to $u(z)F(sz)$.  Thus the $A^q$
limit is the pointwise weighted composition function.
Finally, \eqref{eq:nuclear-to-summing} and the inclusion theorem for
absolutely summing ideals give \eqref{eq:composition-dilation-all-r}.
\end{proof}

Strict dilation cannot be replaced by an automorphism when the source
and target exponents agree.

\begin{proposition}\label{prop:automorphism-obstruction}
Let $1<p<\infty$ and let $\varphi$ be an automorphism of
$\mathbb B_n$.  Then
\begin{equation}\label{eq:automorphism-not-summing}
 C_\varphi\notin\Piabs_r(A^p(\mathbb B_n),A^p(\mathbb B_n))
 \qquad(1\le r<\infty).
\end{equation}
More generally, let
\[
 H^\infty(\mathbb B_n)
 =\Hol(\mathbb B_n)\cap L^\infty(\mathbb B_n),
\]
let $u\in H^\infty(\mathbb B_n)$, and assume
\[
 \inf_{z\in\mathbb B_n}|u(z)|>0.
\]
Then
\begin{equation*}
 W_{u,\varphi}\notin\Piabs_r(A^p,A^p).
\end{equation*}
\end{proposition}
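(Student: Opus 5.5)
The approach is to use complete continuity of absolutely summing operators, \cref{lem:summing-calculus}(iv): it suffices to produce a weakly null sequence in $A^p(\mathbb B_n)$ whose images under $W_{u,\varphi}$ stay bounded away from zero in norm. The composition case is the special case $u=1$, so we prove the weighted statement directly. We use the normalized kernels $e_{j,p}=K(\cdot,a_j)/K(a_j,a_j)^{1-1/p}$ with $a_j\to\partial\mathbb B_n$. \cref{ex:little-rank-one} already shows that $(e_{j,p})$ converges weakly to zero in $A^p(\mathbb B_n)$ and that $\|e_{j,p}\|_{A^p}\asymp1$.

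The key step is a lower bound for $\|W_{u,\varphi}e_{j,p}\|_{A^p}$. Since $|u|\ge c>0$, we have
\[
 \|W_{u,\varphi}e_{j,p}\|_{A^p}\ge c\,\|e_{j,p}\circ\varphi\|_{A^p}.
\]
To bound the right side from below, we change variables $w=\varphi(z)$. The real Jacobian of an automorphism is $|J_\C\varphi|^2$, and $|J_\C\varphi^{-1}(w)|^2=(1-|\varphi^{-1}(w)|^2)^{n+1}/(1-|w|^2)^{n+1}\cdot(\text{const})$. The cleaner route is to avoid explicit formulas and argue instead that $C_\varphi$ is invertible on $A^p(\mathbb B_n)$. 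An automorphism extends holomorphically across $\overline{\mathbb B_n}$, so its Jacobian and that of $\varphi^{-1}$ are bounded on $\overline{\mathbb B_n}$. Hence
\[
 \int|F\circ\varphi|^p\dd v=\int|F|^p|J_\C\varphi^{-1}|^2\dd v\le C\|F\|_{A^p}^p,
\]
and the same holds for $\varphi^{-1}$. So $C_\varphi$ is a bounded isomorphism of $A^p$, and
\[
 \|e_{j,p}\circ\varphi\|_{A^p}\ge C^{-1}\|e_{j,p}\|_{A^p}\gtrsim1.
\]

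It then follows that $W_{u,\varphi}e_{j,p}\not\to0$ in norm while $e_{j,p}\rightharpoonup0$. If $W_{u,\varphi}$ were in $\Piabs_r(A^p,A^p)$, \cref{lem:summing-calculus}(iv) would force $\|W_{u,\varphi}e_{j,p}\|\to0$, a contradiction. We also need $W_{u,\varphi}$ to be bounded for the statement to make sense, or at least that any summing extension agrees with the formula. Boundedness holds since $u\in H^\infty$ and $C_\varphi$ is bounded.

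The main (mild) obstacle is the uniform invertibility of $C_\varphi$ on $A^p$. This reduces to the bounded Jacobians of $\varphi^{\pm1}$ on the closed ball, which holds because ball automorphisms are linear fractional maps holomorphic near $\overline{\mathbb B_n}$.
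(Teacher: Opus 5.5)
Your proof is correct and follows essentially the paper's route: both arguments use the bounded Jacobians of $\varphi^{\pm1}$ to show that $W_{u,\varphi}$ is a bounded isomorphism of $A^p(\mathbb B_n)$, and then contradict the complete continuity in \cref{lem:summing-calculus}\textup{(iv)}. The only difference is cosmetic: the paper upgrades complete continuity to compactness via reflexivity and observes that $I_{A^p}=W_{v,\varphi^{-1}}W_{u,\varphi}$ would then be compact, whereas you use the explicit weakly null atoms $e_{j,p}$ from \cref{ex:little-rank-one}, whose images under $W_{u,\varphi}$ have norms bounded below.
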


\begin{proof}
For a fixed ball automorphism, its real Jacobian is bounded above and
below by positive constants.  Indeed, if $a=\varphi^{-1}(0)$, then
\[
 |J_{\mathbb R}\varphi(z)|
 \asymp
 \frac{(1-|a|^2)^{n+1}}{|1-\langle z,a\rangle|^{2n+2}},
\]
and
\[
 1-|a|\le|1-\langle z,a\rangle|\le1+|a|.
\]
Change of variables gives
\begin{equation*}
 \|F\circ\varphi\|_{A^p}^p
 =\int_{\mathbb B_n}|F(w)|^p
 |J_{\mathbb R}\varphi^{-1}(w)|\dd v(w)
 \asymp\|F\|_{A^p}^p.
\end{equation*}
Thus $C_\varphi$ is a bounded isomorphism, with inverse
$C_{\varphi^{-1}}$.

If $u$ is bounded above and away from zero, multiplication by $u$ is a
bounded isomorphism of $A^p$.  Indeed,
\[
 \|M_u\|\le\|u\|_\infty,
 \qquad
 \|M_u^{-1}\|=\|M_{1/u}\|
 \le\left(\inf_{\mathbb B_n}|u|\right)^{-1}.
\]
Hence $W_{u,\varphi}=M_uC_\varphi$ is a bounded
isomorphism.  Its inverse is
\[
 W_{v,\varphi^{-1}},
 \qquad
 v(z)=\frac1{u(\varphi^{-1}(z))}.
\]
Indeed,
\[
 W_{v,\varphi^{-1}}W_{u,\varphi}=I_{A^p}
 =W_{u,\varphi}W_{v,\varphi^{-1}}.
\]
Suppose that $W_{u,\varphi}$ is absolutely $r$-summing.
By \cref{lem:summing-calculus}\textup{(iv)}, it maps weakly convergent
sequences to norm-convergent sequences.  The space $A^p$ is a closed
subspace of the reflexive space $L^p$, so it is reflexive.
Every bounded sequence in $A^p$ therefore has a weakly convergent
subsequence.  Its image under $W_{u,\varphi}$ converges in norm.  Hence
$W_{u,\varphi}$ is compact.
The identity
\[
 I_{A^p}
 =W_{v,\varphi^{-1}}W_{u,\varphi}
\]
would then be compact.  This is impossible because $A^p(\mathbb B_n)$
is infinite dimensional.  Take $u=1$ to obtain
\eqref{eq:automorphism-not-summing}.
\end{proof}

\section{Volterra operators}\label{sec:volterra}

The Volterra results require a radial Littlewood--Paley formula on the
convex domain. We first prove this formula. The derivative identity
then reduces the problem to \cref{thm:carleson} for a weighted
pullback measure. Both steps
remain valid at $q=1$.

For the one-variable Bergman theory, see
\cite{AlemanSiskakis1997}. Absolute summability on the disc is studied
in \cite{JreisLefevre2024}; related Fock-space results appear in
\cite{ChenHeWang2026}.

\subsection{A Littlewood--Paley formula}

Fix $a\in\Omega$.  For $F\in\Hol(\Omega)$, define
\begin{equation*}
 \R_aF(z)
 =\sum_{m=1}^n(z_m-a_m)\frac{\partial F}{\partial z_m}(z).
\end{equation*}
The field $\R_a$ is the radial derivative with center $a$.  The disc
case is classical \cite{AlemanSiskakis1997}.  We give a proof on
$\Omega$.  The proof also covers $q=1$.

\begin{lemma}
\label{lem:radial-LP}
Let $1\le q<\infty$ and $F\in\Hol(\Omega)$.  Then
\begin{equation}\label{eq:radial-LP-membership}
 F\in A^q(\Omega)
 \quad\Longleftrightarrow\quad
 \delta\R_aF\in L^q(\Omega,v).
\end{equation}
In this case, there is a constant $C=C(\Omega,a,q)\ge1$ such that
\begin{equation}\label{eq:radial-LP}
 C^{-1}\|F\|_{A^q}^q
 \le
 |F(a)|^q+\int_\Omega
 \delta(z)^q|\R_aF(z)|^q\dd v(z)
 \le C\|F\|_{A^q}^q.
\end{equation}
\end{lemma}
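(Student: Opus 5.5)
The plan is to use convexity through the radial structure: $\Omega$ is star-shaped with respect to $a$, so every $z\in\Omega$ lies on the segment $t\mapsto a+t(z-a)$, and $\R_aF(a+t(z-a))=t\,\frac{d}{dt}F(a+t(z-a))$. Both inequalities reduce to one-variable statements along rays, after writing volume in polar coordinates centered at $a$: $z=a+s\xi$, $\xi\in S^{2n-1}$, $0<s<R(\xi)$, where $R(\xi)$ is the boundary radius. Because $\partial\Omega$ is smooth and $a$ is interior, $R$ is smooth and bounded above and below, and near the boundary $\delta(a+s\xi)\asymp R(\xi)-s$ uniformly in $\xi$ (transversality of rays from an interior point of a convex domain). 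Thus the problem becomes a uniform family of weighted Hardy inequalities on intervals $(0,R(\xi))$ with weight $s^{2n-1}\dd s$ and distance $R(\xi)-s$.

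\emph{Upper bound.} Split $\Omega$ into a compact core $K\ni a$ and the collar. On the core, $\delta\asymp1$ and Cauchy estimates bound $|\nabla F|$ on $K$ by $\|F\|_{A^q}$ through a slightly larger compact set, and $|F(a)|\lesssim\|F\|_{A^q}$ by the submean inequality. In the collar, the Cauchy estimate on a Euclidean ball of radius $\delta(z)/2$ gives $\delta(z)|\nabla F(z)|\lesssim\bigl(\avg_{B(z,\delta(z)/2)}|F|^q\bigr)^{1/q}$. Since $|z-a|$ is bounded, $\delta|\R_aF|\lesssim\delta|\nabla F|$; raising to the power $q$, integrating, and using Fubini (each point lies in boundedly many balls $B(z,\delta(z)/2)$ with comparable volumes) gives $\int\delta^q|\R_aF|^q\lesssim\|F\|_{A^q}^q$. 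This direction holds for any $F\in A^q$ and any $q\ge1$.

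\emph{Lower bound.} Assume the right side is finite. Along a ray, $F(a+s\xi)=F(a)+\int_0^s\R_aF(a+t\xi)\,t^{-1}\dd t$. On the inner part $s\le s_0$, the function is controlled by its values on a compact set. I plan to avoid the singularity at $t=0$ by bounding $\sup_K|F|$ separately: on a fixed ball around $a$, $F-F(a)$ is controlled by $\R_aF$ via this integral, which is harmless because $\R_aF=O(t)$ near $t=0$, and the submean and Cauchy estimates let us replace pointwise values by $L^q$ averages of $\R_aF$ over a fixed annulus where $\delta\asymp1$. Then, for $s\in(s_0,R)$, apply the one-dimensional Hardy inequality $\int_{s_0}^R|g(s)-g(s_0)|^q\dd s\lesssim\int_{s_0}^R (R-s)^q|g'(s)|^q\dd s$, valid for all $1\le q<\infty$. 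It follows from $\int_{s_0}^s|g'|\le\int_{s_0}^s (R-t)|g'(t)|\,(R-t)^{-1}\dd t$ and the standard Hardy operator bound for the dual weight. Here the constant is uniform because $R(\xi)$ and $s_0$ lie in a fixed range, and the weight $s^{2n-1}$ is comparable to $1$ there. Integrating over $\xi$ gives $\|F\|_{A^q}^q\lesssim|F(a)|^q+\int\delta^q|\R_aF|^q$, so $F\in A^q$. This proves \eqref{eq:radial-LP-membership} and \eqref{eq:radial-LP}.

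\emph{Main obstacle.} I expect the main obstacle to be the Hardy step at $q=1$ together with the starting value $g(s_0)$. At $q=1$, the direct estimate $\int_{s_0}^R\int_{s_0}^s|g'|\dd t\dd s=\int(R-t)|g'(t)|\dd t$ works exactly by Fubini, so that case is in fact easiest. For $q>1$, the weighted Hardy inequality with weight $(R-t)^q$ against measure $\dd s$ is the standard one with exponent $q$ and $\beta=q>q-1$, which is the admissible range. The remaining care is in justifying the uniform comparability $\delta(a+s\xi)\asymp R(\xi)-s$. This comes from the existence of a cone condition at each boundary point seen from $a$: by convexity, the ball $B(a,\delta(a))$ and the boundary point span a cone inside $\Omega$, which gives $\delta(a+s\xi)\ge c(R(\xi)-s)$. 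The reverse inequality $\delta\le|a+s\xi-(a+R\xi)|$ is trivial.
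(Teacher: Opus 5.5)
Your proof is correct and follows the paper's overall scheme. Both arguments use rays from $a$ and the convexity cone giving $\delta\asymp R(\xi)-s$ (the paper's \eqref{eq:radial-distance}). Both apply a one-dimensional Hardy inequality along each ray, which is Fubini when $q=1$. Both prove the upper bound by a Cauchy estimate on $B(z,c\,\delta(z))$ followed by Tonelli.

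The real difference is how you control the inner region.
\begin{itemize}
\item The paper parametrizes by boundary points, $z=a+t(\zeta-a)$, with Jacobian factor $(\zeta-a)\cdot\nu_\Omega(\zeta)$, and starts the Hardy step at $t=1/2$. It therefore needs $F-F(a)$ in $L^q$ of surface measure on $\partial\Omega_{1/2}$. It gets this from an $L^q(\Omega_{2/3})$ bound for the radial integration operator $\mathcal J_a$, split at a small $t_0$: a Cauchy estimate near $a$, and Minkowski with dilation for $t\ge t_0$. A submean/Tonelli trace estimate then finishes.
\item You take the inner region to be a small ball $B(a,s_0)$ and bound $\sup_{B(a,s_0)}|F-F(a)|$ pointwise, using that $\R_aF$ vanishes at $a$. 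This is exactly the paper's treatment of the piece $t<t_0$. The Hardy inequality on the longer interval $(s_0,R(\xi))$ then handles everything else. This works because $s^{-1}$ and $s^{2n-1}$ are bounded above and below there, and $R(\xi)-s\asymp\delta$ holds on the whole ray, not only near $\partial\Omega$.
\end{itemize}

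Your route removes the $t\ge t_0$ Minkowski estimate and the trace step. Spherical coordinates also avoid the Jacobian factor, and $R(\xi)$ need only be measurable and bounded, so its smoothness is not needed. The paper's version keeps the inner set a dilate of $\Omega$ to match its boundary parametrization, and pays for this with the extra trace estimate.

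When you write it out, state the $O(t)$ bound quantitatively. With $\overline{B(a,r_1)}\subset\Omega$ and $s_0<r_1/2$, the estimate should read $|\R_aF(a+w)|\lesssim|w|\,\|\R_aF\|_{L^q(B(a,r_1))}$ for $|w|\le r_1/2$. This bound is what makes the inner estimate, and hence the starting values $g(s_0)$, depend only on the right-hand side of \eqref{eq:radial-LP}.
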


\begin{proof}
Put $d=2n$.  Thus $d$ is the real dimension of $\Omega$.
Let $\sigma$ be surface measure on
$\partial\Omega$.  The dot denotes the Euclidean inner product on
$\mathbb R^d$.  Put
\[
 D_\Omega=\operatorname{diam}(\Omega)
 =\sup\{|z-w|:z,w\in\Omega\}.
\]
For $x\in\mathbb R^d$ and $s>0$, let
\[
 B(x,s)=\{y\in\mathbb R^d:|y-x|<s\}.
\]
Choose $\varepsilon_a>0$ such that
\[
 B(a,\varepsilon_a)\subset\Omega.
\]
The supporting hyperplane at $\zeta$ does not meet this ball.  Hence
\begin{equation}\label{eq:radial-angle}
 0<\varepsilon_a
 \le(\zeta-a)\mathbin{\cdot}\nu_\Omega(\zeta)
 \le D_\Omega
 \qquad(\zeta\in\partial\Omega).
\end{equation}
Every $z\ne a$ has a unique representation
\[
 z=\Phi(t,\zeta):=a+t(\zeta-a),
 \qquad 0<t<1,\quad \zeta\in\partial\Omega.
\]
The Jacobian of $\Phi$ is
\begin{equation}\label{eq:radial-Jacobian}
 \dd v(\Phi(t,\zeta))
 =t^{d-1}(\zeta-a)\mathbin{\cdot}\nu_\Omega(\zeta)
 \dd t\,\dd\sigma(\zeta).
\end{equation}
Indeed, each tangent vector is multiplied by $t$.  The normal factor is
$(\zeta-a)\mathbin{\cdot}\nu_\Omega(\zeta)$.

Put $x=\Phi(t,\zeta)$.  Convexity gives
\[
 B(x,(1-t)\varepsilon_a)
 =t\zeta+(1-t)B(a,\varepsilon_a)
 \subset\Omega.
\]
Also,
\[
 \delta(x)\le |x-\zeta|
 =(1-t)|\zeta-a|
 \le(1-t)D_\Omega.
\]
Hence
\begin{equation}\label{eq:radial-distance}
 \varepsilon_a(1-t)
 \le\delta(\Phi(t,\zeta))
 \le D_\Omega(1-t).
\end{equation}

Put $F_\zeta(t)=F(\Phi(t,\zeta))$.  Then
\begin{equation*}
 tF_\zeta'(t)=\R_aF(\Phi(t,\zeta)).
\end{equation*}
For $1/2\le t<1$,
\[
 F_\zeta(t)
 =F_\zeta(1/2)
 +\int_{1/2}^t
 \R_aF(\Phi(s,\zeta))\frac{\dd s}{s}.
\]
Put
\[
 G_\zeta(t)
 =\int_{1/2}^t
 |\R_aF(\Phi(s,\zeta))|\frac{\dd s}{s}.
\]
Suppose first that $q>1$.  Fix $1/2<R<1$ and set
\[
 A_R=\int_{1/2}^R G_\zeta(t)^q\dd t.
\]
Since $G_\zeta(1/2)=0$, integration by parts gives
\begin{align*}
 A_R
 &=q\int_{1/2}^R
 (1-t)G_\zeta(t)^{q-1}G_\zeta'(t)\dd t
 -(1-R)G_\zeta(R)^q\\
 &\le q A_R^{(q-1)/q}
 \left(\int_{1/2}^R
 (1-t)^qG_\zeta'(t)^q\dd t\right)^{1/q}.
\end{align*}
If $A_R>0$, divide by $A_R^{(q-1)/q}$.  The case $A_R=0$ is
clear.  Let $R\uparrow1$.  Since $1/t\le2$ on $[1/2,1)$,
\begin{equation*}
 \int_{1/2}^1G_\zeta(t)^q\dd t
 \le (2q)^q\int_{1/2}^1
 (1-t)^q|\R_aF(\Phi(t,\zeta))|^q\dd t.
\end{equation*}
For $q=1$, Fubini's theorem gives, for $1/2<R<1$,
\begin{align*}
 \int_{1/2}^R G_\zeta(t)\dd t
 &=\int_{1/2}^R
 \frac{R-s}{s}|\R_aF(\Phi(s,\zeta))|\dd s\notag\\
 &\le2\int_{1/2}^R
 (1-s)|\R_aF(\Phi(s,\zeta))|\dd s.
\end{align*}
Letting $R\uparrow1$ proves the same estimate for $q=1$.  It follows
from these two estimates and
$|F_\zeta(t)|\le |F_\zeta(1/2)|+G_\zeta(t)$ that
\begin{align}
 \int_{1/2}^1|F_\zeta(t)|^qt^{d-1}\dd t
 &\lesssim |F_\zeta(1/2)|^q \notag\\
 &\quad+
 \int_{1/2}^1
 (1-t)^q|\R_aF(\Phi(t,\zeta))|^qt^{d-1}\dd t.
 \label{eq:radial-Hardy}
\end{align}
Here $2^{1-d}\le t^{d-1}\le1$ on $[1/2,1]$.

Let
\[
 \Omega_s=a+s(\Omega-a),\qquad 0<s<1.
\]
For $h\in\Hol(\Omega)$ with $h(a)=0$, define
\[
 \mathcal J_ah(z)=\int_0^1h(a+t(z-a))\frac{\dd t}{t},
\]
where the quotient has a removable singularity at $t=0$.  We first prove
\begin{equation}\label{eq:Ja-interior-bound}
 \|\mathcal J_ah\|_{L^q(\Omega_{2/3})}
 \lesssim\|h\|_{L^q(\Omega_{3/4})}.
\end{equation}
Choose $0<t_0<1/4$ so small that
\[
 a+t(\Omega_{2/3}-a)\subset B(a,\varepsilon_a/2)
 \qquad(0<t<t_0).
\]
Since $B(a,3\varepsilon_a/4)\subset\Omega_{3/4}$, Cauchy's estimate
and the submean inequality give
\[
 \sup_{B(a,\varepsilon_a/2)}|\nabla h|
 \lesssim\|h\|_{L^q(\Omega_{3/4})},
 \qquad
 |\nabla h|=\left(\sum_{k=1}^n
 |\partial h/\partial z_k|^2\right)^{1/2}.
\]
Use $h(a)=0$ and integrate along the line segment.  For
$z\in\Omega_{2/3}$ and $0<t<t_0$,
\begin{align*}
 h(a+t(z-a))
 &=t\int_0^1\sum_{k=1}^n(z_k-a_k)
 \frac{\partial h}{\partial z_k}(a+st(z-a))\dd s,\\
 |h(a+t(z-a))|
 &\le tD_\Omega
 \sup_{B(a,\varepsilon_a/2)}|\nabla h|
 \lesssim t\|h\|_{L^q(\Omega_{3/4})}.
\end{align*}
Therefore
\[
 \int_0^{t_0}
 \frac{|h(a+t(z-a))|}{t}\dd t
 \lesssim\|h\|_{L^q(\Omega_{3/4})}.
\]
For $t_0\le t\le1$, Minkowski's inequality and
$w=a+t(z-a)$ give
\begin{align*}
 \left\|\int_{t_0}^1
 h(a+t(\,\cdot-a))\frac{\dd t}{t}
 \right\|_{L^q(\Omega_{2/3})}
 &\le\int_{t_0}^1t^{-1-d/q}
 \|h\|_{L^q(\Omega_{2t/3})}\dd t\\
 &\lesssim\|h\|_{L^q(\Omega_{3/4})}.
\end{align*}
This proves \eqref{eq:Ja-interior-bound}.

Apply this with $h=\R_aF$.  We have
\begin{equation*}
 \mathcal J_a(\R_aF)(z)
 =\int_0^1\frac{\dd}{\dd t}F(a+t(z-a))\dd t
 =F(z)-F(a),
\end{equation*}
so \eqref{eq:Ja-interior-bound} gives the following volume estimate.
Choose $r_*>0$ such that
\[
 B(x,2r_*)\subset\Omega_{2/3}
 \qquad(x\in\partial\Omega_{1/2}).
\]
The submean inequality gives
\[
 |F(x)-F(a)|^q
 \lesssim r_*^{-d}
 \int_{B(x,r_*)}|F(w)-F(a)|^q\dd v(w).
\]
Put $H=F-F(a)$ and $S=\partial\Omega_{1/2}$, and let $\sigma_S$
be surface measure on $S$. Tonelli's theorem gives
\begin{align*}
 \int_S|H(x)|^q\dd\sigma_S(x)
 &\lesssim r_*^{-d}\int_{\Omega_{2/3}}|H(w)|^q
              \sigma_S(S\cap B(w,r_*))\dd v(w)\\
 &\le r_*^{-d}\sigma_S(S)\|H\|_{L^q(\Omega_{2/3})}^q.
\end{align*}
The surface measure $\sigma_S(S)$ is finite, and $r_*$ is fixed
independently of $F$. Thus, also when $q=1$,
\[
 \|F-F(a)\|_{L^q(\partial\Omega_{1/2})}
 \lesssim
 \|F-F(a)\|_{L^q(\Omega_{2/3})}.
\]
Thus
\begin{equation}\label{eq:interior-radial-Poincare}
 \|F-F(a)\|_{L^q(\Omega_{1/2})}
 +\|F-F(a)\|_{L^q(\partial\Omega_{1/2})}
 \lesssim\|\R_aF\|_{L^q(\Omega_{3/4})}.
\end{equation}
The second norm uses surface measure on $\partial\Omega_{1/2}$.
Under the map
\[
 \partial\Omega\ni\zeta
 \longmapsto a+\tfrac12(\zeta-a)\in\partial\Omega_{1/2},
\]
surface measure is multiplied by $2^{1-d}$.
Also, $\delta\asymp1$ on $\Omega_{3/4}$.  Integrate
\eqref{eq:radial-Hardy} over $\partial\Omega$.  Use
\eqref{eq:radial-angle}, \eqref{eq:radial-Jacobian},
\eqref{eq:radial-distance}, and
\eqref{eq:interior-radial-Poincare}.  This gives
\begin{align*}
 \int_{\Omega\setminus\Omega_{1/2}}|F|^q\dd v
 &\lesssim
 \|F\|_{L^q(\partial\Omega_{1/2})}^q
 +\int_{\Omega\setminus\Omega_{1/2}}
 \delta^q|\R_aF|^q\dd v\\
 &\lesssim |F(a)|^q
 +\int_{\Omega_{3/4}}|\R_aF|^q\dd v
 +\int_{\Omega\setminus\Omega_{1/2}}
 \delta^q|\R_aF|^q\dd v.
\end{align*}
On the inner set,
\[
 \int_{\Omega_{1/2}}|F|^q\dd v
 \lesssim |F(a)|^q
 +\int_{\Omega_{3/4}}|\R_aF|^q\dd v.
\]
Since $\delta\asymp1$ on $\Omega_{3/4}$, we obtain
\begin{equation*}
 \|F\|_{A^q}^q
 \lesssim |F(a)|^q+
 \int_\Omega\delta^q|\R_aF|^q\dd v.
\end{equation*}

The integrals up to $t=1$ follow by monotone convergence from upper
limits $R<1$.  Thus
\[
 \delta\R_aF\in L^q(\Omega)
 \quad\Longrightarrow\quad F\in A^q(\Omega).
\]

Conversely, let $F\in A^q(\Omega)$.  Choose $0<c_0<1/2$ so that
$B(z,c_0\delta(z))\subset\Omega$.  The Cauchy estimate and the
holomorphic submean inequality give
\begin{equation*}
 \delta(z)^q|\R_aF(z)|^q
 \lesssim
 \frac1{v(B(z,c_0\delta(z)))}
 \int_{B(z,c_0\delta(z))}|F(w)|^q\dd v(w).
\end{equation*}
If $w\in B(z,c_0\delta(z))$, then
\[
 (1-c_0)\delta(z)\le\delta(w)\le(1+c_0)\delta(z).
\]
For fixed $w$, all such $z$ lie in $B(w,C\delta(w))$, and
$\delta(z)\asymp\delta(w)$ there.  Therefore
\[
 \int_{\{z:w\in B(z,c_0\delta(z))\}}
 \frac{\dd v(z)}{\delta(z)^d}
 \lesssim\delta(w)^{-d}v(B(w,C\delta(w)))\lesssim1.
\]
Tonelli's theorem now gives
\begin{align*}
 \int_\Omega\delta(z)^q|\R_aF(z)|^q\dd v(z)
 &\lesssim
 \int_\Omega|F(w)|^q
 \int_{\{z:w\in B(z,c_0\delta(z))\}}
 \frac{\dd v(z)}{\delta(z)^d}\dd v(w)\\
 &\lesssim\int_\Omega|F(w)|^q\dd v(w).
\end{align*}
The submean inequality on a fixed ball about $a$ gives
$|F(a)|^q\lesssim\|F\|_{A^q}^q$.  This proves
\eqref{eq:radial-LP-membership} and \eqref{eq:radial-LP}.
\end{proof}

\subsection{Volterra composition operators}

Let $g\in\Hol(\Omega)$ and let
$\varphi:\Omega\to\Omega$ be holomorphic.  Define
\begin{equation}\label{eq:Volterra-composition-definition}
 V_{g,\varphi}F(z)
 =\int_0^1F(\varphi(a+t(z-a)))
 \R_ag(a+t(z-a))\frac{\dd t}{t}.
\end{equation}
The factor $1/t$ cancels:
\begin{equation}\label{eq:Volterra-cancelled-integrand}
 \frac{\R_ag(a+t(z-a))}{t}
 =\sum_{k=1}^n(z_k-a_k)
 \frac{\partial g}{\partial z_k}(a+t(z-a)).
\end{equation}
For a compact set $K\subset\Omega$, put
\[
 K_a=\{a+t(z-a):z\in K,\ 0\le t\le1\}.
\]
Convexity implies that $K_a$ is a compact subset of $\Omega$.
Also, $\varphi(K_a)$ is a compact subset of $\Omega$.
The integral in \eqref{eq:Volterra-composition-definition} therefore
converges uniformly on compact sets and defines a holomorphic function.  More
precisely, \eqref{eq:Volterra-cancelled-integrand} gives
\begin{equation}\label{eq:Volterra-compact-bound}
 \sup_{z\in K}|V_{g,\varphi}F(z)|
 \le D_\Omega
 \sup_{w\in\varphi(K_a)}|F(w)|
 \sup_{w\in K_a}|\nabla g(w)|.
\end{equation}
This also shows continuity for local uniform convergence of $F$, or
of $g$ and its first derivatives.  Put
\[
 H=(F\circ\varphi)\R_ag.
\]
Then $H(a)=0$.  For $0<s<1$,
\begin{align*}
 V_{g,\varphi}F(a+s(z-a))
 &=\int_0^1H(a+ts(z-a))\frac{\dd t}{t}\\
 &=\int_0^sH(a+\tau(z-a))\frac{\dd\tau}{\tau}.
\end{align*}
Differentiate in $s$ and multiply by $s$.  This gives
\begin{equation}\label{eq:Volterra-radial-identity}
 V_{g,\varphi}F(a)=0,
 \qquad
 \R_a(V_{g,\varphi}F)
 =(F\circ\varphi)\R_ag.
\end{equation}
The ordinary Volterra operator is
$V_g=V_{g,\mathrm{id}_\Omega}$.
The natural domain of $V_{g,\varphi}$ from $A^p$ to $A^q$ is
\begin{equation*}
 \mathcal D(V_{g,\varphi})
 =\{F\in A^p(\Omega):V_{g,\varphi}F\in A^q(\Omega)\}.
\end{equation*}
As above, the notation $V_{g,\varphi}:A^p\to A^q$ means that this
domain is all of $A^p$ and that the operator is bounded.

For $g\in A^q(\Omega)$, define
\begin{equation}\label{eq:Volterra-pullback}
 \mu_{g,\varphi}
 =\varphi_*\bigl(\delta^q|\R_ag|^qv\bigr),
\end{equation}
or, equivalently,
\[
 \mu_{g,\varphi}(E)
 =\int_{\varphi^{-1}(E)}
 \delta(z)^q|\R_ag(z)|^q\dd v(z).
\]
This measure is finite.  Indeed, the right inequality in
\eqref{eq:radial-LP} applied to $g$ gives
\[
 \mu_{g,\varphi}(\Omega)
 =\int_\Omega\delta^q|\R_ag|^q\dd v
 \lesssim\|g\|_{A^q}^q.
\]
The assumption $g\in A^q$ is also necessary for boundedness, since
\[
 V_{g,\varphi}\mathbf1=g-g(a).
\]

\begin{theorem}
\label{thm:Volterra-composition}
Let $1<p<\infty$, $1\le q,r<\infty$,
$g\in A^q(\Omega)$, and
$\varphi:\Omega\to\Omega$ be holomorphic.  The following conditions are
equivalent:
\begin{enumerate}[label=\textup{(\roman*)}]
\item $V_{g,\varphi}\in\Piabs_r(A^p(\Omega),A^q(\Omega))$;
\item $J_{\mu_{g,\varphi}}\in
\Piabs_r(A^p(\Omega),L^q(\mu_{g,\varphi}))$;
\item
\begin{equation}\label{eq:Volterra-sequence}
 \left\{
 \frac{\mu_{g,\varphi}(\Q_\rho(a_j))^{1/q}}
 {V_j^{1/p}}
 \right\}_j
 \in\mathfrak d_{p,q}^{\,r}.
\end{equation}
\end{enumerate}
Moreover,
\begin{equation}\label{eq:Volterra-norm}
 \pi_r(V_{g,\varphi})
 \asymp
 \left\|
 \left\{\frac{\mu_{g,\varphi}(\Q_\rho(a_j))^{1/q}}
 {V_j^{1/p}}\right\}_j
 \right\|_{\mathfrak d_{p,q}^{\,r}}.
\end{equation}
The constants may depend on $\Omega,a,p,q,r,\rho$.  They do not depend
on $g$ or $\varphi$.
\end{theorem}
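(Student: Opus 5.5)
The plan is to mirror the proof of \cref{thm:weighted-composition}, replacing the exact pullback isometry with the two-sided radial Littlewood--Paley estimate of \cref{lem:radial-LP}. For $F\in\Hol(\Omega)$, the identity \eqref{eq:Volterra-radial-identity} gives $V_{g,\varphi}F(a)=0$ and $\R_a(V_{g,\varphi}F)=(F\circ\varphi)\R_ag$. Inserting this into \eqref{eq:radial-LP} and changing variables through the pushforward \eqref{eq:Volterra-pullback}, I will aim to prove
\[
 C^{-1}\|V_{g,\varphi}F\|_{A^q}^q
 \le\int_\Omega|F|^q\dd\mu_{g,\varphi}
 \le C\|V_{g,\varphi}F\|_{A^q}^q .
\]
Membership in \eqref{eq:radial-LP-membership} shows that either side is finite exactly when the other is. This is valid for every $F\in\Hol(\Omega)$, including $q=1$, and $C=C(\Omega,a,q)$ does not depend on $g$ or $\varphi$.

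It follows that the natural domain of $V_{g,\varphi}$ coincides with $\mathcal D(J_{\mu_{g,\varphi}})$. Consequently $V_{g,\varphi}$ is bounded on all of $A^p$ if and only if $J_{\mu_{g,\varphi}}$ is, and the two operator norms agree up to $C^{1/q}$. Next I apply the same two-sided estimate to each member of a finite family $F_1,\dots,F_N\in A^p$. The weak $r$-norm $w_r(F_1,\dots,F_N)$ is the same quantity for both operators, so the $r$-summing inequalities transfer in both directions, giving
\[
 C^{-1/q}\pi_r(J_{\mu_{g,\varphi}})\le\pi_r(V_{g,\varphi})\le C^{1/q}\pi_r(J_{\mu_{g,\varphi}}).
\]
Hence (i) and (ii) are equivalent. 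Because $\mu_{g,\varphi}$ is finite, and in particular locally finite, \cref{thm:carleson} then gives (ii)$\Leftrightarrow$(iii) together with \eqref{eq:Volterra-norm}, with constants independent of the measure.

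The step that needs the most care is making sure the summing extension is identified with the pointwise formula, rather than with some abstract extension from a dense subspace. Here no density argument is required. The equivalence is proved directly on all of $A^p$ through the natural domain, because \eqref{eq:radial-LP-membership} holds for arbitrary holomorphic $F$, and $V_{g,\varphi}F$ is holomorphic by \eqref{eq:Volterra-compact-bound}. I also have to check that the constants stay uniform in $g$ and $\varphi$: they enter only through the measure, and \cref{lem:radial-LP} depends only on $\Omega$, $a$ and $q$.
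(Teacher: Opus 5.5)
Your proposal is correct and follows the paper's proof essentially step for step. The radial identity \eqref{eq:Volterra-radial-identity} together with \cref{lem:radial-LP} gives $\|V_{g,\varphi}F\|_{A^q}^q\asymp\int_\Omega|F|^q\dd\mu_{g,\varphi}$ for every holomorphic $F$; this matches the natural domains, transfers the summing inequalities family by family with the same weak $r$-norm, and \cref{thm:carleson} then applies to the finite measure $\mu_{g,\varphi}$, with constants independent of $g$ and $\varphi$ exactly as you describe.
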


\begin{proof}
By \eqref{eq:Volterra-radial-identity} and
\cref{lem:radial-LP},
\begin{equation*}
 V_{g,\varphi}F\in A^q(\Omega)
 \quad\Longleftrightarrow\quad
 F\in L^q(\Omega,\mu_{g,\varphi}).
\end{equation*}
Indeed, the value of $V_{g,\varphi}F$ at $a$ is zero.  The corresponding norm estimate is
\begin{align}
 \|V_{g,\varphi}F\|_{A^q}^q
 &\asymp
 \int_\Omega
 |F(\varphi(z))|^q
 \delta(z)^q|\R_ag(z)|^q\dd v(z)\notag\\
 &=\int_\Omega|F(w)|^q\dd\mu_{g,\varphi}(w)
 =\|J_{\mu_{g,\varphi}}F\|_{L^q(\mu_{g,\varphi})}^q.
 \label{eq:Volterra-Carleson-norm}
\end{align}
Thus, $V_{g,\varphi}$ is defined and bounded on all of $A^p$
if and only if $J_{\mu_{g,\varphi}}$ is.  The pointwise formulas
determine both operators uniquely.

Let $F_1,\ldots,F_N\in A^p$.  Equation
\eqref{eq:Volterra-Carleson-norm} gives
\begin{align*}
 C^{-1/q}
 \left(\sum_{\nu=1}^N
 \|J_{\mu_{g,\varphi}}F_\nu\|_{L^q(\mu_{g,\varphi})}^r\right)^{1/r}
 &\le
 \left(\sum_{\nu=1}^N
 \|V_{g,\varphi}F_\nu\|_{A^q}^r\right)^{1/r}\\
 &\le C^{1/q}
 \left(\sum_{\nu=1}^N
 \|J_{\mu_{g,\varphi}}F_\nu\|_{L^q(\mu_{g,\varphi})}^r\right)^{1/r}.
\end{align*}
The weak $r$-norm of $(F_\nu)$ is the same for both maps.  Thus
\[
 \pi_r(V_{g,\varphi})
 \asymp\pi_r(J_{\mu_{g,\varphi}}).
\]
The claim now follows from \cref{thm:carleson}.  The argument uses no
projection and remains valid when $q=1$.
\end{proof}

\begin{corollary}\label{cor:Volterra}
Let $1<p<\infty$, $1\le q,r<\infty$, and $g\in A^q(\Omega)$.  Define
\[
 \dd\mu_g(z)=\delta(z)^q|\R_ag(z)|^q\dd v(z).
\]
Then
\begin{equation}\label{eq:Volterra-criterion}
 V_g\in\Piabs_r(A^p,A^q)
 \quad\Longleftrightarrow\quad
 \left\{
 V_j^{1/q-1/p}
 \left(\avg_{\Q_\rho(a_j)}
 \delta(z)^q|\R_ag(z)|^q\dd v(z)\right)^{1/q}
 \right\}_j
 \in\mathfrak d_{p,q}^{\,r},
\end{equation}
and
\begin{equation}\label{eq:Volterra-criterion-norm}
 \pi_r(V_g)
 \asymp
 \left\|
 \left\{
 V_j^{1/q-1/p}
 \left(\avg_{\Q_\rho(a_j)}
 \delta(z)^q|\R_ag(z)|^q\dd v(z)\right)^{1/q}
 \right\}_j
 \right\|_{\mathfrak d_{p,q}^{\,r}}.
\end{equation}
The constants depend only on $\Omega,a,p,q,r$ and the fixed geometric
parameters.  They do not depend on $g$.
\end{corollary}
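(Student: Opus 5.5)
This is a specialization of \cref{thm:Volterra-composition} to $\varphi=\mathrm{id}_\Omega$. We take $\varphi=\mathrm{id}_\Omega$ there. Then $V_{g,\mathrm{id}_\Omega}=V_g$, and the pullback measure \eqref{eq:Volterra-pullback} is simply $\mu_{g,\mathrm{id}}=\mu_g$, that is, $\dd\mu_g=\delta^q|\R_ag|^q\dd v$. This measure is finite, hence locally finite, by the right inequality in \eqref{eq:radial-LP}. \Cref{thm:Volterra-composition} therefore gives
\[
 V_g\in\Piabs_r(A^p,A^q)
 \Longleftrightarrow
 \left\{\mu_g(\Q_\rho(a_j))^{1/q}V_j^{-1/p}\right\}_j\in\mathfrak d_{p,q}^{\,r},
\]
together with the comparability of $\pi_r(V_g)$ with the $\mathfrak d_{p,q}^{\,r}$-norm of this sequence.

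It remains to rewrite the coefficients in the stated averaged form. From the definition of the normalized integral,
\[
 \frac{\mu_g(\Q_\rho(a_j))^{1/q}}{V_j^{1/p}}
 =V_j^{1/q-1/p}\left(\avg_{\Q_\rho(a_j)}\delta^q|\R_ag|^q\dd v\right)^{1/q},
\]
since $V_j=v(\Q_\rho(a_j))$. The two sequences are therefore identical, not merely comparable. Equivalently, this is \eqref{eq:mu-f-coefficient} applied to the symbol $\delta\R_ag$. This yields \eqref{eq:Volterra-criterion} and \eqref{eq:Volterra-criterion-norm}.

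The dependence of the constants is inherited from \cref{thm:Volterra-composition}. There the constants depend on $\Omega,a,p,q,r,\rho$ but not on $g$ or $\varphi$; fixing $\varphi$ introduces nothing new. The case $q=1$ is covered, since \cref{lem:radial-LP} and \cref{thm:carleson} both include it. No real obstacle arises. The only point needing care is that the natural domain and uniqueness conventions match, and this is handled by the pointwise formula used in the proof of \cref{thm:Volterra-composition}.
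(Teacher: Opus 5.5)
Your proposal is correct and is essentially the paper's proof. You specialize \cref{thm:Volterra-composition} to $\varphi=\mathrm{id}_\Omega$, and you note that $\mu_g(\Q_\rho(a_j))^{1/q}V_j^{-1/p}$ coincides exactly with the averaged coefficient because $V_j=v(\Q_\rho(a_j))$.
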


\begin{proof}
Take $\varphi=\mathrm{id}_\Omega$ in
\cref{thm:Volterra-composition}.  Since
\[
 \frac{\mu_g(\Q_\rho(a_j))^{1/q}}{V_j^{1/p}}
 =
 V_j^{1/q-1/p}
 \left(\avg_{\Q_\rho(a_j)}
 \delta^q|\R_ag|^q\dd v\right)^{1/q},
\]
substitution into \eqref{eq:Volterra-sequence} and
\eqref{eq:Volterra-norm} proves
\eqref{eq:Volterra-criterion} and
\eqref{eq:Volterra-criterion-norm}.
\end{proof}

If $\varphi\equiv b\in\Omega$, then
\begin{equation*}
 V_{g,\varphi}F=F(b)(g-g(a)),
 \qquad
 \mu_{g,\varphi}
 =\left(\int_\Omega\delta^q|\R_ag|^q\dd v\right)\delta_b.
\end{equation*}
Thus $V_{g,\varphi}$ has rank at most one.  The sequence in
\eqref{eq:Volterra-sequence} has finite support.  This agrees with
\cref{thm:Volterra-composition}, since every operator of finite rank is
absolutely $r$-summing.

There is also an exact formula for placing a weighted composition
operator before a Volterra composition operator.

\begin{theorem}\label{thm:mixed-Volterra-product}
Let $1<p<\infty$, $1\le q,r<\infty$.  Let
$g\in A^q(\Omega)$ and $u\in\Hol(\Omega)$.  Let
$\varphi,\psi:\Omega\to\Omega$ be holomorphic.  Define
\begin{equation}\label{eq:mixed-generator}
 \widetilde g
 =V_{g,\varphi}u,
 \qquad
 \widetilde g(z)
 =\int_0^1u(\varphi(a+t(z-a)))
 \R_ag(a+t(z-a))\frac{\dd t}{t}.
\end{equation}
Assume $\widetilde g\in A^q(\Omega)$.  Then
\begin{equation}\label{eq:mixed-operator-identity}
 V_{g,\varphi}W_{u,\psi}
 =V_{\widetilde g,\,\psi\circ\varphi}
\end{equation}
on $\Hol(\Omega)$.  The following conditions are equivalent:
\begin{enumerate}[label=\textup{(\roman*)}]
\item the common operator in \eqref{eq:mixed-operator-identity},
restricted to $\mathcal P(\Omega)$, has an extension in
$\Piabs_r(A^p(\Omega),A^q(\Omega))$;
\item
\begin{equation*}
 \left\{
 \frac{\mu_{\mathrm{mix}}(\Q_\rho(a_j))^{1/q}}
 {V_j^{1/p}}
 \right\}_j\in\mathfrak d_{p,q}^{\,r},
\end{equation*}
\end{enumerate}
where
\begin{equation}\label{eq:mixed-pullback-measure}
 \mu_{\mathrm{mix}}
 =
 (\psi\circ\varphi)_*
 \left(
 \delta^q|u\circ\varphi|^q|\R_ag|^qv
 \right).
\end{equation}
Under these conditions, the extension is unique.  For every
$F\in A^p(\Omega)$, its value is given by either side of
\eqref{eq:mixed-operator-identity}.  Moreover,
\begin{equation}\label{eq:mixed-product-norm}
 \pi_r(V_{g,\varphi}W_{u,\psi})
 =\pi_r(V_{\widetilde g,\,\psi\circ\varphi})
 \asymp
 \left\|
 \left\{
 \frac{\mu_{\mathrm{mix}}(\Q_\rho(a_j))^{1/q}}
 {V_j^{1/p}}
 \right\}_j
 \right\|_{\mathfrak d_{p,q}^{\,r}}.
\end{equation}
The constants depend only on $\Omega,a,p,q,r,\rho$ and the fixed
geometric parameters.  They do not depend on $g,u,\varphi$, or $\psi$.
No change is required when $q=1$.
\end{theorem}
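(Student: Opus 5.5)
The plan is to verify the operator identity \eqref{eq:mixed-operator-identity} pointwise, compute its radial derivative, and then reduce to the Carleson criterion exactly as in \cref{thm:Volterra-composition}.

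\emph{Step 1: the identity.} For $F\in\Hol(\Omega)$ put $\Phi=\psi\circ\varphi$. By \eqref{eq:Volterra-radial-identity},
\[
 \R_a\bigl(V_{g,\varphi}W_{u,\psi}F\bigr)
 =\bigl((W_{u,\psi}F)\circ\varphi\bigr)\R_ag
 =(u\circ\varphi)(F\circ\Phi)\R_ag .
\]
Applied to $\widetilde g=V_{g,\varphi}u$, the same identity gives $\R_a\widetilde g=(u\circ\varphi)\R_ag$. Hence
\[
 \R_a\bigl(V_{\widetilde g,\Phi}F\bigr)=(F\circ\Phi)\R_a\widetilde g
\]
agrees with the expression above. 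Both functions vanish at $a$. A holomorphic $G$ with $G(a)=0$ and $\R_aG=0$ is zero, since $t\mapsto G(a+t(z-a))$ has zero derivative on $(0,1]$. This proves \eqref{eq:mixed-operator-identity} on $\Hol(\Omega)$.

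\emph{Step 2: the measure.} Since $\widetilde g\in A^q$, the Volterra pullback measure is
\[
 \mu_{\widetilde g,\Phi}=\Phi_*\bigl(\delta^q|\R_a\widetilde g|^qv\bigr)
 =\Phi_*\bigl(\delta^q|u\circ\varphi|^q|\R_ag|^qv\bigr)=\mu_{\mathrm{mix}} .
\]
By the right inequality of \eqref{eq:radial-LP} applied to $\widetilde g$, this measure is finite.

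\emph{Step 3: the criterion.} \Cref{thm:Volterra-composition} is stated for the natural domain. Here the operator is defined on all of $\Hol(\Omega)$ and then restricted to $\mathcal P(\Omega)$, so I would repeat its short proof in the form used for \cref{thm:weighted-composition-products}. For $F\in\mathcal P(\Omega)$, the function $F$ is bounded, so $F\in L^q(\mu_{\mathrm{mix}})$. \Cref{lem:radial-LP} together with the value $0$ at $a$ gives
\[
 \|V_{\widetilde g,\Phi}F\|_{A^q}^q\asymp\int|F|^q\dd\mu_{\mathrm{mix}} .
\]
This equivalence holds uniformly on finite families, so the summing inequalities for the two initial maps on $\mathcal P(\Omega)$ agree up to constants. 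Density of $\mathcal P(\Omega)$ in $A^p$ (\cref{prop:domain-ledger}) passes them to extensions, using the continuity of $w_r$ exactly as in the proof of \cref{thm:weighted-composition-products}. One also identifies the extension into $L^q(\mu_{\mathrm{mix}})$ with the inclusion, via a subsequence converging $\mu_{\mathrm{mix}}$-almost everywhere combined with local uniform convergence. This gives
\[
 \pi_r(V_{\widetilde g,\Phi})\asymp\pi_r(J_{\mu_{\mathrm{mix}}}),
\]
and \cref{thm:carleson} yields the equivalence of (i) and (ii) together with \eqref{eq:mixed-product-norm}. The equality of the two $\pi_r$ in \eqref{eq:mixed-product-norm} is immediate from Step 1, since the two initial operators coincide on $\mathcal P(\Omega)$.

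\emph{Step 4: identifying the extension.} Take $F_k\in\mathcal P(\Omega)$ with $F_k\to F$ in $A^p$. Then $F_k\to F$ locally uniformly. By \eqref{eq:Volterra-compact-bound}, $V_{\widetilde g,\Phi}F_k\to V_{\widetilde g,\Phi}F$ locally uniformly. Norm convergence in $A^q$ also gives local uniform convergence, so the two limits coincide, and Step 1 identifies the value with either side of \eqref{eq:mixed-operator-identity}.

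The main obstacle is this extension and identification bookkeeping. The operator is not a priori defined on $A^p$, and $W_{u,\psi}$ alone need not map $A^p$ into a space on which $V_{g,\varphi}$ is controlled. For that reason the argument runs entirely through $V_{\widetilde g,\Phi}$ and never through the factors. The constants are independent of $g,u,\varphi,\psi$ because the only constants involved come from \cref{lem:radial-LP} and \cref{thm:carleson}. Nothing uses $P$, so $q=1$ is included.
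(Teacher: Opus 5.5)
Your proposal is correct and follows the paper's proof essentially step for step. The shared steps are the radial-derivative computation with the uniqueness argument for $H(a)=0$, $\R_aH=0$; the identification $\mu_{\widetilde g,\psi\circ\varphi}=\mu_{\mathrm{mix}}$ with finiteness from \cref{lem:radial-LP}; and the identification of the extension through \eqref{eq:Volterra-compact-bound}. The only difference is organizational. The paper first shows that any bounded extension of the restriction to $\mathcal P(\Omega)$ is the pointwise operator, and then cites \cref{thm:Volterra-composition} directly. You instead rerun its short comparison with $J_{\mu_{\mathrm{mix}}}$ on $\mathcal P(\Omega)$, as in \cref{thm:weighted-composition-products}, which handles the initial-domain convention equally well.
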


\begin{proof}
The radial identity applied to \eqref{eq:mixed-generator} gives
\begin{equation}\label{eq:mixed-generator-radial}
 \widetilde g(a)=0,
 \qquad
 \R_a\widetilde g=(u\circ\varphi)\R_ag.
\end{equation}
For $F\in\Hol(\Omega)$, apply
\eqref{eq:Volterra-radial-identity} twice:
\begin{align*}
 \R_a\left(V_{g,\varphi}W_{u,\psi}F\right)
 &=
 \bigl((W_{u,\psi}F)\circ\varphi\bigr)\R_ag\notag\\
 &=
 (u\circ\varphi)
 (F\circ\psi\circ\varphi)\R_ag\notag\\
 &=
 (F\circ\psi\circ\varphi)\R_a\widetilde g\notag\\
 &=
 \R_a\left(V_{\widetilde g,\,\psi\circ\varphi}F\right).
\end{align*}
Both sides vanish at $a$.  If a holomorphic function $H$ satisfies
$H(a)=0$ and $\R_aH=0$, then
\[
 \frac{\dd}{\dd t}H(a+t(z-a))
 =\frac1t\R_aH(a+t(z-a))=0.
\]
Hence $H(z)=H(a)=0$.  Apply this uniqueness argument to the
difference of the two sides of
\eqref{eq:mixed-operator-identity}.  This proves the operator identity.

By \eqref{eq:mixed-generator-radial}, the pullback measure associated
with the operator on the right side is
\begin{align*}
 \mu_{\widetilde g,\,\psi\circ\varphi}
 &=(\psi\circ\varphi)_*
 \left(\delta^q|\R_a\widetilde g|^qv\right)\\
 &=(\psi\circ\varphi)_*
 \left(
 \delta^q|u\circ\varphi|^q|\R_ag|^qv
 \right)
 =\mu_{\mathrm{mix}}.
\end{align*}
This is a finite measure.  In fact,
\[
 \mu_{\mathrm{mix}}(\Omega)
 =\int_\Omega\delta^q|\R_a\widetilde g|^q\dd v
 \lesssim\|\widetilde g\|_{A^q}^q
\]
by \cref{lem:radial-LP}.

For $F\in\mathcal P(\Omega)$, the norm estimate
\eqref{eq:Volterra-Carleson-norm} gives
\[
 \|V_{\widetilde g,\,\psi\circ\varphi}F\|_{A^q}^q
 \lesssim
 \|F\|_{L^\infty(\Omega)}^q\mu_{\mathrm{mix}}(\Omega)<\infty.
\]
Thus the initial operator in \textup{(i)} takes values in $A^q$.

Suppose first that the restriction in \textup{(i)} has a bounded
extension $S:A^p\to A^q$.  Choose $F_k\in\mathcal P(\Omega)$ with
$F_k\to F$ in $A^p$.  Estimates for point evaluations give local uniform
convergence $F_k\to F$.  For a compact $K\subset\Omega$, apply
\eqref{eq:Volterra-compact-bound} to $F_k-F$ and the fixed generator
$\widetilde g$.  This gives
\begin{align*}
 &\sup_K\left|
 V_{\widetilde g,\,\psi\circ\varphi}F_k
 -V_{\widetilde g,\,\psi\circ\varphi}F\right|\\
 &\qquad\le D_\Omega
 \sup_{(\psi\circ\varphi)(K_a)}|F_k-F|
 \sup_{K_a}|\nabla\widetilde g|
 \longrightarrow0.
\end{align*}
On the other hand, $SF_k\to SF$ in $A^q$, hence
locally uniformly.  The identity on $\mathcal P(\Omega)$ gives
\[
 SF=V_{\widetilde g,\,\psi\circ\varphi}F.
\]
Thus $S$ is the pointwise operator on all of $A^p$.

Now apply \cref{thm:Volterra-composition} to
$(\widetilde g,\psi\circ\varphi)$.  It proves the equivalence and
\eqref{eq:mixed-product-norm}.  Conversely, the scalar condition gives
the required extension by that theorem.  Since $\mathcal P(\Omega)$ is
dense in $A^p$, this extension is unique.  The same proof applies when
$q=1$.
\end{proof}

\begin{corollary}
\label{cor:mixed-product-ideal}
Let $1<p,s<\infty$ and $1\le q,r<\infty$.  Let
$g,u\in\Hol(\Omega)$, and let
$\varphi,\psi:\Omega\to\Omega$ be holomorphic.  If
\[
 W_{u,\psi}:A^p\to A^s
 \quad\text{is bounded}
\]
and
\[
 V_{g,\varphi}:A^s\to A^q
 \quad\text{is absolutely }r\text{-summing},
\]
then
\begin{equation}\label{eq:mixed-product-ideal-bound}
 V_{g,\varphi}W_{u,\psi}\in\Piabs_r(A^p,A^q),
 \qquad
 \pi_r(V_{g,\varphi}W_{u,\psi})
 \le
 \pi_r(V_{g,\varphi})\|W_{u,\psi}\|.
\end{equation}
Moreover, $g\in A^q$, $u\in A^s$, and
$\widetilde g=V_{g,\varphi}u\in A^q$.  Hence the measure in
\eqref{eq:mixed-pullback-measure} is finite and
\begin{equation}\label{eq:mixed-product-ideal-sequence}
 \left\{
 \frac{\mu_{\mathrm{mix}}(\Q_\rho(a_j))^{1/q}}
 {V_j^{1/p}}
 \right\}_j\in\mathfrak d_{p,q}^{\,r}.
\end{equation}
It also satisfies
\begin{equation}\label{eq:mixed-product-ideal-sequence-bound}
 \left\|
 \left\{
 \frac{\mu_{\mathrm{mix}}(\Q_\rho(a_j))^{1/q}}
 {V_j^{1/p}}
 \right\}_j
 \right\|_{\mathfrak d_{p,q}^{\,r}}
 \lesssim
 \pi_r(V_{g,\varphi})\|W_{u,\psi}\|.
\end{equation}
\end{corollary}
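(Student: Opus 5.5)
The first estimate \eqref{eq:mixed-product-ideal-bound} is immediate from the ideal property in \cref{lem:summing-calculus}(i). The operator $W_{u,\psi}:A^p\to A^s$ is bounded and $V_{g,\varphi}:A^s\to A^q$ is absolutely $r$-summing. Hence the composite $V_{g,\varphi}W_{u,\psi}$ is absolutely $r$-summing, with
\[
 \pi_r(V_{g,\varphi}W_{u,\psi})
 \le\pi_r(V_{g,\varphi})\|W_{u,\psi}\|.
\]
Both operators here are the pointwise operators on their full domains, so no extension issue arises. The composite coincides on all of $A^p$ with the pointwise formula on the left of \eqref{eq:mixed-operator-identity}.

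Next I would verify the membership assertions by testing on the constant function $\mathbf1$, which lies in every $A^t$ because $\Omega$ is bounded. First, $W_{u,\psi}\mathbf1=u$, so boundedness of $W_{u,\psi}:A^p\to A^s$ gives $u\in A^s$. Second, $V_{g,\varphi}\mathbf1=g-g(a)$ lies in $A^q$, so $g\in A^q$. Third, since $u\in A^s$ and $V_{g,\varphi}$ is defined on all of $A^s$,
\[
 \widetilde g=V_{g,\varphi}u
 =V_{g,\varphi}W_{u,\psi}\mathbf1\in A^q,
\]
and this function agrees with the pointwise integral in \eqref{eq:mixed-generator}. Finiteness of $\mu_{\mathrm{mix}}$ then follows from \cref{lem:radial-LP} together with \eqref{eq:mixed-generator-radial}, exactly as in the proof of \cref{thm:mixed-Volterra-product}.

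With $g\in A^q$ and $\widetilde g\in A^q$, all hypotheses of \cref{thm:mixed-Volterra-product} hold. The composite restricted to $\mathcal P(\Omega)$ has an $r$-summing extension, namely the composite itself. Condition (i) of that theorem therefore holds, and it yields \eqref{eq:mixed-product-ideal-sequence}. The norm comparison \eqref{eq:mixed-product-norm} then gives
\[
 \|\{\mu_{\mathrm{mix}}(\Q_\rho(a_j))^{1/q}V_j^{-1/p}\}\|_{\dideal}
 \lesssim\pi_r(V_{g,\varphi}W_{u,\psi}),
\]
and combining this with the first step proves \eqref{eq:mixed-product-ideal-sequence-bound}.

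The only delicate point is identifying the extension supplied by the ideal property with the one in \cref{thm:mixed-Volterra-product}, since uniqueness there is stated through polynomials. Polynomials are dense in $A^p$, and both operators are continuous from $A^p$ to $A^q$. Their values on polynomials agree by \eqref{eq:mixed-operator-identity}, so the two operators coincide and no further work is needed.
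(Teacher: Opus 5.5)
Your proposal is correct and follows essentially the same route as the paper: the ideal property gives the first bound, testing on $\mathbf 1$ gives $u\in A^s$, $g\in A^q$ and $\widetilde g\in A^q$, and then \cref{thm:mixed-Volterra-product} with its norm comparison yields the sequence membership and estimate. Your extra remark, which identifies the composite with the extension from $\mathcal P(\Omega)$ by density, makes explicit a step the paper leaves implicit.
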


\begin{proof}
The ideal property gives \eqref{eq:mixed-product-ideal-bound}.  Since
$\mathbf1\in A^p\cap A^s$,
\[
 u=W_{u,\psi}\mathbf1\in A^s,
 \qquad
 g-g(a)=V_{g,\varphi}\mathbf1\in A^q.
\]
The domain is bounded.  Thus constant functions belong to $A^q$, and
$g\in A^q$.  Also,
\[
 \widetilde g=V_{g,\varphi}u\in A^q.
\]
All hypotheses of \cref{thm:mixed-Volterra-product} now hold.
Equations \eqref{eq:mixed-product-ideal-sequence} and
\eqref{eq:mixed-product-ideal-sequence-bound} follow from
\eqref{eq:mixed-product-norm} and
\eqref{eq:mixed-product-ideal-bound}.
\end{proof}

\begin{corollary}
\label{cor:Volterra-continuous}
Let the hypotheses of \cref{thm:Volterra-composition} hold.
If $\mathfrak d_{p,q}^{\,r}=\ell^\kappa$ in
\eqref{eq:kappa-table}, then
\begin{equation*}
 V_{g,\varphi}\in\Piabs_r(A^p,A^q)
 \quad\Longleftrightarrow\quad
 \mathcal C_{p,q,\rho}\mu_{g,\varphi}
 \in L^\kappa(\Omega,d\lambda_\Omega),
\end{equation*}
and
\begin{equation*}
 \pi_r(V_{g,\varphi})
 \asymp
 \|\mathcal C_{p,q,\rho}\mu_{g,\varphi}
 \|_{L^\kappa(d\lambda_\Omega)}.
\end{equation*}
At the logarithmic borderline, the power space is replaced by the
Orlicz space.  If $2<p<\infty$, $q=p'<2$, and $1\le r<q$, then
\begin{align*}
 V_{g,\varphi}\in\Piabs_r(A^p,A^q)
 &\Longleftrightarrow
 \mathcal C_{p,q,\rho}\mu_{g,\varphi}
 \in L^{q^-}(\Omega,d\lambda_\Omega),
 \\
 \pi_r(V_{g,\varphi})
 &\asymp
 \|\mathcal C_{p,q,\rho}\mu_{g,\varphi}
 \|_{L^{q^-}(d\lambda_\Omega)}.
\end{align*}
For $q=1$, let $s_p$ be defined by
\eqref{eq:intro-q1-multiplication}.  Then
\begin{align*}
 V_{g,\varphi}\in\Piabs_r(A^p,A^1)
 &\Longleftrightarrow
 \mathcal C_{p,1,\rho}\mu_{g,\varphi}
 \in L^{s_p}(\Omega,d\lambda_\Omega),
 \\
 \pi_r(V_{g,\varphi})
 &\asymp
 \|\mathcal C_{p,1,\rho}\mu_{g,\varphi}
 \|_{L^{s_p}(d\lambda_\Omega)}.
\end{align*}
All constants are independent of $g$ and $\varphi$.
\end{corollary}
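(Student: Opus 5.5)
The plan is to combine \cref{thm:Volterra-composition} with \cref{lem:measure-discrete-continuous}. The route is the same as for \cref{cor:weighted-composition-continuous}, with $\mu_{g,\varphi}$ in place of $\mu_{u,\varphi}$.

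First, \cref{thm:Volterra-composition} shows that $V_{g,\varphi}\in\Piabs_r(A^p,A^q)$ exactly when the lattice sequence $\beta_j(\mu_{g,\varphi})=\mu_{g,\varphi}(\Q_\rho(a_j))^{1/q}V_j^{-1/p}$ lies in $\dideal$. It also gives $\pi_r(V_{g,\varphi})\asymp\|\beta(\mu_{g,\varphi})\|_{\dideal}$, with both sides allowed to be infinite. The measure $\mu_{g,\varphi}$ is finite, because $g\in A^q$ and \cref{lem:radial-LP} applies. In particular it is positive and locally finite, so \cref{lem:measure-discrete-continuous} applies to it. The constants in both results are independent of $g$ and $\varphi$.

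In the power range $\dideal=\ell^\kappa$, \cref{prop:garling} gives $\|\beta\|_{\dideal}\asymp\|\beta\|_{\ell^\kappa}$. Then \eqref{eq:measure-discrete-continuous} identifies $\|\beta(\mu_{g,\varphi})\|_{\ell^\kappa}$ with $\|\mathcal C_{p,q,\rho}\mu_{g,\varphi}\|_{L^\kappa(d\lambda_\Omega)}$, up to constants. Chaining these equivalences gives both the membership criterion and the norm equivalence.

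At the logarithmic borderline $2<p<\infty$, $q=p'<2$, $1\le r<q$, the third line of \eqref{eq:kappa-table} gives $\dideal=\ell^{q^-}$. We then use \eqref{eq:measure-discrete-continuous-Orlicz} in place of the power version.

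For $q=1$, \cref{prop:garling} gives $\mathfrak d_{p,1}^{\,r}=\ell^{s_p}$, as recorded in \eqref{eq:intro-q1-multiplication}. Indeed, for $1<p\le2$ the first line gives exponent $(1/p'+1-1/2)^{-1}=2p/(3p-2)$, and for $p>2$ the second line gives $\ell^1$ since $1<p'$. Hence this case is the power case with $\kappa=s_p$. Both \cref{thm:Volterra-composition} and \cref{lem:measure-discrete-continuous} allow $q=1$, so no projection argument is needed.

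The one point to check is that \cref{lem:measure-discrete-continuous} compares two lattices by invoking \cref{thm:carleson}, using the convention $\pi_r(J_\mu)=+\infty$. This means the equivalences remain valid when either side is infinite, so the ``if and only if'' statements follow together with the norm bounds. Nothing beyond this bookkeeping seems to be an obstacle.
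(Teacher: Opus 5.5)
Your proposal is correct and follows essentially the same route as the paper: the paper applies \cref{lem:measure-discrete-continuous} to $\mu=\mu_{g,\varphi}$ and then \cref{thm:Volterra-composition}, reads $\mathfrak d_{p,1}^{\,r}=\ell^{s_p}$ off the first two lines of \eqref{eq:kappa-table}, and uses \eqref{eq:measure-discrete-continuous-Orlicz} for the Orlicz case. Your extra bookkeeping only makes explicit what the paper leaves implicit, namely the finiteness of $\mu_{g,\varphi}$ via \cref{lem:radial-LP} and the handling of infinite norms.
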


\begin{proof}
Apply \cref{lem:measure-discrete-continuous} to
$\mu=\mu_{g,\varphi}$.  Then use
\cref{thm:Volterra-composition}.  The first two lines of
\eqref{eq:kappa-table} give
$\mathfrak d_{p,1}^{\,r}=\ell^{s_p}$ for every $r$.
The Orlicz case follows from
\eqref{eq:measure-discrete-continuous-Orlicz}.
\end{proof}

\begin{theorem}
\label{thm:Volterra-dilation-nuclear}
Let $\Omega=\mathbb B_n$, let the base point be $a=0$, and let
$\varphi_s(z)=sz$ with $0\le s<1$.  If
$1<p<\infty$, $1\le q<\infty$, and $g\in A^q(\mathbb B_n)$, then
\[
 V_{g,\varphi_s}:A^p(\mathbb B_n)\longrightarrow A^q(\mathbb B_n)
\]
is nuclear.  In particular,
\begin{equation}\label{eq:Volterra-dilation-all-r}
 V_{g,\varphi_s}\in\Piabs_r(A^p,A^q)
 \qquad(1\le r<\infty).
\end{equation}
Moreover,
\begin{equation}\label{eq:Volterra-dilation-nuclear-bound}
 \pi_r(V_{g,\varphi_s})
 \le\nu_1(V_{g,\varphi_s})
 \le C_{n,p,q}\|g\|_{A^q}
 \sum_{m=0}^\infty
 \binom{n+m-1}{m}(m+1)^{(n+1)/p}s^m.
\end{equation}
\end{theorem}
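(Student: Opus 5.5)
The plan is to imitate the proof of \cref{thm:composition-dilation-nuclear}, replacing the weighted composition by the Volterra integral written in homogeneous components. With $a=0$, the definition \eqref{eq:Volterra-composition-definition} reads
\[
 V_{g,\varphi_s}F(z)=\int_0^1F(stz)\,\R_0g(tz)\frac{\dd t}{t}.
\]
Expand $F=\sum_mF_m$ into homogeneous parts. Then $F_m(stz)=s^mt^mF_m(z)$, so formally
\[
 V_{g,\varphi_s}F=\sum_{m\ge0}s^m\,T_mF,\qquad
 T_mF(z)=F_m(z)\int_0^1t^{m}\,\R_0g(tz)\frac{\dd t}{t}=F_m(z)\,G_m(z).
\]
Here $G_m$ is the holomorphic function with $G_m(0)=0$ and $\R_0G_m=\R_0(V_{g,\mathrm{id}}z^{\cdot})$ in the appropriate sense. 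Concretely, $G_m=\sum_k\frac{k}{k+m}g_k$, where $g_k$ are the homogeneous parts of $g$. Thus $G_m$ is obtained from $g-g(0)$ by the radial multiplier $k\mapsto k/(k+m)$.

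The first key step is to show $\|G_m\|_{A^q}\lesssim\|g\|_{A^q}$ uniformly in $m$. To do this, write
\[
 G_m(z)=\int_0^1 t^{m-1}\R_0g(tz)\,\dd t=\int_0^1 t^{m}\frac{\dd}{\dd t}\bigl[g(tz)\bigr]\dd t .
\]
Integrating by parts gives
\[
 G_m=g-g(0)\cdot\mathbf 1_{m=0}-m\int_0^1t^{m-1}g(tz)\,\dd t
\]
(for $m=0$ it is just $g-g(0)$). For $m\ge1$ the last term is an average of dilates $g(t\,\cdot)$ against the probability density $mt^{m-1}\dd t$. On the ball, dilation $g\mapsto g(t\cdot)$ is a contraction on $A^q$ (subharmonicity of $|g|^q$ on spheres). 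Minkowski's inequality then gives $\|G_m\|_{A^q}\le2\|g\|_{A^q}+|g(0)|\lesssim\|g\|_{A^q}$, and this is valid for $q=1$ as well.

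The second step reuses \eqref{eq:homogeneous-supremum}:
\[
 \|T_mF\|_{A^q}\le\|F_m\|_\infty\|G_m\|_{A^q}\lesssim(m+1)^{(n+1)/p}\|g\|_{A^q}\|F\|_{A^p}.
\]
The range of $T_m$ lies in $G_m\mathcal P_m$, so $\operatorname{rank}T_m\le\binom{n+m-1}{m}$. The Auerbach bound \eqref{eq:finite-rank-nuclear-bound} then gives
\[
 \nu_1(T_m)\lesssim\binom{n+m-1}{m}(m+1)^{(n+1)/p}\|g\|_{A^q}.
\]
Multiplying by $s^m$ and summing produces an absolutely convergent nuclear representation, exactly as in the proof of \cref{thm:composition-dilation-nuclear}. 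The resulting bound is \eqref{eq:Volterra-dilation-nuclear-bound}, and \eqref{eq:nuclear-to-summing} together with monotonicity in $r$ gives \eqref{eq:Volterra-dilation-all-r}.

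The remaining step, which I expect to be the main obstacle, is to identify the norm limit of $\sum s^mT_mF$ with the pointwise $V_{g,\varphi_s}F$. Here I would use \eqref{eq:Volterra-compact-bound}. The partial sums $\sum_{m\le N}F_m$ converge to $F$ locally uniformly, and since $\varphi_s$ maps $K_0$ into the compact set $sK_0$, the corresponding Volterra transforms converge locally uniformly. The interchange of sum and $t$-integral is justified on compacts by $\sup_{|w|\le s|z|}|F_m(w)|$ decaying geometrically. Meanwhile the $A^q$-convergent series converges locally uniformly, so the two limits coincide.
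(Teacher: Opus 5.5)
Your proof is correct. Its skeleton agrees with the paper's: the homogeneous decomposition, the rank bound $\dim\mathcal P_m$, the Auerbach estimate, and summation of the nuclear series. In fact your $T_m$ is exactly the paper's $S_mF=V_{g,\mathrm{id}}(Q_mF)$, since $V_{g,\mathrm{id}}(F_m)(z)=F_m(z)\int_0^1t^{m-1}\R_0g(tz)\,\dd t=F_m(z)G_m(z)$.

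The genuine difference is the uniform-in-$m$ bound for this operator. The paper never computes $S_mF$. It uses $S_mF(0)=0$ and $\R_0(S_mF)=F_m\R_0g$, and then applies the radial Littlewood--Paley formula of \cref{lem:radial-LP} to get $\|S_mF\|_{A^q}\lesssim\|F_m\|_\infty\|\delta\R_0g\|_{L^q}\lesssim\|F_m\|_\infty\|g\|_{A^q}$. You instead write $S_mF=F_mG_m$ with the explicit formula $G_m=g-m\int_0^1t^{m-1}g(t\,\cdot)\,\dd t$ for $m\ge1$. You bound $G_m$ by Minkowski's inequality and the contractivity of dilations on $A^q(\mathbb B_n)$; the latter holds because the integral means of the subharmonic function $|g|^q$ increase with the radius.

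Your route is more elementary and self-contained. It avoids \cref{lem:radial-LP} entirely, and the rank bound is visible at once because the range lies in $G_m\mathcal P_m$. However, it uses the dilation structure of the ball centered at $0$. The paper's estimate $\|V_{g,\mathrm{id}}h\|_{A^q}\lesssim\|h\|_\infty\|g\|_{A^q}$ holds on any bounded convex domain. It is also phrased through the quantity $\delta\R_0g$ that governs \cref{thm:Volterra-composition}.

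The identification step also differs slightly. The paper uses radial uniqueness via \eqref{eq:Volterra-radial-identity}: both $UF$ and $V_{g,\varphi_s}F$ vanish at $0$ and have radial derivative $F(s\,\cdot)\R_0g$. You apply \eqref{eq:Volterra-compact-bound} to $F-\sum_{m\le N}F_m$. Since $V_{g,\varphi_s}\bigl(\sum_{m\le N}F_m\bigr)=\sum_{m\le N}s^mT_mF$ holds exactly, this step is routine rather than the main obstacle you anticipated. No interchange of an infinite sum with the $t$-integral is needed.
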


\begin{proof}
Use the homogeneous projections $Q_m$ from
\eqref{eq:homogeneous-projection}.  For $F\in A^p$, let
\[
 S_mF=V_{g,\mathrm{id}_{\mathbb B_n}}(Q_mF).
\]
The radial identity gives
\begin{equation}\label{eq:Volterra-homogeneous-radial}
 S_mF(0)=0,
 \qquad
 \R_0(S_mF)=(Q_mF)\R_0g.
\end{equation}
By \cref{lem:radial-LP},
\begin{align*}
 \|S_mF\|_{A^q}^q
 &\lesssim
 \int_{\mathbb B_n}
 \delta(z)^q|Q_mF(z)|^q|\R_0g(z)|^q\dd v(z)\notag\\
 &\le
 \|Q_mF\|_\infty^q
 \int_{\mathbb B_n}\delta(z)^q|\R_0g(z)|^q\dd v(z)\notag\\
 &\lesssim
 \|Q_mF\|_\infty^q\|g\|_{A^q}^q.
\end{align*}
Equation \eqref{eq:homogeneous-supremum} therefore yields
\[
 \|S_m\|
 \le C_{n,p,q}(m+1)^{(n+1)/p}\|g\|_{A^q}.
\]
The operator $S_m$ depends only on $Q_mF$.  Thus
\[
 \operatorname{rank}S_m
 \le\dim\mathcal P_m
 =\binom{n+m-1}{m}.
\]
By \eqref{eq:finite-rank-nuclear-bound},
\begin{equation}\label{eq:Volterra-homogeneous-nuclear}
 \nu_1(S_m)
 \le C_{n,p,q}
 \binom{n+m-1}{m}(m+1)^{(n+1)/p}\|g\|_{A^q}.
\end{equation}

For $Q_mF=F_m$, homogeneity gives
\[
 F_m(stz)=s^mt^mF_m(z).
\]
Let
\[
 U_NF=\sum_{m=0}^Ns^mS_mF.
\]
Equation \eqref{eq:Volterra-homogeneous-nuclear} gives
\[
\begin{aligned}
 \sum_{m=0}^\infty\nu_1(s^mS_m)
 &\le C_{n,p,q}\|g\|_{A^q}
 \sum_{m=0}^\infty
 \binom{n+m-1}{m}(m+1)^{(n+1)/p}s^m\\
 &<\infty.
\end{aligned}
\]
Thus $(U_N)$ converges in nuclear norm to a nuclear operator $U$.
For every $F\in A^p$, the homogeneous expansion gives
\[
 \sum_{m=0}^Ns^mQ_mF(z)\longrightarrow F(sz)
\]
locally uniformly on $\mathbb B_n$.  By
\eqref{eq:Volterra-homogeneous-radial},
\[
 U_NF(0)=0,
 \qquad
 \R_0(U_NF)
 =\left(\sum_{m=0}^Ns^mQ_mF\right)\R_0g.
\]
Convergence in $A^q$ implies local uniform convergence of the
holomorphic functions and of their derivatives.  Therefore
\[
 UF(0)=0,
 \qquad
 \R_0(UF)=F(s\,\cdot)\R_0g.
\]
The same identities hold for $V_{g,\varphi_s}F$ by
\eqref{eq:Volterra-radial-identity}.  Radial uniqueness gives
$V_{g,\varphi_s}F=UF$ for every $F\in A^p$.  Hence
\begin{equation*}
 V_{g,\varphi_s}
 =U=\sum_{m=0}^\infty s^mS_m
\end{equation*}
in nuclear norm.  Thus,
\[
 V_{g,\varphi_s}F=\sum_{m=0}^\infty s^mS_mF
\]
in $A^q$ for every $F\in A^p$.  This proves nuclearity and the second
inequality in
\eqref{eq:Volterra-dilation-nuclear-bound}.  Finally,
\[
 \pi_r(V_{g,\varphi_s})
 \le\pi_1(V_{g,\varphi_s})
 \le\nu_1(V_{g,\varphi_s})
\]
by \eqref{eq:nuclear-to-summing} and the inclusion theorem.  This gives
\eqref{eq:Volterra-dilation-all-r} and the first inequality in
\eqref{eq:Volterra-dilation-nuclear-bound}.
\end{proof}

\begin{corollary}\label{cor:composition-Volterra-q1}
Let $1<p<\infty$ and $1\le r<\infty$.  Let
$u,g\in A^1(\Omega)$, and let
$\varphi:\Omega\to\Omega$ be holomorphic.  Form the measures in
\eqref{eq:weighted-pullback} and \eqref{eq:Volterra-pullback} with
$q=1$.  Let $s_p$ be given by
\eqref{eq:intro-q1-multiplication}.  Then
\begin{align}
 W_{u,\varphi}\in\Piabs_r(A^p,A^1)
 &\Longleftrightarrow
 \left\{\frac{\mu_{u,\varphi}(\Q_\rho(a_j))}
 {V_j^{1/p}}\right\}_j\in\ell^{s_p},
 \label{eq:composition-q1}\\
 V_{g,\varphi}\in\Piabs_r(A^p,A^1)
 &\Longleftrightarrow
 \left\{\frac{\mu_{g,\varphi}(\Q_\rho(a_j))}
 {V_j^{1/p}}\right\}_j\in\ell^{s_p},
 \label{eq:Volterra-q1}
\end{align}
Moreover,
\begin{align*}
 \pi_r(W_{u,\varphi})
 &\asymp
 \left\|
 \left\{\frac{\mu_{u,\varphi}(\Q_\rho(a_j))}
 {V_j^{1/p}}\right\}_j
 \right\|_{\ell^{s_p}},
 \\
 \pi_r(V_{g,\varphi})
 &\asymp
 \left\|
 \left\{\frac{\mu_{g,\varphi}(\Q_\rho(a_j))}
 {V_j^{1/p}}\right\}_j
 \right\|_{\ell^{s_p}}.
\end{align*}
The constants are independent of $u,g$, and $\varphi$.
\end{corollary}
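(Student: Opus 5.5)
The plan is to read \cref{cor:composition-Volterra-q1} as the case $q=1$ of \cref{thm:weighted-composition} and \cref{thm:Volterra-composition}, with the diagonal ideal made explicit at $q=1$. Both theorems are stated for $1\le q<\infty$, so they apply with $q=1$ directly. The hypotheses $u\in A^1(\Omega)$ and $g\in A^1(\Omega)$ are exactly those required there. At $q=1$ the measures \eqref{eq:weighted-pullback} and \eqref{eq:Volterra-pullback} are finite, by the definition of $A^1$ and by the right inequality of \cref{lem:radial-LP} at $q=1$, respectively.

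With $q=1$ the coefficients are $\mu(\Q_\rho(a_j))/V_j^{1/p}$, since the exponent $1/q$ equals one. \Cref{thm:weighted-composition} gives
\[
 \pi_r(W_{u,\varphi})\asymp\left\|\left\{\mu_{u,\varphi}(\Q_\rho(a_j))V_j^{-1/p}\right\}_j\right\|_{\mathfrak d_{p,1}^{\,r}} .
\]
\Cref{thm:Volterra-composition} gives the same comparison for $V_{g,\varphi}$ with $\mu_{g,\varphi}$ in place of $\mu_{u,\varphi}$.

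It remains to identify $\mathfrak d_{p,1}^{\,r}$ by \cref{prop:garling}. For $1<p\le2$ and $q=1\le2$, the first line of \eqref{eq:kappa-table} gives the exponent $(1/p'+1/2)^{-1}$. Since $1/p'+1/2=(3p-2)/(2p)$, this exponent is $2p/(3p-2)=s_p$. For $2<p<\infty$ we have $p'<2$, hence $q=1<p'$, and the second line gives $\ell^1=\ell^{s_p}$. The borderline lines do not arise: they require $q=p'$, but $p'>1=q$ for $p<\infty$. In each case the norms are equivalent, with constants depending only on $p$ and $r$. Substituting this identification into the two comparisons yields the equivalences \eqref{eq:composition-q1} and \eqref{eq:Volterra-q1} together with the norm estimates.

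The only step needing care is the index bookkeeping. We must check that neither the Orlicz line nor the exceptional set $\mathcal E$ is reached at $q=1$, and that the constants stay independent of $u$, $g$, and $\varphi$. Both facts are inherited from the cited theorems and from \cref{prop:garling}. There is no analytic obstacle, because \cref{thm:carleson} and \cref{lem:radial-LP} already cover $q=1$. The result is also consistent with \eqref{eq:intro-q1-multiplication}.
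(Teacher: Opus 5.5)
Your proposal is correct and matches the paper's proof: specialize \cref{thm:weighted-composition} and \cref{thm:Volterra-composition} to $q=1$, then identify $\mathfrak d_{p,1}^{\,r}=\ell^{s_p}$ from the first two lines of \eqref{eq:kappa-table}. Your additional checks are consistent with the paper: the Orlicz line and $\mathcal E$ are not reached at $q=1$, and the pullback measures are finite.
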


\begin{proof}
If $1<p\le2$, the first line of \eqref{eq:kappa-table} gives
\[
 \mathfrak d_{p,1}^{\,r}
 =\ell^{(1/p'+1-1/2)^{-1}}
 =\ell^{2p/(3p-2)}.
\]
If $p>2$, its second line gives
\[
 \mathfrak d_{p,1}^{\,r}=\ell^1.
\]
These are exactly the two cases in
\eqref{eq:intro-q1-multiplication}.  Apply
\cref{thm:weighted-composition,thm:Volterra-composition}, including
their norm formulas.
\end{proof}

\subsection{Comparison and differences}

The pullback formulation gives comparison results without pointwise
control of the self-map.

\begin{proposition}\label{prop:pullback-domination}
Let $1<p<\infty$ and $1\le q,r<\infty$.  Let $\mu_1,\mu_2$ be positive
locally finite Borel measures.  Let $C>0$ and suppose
\[
 \mu_1(E)\le C^q\mu_2(E)
 \qquad\text{for every Borel set }E\subset\Omega.
\]
If $J_{\mu_2}\in\Piabs_r(A^p,L^q(\mu_2))$, then
$J_{\mu_1}\in\Piabs_r(A^p,L^q(\mu_1))$, and
\begin{equation}\label{eq:measure-domination-summing}
 \pi_r(J_{\mu_1})\le C\pi_r(J_{\mu_2}).
\end{equation}
Thus:
\begin{enumerate}[label=\textup{(\roman*)}]
\item let $\varphi:\Omega\to\Omega$ be holomorphic and let
$u_1,u_2\in A^q(\Omega)$.  If $|u_1|\le C|u_2|$ on $\Omega$, then
\begin{equation}\label{eq:weight-domination}
 W_{u_2,\varphi}\in\Piabs_r(A^p,A^q)
 \quad\Longrightarrow\quad
 W_{u_1,\varphi}\in\Piabs_r(A^p,A^q),
 \qquad
 \pi_r(W_{u_1,\varphi})
 \le C\pi_r(W_{u_2,\varphi});
\end{equation}
\item let $\varphi:\Omega\to\Omega$ be holomorphic and let
$g_1,g_2\in A^q(\Omega)$.  If
$|\R_ag_1|\le C|\R_ag_2|$ on $\Omega$, then
\begin{equation}\label{eq:Volterra-symbol-domination}
 V_{g_2,\varphi}\in\Piabs_r(A^p,A^q)
 \quad\Longrightarrow\quad
 V_{g_1,\varphi}\in\Piabs_r(A^p,A^q),
 \qquad
 \pi_r(V_{g_1,\varphi})
 \lesssim C\pi_r(V_{g_2,\varphi}).
\end{equation}
\end{enumerate}
\end{proposition}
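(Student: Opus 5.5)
The plan is to work directly with the definition of the summing norm, with no need for the Carleson theorem. Let $g_1,\dots,g_N\in A^p$. The domination $\mu_1\le C^q\mu_2$ on Borel sets extends, by the usual approximation of nonnegative measurable functions by simple functions and monotone convergence, to
\[
 \int_\Omega\phi\dd\mu_1\le C^q\int_\Omega\phi\dd\mu_2
\]
for every nonnegative Borel $\phi$. Taking $\phi=|g|^q$ gives $\|g\|_{L^q(\mu_1)}\le C\|g\|_{L^q(\mu_2)}$. In particular $A^p\subset L^q(\mu_1)$ once $J_{\mu_2}$ is defined on all of $A^p$, and then
\[
 \Bigl(\sum_k\|J_{\mu_1}g_k\|_{L^q(\mu_1)}^r\Bigr)^{1/r}
 \le C\Bigl(\sum_k\|J_{\mu_2}g_k\|_{L^q(\mu_2)}^r\Bigr)^{1/r}
 \le C\pi_r(J_{\mu_2})\,w_r(g_1,\dots,g_N).
\]
This proves \eqref{eq:measure-domination-summing}. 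A cleaner variant is to note that $\mu_1\ll\mu_2$ with density at most $C^q$. The map $L^q(\mu_2)\to L^q(\mu_1)$, $h\mapsto h$, is then well defined on classes with norm at most $C$, and $J_{\mu_1}$ is this map composed with $J_{\mu_2}$. The claim then follows from the ideal property \eqref{eq:summing-ideal-property}. I will use this factorization.

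For (i), I compare the pullback measures \eqref{eq:weighted-pullback}. For a Borel set $E$,
\[
 \mu_{u_1,\varphi}(E)=\int_{\varphi^{-1}(E)}|u_1|^q\dd v\le C^q\mu_{u_2,\varphi}(E).
\]
In the proof of \cref{thm:weighted-composition} the identity $\pi_r(W_{u,\varphi})=\pi_r(J_{\mu_{u,\varphi}})$ is exact, and it also holds in the sense that one side is finite exactly when the other is. Combining this with the first part gives \eqref{eq:weight-domination} with constant exactly $C$.

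For (ii) I argue the same way with \eqref{eq:Volterra-pullback}. The pointwise bound $\delta^q|\R_ag_1|^q\le C^q\delta^q|\R_ag_2|^q$ gives $\mu_{g_1,\varphi}\le C^q\mu_{g_2,\varphi}$. The proof of \cref{thm:Volterra-composition} gives $\pi_r(V_{g,\varphi})\asymp\pi_r(J_{\mu_{g,\varphi}})$, with constants from \cref{lem:radial-LP} that do not depend on $g$ or $\varphi$. This is why only $\lesssim$ appears in \eqref{eq:Volterra-symbol-domination}.

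The only point needing care is that the operators are defined on all of $A^p$. That is, I must check that summability of $V_{g_2,\varphi}$ makes the natural domain of $V_{g_1,\varphi}$ equal to $A^p$. This follows from the membership equivalence in \eqref{eq:Volterra-Carleson-norm} together with $A^p\subset L^q(\mu_2)\subset L^q(\mu_1)$. The argument uses no diagonal classification, so it also covers $q=1$.
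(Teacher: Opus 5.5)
Your proposal is correct and follows the paper's proof. The paper also factors $J_{\mu_1}=\iota_{21}J_{\mu_2}$ through the identity map $L^q(\mu_2)\to L^q(\mu_1)$, which has norm at most $C$, and applies the ideal property; for (ii) it likewise uses the comparison $\mu_{g_1,\varphi}\le C^q\mu_{g_2,\varphi}$ together with \cref{thm:Volterra-composition}. The only cosmetic difference is in (i): the paper compares $\|W_{u_1,\varphi}F\|_{A^q}\le C\|W_{u_2,\varphi}F\|_{A^q}$ directly on each member of a finite family, instead of passing through $\mu_{u,\varphi}$ and the exact identity $\pi_r(W_{u,\varphi})=\pi_r(J_{\mu_{u,\varphi}})$, and both routes give the constant $C$.
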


\begin{proof}
The measure inequality implies $\mu_1\ll\mu_2$.  The identity map
\[
 \iota_{21}:L^q(\mu_2)\longrightarrow L^q(\mu_1),
 \qquad \iota_{21}h=h,
\]
is well defined and
\[
 \|\iota_{21}h\|_{L^q(\mu_1)}^q
 =\int_\Omega|h|^q\dd\mu_1
 \le C^q\int_\Omega|h|^q\dd\mu_2.
\]
Thus $\|\iota_{21}\|\le C$ and
\[
 J_{\mu_1}=\iota_{21}J_{\mu_2}.
\]
The ideal property proves \eqref{eq:measure-domination-summing}.

If $|u_1|\le C|u_2|$, then
\[
 \|W_{u_1,\varphi}F\|_{A^q}^q
 =\int_\Omega|u_1|^q|F\circ\varphi|^q\dd v
 \le C^q\|W_{u_2,\varphi}F\|_{A^q}^q.
\]
Apply this inequality to every member of a finite family in $A^p$.
This proves \eqref{eq:weight-domination} with constant $C$.

For the Volterra symbols,
\begin{align*}
 \mu_{g_1,\varphi}(E)
 &=\int_{\varphi^{-1}(E)}
 \delta^q|\R_ag_1|^q\dd v\\
 &\le C^q\mu_{g_2,\varphi}(E).
\end{align*}
Use \eqref{eq:measure-domination-summing} and
\cref{thm:Volterra-composition}.  This gives
\eqref{eq:Volterra-symbol-domination}.
\end{proof}

For $1<p<\infty$ and $1\le q,r<\infty$, write
\[
 b^{\mathrm C}(u,\varphi)
 =
 \left\{\frac{\mu_{u,\varphi}(\Q_\rho(a_j))^{1/q}}
 {V_j^{1/p}}\right\}_j,
 \qquad
 b^{\mathrm V}(g,\varphi)
 =
 \left\{\frac{\mu_{g,\varphi}(\Q_\rho(a_j))^{1/q}}
 {V_j^{1/p}}\right\}_j.
\]

\begin{theorem}\label{thm:pullback-perturbation}
Let $1<p<\infty$ and $1\le q,r<\infty$.
Fix a holomorphic self-map $\varphi:\Omega\to\Omega$.
For $u_0,u_1\in A^q(\Omega)$,
\begin{align}
 W_{u_1,\varphi}-W_{u_0,\varphi}
 &=W_{u_1-u_0,\varphi},
 \label{eq:composition-difference}\\
 \pi_r(W_{u_1,\varphi}-W_{u_0,\varphi})
 &\asymp
 \|b^{\mathrm C}(u_1-u_0,\varphi)\|_{\mathfrak d_{p,q}^{\,r}}.
 \label{eq:composition-perturbation-norm}
\end{align}
For $g_0,g_1\in A^q(\Omega)$,
\begin{align}
 V_{g_1,\varphi}-V_{g_0,\varphi}
 &=V_{g_1-g_0,\varphi},
 \label{eq:Volterra-difference}\\
 \pi_r(V_{g_1,\varphi}-V_{g_0,\varphi})
 &\asymp
 \|b^{\mathrm V}(g_1-g_0,\varphi)\|_{\mathfrak d_{p,q}^{\,r}}.
 \label{eq:Volterra-perturbation-norm}
\end{align}
The algebraic identities hold on $\Hol(\Omega)$.  In the norm formulas,
the left sides denote the unique extensions of the common formulas
restricted to $\mathcal P(\Omega)$.  We set $\pi_r(T)=+\infty$ if such
an absolutely $r$-summing extension does not exist.  We also set
$\|b\|_{\mathfrak d_{p,q}^{\,r}}=+\infty$ when
$b\notin\mathfrak d_{p,q}^{\,r}$.  Thus
\eqref{eq:composition-perturbation-norm} and
\eqref{eq:Volterra-perturbation-norm} include both the finite and the
infinite cases.  All comparison constants are independent of the
symbols and of $\varphi$.
\end{theorem}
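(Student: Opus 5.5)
The plan is to reduce everything to the two single-operator criteria, \cref{thm:weighted-composition} and \cref{thm:Volterra-composition}, applied to the difference symbols. The algebraic identities come first. For \eqref{eq:composition-difference}, pointwise linearity in the weight gives $u_1(F\circ\varphi)-u_0(F\circ\varphi)=(u_1-u_0)(F\circ\varphi)$ for every $F\in\Hol(\Omega)$. For \eqref{eq:Volterra-difference}, the integrand in \eqref{eq:Volterra-composition-definition} is linear in $g$ through $\R_ag$. Alternatively, one can use the radial identity \eqref{eq:Volterra-radial-identity}: both sides vanish at $a$ and have the same image under $\R_a$, so the uniqueness argument from the proof of \cref{thm:mixed-Volterra-product} gives equality. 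The difference symbols $u_1-u_0$ and $g_1-g_0$ lie in $A^q$, so the pullback measures $\mu_{u_1-u_0,\varphi}$ and $\mu_{g_1-g_0,\varphi}$ are finite and the coefficient sequences $b^{\mathrm C}$ and $b^{\mathrm V}$ are defined.

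Next I will identify the operator on the left with the single operator $W_{u_1-u_0,\varphi}$ or $V_{g_1-g_0,\varphi}$ restricted to $\mathcal P(\Omega)$. Polynomials are bounded on $\Omega$, so these restrictions take values in $A^q$; for the Volterra case this uses \eqref{eq:Volterra-Carleson-norm}. The isometry \eqref{eq:weighted-composition-isometry}, respectively the two-sided estimate \eqref{eq:Volterra-Carleson-norm}, holds for every polynomial. Applying it to finite families of polynomials gives
\[
 \pi_r\bigl(W_{u_1-u_0,\varphi}|_{\mathcal P}\bigr)=\pi_r\bigl(J_{\mu_{u_1-u_0,\varphi}}|_{\mathcal P}\bigr),
\]
and comparability in the Volterra case. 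The weak $r$-norms agree because both maps have the same source. These equalities hold in the extended sense: an $r$-summing extension of one restriction exists exactly when one of the other exists. Density of $\mathcal P(\Omega)$ in $A^p$ and the family-approximation argument used in \cref{thm:weighted-composition-products} pass the inequalities to the extensions. \cref{thm:carleson} then converts $\pi_r(J_\mu)$ into $\|b\|_{\mathfrak d_{p,q}^{\,r}}$, with the convention $+\infty$ on both sides.

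The main obstacle is the infinite case together with the identification of extensions. If $J_\mu$ is not $r$-summing on $A^p$, the restriction to $\mathcal P(\Omega)$ must also fail to have an $r$-summing extension; otherwise the $+\infty$ convention breaks. I will handle this as in \cref{thm:weighted-composition-products}. An extension from $\mathcal P(\Omega)$ into $L^q(\mu)$ must coincide $\mu$-almost everywhere with pointwise inclusion, because $L^q(\mu)$ convergence gives a $\mu$-a.e.\ convergent subsequence, while $A^p$ convergence gives pointwise convergence. Thus any extension is $J_\mu$ itself, and \cref{thm:carleson} applies. For the Volterra operator, the local uniform bound \eqref{eq:Volterra-compact-bound} shows that the extension is given by the pointwise formula. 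With these identifications all constants come from \cref{thm:carleson} and \cref{lem:radial-LP}, so they are independent of the symbols and of $\varphi$.
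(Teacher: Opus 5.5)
Your proposal is correct and follows the paper's route. The paper gets both identities by linearity, in the weight and in $\R_a g$, and then applies \cref{thm:weighted-composition} and \cref{thm:Volterra-composition} to the difference symbols $u_1-u_0$ and $g_1-g_0$. Your added argument that any $r$-summing extension from $\mathcal P(\Omega)$ must be the pointwise operator, which is what makes the $+\infty$ convention consistent, is sound; the paper leaves this step implicit in the arguments of \cref{thm:weighted-composition-products} and \cref{prop:domain-ledger}.
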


\begin{proof}
For $F\in\Hol(\Omega)$,
\[
 (W_{u_1,\varphi}-W_{u_0,\varphi})F
 =(u_1-u_0)(F\circ\varphi).
\]
This is \eqref{eq:composition-difference}.  Apply
\cref{thm:weighted-composition} with the weight $u_1-u_0$.

Linearity of the radial derivative gives
\[
 \R_a(g_1-g_0)=\R_ag_1-\R_ag_0.
\]
Subtract the two defining integrals in
\eqref{eq:Volterra-composition-definition}.  This proves
\eqref{eq:Volterra-difference}.  Apply
\cref{thm:Volterra-composition} with the symbol $g_1-g_0$.
\end{proof}

\begin{corollary}
\label{cor:pullback-closedness}
Let $1<p<\infty$ and $1\le q,r<\infty$.  Fix a holomorphic map
$\varphi:\Omega\to\Omega$.  Let $(u_m)\subset A^q(\Omega)$.  Assume
\begin{equation}\label{eq:composition-closedness-membership}
 b^{\mathrm C}(u_m,\varphi)\in\mathfrak d_{p,q}^{\,r}
 \qquad(m\in\N)
\end{equation}
and
\begin{equation}\label{eq:composition-closedness-Cauchy}
 \|b^{\mathrm C}(u_m-u_\ell,\varphi)\|_{\mathfrak d_{p,q}^{\,r}}
 \longrightarrow0
 \qquad(m,\ell\to\infty).
\end{equation}
Then there is $u\in A^q(\Omega)$ such that
\[
 b^{\mathrm C}(u,\varphi)\in\mathfrak d_{p,q}^{\,r},
 \qquad
 W_{u,\varphi}\in\Piabs_r(A^p,A^q),
\]
and
\begin{equation}\label{eq:composition-closedness-conclusion}
 \|u_m-u\|_{A^q}\longrightarrow0,
 \qquad
 \pi_r(W_{u_m,\varphi}-W_{u,\varphi})\longrightarrow0.
\end{equation}

Let $(g_m)\subset A^q(\Omega)$.  Assume
\begin{equation*}
 b^{\mathrm V}(g_m,\varphi)\in\mathfrak d_{p,q}^{\,r}
 \qquad(m\in\N)
\end{equation*}
and
\begin{equation*}
 \|b^{\mathrm V}(g_m-g_\ell,\varphi)\|_{\mathfrak d_{p,q}^{\,r}}
 \longrightarrow0
 \qquad(m,\ell\to\infty).
\end{equation*}
Put $\widehat g_m=g_m-g_m(a)$.  Then there is
$g\in A^q(\Omega)$ with $g(a)=0$ such that
\[
 b^{\mathrm V}(g,\varphi)\in\mathfrak d_{p,q}^{\,r},
 \qquad
 V_{g,\varphi}\in\Piabs_r(A^p,A^q).
\]
Moreover,
\begin{equation}\label{eq:Volterra-closedness-conclusion}
 \|\widehat g_m-g\|_{A^q}\longrightarrow0,
 \qquad
 \pi_r(V_{g_m,\varphi}-V_{g,\varphi})\longrightarrow0.
\end{equation}
\end{corollary}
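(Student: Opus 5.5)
The approach is to transfer completeness from the Banach space $\Piabs_r(A^p,A^q)$ to the symbols, using the exact norm correspondence of \cref{thm:weighted-composition,thm:Volterra-composition} together with the difference identities of \cref{thm:pullback-perturbation}.

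For the weighted composition part, apply \eqref{eq:composition-perturbation-norm} to the pair $u_m,u_\ell$. It gives $\pi_r(W_{u_m,\varphi}-W_{u_\ell,\varphi})\lesssim\|b^{\mathrm C}(u_m-u_\ell,\varphi)\|_{\dideal}\to0$. Each $W_{u_m,\varphi}$ lies in $\Piabs_r$ by \eqref{eq:composition-closedness-membership}, so the sequence is Cauchy in the Banach ideal $\Piabs_r(A^p,A^q)$. It therefore converges in $\pi_r$, hence in operator norm, to some $T\in\Piabs_r$. Evaluating at $\mathbf 1\in A^p$ gives $u_m=W_{u_m,\varphi}\mathbf1\to T\mathbf1=:u$ in $A^q$, so $u\in A^q$. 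To identify $T=W_{u,\varphi}$, fix $F\in A^p$. Then $W_{u_m,\varphi}F\to TF$ in $A^q$, hence locally uniformly. Also $u_m\to u$ locally uniformly, because $A^q$ convergence controls point evaluations, while $F\circ\varphi$ is a fixed holomorphic function. So $u_m(F\circ\varphi)\to u(F\circ\varphi)$ pointwise, and $TF=u(F\circ\varphi)$. Thus $W_{u,\varphi}$ is defined on all of $A^p$ and equals $T\in\Piabs_r$. \Cref{thm:weighted-composition} then gives $b^{\mathrm C}(u,\varphi)\in\dideal$, and $\pi_r(W_{u_m,\varphi}-W_{u,\varphi})=\pi_r(W_{u_m,\varphi}-T)\to0$.

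For the Volterra part the argument is parallel, with one change: $\mathbf1$ now maps to $g-g(a)$, which explains the normalization $\widehat g_m$. By \eqref{eq:Volterra-perturbation-norm} the operators $V_{g_m,\varphi}$ form a Cauchy sequence in $\Piabs_r$, so they converge to some $T$. Since $\R_a$ kills constants, $V_{g_m,\varphi}=V_{\widehat g_m,\varphi}$, and $V_{g_m,\varphi}\mathbf1=\widehat g_m$. Hence $\widehat g_m\to g:=T\mathbf1$ in $A^q$, with $g(a)=0$ as a limit of point values.

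The identification $TF=V_{g,\varphi}F$ is the step needing care. From $A^q$ convergence, $\widehat g_m\to g$ locally uniformly together with first derivatives, by Cauchy estimates. Estimate \eqref{eq:Volterra-compact-bound}, applied with the fixed $F$ and the symbol difference $\widehat g_m-g$, gives $V_{\widehat g_m,\varphi}F\to V_{g,\varphi}F$ locally uniformly. The bound there is linear in $\nabla g$ over the compact set $K_a$, so this convergence follows. On the other hand $V_{g_m,\varphi}F\to TF$ in $A^q$, hence locally uniformly, so the two limits agree. Therefore $V_{g,\varphi}=T\in\Piabs_r$, and \cref{thm:Volterra-composition} gives $b^{\mathrm V}(g,\varphi)\in\dideal$ and the convergence in \eqref{eq:Volterra-closedness-conclusion}.

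The only mild obstacle is making sure the natural domain equals $A^p$ before applying the criteria. This follows because $T$ is bounded on all of $A^p$ and agrees pointwise with the formula.
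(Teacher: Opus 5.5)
Your proposal is correct and follows the paper's proof essentially step by step: the perturbation identities make the operators Cauchy in $\pi_r$, completeness of $\Piabs_r(A^p,A^q)$ gives a limit $T$, evaluation at $\mathbf 1$ produces the limit symbol, and local uniform convergence identifies $T$ with the pointwise formula (via Cauchy estimates and \eqref{eq:Volterra-compact-bound} in the Volterra case). The only cosmetic difference is that you set $u=T\mathbf 1$ and $g=T\mathbf 1$ directly, whereas the paper first shows that $(u_m)$ and $(\widehat g_m)$ are Cauchy in $A^q$ by the same evaluation at $\mathbf 1$.
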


\begin{proof}
By \cref{thm:weighted-composition} and
\eqref{eq:composition-closedness-membership}, every
$W_{u_m,\varphi}$ belongs to $\Piabs_r(A^p,A^q)$.  Equations
\eqref{eq:composition-perturbation-norm} and
\eqref{eq:composition-closedness-Cauchy} show that this sequence is
Cauchy in $\pi_r$.  Moreover,
\begin{align*}
 \|u_m-u_\ell\|_{A^q}
 &=\|(W_{u_m,\varphi}-W_{u_\ell,\varphi})\mathbf1\|_{A^q}\\
 &\le \|\mathbf1\|_{A^p}
 \pi_r(W_{u_m,\varphi}-W_{u_\ell,\varphi})
 \longrightarrow0.
\end{align*}
Hence $u_m\to u$ in $A^q$ for some $u\in A^q$.

The space $\Piabs_r(A^p,A^q)$ is complete in the norm $\pi_r$.
Thus $W_{u_m,\varphi}\to T$ in $\pi_r$ for some
$T\in\Piabs_r(A^p,A^q)$.  For every $F\in A^p$,
\[
 \|W_{u_m,\varphi}F-TF\|_{A^q}
 \le \pi_r(W_{u_m,\varphi}-T)\|F\|_{A^p}
 \longrightarrow0.
\]
The convergence $u_m\to u$ in $A^q$ is locally uniform.  Therefore
\[
 W_{u_m,\varphi}F=u_m(F\circ\varphi)
 \longrightarrow u(F\circ\varphi)=W_{u,\varphi}F
\]
locally uniformly.  Hence $T=W_{u,\varphi}$.  This proves
\eqref{eq:composition-closedness-conclusion}.  The sequence condition
follows from \cref{thm:weighted-composition}.

The Volterra hypotheses and
\eqref{eq:Volterra-perturbation-norm} show that
$(V_{g_m,\varphi})$ is Cauchy in $\pi_r$.  Since
$V_{g_m,\varphi}=V_{\widehat g_m,\varphi}$,
\begin{align*}
 \|\widehat g_m-\widehat g_\ell\|_{A^q}
 &=\|(V_{g_m,\varphi}-V_{g_\ell,\varphi})\mathbf1\|_{A^q}\\
 &\le \|\mathbf1\|_{A^p}
 \pi_r(V_{g_m,\varphi}-V_{g_\ell,\varphi})
 \longrightarrow0.
\end{align*}
Thus $\widehat g_m\to g$ in $A^q$ for some $g\in A^q$.  Point
evaluation at $a$ gives $g(a)=0$.  Completeness gives
$V_{g_m,\varphi}\to S$ in $\pi_r$ for some
$S\in\Piabs_r(A^p,A^q)$.  For each compact $K\subset\Omega$,
Cauchy's estimate on a fixed neighborhood of $K_a$ gives
\[
 \sup_{K_a}|\nabla(\widehat g_m-g)|\longrightarrow0.
\]
For $F\in A^p$, equation \eqref{eq:Volterra-compact-bound} then gives
\begin{align*}
 \sup_K|V_{g_m,\varphi}F-V_{g,\varphi}F|
 &=\sup_K|V_{\widehat g_m-g,\varphi}F|\\
 &\le D_\Omega\sup_{\varphi(K_a)}|F|
 \sup_{K_a}|\nabla(\widehat g_m-g)|\longrightarrow0.
\end{align*}
Convergence in $\pi_r$ also gives $V_{g_m,\varphi}F\to SF$ in $A^q$,
hence locally uniformly.  Therefore $SF=V_{g,\varphi}F$ for every
$F\in A^p$.  Hence $S=V_{g,\varphi}$.  This proves
\eqref{eq:Volterra-closedness-conclusion}.  The sequence condition
follows from \cref{thm:Volterra-composition}.
\end{proof}

We have now treated the five operator classes.  Next, we give
consequences that apply to all five classes.

\section{Explicit criteria, spectral asymptotics, and the endpoint}\label{sec:common}

We collect the parameter forms of the preceding results and give model
examples.  The multiplication and pullback criteria hold also at $q=1$.
The big Hankel criterion holds for every $1<q<\infty$ by
\cref{prop:FTI-all}.  The individual Toeplitz criteria use their stated
symbol assumptions.
For little Hankel operators, the exact criterion concerns the pair
$(h_f,k_f)$, where $k_f=M_f-h_f$ is the auxiliary complementary
operator.  The result for $h_f$ alone is a necessary condition.

\subsection{Explicit parameter form}

Let $1<p<\infty$, $1\le q,r<\infty$, and
$f\in L^q_{\loc}(\Omega)$.  Put $\alpha=1/q-1/p$ and define the
positive locally finite measure
\[
 \mu_f(E)=\int_E|f|^q\dd v.
\]
The local coefficients satisfy
\begin{align*}
 \beta_j(\mu_f)
 &=\frac{\mu_f(\Q_\rho(a_j))^{1/q}}{V_j^{1/p}}
 =V_j^\alpha\M_{q,\rho}f(a_j),
 \\
 \mathcal C_{p,q,\rho}\mu_f(z)
 &=\V_\rho(z)^\alpha\M_{q,\rho}f(z).
\end{align*}
In a power case $\mathfrak d_{p,q}^{\,r}=\ell^\kappa$ of
\eqref{eq:kappa-table}, \cref{lem:measure-discrete-continuous} gives
\begin{equation}\label{eq:discrete-continuous-explicit}
 \|\mathbf M_{p,q,\rho}f\|_{\ell^\kappa}
 \asymp
 \|\V_\rho^{1/q-1/p}\M_{q,\rho}f
 \|_{L^\kappa(\Omega,d\lambda_\Omega)}.
\end{equation}
At $2<p<\infty$, $q=p'<2$, and $1\le r<q$, the same lemma gives
\begin{equation}\label{eq:density-continuous-Orlicz}
 \|\mathbf M_{p,q,\rho}f\|_{\ell^{q^-}}
 \asymp
 \|\V_\rho^{1/q-1/p}\M_{q,\rho}f
 \|_{L^{q^-}(\Omega,d\lambda_\Omega)}.
\end{equation}
Both statements allow infinite norms.  The constants are independent
of $f$.  The proof of \cref{lem:measure-discrete-continuous} compares
the original lattice with an admissible finer lattice.  Each family
covers $\Omega$ at its own radius.  We do not assume that smaller balls
on the original lattice cover $\Omega$.  In particular,
\eqref{eq:discrete-continuous-explicit} is valid also when $q=1$.

For analytic distance, changing the center also changes the domain on
which the holomorphic approximants are defined.  We use the discrete
condition with two fixed scales
\begin{equation*}
 \mathbf G_{p,q,\rho}f
 =\{V_j^\alpha\G_{q,\rho_+}f(a_j)\}_j\in\mathfrak d_{p,q}^{\,r}.
\end{equation*}
The argument using a measure with a density above is used only for local means.

\begin{corollary}\label{cor:explicit}
Let $1<p,q<\infty$, $1\le r<\infty$, and
$f\in\mathcal D_{\Omega,q}$.  Assume
$\mathfrak d_{p,q}^{\,r}=\ell^\kappa$ in
\eqref{eq:kappa-table}.  Then
\begin{equation}\label{eq:explicit-graph}
 \mathscr T_f\in
 \Piabs_r(A^p,A^q\oplus_qL^q)
 \quad\Longleftrightarrow\quad
 \V_\rho^{1/q-1/p}\M_{q,\rho}f
 \in L^\kappa(\Omega,d\lambda_\Omega).
\end{equation}
Moreover,
\begin{equation}\label{eq:explicit-graph-norm}
 \pi_r(\mathscr T_f)
 \asymp
 \|\V_\rho^{1/q-1/p}\M_{q,\rho}f
 \|_{L^\kappa(d\lambda_\Omega)}.
\end{equation}
If $\mathbf G_{p,q,\rho}f\in\ell^\kappa$,
condition \eqref{eq:explicit-graph} is also equivalent to
$T_f\in\Piabs_r(A^p,A^q)$.
In that case,
\begin{equation}\label{eq:explicit-matching-norm}
 \pi_r(T_f)+\|\mathbf G_{p,q,\rho}f\|_{\ell^\kappa}
 \asymp
 \|\V_\rho^{1/q-1/p}\M_{q,\rho}f
 \|_{L^\kappa(d\lambda_\Omega)}.
\end{equation}
\end{corollary}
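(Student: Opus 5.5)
The plan is to combine three earlier results: the graph criterion \cref{thm:intro-main}, the matching IDA criterion \cref{cor:matching-ida}, and the discrete--continuous comparison \eqref{eq:discrete-continuous-explicit}. No new estimates should be needed.

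\emph{The graph statement.} First, \cref{thm:intro-main} gives
\[
 \pi_r(\mathscr T_f)\asymp\pi_r(M_f)\asymp\|\mathbf M_{p,q,\rho}f\|_{\dideal}.
\]
This comparison holds with both sides allowed to be infinite, and the constants do not depend on $f$. Since $\mathfrak d_{p,q}^{\,r}=\ell^\kappa$ with equivalent norms by \cref{prop:garling}, the right side is comparable to $\|\mathbf M_{p,q,\rho}f\|_{\ell^\kappa}$. Next apply \eqref{eq:discrete-continuous-explicit}, which is \cref{lem:measure-discrete-continuous} for the measure $\mu_f=|f|^q\,v$. It replaces the lattice sequence by the continuous norm $\|\V_\rho^{1/q-1/p}\M_{q,\rho}f\|_{L^\kappa(d\lambda_\Omega)}$, again with infinite values allowed. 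Together these give \eqref{eq:explicit-graph} and \eqref{eq:explicit-graph-norm}.

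\emph{The Toeplitz statement.} Assume $\mathbf G_{p,q,\rho}f\in\ell^\kappa$. By the norm equivalence, $f\in\mathcal I_{p,q}^r(\Omega)$ with $\|f\|_{\mathcal I_{p,q}^{r}}\asymp\|\mathbf G_{p,q,\rho}f\|_{\ell^\kappa}$. \Cref{cor:matching-ida} then shows that $T_f\in\Piabs_r$ holds if and only if $M_f\in\Piabs_r$; the IDA hypothesis is already given. It also gives
\[
 \pi_r(T_f)+\|f\|_{\mathcal I_{p,q}^{r}}\asymp\|f\|_{\mathcal L_{p,q}^{r}}.
\]
Converting $\|f\|_{\mathcal L_{p,q}^{r}}$ to the continuous norm, as in the first step, yields \eqref{eq:explicit-matching-norm}. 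By the first step, the condition $M_f\in\Piabs_r$ is equivalent to \eqref{eq:explicit-graph}.

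\emph{Main point of care.} The only subtle step is infinite values. When $T_f$ is not summing, $\pi_r(T_f)=+\infty$, and the matching corollary must be read with that convention. I will argue this directly from its two directions. Under the IDA hypothesis, if $T_f$ is summing then $M_f=T_f+H_f$ is summing by \cref{thm:intro-hankel}. Conversely, if $M_f$ is summing then $T_f=PM_f$ is summing. So the two sides of \eqref{eq:explicit-matching-norm} are finite simultaneously, and the displayed estimate then follows from the comparisons above.
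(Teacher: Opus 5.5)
Your proposal is correct and follows the paper's own proof: combine \eqref{eq:intro-main-norm} with the discrete--continuous comparison \eqref{eq:discrete-continuous-explicit} for the graph statement, then identify $\|f\|_{\mathcal I_{p,q}^{r}}$ with $\|\mathbf G_{p,q,\rho}f\|_{\ell^\kappa}$ and apply \cref{cor:matching-ida}. Your direct identification via the definition and \cref{prop:garling} works because the same lattice and radii are used, so no radius independence is needed. Your explicit handling of the case $\pi_r(T_f)=+\infty$ is a harmless addition.
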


\begin{proof}
By \eqref{eq:discrete-continuous-explicit},
\[
 \|f\|_{\mathcal L_{p,q}^{r}(\Omega)}
 \asymp
 \|\V_\rho^{1/q-1/p}\M_{q,\rho}f
 \|_{L^\kappa(d\lambda_\Omega)}.
\]
Equation \eqref{eq:intro-main-norm} now gives
\eqref{eq:explicit-graph} and \eqref{eq:explicit-graph-norm}.
By the definition of $\mathcal I_{p,q}^{r}$ and the radius independence
proved in \cref{thm:intro-hankel},
\[
 \|f\|_{\mathcal I_{p,q}^{r}}
 \asymp\|\mathbf G_{p,q,\rho}f\|_{\ell^\kappa}.
\]
Hence \eqref{eq:matching-ida-equivalence} gives the last equivalence.
Equations \eqref{eq:matching-ida-norm} and
\eqref{eq:discrete-continuous-explicit} give
\eqref{eq:explicit-matching-norm}.
\end{proof}

At the logarithmic borderline, define
\begin{equation*}
 \|F\|_{L^{q^-}(d\lambda_\Omega)}
 =\inf\left\{t>0:
 \int_\Omega\Psi_q\left(\frac{|F(z)|}{t}\right)
 \dd\lambda_\Omega(z)\le1\right\}.
\end{equation*}

\begin{corollary}\label{cor:orlicz-borderline}
Let
\begin{equation*}
 2<p<\infty,
 \qquad
 q=p'<2,
 \qquad
 1\le r<q,
 \qquad
 f\in\mathcal D_{\Omega,q}.
\end{equation*}
Then
\begin{equation}\label{eq:orlicz-graph}
 \mathscr T_f\in
 \Piabs_r(A^p,A^q\oplus_qL^q)
 \quad\Longleftrightarrow\quad
 \V_\rho^{1/q-1/p}\M_{q,\rho}f
 \in L^{q^-}(\Omega,d\lambda_\Omega).
\end{equation}
Moreover,
\begin{equation}\label{eq:orlicz-graph-norm}
 \pi_r(\mathscr T_f)
 \asymp
 \|\V_\rho^{1/q-1/p}\M_{q,\rho}f
 \|_{L^{q^-}(d\lambda_\Omega)}.
\end{equation}
If
\begin{equation}\label{eq:orlicz-ida}
 \mathbf G_{p,q,\rho}f\in\ell^{q^-},
\end{equation}
then \eqref{eq:orlicz-graph} is also equivalent to
$T_f\in\Piabs_r(A^p,A^q)$.
In that case,
\begin{equation}\label{eq:orlicz-matching-norm}
 \pi_r(T_f)+\|\mathbf G_{p,q,\rho}f\|_{\ell^{q^-}}
 \asymp
 \|\V_\rho^{1/q-1/p}\M_{q,\rho}f
 \|_{L^{q^-}(d\lambda_\Omega)}.
\end{equation}
\end{corollary}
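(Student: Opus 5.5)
The plan is to copy the proof of \cref{cor:explicit} word for word, replacing the power space $\ell^\kappa$ by the Orlicz space $\ell^{q^-}$ and $L^\kappa(d\lambda_\Omega)$ by $L^{q^-}(d\lambda_\Omega)$. For the stated parameters $2<p<\infty$, $q=p'<2$, $1\le r<q$, the third line of \eqref{eq:kappa-table} in \cref{prop:garling} gives
\[
 \mathfrak d_{p,q}^{\,r}=\ell^{q^-}
\]
with equivalent norms. Since $q=p'>1$, both \cref{thm:intro-main} and \cref{cor:matching-ida} apply.

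\textbf{Step 1: the graph criterion.} I combine this identification with \eqref{eq:density-continuous-Orlicz}, which comes from \cref{lem:measure-discrete-continuous}. This gives
\[
 \|f\|_{\mathcal L_{p,q}^{r}}
 =\|\mathbf M_{p,q,\rho}f\|_{\dideal}
 \asymp\|\mathbf M_{p,q,\rho}f\|_{\ell^{q^-}}
 \asymp\|\V_\rho^{1/q-1/p}\M_{q,\rho}f\|_{L^{q^-}(d\lambda_\Omega)},
\]
where infinite values are allowed on every side. Then \eqref{eq:intro-main-norm} gives $\pi_r(\mathscr T_f)\asymp\|f\|_{\mathcal L_{p,q}^{r}}$. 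Together these yield \eqref{eq:orlicz-graph} and \eqref{eq:orlicz-graph-norm}.

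\textbf{Step 2: the matching statement.} Assume \eqref{eq:orlicz-ida}. Because $\dideal=\ell^{q^-}$ with equivalent norms, $\|f\|_{\mathcal I_{p,q}^r}\asymp\|\mathbf G_{p,q,\rho}f\|_{\ell^{q^-}}<\infty$, so $f\in\mathcal I_{p,q}^r$. \Cref{cor:matching-ida} then makes $T_f\in\Piabs_r$ equivalent to $f\in\mathcal L^r_{p,q}$, which by Step 1 is condition \eqref{eq:orlicz-graph}. Finally, \eqref{eq:matching-ida-norm} together with the display in Step 1 gives \eqref{eq:orlicz-matching-norm}.

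\textbf{Main point to check.} The only step that needs care is that the Luxemburg norm \eqref{eq:luxemburg} really is equivalent to $\pi_r(D_b)$. Membership alone is not enough for the norm estimates. \Cref{prop:garling} settles this: Garling's modular differs from $\Psi_q$ only away from zero, and the closed graph theorem then gives equivalence of norms. With that in hand, every constant is independent of $f$, and no further estimate is needed.
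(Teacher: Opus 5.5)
Your proposal is correct and follows essentially the same route as the paper: identify $\mathfrak d_{p,q}^{\,r}=\ell^{q^-}$ by the third line of \eqref{eq:kappa-table}, discretize with \eqref{eq:density-continuous-Orlicz}, and then read off the graph and matching statements from \eqref{eq:intro-main-norm} and \cref{cor:matching-ida}. The paper also uses your observation that \eqref{eq:orlicz-ida} is exactly $f\in\mathcal I_{p,q}^r$, which follows from the norm equivalence in \cref{prop:garling}.
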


\begin{proof}
Put
\[
 F=\V_\rho^{1/q-1/p}\M_{q,\rho}f.
\]
Apply \eqref{eq:density-continuous-Orlicz} to obtain
\begin{equation}\label{eq:orlicz-discretization}
 \|F\|_{L^{q^-}(d\lambda_\Omega)}
 \asymp
 \|\mathbf M_{p,q,\rho}f\|_{\ell^{q^-}}.
\end{equation}
The third line of \eqref{eq:kappa-table} identifies
$\mathfrak d_{p,q}^{\,r}$ with $\ell^{q^-}$, with equivalent norms.
Equations \eqref{eq:intro-main-norm} and
\eqref{eq:orlicz-discretization} give
\eqref{eq:orlicz-graph} and \eqref{eq:orlicz-graph-norm}.
Condition \eqref{eq:orlicz-ida} is exactly
$f\in\mathcal I_{p,q}^{r}$.  Apply
\eqref{eq:matching-ida-equivalence}.  The norm statement
\eqref{eq:matching-ida-norm} gives
\eqref{eq:orlicz-matching-norm}.
\end{proof}

\subsection{Parameter substitution and the Hilbert case}

The exponent table in \cref{prop:garling} applies directly to each
sequence criterion proved above.  In a power case,
\[
 \|b\|_{\mathfrak d_{p,q}^{\,r}}
 \asymp\left(\sum_j|b_j|^\kappa\right)^{1/\kappa}.
\]
On the logarithmic line, this expression is replaced by the Luxemburg
norm in \eqref{eq:luxemburg}.  In the exceptional region, take
$\kappa=\kappa_{\mathcal E}$ from
\eqref{eq:exceptional-index}.  These substitutions preserve the
hypotheses and the direction of each implication.  In particular,
substitution into a necessary condition does not give a converse.

For equal source and target exponents,
\[
 \kappa(p,p,r)=
 \begin{cases}
 2,&1<p\le2,\\
 p',&2<p<\infty,\ 1\le r\le p',\\
 r,&2<p<\infty,\ p'<r<p,\\
 p,&2<p<\infty,\ p\le r<\infty.
 \end{cases}
\]
The formulas agree at their common endpoints.  The case $p=q=2$
gives an independent check of the summing exponent.

\begin{proposition}\label{prop:Hilbert-calibration}
Let $H_1,H_2$ be Hilbert spaces and let $1\le r<\infty$.
For $A\in\mathcal L(H_1,H_2)$,
\begin{equation}\label{eq:Hilbert-summing-S2}
 A\in\Piabs_r(H_1,H_2)
 \quad\Longleftrightarrow\quad
 A\in\mathcal S_2(H_1,H_2).
\end{equation}
Moreover,
\begin{equation}\label{eq:Hilbert-summing-norm}
 c_r\|A\|_{\mathcal S_2}
 \le \pi_r(A)
 \le C_r\|A\|_{\mathcal S_2}.
\end{equation}
For $r=2$,
\begin{equation}\label{eq:Hilbert-two-isometry}
 \pi_2(A)=\|A\|_{\mathcal S_2}.
\end{equation}
Consequently,
\begin{equation}\label{eq:d22r}
 \mathfrak d_{2,2}^{\,r}=\ell^2,
 \qquad
 \|b\|_{\mathfrak d_{2,2}^{\,r}}\asymp_r\|b\|_{\ell^2}.
\end{equation}
Here $\mathcal S_2$ denotes the Hilbert--Schmidt class.
\end{proposition}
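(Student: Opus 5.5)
The plan is to take the case $r=2$ first and then pass to general $r$ by the classical comparison of summing norms on Hilbert spaces.

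For $r=2$ I would follow Pietsch's argument. Suppose $A\in\mathcal S_2$, and fix a finite family $(x_k)$ in $H_1$. Choose an orthonormal basis $(e_i)$ of $H_1$. Then
\[
 \sum_k\|Ax_k\|^2=\sum_k\sum_i|\langle x_k,A^*f_i\rangle|^2,
\]
where $(f_i)$ is an orthonormal basis of $H_2$. Rewriting with $A^*f_i=\sum_\ell\langle A^*f_i,e_\ell\rangle e_\ell$ is awkward, so I would argue more directly. The quantity
\[
 \sum_k\|Ax_k\|^2=\sum_i\sum_k|\langle x_k,A^*f_i\rangle|^2
\]
is bounded by
\[
 \sum_i\|A^*f_i\|^2\,w_2(x_k)^2=\|A\|_{\mathcal S_2}^2\,w_2(x_k)^2 .
\]
This gives $\pi_2(A)\le\|A\|_{\mathcal S_2}$.

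Conversely, suppose $A\in\Piabs_2$. Test the summing inequality on a finite orthonormal family $(e_1,\ldots,e_N)$. Since
\[
 w_2(e_1,\ldots,e_N)=\sup_{\|y\|\le1}\Bigl(\sum_k|\langle e_k,y\rangle|^2\Bigr)^{1/2}\le1
\]
by Bessel's inequality, we get $\sum_{k\le N}\|Ae_k\|^2\le\pi_2(A)^2$. Letting $N\to\infty$ along a basis gives $\|A\|_{\mathcal S_2}\le\pi_2(A)$. This proves \eqref{eq:Hilbert-two-isometry}, and in particular \eqref{eq:Hilbert-summing-S2} for $r=2$.

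For general $r$, I would invoke the standard inclusion theorems on Hilbert spaces, as in \cite{DiestelJarchowTonge1995}. If $1\le r\le2$, monotonicity (the Pietsch-measure argument of \cref{prop:diagonal-compact-monotone}, which holds for any operator) gives $\pi_2\le\pi_r$. Hilbert spaces have cotype $2$, so $\Piabs_r=\Piabs_2$ with $\pi_r\le C_r\pi_2$ for all $r<\infty$; for $r=1$ this is Grothendieck's theorem, $\pi_1\le K_G\pi_2$ on Hilbert spaces. If $r>2$, monotonicity gives $\pi_r\le\pi_2$, and the reverse bound $\pi_2\le C_r\pi_r$ is the cotype-$2$ coincidence result (Maurey/Pisier). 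Combining the two cases yields \eqref{eq:Hilbert-summing-norm}.

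The main obstacle is these reverse comparisons: Grothendieck's inequality for $r=1$ and the cotype-$2$ coincidence $\Piabs_r=\Piabs_2$ for $r>2$. I would cite them rather than reprove them.

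Finally, \eqref{eq:d22r} follows from the diagonal case: $\|D_b\|_{\mathcal S_2}=\|b\|_{\ell^2}$, so $\pi_r(D_b)\asymp_r\|b\|_{\ell^2}$. The same conclusion is consistent with the $p=q=2$ corner of \cref{prop:garling}.
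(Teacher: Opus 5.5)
Your $r=2$ argument and the diagonal consequence match the paper's proof: Parseval gives $\pi_2(A)\le\|A\|_{\mathcal S_2}$, and orthonormal test families give the reverse inequality. The proposal is correct, but the route for general $r$ is different.

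\emph{Your route.} You reduce to $r=2$ by monotonicity of $\pi_r$ in $r$ and then cite two coincidence theorems.
\begin{itemize}
\item Grothendieck's theorem gives $\pi_1\le K_G\pi_2$ between Hilbert spaces. Hence $\pi_2\le\pi_r\le\pi_1\le K_G\pi_2$ for $1\le r\le2$.
\item Maurey's theorem gives $\pi_2\le C_r\pi_r$ for $r>2$, since the domain has cotype $2$.
\end{itemize}

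\emph{The paper's route.} It proves $c_r\|A\|_{\mathcal S_2}\le\pi_r(A)\le C_r\|A\|_{\mathcal S_2}$ directly for each $r$, without passing through $r=2$.
\begin{itemize}
\item The upper bound comes from Gaussian averaging. With $Z_N=\sum_ks_kg_kv_k$ built from a singular value expansion, one has $\mathbb E|\langle x,Z_N\rangle|^r=m_r^r\|A_Nx\|^r$ and $\sum_j|\langle x_j,Z_N\rangle|^r\le\|Z_N\|^rw_r(x_j)^r$. Together with $(\mathbb E\|Z_N\|^r)^{1/r}\lesssim_r\|A_N\|_{\mathcal S_2}$, this is in effect a proof of the little Grothendieck inequality.
\item The lower bound comes from testing $\pi_r$ on the $2^N$ vectors $2^{-N/r}\sum_k\varepsilon_ke_k$. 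Their weak $r$-norm is at most the scalar Khintchine constant. This is combined with a Hilbert-valued Khintchine lower bound, proved for $r<2$ by a Paley--Zygmund estimate.
\end{itemize}

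\emph{Comparison.} Your route is shorter and places the result in the cotype framework, but all the nontrivial content is cited. The paper's argument is self-contained and elementary, and it gives explicit constants.

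One attribution in your write-up should be corrected. The bound $\pi_r\le C_r\pi_2$ is just monotonicity for $r\ge2$. For $r<2$ it is Grothendieck's theorem: factor $A$ through the map $c\mapsto(s_kc_k)$ from $\ell^2$ to $\ell^1$ followed by the $1$-summing inclusion $\ell^1\to\ell^2$. Cotype $2$ of the domain is what supplies the reverse bound $\pi_2\le C_r\pi_r$ for $r>2$.
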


\begin{proof}
We first prove the comparison for every $1\le r<\infty$.  This is the
theorem for Hilbert spaces of \cite{Garling1970}. Write $\mathbb E$
and $\mathbb P$ for expectation and probability. Let $g$ be a standard
complex Gaussian with $\mathbb E|g|^2=1$, and put
$m_r=(\mathbb E|g|^r)^{1/r}$.  For an operator of finite rank, choose a
singular value expansion
\[
 A_Nx=\sum_{k=1}^N s_k\langle x,v_k\rangle u_k,
 \qquad Z_N=\sum_{k=1}^Ns_kg_kv_k,
\]
where $(u_k)$ and $(v_k)$ are orthonormal and the $g_k$ are independent
copies of $g$.  For every $x\in H_1$,
\[
 \mathbb E|\langle x,Z_N\rangle|^r=m_r^r\|A_Nx\|^r.
\]
Jensen's inequality for $r\le2$ and Minkowski's inequality in
$L^{r/2}$ for $r>2$ give, respectively,
\[
 (\mathbb E\|Z_N\|^r)^{1/r}
 \le \max\{1,m_r\}\Bigl(\sum_{k=1}^Ns_k^2\Bigr)^{1/2}.
\]
Thus, for any finite family $(x_j)$,
\begin{align*}
 \sum_j\|A_Nx_j\|^r
 &=m_r^{-r}\mathbb E\sum_j|\langle x_j,Z_N\rangle|^r\\
 &\le m_r^{-r}w_r(x_j)^r\mathbb E\|Z_N\|^r
 \lesssim_r\|A_N\|_{\mathcal S_2}^r w_r(x_j)^r.
\end{align*}
For $A\in\mathcal S_2$, its singular value truncations converge in
operator norm.  Passing to the limit for each finite family proves
$\pi_r(A)\lesssim_r\|A\|_{\mathcal S_2}$.

Conversely, let $e_1,\ldots,e_N$ be orthonormal in $H_1$ and, for
$\varepsilon\in\{-1,1\}^N$, put
\[
 x_\varepsilon=2^{-N/r}\sum_{k=1}^N\varepsilon_ke_k.
\]
Write $\mathbb E_\varepsilon$ for the average over the $2^N$ sign
choices. The scalar Khintchine inequality gives
\[
 w_r(x_\varepsilon)
 =\sup_{\|h\|\le1}
   \left(\mathbb E_\varepsilon
   \left|\sum_{k=1}^N\varepsilon_k\langle e_k,h\rangle\right|^r
   \right)^{1/r}\le K_r.
\]
The Hilbert-valued Khintchine lower bound gives
\begin{align*}
 \left(\sum_{k=1}^N\|Ae_k\|^2\right)^{1/2}
 &\lesssim_r\left(\mathbb E_\varepsilon
    \left\|\sum_{k=1}^N\varepsilon_kAe_k\right\|^r\right)^{1/r}\\
 &=\left(\sum_\varepsilon\|Ax_\varepsilon\|^r\right)^{1/r}
 \le K_r\pi_r(A).
\end{align*}
The constant in the lower bound is independent of the dimension.
For $r\ge2$, it follows from Jensen's inequality.  For $r<2$, put
$X=\|\sum\varepsilon_kAe_k\|^2$ and $M=\mathbb EX$.
Expansion and cancellation of the odd sign products give
\[
 M=\sum_k\|Ae_k\|^2,
 \qquad \mathbb EX^2\le3M^2.
\]
If $M>0$, then Cauchy--Schwarz gives
\[
 \frac M2\le\mathbb E\bigl(X\mathbf1_{\{X\ge M/2\}}\bigr)
 \le(\mathbb EX^2)^{1/2}\mathbb P(X\ge M/2)^{1/2},
 \qquad \mathbb P(X\ge M/2)\ge\frac1{12}.
\]
Consequently,
\[
 \mathbb EX^{r/2}\ge\frac1{12}(M/2)^{r/2}.
\]
The case $M=0$ is immediate. This proves the required lower bound.
Taking the supremum over finite orthonormal families proves
$\|A\|_{\mathcal S_2}\lesssim_r\pi_r(A)$.  Hence
\begin{equation}\label{eq:Hilbert-Garling-comparison}
 A\in\Piabs_r(H_1,H_2)
 \quad\Longleftrightarrow\quad
 A\in\Piabs_2(H_1,H_2),
 \qquad
 \pi_r(A)\asymp_r\pi_2(A).
\end{equation}
We now compute the exact norm for $r=2$.

First assume $A\in\mathcal S_2(H_1,H_2)$.  Let
$x_1,\ldots,x_N\in H_1$, and let $(y_m)$ be an orthonormal basis of
$H_2$.  Parseval's identity gives
\begin{align*}
 \sum_{j=1}^N\|Ax_j\|_{H_2}^2
 &=\sum_m\sum_{j=1}^N
 |\langle x_j,A^*y_m\rangle|^2\notag\\
 &\le
 w_2(x_1,\ldots,x_N)^2
 \sum_m\|A^*y_m\|_{H_1}^2\notag\\
 &=\|A\|_{\mathcal S_2}^2
 w_2(x_1,\ldots,x_N)^2.
\end{align*}
Hence
\[
 \pi_2(A)\le\|A\|_{\mathcal S_2}.
\]

Conversely, assume $A\in\Piabs_2(H_1,H_2)$.  For every finite
orthonormal family $e_1,\ldots,e_N$ in $H_1$,
\begin{equation*}
 w_2(e_1,\ldots,e_N)
 =\sup_{\|x\|_{H_1}\le1}
 \left(\sum_{j=1}^N|\langle e_j,x\rangle|^2\right)^{1/2}
 =1.
\end{equation*}
Therefore
\[
 \sum_{j=1}^N\|Ae_j\|_{H_2}^2\le\pi_2(A)^2.
\]
Take the supremum over all finite subsets of an orthonormal basis of
$H_1$.  Then
\[
 \|A\|_{\mathcal S_2}^2
 =\sum_j\|Ae_j\|_{H_2}^2
 \le\pi_2(A)^2.
\]
This proves \eqref{eq:Hilbert-two-isometry}.  Combine it with
\eqref{eq:Hilbert-Garling-comparison}.  We obtain
\eqref{eq:Hilbert-summing-S2} and \eqref{eq:Hilbert-summing-norm}.

For the diagonal map $D_b:\ell^2\to\ell^2$,
\[
 \|D_b\|_{\mathcal S_2}^2
 =\sum_j\|D_be_j\|_{\ell^2}^2
 =\sum_j|b_j|^2.
\]
Thus
\[
 D_b\in\Piabs_r(\ell^2,\ell^2)
 \quad\Longleftrightarrow\quad b\in\ell^2.
\]
Equations \eqref{eq:Hilbert-summing-norm} and
\eqref{eq:Hilbert-two-isometry} give \eqref{eq:d22r}.  The diagonal
statement also follows directly from
\cite[Theorem~9(iii)]{Garling1974}.
\end{proof}

\subsection{Model domains}

The preceding criteria give explicit inequalities when the McNeal
volume and the local pullback mass are comparable to powers of the
boundary distance.  The next proposition gives this calculation.

\begin{proposition}\label{prop:boundary-power-test}
Let $1<p<\infty$ and $1\le q,r<\infty$.  Suppose
$\mathfrak d_{p,q}^{\,r}=\ell^\kappa$.
Assume that there are constants $D>1$, $\varepsilon_0>0$, and $C\ge1$
such that
\begin{equation}\label{eq:boundary-volume-power}
 C^{-1}\delta(z)^D
 \le\V_\rho(z)\le C\delta(z)^D,
 \qquad 0<\delta(z)<\varepsilon_0.
\end{equation}
Let $\mu$ be a positive locally finite Borel measure.  Assume that, for
some $\sigma\in\mathbb R$,
\begin{equation}\label{eq:boundary-measure-power}
 C^{-1}\delta(z)^\sigma
 \le\mu(\Q_\rho(z))\le C\delta(z)^\sigma,
 \qquad 0<\delta(z)<\varepsilon_0.
\end{equation}
Then
\begin{equation}\label{eq:boundary-power-threshold}
 J_\mu\in\Piabs_r(A^p,L^q(\mu))
 \quad\Longleftrightarrow\quad
 \kappa\left(\frac{\sigma}{q}-\frac{D}{p}\right)>D-1.
\end{equation}
The same conclusion holds for $W_{u,\varphi}$ or
$V_{g,\varphi}$ when $u,g,\varphi$ satisfy the hypotheses of
\cref{thm:weighted-composition,thm:Volterra-composition} and $\mu$ is
the corresponding pullback measure.

If only the upper estimate in \eqref{eq:boundary-measure-power} holds,
the strict inequality in \eqref{eq:boundary-power-threshold} is
sufficient.  If only the lower estimate holds, it is necessary.
\end{proposition}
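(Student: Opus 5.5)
By \cref{thm:carleson} and \cref{lem:measure-discrete-continuous}, membership of $J_\mu$ in $\Piabs_r$ is equivalent to
\[
 \mathcal C_{p,q,\rho}\mu=\mu(\Q_\rho(\cdot))^{1/q}\V_\rho^{-1/p}\in L^\kappa(d\lambda_\Omega),
\]
with comparable norms. So the proposition reduces to an integrability computation with respect to $d\lambda_\Omega=dv/\V_\rho$. For the pullback operators, \cref{thm:weighted-composition} and \cref{thm:Volterra-composition} identify $\pi_r(W_{u,\varphi})$ and $\pi_r(V_{g,\varphi})$ with $\pi_r$ of the corresponding embedding, up to constants. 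The same computation therefore applies to them verbatim, with $\mu$ the pullback measure.

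I will split $\Omega$ into the collar $\{\delta<\varepsilon_0\}$ and the compact core $\{\delta\ge\varepsilon_0\}$. On the core, $\V_\rho\asymp1$ by \eqref{eq:volume-product} and compactness, and $\mu(\Q_\rho(z))$ is bounded because $\mu$ is locally finite and the balls $\Q_\rho(z)$ stay in a fixed compact set. The core therefore contributes a finite amount and does not affect membership. (For the necessity direction with only the lower bound, the core contribution is irrelevant.)

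On the collar, \eqref{eq:boundary-volume-power} and \eqref{eq:boundary-measure-power} give
\[
 \mathcal C_{p,q,\rho}\mu(z)^\kappa\,\frac{1}{\V_\rho(z)}\asymp\delta(z)^{\kappa(\sigma/q-D/p)-D}.
\]
Using only the upper estimate gives $\lesssim$, and using only the lower estimate gives $\gtrsim$. Since $\partial\Omega$ is smooth and compact, the coarea formula gives $v(\{t<\delta<2t\})\asymp t$ for small $t$. Hence
\[
 \int_{\{\delta<\varepsilon_0\}}\delta^{\gamma}\dd v<\infty
 \quad\Longleftrightarrow\quad \gamma>-1,
\]
which follows either by a dyadic decomposition $\sum_k 2^{-k\gamma}2^{-k}$ or by integrating in normal coordinates. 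Taking $\gamma=\kappa(\sigma/q-D/p)-D$, the condition $\gamma>-1$ is exactly the inequality in \eqref{eq:boundary-power-threshold}. With only the upper estimate, this yields sufficiency. With only the lower estimate, a finite integral forces $\gamma>-1$, which gives necessity.

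The main point needing care is that \cref{lem:measure-discrete-continuous} is applied only where $\mathfrak d_{p,q}^{\,r}=\ell^\kappa$ is a power space, as assumed. The other thing to check is that the two-sided bound $v(\{t<\delta<2t\})\asymp t$ holds uniformly, which uses a tubular neighbourhood of $\partial\Omega$. Alternatively, one can argue directly on the lattice by counting lattice points in each dyadic layer $\{2^{-k-1}<\delta(a_j)\le2^{-k}\}$. By disjointness of the balls $\Q_{c\rho}(a_j)$ and \eqref{eq:boundary-volume-power}, this count is comparable to $2^{-k}/2^{-kD}=2^{k(D-1)}$. The same threshold then follows from the geometric series $\sum_k 2^{k(D-1)}2^{-k\kappa(\sigma/q-D/p)}$.
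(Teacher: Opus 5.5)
Your proposal is correct and follows essentially the same route as the paper: you reduce, via \cref{thm:carleson} and \cref{lem:measure-discrete-continuous} (and the pullback identities for $W_{u,\varphi}$ and $V_{g,\varphi}$), to $L^\kappa(d\lambda_\Omega)$-integrability of $\mathcal C_{p,q,\rho}\mu\asymp\delta^{\sigma/q-D/p}$, evaluate the collar integral in normal coordinates, and bound the compact core trivially. The dyadic lattice-counting variant you sketch is also valid, but the paper does not need it.
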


\begin{proof}
Equations \eqref{eq:continuous-Carleson-density},
\eqref{eq:boundary-volume-power}, and
\eqref{eq:boundary-measure-power} give
\begin{equation*}
 \mathcal C_{p,q,\rho}\mu(z)
 \asymp
 \delta(z)^{\sigma/q-D/p}.
\end{equation*}
Also,
\begin{equation*}
 \dd\lambda_\Omega(z)
 =\frac{\dd v(z)}{\V_\rho(z)}
 \asymp\delta(z)^{-D}\dd v(z).
\end{equation*}
Since $\partial\Omega$ is smooth, we can decrease $\varepsilon_0$ so
that the normal coordinate map
\[
 (\zeta,t)\longmapsto\zeta-t\nu_\Omega(\zeta)
\]
has a Jacobian bounded above and below for $0<t<\varepsilon_0$.
Therefore
\begin{align}
 \int_{\{\delta<\varepsilon_0\}}
 |\mathcal C_{p,q,\rho}\mu(z)|^\kappa
 \dd\lambda_\Omega(z)
 &\asymp
 \int_{\partial\Omega}\int_0^{\varepsilon_0}
 t^{\kappa(\sigma/q-D/p)-D}\dd t\,\dd\sigma(\zeta)
 \notag\\
 &<\infty
 \quad\Longleftrightarrow\quad
 \kappa\left(\frac{\sigma}{q}-\frac{D}{p}\right)-D>-1.
 \label{eq:boundary-power-integral}
\end{align}
This is exactly \eqref{eq:boundary-power-threshold}.

It remains to check the compact core.  Put
\[
 K_0=\{z\in\Omega:\delta(z)\ge\varepsilon_0\}.
\]
By compactness and the fixed interior convention for McNeal balls,
there is a compact set $K_1\Subset\Omega$ such that
\[
 \Q_\rho(z)\subset K_1,
 \qquad z\in K_0.
\]
Local finiteness gives $\mu(K_1)<\infty$.  Volume comparison on $K_0$
gives
\[
 c_0:=\inf_{z\in K_0}\V_\rho(z)>0.
\]
Consequently,
\[
 \mathcal C_{p,q,\rho}\mu(z)
 \le \frac{\mu(K_1)^{1/q}}{c_0^{1/p}},
 \qquad z\in K_0.
\]
Also, $\lambda_\Omega(K_0)<\infty$.  Thus the integral over $K_0$ is
finite.  Now apply
\cref{thm:carleson,cor:weighted-composition-continuous,cor:Volterra-continuous}.
If only the upper mass bound holds, the same computation gives
sufficiency.  If only the lower mass bound holds, the boundary integral
in \eqref{eq:boundary-power-integral} gives necessity.
\end{proof}

\begin{corollary}
\label{cor:boundary-power-symbol}
Let $1<p<\infty$ and $1\le q,r<\infty$.  Suppose
$\mathfrak d_{p,q}^{\,r}=\ell^\kappa$.
Under \eqref{eq:boundary-volume-power}, suppose
\[
 f_\gamma(z)\asymp\delta(z)^\gamma,
 \qquad 0<\delta(z)<\varepsilon_0,
\]
where $\gamma\in\mathbb R$ and $f_\gamma:\Omega\to[0,\infty)$.
Put $K_0=\{z\in\Omega:\delta(z)\ge\varepsilon_0\}$.  Suppose
$f_\gamma$ is bounded above and below by positive constants on $K_0$.
If
$f_\gamma\in\mathcal D_{\Omega,q}$, then
\begin{equation}\label{eq:general-boundary-symbol-threshold}
 M_{f_\gamma}\in\Piabs_r(A^p,L^q)
 \quad\Longleftrightarrow\quad
 \kappa\left(
 D\left(\frac1q-\frac1p\right)+\gamma
 \right)>D-1.
\end{equation}
If $1<q<\infty$ and
$f_\gamma\in\mathrm{RH}_{q,\rho}$, the same condition is equivalent to
\[
 T_{f_\gamma}\in\Piabs_r(A^p,A^q).
\]
\end{corollary}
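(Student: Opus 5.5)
The plan is to reduce to \cref{prop:boundary-power-test} with the measure $\mu=\mu_{f_\gamma}=f_\gamma^q\,v$. By \cref{cor:multiplication}, $\pi_r(M_{f_\gamma})=\pi_r(J_{\mu_{f_\gamma}})$. So it suffices to check the two-sided mass estimate \eqref{eq:boundary-measure-power} with $\sigma=q\gamma+D$.

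For $z$ with $\delta(z)<\varepsilon_0'$, with $\varepsilon_0'$ small, I will use that every $w\in\Q_\rho(z)$ satisfies $\delta(w)\asymp\delta(z)$. This follows because the complex normal side of the McNeal polydisc is $c_0\tau_1\asymp\rho\delta(z)$ with $\rho$ small, and the tangential sides do not change the distance to first order. The paper supplies this through the engulfing comparison $\V_\rho(w)\asymp\V_\rho(z)$ together with \eqref{eq:boundary-volume-power}: $\delta(w)^D\asymp\delta(z)^D$ for such $w$, once both points lie in the collar. Then $f_\gamma(w)^q\asymp\delta(z)^{q\gamma}$ on $\Q_\rho(z)$, and
\[
 \mu_{f_\gamma}(\Q_\rho(z))\asymp\delta(z)^{q\gamma}\V_\rho(z)\asymp\delta(z)^{q\gamma+D}.
\]
The only care needed is that $\Q_\rho(z)$ stays in $\{\delta<\varepsilon_0\}$ when $\delta(z)<\varepsilon_0'$, and this holds for $\varepsilon_0'$ small by the comparability just shown. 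Points with $\varepsilon_0'\le\delta<\varepsilon_0$ fall in a compact set, where $f_\gamma$ is bounded above and below, so they affect only the constants.

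Substituting $\sigma=q\gamma+D$ gives $\sigma/q-D/p=\gamma+D(1/q-1/p)$, so \eqref{eq:boundary-power-threshold} becomes exactly \eqref{eq:general-boundary-symbol-threshold}. The compact core is handled as in \cref{prop:boundary-power-test}, since $f_\gamma$ is bounded on $K_0$ and $\mu$ is therefore locally finite there.

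For the Toeplitz statement, \cref{thm:positive-Toeplitz} applies because $f_\gamma\ge0$, $f_\gamma\in\mathcal D_{\Omega,q}$, and $f_\gamma\in\mathrm{RH}_{q,\rho}$. That theorem gives $T_{f_\gamma}\in\Piabs_r$ if and only if $M_{f_\gamma}\in\Piabs_r$.

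The main obstacle is the uniform comparison $\delta(w)\asymp\delta(z)$ on $\Q_\rho(z)$. I will derive it either from the volume-power hypothesis via engulfing, as above, or directly from the normal side length of the polydisc.
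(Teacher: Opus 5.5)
Your proposal is correct and follows the paper's proof: you use $\delta(w)\asymp\delta(z)$ on $\Q_\rho(z)$ to get $\mu_{f_\gamma}(\Q_\rho(z))\asymp\delta(z)^{q\gamma+D}$, then apply \cref{prop:boundary-power-test} with $\sigma=D+\gamma q$, and finish the Toeplitz part with \cref{thm:positive-Toeplitz}. Your volume route to collar membership is circular as written; fix it by noting that $\V_\rho$ is bounded below on $K_0$ while $\V_\rho(w)\asymp\V_\rho(z)\lesssim\delta(z)^D$, so for small $\delta(z)$ every $w\in\Q_\rho(z)$ already lies in the collar.
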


\begin{proof}
Let $\dd\mu_\gamma=|f_\gamma|^q\dd v$. Stability of the boundary distance
gives a constant $C_0\ge1$ such that, if
$\delta(z)<\varepsilon_0/C_0$, then
\[
 C_0^{-1}\delta(z)\le\delta(w)\le C_0\delta(z),
 \qquad w\in\Q_\rho(z).
\]
Hence
\begin{align*}
 \mu_\gamma(\Q_\rho(z))
 &=\int_{\Q_\rho(z)}f_\gamma(w)^q\dd v(w)\\
 &\asymp \delta(z)^{\gamma q}\V_\rho(z)
 \asymp\delta(z)^{D+\gamma q}.
\end{align*}
The bounds on the compact core handle the remaining $z$.  Thus
\[
 \sigma=D+\gamma q.
\]
Apply \cref{prop:boundary-power-test} with
$\mu=\mu_\gamma$.  Since
\[
 \frac{D+\gamma q}{q}-\frac Dp
 =D\left(\frac1q-\frac1p\right)+\gamma,
\]
we obtain \eqref{eq:general-boundary-symbol-threshold}.  If $q>1$ and
$f_\gamma\in\mathrm{RH}_{q,\rho}$, then
\cref{thm:positive-Toeplitz} gives the Toeplitz equivalence.
\end{proof}

At the logarithmic diagonal border, the same strict threshold holds.
Indeed, if $\beta>0$, then
\begin{align*}
 \int_0^{\varepsilon_0}
 \Psi_q(t^\beta)\,t^{-D}\dd t
 &\asymp
 \int_0^{\varepsilon_0}
 t^{q\beta-D}(1+\log(t^{-\beta}))\dd t\notag\\
 &<\infty
 \quad\Longleftrightarrow\quad
 q\beta>D-1.
\end{align*}
At equality the integrand is comparable to
$t^{-1}\log(1/t)$ and the integral diverges.

\begin{example}[The unit ball]\label{ex:unit-ball}
Let $\Omega=\mathbb B_n$.  Fix $1<p<\infty$ and
$1\le q,r<\infty$.  For every fixed admissible nonisotropic
radius, equivalently for every fixed Bergman radius,
\[
 \V_\rho(z)\asymp(1-|z|^2)^{n+1},
 \qquad
 d\lambda_\Omega(z)\asymp
 \frac{\dd v(z)}{(1-|z|^2)^{n+1}}.
\]
In a power case $\mathfrak d_{p,q}^{\,r}=\ell^\kappa$, the multiplication
criterion is
\[
 (1-|z|^2)^{(n+1)(1/q-1/p)}\M_{q,\rho}f(z)
 \in L^\kappa(\mathbb B_n,d\lambda_\Omega).
\]
For $q>1$, this is also the graph criterion in
\eqref{eq:explicit-graph}.  At the Orlicz line, replace $L^\kappa$ by
$L^{q^-}$, as in \eqref{eq:density-continuous-Orlicz}.
For $p=q$, the boundary factor disappears.

This formula gives a concrete threshold.  Let
\[
 f_\gamma(z)=(1-|z|^2)^\gamma,
 \qquad \gamma\ge0.
\]
Then $f_\gamma\in\mathcal D_{\mathbb B_n,q}$ and
\[
 \M_{q,\rho}f_\gamma(z)\asymp(1-|z|^2)^\gamma.
\]
If $\mathfrak d_{p,q}^{\,r}=\ell^\kappa$, then
\begin{equation}\label{eq:ball-radial-threshold}
 M_{f_\gamma}\in\Piabs_r(A^p(\mathbb B_n),L^q(\mathbb B_n))
 \quad\Longleftrightarrow\quad
 \kappa\left((n+1)\left(\frac1q-\frac1p\right)+\gamma\right)>n.
\end{equation}
Indeed, with $t=1-|z|^2$, the integral near the boundary is comparable
to
\[
 \int_0^1
 t^{\kappa((n+1)(1/q-1/p)+\gamma)-(n+1)}\dd t.
\]
For $\gamma=0$, this gives the inclusion criterion in every power case.
On the Orlicz line $q=p'<2$ and $1\le r<q$, put
\[
 \beta_\gamma=(n+1)(1/q-1/p)+\gamma>0.
\]
Since $\Psi_q(t)\asymp t^q(1+\log(1/t))$ as $t\downarrow0$,
\begin{align*}
 \int_0^{1/2}\Psi_q(t^{\beta_\gamma})\frac{\dd t}{t^{n+1}}
 &\asymp\int_0^{1/2}
 t^{q\beta_\gamma-n-1}(1+\log(1/t))\dd t,\\
 M_{f_\gamma}\in\Piabs_r
 &\quad\Longleftrightarrow\quad q\beta_\gamma>n.
\end{align*}
The integral diverges at equality.

These thresholds also hold for the individual Toeplitz operator when
$1<q<\infty$.  Indeed, on every fixed Bergman ball,
\[
 1-|w|^2\asymp1-|z|^2.
\]
Therefore
\begin{align*}
 \left(\avg_{\Q_\rho(z)}f_\gamma(w)^q\dd v(w)\right)^{1/q}
 &\asymp(1-|z|^2)^\gamma,\\
 \avg_{\Q_\rho(z)}f_\gamma(w)\dd v(w)
 &\asymp(1-|z|^2)^\gamma.
\end{align*}
Thus $f_\gamma\in\mathrm{RH}_{q,\rho}(\mathbb B_n)$, with a constant
depending only on $\gamma,q,\rho$.  By
\cref{thm:positive-Toeplitz}, in a power case,
\begin{equation}\label{eq:ball-positive-Toeplitz-threshold}
 T_{f_\gamma}\in\Piabs_r(A^p(\mathbb B_n),A^q(\mathbb B_n))
 \quad\Longleftrightarrow\quad
 \kappa\left((n+1)\left(\frac1q-\frac1p\right)+\gamma\right)>n.
\end{equation}
On the Orlicz line the criterion is $q\beta_\gamma>n$ instead.
These are exact individual Toeplitz criteria.  They use no Hankel or
matching-IDA assumption.

We next compute a Volterra threshold.  Take $a=0$,
$p=q$, and $g(z)=z_1$.  Then
\[
 \R_0g(z)=z_1,
 \qquad
 \dd\mu_g(z)=\delta(z)^p|z_1|^p\dd v(z).
\]
If $\mathfrak d_{p,p}^{\,r}=\ell^\kappa$, then
\begin{equation}\label{eq:ball-Volterra-threshold}
 V_{z_1}\in\Piabs_r(A^p(\mathbb B_n),A^p(\mathbb B_n))
 \quad\Longleftrightarrow\quad \kappa>n.
\end{equation}
Indeed, \cref{cor:Volterra-continuous} gives
\[
 V_{z_1}\in\Piabs_r
 \quad\Longleftrightarrow\quad
 \mathcal C_{p,p,\rho}\mu_g\in
 L^\kappa(\mathbb B_n,d\lambda_\Omega).
\]
For every $z$ near the boundary,
\[
 \mu_g(\Q_\rho(z))
 \lesssim (1-|z|^2)^p\V_\rho(z).
\]
Thus
\[
 \mathcal C_{p,p,\rho}\mu_g(z)
 \lesssim1-|z|^2.
\]
The integral over each compact core is finite.
This proves sufficiency when
\[
 \int_0^1t^{\kappa-(n+1)}\dd t<\infty,
 \quad\text{that is,}\quad \kappa>n.
\]
For necessity, choose a relatively open set
$U\subset\partial\mathbb B_n$ and $c>0$ such that
$|\zeta_1|\ge2c$ on $U$.  On a thin normal collar over a smaller
boundary patch, $|w_1|\ge c$ for every $w\in\Q_\rho(z)$.  Hence
\[
 \mu_g(\Q_\rho(z))
 \gtrsim (1-|z|^2)^p\V_\rho(z),
 \qquad
 \mathcal C_{p,p,\rho}\mu_g(z)
 \gtrsim1-|z|^2.
\]
The collar integral diverges when $\kappa\le n$.  This proves
\eqref{eq:ball-Volterra-threshold}.

More generally, let $\alpha\in\Nzero^n\setminus\{0\}$ and
$g_\alpha(z)=z^\alpha$.  Then
\[
 \R_0g_\alpha(z)=|\alpha|z^\alpha.
\]
Since $|z^\alpha|\le1$, the preceding upper estimate is unchanged.
Choose $\zeta\in\partial\mathbb B_n$ such that
\[
 \zeta_k\ne0\qquad\text{whenever }\alpha_k>0.
\]
On a fixed collar over a smaller boundary patch about $\zeta$,
$|w^\alpha|\ge c_\alpha>0$ for every
$w\in\Q_\rho(z)$.  The same lower integral gives
\begin{equation}\label{eq:ball-Volterra-monomial-threshold}
 V_{z^\alpha}\in
 \Piabs_r(A^p(\mathbb B_n),A^p(\mathbb B_n))
 \quad\Longleftrightarrow\quad
 \kappa(p,p,r)>n.
\end{equation}
For $\alpha=0$, the radial derivative vanishes and
$V_{1}=0$.  Thus the constant symbol is the only monomial exception.
\end{example}

The unweighted identity map gives a sharp test that involves all three
parameters.  It is both a multiplication operator and the weighted
composition operator with $u=1$ and
$\varphi=\mathrm{id}_{\mathbb B_n}$.

\begin{corollary}
\label{cor:ball-inclusion}
Let $1<p<\infty$, $1\le q<\infty$, and $1\le r<\infty$.
Consider
\[
 I_{p,q}:A^p(\mathbb B_n)\longrightarrow A^q(\mathbb B_n),
 \qquad I_{p,q}F=F.
\]
If $q>p$, then $I_{p,q}$ is unbounded.  If $q=p$, it is not absolutely
$r$-summing.  Suppose $1<q<p$ and
$\mathfrak d_{p,q}^{\,r}=\ell^\kappa$ in \eqref{eq:kappa-table}.  Then
\begin{equation}\label{eq:ball-inclusion-threshold}
 I_{p,q}\in\Piabs_r(A^p,A^q)
 \quad\Longleftrightarrow\quad
 \kappa(n+1)\left(\frac1q-\frac1p\right)>n.
\end{equation}
On the logarithmic line $2<p<\infty$, $q=p'<2$, and $1\le r<q$,
\begin{equation}\label{eq:ball-inclusion-Orlicz}
 I_{p,p'}\in\Piabs_r(A^p,A^{p'})
 \quad\Longleftrightarrow\quad p>n+2.
\end{equation}
The last line of \eqref{eq:kappa-table} is not needed when $q<p$.
At the target endpoint,
\begin{equation}\label{eq:ball-inclusion-q1}
 I_{p,1}\in\Piabs_r(A^p,A^1)
 \quad\Longleftrightarrow\quad p>n+1.
\end{equation}
The condition in \eqref{eq:ball-inclusion-q1} is independent of $r$.
\end{corollary}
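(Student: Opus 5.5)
The identity $I_{p,q}$ is the multiplication operator $M_1$, and also the weighted composition operator $W_{1,\mathrm{id}}$. So the plan is to reduce every case to the multiplication or Carleson criterion with $f\equiv1$. The threshold is then read off from \cref{ex:unit-ball} with $\gamma=0$.

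\emph{Case $q>p$.} I would test on the normalized kernels $e_{j,p}$ with $a_j\to\partial\mathbb B_n$. By \eqref{eq:atom-normalization} and the volume comparison,
\[
 \|e_{j,p}\|_{A^q}\gtrsim V_j^{1/q-1/p}\asymp(1-|a_j|^2)^{(n+1)(1/q-1/p)}\to\infty,
\]
while $\|e_{j,p}\|_{A^p}\asymp1$. Hence $I_{p,q}$ is unbounded.

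\emph{Case $q=p$.} This is exactly \cref{ex:little-rank-one}. There $M_1$ is shown not to be compact, because $e_{j,p}\rightharpoonup0$ while $\|e_{j,p}\|_{A^p}\asymp1$. So it is not $r$-summing, by \cref{lem:summing-calculus}(iv).

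\emph{Case $1<q<p$, power range.} I would apply \cref{cor:multiplication} and \cref{lem:measure-discrete-continuous} (equivalently \eqref{eq:ball-radial-threshold}) with $\gamma=0$. The constant symbol lies in $\mathcal D_{\mathbb B_n,q}$ because every kernel section lies in $A^q$ (\cref{lem:kernel-span-density}). Its local mean is $\M_{q,\rho}1\equiv1$. Combined with $\V_\rho\asymp(1-|z|^2)^{n+1}$, the criterion becomes
\[
 \int_0^1t^{\kappa(n+1)(1/q-1/p)-(n+1)}\dd t<\infty,
\]
which gives \eqref{eq:ball-inclusion-threshold}. I would also note that $q<p$ rules out the exceptional region $\mathcal E$, since $\mathcal E$ requires $q>2>p$. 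That justifies the remark about the last line of \eqref{eq:kappa-table}.

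\emph{Orlicz line.} With $q=p'$ we have
\[
 \beta_0=(n+1)\Bigl(\frac1{p'}-\frac1p\Bigr)=(n+1)\Bigl(1-\frac2p\Bigr)>0.
\]
The Orlicz computation in \cref{ex:unit-ball} gives the criterion $q\beta_0>n$, i.e. $(n+1)(p-2)/(p-1)>n$. Expanding, $(n+1)(p-2)>n(p-1)$, which is $p-n-2>0$, i.e. $p>n+2$.

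\emph{Endpoint $q=1$.} Here \eqref{eq:intro-q1-multiplication} applies with exponent $s_p$, and $\beta=(n+1)(1-1/p)$. The condition is $s_p\beta>n$.
\begin{itemize}
 \item If $1<p\le2$, then $s_p=2p/(3p-2)$ and $s_p\beta=2(n+1)(p-1)/(3p-2)$. This exceeds $n$ iff $(2-n)p>2$. That is impossible for $n\ge2$ and $p\le2$. For $n=1$ it reads $p>2$, so it also fails.
 \item If $p>2$, then $s_p=1$ and the condition is $(n+1)(1-1/p)>n$, i.e. $p>n+1$.
\end{itemize}
Both subcases combine to $p>n+1$, which is \eqref{eq:ball-inclusion-q1}. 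Since $s_p$ does not depend on $r$, neither does the condition.

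The main obstacle is bookkeeping rather than analysis: checking that the $p\le2$ subcase never succeeds at $q=1$, and that the table cases for $q<p$ are exactly the power and Orlicz lines used.
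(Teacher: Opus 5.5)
Your proposal is correct and follows the paper's proof essentially step by step: you identify $I_{p,q}$ with $M_1$, read off the power, Orlicz, and $q=1$ thresholds from \eqref{eq:ball-radial-threshold} with $\gamma=0$, and dispose of the cases $q\ge p$ using kernel atoms and non-compactness. The only differences are minor: for $q>p$ you use the local lower bound from \eqref{eq:atom-normalization} rather than the exact kernel norm asymptotics, and you omit the $A^1$ kernel computation, which the paper itself says is not needed.
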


\begin{proof}
For $q>1$, the normalized kernel atoms satisfy
\begin{equation}\label{eq:ball-inclusion-kernel-size}
 \|e_{a,p}\|_{A^p}\asymp1,
 \qquad
 \|e_{a,p}\|_{A^q}\asymp
 (1-|a|^2)^{(n+1)(1/q-1/p)}.
\end{equation}
At $q=1$, the corresponding formula has a logarithmic factor:
\begin{equation}\label{eq:ball-inclusion-kernel-L1}
 \|e_{a,p}\|_{A^1}
 \asymp(1-|a|^2)^{(n+1)/p'}
 \log\frac{e}{1-|a|^2}
 \qquad(|a|\to1).
\end{equation}
Indeed, writing $a=(s,0,\ldots,0)$ by unitary invariance, integration
over the remaining variables gives
\[
 \int_{\mathbb B_n}|K(z,a)|\dd v(z)
 \asymp\int_{\mathbb D}
 \frac{(1-|\zeta|^2)^{n-1}}{|1-s\zeta|^{n+1}}\dd A(\zeta)
 \asymp\log\frac{e}{1-s^2}.
\]
The last comparison follows by integrating first in angle near
$\zeta=1$, and then using
\[
 \int_0^1\frac{x^{n-1}}{(1-s+x)^n}\dd x
 \asymp\log\frac{e}{1-s}
 \qquad(s\to1).
\]
Multiplication by $K(a,a)^{-1/p'}$ gives
\eqref{eq:ball-inclusion-kernel-L1}.

If $q>p$, the second quantity in
\eqref{eq:ball-inclusion-kernel-size} tends to infinity as $|a|\to1$.
Therefore $I_{p,q}$ is unbounded.  If $q=p$, the operator is the
identity on the infinite-dimensional reflexive space $A^p$.  It is not
compact.  Hence it is not absolutely summing.

If $1<q<p$ and $\mathfrak d_{p,q}^{\,r}=\ell^\kappa$, apply
\eqref{eq:ball-radial-threshold} with $\gamma=0$.  This gives
\eqref{eq:ball-inclusion-threshold}.

On the Orlicz line, the preceding example gives the strict condition
\[
 q(n+1)(1/q-1/p)>n.
\]
Since $q=p/(p-1)$,
\[
 q(n+1)(1/q-1/p)=\frac{(n+1)(p-2)}{p-1}>n
 \quad\Longleftrightarrow\quad p>n+2.
\]
This proves \eqref{eq:ball-inclusion-Orlicz}.  Its equality case is
excluded by logarithmic divergence.  This does not identify the Orlicz
space with $\ell^q$.

For $q=1$, use \eqref{eq:ball-radial-threshold} with $\gamma=0$ and
$\kappa=s_p$ from \eqref{eq:intro-q1-multiplication}.  No kernel $L^1$
estimate is needed for this step.  We now simplify the resulting inequality.
If $1<p\le2$, then
\[
 s_p(n+1)\left(1-\frac1p\right)
 =\frac{2(n+1)(p-1)}{3p-2}.
\]
The inequality
\[
 \frac{2(n+1)(p-1)}{3p-2}>n
\]
is equivalent to $(2-n)p>2$.  It has no solution with
$n\ge1$ and $1<p\le2$.  If $p>2$, then $s_p=1$, and
\[
 (n+1)\left(1-\frac1p\right)>n
 \quad\Longleftrightarrow\quad p>n+1.
\]
For $n=1$, this already forces $p>2$.  Hence the two ranges combine to
give \eqref{eq:ball-inclusion-q1}.
\end{proof}

At the Hilbert point these thresholds have a direct spectral meaning.

\begin{corollary}
\label{cor:Hilbert-ball-thresholds}
Let $1\le r<\infty$.
\begin{enumerate}[label=\textup{(\roman*)}]
\item If $\gamma\ge0$, then
\begin{equation*}
 T_{(1-|z|^2)^\gamma}\in
 \Piabs_r(A^2(\mathbb B_n),A^2(\mathbb B_n))
 \quad\Longleftrightarrow\quad
 \gamma>\frac n2.
\end{equation*}
The same condition characterizes
$M_{(1-|z|^2)^\gamma}:A^2\to L^2$.
\item If $\alpha\in\Nzero^n\setminus\{0\}$, then
\begin{equation*}
 V_{z^\alpha}\in
 \Piabs_r(A^2(\mathbb B_n),A^2(\mathbb B_n))
 \quad\Longleftrightarrow\quad n=1.
\end{equation*}
For $\alpha=0$, $V_1=0$ in every dimension.
\end{enumerate}
Both conclusions are independent of $r$.
\end{corollary}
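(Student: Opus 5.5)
The plan is to specialize the ball thresholds of \cref{ex:unit-ball} to $p=q=2$. By \cref{prop:Hilbert-calibration}, $\mathfrak d_{2,2}^{\,r}=\ell^2$ for every $r$, so $\kappa=2$. We are in the first line of \eqref{eq:kappa-table}, and since $1/2+1/2-1/2=1/2$, this again gives $\kappa=2$; in particular we are never on the Orlicz line. Because $\kappa$ does not depend on $r$, both final conclusions will be independent of $r$.

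For (i), take $p=q=2$ in \eqref{eq:ball-radial-threshold}. The factor $(n+1)(1/q-1/p)$ vanishes, so the condition for $M_{(1-|z|^2)^\gamma}$ to be absolutely $r$-summing becomes $2\gamma>n$, that is, $\gamma>n/2$. The example already checked that $f_\gamma\in\mathcal D_{\mathbb B_n,2}$ and that $f_\gamma\in\mathrm{RH}_{2,\rho}$. Hence \eqref{eq:ball-positive-Toeplitz-threshold}, which comes from \cref{thm:positive-Toeplitz}, gives the same condition for $T_{(1-|z|^2)^\gamma}$ on $A^2$. The only point to check is that $\gamma\ge0$ is covered by the example's hypotheses; it is.

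For (ii), apply \eqref{eq:ball-Volterra-monomial-threshold} with $p=2$. This gives $V_{z^\alpha}\in\Piabs_r$ if and only if $\kappa(2,2,r)>n$, that is, $2>n$, that is, $n=1$. For $\alpha=0$, the radial derivative of $1$ vanishes, so \eqref{eq:Volterra-composition-definition} gives $V_1=0$, which is trivially summing in every dimension.

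There is no real obstacle, since everything reduces to substituting parameters into results already proved. The one step to verify carefully is that the Volterra monomial threshold in \cref{ex:unit-ball} was derived for general $p=q$ in a power case, so that it applies at $p=2$ with $\kappa=2$. One should also note, as a sanity check, that at $p=q=2$ the Hilbert--Schmidt interpretation is consistent with these conditions.
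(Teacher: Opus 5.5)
Your proposal is correct and follows the paper's proof: set $p=q=2$, use \cref{prop:Hilbert-calibration} to get $\kappa=2$ for every $r$, and read off $2\gamma>n$ from \eqref{eq:ball-positive-Toeplitz-threshold} and \eqref{eq:ball-radial-threshold}, and $2>n$ from \eqref{eq:ball-Volterra-monomial-threshold}. The paper also adds, for completeness, the Hilbert--Schmidt sanity check you mention. It diagonalizes $T_{(1-|z|^2)^\gamma}$ in the monomial basis with eigenvalues $\asymp(m+1)^{-\gamma}$ and multiplicities $\asymp(m+1)^{n-1}$, so the threshold $2\gamma>n$ follows without the lattice theorem.
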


\begin{proof}
Set $p=q=2$.  By \cref{prop:Hilbert-calibration}, the diagonal
exponent is $\kappa=2$ for every $r$.  Formula
\eqref{eq:ball-positive-Toeplitz-threshold} becomes
\[
 2\gamma>n.
\]
The multiplication assertion follows from
\eqref{eq:ball-radial-threshold}.  Formula
\eqref{eq:ball-Volterra-monomial-threshold} becomes $2>n$.  Since
$n\in\N$, this is equivalent to $n=1$.

For completeness, the Toeplitz threshold can also be checked without
the lattice theorem.  Let
\[
 e_\beta(z)=c_\beta z^\beta,
 \qquad \beta\in\Nzero^n,
\]
be the normalized monomial basis of $A^2(\mathbb B_n)$.
Write $\Gamma(x)=\int_0^\infty t^{x-1}e^{-t}\dd t$ for $x>0$
for Euler's gamma function. Radiality gives
\[
 T_{(1-|z|^2)^\gamma}e_\beta
 =\lambda_{|\beta|,\gamma}e_\beta,
\]
where
\begin{equation*}
 \lambda_{m,\gamma}
 =\frac{\displaystyle
 \int_{\mathbb B_n}|z^\beta|^2(1-|z|^2)^\gamma\dd v(z)}
 {\displaystyle\int_{\mathbb B_n}|z^\beta|^2\dd v(z)}
 =\frac{\Gamma(\gamma+1)\Gamma(n+m+1)}
 {\Gamma(n+m+\gamma+1)}
 \asymp(m+1)^{-\gamma}.
\end{equation*}
The eigenspace of homogeneous degree $m$ has dimension
\[
 d_m=\binom{n+m-1}{m}\asymp(m+1)^{n-1}.
\]
Hence
\begin{align*}
 \|T_{(1-|z|^2)^\gamma}\|_{\mathcal S_2}^2
 &=\sum_{m=0}^\infty d_m|\lambda_{m,\gamma}|^2\notag\\
 &\asymp\sum_{m=0}^\infty(m+1)^{n-1-2\gamma}<\infty
 \quad\Longleftrightarrow\quad 2\gamma>n.
\end{align*}
Now use \eqref{eq:Hilbert-summing-S2}.  This calculation also shows why
the inequality must be strict.
\end{proof}

On a weakly pseudoconvex ellipsoid, the McNeal volume changes with the
boundary point.  The following model in $\C^2$ includes the transition
from weak to strong boundary points.  The kernel formula is classical; see
\cite[Section~2.2]{BoasFuStraube1997}.  We include its derivation to fix
the volume normalization.

\begin{proposition}\label{prop:ellipsoid-global-volume}
Let $m\ge2$ be an integer, and put
\[
 \Omega_m=\{Z=(z,w)\in\C^2:|z|^{2m}+|w|^2<1\},
 \qquad t(Z)=1-|z|^{2m}-|w|^2.
\]
For a fixed sufficiently small $\rho>0$, set
\[
 R(Z)=|z|^2+t(Z)^{1/m}.
\]
Then, uniformly for $Z\in\Omega_m$,
\begin{equation}\label{eq:ellipsoid-global-volume}
 t(Z)\asymp\delta(Z),\qquad
 \V_\rho(Z)\asymp\frac{t(Z)^3}{R(Z)^{m-1}},\qquad
 \dd\lambda_{\Omega_m}(Z)
 \asymp\frac{R(Z)^{m-1}}{t(Z)^3}\dd v(Z).
\end{equation}
Moreover, if $Y\in\Q_\rho(Z)$, then
\begin{equation}\label{eq:ellipsoid-factor-stability}
 t(Y)\asymp t(Z),\qquad R(Y)\asymp R(Z).
\end{equation}
All constants may depend on $m$ and $\rho$.
\end{proposition}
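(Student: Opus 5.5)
The plan is to compute the McNeal polydisc directly from the defining function $r(Z)=|z|^{2m}+|w|^2-1$ and the extremal-basis construction in \cref{sec:geometry}. Then we use \eqref{eq:volume-product}, $\V_\rho(Z)\asymp\tau_1^2\tau_2^2$ at scale $\varepsilon=\rho\delta(Z)$.

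First, $t\asymp\delta$ follows from smoothness of $\partial\Omega_m$. Near the boundary $|\nabla r|\asymp1$ and $t=-r$, so $t$ is a defining function comparable to distance. On the core both quantities are bounded above and below.

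Next we estimate the two radii at a boundary-collar point $Z=(z,w)$.

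The complex normal direction $u_1$ satisfies $\tau_1\asymp\varepsilon\asymp t$, by the standard transversal estimate.

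For the complex tangential direction $u_2$, we expand $r(Z+\lambda u)-r(Z)$ for a unit tangential $u=(u^z,u^w)$. The first-order term vanishes. What remains is a sum of terms
\[
|\lambda u^w|^2 \quad\text{and}\quad \sum_{k\ge1}c_k|z|^{2m-k}\,|\lambda u^z|^{k},
\]
up to harmless mixed terms. The tangential direction has $|u^z|$ bounded below except near $z=0$ in a controlled way. Indeed, at points where $|w|$ is near $1$, the tangent space is essentially the $z$-axis.

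The convexity/finite-type estimate gives
\[
\tau(Z,u,\varepsilon)\asymp\min_k\bigl(\varepsilon/|z|^{2m-k}\bigr)^{1/k}.
\]
Comparing the terms with $k=2$ and $k=2m$ gives
\[
\tau_2\asymp\min\bigl\{\varepsilon^{1/2}|z|^{1-m},\ \varepsilon^{1/(2m)}\bigr\}.
\]
Equivalently,
\[
\tau_2^2\asymp\frac{\varepsilon}{\max\{|z|^{2m-2},\ \varepsilon^{(m-1)/m}\}}\asymp\frac{t}{R^{m-1}},
\]
using $\max\{|z|^2,t^{1/m}\}\asymp R$. The intermediate $k$ are dominated by these two endpoint terms, by the weighted AM–GM inequality.

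Multiplying out,
\[
\V_\rho\asymp t^2\cdot t/R^{m-1}=t^3/R^{m-1}.
\]
The formula for $d\lambda_{\Omega_m}$ is then immediate from its definition. On the core, $R\asymp t\asymp1$, so the formulas hold trivially there.

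The main obstacle is making the $\tau_2$ computation rigorous and uniform. The tangential direction rotates, with a $w$-component of size up to $|z|^{2m-1}$ relative to $|w|$. One also needs the comparison between the minimal basis and these explicit coordinates, which is \cite[Propositions~3.1 and~3.2]{NikolovPflugThomas2013}. My first attempt would use the biholomorphic-free convex comparison: $\tau(Z,u,\varepsilon)$ is comparable to the distance from $Z$ to the level set $\{r=r(Z)+\varepsilon\}$ along the complex line. I would then bound the quartic-type polynomial in $\lambda$ from above and below separately.

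For \eqref{eq:ellipsoid-factor-stability}, engulfing gives $\delta(Y)\asymp\delta(Z)$ for $Y\in\Q_\rho(Z)$, hence $t(Y)\asymp t(Z)$. For $R$, the displacement of $Y$ from $Z$ in the $z$-coordinate is bounded by $\tau_1+\tau_2\lesssim t^{1/(2m)}$, because $\tau_2\le C\varepsilon^{1/(2m)}$. This gives $\bigl||y|^2-|z|^2\bigr|$ bounded as follows:
\begin{itemize}
\item if $|z|^2\gg t^{1/m}$, it is $\lesssim |z|\,t^{1/(2m)}+t^{1/m}$, which is $\ll|z|^2$ once $\rho$ is small;
\item otherwise, both $|y|^2$ and $|z|^2$ are $\lesssim t^{1/m}\le R$.
\end{itemize}
In either case $R(Y)\asymp R(Z)$, with constants depending on $m,\rho$.
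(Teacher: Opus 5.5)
Your route is correct, but it differs from the paper's.

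\textbf{The paper's route.} The paper never computes McNeal radii. It sums the monomial series from \eqref{eq:ellipsoid-monomial-norm} to get the closed form $K_m(Z,Z)=\frac{A^{-2-1/m}}{m\pi^2}\cdot\frac{(m+1)+(1-m)\xi}{(1-\xi)^3}$, where $A=1-|w|^2$ and $\xi=|z|^2A^{-1/m}$. From $t=A(1-\xi^m)$ and $A\asymp R^m$ it reads off $K_m(Z,Z)\asymp R^{m-1}t^{-3}$. It then imports the comparison $K(z,z)\asymp\V_\rho(z)^{-1}$ from \eqref{eq:volume-product}. For \eqref{eq:ellipsoid-factor-stability} it needs no displacement estimate: $R(Y)^{m-1}\asymp t(Y)^3/\V_\rho(Y)\asymp t(Z)^3/\V_\rho(Z)\asymp R(Z)^{m-1}$ by stability of $t$ and $\V_\rho$ on McNeal balls, and $m\ge2$ lets one take the root.

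\textbf{What your route buys.} Your computation is more elementary and more informative. It gives $\tau_1\asymp t$ and $\tau_2^2\asymp t/R^{m-1}$ separately, so the shape of the polydiscs and the strong/weak transition are visible. It needs neither McNeal's kernel estimate nor \cite{NikolovPflugThomas2013}. In $\C^2$ the minimal basis is forced to be (complex normal, complex tangent), and in the collar $\V_\rho=\pi^2c_0^4\tau_1^2\tau_2^2$ exactly. The price is that you presuppose the fixed defining function is $r=|z|^{2m}+|w|^2-1$. That is a legitimate choice; otherwise you must quote comparability of radii under a change of defining function. The kernel route is insensitive to this choice, and it reuses the monomial norms needed later for the spectral computations.

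\textbf{Closing the step you flag as the main obstacle.} Since $r$ is a polynomial of degree $2m$, $P(\lambda)=r(Z+\lambda u)-r(Z)$ has no Taylor remainder. Scaling and equivalence of norms on real polynomials in $\lambda,\bar\lambda$ of degree at most $2m$ give $\sup_{|\lambda|<\tau}|P|\asymp\sum_kA_k\tau^k$. Hence $\tau(Z,u,\varepsilon)\asymp\min_k(\varepsilon/A_k)^{1/k}$ holds directly for the paper's definition of $\tau$, and no level-set comparison is needed.

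\textbf{A correction on the tangent direction.} The complex tangent is $u=(\bar w,-m|z|^{2m-2}\bar z)/N$ with $N\asymp1$ in the collar. So $|u^z|\asymp|w|$, which degenerates near $w=0$ (where $|z|\to1$), not near $z=0$. Keeping the $u$-factors, the coefficients are as follows.
\begin{itemize}
\item $A_2\asymp|u^w|^2+|z|^{2m-2}|u^z|^2\asymp|z|^{2m-2}(m^2|z|^{2m}+|w|^2)\asymp|z|^{2m-2}$. The $|\lambda|^2$ coefficient is a sum of nonnegative terms, so nothing cancels.
\item $A_k\asymp|z|^{2m-k}|w|^k$ for $3\le k\le2m$.
\end{itemize}
Where $|w|$ is small, $|z|\asymp1$ and $A_2\asymp1$, which forces $\tau_2\asymp\varepsilon^{1/2}$. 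Elsewhere $|w|\asymp1$, and your endpoint comparison applies verbatim. This gives your formula for $\tau_2$ uniformly in the collar. It also gives the bound $\tau_2\lesssim\varepsilon^{1/(2m)}$ that your stability argument uses.
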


\begin{proof}
The gradient of $|z|^{2m}+|w|^2-1$ does not vanish on the boundary.
Thus $t\asymp\delta$ in a boundary collar.  Both functions are positive
on each compact core, so the comparison holds throughout $\Omega_m$.

The normalized monomials form an orthonormal basis of $A^2(\Omega_m)$.
Orthogonality follows from the two independent rotations.  To prove
completeness, expand a holomorphic function in its Taylor series on this
complete Reinhardt domain.  Then apply the same rotations.
For $j,k\in\Nzero$, write $a_j=(j+1)/m$.  Polar integration gives
\begin{align}
 N_{j,k}:=\|z^jw^k\|_{A^2}^2
 &=\pi^2\int_0^1\int_0^{(1-y)^{1/m}}x^jy^k\dd x\,\dd y\notag\\
 &=\frac{\pi^2}{j+1}
   \int_0^1y^k(1-y)^{(j+1)/m}\dd y\notag\\
 &=\frac{\pi^2}{m}
   \frac{\Gamma(a_j)\Gamma(k+1)}{\Gamma(a_j+k+2)}.
 \label{eq:ellipsoid-monomial-norm}
\end{align}
Put
\[
 A=1-|w|^2,\qquad \xi=\frac{|z|^2}{A^{1/m}}\in[0,1).
\]
All terms of the diagonal series are nonnegative.  Thus Tonelli's
theorem allows either order of summation.  The binomial series gives
\[
 \sum_{k=0}^\infty
 \frac{\Gamma(a+k+2)}{\Gamma(k+1)}y^k
 =\Gamma(a+2)(1-y)^{-a-2},\qquad 0\le y<1.
\]
Also,
\[
 \sum_{j=0}^\infty(j+1)\xi^j=\frac1{(1-\xi)^2},
 \qquad
 \sum_{j=0}^\infty(j+1)^2\xi^j=\frac{1+\xi}{(1-\xi)^3}.
\]
Summing first over $k$ gives
\begin{align*}
 K_m(Z,Z)
 &=\frac m{\pi^2}\sum_{j=0}^\infty
   \frac{|z|^{2j}}{\Gamma(a_j)}
   \sum_{k=0}^\infty
   \frac{\Gamma(a_j+k+2)}{\Gamma(k+1)}|w|^{2k}\\
 &=\frac{A^{-2-1/m}}{m\pi^2}
   \sum_{j=0}^\infty(j+1)(j+1+m)\xi^j\\
 &=\frac{A^{-2-1/m}}{m\pi^2}
   \frac{(m+1)+(1-m)\xi}{(1-\xi)^3}.
\end{align*}
Since
\[
 2\le(m+1)+(1-m)\xi\le m+1,
 \qquad
 1-\xi\le1-\xi^m\le m(1-\xi),
\]
and $t=A(1-\xi^m)$, we obtain
\begin{equation}\label{eq:ellipsoid-diagonal-kernel}
 K_m(Z,Z)\asymp\frac{A^{1-1/m}}{t^3}
 \asymp\frac{R^{m-1}}{t^3}.
\end{equation}
The second comparison uses
\[
 A=t+|z|^{2m}\asymp(t^{1/m}+|z|^2)^m=R^m.
\]
The comparison between the kernel and the volume in \cref{sec:geometry} gives
\eqref{eq:ellipsoid-global-volume}.

For $Y\in\Q_\rho(Z)$, stability of the boundary distance and volume gives
\[
 t(Y)\asymp t(Z),\qquad\V_\rho(Y)\asymp\V_\rho(Z).
\]
Consequently,
\[
 R(Y)^{m-1}\asymp\frac{t(Y)^3}{\V_\rho(Y)}
 \asymp\frac{t(Z)^3}{\V_\rho(Z)}\asymp R(Z)^{m-1}.
\]
Since $m\ge2$, this proves \eqref{eq:ellipsoid-factor-stability}.
\end{proof}

\begin{theorem}\label{thm:ellipsoid-two-thresholds}
Let $m\ge2$ be an integer, $1<p<\infty$, and $1\le q,r<\infty$.
Suppose $\mathfrak d_{p,q}^{\,r}=\ell^\kappa$ with $1\le\kappa<\infty$.
Let $\eta,b\ge0$ and $\gamma\ge\eta/(2m)$, and define
\begin{equation}\label{eq:ellipsoid-symbol-family}
 f_{\gamma,\eta,b}(Z)
 =t(Z)^\gamma R(Z)^{-\eta/2}
   \left(\log\frac e{t(Z)}\right)^{-b}.
\end{equation}
Set $\Delta=1/q-1/p$ and
\begin{align}
 S&=\kappa(\gamma+3\Delta)-2,\notag\\
 W&=\kappa\left(\gamma+\left(2+\frac1m\right)\Delta
                      -\frac\eta{2m}\right)-1,\notag\\
 E&=\min\{S,W\},\qquad
 \varepsilon_*=
 \begin{cases}1,&S=W,\\0,&S\ne W.\end{cases}
 \label{eq:ellipsoid-critical-indices}
\end{align}
Then $0<f_{\gamma,\eta,b}\le1$ and
$f_{\gamma,\eta,b}\in\mathcal D_{\Omega_m,q}$.
Moreover,
\begin{equation}\label{eq:ellipsoid-full-criterion}
 M_{f_{\gamma,\eta,b}}\in\Piabs_r(A^p(\Omega_m),L^q(\Omega_m))
 \quad\Longleftrightarrow\quad
 \begin{cases}
 E>0,\quad\text{or}\\
 E=0\ \text{and}\ \kappa b>1+\varepsilon_*.
 \end{cases}
\end{equation}
If $q>1$, the same condition is equivalent to
$T_{f_{\gamma,\eta,b}}\in\Piabs_r(A^p(\Omega_m),A^q(\Omega_m))$.
In particular, for $b=0$ the criterion is
\begin{equation}\label{eq:ellipsoid-power-two-conditions}
 \kappa(\gamma+3\Delta)>2,
 \qquad
 \kappa\left(\gamma+\left(2+\frac1m\right)\Delta
                  -\frac\eta{2m}\right)>1.
\end{equation}
\end{theorem}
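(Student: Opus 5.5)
The plan is to reduce everything to the continuous multiplication criterion \eqref{eq:discrete-continuous-explicit} together with \cref{cor:multiplication}, and then evaluate one explicit integral over $\Omega_m$. The preliminary claims come first. Since $0<t\le1$, $R\ge t^{1/m}$ and $\gamma\ge\eta/(2m)$, we get $t^\gamma R^{-\eta/2}\le t^{\gamma-\eta/(2m)}\le1$, and the logarithmic factor is at most one. Hence $f=f_{\gamma,\eta,b}$ is bounded, and therefore $f\in\mathcal D_{\Omega_m,q}$ because each kernel section lies in $A^q$.

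Next we compute the local mean. By \eqref{eq:ellipsoid-factor-stability}, on $\Q_\rho(Z)$ the factors $t$, $R$ and $\log(e/t)$ are comparable to their values at $Z$. Hence
\[
 \M_{q,\rho}f(Z)\asymp f(Z)\asymp \M_{1,\rho}f(Z),
\]
so $f\in\mathrm{RH}_{q,\rho}$. This gives the Toeplitz statement for $q>1$ via \cref{thm:positive-Toeplitz}, once the multiplication criterion is established. Using \eqref{eq:ellipsoid-global-volume}, membership $M_f\in\Piabs_r$ is then equivalent to
\[
 I=\int_{\Omega_m} t^{\kappa(\gamma+3\Delta)-3}R^{(m-1)(1-\kappa\Delta)-\kappa\eta/2}\Bigl(\log\tfrac et\Bigr)^{-\kappa b}\dd v<\infty .
\]

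The main work is evaluating $I$. We pass to coordinates $x=|z|^2$, $y=|w|^2$ (the Jacobian is $\pi^2$), then to $s=x^m$, so that $t=1-s-y$. Only the region $t\to0$ matters. There we parametrize the boundary layer by $t$ and by the position $s\in[0,1]$, with $\dd v\asymp s^{1/m-1}\dd s\,\dd t$ and $R\asymp s^{1/m}+t^{1/m}$.

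For fixed small $t$, we split the $s$-integral at $s\asymp t$. The part $s\lesssim t$ contributes $t^{1/m}\cdot t^{e/m}$, where $e=(m-1)(1-\kappa\Delta)-\kappa\eta/2$. The part $s\gtrsim t$ contributes $\int_t^1 s^{(1+e)/m-1}\dd s$. That integral is $\asymp1$ if $1+e>0$, $\asymp t^{(1+e)/m}$ if $1+e<0$, and $\asymp\log(e/t)$ if $1+e=0$. The radial integrand is therefore
\[
 t^{\kappa(\gamma+3\Delta)-3}\bigl(1+t^{(1+e)/m}\bigr)\Bigl(\log\tfrac et\Bigr)^{-\kappa b+\mathbf1_{\{1+e=0\}}} .
\]
The exponent with the factor $1$ gives the strong-point condition $S>0$. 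The other exponent is $\kappa(\gamma+3\Delta)-3+(1+e)/m$, and a short computation shows that $\kappa(\gamma+3\Delta)-2+(1+e)/m=W+\ldots$ reduces to $W>0$. Indeed,
\[
 -\kappa\Delta(m-1)/m+3\kappa\Delta=\kappa\Delta(2+1/m),
\]
and $-2+m/m=-1$.

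The critical cases are the main obstacle. One must check that the case $1+e=0$ is exactly the case $S=W$; this holds because $S-W=(1+e)/m$ up to sign. In that case the extra logarithm produces $\varepsilon_*=1$. When $E=0$, the $t$-integral is $\int_0 t^{-1}(\log\frac et)^{-\kappa b+\varepsilon_*}\dd t$, which converges iff $\kappa b>1+\varepsilon_*$. I would carefully verify the two-sided comparability of the $s$-split near $s\asymp t$ and near $s\to1$ (where $|w|\to0$ and the scales are isotropic), and check that the compact core contributes a finite amount.
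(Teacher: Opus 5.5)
Your proposal is correct and follows essentially the paper's route: you reduce to one integral via the multiplication criterion, the continuous discretization and the volume formula \eqref{eq:ellipsoid-global-volume}. Your split at $s\asymp t$ after substituting $s=|z|^{2m}$ is just a regional form of the paper's exact integration of $(x+t^{1/m})^B$ in $x=|z|^2$; your $e$ is the paper's $B$, and $W-S=(1+e)/m$ yields the critical logarithm. The Toeplitz part, via the reverse-H\"older property and \cref{thm:positive-Toeplitz}, matches as well.
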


\begin{proof}
Write $L(t)=\log(e/t)$.  Since $R\ge t^{1/m}$ and $0<t\le1$,
\[
 0<f_{\gamma,\eta,b}
 \le t^{\gamma-\eta/(2m)}L(t)^{-b}\le1.
\]
For each fixed $z\in\Omega_m$, the kernel $K_m(\cdot,z)$ is bounded.
This follows from the kernel estimates in \cref{sec:geometry}.  It also
follows by polarizing the formula in the preceding proof.  Thus bounded
symbols belong to $\mathcal D_{\Omega_m,q}$.

Equations \eqref{eq:ellipsoid-factor-stability} and
$L(t(Y))\asymp L(t(Z))$ give
\[
 f_{\gamma,\eta,b}(Y)\asymp f_{\gamma,\eta,b}(Z),
 \qquad Y\in\Q_\rho(Z).
\]
Hence
\begin{equation}\label{eq:ellipsoid-local-means}
 \M_{q,\rho}f_{\gamma,\eta,b}(Z)
 \asymp f_{\gamma,\eta,b}(Z)
 \asymp\avg_{\Q_\rho(Z)}f_{\gamma,\eta,b}(Y)\dd v(Y).
\end{equation}
In particular, these nonnegative symbols satisfy
$\mathrm{RH}_{q,\rho}$ when $q>1$.

By the multiplication criterion and
\eqref{eq:ellipsoid-global-volume}, the summing condition is equivalent
to $I<\infty$, where
\begin{align}
 I&:=\int_{\Omega_m}
      f_{\gamma,\eta,b}(Z)^\kappa
      \V_\rho(Z)^{\kappa\Delta-1}\dd v(Z)\notag\\
 &\asymp\int_{\Omega_m}
      t^{\kappa(\gamma+3\Delta)-3}
      R^{m-1-\kappa((m-1)\Delta+\eta/2)}
      L(t)^{-\kappa b}\dd v(Z).
 \label{eq:ellipsoid-integral-reduction}
\end{align}
Here and below the comparison of integrals allows the value $+\infty$.
Put
\[
 B=m-1-\kappa((m-1)\Delta+\eta/2).
\]
In polar coordinates $x=|z|^2$, $y=|w|^2$, the angular integration
gives $\dd v=\pi^2\dd x\,\dd y$.
The change of variable $t=1-x^m-y$ has Jacobian of absolute value one.
Thus the boundary part of \eqref{eq:ellipsoid-integral-reduction} is
comparable to
\begin{equation}\label{eq:ellipsoid-double-integral}
 \int_0^{1/2}t^{S-1}L(t)^{-\kappa b}
   \left(\int_0^{(1-t)^{1/m}}(x+t^{1/m})^B\dd x\right)\dd t.
\end{equation}
The integral over $\{t\ge1/2\}$ is finite.
Direct integration in $x$ gives
\begin{equation}\label{eq:ellipsoid-tangential-integral}
 \int_0^{(1-t)^{1/m}}(x+t^{1/m})^B\dd x
 \asymp
 \begin{cases}
 1,&B>-1,\\
 L(t),&B=-1,\\
 t^{(B+1)/m},&B<-1.
 \end{cases}
\end{equation}
Indeed, for $B\ne-1$ the integral equals
\[
 \frac{\bigl((1-t)^{1/m}+t^{1/m}\bigr)^{B+1}
              -t^{(B+1)/m}}{B+1}.
\]
For $B=-1$ it equals
\[
 \log\left(1+\frac{(1-t)^{1/m}}{t^{1/m}}\right).
\]
Finally,
\[
 S+\frac{B+1}{m}=W,\qquad B=-1\ \Longleftrightarrow\ S=W.
\]
Therefore \eqref{eq:ellipsoid-double-integral} is finite exactly when
\[
 \int_0^{1/2}t^{E-1}L(t)^{\varepsilon_*-\kappa b}\dd t<\infty.
\]
If $E>0$, this integral converges.  If $E<0$, it diverges.
For $E=0$, the substitution $u=L(t)$ gives
\[
 \int_{\log(2e)}^\infty u^{\varepsilon_*-\kappa b}\dd u<\infty
 \quad\Longleftrightarrow\quad \kappa b>1+\varepsilon_*.
\]
This proves \eqref{eq:ellipsoid-full-criterion} and
\eqref{eq:ellipsoid-power-two-conditions}.
For $q>1$, apply \cref{thm:positive-Toeplitz} using
\eqref{eq:ellipsoid-local-means}.
\end{proof}

The two inequalities come from different boundary regions.  A boundary
patch with $|z|$ bounded away from zero gives the first one.  The region
$|z|^2\lesssim t^{1/m}$ gives the second one.  At their common critical
point, the tangential integral contributes the extra logarithm in
\eqref{eq:ellipsoid-full-criterion}.

\begin{corollary}\label{cor:ellipsoid-Orlicz-border}
Let $m\ge2$ be an integer, $\eta,b\ge0$, and
$\gamma\ge\eta/(2m)$.
Let $2<p<\infty$, $q=p'<2$, and $1\le r<q$.
Use the symbol \eqref{eq:ellipsoid-symbol-family} and the indices
in \eqref{eq:ellipsoid-critical-indices} with $\kappa=q$.
Then
\begin{equation}\label{eq:ellipsoid-Orlicz-criterion}
 M_{f_{\gamma,\eta,b}}\in\Piabs_r
 \quad\Longleftrightarrow\quad
 T_{f_{\gamma,\eta,b}}\in\Piabs_r
 \quad\Longleftrightarrow\quad
 \begin{cases}
 E>0,\quad\text{or}\\
 E=0\ \text{and}\ qb>2+\varepsilon_*.
 \end{cases}
\end{equation}
\end{corollary}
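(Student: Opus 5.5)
The plan is to repeat the proof of \cref{thm:ellipsoid-two-thresholds}, replacing the $\ell^\kappa$ modular by the Orlicz modular on the logarithmic line. By the third line of \eqref{eq:kappa-table}, $\mathfrak d_{p,q}^{\,r}=\ell^{q^-}$ with equivalent norms. By \cref{cor:multiplication} together with \eqref{eq:density-continuous-Orlicz}, summability of $M_f$ is then equivalent to $F:=\V_\rho^{\Delta}\M_{q,\rho}f\in L^{q^-}(d\lambda_{\Omega_m})$. Since the symbols satisfy $\mathrm{RH}_{q,\rho}$ by \eqref{eq:ellipsoid-local-means} and $q>1$ here, \cref{thm:positive-Toeplitz} gives the equivalence of the $M_f$ and $T_f$ conditions. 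It therefore remains to decide when the Luxemburg norm of $F$ is finite.

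\emph{Step 1: finiteness of the norm.} First check that $F$ is bounded. We have $f\le1$, and $\V_\rho^\Delta$ is bounded because $\Delta=1/q-1/p>0$ and the volumes are bounded. Since $\Psi_q(At)\le C_{A,q}\Psi_q(t)$ and $F$ takes values in a bounded set, $\|F\|_{L^{q^-}}<\infty$ is equivalent to $\int\Psi_q(F)\,d\lambda<\infty$. The relevant behaviour of $\Psi_q$ is near zero, where $\Psi_q(s)\asymp s^q(1+\log(1/s))$. On the compact core $F$ is bounded below and $\lambda$ is finite, so only small values of $F$ near the boundary matter.

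\emph{Step 2: reduction to the power case with an extra logarithm.} By \eqref{eq:ellipsoid-global-volume} and \eqref{eq:ellipsoid-local-means}, $F\asymp t^{\gamma+3\Delta}R^{-(m-1)\Delta-\eta/2}L(t)^{-b}$. The key point is that $\log(1/F)\asymp L(t)$ uniformly near the boundary. The upper bound holds because $R\ge t^{1/m}$, so $F\gtrsim t^{N}L^{-b}$ for a fixed $N$. The lower bound holds because $F\lesssim t^{c}$ for some $c>0$. This needs the exponent $\gamma+3\Delta-((m-1)\Delta+\eta/2)/m=\gamma-\eta/(2m)+(2+1/m)\Delta$ to be positive, which follows from $\gamma\ge\eta/(2m)$ and $\Delta>0$; using $R\le C$ for the positive part of the $R$ exponent handles the remaining case. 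This uniform comparison is the step I expect to need the most care: $R$ ranges between $t^{1/m}$ and $1$, and both endpoint regimes must give $\log(1/F)\asymp L(t)$.

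\emph{Step 3: the integral.} Given Step 2, $\Psi_q(F)\asymp F^qL(t)$. This is exactly the integrand in \eqref{eq:ellipsoid-integral-reduction} with $\kappa=q$ and with $L^{-\kappa b}$ replaced by $L^{1-qb}$. The tangential integration \eqref{eq:ellipsoid-tangential-integral} does not involve $L$, so the argument of \cref{thm:ellipsoid-two-thresholds} carries over. The integral is finite if and only if $\int_0^{1/2}t^{E-1}L(t)^{\varepsilon_*+1-qb}\,dt<\infty$. This holds when $E>0$, fails when $E<0$, and at $E=0$ holds if and only if $qb-1-\varepsilon_*>1$, that is, $qb>2+\varepsilon_*$. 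This is \eqref{eq:ellipsoid-Orlicz-criterion}.
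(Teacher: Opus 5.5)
Your proposal is correct and follows the paper's proof essentially step for step. The proof compares the continuous Orlicz density with $t^{\gamma+3\Delta}R^{-(m-1)\Delta-\eta/2}L(t)^{-b}$, shows uniformly that $\log(1/F)\asymp L(t)$ so that $\Psi_q(F)\asymp F^qL(t)$, and then reuses the tangential integration of \cref{thm:ellipsoid-two-thresholds} with one extra factor $L(t)$, which yields $qb>2+\varepsilon_*$ at $E=0$. One small bookkeeping slip: the $R$-exponent $-(m-1)\Delta-\eta/2$ is strictly negative here. So the lower bound $F\gtrsim t^{\gamma+3\Delta}L(t)^{-b}$ comes from $R\le2$, not from $R\ge t^{1/m}$; the latter gives the upper bound $F\lesssim t^{\gamma-\eta/(2m)+(2+1/m)\Delta}L(t)^{-b}$ that you use next, and no ``positive part'' case ever arises.
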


\begin{proof}
Here $\Delta=1/q-1/p>0$.  By
\eqref{eq:ellipsoid-local-means}, the continuous density is comparable to
\[
 a(Z)=t^{\gamma+3\Delta}
 R^{-\eta/2-(m-1)\Delta}L(t)^{-b}.
\]
Since $t^{1/m}\le R\le2$ and $\gamma\ge\eta/(2m)$,
\[
 c\,t^{\gamma+3\Delta}L(t)^{-b}
 \le a(Z)
 \le t^{\gamma+(2+1/m)\Delta-\eta/(2m)}L(t)^{-b}.
\]
Both powers of $t$ have positive exponents.  Consequently, uniformly
as $t\downarrow0$,
\[
 a(Z)\longrightarrow0,\qquad
 1+\log\frac1{a(Z)}\asymp L(t),\qquad
 \Psi_q(a(Z))\asymp a(Z)^qL(t).
\]
Indeed, if $A_0=\gamma+3\Delta$ and
$A_1=\gamma+(2+1/m)\Delta-\eta/(2m)>0$, the displayed bounds imply
\[
 A_1\log(1/t)+b\log L(t)
 \le\log\frac1{a(Z)}
 \le A_0\log(1/t)+b\log L(t)+C.
\]
Since $\log L(t)=o(L(t))$, this proves the logarithmic comparison.
For each fixed $h>0$, the same argument gives
$\Psi_q(a(Z)/h)\asymp_h a(Z)^qL(t)$ near the boundary.
Thus the convergence test is unchanged by the scale in the Luxemburg
norm.  On a compact core the density is bounded and
$\lambda_{\Omega_m}$ has finite mass.
If the modular is finite at one scale, dominated convergence makes it
at most one after increasing that scale.  This proves membership in the
Luxemburg space.
The Orlicz integral is therefore the integral in
\eqref{eq:ellipsoid-double-integral}, with $\kappa=q$ and one additional
factor $L(t)$.  Its critical logarithmic condition is
$qb>2+\varepsilon_*$.  The Toeplitz assertion follows from
\cref{thm:positive-Toeplitz}.
\end{proof}

\begin{example}\label{ex:ellipsoid}
For each integer $m\ge2$, put
\[
 f_m(Z)=t(Z)^{7/4}R(Z)^{-3}.
\]
These are bounded nonnegative symbols.  Take $p=q=2$.
Then $\kappa=2$ for every $1\le r<\infty$, and
\[
 S=\frac32,\qquad W=\frac52-\frac6m.
\]
Thus, for every $1\le r<\infty$,
\begin{equation}\label{eq:ellipsoid-type-dependent-example}
 M_{f_m}\in\Piabs_r(A^2(\Omega_m),L^2(\Omega_m))
 \quad\Longleftrightarrow\quad
 T_{f_m}\in\Piabs_r(A^2(\Omega_m),A^2(\Omega_m))
 \quad\Longleftrightarrow\quad m\ge3.
\end{equation}
Indeed, $\gamma=7/4\ge3/m=\eta/(2m)$ for every $m\ge2$.
The first critical index is always positive.  The second is positive
exactly when $m>12/5$.  Thus the global criterion for these bounded
symbols depends on the type.
\end{example}

There is an independent spectral check of the two power conditions at
the Hilbert point.

\begin{proposition}\label{prop:ellipsoid-spectral-check}
Let $m\ge2$ and $\gamma\ge\beta\ge0$.  On $\Omega_m$, put
\[
 g_{\gamma,\beta}(z,w)
 =(1-|z|^{2m}-|w|^2)^\gamma(1-|w|^2)^{-\beta}.
\]
Then $0<g_{\gamma,\beta}\le1$.  For $0<s<\infty$, let
$\mathcal S_s$ denote the Schatten class of compact operators whose
singular values belong to $\ell^s$.  Then
\begin{equation}\label{eq:ellipsoid-Schatten-threshold}
 T_{g_{\gamma,\beta}}\in\mathcal S_s(A^2(\Omega_m))
 \quad\Longleftrightarrow\quad
 s\gamma>2\ \text{and}\ s(\gamma-\beta)>1.
\end{equation}
In particular, for every $1\le r<\infty$,
\begin{equation}\label{eq:ellipsoid-spectral-threshold}
 T_{g_{\gamma,\beta}}\in\Piabs_r(A^2(\Omega_m),A^2(\Omega_m))
 \quad\Longleftrightarrow\quad
 \gamma>1\ \text{and}\ \gamma-\beta>\frac12.
\end{equation}
\end{proposition}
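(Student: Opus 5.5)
The plan is to diagonalize $T_{g_{\gamma,\beta}}$ in the monomial basis and count eigenvalues. There is no appeal to the lattice theory. The symbol depends only on $|z|$ and $|w|$, so it is invariant under both rotations. Hence $T_{g_{\gamma,\beta}}$ is diagonal in the orthonormal basis $z^jw^k/\sqrt{N_{j,k}}$ from the proof of \cref{prop:ellipsoid-global-volume}. The bound $0<g_{\gamma,\beta}\le1$ is immediate from $1-|z|^{2m}-|w|^2\le 1-|w|^2$ and $\gamma\ge\beta$.

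The eigenvalues are
\[
 \lambda_{j,k}=N_{j,k}^{-1}\int_{\Omega_m}|z|^{2j}|w|^{2k}g_{\gamma,\beta}\dd v .
\]
I would compute them with the same polar reduction as in \eqref{eq:ellipsoid-monomial-norm}, namely $x=|z|^2$, $y=|w|^2$ and $x=(1-y)^{1/m}u$. Since $1-x^m-y=(1-y)(1-u^m)$, the integral factorizes:
\[
 \int_0^1 u^j(1-u^m)^\gamma\dd u\int_0^1y^k(1-y)^{a_j+\gamma-\beta}\dd y .
\]
Dividing by $N_{j,k}$ and using Beta and Stirling asymptotics gives
\[
 \lambda_{j,k}\asymp (j+1)^{-\gamma}\Bigl(\frac{k+a_j+1}{k+1}\Bigr)^{\beta}(k+a_j+1)^{-(\gamma-\beta)}\cdot(k+a_j+1)^{0}\cdot\frac{\Gamma(a_j+\gamma-\beta+1)\,\cdots}{\cdots}.
\]
More cleanly, I expect
\[
 \lambda_{j,k}\asymp (j+1)^{-\beta}(j+k+1)^{-(\gamma-\beta)}\cdot\frac{1}{(j+1)^{\gamma-\beta}}\Bigl(\frac{j+1}{j+k+1}\Bigr)^{0}.
\]
I will verify this carefully. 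The first factor behaves like $(j+1)^{-\gamma}$, the $j$-Beta ratio $B(a_j,\gamma+1)/B(a_j,1)$. The $y$-ratio behaves like $\bigl((a_j+1)/(k+a_j+2)\bigr)^{\gamma-\beta}$ times bounded factors. This gives
\[
 \lambda_{j,k}\asymp (j+1)^{-\gamma}\Bigl(\frac{j+1}{j+k+1}\Bigr)^{\gamma-\beta}
 =(j+1)^{-\beta}(j+k+1)^{-(\gamma-\beta)} .
\]
Getting this two-variable asymptotic uniformly in $(j,k)$, including the $m$-dependence of $a_j=(j+1)/m$, is the main technical step. I would handle it with the uniform estimate $\Gamma(x+c)/\Gamma(x)\asymp x^c$ for $x\ge 1/m$.

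For the Schatten criterion I would test $\sum_{j,k}\lambda_{j,k}^s<\infty$. Summing first in $k$ requires $s(\gamma-\beta)>1$, and that sum is $\asymp (j+1)^{1-s(\gamma-\beta)}$. This gives the term $(j+1)^{-s\beta+1-s(\gamma-\beta)}=(j+1)^{1-s\gamma}$. Summing over $j$ then requires $s\gamma>2$. If $s(\gamma-\beta)\le1$, already $j=0$ diverges, so both conditions are necessary. This proves \eqref{eq:ellipsoid-Schatten-threshold}. Finally, \eqref{eq:ellipsoid-spectral-threshold} is the case $s=2$ combined with \cref{prop:Hilbert-calibration}, which identifies $\Piabs_r$ with $\mathcal S_2$ on Hilbert spaces for every $r$. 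Applied to the positive diagonal operator, this gives $\gamma>1$ and $\gamma-\beta>1/2$. As a consistency check, this should agree with \cref{thm:ellipsoid-two-thresholds} with $\kappa=2$ and $\Delta=0$, using $(1-|w|^2)\asymp R^m$ and $\eta=2m\beta$.
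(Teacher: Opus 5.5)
Your proposal is correct and follows the paper's proof. Both diagonalize in the monomial basis and use the substitution $x=(1-y)^{1/m}u$ to obtain the Beta factorization, which gives the uniform estimate $\lambda_{j,k}\asymp(j+1)^{-\beta}(j+k+1)^{-(\gamma-\beta)}$; both then sum first in $k$ and then in $j$, and finish with \cref{prop:Hilbert-calibration} at $s=2$.

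When you write it up, delete the two exploratory displays that precede this estimate. The second one carries a spurious factor $(j+1)^{-(\gamma-\beta)}$, which would give the wrong threshold. Also add the one-line remark, as the paper does, that a nonnegative diagonal operator with $\sum_{j,k}\lambda_{j,k}^s<\infty$ is compact and has its rearranged diagonal as singular values, so membership in $\mathcal S_s$ is exactly this summability.
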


\begin{proof}
Since $t\le1-|w|^2$, boundedness follows from
$g_{\gamma,\beta}\le t^{\gamma-\beta}\le1$.
Let $e_{j,k}=N_{j,k}^{-1/2}z^jw^k$, with $N_{j,k}$ given by
\eqref{eq:ellipsoid-monomial-norm}.  Rotation invariance gives
\[
 T_{g_{\gamma,\beta}}e_{j,k}=\lambda_{j,k}e_{j,k},\qquad
 \lambda_{j,k}=\frac1{N_{j,k}}
     \int_{\Omega_m}g_{\gamma,\beta}|z|^{2j}|w|^{2k}\dd v.
\]
Using $x=|z|^2$, $y=|w|^2$, and then $u=x^m/(1-y)$, we obtain
\begin{align*}
 &\int_{\Omega_m}g_{\gamma,\beta}|z|^{2j}|w|^{2k}\dd v\\
 &\quad=\pi^2\int_0^1 y^k(1-y)^{-\beta}
     \int_0^{(1-y)^{1/m}}x^j(1-y-x^m)^\gamma\dd x\,\dd y\\
 &\quad=\frac{\pi^2}{m}
    \frac{\Gamma(a_j)\Gamma(\gamma+1)}{\Gamma(a_j+\gamma+1)}
    \int_0^1y^k(1-y)^{a_j+\gamma-\beta}\dd y\\
 &\quad=\frac{\pi^2}{m}
    \frac{\Gamma(a_j)\Gamma(\gamma+1)}{\Gamma(a_j+\gamma+1)}
    \frac{\Gamma(k+1)\Gamma(a_j+\gamma-\beta+1)}
         {\Gamma(a_j+k+\gamma-\beta+2)}.
\end{align*}
Therefore
\begin{equation}\label{eq:ellipsoid-Toeplitz-eigenvalues}
 \lambda_{j,k}
 =\Gamma(\gamma+1)
   \frac{\Gamma(a_j+\gamma-\beta+1)}{\Gamma(a_j+\gamma+1)}
   \frac{\Gamma(a_j+k+2)}{\Gamma(a_j+k+\gamma-\beta+2)}.
\end{equation}
The gamma quotient estimate gives, uniformly for $j,k\in\Nzero$,
\[
 \lambda_{j,k}\asymp
 (j+1)^{-\beta}(j+k+1)^{-(\gamma-\beta)}.
\]
Here the estimate
$\Gamma(x+a)/\Gamma(x+b)\asymp x^{a-b}$ holds uniformly for
$x\ge c>0$.  It follows from Stirling's formula for large $x$ and
continuity on a compact interval.  The comparison
$a_j+k+1\asymp j+k+1$ has constants depending only on $m$.
Put $d=\gamma-\beta$.  If $sd\le1$, the sum over $k$ of
$\lambda_{j,k}^s$ diverges for each fixed $j$.  If $sd>1$, then
the integral test gives
\[
 \sum_{k=0}^\infty(j+k+1)^{-sd}
 \asymp\int_{j+1}^\infty x^{-sd}\dd x
 =\frac{(j+1)^{1-sd}}{sd-1}.
\]
Consequently,
\begin{align*}
 \sum_{j,k\ge0}\lambda_{j,k}^s
 &\asymp\sum_{j=0}^\infty(j+1)^{-s\beta}
                  \sum_{k=0}^\infty(j+k+1)^{-sd}\\
 &\asymp\sum_{j=0}^\infty(j+1)^{1-s\gamma}
 <\infty
 \quad\Longleftrightarrow\quad s\gamma>2.
\end{align*}
This proves \eqref{eq:ellipsoid-Schatten-threshold}, including
$0<s<1$. Indeed, if $\sum_{j,k}\lambda_{j,k}^s<\infty$,
only finitely many eigenvalues exceed any fixed positive number.
The finite diagonal truncations therefore converge in operator norm,
so the operator is compact. Positivity then identifies its singular
values with the $\lambda_{j,k}$. Conversely, membership in
$\mathcal S_s$ implies compactness and requires precisely this sum
to be finite.  At $sd=1$ the inner sum
diverges; at $s\gamma=2$ the remaining outer sum diverges.
Set $s=2$.  Equation
\eqref{eq:Hilbert-summing-S2} gives
\eqref{eq:ellipsoid-spectral-threshold}.
Finally,
\[
 1-|w|^2=t+|z|^{2m}\asymp R^m.
\]
Thus $g_{\gamma,\beta}\asymp f_{\gamma,2m\beta,0}$, and the
spectral criterion agrees with
\eqref{eq:ellipsoid-power-two-conditions} at $p=q=2$.
For $\gamma=7/4$ and $\beta=3/m$, it also proves directly that
$T_{g_{7/4,3/m}}$ is absolutely $r$-summing exactly when $m\ge3$.
\end{proof}

Pushnitski \cite{Pushnitski2017} obtained singular value asymptotics
on the disc for symbols with a regular boundary power and an
angular profile. Here we count the eigenvalues of an explicit
two-parameter family on $\Omega_m$. The calculation gives leading
constants in three regimes. A logarithmic factor occurs when the
two counting exponents agree. It also determines the best
approximation error in the summing norm at a prescribed rank.
For $r\ne2$, we use the classical comparison on Hilbert spaces in
\cref{prop:Hilbert-calibration}.

\begin{theorem}\label{thm:ellipsoid-counting}
Let $m\ge2$ be an integer and let $\gamma>\beta\ge0$.  Use
$g=g_{\gamma,\beta}$ from \cref{prop:ellipsoid-spectral-check}, and put
\[
 d=\gamma-\beta,\qquad G=\Gamma(\gamma+1),\qquad
 \alpha=\max\left\{\frac2\gamma,\frac1d\right\},\qquad
 \varepsilon=\begin{cases}1,&\beta=d,\\0,&\beta\ne d.\end{cases}
\]
The operator $T_g$ is positive and compact.  Write its singular values
as $s_1(T_g)\ge s_2(T_g)\ge\cdots$, with multiplicity, and define
\[
 \mathcal N_g(\tau)=\#\{N\ge1:s_N(T_g)>\tau\},\qquad \tau>0.
\]
Then, as $\tau\downarrow0$,
\begin{equation}\label{eq:ellipsoid-counting-asymptotic}
 \mathcal N_g(\tau)
 \sim C_*\tau^{-\alpha}\bigl(\log(1/\tau)\bigr)^\varepsilon,
\end{equation}
where
\begin{equation}\label{eq:ellipsoid-counting-constant}
 C_*=
 \begin{cases}
 \displaystyle\frac{m\gamma}{2(d-\beta)}G^{2/\gamma},&\beta<d,\\[7pt]
 \displaystyle\frac m\gamma G^{1/d},&\beta=d,\\[7pt]
 \displaystyle\sum_{j=0}^\infty
 \left(G\frac{\Gamma((j+1)/m+d+1)}
 {\Gamma((j+1)/m+\gamma+1)}\right)^{1/d},&\beta>d.
 \end{cases}
\end{equation}
The series in the last case converges.  Set
$A_*=(C_*\alpha^{-\varepsilon})^{1/\alpha}$.  As $N\to\infty$,
\begin{equation}\label{eq:ellipsoid-singular-asymptotic}
 s_N(T_g)\sim
 A_*N^{-1/\alpha}(\log N)^{\varepsilon/\alpha}.
\end{equation}
In particular, if $\mathcal S_{\alpha,\infty}$ denotes the weak
Schatten class defined by $\sup_N N^{1/\alpha}s_N<\infty$, then
\begin{equation}\label{eq:ellipsoid-weak-endpoint}
 T_g\in\mathcal S_{\alpha,\infty}
 \quad\Longleftrightarrow\quad \beta\ne d.
\end{equation}
If $\alpha<2$, then for each $1\le r<\infty$ the approximation numbers
defined before \eqref{eq:summing-rank-budget} satisfy, as $N\to\infty$,
\begin{equation}\label{eq:ellipsoid-optimal-summing-error}
 a_N^{(r)}(T_g)
 \asymp_{m,\gamma,\beta,r} N^{1/2-1/\alpha}(\log N)^{\varepsilon/\alpha}.
\end{equation}
For $r=2$, the precise constant is
\begin{equation}\label{eq:ellipsoid-optimal-two-error}
 a_N^{(2)}(T_g)\sim
 \frac{A_*}{\sqrt{2/\alpha-1}}
 N^{1/2-1/\alpha}(\log N)^{\varepsilon/\alpha}.
\end{equation}
\end{theorem}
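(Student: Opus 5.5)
The starting point is the explicit eigenvalue formula \eqref{eq:ellipsoid-Toeplitz-eigenvalues} from \cref{prop:ellipsoid-spectral-check}. There $T_g$ is diagonal in the monomial basis with eigenvalues $\lambda_{j,k}$, so positivity and compactness are already settled, and $\mathcal N_g(\tau)=\#\{(j,k):\lambda_{j,k}>\tau\}$. I will write $\lambda_{j,k}=c_j\,\Gamma(a_j+k+2)/\Gamma(a_j+k+d+2)$ with
\[
 c_j=G\,\frac{\Gamma(a_j+d+1)}{\Gamma(a_j+\gamma+1)} .
\]
By Stirling, $c_j\sim G\,a_j^{-\beta}$ as $j\to\infty$, and uniformly $\lambda_{j,k}=c_j(a_j+k)^{-d}(1+O(1/(a_j+k)))$.

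For fixed $j$, the condition $\lambda_{j,k}>\tau$ reads $k\lesssim (c_j/\tau)^{1/d}-a_j$. The count is therefore essentially
\[
 \sum_j\bigl((c_j/\tau)^{1/d}-a_j\bigr)_+ ,
\]
with $O(1)$ error per $j$. The relevant range is $a_j\lesssim \tau^{-1/\gamma}$, since $c_ja_j^{-d}\asymp a_j^{-\gamma}$, so $j$ runs up to $\asymp m\tau^{-1/\gamma}$. The accumulated $O(1)$ errors are then $O(\tau^{-1/\gamma})$, which is $o(\tau^{-\alpha})$ because $\alpha\ge 2/\gamma>1/\gamma$.

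The three regimes come from how this sum behaves.

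\emph{Case $\beta<d$.} The large-$j$ part dominates. Replacing $c_j$ by $Ga_j^{-\beta}$ and the sum by $m\int\dd a$ gives
\[
 m\int_0^{A}\bigl(G^{1/d}\tau^{-1/d}a^{-\beta/d}-a\bigr)\dd a ,\qquad A=(G/\tau)^{1/\gamma}.
\]
This integral equals $m(G/\tau)^{2/\gamma}\bigl(\frac{d}{d-\beta}-\frac12\bigr)$, and the bracket simplifies to $\frac{\gamma}{2(d-\beta)}$, which matches $C_*$. The integrand is integrable at $0$ since $\beta/d<1$, and the errors from the Stirling approximation at small $a$ are lower order.

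\emph{Case $\beta=d$.} Here $(c_j/\tau)^{1/d}\approx G^{1/d}\tau^{-1/d}a_j^{-1}$. Summing over $j\le m\tau^{-1/\gamma}$ produces $m\,G^{1/d}\tau^{-1/d}\cdot\frac1\gamma\log(1/\tau)$, while the subtracted term $\sum a_j$ is only $O(\tau^{-2/\gamma})=O(\tau^{-1/d})$, which is negligible against the logarithm.

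\emph{Case $\beta>d$.} Now $\tau^{1/d}\sum_j((c_j/\tau)^{1/d}-a_j)_+\to\sum_j c_j^{1/d}$, and this series converges because $c_j^{1/d}\asymp a_j^{-\beta/d}$ with $\beta/d>1$. Dominated convergence gives the limit; the cutoff error is $O(\tau^{-2/\gamma})=o(\tau^{-1/d})$.

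The main obstacle is controlling the Stirling and Riemann-sum errors uniformly in $j$ near the cutoff. In particular, the $O(1/(a_j+k))$ correction shifts each row count by only $O(1)$; this is what I would check first. I would do it by bracketing $\lambda_{j,k}$ between $c_j(a_j+k+\theta_\pm)^{-d}$, which gives explicit lower and upper counts.

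Next, I would invert the counting asymptotics. The function $\mathcal N(\tau)\sim C\tau^{-\alpha}L^\varepsilon$ is regularly varying, and the standard inversion gives $s_N\sim (C/N)^{1/\alpha}(\log N^{1/\alpha})^{\varepsilon/\alpha}$. This is the stated $A_*=(C_*\alpha^{-\varepsilon})^{1/\alpha}$, and \eqref{eq:ellipsoid-weak-endpoint} follows immediately. For $r=2$, \eqref{eq:Hilbert-two-isometry} and the Hilbert–Schmidt tail computation from \cref{cor:Hankel-Hilbert-approximation-lower} give $a_N^{(2)}(T_g)^2=\sum_{n\ge N}s_n^2$. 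Since $2/\alpha>1$, Karamata's theorem yields $\sum_{n\ge N}s_n^2\sim A_*^2N^{1-2/\alpha}(\log N)^{2\varepsilon/\alpha}/(2/\alpha-1)$, which is \eqref{eq:ellipsoid-optimal-two-error}.

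For general $r$, \cref{prop:Hilbert-calibration} gives $\pi_r\asymp_r\pi_2$ on all operators. Hence $a_N^{(r)}\asymp_r a_N^{(2)}$, which is \eqref{eq:ellipsoid-optimal-summing-error}.
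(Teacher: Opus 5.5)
Your proposal is correct. Its last steps match the paper's: the inversion of the counting function, the identity $a_N^{(2)}(T_g)^2=\sum_{n\ge N}s_n^2$ (valid since $T_g$ is diagonal in an orthonormal basis), the tail-sum asymptotic, and the passage to general $r$ through \cref{prop:Hilbert-calibration}. Your treatment of $\beta>d$, via row-by-row limits and dominated convergence, is also the paper's.

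The two arguments differ in the regime $\beta\le d$.

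\emph{The paper's route.} It introduces the model eigenvalues $\mu_{j,k}=Ga_j^{-\beta}(a_j+k)^{-d}$ and counts them exactly, with an $O(B)$ rounding error. It then uses the multiplicative sandwich $(1-\eta)\mu_{j,k}\le\lambda_{j,k}\le(1+\eta)\mu_{j,k}$ for $j\ge J_\eta$. The finitely many rows $j<J_\eta$ are discarded as $O_\eta(\tau^{-1/d})$, and finally $\eta\downarrow0$.

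\emph{Your route.} You keep the exact $c_j$ and absorb the $k$-dependence into an additive $O(1)$ shift in each row. This gives one master formula for all three regimes:
\[
\mathcal N_g(\tau)=\sum_j\bigl((c_j/\tau)^{1/d}-a_j\bigr)_++O(\tau^{-1/\gamma}).
\]
You then replace $c_j$ by $Ga_j^{-\beta}$ with a summable additive error, and replace the Riemann sum by an integral.

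\emph{What each buys.} The paper's argument is softer: it needs only $\lambda_{j,k}/\mu_{j,k}=1+O(a_j^{-1})$ uniformly in $k$, plus crude per-row bounds. It yields only the leading term. Your argument needs two additional facts, both true. The first is the uniform bracketing $\bigl(\Gamma(x+d+2)/\Gamma(x+2)\bigr)^{1/d}=x+O(1)$ on $[1/m,\infty)$; this function is continuous there with a finite limit at infinity. The second is that $\lambda_{j,k}$ is monotone in $k$. In return you get explicit remainders. For $0<\beta<d$, $\mathcal N_g(\tau)=C_*\tau^{-2/\gamma}+O(\tau^{-1/d})$. For $\beta=d$, the remainder beside the logarithmic main term is $O(\tau^{-1/d})$.
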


\begin{proof}
Put $a_j=(j+1)/m$ and
\[
 c_j=G\frac{\Gamma(a_j+d+1)}{\Gamma(a_j+\gamma+1)}.
\]
Equation \eqref{eq:ellipsoid-Toeplitz-eigenvalues} becomes
\begin{equation}\label{eq:ellipsoid-counting-gamma}
 \lambda_{j,k}=c_j
 \frac{\Gamma(a_j+k+2)}{\Gamma(a_j+k+d+2)}.
\end{equation}
Thus $\lambda_{j,k}>0$ and
\[
 c_j\sim G a_j^{-\beta},\qquad
 \lambda_{j,k}\asymp a_j^{-\beta}(a_j+k)^{-d}.
\]
Since $d>0$, only finitely many $\lambda_{j,k}$ exceed any fixed
positive number.  The diagonal truncations converge in operator norm,
which proves compactness.

First suppose $\beta\le d$.  Consider the model eigenvalues
\[
 \mu_{j,k}=G a_j^{-\beta}(a_j+k)^{-d}.
\]
Put $\vartheta=\beta/d$ and
$B=(G/\tau)^{1/\gamma}$.  The inequality $\mu_{j,k}>\tau$ is
equivalent to
\[
 0\le k<B^{1+\vartheta}a_j^{-\vartheta}-a_j.
\]
Its right side is positive exactly when $a_j<B$.  Counting integers
$k$ gives
\begin{align}
 \#\{(j,k):\mu_{j,k}>\tau\}
 &=B^{1+\vartheta}\sum_{a_j<B}a_j^{-\vartheta}
   -\sum_{a_j<B}a_j+O(B).
 \label{eq:ellipsoid-model-count}
\end{align}
The $O(B)$ term follows because there are at most $mB$ nonempty rows,
and rounding in each row has error at most one.  For $0\le\vartheta<1$,
the integral test gives
\[
 \sum_{a_j<B}a_j^{-\vartheta}
 \sim\frac{m}{1-\vartheta}B^{1-\vartheta},\qquad
 \sum_{a_j<B}a_j\sim\frac m2B^2.
\]
Consequently,
\begin{equation}\label{eq:ellipsoid-model-count-strong}
 \#\{(j,k):\mu_{j,k}>\tau\}
 \sim \frac{m\gamma}{2(d-\beta)}
 G^{2/\gamma}\tau^{-2/\gamma}.
\end{equation}
This also covers $\beta=0$.  If $\vartheta=1$, then
\[
 \sum_{a_j<B}a_j^{-1}=m\log B+O(1),\qquad
 \sum_{a_j<B}a_j=O(B^2).
\]
Since $\gamma=2d$, \eqref{eq:ellipsoid-model-count} now yields
\begin{equation}\label{eq:ellipsoid-model-count-critical}
 \#\{(j,k):\mu_{j,k}>\tau\}
 \sim mB^2\log B
 \sim\frac m\gamma G^{1/d}\tau^{-1/d}\log(1/\tau).
\end{equation}

We next compare the model with the exact eigenvalues. Stirling's
formula gives
\[
 \frac{\Gamma(x+u)}{\Gamma(x+v)}
 =x^{u-v}\bigl(1+O(x^{-1})\bigr)
 \qquad(x\to\infty),
\]
for each fixed pair $u,v$. Hence
\[
\begin{aligned}
 \frac{\lambda_{j,k}}{\mu_{j,k}}
 &=
 a_j^\beta
 \frac{\Gamma(a_j+d+1)}{\Gamma(a_j+\gamma+1)}
 (a_j+k)^d
 \frac{\Gamma(a_j+k+2)}{\Gamma(a_j+k+d+2)}\\
 &=\bigl(1+O(a_j^{-1})\bigr)
   \bigl(1+O((a_j+k)^{-1})\bigr)\\
 &=1+O(a_j^{-1}),
\end{aligned}
\]
uniformly for $k\ge0$ as $j\to\infty$.
For every $0<\eta<1$, there is $J_\eta$ such that
\[
 (1-\eta)\mu_{j,k}\le\lambda_{j,k}
 \le(1+\eta)\mu_{j,k},\qquad j\ge J_\eta,\quad k\ge0.
\]
For each fixed $j$, both eigenvalue sequences have
$O(\tau^{-1/d})$ terms greater than $\tau$.
The finitely many rows $j<J_\eta$ therefore contribute
$O_\eta(\tau^{-1/d})$.  Its ratio to $\tau^{-2/\gamma}$ tends to zero
if $\beta<d$.
Its ratio to $\tau^{-1/d}\log(1/\tau)$ tends to zero if $\beta=d$.
Apply \eqref{eq:ellipsoid-model-count-strong} or
\eqref{eq:ellipsoid-model-count-critical} at the levels
$\tau/(1-\eta)$ and $\tau/(1+\eta)$.
The resulting lower and upper constants differ by the factors
$(1-\eta)^\alpha$ and $(1+\eta)^\alpha$.
Letting $\eta\downarrow0$ proves
\eqref{eq:ellipsoid-counting-asymptotic} in these two cases.

Suppose now that $\beta>d$.  For each fixed $j$, define
\[
 n_j(\tau)=\#\{k\ge0:\lambda_{j,k}>\tau\}.
\]
The asymptotic in \eqref{eq:ellipsoid-counting-gamma} gives
\[
 \lim_{\tau\downarrow0}\tau^{1/d}n_j(\tau)=c_j^{1/d}.
\]
The uniform gamma quotient bound also gives
$\lambda_{j,k}\le Cc_j(k+1)^{-d}$.  Hence
\[
 0\le\tau^{1/d}n_j(\tau)\le C^{1/d}c_j^{1/d},\qquad
 \sum_{j=0}^\infty c_j^{1/d}<\infty,
\]
where convergence follows from $c_j^{1/d}\asymp(j+1)^{-\beta/d}$.
Dominated convergence in the sum over $j$ gives
\[
 \lim_{\tau\downarrow0}\tau^{1/d}\mathcal N_g(\tau)
 =\sum_{j=0}^\infty c_j^{1/d}.
\]
This proves the third case of
\eqref{eq:ellipsoid-counting-asymptotic}.

To recover the singular values, put
$v_N=A_*N^{-1/\alpha}(\log N)^{\varepsilon/\alpha}$.
Then
\[
 \log(1/v_N)\sim\frac1\alpha\log N,\qquad
 \frac{\mathcal N_g(cv_N)}N\longrightarrow c^{-\alpha}
 \quad(c>0).
\]
For fixed $0<c<1<C$ and all sufficiently large $N$, this gives
\[
 \mathcal N_g(cv_N)>N,\qquad \mathcal N_g(Cv_N)<N.
\]
By the definition of the counting function,
\[
 cv_N<s_N(T_g)\le Cv_N.
\]
These inequalities also hold when eigenvalues are repeated.
Let $c$ increase to $1$ and $C$ decrease to $1$. Then
$s_N(T_g)/v_N\to1$. This gives
\eqref{eq:ellipsoid-singular-asymptotic} and
\eqref{eq:ellipsoid-weak-endpoint}.

Finally assume $\alpha<2$.  By \cref{prop:Hilbert-calibration},
$\pi_2=\|\cdot\|_{\mathcal S_2}$ and
$\pi_r\asymp_r\|\cdot\|_{\mathcal S_2}$ on these Hilbert spaces.
Truncating the singular value expansion after $N-1$ terms gives
\begin{equation}\label{eq:ellipsoid-best-Hilbert-tail}
 a_N^{(2)}(T_g)^2=\sum_{\ell=N}^\infty s_\ell(T_g)^2.
\end{equation}
To check the lower bound in this equality, let $\operatorname{rank}A<N$
and let $Q$ be the orthogonal projection onto its range.  If
$(u_\ell)$ is an eigenbasis of $T_g$, then
\begin{align*}
 \|T_g-A\|_{\mathcal S_2}^2
 &\ge\|(I-Q)T_g\|_{\mathcal S_2}^2\\
 &=\sum_{\ell=1}^\infty s_\ell(T_g)^2
       (1-\|Qu_\ell\|^2)
 \ge\sum_{\ell=N}^\infty s_\ell(T_g)^2.
\end{align*}
The last step uses $0\le\|Qu_\ell\|^2\le1$ and
$\sum_\ell\|Qu_\ell\|^2=\operatorname{rank}Q\le N-1$.
Put
\[
 \sigma=\frac2\alpha>1,\qquad
 \nu=\frac{2\varepsilon}{\alpha}\ge0,\qquad
 h(x)=x^{-\sigma}(\log x)^\nu.
\]
The function $h$ is decreasing for all sufficiently large $x$.
Thus
\[
 \int_N^\infty h(x)\dd x
 \le\sum_{\ell=N}^\infty h(\ell)
 \le h(N)+\int_N^\infty h(x)\dd x.
\]
Writing $u=\log x$ and integrating by parts gives
\begin{align*}
 I_N:=\int_N^\infty h(x)\dd x
 &=\frac{N^{1-\sigma}(\log N)^\nu}{\sigma-1}\\
 &\quad+\frac{\nu}{\sigma-1}
 \int_{\log N}^\infty e^{-(\sigma-1)u}u^{\nu-1}\dd u.
\end{align*}
The last term is zero when $\nu=0$. Otherwise it is at most
$\nu I_N/((\sigma-1)\log N)$. Therefore
\[
 I_N\sim\frac{N^{1-\sigma}(\log N)^\nu}{\sigma-1},
 \qquad h(N)=o(I_N).
\]
Equation \eqref{eq:ellipsoid-singular-asymptotic} now gives
\[
 \sum_{\ell=N}^\infty s_\ell(T_g)^2
 \sim\frac{A_*^2}{2/\alpha-1}
 N^{1-2/\alpha}(\log N)^{2\varepsilon/\alpha}.
\]
Taking square roots proves \eqref{eq:ellipsoid-optimal-two-error}.
The equivalence of the summing and Hilbert--Schmidt norms gives
\eqref{eq:ellipsoid-optimal-summing-error}.
\end{proof}

For quasi-radial symbols on these ellipsoids, diagonalization in the
monomial basis is classical; see \cite{QuirogaSanchez2015}.
The next proposition computes the leading counting constant when the
symbol contains a bounded function of the transverse variable.
It also gives precise asymptotics for
the geometric symbol in \eqref{eq:ellipsoid-symbol-family}.

\begin{proposition}\label{prop:ellipsoid-profile}
Let $m\ge2$ be an integer and let $\gamma>\beta\ge0$.
Use $d,\alpha,\varepsilon$ and $C_*$ from
\cref{thm:ellipsoid-counting}. Let
$\psi:[0,1]\to[0,\infty)$ be bounded and measurable. Suppose
\[
 \lim_{u\uparrow1}\psi(u)=L_\psi>0.
\]
For $Z=(z,w)\in\Omega_m$, put
\[
 A(Z)=1-|w|^2,\qquad
 u(Z)=\frac{|z|^{2m}}{A(Z)},\qquad
 h_\psi(Z)=t(Z)^\gamma A(Z)^{-\beta}\psi(u(Z)).
\]
Set $a_j=(j+1)/m$ and
\begin{equation}\label{eq:ellipsoid-profile-moments}
 c_j(\psi)=
 \frac{\Gamma(a_j+d+1)}{\Gamma(a_j)}
 \int_0^1u^{a_j-1}(1-u)^\gamma\psi(u)\dd u.
\end{equation}
Then $T_{h_\psi}$ is positive and compact. Its singular value
counting function satisfies
\begin{equation}\label{eq:ellipsoid-profile-counting}
 \mathcal N_\psi(\tau)
 :=\#\{N\ge1:s_N(T_{h_\psi})>\tau\}
 \sim C_\psi\tau^{-\alpha}
        \bigl(\log(1/\tau)\bigr)^\varepsilon
 \qquad(\tau\downarrow0),
\end{equation}
where
\begin{equation}\label{eq:ellipsoid-profile-constant}
 C_\psi=
 \begin{cases}
 L_\psi^{2/\gamma}C_*,&\beta<d,\\[3pt]
 L_\psi^{1/d}C_*,&\beta=d,\\[3pt]
 \displaystyle\sum_{j=0}^\infty c_j(\psi)^{1/d},&\beta>d.
 \end{cases}
\end{equation}
The series in the last case converges. If
$A_\psi=(C_\psi\alpha^{-\varepsilon})^{1/\alpha}$, then
\begin{equation}\label{eq:ellipsoid-profile-singular}
 s_N(T_{h_\psi})\sim
 A_\psi N^{-1/\alpha}(\log N)^{\varepsilon/\alpha}.
\end{equation}
In particular,
$T_{h_\psi}\in\mathcal S_{\alpha,\infty}$ if and only if
$\beta\ne d$. If $\alpha<2$, then
\begin{equation}\label{eq:ellipsoid-profile-two-error}
 a_N^{(2)}(T_{h_\psi})\sim
 \frac{A_\psi}{\sqrt{2/\alpha-1}}
 N^{1/2-1/\alpha}(\log N)^{\varepsilon/\alpha}.
\end{equation}
If $\alpha<2$ and $1\le r<\infty$, the same rate holds for
$a_N^{(r)}(T_{h_\psi})$ up to positive constants.
\end{proposition}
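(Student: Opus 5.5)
The plan is to follow the proof of \cref{thm:ellipsoid-counting}: first diagonalize $T_{h_\psi}$ in the monomial basis, then show that its eigenvalues have the same structure as before, namely a row factor $c_j(\psi)$ times the same $k$-dependent gamma quotient. Since $h_\psi$ is invariant under both rotations, $T_{h_\psi}e_{j,k}=\lambda_{j,k}(\psi)e_{j,k}$. In the variables $x=|z|^2$, $y=|w|^2$ one has $x^m=u(1-y)$ and $t=(1-y)(1-u)$, so $h_\psi=(1-y)^{d}(1-u)^\gamma\psi(u)$. Substituting $x=(u(1-y))^{1/m}$, so that $dx=\frac1m(1-y)^{1/m}u^{1/m-1}du$, makes the integral factor:
\[
 \int_{\Omega_m}h_\psi|z|^{2j}|w|^{2k}\dd v
 =\frac{\pi^2}{m}\int_0^1u^{a_j-1}(1-u)^\gamma\psi(u)\dd u
 \int_0^1y^k(1-y)^{a_j+d}\dd y .
\]
Dividing by $N_{j,k}$ from \eqref{eq:ellipsoid-monomial-norm} should give
\[
 \lambda_{j,k}(\psi)=c_j(\psi)\frac{\Gamma(a_j+k+2)}{\Gamma(a_j+k+d+2)},
\]
with $c_j(\psi)$ as in \eqref{eq:ellipsoid-profile-moments}. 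When $\psi\equiv1$ this reproduces \eqref{eq:ellipsoid-counting-gamma}, because $c_j(1)=c_j$. The eigenvalues are nonnegative, and boundedness of $\psi$ gives $\lambda_{j,k}(\psi)\lesssim a_j^{-\beta}(a_j+k)^{-d}$. Compactness and positivity then follow exactly as before.

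The main step is the row asymptotics $c_j(\psi)\sim L_\psi G a_j^{-\beta}$ as $j\to\infty$. The beta-type weight $u^{a_j-1}(1-u)^\gamma/B(a_j,\gamma+1)$ is a probability density on $[0,1]$ that concentrates at $u=1$ as $a_j\to\infty$. Because $\psi$ is bounded and $\psi(u)\to L_\psi$ as $u\uparrow1$, the normalized average of $\psi$ tends to $L_\psi$. To prove this, I split $[0,1-\eta]\cup[1-\eta,1]$. The mass of the first piece is at most $(1-\eta)^{a_j-1}$ times a polynomial factor in $a_j$ relative to $B(a_j,\gamma+1)\asymp a_j^{-\gamma-1}$, so it is exponentially small. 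Then
\[
 c_j(\psi)=(L_\psi+o(1))\frac{\Gamma(a_j+d+1)\Gamma(\gamma+1)}{\Gamma(a_j+\gamma+1)}=(L_\psi+o(1))\,c_j ,
\]
and hence $c_j(\psi)\sim L_\psi G a_j^{-\beta}$ by the gamma quotient. This is the step where the hypothesis on $\psi$ is really used. The remaining work is bookkeeping, but the relative $o(1)$ error has to be uniform in $k$; it is, because the $k$-factor is identical for $h_\psi$ and the model.

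The counting argument goes as follows. For $\beta\le d$, I compare $\lambda_{j,k}(\psi)$ with the model $\mu_{j,k}$ multiplied by $L_\psi$: for every $\eta>0$ there is $J_\eta$ such that $(1\pm\eta)L_\psi\mu_{j,k}$ bounds $\lambda_{j,k}(\psi)$ for $j\ge J_\eta$. The finitely many rows $j<J_\eta$ contribute $O(\tau^{-1/d})$ each, and this is negligible against the leading term, as before. Replacing $G$ by $L_\psi G$ in \eqref{eq:ellipsoid-model-count-strong} and \eqref{eq:ellipsoid-model-count-critical} gives constants $(L_\psi G)^{2/\gamma}$ and $(L_\psi G)^{1/d}$. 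This yields $C_\psi=L_\psi^{2/\gamma}C_*$ and $L_\psi^{1/d}C_*$ respectively.

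For $\beta>d$, each row satisfies $\tau^{1/d}n_j(\tau)\to c_j(\psi)^{1/d}$. Rows with $c_j(\psi)=0$ contribute nothing; this can happen only for finitely many $j$, since $c_j(\psi)\to$ a positive multiple of $a_j^{-\beta}$. The domination $\tau^{1/d}n_j(\tau)\lesssim c_j(1)^{1/d}\|\psi\|_\infty^{1/d}$ is summable because $\beta/d>1$. Dominated convergence then gives $C_\psi=\sum_j c_j(\psi)^{1/d}$ and convergence of that series.

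Finally, the singular value asymptotics \eqref{eq:ellipsoid-profile-singular}, the weak-Schatten dichotomy, the $r=2$ tail constant \eqref{eq:ellipsoid-profile-two-error}, and the comparability of $a_N^{(r)}$ for general $r$ follow verbatim from the inversion and tail-sum arguments in the proof of \cref{thm:ellipsoid-counting}, using \cref{prop:Hilbert-calibration}.
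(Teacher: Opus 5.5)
Your proposal is correct and follows essentially the paper's proof. It uses the same factorized eigenvalue formula, the same concentration of the beta weights at $u=1$ to get $c_j(\psi)/c_j(1)\to L_\psi$, and the same row-wise dominated convergence when $\beta>d$. The differences are cosmetic. For $\beta\le d$ the paper sandwiches $\lambda_{j,k}(\psi)$ between $(L_\psi\pm\eta)\lambda_{j,k}(1)$ and reuses the asymptotics of $\mathcal N_g$ directly. You instead rerun the model count with $G$ replaced by $L_\psi G$, which additionally needs the uniform-in-$k$ estimate $\lambda_{j,k}(1)/\mu_{j,k}=1+O(a_j^{-1})$ from that theorem's proof. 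Also, $c_j(\psi)>0$ for every $j$, because $\psi$ is positive near $u=1$.
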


\begin{proof}
Since $0<t\le A\le1$ and $d>0$, we have
\[
 0\le h_\psi\le\|\psi\|_\infty t^\gamma A^{-\beta}
 \le\|\psi\|_\infty.
\]
Thus $T_{h_\psi}$ is bounded and positive. The two independent
rotations make it diagonal in the normalized monomial basis used in
\eqref{eq:ellipsoid-monomial-norm}. We compute its diagonal entries.
Write $a=a_j$, $s=|z|^{2m}$ and $y=|w|^2$. Polar integration and
the substitution $s=(1-y)u$ give
\begin{align*}
 &\int_{\Omega_m}h_\psi(Z)|z|^{2j}|w|^{2k}\dd v(Z)\\
 &\quad=\frac{\pi^2}{m}\int_0^1y^k(1-y)^{-\beta}
       \int_0^{1-y}s^{a-1}(1-y-s)^\gamma
          \psi\!\left(\frac{s}{1-y}\right)\dd s\,\dd y\\
 &\quad=\frac{\pi^2}{m}
       \left(\int_0^1u^{a-1}(1-u)^\gamma\psi(u)\dd u\right)
       \int_0^1y^k(1-y)^{a+d}\dd y\\
 &\quad=\frac{\pi^2}{m}
       \left(\int_0^1u^{a-1}(1-u)^\gamma\psi(u)\dd u\right)
       \frac{\Gamma(k+1)\Gamma(a+d+1)}
            {\Gamma(a+k+d+2)}.
\end{align*}
Divide by \eqref{eq:ellipsoid-monomial-norm}. The resulting
eigenvalues are
\begin{equation}\label{eq:ellipsoid-profile-eigenvalues}
 \lambda_{j,k}(\psi)=c_j(\psi)
   \frac{\Gamma(a_j+k+2)}{\Gamma(a_j+k+d+2)}.
\end{equation}
For $\psi=1$, the beta integral gives
\[
 c_j(1)=\Gamma(\gamma+1)
       \frac{\Gamma(a_j+d+1)}{\Gamma(a_j+\gamma+1)}.
\]
These are the coefficients in
\eqref{eq:ellipsoid-counting-gamma}. In particular,
\begin{equation}\label{eq:ellipsoid-profile-domination}
 0\le c_j(\psi)\le\|\psi\|_\infty c_j(1),\qquad
 0\le\lambda_{j,k}(\psi)
       \le\|\psi\|_\infty\lambda_{j,k}(1).
\end{equation}
Only finitely many entries on the right exceed any fixed positive
number. The same is true on the left. Diagonal truncation therefore
proves compactness.

We next show that
\begin{equation}\label{eq:ellipsoid-profile-moment-limit}
 \frac{c_j(\psi)}{c_j(1)}\longrightarrow L_\psi.
\end{equation}
For $a>0$, define the probability measure
\[
 \dd\nu_a(u)=
 \frac{u^{a-1}(1-u)^\gamma}{B(a,\gamma+1)}\dd u,
 \qquad
 B(a,b)=\frac{\Gamma(a)\Gamma(b)}{\Gamma(a+b)}.
\]
For every fixed $0<\delta<1$,
\begin{align*}
 \nu_a([0,1-\delta])
 &\le (1-\delta)^{a/2}
       \frac{B(a/2,\gamma+1)}{B(a,\gamma+1)}
 \longrightarrow0\qquad(a\to\infty).
\end{align*}
Indeed, the quotient of beta functions tends to $2^{\gamma+1}$.
Given $\eta>0$, choose $\delta$ so that
$|\psi(u)-L_\psi|\le\eta$ for $1-\delta<u<1$. Then
\[
 \left|\int_0^1\psi(u)\dd\nu_a(u)-L_\psi\right|
 \le\eta+(\|\psi\|_\infty+L_\psi)
          \nu_a([0,1-\delta]).
\]
Let $a\to\infty$ and then $\eta\downarrow0$.
Since $c_j(\psi)/c_j(1)=\int\psi\dd\nu_{a_j}$, this proves
\eqref{eq:ellipsoid-profile-moment-limit}.

Suppose first that $\beta\le d$.
For $0<\eta<L_\psi$, choose $J_\eta$ so that
\[
 (L_\psi-\eta)\lambda_{j,k}(1)
 \le\lambda_{j,k}(\psi)
 \le(L_\psi+\eta)\lambda_{j,k}(1),
 \qquad j\ge J_\eta,\quad k\ge0.
\]
For each fixed $j$, the gamma quotient in
\eqref{eq:ellipsoid-profile-eigenvalues} gives
$\lambda_{j,k}(\psi)=O((k+1)^{-d})$.
The rows with $j<J_\eta$ contribute $O_\eta(\tau^{-1/d})$ to
the counting function. This is of lower order than
$\tau^{-2/\gamma}$ when $\beta<d$ and than
$\tau^{-1/d}\log(1/\tau)$ when $\beta=d$.
Consequently,
\begin{align*}
 \mathcal N_g\!\left(\frac{\tau}{L_\psi-\eta}\right)
       -O_\eta(\tau^{-1/d})
 &\le\mathcal N_\psi(\tau)\\
 &\le
 \mathcal N_g\!\left(\frac{\tau}{L_\psi+\eta}\right)
       +O_\eta(\tau^{-1/d}).
\end{align*}
Apply \eqref{eq:ellipsoid-counting-asymptotic} and let
$\eta\downarrow0$. This proves the first two cases of
\eqref{eq:ellipsoid-profile-constant}.

Now suppose that $\beta>d$, and set
$n_j^\psi(\tau)=\#\{k\ge0:\lambda_{j,k}(\psi)>\tau\}$.
The limit in \eqref{eq:ellipsoid-profile-moment-limit} is positive,
and $\psi$ is positive on an interval ending at $1$.
Thus $c_j(\psi)>0$ for every $j$. For each fixed $j$,
\[
 \lambda_{j,k}(\psi)\sim c_j(\psi)k^{-d},\qquad
 \tau^{1/d}n_j^\psi(\tau)\longrightarrow c_j(\psi)^{1/d}.
\]
The uniform gamma quotient bound also gives
\[
 \lambda_{j,k}(\psi)\le Cc_j(\psi)(k+1)^{-d},\qquad
 \tau^{1/d}n_j^\psi(\tau)
 \le C^{1/d}\|\psi\|_\infty^{1/d}c_j(1)^{1/d}.
\]
Here $C$ depends only on $m$ and $d$.
Since $c_j(1)^{1/d}\asymp(j+1)^{-\beta/d}$ and $\beta/d>1$,
dominated convergence yields
\[
 \lim_{\tau\downarrow0}\tau^{1/d}\mathcal N_\psi(\tau)
 =\sum_{j=0}^\infty c_j(\psi)^{1/d}<\infty.
\]
This proves the remaining case of
\eqref{eq:ellipsoid-profile-counting}.

The monotone inversion of the counting function in the proof of
\cref{thm:ellipsoid-counting} now gives
\eqref{eq:ellipsoid-profile-singular} and the weak Schatten statement.
If $\alpha<2$, the Hilbert--Schmidt tail formula
\eqref{eq:ellipsoid-best-Hilbert-tail} and the integral estimate in
that proof give \eqref{eq:ellipsoid-profile-two-error}.
Finally, \cref{prop:Hilbert-calibration} gives the same rate for every
finite $r\ge1$.
\end{proof}

\begin{corollary}\label{cor:ellipsoid-geometric-spectral}
Let $m\ge2$ be an integer and $\gamma>\beta\ge0$. Use the notation of
\cref{thm:ellipsoid-counting}, and let
$f=f_{\gamma,2m\beta,0}$ be the symbol in
\eqref{eq:ellipsoid-symbol-family}. Define
\[
 \psi_{m,\beta}(u)=
 \bigl(u^{1/m}+(1-u)^{1/m}\bigr)^{-m\beta},\qquad 0\le u\le1.
\]
The conclusions of \cref{prop:ellipsoid-profile} hold for $T_f$,
with counting constant
\begin{equation}\label{eq:ellipsoid-geometric-spectral-constant}
 C_f=
 \begin{cases}
 C_*,&\beta\le d,\\[3pt]
 \displaystyle\sum_{j=0}^\infty
        c_j(\psi_{m,\beta})^{1/d},&\beta>d.
 \end{cases}
\end{equation}
In the second case, $0<C_f<C_*$.
In particular, if $\gamma=2$ and $\beta=1$, then
\[
 s_N(T_f)\sim m\frac{\log N}{N},\qquad
 a_N^{(2)}(T_f)\sim m\frac{\log N}{\sqrt N}.
\]
\end{corollary}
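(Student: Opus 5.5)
The plan is to show that $f=f_{\gamma,2m\beta,0}$ is exactly a symbol $h_\psi$ of \cref{prop:ellipsoid-profile} with $\psi=\psi_{m,\beta}$, and then read off the constants.

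\emph{Step 1: rewrite the symbol.} With $b=0$ and $\eta=2m\beta$ we have $f=t^\gamma R^{-m\beta}$ and $R=|z|^2+t^{1/m}$. Write $A=1-|w|^2$ and $u=|z|^{2m}/A$. Then $|z|^2=A^{1/m}u^{1/m}$ and $t=A-|z|^{2m}=A(1-u)$, so
\[
 R=A^{1/m}\bigl(u^{1/m}+(1-u)^{1/m}\bigr),
 \qquad
 R^{-m\beta}=A^{-\beta}\psi_{m,\beta}(u).
\]
Hence $f=t^\gamma A^{-\beta}\psi_{m,\beta}(u)=h_{\psi_{m,\beta}}$. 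The function $\psi_{m,\beta}$ is continuous on $[0,1]$, hence bounded and measurable. Its limit at $1$ is $L_\psi=1$.

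\emph{Step 2: apply the proposition.} All conclusions of \cref{prop:ellipsoid-profile} now hold. For $\beta\le d$ its constant is $L_\psi^{2/\gamma}C_*$ or $L_\psi^{1/d}C_*$, and both equal $C_*$. For $\beta>d$ the constant is $\sum_j c_j(\psi_{m,\beta})^{1/d}$. This gives \eqref{eq:ellipsoid-geometric-spectral-constant}.

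\emph{Step 3: the strict inequality $0<C_f<C_*$ for $\beta>d$.}
\begin{itemize}
\item Positivity follows because each $c_j(\psi)>0$, as in the proof of the proposition.
\item For the upper bound, compare with $\psi\equiv1$. At $\psi=1$ the moment $c_j(1)$ is the coefficient of \eqref{eq:ellipsoid-counting-gamma}, so $\sum_j c_j(1)^{1/d}=C_*$.
\item By concavity of $x\mapsto x^{1/m}$ on $[0,1]$ we have $u^{1/m}+(1-u)^{1/m}\ge1$, with strict inequality for $0<u<1$. Since $\beta>0$, this gives $\psi_{m,\beta}<1$ on $(0,1)$.
\item The weight $u^{a_j-1}(1-u)^\gamma$ is positive on $(0,1)$, so $c_j(\psi)<c_j(1)$ for every $j$.
\item Summing the $1/d$ powers gives $C_f<C_*$; the series converges by the domination \eqref{eq:ellipsoid-profile-domination}.
\end{itemize}
This step needs only care that the inequality is strict termwise.

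\emph{Step 4: the example $\gamma=2$, $\beta=1$.} Here $d=1=\beta$, so $\varepsilon=1$ and $\alpha=\max\{2/\gamma,1/d\}=1$. The critical case gives
\[
 C_f=C_*=\frac{m}{\gamma}\,G^{1/d}=\frac m2\,\Gamma(3)=m,
 \qquad
 A_f=(C_f\alpha^{-\varepsilon})^{1/\alpha}=m.
\]
Then \eqref{eq:ellipsoid-profile-singular} gives $s_N(T_f)\sim m\,N^{-1}\log N$.

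For the approximation numbers, $\alpha=1<2$, so \eqref{eq:ellipsoid-profile-two-error} applies. Since $\sqrt{2/\alpha-1}=1$ and $N^{1/2-1/\alpha}=N^{-1/2}$, it gives $a_N^{(2)}(T_f)\sim m\,N^{-1/2}\log N$.

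There is no real obstacle. The identification $R^{-m\beta}=A^{-\beta}\psi_{m,\beta}(u)$ and the strictness argument in Step 3 are the only nonformal points.
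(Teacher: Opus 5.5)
Your proposal is correct and follows the paper's proof essentially step for step: you identify $f$ with $h_{\psi_{m,\beta}}$ via $t=A(1-u)$ and $R=A^{1/m}\bigl(u^{1/m}+(1-u)^{1/m}\bigr)$, apply the profile proposition with $L_\psi=1$, and obtain $0<C_f<C_*$ from the termwise strict inequality $0<c_j(\psi_{m,\beta})<c_j(1)$ (using $\beta>d>0$). Your constants $d=\alpha=\varepsilon=1$ and $C_*=m$ in the case $\gamma=2$, $\beta=1$ also agree with the paper's.
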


\begin{proof}
With $A$ and $u$ as in \cref{prop:ellipsoid-profile}, we have
\[
 t=A(1-u),\qquad
 R=A^{1/m}\bigl(u^{1/m}+(1-u)^{1/m}\bigr).
\]
It follows that
\[
 f=t^\gamma R^{-m\beta}
   =t^\gamma A^{-\beta}\psi_{m,\beta}(u),\qquad
 2^{-(m-1)\beta}\le\psi_{m,\beta}\le1,
 \qquad \psi_{m,\beta}(1)=1.
\]
Apply \cref{prop:ellipsoid-profile} with $L_\psi=1$.
If $\beta>d$, then $\beta>0$ and
$0<\psi_{m,\beta}(u)<1$ for $0<u<1$.
The integrals in \eqref{eq:ellipsoid-profile-moments} therefore give
$0<c_j(\psi_{m,\beta})<c_j(1)$ for every $j$.
Summing their $1/d$ powers proves $0<C_f<C_*$.
For $\gamma=2$ and $\beta=1$, we have
$d=\alpha=\varepsilon=1$ and $C_*=m$.
The two displayed asymptotics follow.
\end{proof}

Thus the leading constant depends only on the endpoint value of the
profile when $\beta\le d$. When $\beta>d$, it depends on its full
sequence of beta moments. The logarithmic approximation factor is
therefore present for the original geometric symbol as well as for
the explicit gamma-function model.

\begin{theorem}\label{thm:sharp-models}
Let $1<p,q<\infty$ and $1\le r<\infty$.  Let
$\Lambda_0=\{a_j:j\in J_0\}$ be a sufficiently separated sublattice.
Let $b=(b_j)_{j\in J_0}\ge0$ be finitely supported, and extend it by
zero to $\N$ when forming $D_b$.  There is a nonnegative symbol
$f_b\in\mathcal D_{\Omega,q}$, supported in
the disjoint cells of $\Lambda_0$, such that
\begin{equation}\label{eq:sharp-coefficients}
 V_j^{1/q-1/p}\M_{q,\rho}f_b(a_j)\asymp b_j
 \quad(j\in J_0),
\end{equation}
and all coefficients off a fixed enlargement of
$\{a_j:b_j\ne0\}$ vanish.
Moreover,
\begin{equation*}
 \pi_r(M_{f_b}:A^p\to L^q)
 \asymp\pi_r(D_b:\ell^p\to\ell^q).
\end{equation*}
The constants do not depend on the support of $b$.  Thus the exact
diagonal ideal is sharp at every finite level.
\end{theorem}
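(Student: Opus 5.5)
The plan is to copy the multiplication half of the construction in \cref{thm:sharp-Hankel-models} and \cref{thm:infinite-separated-realization}, using the indicator symbol $f_b^{\mathrm M}$ instead of the conjugate-coordinate symbol. Carry out the cell construction of \cref{thm:infinite-separated-realization}: start from the disjoint polydiscs $\Q_{c\rho}(a_j)$ and assign the remaining points to the first $\rho$-ball containing them. This produces disjoint measurable cells $E_j$ with
\[
 \Q_{c\rho}(a_j)\subset E_j\subset\Q_\rho(a_j),\qquad v(E_j)\asymp V_j .
\]
Then put
\[
 f_b=\sum_{j\in J_0}b_jV_j^{1/p-1/q}\mathbf 1_{E_j}.
\]
Since $b$ is finitely supported, $f_b$ is a nonnegative bounded function with compact support in $\Omega$. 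Each kernel section $K(\cdot,z)$ is bounded on that support, so $f_b\in\mathcal D_{\Omega,q}$.

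\emph{Coefficient estimates.} For $j\in J_0$, the cell $E_j$ contains $\Q_{c\rho}(a_j)$, and $f_b$ equals the constant $b_jV_j^{1/p-1/q}$ there. This gives the lower bound
\[
 V_j^{1/q-1/p}\M_{q,\rho}f_b(a_j)\gtrsim b_j,
\]
because $v(\Q_{c\rho}(a_j))\asymp V_j$. For the upper bound, take any lattice point $a_k$. Only cells $E_j$ that meet $\Q_\rho(a_k)$ contribute, and for these engulfing gives $V_j\asymp V_k$. Sufficient separation of $\Lambda_0$ means a fixed $\rho$-ball meets at most a bounded number of cells, and in the collar at most one once the separation constant is large. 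Hence
\[
 V_k^{1/q-1/p}\M_{q,\rho}f_b(a_k)\lesssim\sum_{j\in\mathcal N(k)}b_j ,
\]
where $\mathcal N$ has bounded row and column incidence. This is the same estimate as \eqref{eq:sharp-Hankel-upper-neighbor}. It yields the matching upper bound in \eqref{eq:sharp-coefficients} at $k=j\in J_0$, provided $\mathcal N(j)=\{j\}$ within $J_0$; that holds by separation. It also shows that the coefficient at $a_k$ vanishes unless $\Q_\rho(a_k)$ meets some $E_j$ with $b_j\ne0$. Such $k$ lie in a fixed enlargement of $\{a_j:b_j\ne0\}$ by engulfing.

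\emph{Norm equivalence.} By \cref{cor:multiplication}, $\pi_r(M_{f_b})\asymp\|\mathbf M_{p,q,\rho}f_b\|_{\dideal}$. The lower bound combined with solidity (\cref{prop:diagonal-structure}) gives $\|b\|_{\dideal}\lesssim\|\mathbf M_{p,q,\rho}f_b\|_{\dideal}$. The upper neighbor bound combined with \cref{lem:finite-neighbor} gives the reverse inequality. Here $b$ is extended by zero outside $J_0$; zero extension preserves the ideal norm, as noted in \cref{prop:operator-diagonal}. By definition $\|b\|_{\dideal}=\pi_r(D_b)$. All constants come from the engulfing, packing and incidence bounds, so they are independent of the support of $b$.

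The main obstacle is the bookkeeping for the upper neighbor estimate. It requires that the incidence between lattice balls and the separated cells be uniformly bounded, and that neighboring volumes be comparable. Engulfing and packing supply this exactly as in the proof of \cref{thm:sharp-Hankel-models}.
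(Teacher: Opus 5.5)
Your proposal is correct and follows the paper's argument: an indicator-type symbol on disjoint cells inside the separated $\rho$-balls, the coefficient at $a_j$ read off because no other cell meets $\Q_\rho(a_j)$, the neighbor bound combined with \cref{lem:finite-neighbor} and solidity, and finally \cref{cor:multiplication}. The only difference is cosmetic. The paper uses the affine contractions $E_j=\Q_\rho^{[\theta]}(a_j)$, so $v(E_j)=\theta^{2n}V_j$ and the coefficient at $a_j$ equals $\theta^{2n/q}b_j$ exactly, whereas your partition cells give only two-sided comparability, which is all the statement needs.
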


\begin{proof}
Take the sublattice sufficiently separated that the sets
$\Q_\rho(a_j)$, $j\in J_0$, are pairwise disjoint.  Fix
$0<\theta<1/2$ and use the affine contractions from
\eqref{eq:frozen-polydisc-dilation}:
\[
 E_j=\Q_\rho^{[\theta]}(a_j)\subset\Q_\rho(a_j),
 \qquad v(E_j)=\theta^{2n}V_j.
\]
The extremal basis is fixed at scale $\rho$ in this inclusion.  Put
\[
 f_b(z)=\sum_{j\in J_0}
 b_jV_j^{1/p-1/q}\mathbf1_{E_j}(z).
\]
Here $\mathbf1_E$ denotes the characteristic function of $E$.  Since
$b$ has finite support, the support of $f_b$ lies in a finite union of
relatively compact cells.  For fixed $z\in\Omega$, the kernel section
$K(\cdot,z)$ is bounded on that union.  Hence
$f_bK(\cdot,z)\in L^q$.  Thus $f_b\in\mathcal D_{\Omega,q}$.
No other cell meets $\Q_\rho(a_j)$ for $j\in J_0$.  Hence
\begin{align*}
 \M_{q,\rho}f_b(a_j)^q
 &= V_j^{-1}
 \int_{E_j}b_j^qV_j^{q/p-1}\dd v\\
 &=\theta^{2n}b_j^qV_j^{q/p-1}.
\end{align*}
Hence \eqref{eq:sharp-coefficients} holds for $j\in J_0$.  Finite overlap
shows that $\Q_\rho(a_k)$ misses $\supp f_b$ unless $a_k$ belongs to a
fixed enlargement of $\{a_j:b_j\ne0\}$.  More precisely, let
\[
 \mathcal N(k)=\{j\in J_0:E_j\cap\Q_\rho(a_k)\ne\varnothing\}.
\]
Engulfing and volume stability give $V_j\asymp V_k$ for
$j\in\mathcal N(k)$.  The number of neighbors is uniformly bounded in
both indices.  Thus
\begin{align*}
 \bigl[V_k^{1/q-1/p}\M_{q,\rho}f_b(a_k)\bigr]^q
 &=V_k^{-q/p}\sum_{j\in\mathcal N(k)}
   b_j^qV_j^{q/p-1}v(E_j\cap\Q_\rho(a_k))\\
 &\lesssim\sum_{j\in\mathcal N(k)}b_j^q,
\end{align*}
and taking a $q$th root bounds this coefficient by
$C\sum_{j\in\mathcal N(k)}b_j$.  Conversely, the
coefficients on $J_0$ dominate $b$.  Solidity and
\cref{lem:finite-neighbor} give
\[
 \|\mathbf M_{p,q,\rho}f_b\|_{\mathfrak d_{p,q}^{\,r}}
 \asymp\|b\|_{\mathfrak d_{p,q}^{\,r}}.
\]
Apply \cref{cor:multiplication}.
\end{proof}

\subsection{Compactness and the target endpoint}

\begin{corollary}\label{cor:compactness}
Let $1<p<\infty$, $1\le r<\infty$, and let $Y$ be a Banach space.
Every operator
\[
 A\in\Piabs_r(A^p(\Omega),Y)
\]
is compact.  Consequently, every absolutely summing multiplication,
Toeplitz, big Hankel, little Hankel, weighted composition, or
Volterra composition operator considered above is compact.
If $1<q<\infty$ and $M_f\in\Piabs_r(A^p,L^q)$, then
$T_f,H_f,h_f$, and $k_f$ are compact.
\end{corollary}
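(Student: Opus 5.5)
The plan is to run the general argument already used in \cref{cor:H-compact} and \cref{prop:diagonal-compact-monotone}, and then specialize it to each operator class. Let $A\in\Piabs_r(A^p(\Omega),Y)$. By \cref{lem:summing-calculus}(iv), which rests on Pietsch domination and dominated convergence, $A$ maps weakly convergent sequences to norm convergent ones; that is, $A$ is completely continuous. Next, $A^p(\Omega)$ is a closed subspace of the reflexive space $L^p(\Omega)$ for $1<p<\infty$. Closedness follows from the local uniform convergence given by point-evaluation bounds. Hence $A^p(\Omega)$ is reflexive, and every bounded sequence in it has a weakly convergent subsequence (Eberlein--\v{S}mulian). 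Applying $A$ to such a subsequence gives a norm convergent image, so $A$ is compact. Nothing here uses the target space, so the first assertion holds for every Banach space $Y$.

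For the listed operator classes, the only point to check is that the phrase ``absolutely summing'' refers to a genuine bounded operator on all of $A^p$. This is the convention fixed in the introduction: membership means the initial operator has an absolutely $r$-summing extension, and that extension is unique. Toeplitz operators take values in $A^q$, little Hankel operators in $\overline{A^q}$, and the other classes in $L^q$, $L^q(\mu)$, or $A^q$. Each of these is a Banach space, including at the endpoint $q=1$ for multiplication, composition, and Volterra operators. The first assertion therefore applies verbatim to every class.

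For the last sentence, suppose $1<q<\infty$ and $M_f\in\Piabs_r(A^p,L^q)$. By \cref{thm:intro-main} and \cref{thm:little-graph}, the extensions satisfy
\[
 T_f=PM_f,\quad H_f=(I-P)M_f,\quad h_f=\overline PM_f,\quad k_f=(I-\overline P)M_f .
\]
Here $P$ and $\overline P$ are bounded on $L^q$ by \eqref{eq:intro-Pq}. The ideal property \eqref{eq:summing-ideal-property} shows that each of these four operators is absolutely $r$-summing, and so each is compact by the first part. Alternatively, one can use directly that a bounded operator composed with a compact operator is compact.

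The only mildly delicate step is making sure that ``compact'' is asserted for the unique extension and not merely for the initial operator on $\Gamma$ or on $\mathcal P(\Omega)$. The identifications of extensions already proved in \cref{thm:intro-main}, \cref{thm:little-graph}, \cref{thm:weighted-composition} and \cref{thm:Volterra-composition} settle this. Beyond that the argument is routine.
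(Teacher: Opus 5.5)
Your proposal is correct and follows the paper's own proof: complete continuity of absolutely summing operators (\cref{lem:summing-calculus}(iv), via Pietsch domination) combined with reflexivity of $A^p(\Omega)$ as a closed subspace of $L^p(\Omega)$ gives compactness for an arbitrary target $Y$, and the last assertion follows from the factorizations $T_f=PM_f$, $H_f=(I-P)M_f$, $h_f=\overline PM_f$, $k_f=(I-\overline P)M_f$. Your added care that compactness is asserted for the unique extensions matches the identifications proved in \cref{thm:intro-main} and \cref{thm:little-graph}.
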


\begin{proof}
Absolutely summing operators are completely continuous.  The space
$A^p(\Omega)$ is a closed subspace of the reflexive space $L^p(\Omega)$,
so it is reflexive.  If $(F_m)$ is bounded, pass to a subsequence
$(F_{m_k})$ that converges weakly to $F\in A^p$.  Complete continuity
gives $\|AF_{m_k}-AF\|_Y\to0$.
This also follows directly from Pietsch domination.  For a probability
measure $\nu$ on the dual unit ball,
\[
 \|A(F_{m_k}-F)\|_Y^r
 \le\pi_r(A)^r\int_{B_{(A^p)^*}}
       |\ell(F_{m_k}-F)|^r\dd\nu(\ell)\longrightarrow0.
\]
The integrands tend pointwise to zero and are uniformly bounded by
$\sup_k\|F_{m_k}-F\|_{A^p}^r$.
Thus the image of every bounded sequence has a norm-convergent
subsequence, which proves compactness.  For the final assertion, use
\[
 T_f=PM_f,\qquad H_f=(I-P)M_f,\qquad
 h_f=\overline PM_f,\qquad k_f=(I-\overline P)M_f.
\]
\end{proof}

The next proposition gives the domains of all five classes.  It
distinguishes each initial kernel formula from its bounded extension.

\begin{proposition}
\label{prop:domain-ledger}
Let $1<p<\infty$.  Recall the space $\mathcal P(\Omega)$ defined in
\cref{sec:geometry}.
\begin{enumerate}[label=\textup{(\roman*)}]
\item Both $\Gamma$ and $\mathcal P(\Omega)$ are dense in
$A^p(\Omega)$.
\item Let $1<q<\infty$ and $f\in\mathcal D_{\Omega,q}$.  The four
projected formulas
\[
 T_f,\quad H_f,\quad h_f,\quad k_f
\]
are defined on $\Gamma$.  Any bounded extension to $A^p$ is unique.
\item Let $1\le q<\infty$, $u,g\in A^q(\Omega)$, and let
$\varphi:\Omega\to\Omega$ be holomorphic.  Then
\[
 \mathcal P(\Omega)\subset
 \mathcal D(W_{u,\varphi})\cap
 \mathcal D(V_{g,\varphi}).
\]
If either restriction to $\mathcal P(\Omega)$ has a bounded extension
to $A^p$, that extension is unique. The natural domain of the
corresponding operator is then all of $A^p$, and the extension agrees
with its pointwise formula for every $F\in A^p$.
\item Every occurrence of
\[
 A\in\Piabs_r(A^p,Y)
\]
in the preceding theorems means that the indicated initial operator
has a unique bounded extension to all of $A^p$.  This extension belongs
to $\Piabs_r(A^p,Y)$.
\item At $q=1$, the convention in (iv) applies to
$M_f$, $T_f^{\mathrm{an}}$, $W_{u,\varphi}$, and
$V_{g,\varphi}$.  It is not applied to $T_f,H_f,h_f$, or $k_f$ with a
general nonanalytic symbol.
\end{enumerate}
\end{proposition}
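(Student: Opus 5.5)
The plan is to prove the five items in order; (i) and (iii) carry the substance, and (ii), (iv), (v) are bookkeeping. For (i), density of $\Gamma$ is exactly \cref{lem:kernel-span-density}, so what remains is density of $\mathcal P(\Omega)$. Since $\Omega$ is convex, fix $a\in\Omega$ and consider the dilates $F_s(z)=F(a+s(z-a))$ with $0<s<1$.

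First, $F_s\to F$ in $A^p$ as $s\uparrow1$. I would deduce this from the change of variables $z\mapsto a+s(z-a)$. Its image $a+s(\Omega-a)$ is contained in $\Omega$ by convexity, and the Jacobian factor is $s^{-2n}$. This gives a uniform bound $\|F_s\|_{A^p}\le s^{-2n/p}\|F\|_{A^p}$. Convergence then follows by a standard three-$\varepsilon$ argument: approximate $F$ in $L^p$ by a continuous compactly supported function, and use continuity of dilation on such functions. This is where the uniform bound is needed.

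Second, each $F_s$ is holomorphic on the larger convex domain $a+s^{-1}(\Omega-a)\Supset\overline\Omega$. It can therefore be approximated uniformly on $\overline\Omega$ by polynomials. I would justify this either by Oka--Weil, since a compact convex set is polynomially convex, or more elementarily by Cauchy estimates on a Taylor expansion. The Taylor route needs care because the expansion is about a point and the domain need not be a ball. The clean route is Oka--Weil applied to the compact convex set $\overline\Omega$ inside the convex, hence Runge, neighbourhood. Uniform convergence on the bounded set $\Omega$ implies $A^p$ convergence.

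For (ii), the definition of $\mathcal D_{\Omega,q}$ gives $fg\in L^q$ for $g\in\Gamma$, and $P,\overline P$ are bounded on $L^q$ by \eqref{eq:intro-Pq}. So all four formulas take values in $L^q$. Uniqueness of bounded extensions follows from density of $\Gamma$.

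For (iii), take a polynomial $F$. It is bounded on $\Omega$, so $u(F\circ\varphi)\in A^q$ because $u\in A^q$. For the Volterra operator, \eqref{eq:Volterra-Carleson-norm} (via \cref{lem:radial-LP}) gives $\|V_{g,\varphi}F\|_{A^q}^q\lesssim\|F\|_\infty^q\mu_{g,\varphi}(\Omega)<\infty$, and $\mu_{g,\varphi}$ is finite. Uniqueness of extensions comes from (i). To identify the extension with the pointwise formula, follow the argument already used in \cref{thm:weighted-composition-products,thm:mixed-Volterra-product}. Take $F_k\to F$ in $A^p$; then $F_k\to F$ locally uniformly. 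The pointwise images converge locally uniformly, using \eqref{eq:Volterra-compact-bound} in the Volterra case. Meanwhile the extension's values converge in $A^q$, hence also locally uniformly. So the two limits agree, the pointwise image of $F$ lies in $A^q$, and the natural domain is all of $A^p$.

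For (iv), this is the convention already fixed in the introduction. Combined with (i)–(iii), it shows that the summing extension is the unique bounded extension and agrees with the concrete formula. For (v), at $q=1$ the projection $P$ is not bounded on $L^1$, so the projected formulas are not controlled. $M_f$, $W_{u,\varphi}$ and $V_{g,\varphi}$ avoid $P$ entirely, and analytic Toeplitz symbols reduce to multiplication. The convention is therefore stated only for those operators.

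I expect the main obstacle to be the polynomial density in (i), specifically the uniform-on-$\overline\Omega$ approximation of $F_s$. The rest is bookkeeping with arguments already present in the paper.
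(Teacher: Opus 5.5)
Your proposal is correct and follows essentially the same route as the paper. For (i) you dilate toward an interior point $a$ and apply Oka--Weil on the polynomially convex set $\overline\Omega$; the paper obtains the $L^p$ continuity of the dilations by zero extension to $\mathbb R^{2n}$ rather than by your change of variables into $a+s(\Omega-a)\subset\Omega$, which is an equivalent step. Item (ii) uses density of $\Gamma$ and boundedness of $P,\overline P$ on $L^q$. Item (iii) identifies limits locally uniformly via \eqref{eq:Volterra-compact-bound}, and (iv)--(v) are read off from density and the conventions.
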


\begin{proof}
The density of $\Gamma$ follows from \cref{lem:kernel-span-density}.
We prove the polynomial assertion.  Fix
$a\in\Omega$.  For $0<t<1$, put
\[
 F_t(z)=F(a+t(z-a)).
\]
Convexity implies
\[
 a+t(\overline\Omega-a)\Subset\Omega.
\]
Hence $F_t$ is holomorphic on a neighborhood of $\overline\Omega$.
Let $\widetilde F$ be the zero extension of $F$ to
$\mathbb R^{2n}$.  For $z\in\Omega$,
\[
 F_t(z)=\widetilde F(a+t(z-a)).
\]
Affine dilations are strongly continuous on $L^p(\mathbb R^{2n})$.
Indeed, their norms are $t^{-2n/p}$, uniformly bounded for
$1/2\le t\le1$.  For a continuous compactly supported function,
strong continuity follows from uniform continuity on a common compact
set.  Approximation in $L^p$ by such functions proves the assertion for
$\widetilde F$.
Therefore
\[
 \|F_t-F\|_{L^p(\Omega)}
 \le
 \|\widetilde F(a+t(\,\cdot-a))-\widetilde F\|_{L^p(\mathbb R^{2n})}
 \longrightarrow0
 \qquad(t\uparrow1).
\]
Thus
\begin{equation}\label{eq:dilation-density-Ap}
 \|F_t-F\|_{A^p}\longrightarrow0
 \qquad(t\uparrow1).
\end{equation}
The compact convex set $\overline\Omega$ is polynomially convex.
Oka--Weil approximation gives polynomials $P_{t,m}$ such that
\[
 \sup_{\overline\Omega}|P_{t,m}-F_t|\longrightarrow0
 \qquad(m\to\infty).
\]
Together with \eqref{eq:dilation-density-Ap}, this proves density of
$\mathcal P(\Omega)$.  Explicitly, choose $t_\nu\uparrow1$ with
$\|F_{t_\nu}-F\|_{A^p}<1/\nu$, and then choose a polynomial $P_\nu$
such that
\[
 \sup_{\overline\Omega}|P_\nu-F_{t_\nu}|
 <\frac1{\nu v(\Omega)^{1/p}},
 \qquad
 \|P_\nu-F\|_{A^p}<\frac2\nu.
\]

Part (ii) follows from the symbol condition
\eqref{eq:symbol-class} and the boundedness of $P$ and $\overline P$ on
$L^q$.  Indeed, for $F\in\Gamma$,
\[
 fF\in L^q,
\]
so all four formulas have values in their displayed target spaces.
Two bounded extensions that agree on the dense space $\Gamma$ agree on
$A^p$.

Let $P_0$ be a holomorphic polynomial.  Since $\Omega$ is bounded,
$P_0$ is bounded on $\Omega$.  Thus
\[
 \|W_{u,\varphi}P_0\|_{A^q}
 \le
 \|P_0\|_{L^\infty(\Omega)}\|u\|_{A^q}.
\]
Hence $P_0\in\mathcal D(W_{u,\varphi})$.  For the Volterra operator,
\eqref{eq:Volterra-Carleson-norm} and the finiteness of
$\mu_{g,\varphi}$ give
\[
 \|V_{g,\varphi}P_0\|_{A^q}^q
 \lesssim
 \|P_0\|_{L^\infty(\Omega)}^q
 \mu_{g,\varphi}(\Omega)<\infty.
\]
Thus $P_0\in\mathcal D(V_{g,\varphi})$.

It remains to identify an extension.  Let $F_m\in\mathcal P(\Omega)$
and $F_m\to F$ in $A^p$.  Then $F_m\to F$ uniformly on compact subsets
of $\Omega$, since for $L\Subset\Omega$ the local mean-value inequality
gives
\[
 \sup_L|F_m-F|\le C_L\|F_m-F\|_{A^p}.
\]
For the weighted composition formula,
\[
 u(F_m\circ\varphi)\longrightarrow u(F\circ\varphi)
\]
locally uniformly.  If the left side converges in $A^q$ by boundedness
of an extension, its $A^q$ limit has the displayed pointwise value.

For the Volterra formula, fix $K\Subset\Omega$ and put
\[
 K_a=\{a+t(z-a):z\in K,\ 0\le t\le1\}.
\]
Then $K_a\Subset\Omega$ and $\varphi(K_a)\Subset\Omega$.  The quotient
has the explicit expression
\[
 \frac{\R_ag(a+t(z-a))}{t}
 =\sum_{\ell=1}^n(z_\ell-a_\ell)
        \partial_\ell g(a+t(z-a)).
\]
It extends continuously to $t=0$ and is bounded uniformly for
$z\in K$, $0\le t\le1$.  Therefore
\[
 \sup_{z\in K}|V_{g,\varphi}(F_m-F)(z)|
 \le C_K\sup_{\varphi(K_a)}|F_m-F|\longrightarrow0.
\]
In particular,
\[
 V_{g,\varphi}F_m\longrightarrow V_{g,\varphi}F
\]
uniformly on $K$.  Any $A^q$ limit has the same pointwise value.  This
proves (iii).  Statements (iv) and (v) are now consequences of density
and of the definitions.

The same local argument verifies the necessity of the weight
assumptions for extensions from a dense initial domain. Start with
holomorphic $u$ or $g$. Suppose the corresponding formula is defined
on a subspace of its natural domain that is dense in $A^p$, and has
a bounded extension $A^p\to A^q$.
Choose $F_k$ in that subspace with $F_k\to\mathbf1$ in $A^p$.
For the respective operators, local uniform convergence gives
\[
 W_{u,\varphi}F_k\longrightarrow u,
 \qquad
 V_{g,\varphi}F_k\longrightarrow g-g(a).
\]
Boundedness of the extension also gives convergence in $A^q$.
The local limits identify the $A^q$ limits. Hence $u\in A^q$,
respectively $g-g(a)\in A^q$. Since $\Omega$ has finite volume,
the latter condition is equivalent to $g\in A^q$.
\end{proof}

Compactness gives a scalar boundary condition for each of the five
classes.  The conclusion below is stronger than boundedness.  It holds
in every case of Garling's classification.

\begin{corollary}
\label{cor:five-boundary-vanishing}
Let $1<p,q<\infty$ and $1\le r<\infty$.
Let $f\in\mathcal D_{\Omega,q}$ and $u,g\in A^q(\Omega)$.
Let $\varphi:\Omega\to\Omega$ be holomorphic.
\begin{enumerate}[label=\textup{(\roman*)}]
\item If $T_f\in\Piabs_r(A^p,A^q)$, then
\begin{equation*}
 V_j^{1/q-1/p}|\widetilde f(a_j)|\longrightarrow0
 \quad(j\to\partial\Omega).
\end{equation*}
\item If $H_f\in\Piabs_r(A^p,L^q)$, then
\begin{equation*}
 V_j^{1/q-1/p}\G_{q,\rho_+}f(a_j)\longrightarrow0
 \quad(j\to\partial\Omega).
\end{equation*}
\item If $h_f\in\Piabs_r(A^p,\overline{A^q})$, then
\begin{equation*}
 V_j^{1/q-1/p}|\mathfrak b f(a_j)|\longrightarrow0
 \quad(j\to\partial\Omega).
\end{equation*}
\item If $W_{u,\varphi}\in\Piabs_r(A^p,A^q)$, then
\begin{equation*}
 \frac{\mu_{u,\varphi}(\Q_\rho(a_j))^{1/q}}
 {V_j^{1/p}}\longrightarrow0
 \quad(j\to\partial\Omega).
\end{equation*}
\item If $V_{g,\varphi}\in\Piabs_r(A^p,A^q)$, then
\begin{equation*}
 \frac{\mu_{g,\varphi}(\Q_\rho(a_j))^{1/q}}
 {V_j^{1/p}}\longrightarrow0
 \quad(j\to\partial\Omega).
\end{equation*}
\end{enumerate}
If either graph $(T_f,H_f)$ or $(h_f,k_f)$ is absolutely summing,
one also has
\begin{equation*}
 V_j^{1/q-1/p}\M_{q,\rho}f(a_j)\longrightarrow0.
\end{equation*}
\end{corollary}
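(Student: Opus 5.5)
Each item follows from the same two facts. First, each operator has already been tied to a scalar sequence that lies in $\mathfrak d_{p,q}^{\,r}$ whenever the operator is absolutely $r$-summing. Second, \cref{prop:diagonal-compact-monotone} gives $\mathfrak d_{p,q}^{\,r}\subset c_0$ for $1<p<\infty$. The plan is therefore to place each displayed sequence in the diagonal ideal and then read off vanishing along $j\to\partial\Omega$. This is legitimate because only finitely many lattice points lie in any compact subset, so $j\to\partial\Omega$ is the same as $j\to\infty$ along the enumeration. Membership in $c_0$ does not depend on the enumeration, by the symmetry in \cref{prop:diagonal-structure}.

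The items are handled one at a time, as follows.
\begin{enumerate}[label=\textup{(\roman*)}]
\item The Berezin sequence lies in the ideal by \cref{cor:scalar-necessity}.
\item The IDA sequence $\mathbf G_{p,q,\rho}f$ lies in the ideal by the lower estimate \eqref{eq:H-lower-finite} in the proof of \cref{thm:intro-hankel}. That estimate needs no hypothesis beyond $H_f\in\Piabs_r$ and $f\in\mathcal D_{\Omega,q}$.
\item This is \cref{thm:little-necessity}.
\item This follows from \cref{thm:weighted-composition}.
\item This follows from \cref{thm:Volterra-composition}.
\end{enumerate}
For the final graph assertion, \cref{thm:intro-main} (respectively \cref{thm:little-graph}) gives $f\in\mathcal L_{p,q}^r(\Omega)$, so $\mathbf M_{p,q,\rho}f\in\mathfrak d_{p,q}^{\,r}$. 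In every case, \eqref{eq:diagonal-c0} then gives the asserted convergence to zero.

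The only real point to check is that $c_0$-membership applies to these sequences and not to some compressed diagonal. The sequences above are exactly the coefficient sequences (or are dominated termwise by them, which suffices by solidity) whose ideal norms are controlled in the cited results. I would also note an alternative argument that bypasses the classification. The normalized atoms $e_{j,p}$ tend weakly to zero in $A^p$: the sampling estimate shows $\langle e_{j,p},h\rangle\to0$ for each $h\in A^{p'}$, as in the proof of \cref{thm:complemented-Hankel-model}. Since every operator in $\Piabs_r$ is completely continuous by \cref{lem:summing-calculus}(iv), the atom images then tend to zero in norm. That route would need a local lower bound comparing each coefficient with $\|Ae_{j,p}\|$, so I would use the $c_0$ argument, which is immediate.
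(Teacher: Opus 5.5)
Your proposal is correct and is essentially the paper's proof: each displayed sequence lies in $\mathfrak d_{p,q}^{\,r}$ by \cref{cor:scalar-necessity}, the lower estimate \eqref{eq:H-lower-finite}, and \cref{thm:little-necessity,thm:weighted-composition,thm:Volterra-composition} (and by \cref{thm:intro-main,thm:little-graph} for the graph assertion), after which \eqref{eq:diagonal-c0} gives the vanishing along $j\to\partial\Omega$. Your remarks about the enumeration and the alternative weak-convergence route are harmless but not needed.
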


\begin{proof}
Every scalar sequence in $\mathfrak d_{p,q}^{\,r}$ belongs to $c_0$ by
\cref{prop:diagonal-compact-monotone}.  Apply this fact, respectively,
to \cref{cor:scalar-necessity,thm:intro-hankel,thm:little-necessity,thm:weighted-composition,thm:Volterra-composition}.
The graph assertion follows from \cref{thm:intro-main,thm:little-graph}.
\end{proof}

Analytic symbols do not require a projection.

\begin{corollary}
\label{cor:analytic-symbols}
Let $1<p<\infty$, $1\le q<\infty$, $1\le r<\infty$, and
\[
 f\in\Hol(\Omega)\cap\mathcal D_{\Omega,q}.
\]
Define on $\Gamma$
\[
 T_f^{\mathrm{an}}g=fg\in A^q(\Omega).
\]
Then
\begin{equation}\label{eq:analytic-symbol-criterion}
 T_f^{\mathrm{an}}\in\Piabs_r(A^p(\Omega),A^q(\Omega))
 \quad\Longleftrightarrow\quad
 f\in\mathcal L_{p,q}^{r}(\Omega),
\end{equation}
and
\begin{equation*}
 \pi_r(T_f^{\mathrm{an}})
 \asymp\|f\|_{\mathcal L_{p,q}^{r}(\Omega)}.
\end{equation*}
For $q>1$, $T_f^{\mathrm{an}}=T_f$.  For $q=1$,
\eqref{eq:analytic-symbol-criterion} gives a Toeplitz criterion that requires no projection.
\end{corollary}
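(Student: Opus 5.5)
The plan is to reduce the statement to the multiplication criterion by an exact norm identity, in the same way as for weighted composition operators. The analytic-symbol Toeplitz operator $T_f^{\mathrm{an}}$ and the multiplication operator $M_f$ have the same formula on $\Gamma$, with different targets, $A^q$ and $L^q$. The intended route is therefore the same one used in \cref{cor:multiplication} and \cref{thm:weighted-composition}. For $g\in\Gamma$ we have $fg\in L^q$ because $f\in\mathcal D_{\Omega,q}$. Since $f$ and $g$ are both holomorphic, $fg\in A^q$, and
\[
 \|T_f^{\mathrm{an}}g\|_{A^q}=\|fg\|_{L^q(v)}=\|M_fg\|_{L^q(v)}=\|J_{\mu_f}g\|_{L^q(\mu_f)},
 \qquad \dd\mu_f=|f|^q\dd v .
\]
This identity holds for every member of a finite family in $\Gamma$, and the weak $r$-norm is computed in the common source $A^p$. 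Hence the two initial operators satisfy exactly the same summing inequalities on $\Gamma$.

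Next I would pass from $\Gamma$ to all of $A^p$. The case of a one-element family shows that $T_f^{\mathrm{an}}$ has a bounded extension $A^p\to A^q$ if and only if $M_f$ has a bounded extension $A^p\to L^q$. Density of $\Gamma$ (\cref{lem:kernel-span-density}) makes both extensions unique. The finite-family inequalities pass to the limit by continuity of $w_r$ under norm perturbations, as in the proof of \cref{thm:intro-hankel}. It remains to identify the extension. Take $g_m\in\Gamma$ with $g_m\to g$ in $A^p$. Then $g_m\to g$ locally uniformly, and $fg_m\to fg$ locally uniformly since $f$ is continuous. The $A^q$ limit therefore equals $fg$ pointwise, so the extension is multiplication by $f$ with values in $A^q$, which is closed in $L^q$. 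This gives $\pi_r(T_f^{\mathrm{an}})=\pi_r(M_f)$, with both sides allowed to be infinite. The claimed equivalence and the norm comparison then follow from \cref{cor:multiplication}, which holds for all $1\le q<\infty$, including $q=1$.

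For $q>1$, the identity $T_f^{\mathrm{an}}=T_f$ holds because $fg\in A^q$ gives $P(fg)=fg$ on $\Gamma$. Density then carries this to the extensions.

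I expect no serious obstacle. The only point requiring care is $q=1$, where $P$ is not available. The argument above avoids any projection, since it uses only the isometric inclusion $A^1\subset L^1$ and the fact that $A^1$ is closed in $L^1$. I would check that $\mathcal D_{\Omega,1}$ supplies $fg\in L^1$ for every $g\in\Gamma$, which follows from the definition applied to each kernel section.
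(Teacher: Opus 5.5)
Your proposal is correct and follows essentially the same route as the paper. Both use the norm identity $\|T_f^{\mathrm{an}}g\|_{A^q}=\|M_fg\|_{L^q}$ on $\Gamma$, and both identify the common extension through local uniform convergence. The paper phrases the closedness of $A^q$ in $L^q$ as preservation of $\bar\partial(fg_m)=0$ under $L^q$ limits. Both then apply \cref{cor:multiplication}, and use $P(fg)=fg$ for $q>1$.
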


\begin{proof}
For $g\in\Gamma$, the product $fg$ is holomorphic and belongs to
$L^q$ by the definition of $\mathcal D_{\Omega,q}$.  Thus $fg\in A^q$
and
\[
 \|T_f^{\mathrm{an}}g\|_{A^q}=\|M_fg\|_{L^q}.
\]
The two $r$-summing inequalities on $\Gamma$ are identical.  If either
operator extends, approximate $g\in A^p$ by $g_m\in\Gamma$.  The
$L^q$ limit of the holomorphic functions $fg_m$ is holomorphic in the
sense of distributions: $L^q$ convergence implies local $L^1$
convergence and preserves the equations $\bar\partial(fg_m)=0$.
Also, $g_m\to g$ locally uniformly, so the limit is $fg$.
Hence it belongs to $A^q$.  The two extensions
therefore agree and have the same $r$-summing norm.  Apply
\cref{cor:multiplication}.  If $q>1$, then $P(fg)=fg$ on $\Gamma$, so
$T_f^{\mathrm{an}}=T_f$.
\end{proof}

The diagonal ideal is known at $q=1$.  Let
$1<p<\infty$ and $1\le r<\infty$.  Taking $q=1$ in
\cref{prop:garling} gives
\begin{equation}\label{eq:q1-diagonal}
 \mathfrak d_{p,1}^{\,r}=\ell^{s_p},
 \qquad s_p\text{ is given by \eqref{eq:intro-q1-multiplication}},
\end{equation}
independently of $r$.
For $1<p\le2$, the exponent is
\[
 s_p=\left(\frac1{p'}+1-\frac12\right)^{-1}
 =\frac{2p}{3p-2}.
\]
For $p>2$, it is $s_p=1$.  Thus the two formulas agree at $p=2$.

\begin{proposition}
\label{prop:q1-multiplication}
Let $1<p<\infty$, $1\le r<\infty$, and
$f\in\mathcal D_{\Omega,1}$.  Then
\begin{equation}\label{eq:q1-candidate}
\begin{aligned}
 M_f\in\Piabs_r(A^p(\Omega),L^1(\Omega))
 &\Longleftrightarrow
 \{V_j^{1-1/p}\M_{1,\rho}f(a_j)\}_j\in\ell^{s_p}\\
 &\Longleftrightarrow
 \V_\rho^{1-1/p}\M_{1,\rho}f
 \in L^{s_p}(\Omega,d\lambda_\Omega).
\end{aligned}
\end{equation}
Moreover,
\begin{equation*}
 \pi_r(M_f)
 \asymp
 \|\{V_j^{1-1/p}\M_{1,\rho}f(a_j)\}\|_{\ell^{s_p}}
 \asymp
 \|\V_\rho^{1-1/p}\M_{1,\rho}f
 \|_{L^{s_p}(d\lambda_\Omega)}.
\end{equation*}
The constants depend only on $p,r$ and the fixed finite-type geometry.
They are independent of $f$.
\end{proposition}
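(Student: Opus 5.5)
The plan is to specialize \cref{cor:multiplication} to the target exponent $q=1$ and then rewrite the resulting sequence condition with the exponent table and the discrete--continuous comparison. First, \cref{cor:multiplication} explicitly allows $1\le q<\infty$. It gives
\[
 \pi_r(M_f:A^p\to L^1)\asymp\|\mathbf M_{p,1,\rho}f\|_{\mathfrak d_{p,1}^{\,r}},
\]
with $\mathbf M_{p,1,\rho}f=\{V_j^{1-1/p}\M_{1,\rho}f(a_j)\}_j$ by \eqref{eq:lattice-M}. Membership in $\Piabs_r$ is equivalent to finiteness of the right side. Underneath, this rests on \cref{thm:carleson} applied to $\dd\mu_f=|f|\dd v$ together with \eqref{eq:mu-f-coefficient}. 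No projection is used, so nothing requires $q>1$. I would note in passing that $\mathcal D_{\Omega,1}$ guarantees $fg\in L^1$ for $g\in\Gamma$, so the initial operator is defined.

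Second, I would identify the diagonal ideal via \eqref{eq:q1-diagonal}. For $1<p\le2$, the first line of \eqref{eq:kappa-table} applies with $q=1\le2$ and gives $\ell^{2p/(3p-2)}$. For $p>2$, the second line applies since $1<p'$, and gives $\ell^1$. In both cases the norms are equivalent, with constants depending on $p,r$, and the answer is independent of $r$. This yields the first equivalence in \eqref{eq:q1-candidate} together with the first norm comparison.

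Third, for the continuous form, I would invoke \cref{lem:measure-discrete-continuous} with $\mu=\mu_f$, $q=1$ and $\kappa=s_p$. That lemma is stated for $1\le q<\infty$ in a power case $\mathfrak d_{p,q}^{\,r}=\ell^\kappa$, which is exactly the situation just established. The explicit identity $\mathcal C_{p,1,\rho}\mu_f=\V_\rho^{1-1/p}\M_{1,\rho}f$ then converts it into the second equivalence, matching \eqref{eq:discrete-continuous-explicit}.

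The only point needing care, which I would flag as the main check, is that the lemma's reverse estimate passes through \cref{thm:carleson} on a finer lattice. So I need $q=1$ to be admissible there too, and it is, since the theorem allows $1\le q<\infty$. Both sides may be infinite, and the comparison must hold in that case as well. This is covered by the convention $\pi_r=+\infty$ adopted in that proof.
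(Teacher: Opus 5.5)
Your proposal is correct and follows essentially the same route as the paper: \cref{cor:multiplication} at $q=1$ (which rests on \cref{thm:carleson} and uses no Bergman projection), then the identification $\mathfrak d_{p,1}^{\,r}=\ell^{s_p}$ from \eqref{eq:q1-diagonal}, then the discrete--continuous comparison of \cref{lem:measure-discrete-continuous} in the form \eqref{eq:discrete-continuous-explicit}. You also check that the finer-lattice step of that lemma again invokes only \cref{thm:carleson} with $q=1$; the paper relies on this point implicitly.
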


\begin{proof}
The proof of \cref{thm:carleson} remains valid when $q=1$.  The local
nuclear decomposition uses only $p\ge1$ and $q\ge1$.  The block proof
uses the triangle inequality in $\ell^1$.  It does not use the Bergman
projection on $L^1$.  Hence \cref{cor:multiplication} with $q=1$ gives
\[
 \pi_r(M_f)\asymp
 \|\{V_j^{1-1/p}\M_{1,\rho}f(a_j)\}\|_{\mathfrak d_{p,1}^{\,r}}.
\]
Use \eqref{eq:q1-diagonal}.  The argument using step functions on the lattice in
\eqref{eq:discrete-continuous-explicit} gives the continuous form and
the two norm comparisons.
\end{proof}

The endpoint exponent $s_p$ is also sharp on infinite separated data.

\begin{proposition}
\label{prop:q1-sharp-realization}
Let $1<p<\infty$ and $1\le r<\infty$.  Let
$\Lambda_0=\{a_j:j\in J_0\}$ be sufficiently separated.  For every
nonnegative $b\in\ell^{s_p}(J_0)$, there is
$f_b\in L^1(\Omega)\cap\mathcal D_{\Omega,1}$ such that
\begin{align}
 \left\|
 \{V_j^{1-1/p}\M_{1,\rho}f_b(a_j)\}_j
 \right\|_{\ell^{s_p}}
 &\asymp\|b\|_{\ell^{s_p}},
 \label{eq:q1-sharp-data}\\
 \pi_r(M_{f_b}:A^p\to L^1)
 &\asymp\|b\|_{\ell^{s_p}}.
 \label{eq:q1-sharp-operator}
\end{align}
The constants do not depend on $b$.
\end{proposition}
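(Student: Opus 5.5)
We follow the construction used for $f_b^{\mathrm M}$ in \cref{thm:infinite-separated-realization}, now at the target endpoint $q=1$. Take the measurable cells $E_j$ with $\Q_{c\rho}(a_j)\subset E_j\subset\Q_\rho(a_j)$, and retain the indices $j\in J_0$. Then put
\[
 f_b=\sum_{j\in J_0}b_jV_j^{1/p-1}\mathbf1_{E_j}.
\]
The cells are disjoint, so $f_b$ is well defined pointwise.

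\textbf{Step 1: integrability.} We have $\|f_b\|_{L^1}\asymp\sum_jb_jV_j^{1/p}$. This is where $\ell^{s_p}$ has to be used: we cannot simply observe that $b\in\ell^1$. Instead, as in \eqref{eq:weighted-bq-summable}, write
\[
 \sum_jb_jV_j^{1/p}=\|D_bc\|_{\ell^1},\qquad c=(V_j^{1/p})_j\in\ell^p,
\]
where $\|c\|_{\ell^p}^p=\sum_jV_j\lesssim v(\Omega)$ by disjointness of the small balls. By \eqref{eq:q1-diagonal}, $\ell^{s_p}=\mathfrak d_{p,1}^{\,r}$, so $D_b:\ell^p\to\ell^1$ is bounded with $\|D_b\|\le\pi_r(D_b)\lesssim\|b\|_{\ell^{s_p}}$. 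Hence $f_b\in L^1(\Omega)$.

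\textbf{Step 2: two-sided local coefficients.} On $\Q_{c\rho}(a_j)\subset E_j$ the symbol is the constant $b_jV_j^{1/p-1}$. Since $v(\Q_{c\rho}(a_j))\asymp V_j$, this gives the lower bound $V_j^{1-1/p}\M_{1,\rho}f_b(a_j)\gtrsim b_j$. For the upper bound at an arbitrary lattice point $a_k$, let $\mathcal N(k)=\{j\in J_0:E_j\cap\Q_\rho(a_k)\neq\varnothing\}$. Engulfing gives $V_j\asymp V_k$ for $j\in\mathcal N(k)$ and bounded incidence. Therefore
\[
 V_k^{1-1/p}\M_{1,\rho}f_b(a_k)\lesssim\sum_{j\in\mathcal N(k)}b_j .
\]
Solidity together with \cref{lem:finite-neighbor}, applied in $\mathfrak d_{p,1}^{\,r}=\ell^{s_p}$, yields \eqref{eq:q1-sharp-data}.

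\textbf{Step 3: membership in $\mathcal D_{\Omega,1}$ and the operator norm.} Let $\dd\mu=f_b\dd v$, which is locally finite. By \cref{thm:carleson} with $q=1$ and Step 2, $J_\mu:A^p\to L^1(\mu)$ is bounded. Each $K(\cdot,z)$ lies in $A^p$ by \cref{lem:kernel-span-density}, so $\|f_bK(\cdot,z)\|_{L^1}=\|J_\mu K(\cdot,z)\|_{L^1(\mu)}<\infty$. Thus $f_b\in\mathcal D_{\Omega,1}$. \Cref{prop:q1-multiplication} combined with \eqref{eq:q1-sharp-data} then gives \eqref{eq:q1-sharp-operator}.

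The main delicate point is Step 1, since global integrability must come from the operator bound of the diagonal rather than from coordinatewise summability of $b$. Every other step transfers verbatim from the $q>1$ construction, because no step uses the Bergman projection on $L^1$.
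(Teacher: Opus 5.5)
Your proposal is correct and follows the paper's proof essentially step by step: the same step-function symbol, the same integrability argument via $\|D_bc\|_{\ell^1}$ with $c=(V_j^{1/p})_j$, the same two-sided neighbor bounds, and the final appeal to \cref{prop:q1-multiplication}. The only differences are minor. The paper uses the contracted polydiscs $\Q_\rho^{[\theta]}(a_j)$ as cells and deduces $f_b\in\mathcal D_{\Omega,1}$ from boundedness of $K(\cdot,z)$ on $\Omega$. You instead use the partition cells and obtain that membership from the $q=1$ Carleson embedding applied to $K(\cdot,z)\in A^p$.
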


\begin{proof}
Take $\Lambda_0$ sufficiently separated that its $\rho$-balls are
pairwise disjoint.  Fix $0<\theta<1/2$ and put
\[
 E_j=\Q_\rho^{[\theta]}(a_j)\subset\Q_\rho(a_j),
 \qquad v(E_j)=\theta^{2n}V_j.
\]
Set
\begin{equation*}
 f_b(z)=\sum_{j\in J_0}
 b_jV_j^{1/p-1}\mathbf1_{E_j}(z).
\end{equation*}
Let $c_j=V_j^{1/p}$.  Separation gives
\[
 \sum_j|c_j|^p=\sum_jV_j\lesssim v(\Omega).
\]
Since $b\in\mathfrak d_{p,1}^{\,r}$ by
\eqref{eq:q1-diagonal},
\begin{equation*}
 \|f_b\|_{L^1}
 \asymp\sum_jb_jV_j^{1/p}
 =\|D_bc\|_{\ell^1}<\infty.
\end{equation*}
The kernel estimate with one variable in a fixed compact subset gives
$K(\cdot,z)\in L^\infty(\Omega)$ for every $z\in\Omega$.  Hence
$f_bK(\cdot,z)\in L^1$ and
$f_b\in\mathcal D_{\Omega,1}$.

For $j\in J_0$, the ball $\Q_\rho(a_j)$ meets only its own cell.
Consequently,
\begin{align*}
 V_j^{1-1/p}\M_{1,\rho}f_b(a_j)
 &=V_j^{-1/p}\int_{\Q_\rho(a_j)}f_b\dd v\\
 &= b_jV_j^{-1}v(E_j)=\theta^{2n}b_j.
\end{align*}
Conversely, finite overlap gives
\begin{align*}
 V_k^{1-1/p}\M_{1,\rho}f_b(a_k)
 &=V_k^{-1/p}\sum_{j\in\mathcal N(k)}
    b_jV_j^{1/p-1}v(E_j\cap\Q_\rho(a_k))\\
 &\lesssim\sum_{j\in\mathcal N(k)}b_j,
\end{align*}
where the number of neighbors is uniformly bounded in both indices.
The ideal $\ell^{s_p}=\mathfrak d_{p,1}^{\,r}$ is solid.  Apply
\cref{lem:finite-neighbor}.  This proves
\eqref{eq:q1-sharp-data}.  Finally use
\cref{prop:q1-multiplication} to obtain
\eqref{eq:q1-sharp-operator}.
\end{proof}

On $\mathbb B_n$, the endpoint criterion is explicit.  For
$f_\gamma(z)=(1-|z|^2)^\gamma$, $\gamma\ge0$,
\begin{equation*}
 M_{f_\gamma}\in\Piabs_r(A^p(\mathbb B_n),L^1(\mathbb B_n))
 \quad\Longleftrightarrow\quad
 s_p\left((n+1)\left(1-\frac1p\right)+\gamma\right)>n.
\end{equation*}
This follows from \eqref{eq:ball-radial-threshold} with
$q=1$ and $\kappa=s_p$.  Thus the $L^1$ multiplication criterion is
proved without an $L^1$ projection estimate.

The next proposition summarizes the endpoint conclusions for all five
classes.  It separates the two classes requiring projections from the
three exact criteria.

\begin{proposition}
\label{prop:q1-five-class-ledger}
Let $1<p<\infty$, $1\le r<\infty$, and let $s_p$ be given by
\eqref{eq:intro-q1-multiplication}.  Then the following assertions hold.
\begin{enumerate}[label=\textup{(\roman*)}]
\item If $f\in\Hol(\Omega)\cap\mathcal D_{\Omega,1}$, then
\begin{equation}\label{eq:q1-ledger-Toeplitz}
 T_f^{\mathrm{an}}\in\Piabs_r(A^p,A^1)
 \quad\Longleftrightarrow\quad
 \sum_j
 \left[V_j^{1-1/p}\M_{1,\rho}f(a_j)\right]^{s_p}<\infty.
\end{equation}
For a nonanalytic symbol, no bounded $L^1$ projection is used and no
individual Toeplitz equivalence is asserted.
\item The big Hankel formula $H_f=(I-P)M_f$ does not yield an endpoint
factorization on $L^1$.  Thus the $q=1$ big Hankel criterion is not a
consequence of the present theorem.
\item The same problem occurs for
$h_f=\overline P M_f$ and $k_f=(I-\overline P)M_f$.  Thus the
graph for little Hankel operators with nonanalytic symbols is not covered
by this endpoint argument.
\item If $u\in A^1(\Omega)$ and
$\varphi:\Omega\to\Omega$ is holomorphic, then
\begin{equation}\label{eq:q1-ledger-composition}
 W_{u,\varphi}\in\Piabs_r(A^p,A^1)
 \quad\Longleftrightarrow\quad
 \sum_j
 \left[
 \frac{\mu_{u,\varphi}(\Q_\rho(a_j))}{V_j^{1/p}}
 \right]^{s_p}<\infty.
\end{equation}
\item If $g\in A^1(\Omega)$ and
$\varphi:\Omega\to\Omega$ is holomorphic, then
\begin{equation}\label{eq:q1-ledger-Volterra}
 V_{g,\varphi}\in\Piabs_r(A^p,A^1)
 \quad\Longleftrightarrow\quad
 \sum_j
 \left[
 \frac{\mu_{g,\varphi}(\Q_\rho(a_j))}{V_j^{1/p}}
 \right]^{s_p}<\infty.
\end{equation}
\end{enumerate}
The three equivalences are independent of $r$ at the level of sets.
Their norm constants may depend on $r$.
\end{proposition}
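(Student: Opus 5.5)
This proposition is a ledger, so I would assemble it from results already proved rather than give a new argument. The common input is the endpoint diagonal identification \eqref{eq:q1-diagonal}: for $1<p<\infty$ and every $1\le r<\infty$ it gives $\mathfrak d_{p,1}^{\,r}=\ell^{s_p}$ with equivalent norms. The norm constants are allowed to depend on $r$, while the exponent $s_p$ does not. This is why the three equivalences are independent of $r$ as statements about sets.

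\textbf{Item (i).} I would apply \cref{cor:analytic-symbols} with $q=1$. That corollary needs no projection, and it gives $\pi_r(T_f^{\mathrm{an}})\asymp\|f\|_{\mathcal L_{p,1}^{r}}=\|\{V_j^{1-1/p}\M_{1,\rho}f(a_j)\}_j\|_{\mathfrak d_{p,1}^{\,r}}$. Substituting \eqref{eq:q1-diagonal} turns this into the $\ell^{s_p}$ summability condition in \eqref{eq:q1-ledger-Toeplitz}.

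\textbf{Items (iv) and (v).} These are \cref{thm:weighted-composition} and \cref{thm:Volterra-composition} at $q=1$, which are stated for $1\le q<\infty$. Both proofs pass through \cref{thm:carleson}. At $q=1$ the local nuclear estimate in \cref{lem:local-restriction} and the block principle remain valid, and the Volterra reduction uses \cref{lem:radial-LP}, which also covers $q=1$. Alternatively, I would cite \cref{cor:composition-Volterra-q1} directly, since it already records both equivalences together with their norm comparisons.

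\textbf{Items (ii) and (iii).} These are negative, bookkeeping statements. I would observe that every factorization for $H_f$, $h_f$ and $k_f$ used earlier writes the operator as $(I-P)M_f$, $\overline P M_f$, or $(I-\overline P)M_f$, and then applies the ideal property. That step needs $P$ or $\overline P$ to be bounded on $L^q$. Such boundedness is available through \eqref{eq:intro-Pq} only for $1<q<\infty$, and no bounded $L^1$ projection is assumed. Likewise, the upper estimate in \cref{thm:intro-hankel} rests on \cref{prop:FTI-all}, which requires $q>1$. Hence the present argument gives no $q=1$ criterion for these operators, which is exactly what (ii) and (iii) assert.

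The only point I would check with some care is that \cref{thm:carleson} really survives at $q=1$ without the Bergman projection. As noted after its proof in \cref{prop:q1-multiplication}, only Hölder's inequality, the nuclear decomposition and the block principle are used there, so this check should go through.
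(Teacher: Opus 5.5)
Your proposal is correct and follows essentially the same route as the paper. It obtains (i) from \cref{cor:analytic-symbols} at $q=1$ combined with \eqref{eq:q1-diagonal} (the paper additionally cites \cref{prop:q1-multiplication}, which your route already uses implicitly), reads (iv) and (v) off \cref{cor:composition-Volterra-q1}, treats (ii) and (iii) as bookkeeping about the missing bounded $L^1$ projections, and gets the $r$-independence of the sets from the fact that $s_p$ does not depend on $r$.
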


\begin{proof}
For (i), combine \cref{cor:analytic-symbols,prop:q1-multiplication} and
\eqref{eq:q1-diagonal}.  This gives
\[
 \pi_r(T_f^{\mathrm{an}})
 \asymp
 \left(
 \sum_j[V_j^{1-1/p}\M_{1,\rho}f(a_j)]^{s_p}
 \right)^{1/s_p}.
\]
Assertions (ii) and (iii) state the missing bounded factorizations.
They do not assert endpoint criteria.  Assertion (iv) is
\eqref{eq:composition-q1}.  Assertion (v) is
\eqref{eq:Volterra-q1}.  Expanding the $\ell^{s_p}$ norms gives
\eqref{eq:q1-ledger-composition} and
\eqref{eq:q1-ledger-Volterra}.  Since $s_p$ does not contain $r$, the
set of admissible data is the same for every finite $r$.
\end{proof}

The endpoint also has an explicit boundary test.

\begin{corollary}
\label{cor:q1-boundary-powers}
Let $1<p<\infty$ and $1\le r<\infty$.  Let $s_p$ be defined by
\eqref{eq:intro-q1-multiplication}.  Assume
\eqref{eq:boundary-volume-power}.  Let $\mu$ be a positive locally
finite Borel measure.  Suppose that
\eqref{eq:boundary-measure-power} holds for some $\sigma\in\mathbb R$.
Then
\begin{equation}\label{eq:q1-measure-boundary-threshold}
 J_\mu\in\Piabs_r(A^p,L^1(\mu))
 \quad\Longleftrightarrow\quad
 s_p\left(\sigma-\frac Dp\right)>D-1.
\end{equation}
The same statement holds for $W_{u,\varphi}$ or
$V_{g,\varphi}$ when $u,g\in A^1(\Omega)$,
$\varphi:\Omega\to\Omega$ is holomorphic, and $\mu$ is the
corresponding pullback measure.  Let $\gamma\in\mathbb R$.  If
$f_\gamma\in\mathcal D_{\Omega,1}$ and
\[
 |f_\gamma(z)|\asymp\delta(z)^\gamma,
 \qquad 0<\delta(z)<\varepsilon_0,
\]
then
\begin{equation}\label{eq:q1-symbol-boundary-threshold}
 M_{f_\gamma}\in\Piabs_r(A^p,L^1)
 \quad\Longleftrightarrow\quad
 s_p\left[D\left(1-\frac1p\right)+\gamma\right]>D-1.
\end{equation}
For holomorphic $f_\gamma$, the same condition characterizes
$T_{f_\gamma}^{\mathrm{an}}$.
\end{corollary}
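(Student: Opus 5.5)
The plan is to reduce \cref{cor:q1-boundary-powers} to \cref{prop:boundary-power-test} with $q=1$ and $\kappa=s_p$. By \eqref{eq:q1-diagonal}, $\mathfrak d_{p,1}^{\,r}=\ell^{s_p}$ with equivalent norms for every $r$. So the hypothesis $\mathfrak d_{p,q}^{\,r}=\ell^\kappa$ of \cref{prop:boundary-power-test} holds with $q=1$ and $\kappa=s_p$.

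First I will confirm that the proof of \cref{prop:boundary-power-test} runs at $q=1$. It uses only three inputs. The first is \cref{thm:carleson}, which is stated for $1\le q<\infty$. The second is \cref{lem:measure-discrete-continuous}, also stated for $1\le q$; its proof compares cells and a finer lattice and never uses the projection. The third is the radial integral in normal coordinates. With $q=1$, the threshold $\kappa(\sigma/q-D/p)>D-1$ becomes $s_p(\sigma-D/p)>D-1$, which is \eqref{eq:q1-measure-boundary-threshold}. The one-sided versions, where only the upper or only the lower mass bound is assumed, carry over verbatim.

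For the pullback operators I will invoke the norm identities behind \cref{thm:weighted-composition} and \cref{thm:Volterra-composition}, both valid at $q=1$. Each gives $\pi_r(W_{u,\varphi})=\pi_r(J_{\mu_{u,\varphi}})$ and $\pi_r(V_{g,\varphi})\asymp\pi_r(J_{\mu_{g,\varphi}})$. I will then apply the measure statement to the corresponding pullback measure, or equivalently cite \cref{cor:weighted-composition-continuous} and \cref{cor:Volterra-continuous} in their $q=1$ forms.

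For the symbol statement I will follow \cref{cor:boundary-power-symbol} with $q=1$. Set $\dd\mu_\gamma=|f_\gamma|\dd v$. Stability of $\delta$ on McNeal balls gives $\mu_\gamma(\Q_\rho(z))\asymp\delta(z)^{\gamma}\V_\rho(z)\asymp\delta(z)^{D+\gamma}$ near the boundary, so $\sigma=D+\gamma$. Since $\sigma-D/p=D(1-1/p)+\gamma$, the measure threshold becomes \eqref{eq:q1-symbol-boundary-threshold}. The equality $\pi_r(M_{f_\gamma})=\pi_r(J_{\mu_\gamma})$ comes from \cref{cor:multiplication} at $q=1$.

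The main obstacle is the compact core. Unlike \cref{cor:boundary-power-symbol}, the statement gives no two-sided bound on $f_\gamma$ away from the boundary. Local integrability $f_\gamma\in L^1_{\loc}$, which follows from $\mathcal D_{\Omega,1}$, still makes $\mu_\gamma$ locally finite. The finiteness argument on $K_0$ in \cref{prop:boundary-power-test} needs only local finiteness and a positive lower volume bound, so the core contributes a finite amount. The lower mass bound is needed only in the collar, where it is given. For holomorphic $f_\gamma$, I will conclude with \cref{cor:analytic-symbols}, which gives $\pi_r(T^{\mathrm{an}}_{f_\gamma})\asymp\pi_r(M_{f_\gamma})$ at $q=1$.
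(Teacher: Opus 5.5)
Your proposal is correct and follows essentially the same route as the paper: identify $\mathfrak d_{p,1}^{\,r}=\ell^{s_p}$ via \eqref{eq:q1-diagonal}, apply \cref{prop:boundary-power-test} with $q=1$ and $\kappa=s_p$, and treat the symbol case through $\dd\mu_\gamma=|f_\gamma|\dd v$ with $\sigma=D+\gamma$. You then finish the holomorphic case through \cref{cor:analytic-symbols}, which is what \eqref{eq:q1-ledger-Toeplitz} packages. Your remark that the compact core needs only local finiteness of $\mu_\gamma$ (no two-sided core bound on $f_\gamma$) is exactly what makes the reduction to \cref{prop:boundary-power-test} legitimate here.
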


\begin{proof}
By \eqref{eq:q1-diagonal},
\[
 \mathfrak d_{p,1}^{\,r}=\ell^{s_p}.
\]
Apply \cref{prop:boundary-power-test} with $q=1$ and
$\kappa=s_p$.  This gives \eqref{eq:q1-measure-boundary-threshold}.
The weighted-composition and Volterra assertions follow from the same
argument applied to the corresponding pullback measures.  No projection
estimate on $L^1$ is used.

For the symbol assertion, put $\dd\mu_\gamma=|f_\gamma|\dd v$.  The
assumption $f_\gamma\in\mathcal D_{\Omega,1}$ gives local finiteness of
$\mu_\gamma$. Stability of the boundary distance on McNeal balls gives
\[
 \delta(w)\asymp\delta(z),
 \qquad w\in\Q_\rho(z),
 \qquad 0<\delta(z)<\varepsilon_1,
\]
with $\varepsilon_1>0$ fixed.  Hence, near the boundary,
\[
 \mu_\gamma(\Q_\rho(z))
 =\int_{\Q_\rho(z)}|f_\gamma(w)|\dd v(w)
 \asymp\delta(z)^\gamma\V_\rho(z)
 \asymp\delta(z)^{D+\gamma}.
\]
Thus $\sigma=D+\gamma$, which gives
\eqref{eq:q1-symbol-boundary-threshold}.  The final assertion follows
from \eqref{eq:q1-ledger-Toeplitz}.
\end{proof}

At equality in \eqref{eq:q1-measure-boundary-threshold} or
\eqref{eq:q1-symbol-boundary-threshold}, the normal integral is
$\int_0^{\varepsilon_0}t^{-1}\dd t$.  It diverges.  Hence the strict
inequality cannot be weakened.

For nonanalytic symbols, neither projected graph follows from
\cref{prop:q1-multiplication}.  The proofs for $q>1$ use the bounded
projections
\[
 P:L^q(\Omega)\longrightarrow A^q(\Omega),
 \qquad
 \overline P:L^q(\Omega)\longrightarrow\overline{A^q(\Omega)}.
\]
At the endpoint, the same factorization would require bounded maps
\[
 P:L^1(\Omega)\longrightarrow A^1(\Omega),
 \qquad
 \overline P:L^1(\Omega)\longrightarrow\overline{A^1(\Omega)}.
\]
No such boundedness estimates are available here.  A weak-type
estimate would not imply
\[
 \pi_r(PM_f)\lesssim\pi_r(M_f).
\]
Thus the graphs $(T_f,H_f)$ and $(h_f,k_f)$ for nonanalytic symbols are not
covered by the present projection argument.  At this endpoint,
the kernel formulas are understood on any dense initial domain on which
they take values in their target spaces:

\begin{equation*}
\boxed{
 \begin{gathered}
 \text{Characterize }T_f,\ H_f,\ h_f,\text{ and }k_f
 \text{ at target exponent }q=1\\
 \text{in }\Piabs_r\text{ by a replacement for }
 \eqref{eq:q1-candidate}.
 \end{gathered}}
\end{equation*}
Endpoint atomic decompositions, Hardy--Orlicz spaces, or additional
weak-type estimates may help answer this question.

For the summary below, let $\mu_A$ denote $\mu$ when $A=J_\mu$,
$|f|^qv$ when $A=M_f$, $\mu_{u,\varphi}$ when $A=W_{u,\varphi}$,
and $\mu_{g,\varphi}$ when $A=V_{g,\varphi}$.
The criteria proved here can then be summarized by
\[
\begin{array}{c}
 J_\mu,\ M_f,\ W_{u,\varphi},\ V_{g,\varphi}\\[2pt]
 \Downarrow\\[-2pt]
 \beta_j(A)=\mu_A(\Q_\rho(a_j))^{1/q}V_j^{-1/p}\\[2pt]
 \Downarrow\\[-2pt]
 A\in\Piabs_r
 \quad\Longleftrightarrow\quad
 D_{\{\beta_j(A)\}}\in\Piabs_r(\ell^p,\ell^q).
\end{array}
\]
For $q>1$, bounded projection decompositions give the joint
criteria for $(T_f,H_f)$ and $(h_f,k_f)$.
The IDA decomposition at two fixed scales and Ahn's solution estimate give
the big Hankel criterion. The individual Toeplitz converses use
the stated symbol conditions. For the little Hankel operator alone,
we prove a necessary condition.
Diagonal factorizations give approximation lower bounds.
For atomic embeddings, the approximation errors agree with those of
diagonal operators up to constants.
The ellipsoid calculation gives a logarithmic factor in the optimal
approximation rate in the summing norm.
\begingroup
\makeatletter
\let\@tocwrite\@gobbletwo
\small

\medskip
\noindent\textsc{Funding.}
The first author was supported by the National Natural Science Foundation
of China (Grant No.~12601234).  The second author was supported by the
National Natural Science Foundation of China (Grant No.~12401154).

\smallskip
\noindent\textsc{Data availability.}
No data were used for the research described in this article.

\smallskip
\noindent\textsc{Declaration of competing interest.}
The authors declare that they have no known competing financial interests
or personal relationships that could have appeared to influence the work
reported in this paper.

\makeatother
\endgroup

\raggedbottom


\begin{thebibliography}{99}
\small
\urlstyle{same}
\newcommand{\bibdoi}[1]{\mbox{DOI:~\href{https://doi.org/#1}{\nolinkurl{#1}}}}

\bibitem{AbateRaissySaracco2012}
M.~Abate, J.~Raissy, and A.~Saracco,
Toeplitz operators and Carleson measures in strongly pseudoconvex domains,
\emph{J. Funct. Anal.} \textbf{263} (2012), no.~11, 3449--3491.
\bibdoi{10.1016/j.jfa.2012.08.027}.

\bibitem{Ahn2004}
H.~Ahn,
Weighted $L^p$ estimates for the $\overline\partial$-equation on convex
domains of finite type,
\emph{Publ. Mat.} \textbf{48} (2004), no.~1, 139--157.
\bibdoi{10.5565/PUBLMAT_48104_07}.

\bibitem{AhnCho2003}
H.~Ahn and H.~R. Cho,
Optimal Sobolev estimates for $\overline\partial$ on convex domains of
finite type,
\emph{Math. Z.} \textbf{244} (2003), 837--857.
\bibdoi{10.1007/s00209-003-0525-z}.

\bibitem{AlemanSiskakis1997}
A.~Aleman and A.~G. Siskakis,
Integration operators on Bergman spaces,
\emph{Indiana Univ. Math. J.} \textbf{46} (1997), no.~2, 337--356.
\bibdoi{10.1512/iumj.1997.46.1373}.

\bibitem{Alexandre2006}
W.~Alexandre,
$C^k$-estimates for the $\overline\partial$-equation on convex domains
of finite type,
\emph{Math. Z.} \textbf{252} (2006), no.~3, 473--496.
\bibdoi{10.1007/s00209-005-0812-y}.

\bibitem{BoasFuStraube1997}
H.~P. Boas, S.~Fu, and E.~J. Straube,
The Bergman kernel function: explicit formulas and zeroes,
preprint, \href{https://arxiv.org/abs/math/9706202}{arXiv:math/9706202}, 1997.

\bibitem{BonamiPelosoSymesak2001}
A.~Bonami, M.~M. Peloso, and F.~Symesak,
Factorization of Hardy spaces and Hankel operators on convex domains in
$\C^n$,
\emph{J. Geom. Anal.} \textbf{11} (2001), no.~3, 363--397.
\bibdoi{10.1007/bf02922011}.

\bibitem{Catlin1987}
D.~Catlin,
Subelliptic estimates for the $\overline\partial$-Neumann problem on
pseudoconvex domains,
\emph{Ann. of Math. (2)} \textbf{126} (1987), no.~1, 131--191.
\bibdoi{10.2307/1971347}.

\bibitem{CharpentierDupain2018}
P.~Charpentier and Y.~Dupain,
Weighted and boundary $L^p$ estimates for solutions of the
$\overline\partial$-equation on lineally convex domains of finite type and
applications,
\emph{Math. Z.} \textbf{290} (2018), no.~1--2, 195--220.
\bibdoi{10.1007/s00209-017-2015-8}.

\bibitem{CharpentierDupain2024}
P.~Charpentier and Y.~Dupain,
On weighted $L^p$-Sobolev estimates for solutions of the
$\overline\partial$-equation on linearly convex domains of finite type
and application,
\emph{Acta Math. Vietnam.} \textbf{49} (2024), no.~2, 217--240.
\bibdoi{10.1007/s40306-024-00530-y}.

\bibitem{ChenDongWang2025}
J.~Chen, C.~Dong, and M.~Wang,
Absolutely summing Carleson embeddings over the half-plane,
\emph{Complex Anal. Oper. Theory} \textbf{19} (2025), no.~4,
Paper No.~72.
\bibdoi{10.1007/s11785-025-01697-4}.

\bibitem{ChenHeWang2026}
J.~Chen, B.~He, and M.~Wang,
Absolutely summing Carleson embeddings on weighted Fock spaces with
$A_\infty$-type weights,
\emph{J. Operator Theory} \textbf{95} (2026), no.~1, 119--151.
\bibdoi{10.7900/jot.2024apr01.24}.

\bibitem{CimaWogen1982}
J.~A. Cima and W.~R. Wogen,
A Carleson measure theorem for the Bergman space on the ball,
\emph{J. Operator Theory} \textbf{7} (1982), 157--165.

\bibitem{CowenMacCluer1995}
C.~C. Cowen and B.~D. MacCluer,
\emph{Composition Operators on Spaces of Analytic Functions},
Studies in Advanced Mathematics, CRC Press, Boca Raton, FL, 1995.
\bibdoi{10.1201/9781315139920}.

\bibitem{CuckovicMcNeal2006}
Z.~\v{C}u\v{c}kovi\'c and J.~D. McNeal,
Special Toeplitz operators on strongly pseudoconvex domains,
\emph{Rev. Mat. Iberoam.} \textbf{22} (2006), no.~3, 851--866.
\bibdoi{10.4171/rmi/476}.

\bibitem{CuckovicZhao2004}
Z.~\v{C}u\v{c}kovi\'c and R.~Zhao,
Weighted composition operators on the Bergman space,
\emph{J. Lond. Math. Soc. (2)} \textbf{70} (2004), no.~2, 499--511.
\bibdoi{10.1112/s0024610704005605}.

\bibitem{Cumenge2001}
A.~Cumenge,
Sharp estimates for $\overline\partial$ on convex domains of finite type,
\emph{Ark. Mat.} \textbf{39} (2001), no.~1, 1--25.
\bibdoi{10.1007/bf02388789}.

\bibitem{DiestelJarchowTonge1995}
J.~Diestel, H.~Jarchow, and A.~Tonge,
\emph{Absolutely Summing Operators},
Cambridge Studies in Advanced Mathematics, vol.~43,
Cambridge University Press, Cambridge, 1995.
\bibdoi{10.1017/cbo9780511526138}.

\bibitem{FanHeWangZeng2026}
Z.~Fan, B.~He, X.~Wang, and Z.~Zeng,
Absolute summability from diagonal operators to Carleson embeddings and
Hankel operators,
preprint, \href{https://arxiv.org/abs/2511.22165v2}{arXiv:2511.22165v2}, 2026.

\bibitem{FanWangZeng2026}
Z.~Fan, X.~Wang, and Z.~Zeng,
IDA function and asymptotic behavior of singular values of Hankel
operators on weighted Bergman spaces,
\emph{Adv. Math.} \textbf{501} (2026), Paper No.~111113.
\bibdoi{10.1016/j.aim.2026.111113}.

\bibitem{FangXia2018}
Q.~Fang and J.~Xia,
Hankel operators on weighted Bergman spaces and norm ideals,
\emph{Complex Anal. Oper. Theory} \textbf{12} (2018), no.~3, 629--668.
\bibdoi{10.1007/s11785-017-0710-4}.

\bibitem{FaresLefevre2022}
T.~Far\`es and P.~Lef\`evre,
Absolutely summing weighted composition operators on Bloch spaces,
\emph{J. Operator Theory} \textbf{88} (2022), no.~2, 407--443.
\bibdoi{10.7900/jot.2021oct20.2342}.

\bibitem{FollandKohn1972}
G.~B. Folland and J.~J. Kohn,
\emph{The Neumann Problem for the Cauchy--Riemann Complex},
Annals of Mathematics Studies, vol.~75, Princeton University Press,
Princeton, NJ, 1972.
\bibdoi{10.1515/9781400881529}.

\bibitem{GanHuKhan2022}
C.~Gan, B.~Hu, and I.~Khan,
Dyadic decomposition of convex domains of finite type and applications,
\emph{Math. Z.} \textbf{301} (2022), no.~2, 1939--1962.
\bibdoi{10.1007/s00209-022-02984-y}.

\bibitem{Garling1970}
D.~J.~H. Garling,
Absolutely $p$-summing operators in Hilbert space,
\emph{Studia Math.} \textbf{38} (1970), no.~1, 319--331.
\bibdoi{10.4064/sm-38-1-319-331}.

\bibitem{Garling1974}
D.~J.~H. Garling,
Diagonal mappings between sequence spaces,
\emph{Studia Math.} \textbf{51} (1974), no.~2, 129--138.
\bibdoi{10.4064/sm-51-2-129-138}.

\bibitem{Hastings1975}
W.~W. Hastings,
A Carleson measure theorem for Bergman spaces,
\emph{Proc. Amer. Math. Soc.} \textbf{52} (1975), no.~1, 237--241.
\bibdoi{10.1090/s0002-9939-1975-0374886-9}.

\bibitem{HeJreisLefevreLou2024}
B.~He, J.~Jreis, P.~Lef\`evre, and Z.~Lou,
Absolutely summing Carleson embeddings on Bergman spaces,
\emph{Adv. Math.} \textbf{439} (2024), Paper No.~109495, 61 pp.
\bibdoi{10.1016/j.aim.2024.109495}.

\bibitem{HuLv2026}
Z.~Hu and X.~Lv,
Absolutely summing Hankel operators on Fock spaces and the
Berger--Coburn phenomenon,
preprint, \href{https://arxiv.org/abs/2601.01197v1}{arXiv:2601.01197v1}, 2026.

\bibitem{HuLvZhu2016}
Z.~Hu, X.~Lv, and K.~Zhu,
Carleson measures and balayage for Bergman spaces of strongly
pseudoconvex domains,
\emph{Math. Nachr.} \textbf{289} (2016), no.~10, 1237--1254.
\bibdoi{10.1002/mana.201500021}.

\bibitem{HuVirtanen2023}
Z.~Hu and J.~A. Virtanen,
IDA and Hankel operators on Fock spaces,
\emph{Anal. PDE} \textbf{16} (2023), no.~9, 2041--2077.
\bibdoi{10.2140/apde.2023.16.2041}.

\bibitem{HuWang2025}
Z.~Hu and E.~Wang,
Absolutely summing Toeplitz operators on Bergman spaces in the unit ball
of $\C^n$,
preprint, \href{https://arxiv.org/abs/2512.24710v1}{arXiv:2512.24710v1}, 2025.

\bibitem{Isralowitz2013}
J.~Isralowitz,
Schatten $p$ class commutators on the weighted Bergman space
$L_a^2(\mathbb B_n,dv_\gamma)$ for
$2n/(n+1+\gamma)<p<\infty$,
\emph{Indiana Univ. Math. J.} \textbf{62} (2013), no.~1, 201--233.
\bibdoi{10.1512/iumj.2013.62.4767}.

\bibitem{Janson1988}
S.~Janson,
Hankel operators between weighted Bergman spaces,
\emph{Ark. Mat.} \textbf{26} (1988), no.~1--2, 205--219.
\bibdoi{10.1007/bf02386120}.

\bibitem{Jasiczak2010}
M.~Jasiczak,
Carleson embedding theorem on convex finite type domains,
\emph{J. Math. Anal. Appl.} \textbf{362} (2010), no.~1, 167--189.
\bibdoi{10.1016/j.jmaa.2009.09.022}.

\bibitem{JreisLefevre2024}
J.~Jreis and P.~Lef\`evre,
Some operator ideal properties of Volterra operators on Bergman and
Bloch spaces,
\emph{Integral Equations Operator Theory} \textbf{96} (2024), no.~1,
Paper No.~1, 29 pp.
\bibdoi{10.1007/s00020-023-02742-7}.

\bibitem{Krantz2001}
S.~G. Krantz,
\emph{Function Theory of Several Complex Variables}, second ed.,
AMS Chelsea Publishing, vol.~340, American Mathematical Society,
Providence, RI, 2001.
\bibdoi{10.1090/chel/340}.

\bibitem{KrantzLi1995}
S.~G. Krantz and S.-Y. Li,
Duality theorems for Hardy and Bergman spaces on convex domains of finite
type in $\C^n$,
\emph{Ann. Inst. Fourier (Grenoble)} \textbf{45} (1995), no.~5, 1305--1327.
\bibdoi{10.5802/aif.1497}.

\bibitem{Lefevre2026}
P.~Lef\`evre,
Nuclear Carleson embeddings on Hardy and Bergman spaces with exponent~1,
preprint, \href{https://hal.science/hal-05467038}{HAL:hal-05467038}, 2026.

\bibitem{LefevreRodriguezPiazza2018}
P.~Lef\`evre and L.~Rodr\'iguez-Piazza,
Absolutely summing Carleson embeddings on Hardy spaces,
\emph{Adv. Math.} \textbf{340} (2018), 528--587.
\bibdoi{10.1016/j.aim.2018.10.012}.

\bibitem{Li1992}
H.~Li,
BMO, VMO and Hankel operators on the Bergman space of strongly
pseudoconvex domains,
\emph{J. Funct. Anal.} \textbf{106} (1992), no.~2, 375--408.
\bibdoi{10.1016/0022-1236(92)90054-m}.

\bibitem{Li1993}
H.~Li,
Schatten class Hankel operators on the Bergman spaces of strongly
pseudoconvex domains,
\emph{Proc. Amer. Math. Soc.} \textbf{119} (1993), no.~4, 1211--1221.
\bibdoi{10.1090/s0002-9939-1993-1169879-9}.

\bibitem{LiLiuWang2024}
H.~Li, J.~Liu, and H.~Wang,
Carleson measures on convex domains with smooth boundary of finite type,
\emph{Math. Nachr.} \textbf{297} (2024), no.~2, 694--706.
\bibdoi{10.1002/mana.202200325}.

\bibitem{LiLuecking1995}
H.~Li and D.~H. Luecking,
Schatten class of Hankel and Toeplitz operators on the Bergman space of
strongly pseudoconvex domains,
in \emph{Multivariable Operator Theory}, Contemp. Math., vol.~185,
Amer. Math. Soc., Providence, RI, 1995, pp.~237--257.
\bibdoi{10.1090/conm/185/02157}.

\bibitem{Luecking1983}
D.~H. Luecking,
A technique for characterizing Carleson measures on Bergman spaces,
\emph{Proc. Amer. Math. Soc.} \textbf{87} (1983), no.~4, 656--660.
\bibdoi{10.1090/s0002-9939-1983-0687635-6}.

\bibitem{Luecking1987}
D.~H. Luecking,
Trace ideal criteria for Toeplitz operators,
\emph{J. Funct. Anal.} \textbf{73} (1987), no.~2, 345--368.
\bibdoi{10.1016/0022-1236(87)90072-3}.

\bibitem{McNeal1992}
J.~D. McNeal,
Convex domains of finite type,
\emph{J. Funct. Anal.} \textbf{108} (1992), no.~2, 361--373.
\hfill\break \bibdoi{10.1016/0022-1236(92)90029-i}.

\bibitem{McNeal1994}
J.~D. McNeal,
Estimates on the Bergman kernels of convex domains,
\emph{Adv. Math.} \textbf{109} (1994), no.~1, 108--139.
\bibdoi{10.1006/aima.1994.1082}.

\bibitem{McNealSIO1994}
J.~D. McNeal,
The Bergman projection as a singular integral operator,
\emph{J. Geom. Anal.} \textbf{4} (1994), no.~1, 91--103.
\bibdoi{10.1007/bf02921594}.

\bibitem{McNealStein1994}
J.~D. McNeal and E.~M. Stein,
Mapping properties of the Bergman projection on convex domains of finite
type,
\emph{Duke Math. J.} \textbf{73} (1994), no.~1, 177--199.
\bibdoi{10.1215/s0012-7094-94-07307-9}.

\bibitem{NagelRosaySteinWainger1989}
A.~Nagel, J.-P. Rosay, E.~M. Stein, and S.~Wainger,
Estimates for the Bergman and Szeg\H{o} kernels in $\C^2$,
\emph{Ann. of Math. (2)} \textbf{129} (1989), no.~1, 113--149.
\bibdoi{10.2307/1971487}.

\bibitem{NagelSteinWainger1985}
A.~Nagel, E.~M. Stein, and S.~Wainger,
Balls and metrics defined by vector fields. I. Basic properties,
\emph{Acta Math.} \textbf{155} (1985), 103--147.
\bibdoi{10.1007/bf02392539}.

\bibitem{NikolovPflug2003}
N.~Nikolov and P.~Pflug,
Estimates for the Bergman kernel and metric of convex domains in $\C^n$,
\emph{Ann. Polon. Math.} \textbf{81} (2003), no.~1, 73--78.
\bibdoi{10.4064/ap81-1-6}.

\bibitem{NikolovPflugThomas2013}
N.~Nikolov, P.~Pflug, and P.~J. Thomas,
On different extremal bases for $\C$-convex domains,
\emph{Proc. Amer. Math. Soc.} \textbf{141} (2013), no.~9, 3223--3230.
\bibdoi{10.1090/s0002-9939-2013-11584-4}.

\bibitem{Pau2016}
J.~Pau,
Characterization of Schatten-class Hankel operators on weighted Bergman
spaces,
\emph{Duke Math. J.} \textbf{165} (2016), no.~14, 2771--2791.
\bibdoi{10.1215/00127094-3627310}.

\bibitem{PauZhao2015}
J.~Pau and R.~Zhao,
Carleson measures and Toeplitz operators for weighted Bergman spaces on
the unit ball,
\emph{Michigan Math. J.} \textbf{64} (2015), no.~4, 759--796.
\bibdoi{10.1307/mmj/1447878031}.

\bibitem{PelaezPeralaRattya2020}
J.~\'{A}. Pel\'{a}ez, A.~Per\"al\"a, and J.~R\"atty\"a,
Hankel operators induced by radial B\'{e}koll\'{e}--Bonami weights on
Bergman spaces,
\emph{Math. Z.} \textbf{296} (2020), no.~1--2, 211--238.
\bibdoi{10.1007/s00209-019-02412-8}.

\bibitem{Pietsch1967}
A.~Pietsch,
Absolut $p$-summierende Abbildungen in normierten R\"aumen,
\emph{Studia Math.} \textbf{28} (1967), no.~3, 333--353.
\bibdoi{10.4064/sm-28-3-333-353}.

\bibitem{Pietsch1980}
A.~Pietsch,
\emph{Operator Ideals},
North-Holland Mathematical Library, vol.~20, North-Holland, Amsterdam,
1980.

\bibitem{Pushnitski2017}
A.~Pushnitski,
Spectral asymptotics for a class of Toeplitz operators on the Bergman space,
arXiv:1710.02652 (2017).
\url{https://arxiv.org/abs/1710.02652}.

\bibitem{QuirogaSanchez2015}
R.~Quiroga-Barranco and A.~Sanchez-Nungaray,
\emph{Toeplitz operators with quasi-homogeneuos quasi-radial symbols
on some weakly pseudoconvex domains},
Complex Anal. Oper. Theory \textbf{9} (2015), no.~5, 1111--1134.
\url{https://doi.org/10.1007/s11785-014-0407-x}.

\bibitem{Simon2005}
B.~Simon,
\emph{Trace Ideals and Their Applications}, second ed.,
Mathematical Surveys and Monographs, vol.~120,
American Mathematical Society, Providence, RI, 2005.
\bibdoi{10.1090/surv/120}.

\bibitem{TaskinenVirtanen2018}
J.~Taskinen and J.~A. Virtanen,
On generalized Toeplitz and little Hankel operators on Bergman spaces,
\emph{Arch. Math. (Basel)} \textbf{110} (2018), no.~2, 155--166.
\bibdoi{10.1007/s00013-017-1124-2}.

\bibitem{XiaoYangYuan2026}
J.~Xiao, W.~Yang, and C.~Yuan,
Geometrical Toeplitz operators and Carleson embeddings over smoothly
bounded convex domains of finite type in $\C^n$,
\emph{Sci. China Math.} \textbf{69} (2026), no.~4, 985--1010.
\bibdoi{10.1007/s11425-024-2449-5}.

\bibitem{XuDong2026}
C.~Xu and J.~Dong,
Absolutely summing Toeplitz operators on weighted Fock spaces,
preprint, \href{https://arxiv.org/abs/2609.15424v1}{arXiv:2609.15424v1}, 2026.

\bibitem{YangYuan2025}
W.~Yang and C.~Yuan,
Schatten class little Hankel operators on Bergman spaces in bounded
symmetric domains,
\emph{J. Funct. Anal.} \textbf{288} (2025), no.~10,
Paper No.~110875, 32 pp.
\bibdoi{10.1016/j.jfa.2025.110875}.

\bibitem{Yao2024}
L.~Yao,
Sobolev and H\"older estimates for homotopy operators of the
$\overline\partial$-equation on convex domains of finite multitype,
\emph{J. Math. Anal. Appl.} \textbf{538} (2024), no.~2,
Paper No.~128238, 41 pp.
\bibdoi{10.1016/j.jmaa.2024.128238}.

\bibitem{Yuan2026}
C.~Yuan,
Little Hankel operators between Bergman spaces in bounded symmetric
domains,
\emph{Rev. R. Acad. Cienc. Exactas F\'is. Nat. Ser. A Mat. RACSAM}
\textbf{120} (2026), no.~2, Paper No.~32.
\hfill\break \bibdoi{10.1007/s13398-026-01828-2}.

\bibitem{Zhang2025}
S.~Zhang,
Carleson measures on weakly pseudoconvex domains,
\emph{J. Geom. Anal.} \textbf{35} (2025), no.~9, Paper No.~277.
\bibdoi{10.1007/s12220-025-02114-0}.

\bibitem{Zhang2026}
S.~Zhang,
Carleson measures and uniformly discrete sequences in domains with
bounded intrinsic geometry,
\emph{Proc. Amer. Math. Soc.} \textbf{154} (2026), no.~11, 4731--4744.
\bibdoi{10.1090/proc/17743}.

\bibitem{Zhu1991}
K.~Zhu,
Hankel operators on the Bergman space of bounded symmetric domains,
\emph{Trans. Amer. Math. Soc.} \textbf{324} (1991), no.~2, 707--730.
\bibdoi{10.1090/s0002-9947-1991-1093426-6}.

\bibitem{Zhu2007}
K.~Zhu,
\emph{Operator Theory in Function Spaces}, second ed.,
Mathematical Surveys and Monographs, vol.~138,
American Mathematical Society, Providence, RI, 2007.
\bibdoi{10.1090/surv/138}.

\bibitem{Zimmer2021}
A.~Zimmer,
Compactness of the $\bar\partial$-Neumann problem on domains with
bounded intrinsic geometry,
\emph{J. Funct. Anal.} \textbf{281} (2021), no.~1,
Paper No.~108992, 47 pp.
\bibdoi{10.1016/j.jfa.2021.108992}.

\bibitem{Zimmer2023}
A.~Zimmer,
Hankel operators on domains with bounded intrinsic geometry,
\emph{J. Geom. Anal.} \textbf{33} (2023),
Paper No.~176, 29 pp.
\bibdoi{10.1007/s12220-023-01231-y}.

\end{thebibliography}
\end{document}